\documentclass[11pt,a4paper]{article}
\usepackage{amsmath,amssymb,amsthm,mathtools,mathrsfs}
\usepackage{geometry}
\usepackage{booktabs,array}
\usepackage{xcolor}
\usepackage{enumitem}
\usepackage{hyperref}
\usepackage{fancyhdr}
\usepackage{microtype}
\hypersetup{colorlinks=true,linkcolor=blue!45!black,urlcolor=blue!55!black,citecolor=blue!45!black}
\setlist{nosep,leftmargin=2.2em}
\allowdisplaybreaks[2]
\numberwithin{equation}{section}
\newtheorem{theorem}{Theorem}[section]
\newtheorem{proposition}[theorem]{Proposition}
\newtheorem{lemma}[theorem]{Lemma}
\newtheorem{corollary}[theorem]{Corollary}
\theoremstyle{definition}

\theoremstyle{remark}
\newtheorem{remark}[theorem]{Remark}

\newcommand{\dd}{\,d}
\newcommand{\R}{\mathbb R}
\newcommand{\cF}{\mathcal F}

\title{\fontsize{11.8}{15}\selectfont\bfseries
\mbox{Optimal Spectral Lower Bounds and Nonradial Nonlinear Asymptotic Stability}\\[1mm]
\mbox{of a Family of Three-Dimensional Keller--Segel Self-Similar Blow-Up Solutions}}
\author{\textsc{Te Li, Yuwei Sun, and Kaiqiang Zhang}}
\date{}

\begin{document}
\maketitle

\begin{abstract}
This paper studies the spectral properties and nonlinear asymptotic stability
of a family of finite-time self-similar blow-up solutions to the
three-dimensional Keller--Segel system constructed by matching interior and
exterior profiles within the framework of matched asymptotic expansions.
For every sufficiently large matching index $n$, the full linearized operator
around the stationary state $U_n$ in self-similar variables is analyzed on
$L^2(\mathbb R^3)$.  Sturm zero counting in the radial mode, a wave operator
that reduces the nonlocal $l=1$ equation to a local equation, and a Mellin--Newton quadratic form
for all $l\ge2$ show that, after the scaling and translation modes and the
finitely many genuinely unstable radial modes are removed, the remaining
spectrum is separated from the imaginary axis by a positive distance.  In
addition, the optimal lower bound on the real parts of the spectrum is $1/4$
in every mode $l\ge2$.  On the stable subspace, an exponentially decaying
semigroup and a modified energy equivalent to the $L^2$ norm are constructed,
and the logarithmic asymptotic decay rate of the semigroup norm is proved to
equal the stable spectral gap.  Finally, modulation equations, $H^2$ energy
estimates, control of the scaling derivative, and Brouwer's no-retraction
theorem yield nonradial nonlinear asymptotic stability of the corresponding
self-similar blow-up solutions after the initial coefficients in the finitely
many unstable radial directions have been chosen suitably.
\end{abstract}

\noindent\textbf{Keywords:}
Keller--Segel system; self-similar blow-up; optimal spectral lower bound;
nonradial perturbations; nonlinear asymptotic stability

\tableofcontents
\newpage

\section{The equation, self-similar blow-up, and the main theorems}

\subsection{Background and the problem}

The Keller--Segel model was introduced to describe aggregation driven by
chemotaxis \cite{KellerSegel1970}.  In the parabolic--elliptic reduction
considered here, diffusion spreads the cell density, whereas the
self-generated chemoattractant field pulls it toward regions of higher
concentration.  In three dimensions this competition can lead to finite-time
concentration.  Brenner, Constantin, Kadanoff, Schenkel, and Venkataramani
gave in \cite{BrennerEtAl1999} the explicit finite-time self-similar blow-up
solution
\begin{equation}
 u_{0,T,x_*}(t,x)=\frac1{T-t}
 U_0\!\left(\frac{x-x_*}{\sqrt{T-t}}\right),
 \qquad
 U_0(y):=\frac{4(6+|y|^2)}{(2+|y|^2)^2}.                    \label{eq:explicitU0}
\end{equation}
Here $u_{0,T,x_*}$ is the blow-up solution in the original space--time
variables, whereas $U_0$ is the stationary state obtained after the
self-similar transformation.

The radial nonlinear stability of this basic solution was proved by
Glogi\'c--Sch\"orkhuber \cite{GlogicSchoerkhuber2024};
Li--Zhou subsequently proved its nonradial nonlinear stability
\cite[Theorem~1.1]{LiZhou2025}.
Beyond the basic solution,
Nguyen--Wang--Zhang constructed infinitely many smooth radial stationary
solutions $U_n$ in self-similar variables for dimensions $3\le d\le9$
\cite[Theorem~1.1]{NWZ2026}.

The present paper studies the nonradial spectral properties and nonlinear
stability of the family of finite-time self-similar blow-up solutions obtained
from the construction in \cite{NWZ2026}.  In self-similar variables, this
family corresponds to radial stationary states $U_n$; their precise
representation is given in \eqref{eq:Unfamily-summary} of
Section~\ref{sec:strategy}.

\subsection{The equation and the self-similar reduction}

The parabolic--elliptic system considered in this paper is
\begin{equation}
 \partial_tu=\Delta u-\nabla\!\cdot(u\nabla\Psi_u),\qquad
 -\Delta\Psi_u=u,\qquad (t,x)\in(0,T)\times\R^3.              \label{eq:KS}
\end{equation}
This model originates from the study of chemotactic aggregation by
Keller--Segel \cite{KellerSegel1970}.  To study finite-time blow-up at $t=T$,
introduce the self-similar variables
\begin{equation}
 s=-\log(T-t),\qquad y=\frac{x-x_*}{\sqrt{T-t}},\qquad
 v(s,y)=(T-t)u(t,x).                                         \label{eq:rescaling}
\end{equation}
Set also $\Psi_u(t,x)=\Psi_v(s,y)$.  Then
\[
 u(t,x)=\frac1{T-t}v(s,y),\qquad
 \partial_ts=\frac1{T-t},\qquad
 \partial_ty=\frac{y}{2(T-t)}.
\]
The chain rule, applied term by term, gives
\begin{align}
 \partial_tu
 &=\frac1{(T-t)^2}
   \left(\partial_sv+v+\frac12y\cdot\nabla v\right),\notag\\
 \Delta_xu
 &=\frac1{(T-t)^2}\Delta_yv,\notag\\
 \nabla_x\!\cdot(u\nabla_x\Psi_u)
 &=\frac1{(T-t)^2}\nabla_y\!\cdot(v\nabla_y\Psi_v),\notag\\
 -\Delta_y\Psi_v&=v.                                        \label{eq:rescaledterms}
\end{align}
Substituting \eqref{eq:rescaledterms} into \eqref{eq:KS}, setting
$\Lambda v=2v+y\cdot\nabla v$, and cancelling the common factor
$(T-t)^{-2}$ yield the evolution equation in self-similar variables:
\begin{equation}
 \partial_sv=\Delta v-\nabla\!\cdot(v\nabla\Psi_v)-\frac12\Lambda v,
 \qquad -\Delta\Psi_v=v.                                    \label{eq:rescaledKS}
\end{equation}

If $v(s,y)\equiv U(y)$ is independent of $s$, then \eqref{eq:rescaledKS}
reduces to the stationary equation
\begin{equation}
 \Delta U-\nabla\!\cdot(U\nabla\Psi_U)-\frac12\Lambda U=0,
 \qquad -\Delta\Psi_U=U.                                    \label{eq:steadystateU}
\end{equation}
The corresponding solution in the original variables is
\begin{equation}
 u_{T,x_*}(t,x)=\frac1{T-t}
 U\!\left(\frac{x-x_*}{\sqrt{T-t}}\right).                  \label{eq:selfsimilar}
\end{equation}
If $U$ is a nonzero bounded stationary state, then
\[
 \|u_{T,x_*}(t)\|_{L^\infty(\mathbb R^3)}
 =\frac{\|U\|_{L^\infty(\mathbb R^3)}}{T-t}
 \longrightarrow\infty
 \qquad(t\uparrow T).
\]
Thus \eqref{eq:selfsimilar} is a finite-time self-similar solution that blows
up at $t=T$.  In particular, $U_n$ is a stationary state of
\eqref{eq:rescaledKS}, and its substitution into \eqref{eq:selfsimilar}
produces a finite-time self-similar blow-up solution in the original
variables.  Consequently, after the transformation \eqref{eq:rescaling}, the
asymptotic stability problem near this blow-up solution becomes the
asymptotic stability problem for \eqref{eq:rescaledKS} near $U_n$.  The
spectral properties of the linearized operator associated with $U_n$ are
studied first, and the resulting linear estimates are then used to treat the
nonlinear evolution.

To derive the Fourier-mode operators used below from the full-space equation,
first linearize about a general radial stationary state $U$.  Write
\[
 v(s,y)=U(y)+\varepsilon(s,y).
\]
Substitute this expression into \eqref{eq:rescaledKS}, use
$\Psi_f=-\Delta^{-1}f$ with the convention $\Delta\Delta^{-1}f=f$, and
subtract the stationary equation \eqref{eq:steadystateU}.  The result is
\begin{equation}
 \partial_s\varepsilon+\mathbf L_U\varepsilon
 =\nabla\!\cdot\bigl(\varepsilon\nabla\Delta^{-1}\varepsilon\bigr),
                                                               \label{eq:perturbation-around-U}
\end{equation}
where
\begin{equation}
 \mathbf L_Uf=-\Delta f+\frac12(2+y\cdot\nabla)f-2Uf
 -\nabla\Delta^{-1}U\cdot\nabla f
 -\nabla U\cdot\nabla\Delta^{-1}f.                         \label{eq:full-linearized-operator}
\end{equation}
More precisely, the linear part of the quadratic drift is
\[
 \nabla\!\cdot(U\nabla\Delta^{-1}f)
 +\nabla\!\cdot(f\nabla\Delta^{-1}U)
 =2Uf+\nabla U\cdot\nabla\Delta^{-1}f
  +\nabla\Delta^{-1}U\cdot\nabla f,
\]
where $\Delta\Delta^{-1}g=g$ has been used twice.  Hence every term in
\eqref{eq:full-linearized-operator} comes directly from the first-order
linearization of \eqref{eq:rescaledKS}, while the right-hand side of
\eqref{eq:perturbation-around-U} is precisely the remaining quadratic term.
The perturbation $\varepsilon=\varepsilon(s,y)$ may depend on the
self-similar time $s$; thus \eqref{eq:perturbation-around-U} is the
perturbation evolution equation near $U$, rather than another stationary
equation.

Since \(U\) is radial, \(\mathbf L_U\) commutes with rotations.
We therefore use the standard spherical harmonic decomposition
\cite[Theorem~3.5.7]{Simon2015}, in the form used for the
three-dimensional Keller--Segel linearization in
\cite[Section~2.1]{LiZhou2025}.
With $r=|y|$ and
$\omega=y/|y|$, write
\[
 f(r,\omega)=\sum_{l=0}^{\infty}\sum_{m=-l}^{l}
 f_{l,m}(r)Y_{l,m}(\omega),
\]
where
\[
 -\Delta_{\mathbb S^2}Y_{l,m}=l(l+1)Y_{l,m},
 \qquad l=0,1,2,\ldots .
\]
For each $l\ge0$, define first the finite-dimensional angular space
\[
 \mathcal Y_l:=\operatorname{span}\{Y_{l,m}:-l\le m\le l\},
 \qquad \dim\mathcal Y_l=2l+1.
\]
The complete $l$th mode space in $L^2(\mathbb R^3)$ is
\begin{equation}
 \mathcal H_l:=L^2((0,\infty),r^2\dd r)\otimes\mathcal Y_l.
                                                               \label{eq:full-l-mode-space}
\end{equation}
Normalize $Y_{l,m}$ in $L^2(\mathbb S^2)$ and denote the corresponding
fixed-mode subspace of the full space by
\begin{equation}
 \mathscr X_{l,m}:=
 \{f(r)Y_{l,m}(\omega):f\in L^2((0,\infty),r^2\dd r)\}
 \subset L^2(\mathbb R^3).                                \label{eq:fixedmodespace}
\end{equation}
Then
\[
 \mathcal H_l=\bigoplus_{m=-l}^l\mathscr X_{l,m}.
\]
Because the linearized operator is rotationally invariant, all values of $m$
in the same $\mathcal Y_l$ lead to the same radial operator.  The
three-dimensional linearized problem therefore reduces to a radial operator
on each $l$th mode space.  In particular, $\mathcal Y_1$ is
three-dimensional, and for radial $U$,
\[
 \partial_{y_j}U(r)=U'(r)\frac{y_j}{r},\qquad 1\le j\le3.
\]
Since $y_j/r\in\mathcal Y_1$, the three translation directions belong to
the complete mode space $\mathcal H_1$.  These are the only physical
structures needed below.

\subsection{Mass inside a ball, the reduced-mass variable, and the mode operators}

This subsection starts from the stationary density equation
\eqref{eq:steadystateU}.  Integration over a ball first eliminates the
Poisson potential, and \eqref{eq:reducedmass} then defines $\Phi$.  The aim is
to reduce the stationary equation to the scalar ordinary differential
equation \eqref{eq:profile} and to recover the density $U$ and the Newton
field $P$ from $\Phi$ by \eqref{eq:UPPhi}.

For a radial stationary density $U(r)$, define first the mass enclosed by the
ball of radius $r$:
\begin{equation}
 \mathcal M_U(r):=\int_{|y|\le r}U(y)\dd y
 =4\pi\int_0^rU(s)s^2\dd s.                                \label{eq:ballmass}
\end{equation}
After removing the area of the unit sphere in three dimensions and the cubic
radial factor, define the reduced-mass variable by
\begin{equation}
 \Phi(r):=\frac{\mathcal M_U(r)}{8\pi r^3}.                 \label{eq:reducedmass}
\end{equation}
The stationary self-similar equation implies
\begin{equation}
 \cF(\Phi):=\Phi''+\frac4r\Phi'-\Phi-\frac r2\Phi'
 +6\Phi^2+2r\Phi\Phi'=0.                                  \label{eq:profile}
\end{equation}
Conversely, the stationary density and the radial Newton field are expressed
in terms of $\Phi$ by
\begin{equation}
 U(r)=6\Phi(r)+2r\Phi'(r),\qquad
 P(r)=\frac1{r^2}\int_0^rU(s)s^2\dd s=2r\Phi(r).            \label{eq:UPPhi}
\end{equation}
These formulas are also consistent with
\eqref{eq:ballmass}--\eqref{eq:reducedmass}.  Indeed, using
$U=6\Phi+2r\Phi'$ gives
\[
 \int_0^rU(s)s^2\dd s
 =\int_0^r\bigl(6s^2\Phi+2s^3\Phi'\bigr)\dd s
 =2r^3\Phi(r).
\]
Multiplication by $4\pi$ yields
$\mathcal M_U(r)=8\pi r^3\Phi(r)$.

The radial coefficient operators can now be derived from the full-space
operator \eqref{eq:full-linearized-operator}.  Fix $l\ge0$ and set
$J_l=l(l+1)$.  For every $f(r)Y_{l,m}(\omega)$,
\begin{align}
 -\Delta\bigl(fY_{l,m}\bigr)&=(-\Delta_lf)Y_{l,m},
                                                               \label{eq:laplacian-mode-reduction}\\
 \Delta^{-1}\bigl(fY_{l,m}\bigr)&=(\Delta_l^{-1}f)Y_{l,m},
                                                               \label{eq:newton-mode-reduction}
\end{align}
where
\begin{equation}
 \Delta_l=\partial_{rr}+\frac2r\partial_r-\frac{J_l}{r^2},
 \qquad
 -\Delta_l=-\partial_{rr}-\frac2r\partial_r+\frac{J_l}{r^2}.
                                                               \label{eq:Deltal}
\end{equation}
Here $\Delta_l^{-1}f$ denotes specifically the solution of
$\Delta_lw=f$ which is regular at $r=0$ and decays at $r=\infty$.  For
smooth compactly supported $f$, it is given by
\begin{equation}
 (\Delta_l^{-1}f)(r)
 =-\frac1{2l+1}\left[
 r^{-l-1}\int_0^r f(s)s^{l+2}\dd s
 +r^l\int_r^\infty f(s)s^{1-l}\dd s\right].              \label{eq:Deltal-inverse}
\end{equation}
Thus $\Delta_l^{-1}$ is not an arbitrary right inverse of the radial
differential expression.  Integration by parts also gives
\begin{equation}
 \langle\Delta_l^{-1}f,f\rangle_{L^2(r^2\dd r)}
 =-\int_0^\infty\left(|w'|^2+\frac{J_l}{r^2}|w|^2\right)r^2\dd r
 \le0,\qquad w=\Delta_l^{-1}f.                            \label{eq:Deltal-nonpositive}
\end{equation}

By \eqref{eq:UPPhi},
\[
 \nabla\Delta^{-1}U=P(r)\frac{y}{r},\qquad
 \nabla U=U'(r)\frac{y}{r}.
\]
Substitution of these identities, term by term, into
\eqref{eq:full-linearized-operator} yields
\begin{equation}
 \mathbf L_U\bigl(f(r)Y_{l,m}(\omega)\bigr)
 =(L_lf)(r)Y_{l,m}(\omega),                               \label{eq:full-to-radial-mode}
\end{equation}
where
\begin{equation}
 L_lf=-\Delta_lf+\frac12(2+r\partial_r)f-2Uf-Pf'
      -U'\partial_r\Delta_l^{-1}f.                         \label{eq:Ll}
\end{equation}
Hence $L_l$ depends on $l$, but not on $m$.

For the matched stationary states $\Phi_n,U_n$ described in the next
subsection, set
\begin{equation}
 P_n(r):=2r\Phi_n(r)=\frac1{r^2}\int_0^rU_n(\rho)\rho^2\dd\rho,
 \qquad
 L_{n,l}:=L_l\big|_{U=U_n,\,P=P_n}.                       \label{eq:matched-mode-operator}
\end{equation}
Below, $L_{n,l}$ always denotes the closed realization of this differential
expression in $L^2((0,\infty),r^2\dd r)$ which is regular at the origin and
square integrable at infinity.  Its closed domain is unitarily equivalent to
the restriction of the full-space operator to a fixed $(l,m)$ mode; see
Lemma~\ref{lem:fixedmodeunitary}.

\subsection{The stationary family, proof strategy, and scope}\label{sec:strategy}

The interior--exterior matching construction in \cite{NWZ2026} is used throughout.
The corrected construction produces a fixed $r_0>0$, two sequences
$\mu_n\to0$ and $\varepsilon_n\to0$, and reduced-mass stationary states
$\Phi_n$.  More precisely,
\begin{equation}
 \Phi_n(r)=
 \begin{cases}
  \mu_n^{-2}\bar Q(r/\mu_n)+R_n(r/\mu_n),&0\le r\le r_0,\\[1mm]
  r^{-2}+\varepsilon_n\bigl(u_1+w_n\bigr)(r),&r\ge r_0,
 \end{cases}
 \qquad
 U_n(r)=6\Phi_n(r)+2r\Phi_n'(r).                            \label{eq:Unfamily-summary}
\end{equation}
Here $\bar Q$ is the universal inner solution, $u_1$ is a fundamental
solution of the linear outer equation, and $R_n,w_n$ are the inner and outer
corrections, respectively.  Proposition~\ref{prop:correctedprofile-main}
gives the precise statement behind \eqref{eq:Unfamily-summary}.  The matching
construction, the estimates for the correction terms, and the proof of the
radial inputs are collected in Appendix~\ref{sec:correctedmatchingappendix};
the basic properties of $\bar Q$ are collected in
Appendix~\ref{sec:JDEcoreproperties}.  Appendix~\ref{sec:selfsimilarfamilyappendix}
only records the common correspondence between $\Phi_n$, $U_n$, and the
finite-time self-similar blow-up solutions in the original variables.

The central linear problem is to determine the spectral gap of the nonradial
linearized operator associated with $U_n$.  After $U_n$ is fixed, the radial
mode $l=0$, the translation mode $l=1$, and the remaining Fourier modes
$l\ge2$ are treated in this order, and their conclusions are finally combined
in the ordinary space $L^2(\mathbb R^3)$.  Theorem~\ref{thm:main} shows that
the modes $l\ge2$ create no new unstable discrete spectrum, the mode $l=1$
contains only the three modes generated by translation symmetry, and every
remaining genuinely unstable direction is radial.

The main difficulty in the $l=1$ analysis is the Newton nonlocal term in the
linearized operator.  Treating that term merely as a perturbation of the
local principal part does not yield the required spectral gap.  A wave
operator is therefore constructed to remove the nonlocal term from the
eigenvalue equation.  The explicit wave-operator
method was first introduced in \cite{LiWeiZhang2020} for the critical Fourier
mode $k=1$ of the Oseen vortex and was applied in \cite{LiZhou2025} to the
explicit Keller--Segel stationary state $U_0$ in
\eqref{eq:explicitU0}.  For a general $U_n$, the corresponding explicit wave
operator $T_n$ is defined in \eqref{eq:Tn}.  Lemma~\ref{lem:waveoperatorreduction}
proves, term by term, the reduction identity
\[
 T_nL_{n,1}=\widetilde L_{n,1}T_n.
\]
Thus applying $L_{n,1}$ first and $T_n$ afterwards gives the same result as
applying $T_n$ first and then the local operator $\widetilde L_{n,1}$.
Consequently, $T_n$ transforms the eigenvalue equation containing the Newton
term into a local second-order equation.  More precisely, if
\[
 L_{n,1}f=zf,\qquad f\notin\ker T_n,
\]
then $T_nf\ne0$ and
\[
 \widetilde L_{n,1}(T_nf)=zT_nf.
\]
Since $\ker T_n=\operatorname{span}\{U_n'\}$, exclusion of the low-lying point
spectrum of $L_{n,1}$, after the translation eigenfunction has been removed,
reduces to the corresponding exclusion for the local operator
$\widetilde L_{n,1}$.  Formula \eqref{eq:Hn} further conjugates
$\widetilde L_{n,1}$ to the self-adjoint Schr\"odinger operator
$\mathcal H_n$, and the required exclusion follows from a strict lower bound
for $\mathcal H_n$.  Passing from exclusion of point spectrum to a gap for the
full spectrum requires the Fredholm closure in
Section~\ref{subsec:fixedmodefredholm}.  The complete calculation of the
wave-operator reduction is given in Section~\ref{sec:l1localreduction}.

The nonradial nonlinear stability of \eqref{eq:explicitU0} was also proved in
\cite{LiZhou2025}.  That argument uses the Riesz projections $P_{\rm u}$ and
$P_{\rm s}=I-P_{\rm u}$ to separate the four-dimensional unstable subspace
generated by scaling and translation from the stable subspace.  The stable
part is controlled by a decaying semigroup, Duhamel's formula, and higher-order
parabolic energy estimates.  The quadratic term
$\mathcal N(\varepsilon)=\nabla\!\cdot
(\varepsilon\nabla\Delta^{-1}\varepsilon)$ satisfies
$\|\mathcal N(\varepsilon)\|_{H^1}\lesssim
\|\varepsilon\|_{H^2}^2$.  A finite-dimensional outgoing property and a
Brouwer topological argument determine the initial data, while the scaling
and translation parameters determine the blow-up time and center.

For $U_n$, in addition to these four symmetry directions, there are $N_n$
radial unstable directions which cannot be absorbed by symmetries.
Section~\ref{sec:stablelinear} establishes the stable semigroup bound after
all unstable spectral directions have been projected out.  The nonlinear
part then closes the evolution by using this semigroup bound, the equations
for the scaling and translation parameters obtained from the orthogonality
conditions, and bootstrap estimates.  For comparison with the counting
convention in \cite{CollotZhang2024}, only the following notation is borrowed.
For the same stationary state, if the scaling mode is also counted among the
radial unstable directions and the resulting total is denoted by
$N_n^{\rm CZ}$, then $N_n^{\rm CZ}=N_n+1$ by definition.  The blow-up time
and center absorb the four symmetry modes, and the Brouwer argument chooses
the remaining $N_n$ radial unstable coefficients, which yields
Theorem~\ref{thm:nonradial-stability}.  The constant $N_0$ in
$N_n=n+N_0$ records only the phase interval at which the matching scales begin
to be enumerated.

\subsection{Main theorems}

The following three theorems are stated in the order in which they are used.
Theorem~\ref{thm:main} is the core linear result: it combines the radial,
translation, and all $l\ge2$ calculations to obtain spectral lower bounds
on the ordinary space $L^2(\mathbb R^3)$.
Theorem~\ref{thm:stable-semigroup} starts from this spectral decomposition
and gap and, using the closed-realization and resolvent estimates of
Section~\ref{sec:stablelinear}, derives decay of the stable semigroup and an
equivalent linear energy.  Theorem~\ref{thm:nonradial-stability} is the final
nonlinear asymptotic stability conclusion.  Its Duhamel estimates and
nonlinear energy closure use exactly the linear bounds of
Theorem~\ref{thm:stable-semigroup}; the equations for the scaling and
translation parameters handle the four symmetry directions, and the Brouwer
argument selects the remaining finitely many radial unstable coefficients.
Thus the logical chain is
\[
 \begin{gathered}
 \text{optimal spectral lower bounds}
 \\[-1mm]\Downarrow\\[-1mm]
 \text{semigroup estimates and a linear energy}
 \\[-1mm]\Downarrow\\[-1mm]
 \text{nonlinear asymptotic stability}.
 \end{gathered}
\]

In Theorem~\ref{thm:main}, the radial conclusion for $l=0$ follows from the
Sturm count for the scaling mode, the $l=1$ conclusion follows from the
wave-operator reduction identity \eqref{eq:waveoperatorreduction}, and the conclusions for $l\ge2$
follow from the real-part quadratic form \eqref{eq:qexact}.

\begin{theorem}[Spectral lower bounds]\label{thm:main}
Let $U_n$ be the three-dimensional stationary self-similar density
constructed in Proposition~\ref{prop:correctedprofile-main}, let $\Phi_n$ be
the reduced-mass variable determined by \eqref{eq:UPPhi}, and assume that $n$
is sufficiently large.  Let $L_{n,l}$ be the closed realization defined in
\eqref{eq:matched-mode-operator}, regular at the origin and square integrable
at infinity.  Through the map $f(r)\mapsto f(r)Y_{l,m}(\omega)$, this
realization is unitarily equivalent to the restriction of the linearized
operator on the fixed $(l,m)$ mode space in the ordinary
$L^2(\mathbb R^3)$.  Then the following statements hold.
\begin{enumerate}
\item In the radial mode $l=0$, the eigenvalue $-1$ is
      algebraically simple, with eigenfunction $\Lambda U_n$.  If
      $N_n=\#Z_{(0,\infty)}(\Lambda\Phi_n)$, then $L_{n,0}$ has exactly
      $N_n$ algebraically simple real eigenvalues
      \[
       \lambda_{1,n}<\cdots<\lambda_{N_n,n}<-1,
      \]
      and
      \begin{equation}
       \sigma(L_{n,0})\cap\{z:\operatorname{Re}z\le0\}
       =\{\lambda_{1,n},\ldots,\lambda_{N_n,n},-1\}.
                                                               \label{eq:mainradialspectrum}
      \end{equation}
      With the matching-root numbering, $N_n=n+N_0$, where $N_0$ depends
      on the initial index.
\item In the \(l=1\) mode, there exists a constant
      \(\delta_1\in(0,1/4)\), independent of all sufficiently large \(n\),
      such that
      \begin{equation}
      \sigma(L_{n,1})
      \cap\{z\in\mathbb C:\operatorname{Re}z<\delta_1\}
      =\left\{-\frac12\right\}.                            \label{eq:mainl1spectrum}
      \end{equation}
      On each fixed $m$ component,
      \[
       \ker(L_{n,1}+1/2)=\operatorname{span}\{U_n'\},
      \]
      and \(-1/2\) is algebraically simple.
      After the three spatial directions are combined, $-1/2$ is semisimple
      and its eigenspace is
      $\operatorname{span}\{\partial_{y_j}U_n:1\le j\le3\}$.
      If \(P_{n,1}\) denotes the Riesz projection of $L_{n,1}$ at $-1/2$,
      then
      \[
       \sigma\!\left(L_{n,1}|_{\operatorname{Ran}(I-P_{n,1})}\right)
       \subset\{z:\operatorname{Re}z\ge\delta_1\}.
      \]
\item Every $l\ge2$ mode has the same exact spectral threshold:
      \begin{equation}
       \sigma(L_{n,l})\cap\{z:\operatorname{Re}z<1/4\}=\varnothing,
       \qquad l\ge2.                                       \label{eq:mainlge2quarter}
      \end{equation}
      The line $1/4+i\R$ belongs to the spectrum in every fixed mode.  Thus
      $1/4$ is the exact spectral left boundary, not an arbitrary constant in
      a quadratic-form estimate.
\end{enumerate}
Finally, return the stationary state to the original space--time variables.
For every $T>0$ and $x_*\in\R^3$,
\begin{equation}
 u_{n,T,x_*}(t,x)=\frac1{T-t}
 U_n\!\left(\frac{x-x_*}{\sqrt{T-t}}\right),\qquad 0<t<T,   \label{eq:physicalselfsimilarfamily}
\end{equation}
is a self-similar solution of the original Keller--Segel equation.  The
preceding assertions are exactly the linear spectral conclusions for
\eqref{eq:physicalselfsimilarfamily} in self-similar variables: the radial
eigenvalue $-1$ corresponds to an infinitesimal variation of the blow-up time
$T$, the three $l=1$ translation eigenfunctions correspond to infinitesimal
variations of the center $x_*$, and every nonradial mode $l\ge2$ has a
positive spectral gap.
\end{theorem}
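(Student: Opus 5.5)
The proof treats the three regimes $l=0$, $l=1$, $l\ge2$ separately, each on a fixed $(l,m)$ component, and then transfers the results to $L^2(\mathbb R^3)$ using Lemma~\ref{lem:fixedmodeunitary} and the orthogonal decomposition $\mathcal H_l=\bigoplus_m\mathscr X_{l,m}$.

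The first, common step is essential-spectrum control. For each $l$, write $L_{n,l}=L^{\rm free}_l+K_{n,l}$, where
$$L^{\rm free}_l=-\Delta_l+\tfrac12(2+r\partial_r).$$
The remainder $K_{n,l}$ consists of $-2U_n$, $-P_nf'$ and $-U_n'\partial_r\Delta_l^{-1}$. Since $U_n\sim 2r^{-2}$ and $P_n\sim 2r^{-1}$ at infinity, $K_{n,l}$ is relatively compact with respect to $L^{\rm free}_l$ modulo the $r^{-2}$ tail.

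Conjugating by $r^{-3/2}$ powers and using Mellin variables shows that the free Ornstein--Uhlenbeck-type operator, after the $y\cdot\nabla$ scaling in $L^2(r^2dr)$, has spectrum equal to a half-plane. Its left boundary is determined by the $L^2$ weight: $\tfrac12\cdot 2-\tfrac12\cdot\tfrac32=\tfrac14$. This yields the line $\tfrac14+i\mathbb R$ as essential spectrum, and the $r^{-2}$ and $r^{-1}\partial_r$ tails of $U_n,P_n$ do not shift it. Weyl sequences supported at large $r$ then show that $\tfrac14+i\mathbb R\subset\sigma(L_{n,l})$ for every $l$. By the Fredholm closure of Section~\ref{subsec:fixedmodefredholm}, the spectrum in $\{\operatorname{Re}z<\tfrac14\}$ consists of isolated eigenvalues of finite multiplicity. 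It therefore suffices to locate eigenvalues in each mode.

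For $l=0$, I pass to mass variables, replacing $f$ by $m(r)=r^{-3}\int_0^r f s^2\,ds$. This turns $L_{n,0}$ into a local second-order operator, which is the linearization of $\mathcal F$ in \eqref{eq:profile} around $\Phi_n$. That local operator is conjugate to a self-adjoint Sturm--Liouville operator in a Gaussian-type weight, so the eigenvalues with $\operatorname{Re}z\le0$ are real and simple.

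Differentiating the scaling family shows that $\Lambda\Phi_n$ solves the eigen-equation with $z=-1$. Sturm oscillation then says that the number of eigenvalues below $-1$ equals $\#Z_{(0,\infty)}(\Lambda\Phi_n)=N_n$. To rule out eigenvalues in $(-1,0]$, I use that the eigenfunction ordering puts $\Lambda\Phi_n$ as the $(N_n+1)$st state, together with a check at $z=0$, using an explicit solution (e.g.\ $\partial_\mu$ of the family) that fails to decay. Algebraic simplicity follows from self-adjointness after conjugation. Finally, $N_n=n+N_0$ comes from the zero count of $\Lambda\Phi_n$ in Proposition~\ref{prop:correctedprofile-main}, where the inner profile $\bar Q$ contributes oscillations indexed by the matching roots.

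For $l=1$, I apply the wave operator $T_n$ of \eqref{eq:Tn}. Lemma~\ref{lem:waveoperatorreduction} gives $T_nL_{n,1}=\widetilde L_{n,1}T_n$ and $\ker T_n=\operatorname{span}\{U_n'\}$, and $\partial_{y_j}U_n$ gives the $-\tfrac12$ eigenvalue. If $L_{n,1}f=zf$ with $\operatorname{Re}z<\delta_1$ and $f\notin\ker T_n$, then $T_nf$ is an eigenfunction of $\widetilde L_{n,1}$, hence of the self-adjoint operator $\mathcal H_n$ from \eqref{eq:Hn}. The strict lower bound $\mathcal H_n\ge\delta_1$, uniform in large $n$, excludes this case.

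This lower bound is the step I expect to be the main obstacle. I would prove it by splitting into the inner region $r\lesssim\mu_n$, where the potential rescales to the $\bar Q$-linearization whose $l=1$ ground state is known from Appendix~\ref{sec:JDEcoreproperties}, and the outer region, where $\Phi_n\approx r^{-2}$ and a Hardy inequality gives positivity. The two regions are glued with an IMS localization whose error is $O(\mu_n+\varepsilon_n)$.

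Algebraic simplicity of $-\tfrac12$ follows from checking that the Jordan chain equation $(L_{n,1}+\tfrac12)g=U_n'$ is unsolvable. For this I would pair with the adjoint kernel element and use a nonvanishing integral. Combining the three $m$ components gives a semisimple eigenvalue with eigenspace $\operatorname{span}\{\partial_{y_j}U_n\}$ and the Riesz projection statement.

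For $l\ge2$, I use the real-part quadratic form \eqref{eq:qexact}. With $w=\Delta_l^{-1}f$, I write the nonlocal term via \eqref{eq:Deltal-nonpositive} and a Mellin--Newton representation, and show
$$\operatorname{Re}\langle L_{n,l}f,f\rangle_{\rho}\ge\tfrac14\|f\|_\rho^2$$
in a suitable equivalent weighted norm. The key input is that the potential contributions $-2U_n-\tfrac12(P_n\cdot)'$ are dominated by $J_l/r^2\ge6/r^2$ through the pointwise bound $r^2U_n\le C$ with the right constant inherited from $\bar Q$ and $r^{-2}$.

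By numerical range, no eigenvalue can lie in $\operatorname{Re}z<\tfrac14$. Combined with the essential-spectrum step, this gives \eqref{eq:mainlge2quarter} together with the sharpness of $\tfrac14$.

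The physical interpretation is immediate. Differentiating \eqref{eq:physicalselfsimilarfamily} in $T$ gives $\Lambda U_n$, and differentiating in $x_*$ gives $\partial_{y_j}U_n$.
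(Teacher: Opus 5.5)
Your overall architecture (fixed-mode reduction and Fredholm closure below $\frac14$, mass variable plus Sturm oscillation for $l=0$, the wave operator $T_n$ for $l=1$, a Newton quadratic form for $l\ge2$) broadly matches the paper. However, the step that actually yields \eqref{eq:mainradialspectrum} is missing. Sturm oscillation applied to $\Lambda\Phi_n$ only shows that $-1$ is the $(N_n+1)$st eigenvalue of $L_{\Phi_n}$. It says nothing about where $\lambda_{N_n+2}$ lies, and a check at the single point $z=0$ cannot exclude eigenvalues in the open interval $(-1,0)$. Even that check is not free. Differentiating the one-parameter family of origin-regular profile solutions gives the regular solution of $L_{\Phi_n}h=0$, but showing that it is not the Weyl solution is a nondegeneracy statement you would still have to prove. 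The paper treats the whole interval at once (Proposition~\ref{prop:radialgapnew}). At the fixed section $r_0$ it compares the adapted projective phases \eqref{eq:adaptedphase} of the origin-regular and Weyl solutions. The outer phase equals, in $C^1([-1,0])$ and up to $O(r_0^2)+o_n(1)$, the Kummer phase $\Theta(\lambda)-\omega\log r_0$, with $\Theta'\ge\sqrt7/4$ and $\Theta(0)-\Theta(-1)=\pi-\arctan\sqrt7<\pi$. The inner phase moves only by $O(r_0^2)$, because $\lambda$ enters the long inner equation \eqref{eq:scaledspectral} with a factor $\mu_n^2$. The phase difference vanishes at $\lambda=-1$ (scaling mode) and is strictly increasing, so it stays in $(0,\pi)$ on $(-1,0]$. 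Some two-scale phase control of this kind is indispensable; an exact zero count of the regular solution at $\lambda=0$ would need the same control.

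The quantitative inputs for $l=1$ and $l=2$ are also not in place.

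For $l=1$, in the inner variable $s=r/\mu_n$ the leading part of $\mathcal H_n$ is $\mu_n^{-2}\bigl(-\partial_{ss}+s^{-2}\mathfrak c(\bar y,\bar a)\bigr)$. What is needed is therefore a scale-invariant positivity statement along the entire core trajectory. Appendix~\ref{sec:JDEcoreproperties} contains no spectral information on this operator. An IMS partition between the scales $\mu_n$ and $1$ also cannot have error $O(\mu_n+\varepsilon_n)$: its localization error is of size $r^{-2}$, at best $(Mr)^{-2}$ with logarithmic cutoffs of width $M$. Such an error can only be absorbed by a uniformly positive coefficient of $r^{-2}$. The paper gets exactly this from the decomposition \eqref{eq:Wsplit} and the phase-plane/Bernstein bound $\mathfrak c\ge\frac1{20}$ of Proposition~\ref{prop:kernelK}, which gives the pointwise bound \eqref{eq:wavegap} with no IMS localization. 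You also still need the endpoint check of Lemma~\ref{lem:l1domaintransfer} to put $M_n^{-1}T_nf$ in $D(\mathcal H_n)$.

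For $l\ge2$, dominating the local terms by $J_l/r^2$ is the paper's argument for $l\ge3$. At $l=2$, however, the local terms leave only an order-one margin (the multiplier part is $6-\frac32\cdot\frac{15}{4}=\frac38$). By contrast, with \eqref{eq:Ainvbounds} and \eqref{eq:hbexactbounds} the Newton term $-\frac{J_2}{2}\int\mathsf b_n|v|^2\dd t$ is controlled only by about $\frac{J_2}{2}\cdot\frac{16}{81}\cdot\frac{1300}{27}\|F\|_2^2\approx28.5\|F\|_2^2$. The paper therefore uses:
\begin{enumerate}
\item the partial-localization identity;
\item the similarity $m\mathscr D_4^{-1}L_{n,2}\mathscr D_4m^{-1}$, with an exponent passing from $r^1$ at the core to $r^0$ at infinity;
\item two-scale gluing with $\chi^2+\eta^2=1$;
\item a Kummer--Volterra tail iteration placing the transformed eigenfunction in the form domain at the threshold.
\end{enumerate}
This is a similarity by an unbounded operator, not an equivalent norm, so the domain check is essential.

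Finally, for the Jordan chain at $-\frac12$, pairing with an adjoint kernel element requires a nonvanishing integral you do not control. Instead, apply $T_n$ to $(L_{n,1}+\frac12)f_1=U_n'$ and use \eqref{eq:waveoperatorreduction} with $T_nU_n'=0$ to get $(\mathcal H_n+\frac12)M_n^{-1}T_nf_1=0$. This forces $T_nf_1=0$, hence $f_1\in\operatorname{span}\{U_n'\}$ and $(L_{n,1}+\frac12)f_1=0\ne U_n'$, a contradiction.
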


\begin{proof}
The first assertion is precisely Proposition
\ref{prop:radialdensityspectrum-main}.  That proposition first obtains the
spectral ordering in the mass variable from the Sturm oscillation theorem and
Proposition~\ref{prop:radialgap-main}; Lemmas~\ref{lem:fixedmodefredholm} and
\ref{lem:radialunweightedspectrum} then transfer the full spectrum,
generalized eigenspaces, and algebraic multiplicities to $L_{n,0}$ in the
ordinary radial $L^2$ space.  Proposition~\ref{prop:nodes-main} gives
$N_n=n+N_0$; the spectral-gap proof itself does not use this increment
formula.

For the second assertion, Proposition~\ref{prop:l1gap} gives the
point-spectrum exclusion, the kernel computation, and the exclusion of
Jordan chains.  Proposition~\ref{prop:l1fullclosure} uses the Fredholm index
zero to upgrade the point-spectrum statement to the full-spectrum identity
\eqref{eq:mainl1spectrum} and gives the gap on the Riesz complement.
Formula \eqref{eq:fulltranslationkernel} identifies the three spatial
directions with
$\partial_{y_1}U_n,\partial_{y_2}U_n,\partial_{y_3}U_n$.

For the third assertion, Propositions~\ref{prop:fulll2gap} and
\ref{prop:lge3} exclude point spectrum in $\operatorname{Re}z<1/4$ for
$l=2$ and $l\ge3$, respectively.  Proposition
\ref{prop:fixedmodefullclosure} in Section~\ref{sec:stablelinear} first
restricts the full-space closed realization unitarily to $L_{n,l}$ and then
uses Fredholm index zero to upgrade the point-spectrum exclusion to a
full-spectrum exclusion.  The fixed-mode Weyl sequence constructed in the
same proposition also puts $1/4+i\R$ in the spectrum of every fixed mode.
All these propositions concern the same sequence of stationary states, and
only finitely many lower bounds on $n$ occur; taking their maximum makes all
the conclusions simultaneous.

Finally, \eqref{eq:physicalselfsimilarfamily} satisfies the original equation
by the term-by-term scaling calculation between \eqref{eq:selfsimilar} and
\eqref{eq:KS}.  Differentiation with respect to $T$ and $x_*$ gives the
scaling direction and the three translation directions, respectively, and
hence the geometric interpretations of the radial eigenvalue $-1$ and the
$l=1$ eigenvalue $-1/2$ agree with the statement of the theorem.
\end{proof}

\begin{theorem}[Semigroup estimates and exact logarithmic decay exponent]
\label{thm:stable-semigroup}
Under the assumptions of Theorem~\ref{thm:main}, let $P_{{\rm u},n}$ be the
Riesz projection onto the $N_n+4$ negative modes identified by that theorem,
and set
\[
 P_{{\rm s},n}=I-P_{{\rm u},n},\qquad
 X_{{\rm s},n}=\operatorname{Ran}P_{{\rm s},n}.
\]
Then:
\begin{enumerate}
\item $-\mathbf L_n$ generates a strongly continuous semigroup on
      $L^2(\R^3)$.  There are constants $C_n\ge1$ and $\gamma_n>0$,
      depending only on the fixed stationary state $U_n$, such that
      \begin{equation}
       \|e^{-s\mathbf L_n}P_{{\rm s},n}f\|_2
       \le C_ne^{-\gamma_ns}\|P_{{\rm s},n}f\|_2,
       \qquad s\ge0.                                      \label{eq:mainstableSG}
      \end{equation}
      On $X_{{\rm s},n}$ there is also an equivalent quadratic energy
      $\mathcal E_n[f]=\langle B_nf,f\rangle$ satisfying
      \begin{equation}
       \|f\|_2^2\le\mathcal E_n[f]\le C_n\|f\|_2^2,
       \qquad 2\operatorname{Re}\langle B_n\mathbf L_nf,f\rangle
       \ge\gamma_n\mathcal E_n[f].                         \label{eq:mainmodifiedL2}
      \end{equation}
      The second inequality holds first for
      $f\in D(\mathbf L_n)\cap X_{{\rm s},n}$ and then extends, in semigroup
      form, to arbitrary stable initial data.
\item Define
      \begin{equation}
 \gamma_n^{\rm sp}:=\inf\{\operatorname{Re}z:
       z\in\sigma(\mathbf L_n|_{X_{{\rm s},n}})\}.          \label{eq:mainsharpgap}
      \end{equation}
      For $l=0,1$, let $\gamma_{l,n}^{\rm st}$ be the infimum of the real
      parts of stable discrete spectrum in $0<\operatorname{Re}z<1/4$, after
      the known negative spaces have been removed, and set it to $+\infty$
      when no such spectrum exists.  Then
      \begin{equation}
       \gamma_n^{\rm sp}=\min\{1/4,\gamma_{0,n}^{\rm st},
       \gamma_{1,n}^{\rm st}\}>0,                           \label{eq:maingapformula}
      \end{equation}
      This exact left endpoint of the spectrum gives the logarithmic
      asymptotic decay exponent of the stable semigroup:
      \begin{equation}
       \lim_{s\to\infty}-\frac1s
       \log\|e^{-s\mathbf L_n}P_{{\rm s},n}\|_{2\to2}
       =\gamma_n^{\rm sp},                                  \label{eq:mainsharprate}
      \end{equation}
      More precisely, for every $0<\varepsilon<\gamma_n^{\rm sp}$ there is
      a constant $C_{n,\varepsilon}$ such that
      \begin{equation}
       \|e^{-s\mathbf L_n}P_{{\rm s},n}\|_{2\to2}
       \le C_{n,\varepsilon}
       e^{-(\gamma_n^{\rm sp}-\varepsilon)s}.              \label{eq:almostsharprate}
      \end{equation}
\end{enumerate}
\end{theorem}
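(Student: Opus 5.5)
The plan is to work mode by mode and then reassemble in $L^2(\R^3)$. The first task is generation. I would write $\mathbf L_n=A+K$, with
\[
A=-\Delta+\tfrac12(2+y\cdot\nabla)-\nabla\Delta^{-1}U_n\cdot\nabla .
\]
The bounded-coefficient drift $P_n(r)\,y/r$ is smooth, and since $P_n\sim 2/r$ at infinity its derivative decays. The remainder is $K=-2U_n-\nabla U_n\cdot\nabla\Delta^{-1}$. Because $U_n$ and $\nabla U_n$ decay like $r^{-2}$ and $r^{-3}$, the operator $K$ is $A$-bounded with relative bound zero (Hardy--Littlewood--Sobolev controls $\nabla\Delta^{-1}$ in weighted spaces), and it is in fact relatively compact. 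The operator $-A$ is an Ornstein--Uhlenbeck type operator with a bounded first-order perturbation. A G\aa rding/energy identity gives $\operatorname{Re}\langle Af,f\rangle\ge\|\nabla f\|^2+(1-\tfrac34-C)\|f\|^2$, so $-A$ generates a quasi-contractive $C_0$ semigroup. Bounded-perturbation and relative-bound arguments then give generation for $-\mathbf L_n$. The Riesz projection $P_{{\rm u},n}$ is well defined because, by Theorem~\ref{thm:main}, the $N_n+4$ eigenvalues in $\{\operatorname{Re}z\le0\}$ are isolated with finite algebraic multiplicity. Here the eigenvalues $\lambda_{j,n}$ and $-1$ come from $l=0$, and the eigenvalue $-1/2$ with multiplicity three comes from $l=1$.

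The semigroup bound \eqref{eq:mainstableSG} requires uniform resolvent control on $X_{{\rm s},n}$ along vertical lines, combined with Gearhart--Pr\"uss. I would prove
\[
\sup_{\operatorname{Re}z\le\gamma_n^{\rm sp}-\varepsilon}\|(\mathbf L_n-z)^{-1}P_{{\rm s},n}\|<\infty .
\]
This splits into two regimes. For large $|\operatorname{Im}z|$, an energy estimate on $A$ gives $\|(\mathbf L_n-z)^{-1}\|\lesssim|{\operatorname{Im} z}|^{-1/2}$, because $K$ is lower order. For large $l$, uniformity in $l$ is handled by the mode-$l$ quadratic form (as in Proposition~\ref{prop:lge3}): $\operatorname{Re}\langle L_{n,l}f,f\rangle\ge(\tfrac14+c_l)\|f\|^2$, and its accretivity bound is uniform once $J_l$ dominates $U_n$. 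The remaining region is compact, involves finitely many modes, and has no spectrum by Theorem~\ref{thm:main} together with \eqref{eq:maingapformula}. There, continuity of the resolvent gives boundedness.

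Gearhart--Pr\"uss on the Hilbert space $X_{{\rm s},n}$ then yields \eqref{eq:almostsharprate}, and this implies \eqref{eq:mainstableSG} with $\gamma_n=\gamma_n^{\rm sp}-\varepsilon$. For the energy, I would take the Lyapunov operator
\[
B_n=\int_0^\infty e^{-s\mathbf L_n^*}e^{-s(\mathbf L_n-\gamma_n/2)}\,ds ,
\]
restricted to the stable part, normalized by adding the identity. It converges by the decay bound. Differentiating gives $2\operatorname{Re}\langle B_n\mathbf L_nf,f\rangle=\|f\|^2+\gamma_n\langle B_n f,f\rangle$ up to constant bookkeeping, which yields \eqref{eq:mainmodifiedL2} on $D(\mathbf L_n)\cap X_{{\rm s},n}$. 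Density and the semigroup form then extend it to all stable data.

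For \eqref{eq:maingapformula}, the spectrum of $\mathbf L_n|_{X_{{\rm s},n}}$ is the closure of the union of the mode spectra with the unstable points removed. The modes $l\ge2$ contribute exactly $\operatorname{Re}z\ge1/4$, with the line $1/4+i\R$ attained. For $l=0,1$, the Weyl-sequence argument of Proposition~\ref{prop:fixedmodefullclosure} also places $1/4+i\R$ in the essential spectrum. Below $1/4$ only discrete stable eigenvalues remain, which gives the minimum formula; positivity follows from items 1--2 of Theorem~\ref{thm:main}. For \eqref{eq:mainsharprate}, the upper bound comes from \eqref{eq:almostsharprate}. The lower bound comes from the spectral inclusion $e^{-s\sigma}\subset\sigma(e^{-s\mathbf L_n})$, so $\|e^{-s\mathbf L_n}P_{{\rm s},n}\|\ge r(e^{-s\mathbf L_n}P_{{\rm s},n})\ge e^{-s\gamma_n^{\rm sp}}$.

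I expect the main obstacle to be the uniform resolvent bound: uniform over all $l$ and all $\operatorname{Im}z$, and sharp down to $\operatorname{Re}z$ close to $1/4$ near the essential spectrum. My first attempt would be the Mellin--Newton quadratic form with a weight that makes $-\Delta_l^{-1}$ interact favourably, perturbing in the $U_n$-small region $r\ge r_0$.
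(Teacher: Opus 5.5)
Your route differs from the paper's. You propose uniform resolvent bounds on $X_{{\rm s},n}$, then Gearhart--Pr\"uss, then a Lyapunov operator, and spectral inclusion for the lower bound. It breaks at exactly the step that carries it: the large-$|\operatorname{Im}z|$ regime. You assert that an energy estimate gives $\|(\mathbf L_n-z)^{-1}\|\lesssim|\operatorname{Im}z|^{-1/2}$, and this is false. By Lemma~\ref{lem:exactessentialthreshold} the whole line $\frac14+i\R$ lies in $\sigma_{\rm ess}(\mathbf L_n)$. Hence it also lies in $\sigma(\mathbf L_n|_{X_{{\rm s},n}})$, since the complementary block carries only the finitely many unstable eigenvalues. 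So for every $z$ with $\operatorname{Re}z<\frac14$ one has $\operatorname{dist}(z,\sigma)\le\frac14-\operatorname{Re}z$. Therefore $\|(\mathbf L_n|_{X_{{\rm s},n}}-z)^{-1}\|\ge(\frac14-\operatorname{Re}z)^{-1}$ for \emph{all} $\operatorname{Im}z$. The resolvent does not decay along vertical lines at all; for $\mathbf L_0$ it equals $(\frac14-\operatorname{Re}z)^{-1}$ exactly. The mechanism you invoke fails because the dilation term $\frac12y\cdot\nabla$ has unbounded imaginary numerical range. Thus $\operatorname{Im}\langle(\mathbf L_n-z)f,f\rangle$ gives no control of $|\operatorname{Im}z|\,\|f\|^2$. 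The real part controls only $\|\nabla f\|^2+(\frac14-\operatorname{Re}z)\|f\|^2$, and there $K$ is not small, since $\|U_n\|_\infty\ge U_n(0)=\mu_n^{-2}$. This regime must be handled for the finitely many low modes, where no coercive $L^2$ form is available. As written, the uniform resolvent bound is not established, and with it neither Gearhart--Pr\"uss nor \eqref{eq:almostsharprate} and \eqref{eq:mainstableSG}.

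What Gearhart--Pr\"uss needs is boundedness, not decay, and that requires a compactness argument instead. With your splitting, $\|(A-z)^{-1}\|\le(\frac14-\operatorname{Re}z)^{-1}$ and $(A-z)^{-1}$ is uniformly bounded $L^2\to H^1$. One shows $\|K(A-z)^{-1}\|\to0$ as $|\operatorname{Im}z|\to\infty$, locally uniformly in $\operatorname{Re}z<\frac14$. First cut off the far field, where $U_n$ and $\nabla U_n$ are small in $L^\infty$ and $L^3$. Then write the local part as a compact operator composed with $(1-\Delta)^{1/2}(A-z)^{-1}$; the adjoint of the latter tends to zero strongly, by the resolvent identity tested on Schwartz functions. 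A Neumann series then bounds $(\mathbf L_n-z)^{-1}$ for large $|\operatorname{Im}z|$.

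The paper avoids vertical-line estimates entirely. It shows $S_n(s)-S_0(s)$ is compact (Duhamel) and $\|S_0(s)\|=e^{-s/4}$, so $r_{\rm ess}(S_n(s)|_{X_{{\rm s},n}})=e^{-s/4}$. Outside that disk only eigenvalues $e^{-sz}$ occur, so $r(S_n(s)|_{X_{{\rm s},n}})=e^{-s\gamma_n^{\rm sp}}$, and the spectral-radius formula gives \eqref{eq:mainsharprate}--\eqref{eq:almostsharprate}.

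Two smaller corrections. First, your large-$l$ bound $\operatorname{Re}\langle L_{n,l}f,f\rangle\ge(\frac14+c_l)\|f\|^2$ is impossible, because $\frac14+i\R\subset\sigma(L_{n,l})$. Proposition~\ref{prop:lge3} gives $\frac14\|f\|^2_{L^2(r^2\dd r)}+c_3\int|f|^2\dd r$, which still suffices once surjectivity is taken from Lemma~\ref{lem:fixedmodefredholm}. Second, in your Lyapunov operator the integral term must carry a factor exceeding $2a_n$, where $\operatorname{Re}\langle\mathbf L_nf,f\rangle\ge-a_n\|f\|^2$; this is the paper's $\alpha_n=2a_n+1$. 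With unit weight, the identity you add for the lower bound can destroy dissipation. Your mode-by-mode identification of $\gamma_n^{\rm sp}$ and the spectral-inclusion lower bound are fine.
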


\begin{proof}
Theorem~\ref{thm:main} first supplies the finite-dimensional unstable
projection and a strictly positive spectral endpoint on its complement.
Starting from this gap, Proposition~\ref{prop:stablelinearenergy} uses the
closed-realization and resolvent estimates established in
Section~\ref{sec:stablelinear} to obtain the decaying semigroup and equivalent
linear energy.  Proposition~\ref{prop:sharpspectraldecay} then uses the same
  spectral decomposition to prove the exact spectral endpoint, the logarithmic
  asymptotic formula, and the bounds with arbitrarily small exponent loss.
  Taking the maximum of the finitely many lower bounds on $n$ occurring in
  Theorem~\ref{thm:main} and in these two propositions makes all conclusions
  hold simultaneously.
\end{proof}

Theorem~\ref{thm:stable-semigroup} is the complete stable linear input for the
nonlinear theorem below.  The modulation equations, Duhamel estimates, and
bootstrap closure are carried out in
Section~\ref{nl:sec:nonlinear-dynamics}.

\begin{theorem}[Nonradial nonlinear asymptotic stability]
\label{thm:nonradial-stability}
Let $n$ be sufficiently large and set
\[
 \mathcal Y
 =\{f\in H^2(\mathbb R^3):\Lambda f\in L^2(\mathbb R^3)\},
 \qquad
 \|f\|_{\mathcal Y}=\|f\|_{H^2}+\|\Lambda f\|_{L^2}.
\]
Let
\[
 V_n=X_{{\rm s},n}\cap\mathcal Y
 =\left\{v\in\mathcal Y:
 \begin{array}{l}
 \langle v,\zeta_0\rangle=0,\\
 \langle v,\zeta_k^{\rm tr}\rangle=0\quad(1\le k\le3),\\
 \langle v,\zeta^{\rm u}_{j,n}\rangle=0\quad(1\le j\le N_n)
 \end{array}\right\}.
\]
There exist $\delta_n>0$ and $C_n>0$ such that, for every
$v_0\in V_n$ with $\|v_0\|_{\mathcal Y}<\delta_n$, one can choose
$c=(c_1,\ldots,c_{N_n})$, with
$|c|\le C_n\|v_0\|_{\mathcal Y}$, for which the solution with
initial datum
\begin{equation}\label{eq:corrected-initial-data}
 u_0=U_n+v_0+\sum_{j=1}^{N_n}c_j\varphi_{j,n}
\end{equation}
blows up at a finite time $T>0$.  More precisely, there are a $C^1$ function
$x(t)$ and a remainder $\widetilde u(t)$ such that
\begin{equation}\label{eq:physical-decomposition}
 u(t,x)=\frac1{T-t}\left[
 U_n\!\left(\frac{x-x(t)}{\sqrt{T-t}}\right)
 +\widetilde u\!\left(t,\frac{x-x(t)}{\sqrt{T-t}}\right)
 \right],
\end{equation}
and
\begin{equation}\label{eq:nonlinear-convergence}
 \lim_{t\uparrow T}\|\widetilde u(t)\|_{H^2}=0,
 \qquad
 \lim_{t\uparrow T}x(t)=x(T).
\end{equation}
In particular, the blowup is of type~I and $x(T)$ is a blowup point.
The four symmetry directions are fixed here by the orthogonality conditions;
allowing arbitrary initial scale and center follows from the scaling and
translation symmetries of \eqref{eq:KS}.
\end{theorem}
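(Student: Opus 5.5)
The proof runs a modulation-plus-bootstrap argument in self-similar variables, in the spirit of \cite{LiZhou2025}, with the $N_n$ extra radial unstable directions handled by a Brouwer shooting argument. First I fix the self-similar frame with time-dependent parameters. I set $s=-\log(\lambda(t))$, where the scale $\lambda(t)$ plays the role of $T-t$ and is not yet known, take a center $x(t)$, and write
\[
 u(t,x)=\lambda^{-1}\bigl[U_n+\varepsilon\bigr](s,y),\qquad y=(x-x(t))/\sqrt{\lambda}.
\]
The perturbation then satisfies
\[
 \partial_s\varepsilon+\mathbf L_n\varepsilon=\mathcal N(\varepsilon)+a(s)\Lambda(U_n+\varepsilon)+b(s)\cdot\nabla(U_n+\varepsilon),
\]
where $a$ measures the deviation of $-\lambda_s/\lambda$ from its self-similar value and $b$ is the rescaled velocity of $x(t)$. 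The four parameters are fixed by imposing $\langle\varepsilon,\zeta_0\rangle=\langle\varepsilon,\zeta^{\rm tr}_k\rangle=0$ for all $s$. Here $\zeta_0$ and $\zeta^{\rm tr}_k$ are the adjoint eigenfunctions dual to $\Lambda U_n$ and $\partial_{y_k}U_n$, which exist by the algebraic simplicity and semisimplicity statements of Theorem~\ref{thm:main}. Differentiating these conditions gives a $4\times4$ linear system for $(a,b)$. Its matrix is a small perturbation of the identity pairing, so
\[
 |a|+|b|\lesssim\|\varepsilon\|_{L^2}^2+\|\varepsilon\|_{L^2}\bigl(|a|+|b|\bigr),
\]
and this is quadratic. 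At $s=s_0$ the implicit function theorem adjusts the initial scale and center, which is possible because $v_0$ already satisfies the orthogonality.

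Next I split $\varepsilon=\sum_j\kappa_j(s)\varphi_{j,n}+\varepsilon_{\rm s}$, with $\varepsilon_{\rm s}=P_{{\rm s},n}\varepsilon$. The unstable coefficients obey
\[
 \dot\kappa_j=-\lambda_{j,n}\kappa_j+O\bigl(\|\varepsilon\|^2\bigr),\qquad -\lambda_{j,n}>1,
\]
because the projections commute with $\mathbf L_n$. The stable part is estimated by Duhamel's formula combined with \eqref{eq:mainstableSG}. I would control the nonlinearity and the modulation forcing using $\|\mathcal N(\varepsilon)\|_{L^2}\lesssim\|\varepsilon\|_{H^2}^2$ and, at the $H^2$ level, $\|\mathcal N\|_{H^1}\lesssim\|\varepsilon\|_{H^2}^2$. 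This gives $L^2$ decay at a rate $\gamma<\gamma_n$.

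The main obstacle is upgrading to $H^2$ and controlling $\Lambda\varepsilon$. The semigroup bound holds only in $L^2$, and the forcing terms $a\Lambda\varepsilon$ and $b\cdot\nabla\varepsilon$ cost derivatives and weights. I would first try a high-order energy identity for $\|\Delta\varepsilon\|_2^2+\|\Lambda\varepsilon\|_2^2$. In that identity the $-\tfrac12\Lambda$ drift contributes a damping term $+c\|\cdot\|^2$ at top order, and the terms involving $U_n$ are of lower order. Those lower-order terms are absorbed by interpolation against the decaying $L^2$ norm, giving
\[
 \frac{d}{ds}E_2\le-cE_2+C\|\varepsilon\|_2^2+C E_2^{3/2},
\]
and hence decay of $E_2$. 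The weighted commutator $[\Lambda,\mathbf L_n]$ is bounded because $y\cdot\nabla U_n$ and $y\cdot\nabla P_n$ decay; this is where the asymptotics $\Phi_n\sim r^{-2}$ from Proposition~\ref{prop:correctedprofile-main} enter.

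The bootstrap set is
\[
 |\kappa(s)|\le\delta e^{-\gamma(s-s_0)},\qquad \|\varepsilon_{\rm s}\|_{\mathcal Y}\le K\delta e^{-\gamma(s-s_0)}.
\]
The stable bounds improve strictly. Suppose no choice of $c$ in a small ball keeps the solution in the set forever. The exit time is continuous in $c$, and at exit $\kappa$ points strictly outward since $\frac{d}{ds}\bigl(e^{2\gamma s}|\kappa|^2\bigr)>0$ on the boundary. Mapping $c$ to the rescaled exit value of $\kappa$ would then give a retraction of the ball onto its boundary sphere, contradicting Brouwer's theorem. The initial coefficients enter through $\kappa(s_0)=c+O(\|v_0\|)$, which yields $|c|\le C_n\|v_0\|_{\mathcal Y}$.

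Finally, integrability of $a$ and $b$ gives finite $T$ and $\lambda\sim T-t$, so the blow-up is of type I. It also makes $x(t)$ Cauchy, with $x(t)\to x(T)$. Converting the scale factor back to $T-t$ leaves a perturbation $\widetilde u$ that tends to $0$ in $H^2$.
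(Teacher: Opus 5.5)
Your proposal is correct in outline, and it has the same architecture as the paper's proof. Orthogonality to $\zeta_0,\zeta_k^{\rm tr}$ fixes scale and center, the modulation bounds are quadratic, the bootstrap runs on $\|\varepsilon\|_{H^2}$ and $\|\Lambda\varepsilon\|_{L^2}$, the unstable coefficients are outgoing so Brouwer applies, and the result is converted back to $T-t$. The difference is in how you obtain the linear $L^2$ decay.

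The paper does not use Duhamel. It differentiates the Lyapunov functional $\mathcal E_n[\varepsilon]=\langle B_n\varepsilon,\varepsilon\rangle$ of Proposition~\ref{prop:stablelinearenergy}. To handle the projected modulation terms, it proves Lemma~\ref{lem:projected-generator-energy}: $|\operatorname{Re}\langle B_nP_{{\rm s},n}\Gamma f,f\rangle|\le C_n\mathcal E_n[f]$ for $\Gamma\in\{\Lambda,\partial_1,\partial_2,\partial_3\}$. That lemma rests on commutator bounds for $[e^{-s\mathbf L_n},\Lambda]$ and $[e^{-s\mathbf L_n},\partial_k]$. Its purpose is to keep $\|\Lambda\varepsilon\|_{L^2}$ out of the $L^2$ step, so that $K'$ can be fixed last. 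For $\Lambda\varepsilon$, which need not lie in $X_{{\rm s},n}$, the paper then uses the G\aa rding inequality of Lemma~\ref{lem:finite-dimensional-Garding} together with $\langle\Lambda\varepsilon,q_\ell\rangle=\langle\varepsilon,\Lambda^*q_\ell\rangle$.

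Your Duhamel route with \eqref{eq:mainstableSG} simply treats $aP_{{\rm s},n}\Lambda\varepsilon$ and $b\cdot P_{{\rm s},n}\nabla\varepsilon$ as forcing of size $(|a|+|b|)\|\varepsilon\|_{\mathcal Y}$. This is cubic because $a$ and $b$ are quadratic. With one constant $K$ for the whole $\mathcal Y$ norm and smallness taken from $\|v_0\|_{\mathcal Y}$, it closes and makes the commutator lemma unnecessary. The cost is that the $L^2$ and $\Lambda$ bounds become coupled, which is harmless here. The paper's route instead gives a self-contained $L^2$ differential inequality with a clean ordering of constants.

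Two points need fixing.

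\emph{The time variable.} With $s=-\log\lambda(t)$ you get $\lambda_s/\lambda\equiv-1$, so no scaling freedom remains and $a$ vanishes identically. Define $s$ by $ds/dt=\lambda^{-1}$ instead; this is the paper's $ds/dt=\lambda^{-2}$ in its normalization.

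\emph{Closing the high-order energy.} The drift does not close it. The terms $\Delta(2U_n\varepsilon)$ and $\Delta(b_n\cdot\nabla\varepsilon)$ produce the top-order growth term $\tfrac32\int U_n|\Delta\varepsilon|^2$. In the $\Lambda$-energy they produce $\tfrac32\int U_n|\Lambda\varepsilon|^2$. Since $U_n(0)=\mu_n^{-2}\gg1$, the drift damping $\tfrac54\|\Delta\varepsilon\|_2^2$ and $\tfrac14\|\Lambda\varepsilon\|_2^2$ cannot absorb these. You must keep the parabolic dissipation $\|\nabla\Delta\varepsilon\|_2^2$ and $\|\nabla\Lambda\varepsilon\|_2^2$ and interpolate, for example $\|\Delta\varepsilon\|_2^2\le\eta\|\nabla\Delta\varepsilon\|_2^2+C_\eta\|\varepsilon\|_2^2$. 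For $\Lambda\varepsilon$, use $U_n\lesssim r^{-2}$ outside a large ball $B_R$ together with $\|\Lambda\varepsilon\|_{L^2(B_R)}\lesssim_R\|\varepsilon\|_{H^1}$; alternatively, use the G\aa rding inequality. As a consequence, your rate $\gamma$ must lie below both $\gamma_n$ and half of the resulting high-order damping constant, not merely below $\gamma_n$.
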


\subsection{Notation, references, and the organization of the appendices}

The main text retains only the principal statements needed for the spectral
analysis and nonlinear stability.  Appendix~\ref{sec:selfsimilarfamilyappendix}
fixes the stationary family $\Phi_n,U_n$ used here and records its
correspondence with the finite-time self-similar blow-up solutions in the
original variables.  The interior--exterior matching construction, the estimates
for the correction terms, and the proof of the radial spectral inputs are
given in Appendix~\ref{sec:correctedmatchingappendix}; the properties of the
universal inner solution $\bar Q$ are given in
Appendix~\ref{sec:JDEcoreproperties}.  Standard results concerning closed
operator realizations, singular Sturm--Liouville zero counting, Fredholm
criteria, spectral projections, and semigroups are collected in
Appendix~\ref{sec:toolkit}.  At their first use in the main text, the relevant
proposition, formula, or reference number is specified.

\section{The radial mode: unstable eigenvalue count and spectral gap}\label{sec:radialmode}

The radial operator has an eigenfunction $\Lambda\Phi_n$, generated by the
scaling symmetry, with eigenvalue $-1$.  We therefore first realize the radial
operator as a self-adjoint Sturm--Liouville operator and use the number of
zeros of $\Lambda\Phi_n$ to count the eigenvalues below $-1$.  It remains to
exclude spectrum in $(-1,0]$.  For this purpose, the limiting outer equation
is reduced to Kummer's equation, and the phases of the solution regular at
the origin and the Weyl solution at infinity are compared on a fixed
cross-section.  The first part uses Sturm zero counting, whereas the second
uses monotonicity of the phase with respect to the spectral parameter.

\subsection{Self-adjoint form and the scaling eigenfunction}

This subsection has two starting points.  First, the radial operator
\eqref{eq:Lrad} is written in the divergence form \eqref{eq:selfadjoint}, so that
the Sturm theory recorded in Appendix~\ref{sec:toolkit}, especially
Lemma~\ref{lem:radialrealization}, can be applied.
Second, the scaling family $\Phi_\lambda(r)=\lambda^2\Phi(\lambda r)$ is
substituted into the stationary equation \eqref{eq:profile} and differentiated
at $\lambda=1$ to obtain the scaling eigenvalue equation
\eqref{eq:scalemode}.

The radial operator is
\begin{equation}
 L_\Phi f=-f''+\left(\frac r2-\frac4r-2r\Phi\right)f'
 +(1-2r\Phi'-12\Phi)f.                                      \label{eq:Lrad}
\end{equation}
Define
\begin{equation}
 m_\Phi(r)=r^4\exp\left(\int_0^r2s\Phi(s)\dd s\right)e^{-r^2/4}.
\end{equation}
Since
\[
 \frac{m_\Phi'}{m_\Phi}=\frac4r+2r\Phi-\frac r2,
\]
expanding $-m_\Phi^{-1}(m_\Phi f')'$ term by term gives
\begin{equation}
 L_\Phi f=-m_\Phi^{-1}(m_\Phi f')'
 +(1-2r\Phi'-12\Phi)f.                                     \label{eq:selfadjoint}
\end{equation}
This is an algebraic identity.  For an actual smooth reduced-mass profile,
$m_\Phi\sim r^4$ at zero.  The two local branches are asymptotic to a constant
and to $r^{-3}$; only the constant branch belongs to $L^2(m_\Phi\dd r)$.  At
infinity, $e^{-r^2/4}$ gives compact confinement.  The Friedrichs realization
is therefore self-adjoint with discrete spectrum.  One must not substitute
the singular limit $\Phi_*=r^{-2}$ at zero, since that limit is not an actual
smooth profile there.

Let $S=\Lambda\Phi=2\Phi+r\Phi'$.  Substitute
$\Phi_\lambda(r)=\lambda^2\Phi(\lambda r)$ directly into
\eqref{eq:profile}.  The fourth-order homogeneous terms acquire $\lambda^4$,
whereas the self-similar terms acquire $\lambda^2$, and hence
\begin{equation}
 \cF(\Phi_\lambda)(r)=\frac{\lambda^4-\lambda^2}{2}S(\lambda r).
\end{equation}
Differentiate at $\lambda=1$ and use $L_\Phi=-D\cF(\Phi)$ to obtain
\begin{equation}
 L_\Phi S=-S.                                                \label{eq:scalemode}
\end{equation}
If $S(r_*)=S'(r_*)=0$, uniqueness for the second-order initial-value problem
would give $S\equiv0$, contradicting $S(0)=2\Phi(0)>0$.  Thus every positive
zero of $S$ is simple.

\begin{lemma}[Sturm count]
If $\Lambda\Phi$ has $N$ positive zeros, then $-1$ is the $(N+1)$st radial
eigenvalue of $L_\Phi$, and exactly $N$ radial eigenvalues are strictly below
$-1$.
\end{lemma}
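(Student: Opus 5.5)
The plan is to treat $L_\Phi$ in the divergence form \eqref{eq:selfadjoint} as a singular Sturm--Liouville operator on $(0,\infty)$. The weight is $m_\Phi$, the potential is $q=1-2r\Phi'-12\Phi$, and the problem is posed in $L^2(m_\Phi\dd r)$.

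\textbf{Step 1: the spectrum is simple and discrete.} By the discussion following \eqref{eq:selfadjoint}, the endpoint $r=0$ is limit-point in the relevant sense: only the constant branch lies in $L^2(m_\Phi\dd r)$, and the other branch behaves like $r^{-3}$. At infinity the Gaussian factor in $m_\Phi$ gives confinement. Lemma~\ref{lem:radialrealization} from Appendix~\ref{sec:toolkit} then yields a self-adjoint Friedrichs realization with purely discrete spectrum
\[
\mu_1<\mu_2<\cdots\to\infty .
\]
Each eigenvalue is simple, because an eigenfunction must be the unique (up to scaling) solution regular at $0$.

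\textbf{Step 2: invoke Sturm oscillation.} I will use the singular Sturm oscillation theorem recorded in the appendix. It says the $k$th eigenfunction has exactly $k-1$ zeros in $(0,\infty)$. To apply it, I must confirm that $S=\Lambda\Phi$ is a genuine eigenfunction in the domain. Near $0$, $S$ is smooth with $S(0)=2\Phi(0)>0$, so it lies on the regular branch. At infinity, $\Phi$ is smooth and decays like $r^{-2}$ plus corrections, so $S$ grows at most polynomially, and the weight $e^{-r^2/4}$ puts $S$ in $L^2(m_\Phi\dd r)$. The Friedrichs domain condition at infinity (finite weighted Dirichlet energy) holds for the same reason.

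With \eqref{eq:scalemode}, this makes $-1$ an eigenvalue with eigenfunction $S$. Since $S$ has exactly $N$ zeros, all simple as shown just before the lemma, the oscillation theorem identifies $-1=\mu_{N+1}$. Simplicity of the spectrum then gives exactly $N$ eigenvalues strictly below $-1$.

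\textbf{Main obstacle.} The step I expect to be hardest is justifying the oscillation theorem at the singular endpoints. I must check that it counts zeros on the open interval, and that no zeros can escape to $r\to\infty$ or accumulate there. I would first try the standard Prüfer-angle argument. At $r=0$, the regular solution is pinned to the constant branch. At infinity, the eigenfunction is asymptotic either to a polynomially decaying Kummer-type branch or to the growing branch $\sim e^{r^2/4}$ times a power. Neither branch oscillates, so the Prüfer angle has a limit and zero counts are finite and well defined.

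If needed, a Sturm comparison argument also works: between consecutive zeros of $S$ there is a zero of every eigenfunction with eigenvalue above $-1$, and the reverse inequality holds below $-1$. Combined with completeness, this yields the count directly.
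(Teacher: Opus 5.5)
Your proposal is correct and follows the paper's proof. Both cast $L_\Phi$ in the Sturm--Liouville form \eqref{eq:selfadjoint}, use Lemma~\ref{lem:radialrealization} to get a self-adjoint realization with compact resolvent and simple eigenvalues, apply the oscillation theorem ($j$th eigenfunction has $j-1$ zeros), and read off $-1=\lambda_{N+1}$ from the $N$ zeros of $\Lambda\Phi$. The paper simply cites \cite[Chapter~6]{Zettl2005} once the endpoint hypotheses are checked, so your Pr\"ufer and comparison discussion is unnecessary; your explicit check that $\Lambda\Phi$ lies in the Friedrichs domain fills in a point the paper leaves implicit.
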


\begin{proof}
Equation \eqref{eq:selfadjoint} is in Sturm--Liouville form.
Lemma~\ref{lem:radialrealization} verifies the endpoint hypotheses of the
classical oscillation theorem: at zero the operator domain selects the
constant branch and excludes $r^{-3}$; at infinity the conjugated potential
is $Q_\Phi=r^2/16+O(1)$; and the Friedrichs realization is self-adjoint with
compact resolvent.  Hence \cite[Chapter~6]{Zettl2005} applies.  If the
eigenvalues are ordered as $\lambda_1<\lambda_2<\cdots$, an eigenfunction for
$\lambda_j$ has exactly $j-1$ zeros in $(0,\infty)$.

By \eqref{eq:scalemode}, $S=\Lambda\Phi$ is an eigenfunction for $-1$, and
the preceding uniqueness argument shows that its $N$ positive zeros are
simple.  Therefore $-1=\lambda_{N+1}$ and
$\lambda_1,\ldots,\lambda_N<-1$.
\end{proof}

\subsection{The two radial conclusions supplied by the corrected matching construction}

The following propositions are the only radial inputs used later.  The first
excludes $(-1,0]$; the second relates the matching index to the dimension of
the unstable radial space.  Their complete phase calculations and zero
stability arguments are placed in Appendix~\ref{sec:correctedmatchingappendix}.

\begin{proposition}[Full radial spectral gap]\label{prop:radialgap-main}
For every sufficiently late matched profile of
Proposition~\ref{prop:correctedprofile-main},
\begin{equation}
 \sigma(L_{\Phi_n})\cap(-1,0]=\varnothing.                  \label{eq:radialgap-main}
\end{equation}
\end{proposition}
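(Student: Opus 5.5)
Since $L_{\Phi_n}$ is self-adjoint in $L^2(m_{\Phi_n}\dd r)$ with compact resolvent (Lemma~\ref{lem:radialrealization}), its spectrum is real and discrete. It therefore suffices to show that no $\lambda\in(-1,0]$ is an eigenvalue. By the Sturm count, $-1=\lambda_{N_n+1}$, so the claim is equivalent to $\lambda_{N_n+2}>0$. Equivalently, an eigenfunction at any $\lambda\in(-1,0]$ would have to carry exactly $N_n$ or more zeros, and we aim to exclude this.

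\textbf{Phase functions.} For each $\lambda\in[-1,0]$, let $f_0(\cdot;\lambda)$ be the solution regular at the origin, normalized by $f_0(0)=1$. Let $f_\infty(\cdot;\lambda)$ be the Weyl solution at infinity, i.e.\ the branch with polynomial growth rather than $e^{r^2/4}$ growth. An eigenvalue occurs exactly when the Wronskian $m_\Phi(f_0f_\infty'-f_0'f_\infty)$ vanishes. Introduce Prüfer angles $\theta_0(r;\lambda)$ and $\theta_\infty(r;\lambda)$ and compare them at a fixed cross-section $r=r_1$. The cross-section lies in the outer region $r_1\ge r_0$, on scales where $\Phi_n=r^{-2}+O(\varepsilon_n)$.

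\textbf{Outer side.} For $\Phi_*=r^{-2}$ the operator reduces to
\[
-f''+\Bigl(\frac r2-\frac8r\Bigr)f'+\Bigl(1-\frac{8}{r^2}\Bigr)f.
\]
After $f=r^\alpha g$ and $x=r^2/4$, this becomes Kummer's equation $xg''+(b-x)g'-ag=0$ with $a$ affine in $\lambda$. The Weyl solution is Tricomi's $U(a,b,x)$, and its phase at $r_1$ is explicit and depends continuously on $\lambda$. Perturbation in $\varepsilon_n$ on $[r_1,\infty)$ gives the true $\theta_\infty(r_1;\lambda)$ up to $o(1)$, uniformly for $\lambda\in[-1,0]$.

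\textbf{Inner side.} $\theta_0(r_1;\lambda)$ is obtained from the inner profile $\mu_n^{-2}\bar Q(r/\mu_n)$. On the inner scale $\lambda$ is negligible, so $f_0$ is approximated by the zero-mode of the linearization of $\bar Q$, which is the inner scaling mode. Through the matching region this solution oscillates like $r^{-\beta}\cos(\omega\log r+\phi)$, the oscillation that produces the $N_n$ zeros of $\Lambda\Phi_n$. Its phase at $r_1$ is therefore given by the matching phase plus a $\lambda$-dependent correction that is uniformly small.

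\textbf{Monotonicity and conclusion.} Standard Sturm comparison gives $\partial_\lambda\theta_0(r_1;\lambda)>0$ and $\partial_\lambda\theta_\infty(r_1;\lambda)<0$, with the sign convention chosen so that eigenvalues are where $\theta_0-\theta_\infty\in\pi\mathbb Z$. At $\lambda=-1$ the difference equals $(N_n+1)\pi$ minus an offset, namely the eigenvalue $\lambda_{N_n+1}$. It suffices to show that $\theta_0-\theta_\infty$ increases by strictly less than $\pi$ as $\lambda$ runs over $[-1,0]$. The inner phase barely moves. For the outer phase, the explicit Kummer formula should show that the increment over $[-1,0]$ is bounded by a definite constant less than $\pi$: the next eigenvalue of the limiting outer problem lies at a positive value, corresponding to a pole of $\Gamma(a)$. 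Adding the $o(1)$ errors then keeps the total increment below $\pi$ for large $n$.

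\textbf{Main obstacle.} I expect the hardest step to be this quantitative outer-phase estimate, together with uniform control of the $\varepsilon_n$-perturbation near $\lambda=0$. There the Kummer parameter approaches a degenerate value, since $\lambda=0$ is close to the gauge/mass direction. I would first try to obtain the strict bound directly from the closed-form connection coefficients of $U(a,b,x)$.
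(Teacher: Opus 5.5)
Your architecture is the paper's (Proposition~\ref{prop:radialgapnew}). You compare the projective phases of the origin-regular solution and the Weyl solution at one cross-section, anchor at $\lambda=-1$ through $\Lambda\Phi_n$, and show that the phase difference moves by less than $\pi$ on $[-1,0]$. Using Pr\"ufer monotonicity instead of the paper's $C^1$-in-$\lambda$ phase estimates is legitimate. The quantity $m_\Phi(f\,\partial_\lambda f'-f'\,\partial_\lambda f)(r)$ equals $-\int_0^r m_\Phi f^2$ on the regular branch and $+\int_r^\infty m_\Phi f^2$ on the Weyl branch, and orientation-preserving linear changes of Cauchy coordinates keep these signs. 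So uniform $C^0$ control of the phases would suffice.

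\textbf{First gap: the outer increment.} This is the step that decides the proposition, you leave it as an expectation, and the mechanism you give for it is wrong. Your limiting drift is off: it is $\frac r2-\frac4r-2r\Phi_*=\frac r2-\frac6r$, not $\frac r2-\frac8r$. Only with $-6/r$ are the indicial roots at $0$ equal to $-\frac52\pm i\omega$, $\omega=\frac{\sqrt7}{2}$, which is the oscillation producing the zeros of $\Lambda\Phi_n$; with $-8/r$ they are real.
\begin{itemize}
\item With the correct operator the Kummer parameters are $a=-\frac14-\lambda+\frac{i\omega}{2}$ and $b=1+i\omega$, both complex. Hence $\Gamma(a)$ has no poles for real $\lambda$.
\item The limiting problem is limit-circle oscillatory at $0$, so it has no ``next eigenvalue'' to appeal to.
\item Nothing degenerates at $\lambda=0$.
\end{itemize}
What you need is the connection formula $\psi_\lambda(r)=r^{-5/2}\bigl[\cos(\omega\log r-\Theta(\lambda))+O(r^2)\bigr]$, with $\Theta(\lambda)=\arg\bigl[2^{i\omega}\Gamma(i\omega)/\Gamma(-\frac14-\lambda+\frac{i\omega}{2})\bigr]$. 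The recurrence $\Gamma(z+1)=z\Gamma(z)$ then gives the exact increment $\Theta(0)-\Theta(-1)=\arg\bigl(-\frac14+\frac{i\sqrt7}{4}\bigr)=\pi-\arctan\sqrt7<\pi$. This identity is the quantitative heart of the proof, and your proposal does not supply it.

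\textbf{Second gap: the cross-section.} It cannot be a fixed $O(1)$ outer radius. It is true that $\lambda$ is negligible on the scale $r\sim\mu_n$. But the regular solution must be carried across the long overlap $1\le s\le r_1/\mu_n$, where the spectral term in the rescaled equation is $\mu_n^2(1-\lambda)h$. Its accumulated effect on the Cauchy data is of order $\mu_n^2\int_1^{r_1/\mu_n}s\,ds\sim r_1^2$, which is small only if $r_1$ is small. At an $O(1)$ radius the inner phase moves by an uncontrolled $O(1)$ amount, and the outer phase is no longer the explicit small-$r$ Gamma phase. The fix is to follow the paper's order of quantifiers:
\begin{itemize}
\item Take the cross-section at the small matching radius $r_0$.
\item Fix $r_0$ first, so that the $O(r_0^2)$ errors in both phases are well below $\arctan\sqrt7$. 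These are the accumulated inner drift and the remainder in the Kummer expansion.
\item Only then take $n$ large, so that the $\varepsilon_n$-perturbation of the Weyl solution on $[r_0,\infty)$ is $o(1)$ uniformly in $\lambda\in[-1,0]$.
\end{itemize}
With both ingredients in place your argument closes: $D(0)-D(-1)=\pi-\arctan\sqrt7+O(r_0^2)+o_n(1)<\pi$, and monotonicity keeps $D\notin\pi\mathbb Z$ on $(-1,0]$.
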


\begin{proof}
This is Proposition~\ref{prop:radialgapnew} in
Appendix~\ref{sec:correctedmatchingappendix}.
\end{proof}

\begin{proposition}[Matching index and the number of scaling-mode zeros]
\label{prop:nodes-main}
There are integers $N_0$ and $n_0$ such that
\begin{equation}
 N_n:=\#Z_{(0,\infty)}(\Lambda\Phi_n)=n+N_0,
 \qquad n\ge n_0.                                          \label{eq:nodecount-main}
\end{equation}
Here $N_0$ depends only on the initial numbering of the phase intervals.
Consequently, precisely $n+N_0$ radial eigenvalues lie strictly below $-1$.
\end{proposition}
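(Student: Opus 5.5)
The plan is to count the positive zeros of $S_n:=\Lambda\Phi_n=2\Phi_n+r\Phi_n'$ separately on the interior region $[0,r_0]$ and the exterior region $[r_0,\infty)$, using the representation \eqref{eq:Unfamily-summary}. Combined with the Sturm count lemma above, this gives the number of radial eigenvalues below $-1$.

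\emph{Interior region.} Since $\Lambda$ commutes with the scaling $\mu^{-2}\bar Q(\cdot/\mu)$, I write
\[
S_n(r)=\mu_n^{-2}(\Lambda\bar Q)(r/\mu_n)+(\Lambda R_n)(r/\mu_n).
\]
I expect the following properties of $\bar Q$ from Appendix~\ref{sec:JDEcoreproperties}: $\bar Q$ is the singular-limit (steady, no self-similar drift) inner profile, and $\bar Q(\rho)\sim\rho^{-2}+\rho^{-1/2}\bigl(a\cos(\tfrac{\sqrt7}{2}\log\rho)+b\sin(\tfrac{\sqrt7}{2}\log\rho)\bigr)$. The exponent comes from the linearization of the steady equation around $r^{-2}$, whose indicial roots are $-3/2\pm i\sqrt7/2$ relative to $\rho^{-2}\cdot\rho$. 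Hence $\Lambda\bar Q$ annihilates the leading $\rho^{-2}$ term, and $\Lambda\bar Q\sim c\rho^{-1/2}\cos(\tfrac{\sqrt7}{2}\log\rho-\theta)$ oscillates, with zeros at logarithmically equispaced points $\rho_k$. On $\rho\le r_0/\mu_n$, the correction $\mu_n^2\Lambda R_n$ is small relative to this envelope. At each zero the derivative of $\Lambda\bar Q$ is nondegenerate, so by an implicit-function (Rouché-type) argument the zeros of $S_n$ in $[0,r_0]$ correspond bijectively to the $\rho_k$ with $\rho_k\le r_0/\mu_n$, up to a bounded boundary layer near $\rho=r_0/\mu_n$. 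Their number is therefore $\tfrac{2}{\pi\sqrt7}\log(r_0/\mu_n)+O(1)$.

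\emph{Exterior region.} For $r\ge r_0$, we have $\Lambda(r^{-2})=0$, so $S_n=\varepsilon_n\Lambda(u_1+w_n)$. Since $w_n\to0$ in $C^1$, the zeros there match those of $\Lambda u_1$ on $[r_0,\infty)$. I expect these to be finitely many, fixed and simple, after checking the Kummer asymptotics of $u_1$ at infinity, where $\Lambda u_1$ has a definite sign for large $r$.

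\emph{Combining.} The matching conditions from Appendix~\ref{sec:correctedmatchingappendix} select $\mu_n$ through a phase condition of the form $\tfrac{\sqrt7}{2}\log(1/\mu_n)+\theta_0=n\pi+O(o(1))$. Consecutive matched profiles therefore differ by exactly one phase interval, so the interior count increases by exactly one per step in $n$. The total is $n+N_0$, where $N_0$ absorbs the exterior zeros and the offset of the initial phase interval. The final sentence then follows from the Sturm count lemma, together with the simplicity of zeros already shown.

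The main obstacle is the overlap region near $r\approx r_0$, equivalently $\rho\approx r_0/\mu_n$. There the two representations must be glued, and neither the inner oscillation nor the exterior term dominates cleanly, since both have amplitude of order $\mu_n^{-2}(r_0/\mu_n)^{-1/2}\sim\varepsilon_n$ by the matching. My first attempt would be to use the matched representation on a fixed annulus $[r_0/2,2r_0]$ and write $S_n/\varepsilon_n$ as a known function: the explicit linear combination of $r^{-1/2}\cos(\tfrac{\sqrt7}{2}\log r-\phi_n)$ and $\Lambda u_1$ that the matching enforces, plus $o(1)$. I would then count its zeros uniformly. The phase $\phi_n$ is fixed modulo $\pi$ by the matching condition, so the count on the annulus is independent of $n$, and this is what makes the increment exactly one.
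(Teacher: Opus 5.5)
\paragraph{Overall.} Your decomposition is the same as the paper's. Inner zeros come from the logarithmic oscillation of $\Lambda\bar Q$. Outer zeros come from $\Lambda u_1$ because $\Lambda(r^{-2})=0$. The positivity $\Lambda u_1=4r^{-4}+O(r^{-6})$ rules out zeros at infinity. But there is a genuine gap at the one step that makes the count exact, namely that the interior count grows by exactly one per index.

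\paragraph{Where the argument fails.} Your repair assumes the matching fixes the phase with $o(1)$ error. That is not available. Proposition~\ref{prop:matchingroots} only gives $-\omega\log\mu_n+\gamma_0=n\pi+e_n$ with $|e_n|<\delta$, and the remark after it states explicitly that convergence of this error is not known. With $e_n$ free to move in $(-\delta,\delta)$, a zero can lie arbitrarily close to your cut $r_0$ (or to an endpoint of your annulus) for some $n$ and not for others. So neither ``interior count $=n+$const'' nor ``annulus count independent of $n$'' follows.

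Your asymptotics also hide why the overlap cannot be removed by letting $n\to\infty$. The oscillatory part of $\bar Q$ is $s^{-5/2}$, not $s^{-1/2}$: the linearization at $s^{-2}$ is $h''+\frac6sh'+\frac8{s^2}h=0$, with roots $-\frac52\pm i\frac{\sqrt7}{2}$. Hence $\Lambda\bar Q\sim c_us^{-5/2}\sin(\omega\log s+\delta_u)$. The relative size of $\mu_n^2\Lambda R_n$ at $s\sim r_0/\mu_n$ is then $O(\mu_n^2s^2)=O(r_0^2)$, which does not tend to zero with $n$. (This corrected exponent also gives the amplitude $\mu_n^{1/2}r_0^{-5/2}$, consistent with $|\varepsilon_n|\lesssim\mu_n^{1/2}$, and a leading zero count $\frac{\sqrt7}{2\pi}\log(r_0/\mu_n)$ rather than $\frac{2}{\pi\sqrt7}\log(r_0/\mu_n)$.)

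\paragraph{The missing step.} You need a cut that is non-resonant uniformly over the whole admissible phase window. Choose $\rho\in(0,r_0)$, within one logarithmic period, such that:
\begin{itemize}
\item $C\rho^2<\delta/2$;
\item $\Lambda u_1(\rho)\neq0$;
\item $\omega\log\rho+\delta_u-\gamma_0$ has distance greater than $2\delta$ from $\pi\mathbb Z$, which is possible because $\delta<\pi/4$.
\end{itemize}
Since $\omega\log(\rho/\mu_n)+\delta_u=n\pi+(\omega\log\rho+\delta_u-\gamma_0)+e_n$, the inner endpoint $s=\rho/\mu_n$ stays a definite distance from every zero, for every admissible root.

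In $t=\log s$, the function $e^{5t/2}\Lambda(\bar Q+\mu_n^2R_n)$ is $C^1$-close, with error $C\rho^2$, to $e^{5t/2}\Lambda\bar Q$ up to that endpoint. This yields exactly $n+C_{\rm in}$ zeros on $(0,\rho)$. On $[\rho,\infty)$, use the outer form $\varepsilon_n^{-1}\Lambda\Phi_n=\Lambda u_1+\Lambda w_{\varepsilon_n}$: a fixed function plus an $o(1)$ $C^1$ perturbation, with nonzero endpoint value, which gives a constant count $C_{\rm out}$. No convergence of $e_n$ is needed.
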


\begin{proof}
The complete zero-increment proof is the final subsection of
Appendix~\ref{sec:correctedmatchingappendix}.  The last assertion follows from
the Sturm count above.
\end{proof}

\subsection{Return to the original radial density operator}

The preceding Sturm--Liouville analysis acts on the self-adjoint operator
$L_{\Phi_n}$ associated with the reduced-mass variable.  To obtain the radial
spectrum in the original density variable, one must use the mass--density
map $\mathfrak Th=2(rh'+3h)$ and its inverse, and establish the transfer in
both directions on the closed domains.  The required operator-theoretic
facts are given in Lemmas~\ref{lem:fixedmodeunitary},
\ref{lem:fixedmodefredholm}, and \ref{lem:radialunweightedspectrum}.  We first
state the resulting radial conclusion.

\begin{proposition}[Spectrum of the original radial density operator]
\label{prop:radialdensityspectrum-main}
For every sufficiently large $n$,
\begin{equation}
 \sigma(L_{n,0})\cap
 \left\{z:\operatorname{Re}z<\frac14\right\}
 =
 \sigma(L_{\Phi_n})\cap
 \left(-\infty,\frac14\right).
 \label{eq:radialdensityspectrum-main}
\end{equation}
The correspondence preserves generalized eigenspaces and algebraic
multiplicities. In particular, all spectral points of $L_{n,0}$ in this
half-plane are real and algebraically simple.

More precisely, if
\[
 N_n=\#Z_{(0,\infty)}(\Lambda\Phi_n),
\]
then $L_{n,0}$ has exactly $N_n$ algebraically simple real eigenvalues
\[
 \lambda_{1,n}<\cdots<\lambda_{N_n,n}<-1.
\]
The eigenvalue $-1$ is algebraically simple, with eigenfunction
$\Lambda U_n$, and
\begin{equation}
 \sigma(L_{n,0})\cap
 \left\{z:\operatorname{Re}z\leq0\right\}
 =
 \left\{
  \lambda_{1,n},\ldots,\lambda_{N_n,n},-1
 \right\}.
 \label{eq:radialdensitynonpositive}
\end{equation}
In particular,
\begin{equation}
 \sigma(L_{n,0})\cap
 \left\{z:-1<\operatorname{Re}z\leq0\right\}
 =
 \varnothing.
 \label{eq:radialdensitygap}
\end{equation}
With the matching-root numbering, $N_n=n+N_0$.
\end{proposition}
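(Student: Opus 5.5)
The plan is to transfer the spectral information from the self-adjoint reduced-mass operator $L_{\Phi_n}$ to the density operator $L_{n,0}$ by means of the mass--density map $\mathfrak T h=2(rh'+3h)$. This map is the linearization of $\Phi\mapsto U=6\Phi+2r\Phi'$ from \eqref{eq:UPPhi}. Its inverse is $\mathfrak T^{-1}f(r)=\frac{1}{2r^3}\int_0^r f(s)s^2\dd s$, the reduced mass of a radial perturbation. The first step is the intertwining identity $L_{n,0}\mathfrak T=\mathfrak T L_{\Phi_n}$ on smooth, rapidly decaying functions. It follows by differentiating the identity $U=\mathfrak T\Phi$ along a one-parameter family of solutions of \eqref{eq:profile}, or directly by a term-by-term check. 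For $l=0$, \eqref{eq:Deltal-inverse} gives $\partial_r\Delta_0^{-1}f=-r^{-2}\int_0^r f s^2\dd s=-2r\,\mathfrak T^{-1}f$, so the nonlocal term becomes local in the mass variable. Applied to $S=\Lambda\Phi_n$, the identity gives $\mathfrak T S=\Lambda U_n$, since $\mathfrak T$ commutes with the scaling generator up to the appropriate weights. This yields the eigenfunction $\Lambda U_n$ for $-1$.

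The second step handles eigenvalues in both directions. If $L_{\Phi_n}h=\lambda h$ with $h\in D(L_{\Phi_n})$, the Gaussian weight in $m_{\Phi_n}$ forces $h$ to be a subdominant solution at infinity. In the confined region such solutions grow at most polynomially, roughly like $r^{2\lambda-2}$ times corrections, so $\mathfrak Th\in L^2(r^2\dd r)$ whenever $\operatorname{Re}\lambda<1/4$, with regularity at $0$ inherited. Conversely, given $L_{n,0}f=zf$ with $f\in D(L_{n,0})$, I set $h=\mathfrak T^{-1}f$. Then $h$ is regular at $0$, and $h$ satisfies $(L_{\Phi_n}-z)h=c\,r^{-3}$-type terms that vanish, because $\mathfrak T$ has kernel spanned by $r^{-3}$, which is excluded by regularity. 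It then remains to show $h\in L^2(m_{\Phi_n}\dd r)$. The ODE at infinity has two branches, one behaving like $r^{2z-2}$ and a Gaussian-growing branch $e^{r^2/4}r^{\cdots}$. Since $f\in L^2$, the growing branch is excluded, which places $h$ in the weighted space. The same argument applied to Jordan chains $(L-z)^kf=0$ shows that $\mathfrak T$ maps generalized eigenspaces bijectively, and hence preserves algebraic multiplicities. Because $L_{\Phi_n}$ is self-adjoint, all of its eigenvalues are real and semisimple, so the point spectrum of $L_{n,0}$ in $\operatorname{Re}z<1/4$ is real and algebraically simple. Simplicity comes from Sturm--Liouville theory, since the regular-at-$0$ solution is unique up to scaling.

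The third step upgrades point spectrum to full spectrum using Lemma~\ref{lem:fixedmodefredholm}. That lemma states that $L_{n,0}-z$ is Fredholm of index zero for $\operatorname{Re}z<1/4$; the essential spectrum starts at the line $1/4+i\R$ coming from the far-field operator $-\Delta+\frac12(2+r\partial_r)$ with decaying potentials. Consequently every spectral point in that half-plane is an isolated eigenvalue of finite multiplicity, and \eqref{eq:radialdensityspectrum-main} follows from step two; this is the content of Lemma~\ref{lem:radialunweightedspectrum}. The remaining conclusions then follow by combining previous results. The Sturm count lemma together with Proposition~\ref{prop:nodes-main} gives exactly $N_n$ eigenvalues below $-1$ and the simple eigenvalue $-1$. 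Proposition~\ref{prop:radialgap-main} excludes $(-1,0]$, and realness excludes everything else with $\operatorname{Re}z\le0$. Together these give \eqref{eq:radialdensitynonpositive} and \eqref{eq:radialdensitygap}.

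I expect the main obstacle to be the converse direction in step two: showing that $\mathfrak T^{-1}f$ lies in $D(L_{\Phi_n})$ without losing the Gaussian-weighted integrability. The integration constant at infinity is hidden in $\int_0^\infty fs^2\dd s$, a mass that need not vanish. One must therefore check via the asymptotic ODE analysis that the $L^2$ condition on $f$ selects precisely the subdominant branch for $h$, uniformly for complex $z$ with $\operatorname{Re}z<1/4$.
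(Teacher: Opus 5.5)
Your proposal is correct and follows the paper's proof essentially step for step: Fredholm index zero (Lemma~\ref{lem:fixedmodefredholm}) reduces the half-plane $\operatorname{Re}z<1/4$ to point spectrum, the intertwining identity $L_{n,0}\mathfrak T=\mathfrak T L_{\Phi_n}$ transfers eigenvalues and Jordan chains level by level, with the kernel $\operatorname{span}\{r^{-3}\}$ of $\mathfrak T$ excluded by regularity and the $r^{2\lambda-2}$ Weyl tail giving the threshold $1/4$, and self-adjointness, the Sturm count, and Propositions~\ref{prop:radialgap-main} and \ref{prop:nodes-main} finish the argument. Two small corrections: the $l=0$ Newton formula actually gives $\partial_r\Delta_0^{-1}f=+2r\,\mathfrak T^{-1}f$ (and $\mathfrak T$ and $\Lambda$ commute exactly, both being polynomials in $r\partial_r$); also, the obstacle you anticipate does not arise, because Cauchy--Schwarz gives $|\mathfrak T^{-1}f(r)|\le Cr^{-3/2}\|f\|_{L^2(r^2\dd r)}$, which is trivially square integrable against the Gaussian weight $m_{\Phi_n}$ whatever the total mass, so no branch selection at infinity is needed for that step.
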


\begin{proof}
Lemma~\ref{lem:fixedmodefredholm} shows that $L_{n,0}-z$ is Fredholm of index
zero when $\operatorname{Re}z<1/4$, and that the spectral points in this
half-plane are precisely the points of the point spectrum.
Lemma~\ref{lem:radialunweightedspectrum} then uses $\mathfrak T$ and
$\mathfrak T^{-1}$ to identify, level by level, the generalized eigenvalue
equations for $L_{n,0}$ and $L_{\Phi_n}$, while preserving algebraic
multiplicity.  This proves \eqref{eq:radialdensityspectrum-main}.

By \eqref{eq:selfadjoint} and Lemma~\ref{lem:radialrealization},
$L_{\Phi_n}$ is a self-adjoint Sturm--Liouville operator with compact
resolvent.  Its eigenvalues are real and simple, and self-adjointness excludes
nontrivial Jordan chains.  The two-way transfer therefore shows that every
spectral point of $L_{n,0}$ in $\operatorname{Re}z<1/4$ is real and
algebraically simple.

Equation \eqref{eq:scalemode} and the Sturm count show that $-1$ is a simple
eigenvalue of $L_{\Phi_n}$, with eigenfunction $\Lambda\Phi_n$, and that
exactly $N_n$ eigenvalues lie strictly below $-1$.
Proposition~\ref{prop:radialgap-main} excludes spectrum in $(-1,0]$.
On the other hand, the mass--density map satisfies
\[
 \mathfrak T(\Lambda\Phi_n)=\Lambda U_n.
\]
Substitution of these facts into \eqref{eq:radialdensityspectrum-main} gives
\eqref{eq:radialdensitynonpositive} and \eqref{eq:radialdensitygap}.
Finally, $N_n=n+N_0$ is precisely the conclusion of
Proposition~\ref{prop:nodes-main}.
\end{proof}

\section{The \texorpdfstring{$l=1$}{l=1} Fourier mode space: the translation eigenvalue and the spectral gap}

We construct a first-order transform $T_n$ whose kernel is precisely the
translation eigenspace.  We then verify, term by term, the identity that
reduces the nonlocal eigenvalue equation to a local equation.  The resulting
local operator is conjugated to a self-adjoint Schr\"odinger operator whose
potential has a uniform positive lower bound.  Endpoint domains must be
checked separately: this reduction can exclude additional eigenvalues and
Jordan chains only after transformed eigenfunctions and generalized
eigenfunctions are shown to lie in the closed operator domain.

\subsection{Definition of the first-order inverses}

Let
\[
 D_j=\partial_r+\frac jr=r^{-j}\partial_r r^j.
\]
To impose regularity at the origin and decay at infinity simultaneously, its
right inverse must be defined in two cases:
\begin{equation}
 D_j^{-1}f(r)=
 \begin{cases}
 r^{-j}\displaystyle\int_0^r f(s)s^j\dd s,&j>0,\\[3mm]
 -r^{-j}\displaystyle\int_r^\infty f(s)s^j\dd s,&j\le0.
 \end{cases}                                                   \label{eq:Dinv}
\end{equation}
The angular Fourier-mode Newton formula gives
\begin{equation}
 \partial_r\Delta_1^{-1}f
 =\frac13\left(2D_3^{-1}f+D_0^{-1}f\right).                  \label{eq:newtonD}
\end{equation}
Indeed, write
\[
 \Delta_1^{-1}f(r)=-\frac13\left[
 r^{-2}\int_0^r f(s)s^3\dd s+r\int_r^\infty f(s)\dd s\right].
\]
Direct differentiation cancels the two terms containing $rf(r)$ and leaves
exactly \eqref{eq:newtonD}.

\subsection{Global nonvanishing of the wave-operator denominator}

Define
\begin{equation}
 K_n=D_3^{-1}U_n'=r^{-3}\int_0^rU_n'(s)s^3\dd s.              \label{eq:Kdef}
\end{equation}
Integration by parts and \eqref{eq:UPPhi} give the unconditional identity
\begin{align}
 K_n
 &=U_n-\frac3{r^3}\int_0^rU_n(s)s^2\dd s
 =U_n-\frac3rP_n
 =2r\Phi_n'.                                                   \label{eq:Kphi}
\end{align}
By \eqref{eq:Kphi}, it suffices to prove the global monotonicity
$\Phi_n'<0$.  We derive it directly from the inner and outer matching bounds.

\subsubsection*{Strict decrease of the universal inner solution}

Proposition~\ref{prop:coreprofileproperties} gives
$\bar Q>0$, $\bar Q'<0$, and the quantitative estimate
\eqref{eq:Qprimequant}.  Appendix~\ref{sec:JDEcoreproperties} derives these
facts from \eqref{eq:core-main} by means of the autonomous system
\eqref{eq:yazsystem} and a straight-line barrier; global monotonicity is not
being inferred from endpoint asymptotics alone.

\subsubsection*{Transfer of the strict sign to the matched profiles}

In the inner region, write
\[
 q_n(s)=\bar Q(s)+\mu_n^2R_n(s),\qquad
 \Phi_n(r)=\mu_n^{-2}q_n(r/\mu_n).
\]
Regularity at zero and \eqref{eq:Rnormglobal} give
\begin{equation}
 |R_n'(s)|\le
 \begin{cases}
 Cs,&0\le s\le1,\\
 Cs^{-3/2},&1\le s\le r_0/\mu_n.
 \end{cases}                                                \label{eq:Rprimepieces}
\end{equation}
For $0<s\le1$, $-\bar Q'(s)\ge cs$, and hence
\begin{equation}
 \frac{\mu_n^2|R_n'(s)|}{-\bar Q'(s)}\le C\mu_n^2.         \label{eq:relativeQprime0}
\end{equation}
For $1\le s\le r_0/\mu_n$, \eqref{eq:Qprimequant} gives
\begin{equation}
 \frac{\mu_n^2|R_n'(s)|}{-\bar Q'(s)}
 \le C\mu_n^2s^{3/2}\le C\mu_n^{1/2}r_0^{3/2}\to0.       \label{eq:relativeQprime}
\end{equation}
Thus $q_n'<0$ throughout the inner region for large $n$.

The value is treated similarly.  On $s\le1$, $R_n=O(s^2)$ and
$\bar Q\ge c>0$.  On $s\ge1$, $\bar Q\ge cs^{-2}$ and
$|R_n|\le Cs^{-1/2}$, so
\begin{equation}
 \frac{\mu_n^2|R_n(s)|}{\bar Q(s)}
 \le C\mu_n^2s^{3/2}\le C\mu_n^{1/2}r_0^{3/2}\to0.        \label{eq:relativeQvalue}
\end{equation}
Hence $q_n>0$ as well.

In the outer region,
$\Phi_n=r^{-2}+\varepsilon_n(u_1+w_n)$.  On $[r_0,1]$,
\eqref{eq:Xnormglobal} makes the derivative perturbation uniformly small
relative to $-2r^{-3}$.  For $r\ge1$,
\[
 u_1'=-2r^{-3}+8r^{-5}+O(r^{-7}),\qquad
 w_n'=O(\varepsilon_nr^{-5}),
\]
and therefore $\Phi_n'=-2r^{-3}+o(r^{-3})<0$.  The inner and outer Cauchy
data agree at $r_0$.

\begin{lemma}[Nonvanishing denominator in the elimination transform]
For all sufficiently large $n$,
\begin{equation}
 K_n(r)=2r\Phi_n'(r)<0,\qquad r>0.                           \label{eq:Knegative}
\end{equation}
\end{lemma}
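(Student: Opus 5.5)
By \eqref{eq:Kphi}, $K_n=2r\Phi_n'$ holds identically for $r>0$, so the claim is equivalent to $\Phi_n'(r)<0$ for every $r>0$. The plan is to prove this sign separately on three pieces, $(0,r_0]$, $[r_0,1]$ and $[1,\infty)$, and then let $n$ exceed the maximum of the finitely many thresholds. The pieces glue without further work: the inner and outer representations in \eqref{eq:Unfamily-summary} have matching Cauchy data at $r_0$, so $\Phi_n'$ is one continuous function, and each piece only needs its own strict inequality.

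\emph{Inner region $0<r\le r_0$.} Here $\Phi_n'(r)=\mu_n^{-3}q_n'(r/\mu_n)$ with $q_n=\bar Q+\mu_n^2R_n$, so it suffices to show $q_n'(s)<0$ for $0<s\le r_0/\mu_n$. Write
\[
 q_n'=\bar Q'\Bigl(1+\frac{\mu_n^2R_n'}{\bar Q'}\Bigr).
\]
Proposition~\ref{prop:coreprofileproperties} gives $\bar Q'<0$, so it suffices that the relative error is smaller than $1$ in absolute value.
\begin{itemize}
\item On $0<s\le1$: use $-\bar Q'(s)\ge cs$ together with $|R_n'(s)|\le Cs$ from \eqref{eq:Rprimepieces}. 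This is \eqref{eq:relativeQprime0}, which bounds the ratio by $C\mu_n^2$.
\item On $1\le s\le r_0/\mu_n$: use $-\bar Q'(s)\ge cs^{-3}$ from \eqref{eq:Qprimequant} together with $|R_n'(s)|\le Cs^{-3/2}$. This is \eqref{eq:relativeQprime}, which bounds the ratio by $C\mu_n^{1/2}r_0^{3/2}$.
\end{itemize}
Both bounds tend to $0$ as $n\to\infty$, so $q_n'<0$ throughout the inner region for large $n$.

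The one point needing care is $s$ near $0$, where $q_n'(0)=0$. The argument above is a ratio bound with linear vanishing on both sides, so the factor $s$ cancels and strict negativity holds for every $s>0$.

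\emph{Outer region, compact part $r_0\le r\le1$.} Here $\Phi_n'=-2r^{-3}+\varepsilon_n(u_1'+w_n')$ and $-2r^{-3}\le-2$. The function $u_1'$ is continuous on this compact interval, hence bounded. The bound \eqref{eq:Xnormglobal} controls $w_n'$ uniformly there. So $|\varepsilon_n(u_1'+w_n')|\le C\varepsilon_n<1$ for large $n$, which gives $\Phi_n'<0$.

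\emph{Outer region, tail $r\ge1$.} Use the expansions $u_1'=-2r^{-3}+8r^{-5}+O(r^{-7})$ and $w_n'=O(\varepsilon_nr^{-5})$. Together they give $|u_1'+w_n'|\le Cr^{-3}$ with $C$ uniform in $r\ge1$. Hence
\[
 \Phi_n'\le-(2-C\varepsilon_n)r^{-3}<0 .
\]

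\emph{Main obstacle.} The hard step is the inner far range $1\le s\le r_0/\mu_n$. There $\bar Q'$ decays like $s^{-3}$, faster than the correction $R_n'\sim s^{-3/2}$, so the relative error grows like $\mu_n^2s^{3/2}$. It stays small only because $s$ is cut off at $r_0/\mu_n$, which yields the factor $\mu_n^{1/2}$. The argument therefore depends on having the quantitative lower bound \eqref{eq:Qprimequant} with the sharp power $s^{-3}$, and on the decay rate in \eqref{eq:Rnormglobal}. I take both as given from the appendices.
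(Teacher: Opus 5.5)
Your proof is correct and matches the paper's argument: the inner sign comes from the chain rule $\Phi_n'(r)=\mu_n^{-3}q_n'(r/\mu_n)$ and the relative bounds \eqref{eq:relativeQprime0}--\eqref{eq:relativeQprime}, and the outer sign comes from comparing $\varepsilon_n(u_1'+w_n')$ with $-2r^{-3}$, on $[r_0,1]$ via \eqref{eq:Xnormglobal} and on $[1,\infty)$ via the expansions of $u_1'$ and $w_n'$. The only fix is cosmetic: write $|\varepsilon_n|$ instead of $\varepsilon_n$ in your outer bounds, since $\varepsilon_n$ may be negative.
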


\begin{proof}
The inner sign follows from the chain rule
$\Phi_n'(r)=\mu_n^{-3}q_n'(r/\mu_n)$ and
\eqref{eq:relativeQprime0}--\eqref{eq:relativeQprime}; the preceding outer
estimates give the sign beyond $r_0$.  Since $2r>0$, the result follows.
\end{proof}

Notice that this proves $K_n=2r\Phi_n'<0$, not $U_n'<0$.  Only the former is
needed by the elimination transform.

\subsection{Construction of the wave operator and reduction to a local operator}
\label{sec:l1localreduction}

Starting from the $l=1$ operator \eqref{eq:Ll}, decompose the derivative of
the Newton potential by \eqref{eq:newtonD}, and use the nonvanishing function
$K_n$ from the preceding subsection to define \eqref{eq:Tn}.  The calculation
is designed to cancel every $D_j^{-1}$ term and prove
\[
 T_nL_{n,1}=\widetilde L_{n,1}T_n,
\]
namely \eqref{eq:waveoperatorreduction}.  Thus the original nonlocal operator is reduced
to a local second-order operator.

Define
\begin{equation}
 T_nf=f-\frac{U_n'}{K_n}D_3^{-1}f.                            \label{eq:Tn}
\end{equation}
Since $D_3K_n=U_n'$, we have $T_nU_n'=0$.  Conversely, if $T_nf=0$ and
$F=D_3^{-1}f$, then
\[
 D_3F=\frac{D_3K_n}{K_n}F,\qquad (F/K_n)'=0.
\]
Therefore
\begin{equation}
 \ker T_n=\operatorname{span}\{U_n'\}.                       \label{eq:kerT}
\end{equation}

\begin{lemma}[Wave-operator reduction identity]\label{lem:waveoperatorreduction}
If $K_n\ne0$, then
\begin{equation}
 T_nL_{n,1}=\widetilde L_{n,1}T_n,                            \label{eq:waveoperatorreduction}
\end{equation}
where
\begin{equation}
 \widetilde L_{n,1}=-\partial_{rr}+A_n\partial_r+B_n,\qquad
 A_n=-\frac2r+\frac r2-P_n,                                  \label{eq:AB1}
\end{equation}
\begin{equation}
 B_n=\frac2{r^2}+1-2U_n-2\left(\frac{U_n'}{K_n}\right)'.     \label{eq:AB2}
\end{equation}
\end{lemma}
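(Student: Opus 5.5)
The plan is to verify the identity by direct computation on smooth $f$ compactly supported in $(0,\infty)$. On that class all the inverses $D_3^{-1}$, $D_0^{-1}$, $\Delta_1^{-1}$ are given by the explicit integrals \eqref{eq:Dinv}, \eqref{eq:Deltal-inverse}, and boundary terms in integrations by parts vanish. Extension to the closed domain is postponed, since the statement is an identity of differential--integral expressions.

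\emph{Rewriting $L_{n,1}$ and $T_n$.} By \eqref{eq:Ll} with $J_1=2$ and \eqref{eq:newtonD},
\[
 L_{n,1}f=-f''+A_nf'+\Bigl(\tfrac2{r^2}+1-2U_n\Bigr)f
 -\tfrac13U_n'\bigl(2F+G\bigr),\qquad F=D_3^{-1}f,\ G=D_0^{-1}f,
\]
with $A_n$ as in \eqref{eq:AB1}. Thus the principal part of $L_{n,1}$ already agrees with that of $\widetilde L_{n,1}$. It helps to write $T_n$ in factorized form. Since $D_3K_n=U_n'$, we have $K_n'/K_n=U_n'/K_n-3/r$, and therefore
\[
 T_nf=D_3F-\frac{U_n'}{K_n}F=K_n\,\partial_r\!\Bigl(\frac{F}{K_n}\Bigr).
\]
So $T_n$ is a first-order operator in $F$, and the identity becomes a third-order identity in $F$, with the single remaining nonlocal quantity $G$ satisfying $G'=f=D_3F$.

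\emph{Computing $T_nL_{n,1}f$.} The key step is to compute $D_3^{-1}(L_{n,1}f)$ as an expression in $F$, $G$ and at most one further antiderivative. Then $T_n(L_{n,1}f)=L_{n,1}f-(U_n'/K_n)D_3^{-1}(L_{n,1}f)$ can be handled term by term.

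For the local part, repeated integration by parts against $r^3$ turns $D_3^{-1}(-f''+A_nf'+\cdots)$ into local terms in $f$, $F$, plus $D_3^{-1}$ of coefficient-times-$F$ terms. These remainders are exactly what must cancel against the nonlocal terms.

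For the nonlocal part, I would use $U_n'=D_3K_n$ and $G'=f$ to write
\[
 D_3^{-1}(U_n'G)=K_nG-D_3^{-1}(K_nf).
\]
The leftover $D_3^{-1}(K_nf)=D_3^{-1}(K_nD_3F)$ is then integrated by parts once more into $K_nF-D_3^{-1}(K_n'F)$. The analogous treatment of $D_3^{-1}(U_n'F)$ produces the same type of term.

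Finally I apply $T_n$ and collect terms in four groups: the coefficients of $G$, of $D_3^{-1}(\cdot\,F)$, of $F$, and of $f,f',f''$. I expect the first two groups to vanish. The $G$ terms should cancel between $U_n'G$ and $(U_n'/K_n)K_nG$. The $D_3^{-1}$ terms should cancel after using the profile relations $P_n=2r\Phi_n$, $P_n'=U_n-2P_n/r$, $K_n=U_n-3P_n/r=2r\Phi_n'$, and the stationary equation \eqref{eq:profile}, or equivalently the radial form of \eqref{eq:steadystateU}. The remaining local terms should then match $\widetilde L_{n,1}(K_n\partial_r(F/K_n))$. Here the $-2(U_n'/K_n)'$ term in \eqref{eq:AB2} arises from commuting $\partial_{rr}$ and $A_n\partial_r$ past the multiplication by $U_n'/K_n$ in $T_n$. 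The hypothesis $K_n\ne0$, guaranteed by \eqref{eq:Knegative}, is used only so that division by $K_n$ makes sense.

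\emph{Main obstacle.} The hard part is the cancellation of the surviving nonlocal terms of the form $D_3^{-1}(cF)$. Their coefficient $c$ is a combination of $U_n''$, $U_n'$, $P_n$, $K_n'$, and it vanishes only after invoking the stationary ODE. I would first test the cancellation on the explicit profile $U_0$ of \eqref{eq:explicitU0}, where the identity is known from \cite{LiZhou2025}. I would then reorganize the general coefficient in the variable $\Phi_n$, where $U_n=6\Phi_n+2r\Phi_n'$, $K_n=2r\Phi_n'$, and $P_n=2r\Phi_n$. In that variable the vanishing should reduce to $\cF(\Phi_n)=0$ and its derivative, since the third derivative $U_n''\sim\Phi_n'''$ enters. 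Once the identity holds on $C_c^\infty(0,\infty)$, it holds in the sense of distributions for every $f$ for which both sides are defined, and the domain questions are left to the later closure arguments.
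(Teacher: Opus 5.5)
Your framework (verification on $C_c^\infty(0,\infty)$, cancellation of the $G$-terms, and the factorization $T_nf=K_n\partial_r(F/K_n)$) is sound. The gap is that you use the stationary equation where it is not needed and skip the one step where it is.

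\textbf{The $D_3^{-1}(cF)$ remainders cancel without the profile equation.} Set $C=\tfrac2{r^2}+1-2U_n$ and $b=\tfrac1r+\tfrac r2-P_n$. Then $-f''+A_nf'+Cf=D_3(-f'+bf)+(-1-U_n+P_n/r)f$. Using $(P_n/r)'=K_n/r$, your integrations by parts leave the following coefficient in front of $D_3^{-1}(\,\cdot\,F)$:
\[
-\Bigl(\frac{K_n}{r}-U_n'\Bigr)-\frac23U_n'-\frac13K_n'=\frac13\bigl(U_n'-D_3K_n\bigr)=0 .
\]
So $D_3^{-1}L_{n,1}f=-f'+bf-(1+\tfrac23U_n)F-\tfrac13K_nG$ follows from the Newton relations alone; this is exactly the paper's $H$.

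\textbf{The real content is the coefficient of $F$.} After applying $T_n$ and expanding $\widetilde L_{n,1}T_nf$, both sides take the form $-f''+(A_n+R)f'+(C-Rb)f+(\cdot)F$, with $R=U_n'/K_n$. The $f$-coefficients agree because of the $-2R'$ in $B_n$. What remains is the zeroth-order identity
\[
R\Bigl(1+\frac{2P_n}{r}\Bigr)=E:=-S'+\frac3rS+A_nS-B_nR,\qquad S=\frac3rR-R',
\]
which you subsume under ``the remaining local terms should then match''. This is where all the content of the lemma lies. It involves $R''$, hence $U_n'''$ and $K_n''$. It also fails for radial densities that are not stationary profiles. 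For example, $\Phi=c\,r^{-2}$ gives $R=1/r$, and the two sides differ by $8(1-c)r^{-3}$, while $\mathcal F(c\,r^{-2})=2c(c-1)r^{-4}$.

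\textbf{The missing idea: test on the translation mode.} The paper avoids the third-order computation as follows.
\begin{itemize}
\item Take $f=U_n'$. Then $F=D_3^{-1}U_n'=K_n$, $G=U_n$, and $T_nU_n'=0$.
\item Check $L_{n,1}U_n'=-\tfrac12U_n'$ directly from the first-order system $P_n'=U_n-2P_n/r$, $U_n'=(\tfrac r2-P_n)U_n-\tfrac12P_n$. This is the only use of the profile equation.
\item Both sides of the expanded identity then vanish, so their difference $[R(1+2P_n/r)-E]K_n$ vanishes, and $K_n\ne0$ gives the coefficient identity.
\end{itemize}
Your factorization makes this transparent. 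Once the $G$-terms drop out, $F\mapsto T_nL_{n,1}D_3F$ and $F\mapsto\widetilde L_{n,1}K_n\partial_r(F/K_n)$ are third-order differential operators with the same coefficients of $F''',F'',F'$. They therefore differ by multiplication by a function, and both annihilate $F=K_n$, so that function is zero.

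A brute-force reduction via $\mathcal F(\Phi_n)=0$ would also work. It must, however, be done for this $F$-coefficient rather than for the $D_3^{-1}$ terms, and it needs two derivatives of the profile equation, since $U_n'''$ contains $\Phi_n''''$.
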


\begin{proof}
For convenience, write temporarily
$U=U_n$, $P=P_n$, and $K=K_n$.  Set
\[
 F=D_3^{-1}f,\qquad G=D_0^{-1}f,\qquad
 R=\frac{U'}K,\qquad C=\frac2{r^2}+1-2U.
\]
Since $K=D_3^{-1}U'$ and $D_3=r^{-3}\partial_rr^3$,
\begin{equation}
 (r^3K)'=r^3U',\qquad
 r^3K=r^3U-3r^2P.                                             \label{eq:Gidentities}
\end{equation}
where the second equality uses $K=U-3P/r$.  The profile equation is
equivalent to
\begin{equation}
 -U'+\frac r2U-\frac12P-UP=0.                                \label{eq:Uprofile}
\end{equation}

We first compute $D_3^{-1}L_{n,1}f$.  By \eqref{eq:Ll} and
\eqref{eq:newtonD},
\[
 L_{n,1}f=-f''+Af'+Cf-\frac23U'F-\frac13U'G,
 \qquad A=-\frac2r+\frac r2-P.
\]
Set
\[
 b=\frac1r+\frac r2-P,
 \qquad
 H=-f'+bf-\left(1+\frac23U\right)F-\frac13KG.
\]
We verify directly that $D_3H=L_{n,1}f$.  Since
$F'=f-3F/r$, $G'=f$, and $K'+3K/r=U'$, termwise differentiation gives
\begin{align*}
 D_3H
 ={}&-f''+\left(b-\frac3r\right)f'\\
 &+\left[b'+\frac3r b-1-\frac23U-\frac13K\right]f
   -\frac23U'F-\frac13U'G.
\end{align*}
The first-order coefficient in the first line is $b-3/r=A$.  Also, from
$P'=U-2P/r$ and $K=U-3P/r$,
\begin{align*}
 b'+\frac3r b-1-\frac23U-\frac13K
 &=\left(-\frac1{r^2}+\frac12-P'\right)
   +\left(\frac3{r^2}+\frac32-\frac{3P}{r}\right)
   -1-\frac23U-\frac13K\\
 &=\frac2{r^2}+1-2U=C.
\end{align*}
Thus $D_3H=L_{n,1}f$, so $H=D_3^{-1}L_{n,1}f$.  Substitution in $T_n$ makes
the coefficient of $G$ equal to $-U'/3+RK/3=0$, while the coefficient of
$F$ is
\[
 -\frac23U'+R\left(1+\frac23U\right)
 =R\left(1+\frac{2P}{r}\right).
\]
The left-hand side is therefore
\begin{equation}
 T_nL_{n,1}f
 =-f''+(A+R)f'+(C-Rb)f
   +R\left(1+\frac{2P}{r}\right)F.                    \label{eq:TLexpanded}
\end{equation}

Now expand the right-hand side.  Let $h=T_nf=f-RF$ and $S=3R/r-R'$.  Since
$F'=f-3F/r$,
\begin{align*}
 h'&=f'-Rf+SF,\\
 h''&=f''-Rf'+(S-R')f+\left(S'-\frac3rS\right)F.
\end{align*}
Taking $B=C-2R'$ and substituting term by term into $-h''+Ah'+Bh$ gives
\begin{equation}
 \widetilde L_{n,1}T_nf
 =-f''+(A+R)f'+(C-Rb)f+EF,                         \label{eq:LTexpanded}
\end{equation}
where
\begin{equation}
 E=-S'+\frac3rS+AS-BR.                              \label{eq:Eresidual}
\end{equation}
By \eqref{eq:TLexpanded}--\eqref{eq:Eresidual}, only the coefficients of $F$
remain to be compared.

Instead of hiding this comparison in a common-denominator calculation, we use
the translation identity to eliminate it exactly.  We first verify that
identity.  Put $q=r/2-P$.  From \eqref{eq:Uprofile} and $P'=U-2P/r$,
\begin{align*}
 U'&=qU-\frac12P,\\
 U''&=qU'+(\tfrac12-P')U-\frac12P',\\
 U'''&=qU''+2(\tfrac12-P')U'-P''U-\frac12P''.
\end{align*}
Moreover, $D_0^{-1}U'=U$, and hence
\[
 \partial_r\Delta_1^{-1}U'
 =\frac13(2K+U)=U-\frac{2P}{r}=P'.
\]
Direct substitution into $L_{n,1}U'+\frac12U'$ gives
\begin{align*}
 I:={}&-U'''+\left(q-\frac2r\right)U''
 +\left(\frac2{r^2}+\frac32-2U-P'\right)U'\\
 ={}&-\frac2rU''
 +\left(\frac2{r^2}+\frac12-2U+P'\right)U'
 +(U+\tfrac12)P''.
\end{align*}
Using $P''=U'-2P'/r+2P/r^2$ and the formula above for $U''$, combine terms:
\begin{align*}
 I
 &=\frac2{r^2}U'-\frac Ur+\frac{2PU}{r^2}+\frac P{r^2}\\
 &=\frac2{r^2}\left[\left(\frac r2-P\right)U-\frac P2\right]
   -\frac Ur+\frac{2PU}{r^2}+\frac P{r^2}=0.
\end{align*}
Therefore $L_{n,1}U'=-U'/2$.

Finally set $f=U'$ in \eqref{eq:TLexpanded}--\eqref{eq:LTexpanded}.  Then
$F=K$ and $T_nU'=0$, so
\[
 T_nL_{n,1}U'=-\frac12T_nU'=0,
 \qquad \widetilde L_{n,1}T_nU'=0.
\]
Their difference is $[R(1+2P/r)-E]K$.  By \eqref{eq:Knegative}, $K$ never
vanishes for $r>0$, so $E=R(1+2P/r)$.  Combining
\eqref{eq:TLexpanded} and \eqref{eq:LTexpanded} proves
\eqref{eq:waveoperatorreduction}.  This agrees with
\cite[Proposition~2.11]{LiZhou2025}, but the full calculation has been given
here.
\end{proof}

\subsection{Self-adjoint conjugation and the exact potential}

Let
\begin{equation}
 M_n(r)=\exp\left(\frac12\int^rA_n(s)\dd s\right)
 =\frac1r\exp\left(\frac{r^2}{8}-\int_0^rs\Phi_n(s)\dd s\right).
\end{equation}
If $m=M_n'/M_n=A_n/2$, then for a test function $f$,
\begin{align*}
 M_n^{-1}\partial_r(M_nf)&=f'+mf,\\
 M_n^{-1}\partial_{rr}(M_nf)&=f''+2mf'+(m'+m^2)f.
\end{align*}
After substitution into $-\partial_{rr}+A_n\partial_r+B_n$, the first-order
coefficient $-2m+A_n$ vanishes, while the zeroth-order coefficient is
$B_n-m'-m^2+A_nm=B_n-A_n'/2+A_n^2/4$.  Hence
\[
 M_n^{-1}(-\partial_{rr}+A_n\partial_r+B_n)M_n
 =-\partial_{rr}+B_n-\frac12A_n'+\frac14A_n^2.
\]
Therefore
\begin{equation}
 \mathcal H_n:=M_n^{-1}\widetilde L_{n,1}M_n
 =-\partial_{rr}+W_n                                           \label{eq:Hn}
\end{equation}
on $L^2(0,\infty)$.  We simplify the potential term by term.  Temporarily
drop the subscript $n$ and write
\[
 A=-\frac2r+\frac r2-P,\qquad
 R=\frac{U'}K,\qquad C=\frac2{r^2}+1-2U,\qquad B=C-2R'.
\]
From $P'=U-2P/r$,
\begin{align*}
 -\frac12A'&=-\frac1{r^2}-\frac14+\frac12U-\frac Pr,\\
 \frac14A^2
 &=\frac1{r^2}+\frac{r^2}{16}+r^2\Phi^2
   -\frac12+2\Phi-\frac{r^2}{2}\Phi,
\end{align*}
where the second equality uses $P=2r\Phi$.  Substituting
$C=2/r^2+1-12\Phi-4r\Phi'$ into
$W=B-A'/2+A^2/4$ gives
\begin{align}
W_n={}&\frac2{r^2}+\frac{r^2}{16}+\frac14+r^2\Phi_n^2
-\frac{r^2}{2}\Phi_n-9\Phi_n-3r\Phi_n'
-2\left(\frac{U_n'}{2r\Phi_n'}\right)'.                     \label{eq:Wdirect}
\end{align}

Define also
\begin{equation}
 y_n=r^2\Phi_n,\qquad a_n=-\frac{r\Phi_n'}{\Phi_n}>0.
\end{equation}
Again omit the subscript.  The definitions and \eqref{eq:profile} give
\begin{align}
 \dot y&=(2-a)y,\notag\\
 \dot a&=(3-a)(2y-a)+\frac{r^2}{2}(a-2),                 \label{eq:fullayflow}
\end{align}
where a dot means $r\partial_r$.  For the second identity, one obtains
\[
 0=\frac{-\dot a+a^2-3a}{r^2}
   +\left(\frac a2-1\right)+\frac{(6-2a)y}{r^2},
\]
then multiplies by $r^2$ and solves for $\dot a$.

On the other hand, \eqref{eq:Uprofile}, $K=-2ay/r^2$, and
$U=2(3-a)y/r^2$,
\begin{equation}
 R=\frac{U'}K
 =r\frac{a-2}{2a}+\frac1r\frac{2y(3-a)}a.               \label{eq:Rya}
\end{equation}
To keep the chain rule explicit, first rewrite \eqref{eq:Rya} as
\[
 R=r\left(\frac12-\frac1a\right)
   +\frac1r\left(\frac{6y}{a}-2y\right).
\]
Since a dot denotes $r\partial_r$, termwise differentiation gives the exact
identity
\begin{align}
 R'={}&\left(\frac12-\frac1a\right)+\frac{\dot a}{a^2}\notag\\
 &+\frac1{r^2}\left[-\left(\frac{6y}{a}-2y\right)
 +\left(\frac6a-2\right)\dot y-\frac{6y}{a^2}\dot a\right].
                                                               \label{eq:Rprimechain}
\end{align}
Now substitute
\[
 \dot y=(2-a)y,
 \qquad
 \dot a=6y-2ay-3a+a^2+\frac{r^2}{2}(a-2).
\]
The last term in \eqref{eq:Rprimechain}, through
$r^2(a-2)/2$, produces a zeroth-order contribution.  Collecting separately
the powers $r^2$, $1$, and $r^{-2}$ gives
\begin{align}
 R'={}&r^2\frac{a-2}{2a^2}
 +\left(\frac32-\frac4a+\frac{12y}{a^2}-\frac{5y}{a}\right)\notag\\
 &+\frac1{r^2}\left(
 2ay-14y+\frac{24y}{a}+\frac{12y^2}{a}-\frac{36y^2}{a^2}
 \right).                                                   \label{eq:Rprimeya}
\end{align}
Substitute \eqref{eq:Rprimeya} into \eqref{eq:Wdirect}.  The three
coefficients are
\begin{align*}
 [r^2]:\quad&\frac1{16}-\frac{a-2}{a^2}
 =\frac{a^2-16a+32}{16a^2},\\
 [1]:\quad&\frac14-\frac y2
 -2\left(\frac32-\frac4a+\frac{12y}{a^2}-\frac{5y}{a}\right)\\
 &=-\frac14\left(11-\frac{32}{a}+2y-\frac{40y}{a}
     +\frac{96y}{a^2}\right),\\
 [r^{-2}]:\quad&2+y^2-9y+3ay\\
 &\quad-2\left(2ay-14y+\frac{24y}{a}
       +\frac{12y^2}{a}-\frac{36y^2}{a^2}\right)\\
 &=2+y\left(19-a-\frac{48}{a}\right)
   +y^2\left(1-\frac{24}{a}+\frac{72}{a^2}\right).
\end{align*}
Restoring subscripts yields
\begin{equation}
 W_n=r^2\mathfrak a(y_n,a_n)+\mathfrak b(y_n,a_n)
 +\frac1{r^2}\mathfrak c(y_n,a_n).                            \label{eq:Wsplit}
\end{equation}
where
\begin{align}
 \mathfrak a(y,a)&=\frac{a^2-16a+32}{16a^2},                 \label{eq:fraka}\\
 \mathfrak b(y,a)&=-\frac14\left(11-\frac{32}{a}+2y
 -\frac{40y}{a}+\frac{96y}{a^2}\right),                     \label{eq:frakb}\\
 \mathfrak c(y,a)&=2+y\left(19-a-\frac{48}{a}\right)
 +y^2\left(1-\frac{24}{a}+\frac{72}{a^2}\right).           \label{eq:frakc}
\end{align}
Two independent substitutions check the formula:
\begin{itemize}
\item for the explicit profile $\Phi_0=2/(2+r^2)$, \eqref{eq:Wsplit} gives
$W_0=12/r^2+r^2/16-8/(2+r^2)-3/4$;
\item for the singular outer state $(y,a)=(1,2)$, it gives
\begin{equation}
 W_\infty=\frac{r^2}{16}+\frac2{r^2}-\frac14,\qquad
 \inf W_\infty=\frac1{\sqrt2}-\frac14>0.                   \label{eq:Winfty}
\end{equation}
\end{itemize}

\subsection{Exact potential lower bound for the universal core}\label{subsec:kernelK}

The main coefficient to be estimated in \eqref{eq:Wsplit} is
\(\mathfrak c(y,a)\).  In this subsection we substitute $a=yz$ into
\eqref{eq:frakc} to obtain \eqref{eq:cyz}, and then control the trajectory
of the autonomous system \eqref{eq:yazsystem} by barriers and a Lyapunov
function.  The final conclusion is the uniform positive lower bound in
\eqref{eq:kernelK}.

Rewrite $\mathfrak c$ in the variables $(y,z)$:
\begin{equation}
 \mathfrak c(y,yz)=2-\frac{48}{z}+\frac{72}{z^2}
 +y\left(19-\frac{24}{z}\right)+y^2(1-z).                   \label{eq:cyz}
\end{equation}
The next statement concerns only the trajectory of the autonomous system
\eqref{eq:yazsystem} that starts at the origin and converges to $(1,2,2)$.

\begin{proposition}[Potential lower bounds for the universal core]\label{prop:kernelK}
For every $s>0$,
\begin{equation}
 \mathfrak c(\bar y,\bar a)\ge\frac1{20},\qquad
 \mathfrak a(\bar y,\bar a)\ge-\frac1{16},\qquad
 \mathfrak b(\bar y,\bar a)\ge-C(1+s^{-2}).                 \label{eq:kernelK}
\end{equation}
\end{proposition}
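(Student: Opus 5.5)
The three bounds in \eqref{eq:kernelK} are of very different difficulty. I plan to treat $\mathfrak a$ by pure algebra, $\mathfrak b$ by small-$s$ asymptotics, and $\mathfrak c$ by confining the core trajectory to an explicit compact region of the $(y,z)$ plane, with $z=a/y$ as in \eqref{eq:cyz}, and then minimizing \eqref{eq:cyz} over that region.

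\emph{The bound for $\mathfrak a$.} By \eqref{eq:fraka}, $\mathfrak a(y,a)=\frac1{16}-\frac{a-2}{a^2}$. On $a>0$ the function $a\mapsto (a-2)/a^2$ attains its maximum $1/8$ at $a=4$. Since $\bar a>0$ by $\bar Q'<0$ (Proposition~\ref{prop:coreprofileproperties}), this gives $\mathfrak a\ge -1/16$ for every $s$, with no information about the trajectory needed.

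\emph{The bound for $\mathfrak b$.} In \eqref{eq:frakb} the term $+32/a$ inside the bracket contributes positively to $\mathfrak b$, and so does $-40y/a$. The only possibly large negative contributions are $-\frac y2$ and $-24y/a^2$. I will therefore show three things: $\bar y$ is bounded, $\bar z=\bar a/\bar y$ is bounded below by a positive constant, and $\bar a\ge c\min(s^2,1)$.

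Near $s=0$, regularity of $\bar Q$ and the core equation (the profile equation without the self-similar terms) give $5\bar Q''(0)=-6\bar Q(0)^2$. Hence $\bar y\simeq \bar Q(0)s^2$, $\bar a\simeq \frac65\bar Q(0)s^2$ and $\bar z\to 6/5$. Then $y/a^2=1/(za)\lesssim s^{-2}$.

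For $s\gtrsim1$, the convergence of the trajectory to $(1,2)$ together with $\bar a>0$ (hence $\bar a$ bounded below on compact $s$-sets away from $0$) gives boundedness of all the terms. This yields $\mathfrak b\ge -C(1+s^{-2})$.

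\emph{The bound for $\mathfrak c$, which is the main step.} With the dot denoting $s\partial_s$, the core flow is $\dot y=(2-a)y$ and $\dot a=(3-a)(2y-a)$. I will use it in the variables $(y,z)$, where
\[
\frac{\dot z}{z}=(3-yz)\Big(\frac2z-1\Big)-2+yz,
\]
so that the trajectory runs from $(y,z)=(0,6/5)$ to $(1,2)$. At the endpoint, \eqref{eq:cyz} gives $\mathfrak c(1,2)=2$, which matches \eqref{eq:Winfty}. Near the origin $\mathfrak c\approx 2-40+50=12$ at $z=6/5$.

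Because the approach to $(1,2)$ is oscillatory (this is what makes the matching possible), the trajectory cannot be confined by monotonicity alone. I would proceed in two stages.
\begin{enumerate}
\item \emph{Monotone part.} Use straight-line barriers in the $(y,z)$ plane, of the type already used in Appendix~\ref{sec:JDEcoreproperties}, to trap the trajectory in a box $\{0\le y\le y_+,\ z_-\le z\le z_+\}$ with, say, $z_-\ge 6/5$.
\item \emph{Spiral part.} Use a quadratic Lyapunov function $V$ for the linearization at $(1,2)$, chosen so that $\dot V<0$ on a neighbourhood $\{V\le\eta\}$. This gives the final spiral phase inside that sublevel set.
\end{enumerate}

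On the resulting compact set $\mathcal K$, the function $\mathfrak c(y,yz)$ is a quadratic polynomial in $y$ whose coefficients are explicit rational functions of $z$. For each fixed $z$, its minimum over the relevant $y$-interval can be computed, either at the vertex or at an endpoint. Checking that this one-variable function of $z$ stays $\ge 1/20$ on $[z_-,z_+]$ finishes the proof.

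The main obstacle is step~1 and its junction with step~2: the barriers must be sharp enough that $\mathcal K$ avoids the region where \eqref{eq:cyz} becomes small. For $y$ moderately large and $z>1$, the term $y^2(1-z)$ pulls $\mathfrak c$ down, so an explicit upper bound on $\bar y$ is essential. I would first try the barrier $z\le 2+\kappa(y-1)$ together with an a priori bound $y\le y_+$, where $y_+$ is slightly above $1$, obtained from $\dot y=(2-a)y$ and a bound on the first overshoot of $a$ below $2$. Only if this fails would I fall back on a rigorous numerical enclosure of the trajectory.
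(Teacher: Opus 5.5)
Your bounds for $\mathfrak a$ and $\mathfrak b$ are fine and match the paper. For $\mathfrak a$ one has $\mathfrak a+\frac1{16}=(a-4)^2/(8a^2)$. For $\mathfrak b$ you use $\bar a\sim s^2/5$, $\bar z\to\frac65$, positivity and compactness.

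The gap is in $\mathfrak c\ge\frac1{20}$, which is the real content of the proposition. It sits exactly at the junction you flag without resolving.

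\emph{The region cannot be a product box.} As $y\to0$ with $z$ fixed, \eqref{eq:cyz} tends to $2-48/z+72/z^2$. This is negative for $12-6\sqrt3<z<12+6\sqrt3$; it equals $-4$ at $z=2$. Any box containing the orbit has $y$ reaching down to $0$ and $z_+\ge2$, since $\bar z\to2$, so it contains points with $\mathfrak c\approx-4$.

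Before the first maximum you need a thin $y$-dependent strip around $z\approx\frac65$. The paper uses the lower line $z>\frac65+\frac y{20}$, the curved barrier $u_0(y)=\frac65+\frac7{100}y+\frac1{10}y^2$, and four affine barriers. A line through $(1,2)$ such as $z\le2+\kappa(y-1)$ can serve only in that phase. The point $(1,2)$ is a stable focus with eigenvalues $(-1\pm i\sqrt7)/2$, so the orbit crosses every line through it infinitely often.

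\emph{The post-peak phase is the decisive problem.} After the first maximum $y_M$ the orbit makes $O(1)$ excursions; numerically $y_M\approx1.26$, not slightly above $1$. Meanwhile $\mathfrak c$ is negative at points comparably close to $(1,2)$: at $(y,a)=(\frac12,\frac32)$ it equals $-1$. A quadratic Lyapunov function for the linearization controls only some small sublevel set near $(1,2)$. Nothing in your plan shows the orbit reaches that set without first passing through such points.

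\emph{The missing idea is a global Lyapunov function for the post-peak phase.} This is the Volterra function $\mathcal H(y,\beta)=2\varphi(y)+\varphi(\beta)$, with $\varphi(x)=x-1-\log x$. It satisfies $\dot{\mathcal H}=-(\beta-1)^2$ on the whole quadrant, see \eqref{eq:Lyapunov}. Since $\beta=1$ at the first maximum, $\mathcal H\le2\varphi(y_M)$ for all later times.

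The level must also be quantitatively sharp. With only the crude bound $y_M<\frac53$ from Appendix~\ref{sec:JDEcoreproperties}, the sublevel set $\{\mathcal H\le2\varphi(\frac53)\}$ still contains $(y,a)\approx(0.605,1.5)$, where $\mathfrak c\approx-0.55$.

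The paper therefore proceeds in two steps:
\begin{enumerate}
\item It first proves $y_M<\frac{1311}{1000}$, by integrating the energy $\mathcal E=\frac12\dot y^2+\frac23y^3-y^2$, with $\dot{\mathcal E}=-(2y-1)\dot y^2$, along the barrier-controlled pre-peak segments.
\item It then deduces from $\mathcal H$ the post-peak box $\frac7{10}<y<\frac43$, $\frac32<a<\frac{59}{25}$. On this box $\mathfrak c-\frac1{20}$ reduces to an explicit cubic in $a$ with positive Bernstein coefficients.
\end{enumerate}

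Without a global confinement of this kind with explicit constants, your final minimization over $\mathcal K$ has no admissible region to run on. Your fallback to a rigorous numerical enclosure would also still need such a trapping set to handle the infinite-time tail.
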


\begin{proof}
Set $\beta=3-a$.  Proposition~\ref{prop:coreprofileproperties} derives
$\beta>0$, the autonomous system \eqref{eq:yazsystem}, and the Lyapunov
identity \eqref{eq:Lyapunov} from the universal inner equation; the complete
calculation is given in Appendix~\ref{sec:JDEcoreproperties}.  The Lyapunov
identity controls the trajectory after its first maximum.

Before the first maximum, in addition to the lower barrier
\eqref{eq:lowerbarrier}, use
\[
 u_0(y)=\frac65+\frac7{100}y+\frac1{10}y^2,\qquad 0<y\le1,
\]
and four affine upper barriers.  The Lie derivative along each boundary
reduces to a cubic or quartic polynomial with rational coefficients.
Appendix~\ref{sec:polynomial-checks} lists all Bernstein coefficients.  They are
strictly positive, so the direction of each barrier is verified without
floating-point computations.

Next we use the phase energy.  Eliminate $\beta$ from
$\dot y=y(\beta-1)$ and $\dot\beta=\beta(3-\beta-2y)$.  Differentiating the
first equation and substituting the second gives
\begin{equation*}
 \ddot y=(1-2y)\dot y+2y-2y^2.
\end{equation*}
Therefore
\begin{equation}
 \mathcal E=\frac12\dot y^2+\frac23y^3-y^2,\qquad
 \dot{\mathcal E}=-(2y-1)\dot y^2,                            \label{eq:phaseenergy}
\end{equation}
because termwise differentiation yields
\[
 \dot{\mathcal E}
 =\dot y\bigl(\ddot y+2y^2-2y\bigr)
 =(1-2y)\dot y^2.
\]
Integration over the successive segments gives
\[
 \mathcal E(6/5)\le-\frac{1041528529}{4800000000}.
\]
At the first maximum $\dot y=0$, the function $V(y)=2y^3/3-y^2$ is increasing
for $y>1$, and
\[
 V(1311/1000)+\frac{1041528529}{4800000000}
 =\frac{10230341}{24000000000}>0.
\]
Thus $y_M<1311/1000<4/3$.  Equation \eqref{eq:Lyapunov}, together with
rational bounds for $\log(1+x)$ from finite alternating series, gives after
the first maximum
\begin{equation}
 y>\frac7{10},\qquad \frac32<a<\frac{59}{25},\qquad y<\frac43. \label{eq:postpeakbox}
\end{equation}

Finally split the trajectory into the regions before the first maximum and
\eqref{eq:postpeakbox}.  In the former, concavity in \eqref{eq:cyz} reduces
the minimum to three endpoint polynomials.  In the latter,
$\mathfrak c-1/20$ reduces to
\[
 \frac{1764-2268a+787a^2-35a^3}{50a^2}.
\]
All Bernstein coefficients in the appendix are positive, so
$\mathfrak c\ge1/20$.  Moreover,
\[
 \mathfrak a+\frac1{16}=\frac{(a-4)^2}{8a^2}\ge0,
\]
while $z\ge6/5$ gives
\[
 \mathfrak b=-\frac{11}{4}-\frac y2+\frac{10}{z}
 +\frac{8z-24}{yz^2}\ge-\frac{41}{12}-\frac{10}{y}.
\]
Since $y\sim s^2/6$ at the origin and positivity plus continuity handle the
remaining intervals, $y^{-1}\le C(1+s^{-2})$.  This proves all three bounds.
\end{proof}

\subsection{Transfer of inner potential positivity to \texorpdfstring{$W_n$}{Wn}}

We now return to the full potential \eqref{eq:Wsplit}.  In the inner region,
the relative convergence \eqref{eq:relativecore} is inserted into
Proposition~\ref{prop:kernelK}.  In the outer region, the matched expansion
is substituted into the same potential formula and compared with the
singular potential \eqref{eq:Winfty}.  Combining the two regions gives the
uniform lower bound \eqref{eq:wavegap} on the whole half-line.

Return to the exact potential decomposition \eqref{eq:Wsplit}.  In the inner
region, write $q_n=\bar Q+\mu_n^2R_n$.  Estimates
\eqref{eq:relativeQprime0}--\eqref{eq:relativeQvalue} give
\begin{equation}
 \sup_{0<s\le r_0/\mu_n}
 \left(\left|\frac{q_n}{\bar Q}-1\right|
 +\left|\frac{q_n'}{\bar Q'}-1\right|\right)\longrightarrow0. \label{eq:relativecore}
\end{equation}
At $s=0$, the second quotient is interpreted by continuous extension, using
$\bar Q'(s)=-s/30+O(s^3)$ and $R_n'(s)=O(s)$.  Hence the dimensionless pair
$(y_n,z_n)$ is a uniformly small relative perturbation of the universal pair.
The strictly positive lower bound in Proposition~\ref{prop:kernelK} yields
\begin{equation}
 \mathfrak c(y_n,a_n)\ge\frac1{40},\qquad
 \mathfrak b(y_n,a_n)\ge-C(1+s^{-2}).                       \label{eq:transferbc}
\end{equation}
The bound $\mathfrak a\ge-1/16$ holds for every $a>0$.  Substitution in
\eqref{eq:Wsplit}, with $s=r/\mu_n$, gives
\begin{equation}
 W_n(r)\ge\frac1{40r^2}-C-C\frac{\mu_n^2}{r^2}-\frac{r^2}{16},
 \qquad0<r\le r_0.                                        \label{eq:Winnerlower}
\end{equation}
Choose $r_0$ small and then $n$ large enough that $C\mu_n^2\le1/80$; this
gives a fixed positive lower bound in the inner region.

In the outer region, the term $r^2\mathfrak a$ requires the differentiated
outer expansion.  From
$u_1=r^{-2}-2r^{-4}+O(r^{-6})$ and the extra
$\varepsilon_n$ in the weighted bound for $w_n$,
\[
 y_n=1+O(\varepsilon_n),\qquad
 a_n=2+O(\varepsilon_nr^{-2}),\qquad r\ge1.
\]
Thus $r^2(\mathfrak a(y_n,a_n)-1/16)=O(\varepsilon_n)$.
On $[r_0,1]$ use ordinary $C^1$ convergence.  Consequently,
\begin{equation}
 \sup_{r\ge r_0}|W_n(r)-W_\infty(r)|\longrightarrow0.       \label{eq:Wouterconv}
\end{equation}

\begin{proposition}[Positive lower bound for the full elimination potential]
\label{prop:wavepotential}
There are $n_0$ and $c_{\rm w}>0$ such that
\begin{equation}
 W_n(r)\ge c_{\rm w},\qquad r>0,\quad n\ge n_0.             \label{eq:wavegap}
\end{equation}
\end{proposition}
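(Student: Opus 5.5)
The proof splits the half-line at the fixed matching radius $r_0$ and combines the inner lower bound \eqref{eq:Winnerlower} with the outer convergence \eqref{eq:Wouterconv}. Both ingredients are already prepared above; what remains is to fix the order of the choices of $r_0$ and $n_0$ so that the resulting constant does not depend on $n$. The target is
\[
 c_{\rm w}=\min\Bigl\{\frac{1}{160 r_0^2},\ \frac12\Bigl(\frac1{\sqrt2}-\frac14\Bigr)\Bigr\}.
\]

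\emph{Inner region $0<r\le r_0$.} We use \eqref{eq:Winnerlower}. The constant $C$ there comes from \eqref{eq:transferbc} and Proposition~\ref{prop:kernelK}. It is uniform in $n$ and in $r_0$, because it depends only on the universal core through the bounds $\mathfrak b\ge -C(1+s^{-2})$, $\mathfrak a\ge -1/16$, $\mathfrak c\ge 1/40$, and these hold on all of $0<s\le r_0/\mu_n$ once \eqref{eq:relativecore} is small. First choose $r_0$ so small that
\[
 C+\frac{r_0^2}{16}\le\frac{1}{80r_0^2}.
\]
Then choose $n$ so large that $C\mu_n^2\le 1/80$. On $(0,r_0]$ these give
\[
 W_n(r)\ge \frac{1}{40r^2}-\frac{1}{80r^2}-C-\frac{r^2}{16}\ge \frac{1}{80r^2}-\frac{1}{80r_0^2}\cdot\frac12\cdot 2 .
\]
To keep a positive margin, I would instead impose $C+r_0^2/16\le 1/(160r_0^2)$. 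This yields
\[
 W_n\ge \frac{1}{80r^2}-\frac{1}{160r_0^2}\ge \frac{1}{160r_0^2}
\]
on $(0,r_0]$. The function $r\mapsto 1/(80r^2)$ is decreasing, so its minimum on the interval is attained at $r_0$, and the bound is indeed positive.

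\emph{Outer region $r\ge r_0$.} Here $W_\infty\ge 1/\sqrt2-1/4$ by \eqref{eq:Winfty}. By \eqref{eq:Wouterconv}, with $r_0$ now fixed, there is $n_0$ such that
\[
 \sup_{r\ge r_0}|W_n-W_\infty|\le \frac12\Bigl(\frac1{\sqrt2}-\frac14\Bigr), \qquad n\ge n_0 .
\]
Hence $W_n\ge \tfrac12(1/\sqrt2-1/4)$ on $[r_0,\infty)$. Taking the maximum of the two thresholds on $n$ and the minimum of the two lower bounds gives \eqref{eq:wavegap}.

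\emph{Main obstacle.} The delicate point is the order of quantifiers. The matching construction of Proposition~\ref{prop:correctedprofile-main} fixes $r_0$, whereas the argument above wants to shrink $r_0$. If $r_0$ cannot be chosen freely, I would avoid shrinking it by adding an intermediate annulus $[\rho,r_0]$ with $\rho$ small. On this annulus I would use the $C^1$ convergence of the outer expansion together with the inner convergence \eqref{eq:relativecore} to compare $W_n$ with the limiting potential built from the universal core at large $s$. That limit is $W_\infty$, since $(\bar y,\bar a)\to(1,2)$, and it is uniformly positive there. The remaining care is to check that the $r^2\mathfrak a$ term, of size $O(r^2)$ with bounded coefficient, and the $\mathfrak b$ term, bounded by $C(1+\mu_n^2/r^2)$, stay below the margin uniformly on this annulus. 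For the $\mathfrak b$ term this follows from the uniform convergence of $(y_n,a_n)$ to $(1,2)$ in the matching zone.
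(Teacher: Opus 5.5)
Your proof is correct and follows the paper's argument. You split at $r_0$, use the fact that the constant in \eqref{eq:Winnerlower} is independent of $n$ and $r_0$ to shrink $r_0$ until $1/(80r^2)-C-r^2/16$ has a positive lower bound on $(0,r_0]$, and then take $n$ large so that $C\mu_n^2\le 1/80$ while \eqref{eq:Wouterconv} and $\inf W_\infty=1/\sqrt2-1/4$ handle $[r_0,\infty)$. On the quantifier concern: the paper simply takes $r_0$ small from the outset, since the matching construction allows $r_0$ to be as small as needed. Your fallback of splitting at an auxiliary $\rho<r_0$ would also work, because \eqref{eq:Winnerlower} holds on $(0,\rho]$ and $(y_n,a_n)\to(1,2)$ uniformly on $[\rho,r_0]$.
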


\begin{proof}
First fix a sufficiently small $r_0>0$.  The inner estimate
\eqref{eq:Winnerlower} reads
\[
 W_n(r)\ge\frac1{40r^2}-C-C\frac{\mu_n^2}{r^2}-\frac{r^2}{16},
 \qquad 0<r\le r_0.
\]
Decrease $r_0$ until $1/(80r^2)-C-r^2/16$ has a positive lower bound on
$(0,r_0]$, and then increase $n$ until $C\mu_n^2\le1/80$.  There is thus a
constant $c_{\rm in}>0$, independent of $n$, such that
$W_n\ge c_{\rm in}$ on $(0,r_0]$.

In the outer region, \eqref{eq:Wouterconv} gives
\[
 \sup_{r\ge r_0}|W_n(r)-W_\infty(r)|\longrightarrow0.
\]
By \eqref{eq:Winfty}, $W_\infty$ has a strictly positive lower bound
$c_\infty(r_0)$ on $[r_0,\infty)$.  Hence, for all sufficiently large $n$,
$W_n\ge c_\infty(r_0)/2$ on this interval.  Taking
\[
 c_{\rm w}=\min\{c_{\rm in},c_\infty(r_0)/2\}>0
\]
covers the whole half-line.
\end{proof}
For all sufficiently large \(n\), we henceforth denote by
\(\mathcal H_n\) the Friedrichs realization on \(L^2(0,\infty)\)
of the differential expression
\[
-\partial_{rr}+W_n.
\]
Indeed, the endpoint asymptotics give
\[
W_n(r)=\frac{12}{r^2}+O(1),
\qquad r\to0,
\]
while \(W_n(r)\to+\infty\) quadratically as \(r\to\infty\).
Thus the associated quadratic form is closed and bounded from
below, and its Friedrichs realization is self-adjoint. In
particular, Proposition~\ref{prop:wavepotential} implies
\[
\sigma(\mathcal H_n)\subset[c_{\rm w},\infty).
\]

\subsection{Return to the non-self-adjoint point spectrum and exclusion of Jordan chains}

Translation symmetry gives
\begin{equation}
 L_{n,1}U_n'=-\frac12U_n'.                                   \label{eq:translation}
\end{equation}

\begin{lemma}[Endpoint-domain transfer under the wave operator]
\label{lem:l1domaintransfer}
Let \(f\in D((L_{n,1}-z)^q)\) satisfy
\[
(L_{n,1}-z)^qf=0
\]
and the natural endpoint conditions associated with the regular,
decaying closed realization of \(L_{n,1}\). Then
\begin{equation}
 p=M_n^{-1}T_nf\in D(\mathcal H_n^q),
 \qquad
 (\mathcal H_n-z)^qp=0.                                    \label{eq:l1domaintransfer}
\end{equation}
If \(p=0\), then
\[
f\in\operatorname{span}\{U_n'\}.
\]
\end{lemma}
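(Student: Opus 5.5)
The algebraic part comes directly from the reduction identity. By \eqref{eq:waveoperatorreduction}, $T_n(L_{n,1}-z)=(\widetilde L_{n,1}-z)T_n$ as differential expressions acting on smooth functions. Iterating $q$ times gives $T_n(L_{n,1}-z)^q f=(\widetilde L_{n,1}-z)^qT_nf$. Conjugating by $M_n$ through \eqref{eq:Hn} gives $(\mathcal H_n-z)^qp=M_n^{-1}T_n(L_{n,1}-z)^qf=0$ in the sense of differential expressions.

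For this to be legitimate, each iterate $(L_{n,1}-z)^jf$, $j<q$, must be smooth on $(0,\infty)$. This follows from local elliptic regularity of the integro-differential equation: $\partial_r\Delta_1^{-1}$ of a $C^k$ function is $C^{k+1}$ by \eqref{eq:newtonD}, and the coefficients are smooth. I would therefore prove regularity by induction downward from $j=q-1$.

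The real content is the domain claim $p\in D(\mathcal H_n^q)$, which is where I expect the main obstacle. I would argue by the asymptotics at each endpoint, for $g_j:=(L_{n,1}-z)^jf$.

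At $r=0$, regularity for the $l=1$ realization means $g_j=O(r)$ with $g_j'$ bounded. Then $D_3^{-1}g_j=O(r)$. Using $U_n'=O(r)$ and $K_n=2r\Phi_n'\sim -cr^2$, the ratio $U_n'/K_n$ behaves like $r^{-1}$. Hence $T_ng_j=O(r)$. Since $M_n\sim r^{-1}$, the conjugated function $M_n^{-1}T_ng_j=O(r^2)$. This is exactly the Friedrichs branch for $W_n\sim12/r^2$, whose exponents are $r^4$ and $r^{-3}$. To justify discarding the $r^{-3}$ branch, I would use the Frobenius structure: the $O(r^2)$ expansion forces the leading behaviour onto the $r^4$ branch, possibly with a particular-solution correction that stays in the form domain, and the $r^{-3}$ branch is incompatible with $O(r^2)$.

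At $r=\infty$, square integrability of $g_j$ in $r^2\,dr$ for the decaying realization yields Gaussian-type decay of genuine eigen- and generalized eigenfunctions. The confining drift $\tfrac r2\partial_r$ leaves only two branches: a polynomially decaying one, and a branch growing like $e^{r^2/4}$, which is excluded by $L^2$. Since $U_n'/K_n=O(r)$ by \eqref{eq:Rya} with $a\to2$, $T_ng_j$ decays polynomially. Moreover $M_n^{-1}\sim r\,e^{-r^2/8}\cdot(\text{power})$, so $p$ and all the iterates $M_n^{-1}T_ng_j=(\mathcal H_n-z)^{q-j}$-preimages lie in $L^2(0,\infty)$ with $W_n^{1/2}p$ and $p'$ square integrable. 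Because $\mathcal H_n$ is limit point at infinity, with $W_n\to\infty$, membership in $D(\mathcal H_n)$ then reduces to $L^2$ membership of the function and of its image. Applying this to each $j$ inductively gives $p\in D(\mathcal H_n^q)$.

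For the final claim, suppose $p=0$. Then $T_nf=0$ because $M_n\neq0$, and \eqref{eq:kerT} gives $f\in\operatorname{span}\{U_n'\}$. Note that \eqref{eq:kerT} used only $K_n\neq0$, which holds by \eqref{eq:Knegative}. If instead one wants the conclusion under the weaker hypothesis $T_n(L_{n,1}-z)^jf=0$, I would apply the same kernel argument to each iterate in turn.
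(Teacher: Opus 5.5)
Your proposal is correct and follows essentially the same route as the paper. You iterate the reduction identity along $g_j=(L_{n,1}-z)^jf$, which is legitimate because each $g_j$ is regular at $0$ and square integrable, as the fixed endpoints of $D_3^{-1}$ and $\Delta_1^{-1}$ require (smoothness alone is not enough). You then check both endpoints, with the Gaussian factor of $M_n^{-1}$ doing the work at infinity, and conclude with $\ker T_n=\operatorname{span}\{U_n'\}$. Your use of the limit-point property to reduce membership in $D(\mathcal H_n^q)$ to $M_n^{-1}T_ng_j\in L^2$ for all $j$ is a clean way to organize the domain check; note the origin is also limit point, since $r^{-3}\notin L^2(0,1)$.

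There are two slips to fix. First, near the origin $D_3^{-1}g_j=O(r^2)$, not $O(r)$. In fact $U_n'/K_n=5/r+O(r)$ and $D_3^{-1}(c_0r)=\tfrac{c_0}{5}r^2$, so the leading terms cancel exactly, giving $T_ng_j=O(r^3)$ and $p=O(r^4)$ directly on the $r^4$ branch, as in the paper. Second, eigenfunctions of $L_{n,1}$ decay only polynomially, not in a Gaussian way: already $U_n'$ is of order $r^{-3}$. All the Gaussian decay of $p$ therefore comes from $M_n^{-1}$.
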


\begin{proof}
First consider the origin.  A smooth radial profile has the even expansion
\[
 \Phi_n(r)=\phi_0+\phi_2r^2+O(r^4).
\]
For sufficiently large matched profiles, the inner expansion and
\eqref{eq:Knegative} give $\phi_2<0$, so the leading denominator below does
not vanish.  From $U_n=6\Phi_n+2r\Phi_n'$ and $K_n=2r\Phi_n'$,
\[
 U_n'(r)=20\phi_2r+O(r^3),\qquad
 K_n(r)=4\phi_2r^2+O(r^4),\qquad
 \frac{U_n'}{K_n}=\frac5r+O(r).
\]
The indicial roots are determined by the $l=1$ Laplace principal part; the two
branches are $r$ and $r^{-2}$.  The natural domain selects
\[
 f(r)=c_0r+O(r^3).
\]
The same expansion holds at every level of a generalized eigenfunction chain,
because its right-hand side is the regular function from the preceding level.
Therefore
\[
 D_3^{-1}f
 =r^{-3}\int_0^r(c_0s+O(s^3))s^3\dd s
 =\frac{c_0}{5}r^2+O(r^4).
\]
In $T_nf=f-(U_n'/K_n)D_3^{-1}f$, the leading $c_0r$ terms cancel exactly, so
\begin{equation}
 T_nf=O(r^3),\qquad r\to0.                                 \label{eq:Tforigin}
\end{equation}
On the other hand,
\[
 M_n(r)=r^{-1}(1+O(r^2)),
\]
Thus $p=M_n^{-1}T_nf=O(r^4)$.  It selects the square-integrable regular branch
of $\mathcal H_n$.  To verify this domain statement, one must compute the
complete leading term of the conjugated potential at the origin.  Indeed,
\begin{align*}
 U_n(r)&=6\phi_0+10\phi_2r^2+O(r^4),\\
 U_n'(r)&=20\phi_2r+O(r^3),\\
 K_n(r)&=4\phi_2r^2+O(r^4),\\
 R_n(r):=\frac{U_n'}{K_n}&=\frac5r+O(r),
 \qquad R_n'(r)=-\frac5{r^2}+O(1).
\end{align*}
Moreover, $A_n=-2/r+O(r)$ and $C_n=2/r^2+O(1)$.  Hence
\begin{align*}
 W_n
 &=C_n-2R_n'-\frac12A_n'+\frac14A_n^2\\
 &=\left(2+10-1+1\right)\frac1{r^2}+O(1)
 =\frac{12}{r^2}+O(1).                                    \label{eq:Worigin}
\end{align*}
The indicial equation for the principal equation
\[
 -p''+\frac{12}{r^2}p=0
\]
is $-\alpha(\alpha-1)+12=0$, and therefore
$\alpha=4,-3$.  Only the $r^4$ branch belongs to $L^2(0,1)$.  Thus the
estimate $p=O(r^4)$ obtained above selects exactly the square-integrable
regular branch of $\mathcal H_n$ at the origin and belongs to the
corresponding Friedrichs operator domain.

At infinity, $\Phi_n=O(r^{-2})$ gives
\[
 M_n^{-1}(r)
 =r\exp\left(-\frac{r^2}{8}
 +\int_0^r s\Phi_n(s)\dd s\right)
 =e^{-r^2/8}\,r^{O(1)}.
\]
The endpoint ODE analysis in \cite[Section~2]{LiZhou2025} shows that, after
excluding the $e^{r^2/4}$ branch, eigenfunctions and generalized eigenfunctions belonging to the
regular, decaying closed realization, grow at most like a polynomial times logarithms.
The operator $D_3^{-1}$ and the coefficient $U_n'/K_n$ change only finitely
many powers.  Hence $p$, and its images under finitely many applications of
$\mathcal H_n$, have Gaussian decay, belong to $L^2(0,\infty)$, and satisfy
the operator-domain condition at infinity.

The wave-operator reduction identity first holds on smooth compactly supported
functions.  Cut off $f$ using the two endpoint expansions above; all cutoff
boundary terms tend to zero.  The identity therefore extends to $f$ and its
Jordan chain.  Applying it $q$ times gives
\[
 (\mathcal H_n-z)^qM_n^{-1}T_nf
 =M_n^{-1}T_n(L_{n,1}-z)^qf=0.
\]
Finally, if $p=0$, then $T_nf=0$, and
\eqref{eq:kerT} makes $f$ a multiple of $U_n'$.
\end{proof}

By \eqref{eq:wavegap} and self-adjointness,
$\sigma(\mathcal H_n)\subset[c_{\rm w},\infty)$.

\begin{proposition}[Point-spectrum conclusion in the $l=1$ mode]
\label{prop:l1gap}
For the matched self-similar solutions selected in
Proposition~\ref{prop:correctedprofile-main}, there is a constant
$\delta_1\in(0,1/4)$, independent of all sufficiently large $n$, such that
\begin{equation}
 \sigma_p(L_{n,1})\cap\{z\in\mathbb C:\operatorname{Re}z<\delta_1\}
 =\{-1/2\}.                                                \label{eq:l1theorem}
\end{equation}
Moreover,
\begin{equation}
 \ker(L_{n,1}+1/2)=\operatorname{span}\{U_n'\}.             \label{eq:l1kernel}
\end{equation}
There is no nontrivial Jordan chain at $-1/2$.  Thus, on each fixed $m$
component, the eigenspace at $-1/2$ is one-dimensional and has no nontrivial
generalized eigenvectors.  Its isolation and algebraic simplicity will follow
from the Fredholm closure in Proposition~\ref{prop:l1fullclosure}.
\end{proposition}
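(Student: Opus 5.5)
We choose $\delta_1:=\min\{c_{\rm w},1/8\}$, where $c_{\rm w}$ is the uniform constant of Proposition~\ref{prop:wavepotential}. Then $\delta_1\in(0,1/4)$ and $\delta_1$ does not depend on $n\ge n_0$.

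\emph{Step 1: exclusion of eigenvalues.} Suppose $L_{n,1}f=zf$ with $f\neq0$ in the closed domain and $\operatorname{Re}z<\delta_1$. Lemma~\ref{lem:l1domaintransfer} with $q=1$ gives
\[
p=M_n^{-1}T_nf\in D(\mathcal H_n),\qquad \mathcal H_np=zp.
\]
The operator $\mathcal H_n$ is self-adjoint, so $\sigma(\mathcal H_n)\subset[c_{\rm w},\infty)$. The point $z$ has $\operatorname{Re}z<c_{\rm w}$, so it lies in the resolvent set of $\mathcal H_n$ (either $z\notin\R$ or $z<c_{\rm w}$). Hence $p=0$, and the last clause of Lemma~\ref{lem:l1domaintransfer} gives $f=cU_n'$ with $c\neq0$.

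By the translation identity \eqref{eq:translation}, $L_{n,1}U_n'=-U_n'/2$, so $z=-1/2$. Conversely, $-1/2$ is an eigenvalue, because $U_n'$ belongs to the closed domain: near the origin $U_n'=20\phi_2r+O(r^3)$, which is the regular branch, and at infinity $U_n'=O(r^{-3})$, which is square integrable against $r^2\dd r$. Together these give \eqref{eq:l1theorem} and \eqref{eq:l1kernel}.

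\emph{Step 2: no Jordan chain.} Suppose $(L_{n,1}+1/2)g=U_n'$. Then $(L_{n,1}+1/2)^2g=0$, and Lemma~\ref{lem:l1domaintransfer} with $q=2$ gives
\[
p=M_n^{-1}T_ng\in D(\mathcal H_n^2),\qquad (\mathcal H_n+1/2)^2p=0.
\]
Since $-1/2<c_{\rm w}$, the operator $\mathcal H_n+1/2$ is injective on its domain. Applying this twice gives $p=0$, so $g=cU_n'$. Substituting back yields
\[
U_n'=(L_{n,1}+1/2)(cU_n')=0,
\]
which contradicts $U_n'\not\equiv0$; indeed $U_n'\sim20\phi_2r$ with $\phi_2<0$. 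So there is no nontrivial Jordan chain at $-1/2$. The eigenspace on each fixed $m$ component is therefore one-dimensional with no generalized eigenvectors. Isolation and algebraic simplicity are deferred to Proposition~\ref{prop:l1fullclosure}, as the statement says.

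\emph{Main obstacle.} Everything rests on the domain transfer in Lemma~\ref{lem:l1domaintransfer}. Two points need the most care. First, at the origin, a generalized eigenfunction must take the regular form $c_0r+O(r^3)$ at every level of the chain, so that the $c_0r$ terms in $T_nf$ cancel and $p=O(r^4)$. Second, at infinity, eigenfunctions in the decaying realization must have only polynomial–logarithmic growth, so that the factor $M_n^{-1}\sim e^{-r^2/8}r^{O(1)}$ makes $p$ Gaussian decaying and places it in $D(\mathcal H_n^q)$.

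The lemma already packages both points. The only remaining thing to check is that the injectivity argument is applied inside $D(\mathcal H_n^2)$, where self-adjointness legitimately gives the resolvent bound.
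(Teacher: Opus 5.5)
Your proof is correct and follows essentially the paper's route: the domain-transfer lemma and the wave operator $T_n$ reduce everything to the positive self-adjoint operator $\mathcal H_n\ge c_{\rm w}$, and $\ker T_n=\operatorname{span}\{U_n'\}$ accounts for the translation mode. The differences are cosmetic: you use that $z$ lies in the resolvent set of $\mathcal H_n$ instead of pairing $\mathcal H_np=zp$ with $p$, and you exclude the Jordan chain via the $q=2$ case of Lemma~\ref{lem:l1domaintransfer}, whereas the paper applies $T_n$ once to $(L_{n,1}+1/2)f_1=U_n'$ and uses $T_nU_n'=0$.
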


\begin{proof}
By \eqref{eq:wavegap}, there is a constant $c_{\rm w}>0$, independent of all
sufficiently large $n$, such that
\[
 \mathcal H_n\ge c_{\rm w}.
\]
Fix
\begin{equation}
 \delta_1:=\frac12\min\{c_{\rm w},\tfrac14\}.              \label{eq:l1delta}
\end{equation}
Then $0<\delta_1<\min\{c_{\rm w},1/4\}$, and $\delta_1$ is independent of
all sufficiently large $n$.

We first exclude point spectrum.  Suppose that
\[
 0\ne f\in D(L_{n,1}),\qquad L_{n,1}f=zf,\qquad
 \operatorname{Re}z<\delta_1.
\]
If $T_nf=0$, then \eqref{eq:kerT} shows that $f$ is a multiple of $U_n'$;
equation \eqref{eq:translation} then gives $z=-1/2$.  If $T_nf\ne0$, set
\[
 p=M_n^{-1}T_nf.
\]
Lemma~\ref{lem:l1domaintransfer} gives $0\ne p\in D(\mathcal H_n)$ and
$\mathcal H_np=zp$.  Taking the inner product with $p$ yields
\[
 z\|p\|_2^2=\langle\mathcal H_np,p\rangle
 \ge c_{\rm w}\|p\|_2^2.
\]
The right-hand side is real, so $z\in\mathbb R$ and $z\ge c_{\rm w}$, which
contradicts $\operatorname{Re}z<\delta_1<c_{\rm w}$.  Together with
\eqref{eq:translation}, this proves \eqref{eq:l1theorem}.

We next determine the kernel at $-1/2$.  Suppose that
$(L_{n,1}+1/2)f=0$.  If $T_nf\ne0$, then
Lemma~\ref{lem:l1domaintransfer} and the wave-operator reduction identity give
\[
 (\mathcal H_n+1/2)M_n^{-1}T_nf=0.
\]
Since $\mathcal H_n\ge c_{\rm w}>0$, one has
$\ker(\mathcal H_n+1/2)=\{0\}$, a contradiction.  Hence $T_nf=0$, and
\eqref{eq:kerT} gives $f\in\operatorname{span}\{U_n'\}$.  The reverse
inclusion follows directly from \eqref{eq:translation}, proving
\eqref{eq:l1kernel}.

Finally, suppose that a Jordan chain of length at least two exists.  By
\eqref{eq:l1kernel}, there is $f_1\in D(L_{n,1})$ such that
\begin{equation}
 (L_{n,1}+1/2)f_1=U_n'.                                    \label{eq:l1jordanassumption}
\end{equation}
If $T_nf_1=0$, then $f_1$ is a multiple of $U_n'$, and the left-hand side of
\eqref{eq:l1jordanassumption} vanishes, contradicting $U_n'\ne0$.  Thus
$T_nf_1\ne0$.  Apply $T_n$ to \eqref{eq:l1jordanassumption}, use
\eqref{eq:waveoperatorreduction} and $T_nU_n'=0$, and then conjugate by $M_n$ to obtain
\[
 (\mathcal H_n+1/2)M_n^{-1}T_nf_1=0.
\]
Positivity again gives $M_n^{-1}T_nf_1=0$, contradicting $T_nf_1\ne0$.
Therefore no nontrivial Jordan chain exists.  Together with the
one-dimensional kernel, this shows that the fixed radial coefficient
operator has no generalized eigendirection at $-1/2$ other than $U_n'$.
\end{proof}

\section{The \texorpdfstring{$l\ge2$}{l>=2} Fourier mode spaces: a unified quadratic form and exclusion of point spectrum}\label{sec:higherangular}

For $l\ge2$, the spectral lower bound comes from the balance between the
angular Laplacian and the Newton potential.  We first integrate by parts in
the original variable to obtain the exact real-part quadratic form.  We then
set $t=\log r$ and write the Newton inverse in terms of inverses of
constant-coefficient first-order operators.  This gives the common
Mellin--Newton quadratic form for all $l\ge2$.  When $l\ge3$, the angular term
gives a uniform strictly positive lower bound.  For the lowest degree $l=2$,
the same quadratic form applies, but the endpoint terms require sharper
estimates.

This section uses the quadratic form only to exclude eigenfunctions in
$\operatorname{Re}z<1/4$.  Proposition~\ref{prop:fixedmodefullclosure} in
Section~\ref{sec:stablelinear} supplies the common Fredholm argument that
upgrades exclusion of point spectrum to exclusion of the full spectrum.
The properties of the universal inner solution used below are not copied
directly from [6].  Proposition~\ref{prop:coreprofileproperties} states the
required results, whose complete recalculation is placed in
Appendix~\ref{sec:JDEcoreproperties}.  The actual matched solution is always
provided by the order-$\mu^2$ corrected construction in
Appendix~\ref{sec:correctedmatchingappendix}.

\subsection{The common Mellin--Newton quadratic form for \texorpdfstring{$l\ge2$}{l>=2}}

The starting point of this subsection is always the mode operator
\eqref{eq:Ll}.  We first integrate by parts in
$\operatorname{Re}\langle L_{n,l}f,f\rangle$ to obtain \eqref{eq:qexact}.
Then $t=\log r$ and \eqref{eq:Mellinvars} are used to rewrite both the local
and the Newton terms, yielding \eqref{eq:qMellin}.  All later inner and outer
estimates are applied to this common quadratic form.

\subsubsection{The exact quadratic form}

Take $w=\Delta_l^{-1}f$ in \eqref{eq:Ll}, so that $f=\Delta_lw$.  We pair
each term with $f$.  The local terms are integrated by parts directly.  In
the nonlocal term, we first replace $f$ by $\Delta_lw$ and then integrate by
parts.  This reduces all nonlocal contributions to the last two terms in
\eqref{eq:qexact}, which involve only $w$ and $w'$.

\begin{lemma}[Exact real-part quadratic form]\label{lem:qexact}
Let $w=\Delta_l^{-1}f$, that is, $f=\Delta_lw$.  For every smooth,
rapidly decaying complex-valued function,
\begin{align}
q_{n,l}[f]
:={}&\operatorname{Re}\langle L_{n,l}f,f\rangle_{L^2(r^2\dd r)}\notag\\
={}&\int_0^\infty\left(r^2|f'|^2+J_l|f|^2
+\frac14r^2|f|^2-\frac32r^2U_n|f|^2\right)\dd r\notag\\
&+\frac12\int_0^\infty
r^2\left(U_n''-\frac2rU_n'\right)|w'|^2\dd r
-\frac{J_l}{2}\int_0^\infty U_n''|w|^2\dd r.                 \label{eq:qexact}
\end{align}
\end{lemma}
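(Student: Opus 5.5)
The identity \eqref{eq:qexact} should follow by pairing each term of \eqref{eq:Ll} (with $U=U_n$, $P=P_n$) against $f$ in $L^2(r^2\dd r)$, taking real parts, and integrating by parts once per term. I will first treat the local terms $-\Delta_lf+\frac12(2+r\partial_r)f-2U_nf-P_nf'$ and then the Newton term $-U_n'\partial_r\Delta_l^{-1}f$, written in terms of $w=\Delta_l^{-1}f$. Throughout, $f$ is smooth and rapidly decaying, and I will check at the end that every boundary term at $r=0$ and $r=\infty$ vanishes.

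\emph{Local terms.} From $r^2\Delta_lf=(r^2f')'-J_lf$, one integration by parts gives
\[
\operatorname{Re}\langle-\Delta_lf,f\rangle=\int_0^\infty\bigl(r^2|f'|^2+J_l|f|^2\bigr)\dd r .
\]
The identity term contributes $\int_0^\infty r^2|f|^2\dd r$. For the dilation term I use $\operatorname{Re}(f'\bar f)=\frac12(|f|^2)'$:
\[
\operatorname{Re}\int_0^\infty\frac{r^3}{2}f'\bar f\dd r=\int_0^\infty\frac{r^3}{4}(|f|^2)'\dd r=-\frac34\int_0^\infty r^2|f|^2\dd r .
\]
Together with the identity term this leaves $\frac14\int_0^\infty r^2|f|^2\dd r$. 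For the drift term I use $(r^2P_n)'=r^2U_n$, which follows from \eqref{eq:matched-mode-operator}. This gives
\[
-\operatorname{Re}\int_0^\infty P_nf'\bar f\,r^2\dd r=\frac12\int_0^\infty r^2U_n|f|^2\dd r .
\]
Adding the term $-2U_n|f|^2$ produces the coefficient $-\frac32r^2U_n$, so the first line of \eqref{eq:qexact} is obtained.

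\emph{Newton term.} I substitute $\bar f=r^{-2}\overline{(r^2w')'}-J_lr^{-2}\bar w$, so that
\[
-\operatorname{Re}\int_0^\infty U_n'w'\bar f\,r^2\dd r=-\operatorname{Re}\int_0^\infty U_n'w'\,\overline{(r^2w')'}\dd r+J_l\operatorname{Re}\int_0^\infty U_n'w'\bar w\dd r .
\]
In the first piece I write $g=r^2w'$, so the integrand is $U_n'r^{-2}\operatorname{Re}(g\bar g')=\frac12U_n'r^{-2}(|g|^2)'$. Integrating by parts gives
\[
\frac12\int_0^\infty(U_n'r^{-2})'r^4|w'|^2\dd r=\frac12\int_0^\infty r^2\Bigl(U_n''-\frac2rU_n'\Bigr)|w'|^2\dd r .
\]
The second piece equals $\frac{J_l}2\int_0^\infty U_n'(|w|^2)'\dd r=-\frac{J_l}2\int_0^\infty U_n''|w|^2\dd r$. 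These two pieces are exactly the last line of \eqref{eq:qexact}.

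\emph{Boundary terms.} This is the only real obstacle, and I will control it with the explicit formula \eqref{eq:Deltal-inverse}. At the origin, $w=O(r^l)$ and $w'=O(r^{l-1})$ with $l\ge2$, while $U_n'=O(r)$ by smoothness and radial symmetry. Hence the products $U_n'r^2|w'|^2$ and $U_n'|w|^2$ vanish at $r=0$, and so do the local boundary terms $r^2f'\bar f$, $r^3|f|^2$ and $r^2P_n|f|^2$.

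At infinity, rapid decay of $f$ gives $w=O(r^{-l-1})$ and $w'=O(r^{-l-2})$. By \eqref{eq:Unfamily-summary}, $U_n$ and its derivatives are bounded, and $U_n'=O(r^{-3})$ there. Therefore every boundary term at infinity tends to zero.
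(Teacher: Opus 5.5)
Your proof is correct and is essentially the paper's argument: the same term-by-term integration by parts. The only cosmetic difference is that you write the Newton pairing in the divergence form $\overline{(r^2w')'}$ before integrating by parts, whereas the paper integrates the $\bar w''$ term by parts and then absorbs the $\frac2r\bar w'$ contribution.

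Your explicit boundary-term check via \eqref{eq:Deltal-inverse}, which the paper leaves implicit, is also sound. The bound $w=O(r^l)$ uses $f=O(r^l)$ at the origin, but even for merely bounded $f$ one has $w=O(r^2\log(1/r))$, and all boundary terms still vanish.
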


\begin{proof}
We integrate term by term.  First,
\[
 \operatorname{Re}\int_0^\infty(-\Delta_lf)\bar f\,r^2\dd r
 =\int_0^\infty(r^2|f'|^2+J_l|f|^2)\dd r.
\]
Next,
\[
 \operatorname{Re}\int_0^\infty\tfrac12(2f+rf')\bar f\,r^2\dd r
 =\int_0^\infty r^2|f|^2\dd r
 +\frac14\int_0^\infty r^3(|f|^2)'\dd r.
\]
The boundary term vanishes by regularity and rapid decay, and hence
\[
 \frac14\int_0^\infty r^3(|f|^2)'\dd r
 =-\frac34\int_0^\infty r^2|f|^2\dd r.
\]
Thus the drift term equals
\[
 \left(1-\frac34\right)\int_0^\infty r^2|f|^2\dd r
 =\frac14\int_0^\infty r^2|f|^2\dd r.
\]
Since $(r^2P_n)'=r^2U_n$,
\[
 \operatorname{Re}\int_0^\infty(-P_nf')\bar f\,r^2\dd r
 =\frac12\int_0^\infty r^2U_n|f|^2\dd r.
\]
Combining this with the local potential term $-2U_nf$ gives
$-\frac32\int r^2U_n|f|^2\dd r$.

It remains to calculate the Newton term.  Since $f=\Delta_lw$,
\begin{align*}
N&:=-\operatorname{Re}\int_0^\infty
 U_n'w'\overline{\Delta_lw}\,r^2\dd r\\
&=-\operatorname{Re}\int_0^\infty
 U_n'w'\left(\bar w''+\frac2r\bar w'
 -\frac{J_l}{r^2}\bar w\right)r^2\dd r.
\end{align*}
Integrating by parts in the term containing $\bar w''$ and using
$\operatorname{Re}(w'\bar w'')=\frac12(|w'|^2)'$, we obtain
\[
 -\operatorname{Re}\int U_n'w'\bar w''r^2\dd r
 =\frac12\int(r^2U_n')'|w'|^2\dd r.
\]
Combining this with $-2\int rU_n'|w'|^2\dd r$, its coefficient becomes
\[
 \frac12(r^2U_n')'-2rU_n'
 =\frac12r^2\left(U_n''-\frac2rU_n'\right).
\]
Finally, using $\operatorname{Re}(w'\bar w)=\frac12(|w|^2)'$ gives
\[
 J_l\operatorname{Re}\int U_n'w'\bar w\,\dd r
 =-\frac{J_l}{2}\int U_n''|w|^2\dd r.
\]
Adding these four contributions proves \eqref{eq:qexact}.
\end{proof}

\subsubsection{Logarithmic radius, Mellin variables, and the Newton inverse}

Starting from \eqref{eq:qexact}, substitute \eqref{eq:Mellinvars}.  Radial
scaling becomes translation in $t$, and the equation $f=\Delta_lw$ becomes
the constant-coefficient relation $F=\mathcal A_lv$.  Substituting these
identities term by term into \eqref{eq:qexact} gives the Mellin--Newton form
\eqref{eq:qMellin}.

Here the Mellin variable is simply the logarithmic radius $t=\log r$; the
scaling $r\mapsto cr$ is thereby changed into the translation
$t\mapsto t+\log c$.  Set
Set
\begin{equation}
 t=\log r,\qquad F(t)=r^{1/2}f(r),\qquad v(t)=r^{-3/2}w(r).
                                                                    \label{eq:Mellinvars}
\end{equation}
Write $D=\partial_t=r\partial_r$. Here and throughout this subsection, unless otherwise specified,
\[
\|\cdot\|_2
:=
\|\cdot\|_{L^2(\mathbb R,\dd t)}.
\]
Whenever functions of \(r\) and \(t\) occur in
the same formula, we understand that \(r=e^t\). 

Since $w=r^{3/2}v(t)$, the chain rule gives
\begin{align*}
 w'&=r^{1/2}(D+\tfrac32)v,\\
 w''&=r^{-1/2}(D+\tfrac12)(D+\tfrac32)v.
\end{align*}
Hence
\begin{align*}
 \Delta_lw
 &=w''+\frac2r w'-\frac{J_l}{r^2}w\\
 &=r^{-1/2}\left[(D+\tfrac12)(D+\tfrac32)
      +2(D+\tfrac32)-J_l\right]v\\
 &=r^{-1/2}\left(D^2+4D+\frac{15}{4}-J_l\right)v.
\end{align*}
Comparison with $f=r^{-1/2}F$ yields
\begin{equation}
 F=\mathcal A_lv,\qquad
 \mathcal A_l=\partial_t^2+4\partial_t+\frac{15}{4}-J_l
 =\left(\partial_t-l+\frac32\right)
  \left(\partial_t+l+\frac52\right).                         \label{eq:Al}
\end{equation}
Define
\begin{equation}
\mathsf h_n=r^2U_n,\quad
\mathsf a_n=r^4\left(U_n''-\frac2rU_n'\right),\quad
\mathsf b_n=r^4U_n''.                                        \label{eq:hab}
\end{equation}
To display the origin of every weight, we record the change of variables
term by term.  Since $f=r^{-1/2}F$, $w=r^{3/2}v$, and $\dd r=r\dd t$,
\begin{align*}
 \int_0^\infty r^2|f'|^2\dd r
 &=\int_\R\left|\left(D-\frac12\right)F\right|^2\dd t,\\
 \int_0^\infty |f|^2\dd r&=\int_\R|F|^2\dd t,\\
 \int_0^\infty r^2U_n|f|^2\dd r
 &=\int_\R\mathsf h_n|F|^2\dd t,\\
 \int_0^\infty r^2|f|^2\dd r
 &=\int_\R e^{2t}|F|^2\dd t,\\
 \int_0^\infty r^2\left(U_n''-\frac2rU_n'\right)|w'|^2\dd r
 &=\int_\R\mathsf a_n\left|\left(D+\frac32\right)v\right|^2\dd t,\\
 \int_0^\infty U_n''|w|^2\dd r
 &=\int_\R\mathsf b_n|v|^2\dd t.
\end{align*}
Substituting these six identities into \eqref{eq:qexact} gives
\begin{align}
q_{n,l}[f]
={}&\int_\R\left|\left(\partial_t-\frac12\right)F\right|^2\dd t
+\int_\R\left(J_l-\frac32\mathsf h_n\right)|F|^2\dd t\notag\\
&+\frac14\int_\R e^{2t}|F|^2\dd t
+\frac12\int_\R\mathsf a_n
\left|\left(\partial_t+\frac32\right)v\right|^2\dd t
-\frac{J_l}{2}\int_\R\mathsf b_n|v|^2\dd t.                  \label{eq:qMellin}
\end{align}

\begin{lemma}[Mellin--Newton inverse kernel and uniform bounds]\label{lem:mellinnewton}
For every \(l\ge2\), the operator
\[
\mathcal A_l:H^2(\mathbb R)\longrightarrow L^2(\mathbb R)
\]
is invertible, and its bounded inverse
\[
\mathcal A_l^{-1}:L^2(\mathbb R)\longrightarrow H^2(\mathbb R)
\]
is the convolution operator with Green kernel
\begin{equation}
G_l(\tau)=-\frac1{2l+1}
\begin{cases}
e^{(l-3/2)\tau},&\tau<0,\\
e^{-(l+5/2)\tau},&\tau>0.
\end{cases}                                                   \label{eq:Gl}
\end{equation}
Its Fourier symbol satisfies
\begin{equation}
|\mathcal A_l(i\xi)|^2
=\left(\xi^2+J_l-\frac{15}{4}\right)^2+16\xi^2.               \label{eq:Asymbol}
\end{equation}
For $l\ge2$, $J_l\ge6$. Set \(c_l:=J_l-\frac{15}{4} \), then
\begin{equation}
\|v\|_2^2\le\frac{16}{81}\|F\|_2^2,\qquad
\left\|\left(\partial_t+\frac32\right)v\right\|_2^2
\le\frac49\|F\|_2^2.                                         \label{eq:Ainvbounds}
\end{equation}
The uniform constants \(16/81\) and \(4/9\) are sharp; the
corresponding multiplier suprema occur at \(l=2\) and \(\xi=0\).

Moreover, for every Lipschitz function \(\chi\), the commutators,
initially defined on \(C_c^\infty(\mathbb R)\), extend uniquely to
bounded operators on \(L^2(\mathbb R)\) and satisfy
\begin{align}
\|[\chi,\mathcal A_l^{-1}]F\|_2
&\le\|\chi'\|_\infty\|F\|_2,                                  \label{eq:comm1}\\
\left\|\left[\chi,\left(\partial_t+\frac32\right)
\mathcal A_l^{-1}\right]F\right\|_2
&\le\frac53\|\chi'\|_\infty\|F\|_2.                           \label{eq:comm2}
\end{align}
\end{lemma}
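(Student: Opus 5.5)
The plan is to read everything off the factorization \eqref{eq:Al}. On the Fourier side, with $\partial_t\mapsto i\xi$, the symbol of $\mathcal A_l$ is
\[
\mathcal A_l(i\xi)=\bigl(i\xi-l+\tfrac32\bigr)\bigl(i\xi+l+\tfrac52\bigr),
\qquad
|\mathcal A_l(i\xi)|^2=\Bigl(\xi^2+\bigl(l-\tfrac32\bigr)^2\Bigr)\Bigl(\xi^2+\bigl(l+\tfrac52\bigr)^2\Bigr).
\]
Expanding this product gives $(c_l-\xi^2)^2+16\xi^2$ with $c_l=J_l-\frac{15}{4}$, which is the content of \eqref{eq:Asymbol}. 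I will work with the product form, because two of its properties do all the work: it is bounded below by $(l-\frac32)^2(l+\frac52)^2>0$, and each factor is increasing in $l$.

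The lower bound gives invertibility immediately. Since $(1+\xi^2)/|\mathcal A_l(i\xi)|$ is bounded, $\mathcal A_l:H^2\to L^2$ is an isomorphism by Plancherel. To identify the inverse as convolution with $G_l$ in \eqref{eq:Gl}, I will check two things. First, each exponential branch solves $\mathcal A_lG=0$ on its half-line: the roots of $\lambda^2+4\lambda+c_l$ are $l-\frac32>0$, used for $\tau<0$, and $-(l+\frac52)<0$, used for $\tau>0$. Second, $G_l$ is continuous at $0$ and its derivative jumps by $\frac{1}{2l+1}\bigl((l+\frac52)+(l-\frac32)\bigr)=1$. Hence $\mathcal A_lG_l=\delta$, and $G_l\in L^1$ because both branches decay.

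For \eqref{eq:Ainvbounds}, Plancherel reduces both inequalities to suprema of multipliers. The first is $\sup_\xi|\mathcal A_l(i\xi)|^{-2}$. It is largest at $\xi=0$ and decreases in $l$, so its value is $\bigl((\tfrac12)(\tfrac92)\bigr)^{-2}=\frac{16}{81}$, attained at $l=2$. The second is
\[
\sup_\xi\frac{\xi^2+\frac94}{|\mathcal A_l(i\xi)|^2}.
\]
By monotonicity in $l$ it suffices to take $l=2$. There, with $x=\xi^2$, the inequality $\frac{x+9/4}{(x+\frac14)(x+\frac{81}{4})}\le\frac49$ is equivalent to $4x^2+37x\ge0$. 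Equality holds at $\xi=0$, which also proves sharpness of both constants.

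For the commutators, I will write $[\chi,\mathcal A_l^{-1}]F(t)=\int(\chi(t)-\chi(s))G_l(t-s)F(s)\,ds$ for $F\in C_c^\infty$. Using $|\chi(t)-\chi(s)|\le\|\chi'\|_\infty|t-s|$ and Young's (Schur's) inequality, the operator norm is at most $\|\chi'\|_\infty\int|\tau||G_l(\tau)|\,d\tau$, where
\[
\int|\tau||G_l(\tau)|\,d\tau=\frac{1}{2l+1}\Bigl((l-\tfrac32)^{-2}+(l+\tfrac52)^{-2}\Bigr).
\]
This decreases in $l$ and equals $\frac15\bigl(4+\frac4{81}\bigr)<1$ at $l=2$, which gives \eqref{eq:comm1}.

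For \eqref{eq:comm2}, the operator $(\partial_t+\frac32)\mathcal A_l^{-1}$ has kernel $G_l'+\frac32G_l$. There is no delta part, because $G_l$ is continuous. This kernel equals $-\frac{l}{2l+1}e^{(l-3/2)\tau}$ for $\tau<0$ and $\frac{l+1}{2l+1}e^{-(l+5/2)\tau}$ for $\tau>0$. Its first absolute moment is
\[
\frac{1}{2l+1}\Bigl(\frac{l}{(l-\frac32)^2}+\frac{l+1}{(l+\frac52)^2}\Bigr),
\]
which equals $\frac15\bigl(8+\frac{4}{27}\bigr)<\frac53$ at $l=2$. For $l\ge3$ it is smaller: the $\tau>0$ term is at most $\frac1{l+5/2}$, and the first term is decreasing in $l$. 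Finally, the commutators are bounded on the dense set $C_c^\infty$, so they extend uniquely to bounded operators on $L^2(\R)$.

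The only delicate point is the bookkeeping in this last step. One has to track the sign change of the kernel and check that the uniform constant $\frac53$ (and likewise $1$) holds for every $l\ge2$ and not just $l=2$. This is handled by the monotonicity in $l$ noted above.
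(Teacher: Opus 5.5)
Your proof is correct and follows essentially the paper's route: Plancherel with the explicit symbol, the Green kernel from continuity and a unit derivative jump at $0$, and first absolute moments with Young's inequality for the commutators. Working with the product form $(\xi^2+(l-\frac32)^2)(\xi^2+(l+\frac52)^2)$ and monotonicity in $l$, rather than the paper's bound $c_l^{-1}\le\frac49$, is only a cosmetic difference. Fix three slips: with $c_l=J_l-\frac{15}{4}$ the product expands to $(\xi^2+c_l)^2+16\xi^2$, the characteristic polynomial is $\lambda^2+4\lambda-c_l$, and at $l=2$ the multiplier inequality reduces to $4x^2+73x\ge0$, not $4x^2+37x\ge0$.
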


\begin{proof}
Factorization \eqref{eq:Al} gives the homogeneous solutions
\(e^{(l-3/2)\tau}\) and \(e^{-(l+5/2)\tau}\). The solution
decaying as \(\tau\to-\infty\) is \(e^{(l-3/2)\tau}\), while
the solution decaying as \(\tau\to+\infty\) is
\(e^{-(l+5/2)\tau}\). Continuity at $\tau=0$ and integration of
$\mathcal A_lG_l=\delta_0$ across zero give
$G_l'(0+)-G_l'(0-)=1$.  The common coefficient is therefore
$-1/(2l+1)$, proving \eqref{eq:Gl}.

Replacing $\partial_t$ by $i\xi$ gives
\[
 \mathcal A_l(i\xi)
 =-\xi^2+4i\xi+\frac{15}{4}-J_l.
\]
Taking the squared modulus proves \eqref{eq:Asymbol}.  Let
$c_l=J_l-15/4\ge9/4$.  The denominator
$(\xi^2+c_l)^2+16\xi^2$ has its uniform minimum $81/16$ at
$\xi=0,c_l=9/4$, so
\[
 |\widehat v(\xi)|^2
 \le c_l^{-2} |\widehat F(\xi)|^2 \le\frac{16}{81}|\widehat F(\xi)|^2.
\]
Moreover,
\[
\left|
\widehat{\left(\partial_t+\frac32\right)v}(\xi)
\right|^2
=
\frac{\xi^2+\frac94}
     {(\xi^2+c_l)^2+16\xi^2}
|\widehat F(\xi)|^2.
\]
Similarly, with $x=\xi^2$, direct differentiation of the rational function
gives
\[
 \frac{x+9/4}{(x+c_l)^2+16x}\le c_l^{-1}\le\frac49,\qquad x\ge0.
\]
Plancherel's theorem now gives \eqref{eq:Ainvbounds}.

Finally, the convolution identity is
\[
[\chi,G_l*]F(t)=\int_\mathbb R
G_l(\tau)\bigl(\chi(t)-\chi(t-\tau)\bigr)F(t-\tau)\dd\tau.
\]
The bound
$|\chi(t)-\chi(t-\tau)|\le\|\chi'\|_\infty|\tau|$ and Young's inequality
reduce the two commutator norms to the following first moments:
\begin{align*}
\int|\tau||G_l(\tau)|\dd\tau
&=\frac1{2l+1}\left[\frac1{(l-3/2)^2}+\frac1{(l+5/2)^2}\right]<1,\\
\int|\tau|\left|\left(\partial_\tau+\frac32\right)G_l\right|\dd\tau
&=\frac1{2l+1}\left[\frac l{(l-3/2)^2}
+\frac{l+1}{(l+5/2)^2}\right]<\frac53.
\end{align*}
Both right-hand sides are maximal at \(l=2\) and decrease with
\(l\). Thus the constants are uniform for all \(l\ge2\).
The commutator identities are initially justified for
\(F\in C_c^\infty(\mathbb R)\), and the preceding estimates,
together with the density of \(C_c^\infty(\mathbb R)\) in
\(L^2(\mathbb R)\), yield their unique bounded extensions to
\(L^2(\mathbb R)\). This proves
\eqref{eq:comm1}--\eqref{eq:comm2}.
\end{proof}

\subsubsection{Exact positive lower bound for the quadratic form in the singular outer region}

For $U_*=2r^{-2}$,
\[
\mathsf h_*=2,\qquad \mathsf a_*=20,\qquad \mathsf b_*=12.
\]
After temporarily removing the positive term
$\frac14\|e^tF\|_2^2$ from \eqref{eq:qMellin}, the Fourier multiplier is
\begin{equation}
C_l(x)=x+J_l-\frac{11}{4}
+\frac{10x+\frac{45}{2}-6J_l}
{\left(x+J_l-\frac{15}{4}\right)^2+16x},\qquad x=\xi^2.
                                                                    \label{eq:Cl}
\end{equation}
Set $d=J_l-6\ge0$.  Exact reduction to a common denominator gives
\begin{align}
C_l(x)-\frac7{12}
=\frac{1}{3\mathcal D(x,d)}\bigl(&531d+344d^2+48d^3+3347x
+1456dx\notag\\
&+144d^2x+1112x^2+144dx^2+48x^3\bigr),                       \label{eq:Ccert}
\end{align}
where
\[
\mathcal D(x,d)=81+72d+16d^2+328x+32dx+16x^2>0.
\]
Every numerator coefficient is nonnegative, so:
\begin{proposition}[Coercivity of the singular outer operator]
For every $l\ge2$,
\begin{equation}
q_{*,l}[f]\ge\frac7{12}\int_0^\infty|f|^2\dd r
+\frac14\int_0^\infty r^2|f|^2\dd r.                         \label{eq:outercoercive}
\end{equation}
\end{proposition}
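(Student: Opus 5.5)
The plan is to substitute the singular outer profile $U_*=2r^{-2}$ into the Mellin--Newton form \eqref{eq:qMellin}, where all three weights become constants, $\mathsf h_*=2$, $\mathsf a_*=20$, $\mathsf b_*=12$. After that substitution every term except $\frac14\int e^{2t}|F|^2\dd t$ is translation invariant in $t$, so it diagonalizes under the Fourier transform in $t$. I will keep the $e^{2t}$ term aside; it equals $\frac14\int_0^\infty r^2|f|^2\dd r$ and is simply carried to the right-hand side of \eqref{eq:outercoercive}. Also, $\int_\R|F|^2\dd t=\int_0^\infty|f|^2\dd r$ by the change-of-variables table preceding \eqref{eq:qMellin}, so it suffices to prove
\[
 Q[F]:=\Bigl\|\bigl(\partial_t-\tfrac12\bigr)F\Bigr\|_2^2+(J_l-3)\|F\|_2^2
 +10\Bigl\|\bigl(\partial_t+\tfrac32\bigr)v\Bigr\|_2^2-6J_l\|v\|_2^2
 \ge\frac7{12}\|F\|_2^2,
\]
where $v=\mathcal A_l^{-1}F$.

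By Lemma~\ref{lem:mellinnewton}, $\widehat v(\xi)=\widehat F(\xi)/\mathcal A_l(i\xi)$, and $|\mathcal A_l(i\xi)|^2$ is given by \eqref{eq:Asymbol}. Write $x=\xi^2$. The symbol of $|(\partial_t-\frac12)F|^2$ is $x+\frac14$, and the symbol of $|(\partial_t+\frac32)v|^2$ is $(x+\frac94)/|\mathcal A_l|^2$. Adding the pieces gives
\[
 x+\tfrac14+J_l-3+\frac{10x+\tfrac{45}{2}-6J_l}{(x+J_l-\tfrac{15}{4})^2+16x}.
\]
This is exactly $C_l(x)$ in \eqref{eq:Cl}. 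By Plancherel, $Q[F]=\frac1{2\pi}\int C_l(\xi^2)|\widehat F(\xi)|^2\dd\xi$, so the proposition reduces to showing $C_l(x)\ge\frac7{12}$ for every $x\ge0$ and every $l\ge2$.

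For that pointwise bound I set $d=J_l-6\ge0$, put $C_l-\frac7{12}$ over the common denominator $3\mathcal D(x,d)$, and expand. The denominator is $\mathcal D(x,d)=16\bigl((x+d+\tfrac94)^2+16x\bigr)$, which is strictly positive. The expanded numerator should be the polynomial in \eqref{eq:Ccert}, and every one of its coefficients is nonnegative, so the quotient is $\ge0$ for $x,d\ge0$. As consistency checks, the constant term of the numerator vanishes, so equality holds at $x=d=0$ (that is, $l=2$ and $\xi=0$). This is consistent with $\frac7{12}$ being the sharp constant.

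This last algebraic step is the only place where an error could realistically occur, so I expect it to be the main obstacle. I would recheck the coefficient expansion symbolically, or at least at a few sample values of $(x,d)$.

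One justification remains. The identity \eqref{eq:qMellin} is stated for smooth, rapidly decaying $f$, and $F=r^{1/2}f$ then lies in $L^2(\R)$, so the Fourier argument applies. The inequality \eqref{eq:outercoercive} is meant on that class, and extends by density in the form domain if needed.
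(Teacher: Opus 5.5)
Your proposal is correct and is essentially the paper's own proof. You substitute $(\mathsf h_*,\mathsf a_*,\mathsf b_*)=(2,20,12)$ into \eqref{eq:qMellin}, set aside $\frac14\|e^tF\|_2^2$, Fourier-diagonalize the remaining terms to obtain the multiplier $C_l$ of \eqref{eq:Cl}, and conclude from the nonnegative-coefficient identity \eqref{eq:Ccert} with $d=J_l-6\ge0$.

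The expansion you flagged as the risky step does check out. Writing $C_l-\frac7{12}=x+d+\frac83+(160x-96d-216)/\mathcal D$ and multiplying through by $3\mathcal D$ reproduces exactly the numerator in \eqref{eq:Ccert}, including the cancellation of the constant term.
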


\begin{proof}
Let $F(t)=e^{t/2}f(e^t)$.  In the Mellin quadratic form
\eqref{eq:qMellin}, first separate the positive term
$\frac14\|e^tF\|_2^2$.  Fourier transformation of the remaining terms, using
\eqref{eq:Asymbol} for the multiplier of $\mathcal A_l^{-1}$, gives
\[
 q_{*,l}[f]-\frac14\|e^tF\|_2^2
 =\int_{\mathbb R}C_l(\xi^2)|\widehat F(\xi)|^2\dd\xi.
\]
Here $C_l$ is exactly \eqref{eq:Cl}.  With $d=J_l-6\ge0$, the denominator in
\eqref{eq:Ccert} is strictly positive and every numerator term is
nonnegative.  Hence $C_l(x)\ge7/12$ for all $x\ge0$ and $l\ge2$.
Plancherel's theorem gives
\[
 q_{*,l}[f]\ge\frac7{12}\|F\|_2^2+\frac14\|e^tF\|_2^2.
\]
Finally, $r=e^t$ and $F=e^{t/2}f(e^t)$ imply
\[
 \|F\|_2^2=\int_0^\infty|f(r)|^2\dd r,\qquad
 \|e^tF\|_2^2=\int_0^\infty r^2|f(r)|^2\dd r,
\]
which is \eqref{eq:outercoercive}.
\end{proof}

For the lowest angular degree $l=2$, one may retain the first-derivative
term instead of keeping only the positive $L^2$ term.  Exact simplification
gives
\begin{equation}
 C_2(x)-\frac7{12}(1+x)
 =\frac{x(12821+2152x+80x^2)}
 {12(1+4x)(81+4x)}\ge0.                                     \label{eq:l2H1outer}
\end{equation}
Therefore
\begin{equation}
 q_{*,2}[f]\ge\frac7{12}
 \bigl(\|F\|_2^2+\|\partial_tF\|_2^2\bigr)
 +\frac14\|e^tF\|_2^2.                                     \label{eq:l2H1coercive}
\end{equation}
If the three coefficients satisfy uniformly on the \emph{whole real line}
\[
|\mathsf h_n-2|+|\mathsf a_n-20|+|\mathsf b_n-12|\le\eta,
\]
then \eqref{eq:Ainvbounds} gives
\begin{equation}
|q_{n,l}[f]-q_{*,l}[f]|
\le\left(\frac32+\frac29+\frac{16}{27}\right)\eta\|F\|_2^2
=\frac{125}{54}\eta\|F\|_2^2.                                \label{eq:outerpertglobal}
\end{equation}
Thus, if $\eta\le63/500$, the coefficient $7/12$ is reduced by at most
$7/24$ and remains strictly positive.

\begin{remark}[The nonlocal tail in regional perturbation estimates]
Estimate \eqref{eq:outerpertglobal} requires control of the coefficient
differences everywhere the Newton potential $v=\mathcal A_l^{-1}F$ is
integrated.  Even if $F$ is supported in $t\ge t_0$, \eqref{eq:Gl} gives,
for $t<t_0$,
\begin{equation}
v(t)=-\frac{e^{(l-3/2)t}}{2l+1}
\int_{t_0}^\infty e^{-(l-3/2)s}F(s)\dd s,                    \label{eq:lefttail}
\end{equation}
which is generally nonzero.  Thus outer support of $f$ does not imply that
the Newton term samples only outer coefficients.  Every regional perturbation
estimate must include this nonlocal Newton tail.
\end{remark}

\subsubsection{The quadratic form for the universal inner limit and its positive lower bound}

On the inner scale, the coefficients of the actual operator converge to
those associated with the universal solution $\bar U$.  We therefore first
study the corresponding limiting operator $\bar{\mathcal L}_l$.  Starting
from the stationary first-order system \eqref{eq:coreUP}, we factor
$\bar{\mathcal L}_l$ term by term as in \eqref{eq:factorcore}.  Pairing the
result with
$\mathcal M_lf=f/\bar U+\Delta_l^{-1}f$ then gives the two quadratic forms
below.

After the lower-order self-similar terms are removed, the inner limiting
operator is
\[
 \bar{\mathcal L}_lf=-\Delta_lf-\bar Pf'-2\bar Uf
 -\bar U'\partial_s\Delta_l^{-1}f.
\]
Let $w=\Delta_l^{-1}f$ and define
\begin{equation}
 \mathcal M_lf=\frac f{\bar U}+w.                            \label{eq:Mcore}
\end{equation}
Since $\bar U'/\bar U=-\bar P$,
\[
 \bar U\partial_s\mathcal M_lf=f'+\bar Pf+\bar Uw'.
\]
Therefore
\begin{align*}
 &-\frac1{s^2}\partial_s(s^2\bar U\partial_s\mathcal M_lf)
 +\frac{J_l}{s^2}\bar U\mathcal M_lf\\
={}&-f''-\frac2sf'-\bar Pf'-(\bar P'+2\bar P/s)f
 -(\bar U'+2\bar U/s)w'-\bar Uw''
 +\frac{J_l}{s^2}f+\frac{J_l}{s^2}\bar Uw.
\end{align*}
Use $\bar P'+2\bar P/s=\bar U$ and
$f=w''+2s^{-1}w'-J_ls^{-2}w$.  The last three terms become
\[
 -(\bar U'+2\bar U/s)w'-\bar Uw''
 +\frac{J_l}{s^2}\bar Uw
 =-\bar U'w'-\bar Uf,
\]
and hence
\begin{equation}
 \bar{\mathcal L}_lf
 =-\frac1{s^2}\partial_s(s^2\bar U\partial_s\mathcal M_lf)
 +\frac{J_l}{s^2}\bar U\mathcal M_lf.                     \label{eq:factorcore}
\end{equation}
Consequently, define
\begin{align}
 \mathcal E_l[f]&=\langle f,\mathcal M_lf\rangle_{L^2(s^2\dd s)}
 =A_l[f]-B_l[f],                                           \label{eq:Ecore}\\
 A_l[f]&=\int_0^\infty\frac{|f|^2}{\bar U}s^2\dd s,\qquad
 B_l[f]=\int_0^\infty(s^2|w'|^2+J_l|w|^2)\dd s,           \label{eq:ABcore}
\end{align}
and obtain the nonnegative quadratic form
\begin{equation}
 \mathcal D_l[f]=\langle\bar{\mathcal L}_lf,\mathcal M_lf\rangle
 =\int_0^\infty\bar U
 (s^2|(\mathcal M_lf)'|^2+J_l|\mathcal M_lf|^2)\dd s\ge0. \label{eq:Dcore}
\end{equation}

The phase-plane region \eqref{eq:ybregion} gives
\begin{equation}
 s^2\bar U=2y\beta\le2y\left(3-\frac65y\right)\le\frac{15}{4}. \label{eq:Ucap}
\end{equation}
Also,
\[
 B_l=-\operatorname{Re}\int_0^\infty f\overline w\,s^2\dd s.
\]
Cauchy--Schwarz with the weight $\bar U$ and \eqref{eq:Ucap} yields
\[
 B_l\le A_l^{1/2}
 \left(\int\bar U|w|^2s^2\dd s\right)^{1/2}
 \le A_l^{1/2}\left(\frac{15}{4J_l}B_l\right)^{1/2}.
\]
After squaring and cancelling $B_l$ when it is nonzero,
\begin{equation}
 B_l\le\frac{15}{4J_l}A_l.                                 \label{eq:BvsA}
\end{equation}

\begin{proposition}[Positive lower bound for the inner limiting quadratic form]
For every $l\ge2$,
\begin{equation}
 \mathcal E_l[f]\ge\left(1-\frac{15}{4J_l}\right)A_l[f]
 \ge\frac38A_l[f],                                        \label{eq:EcontrolsA}
\end{equation}
and
\begin{equation}
 \mathcal E_l[f]\ge\left(\frac{4J_l}{15}-1\right)B_l[f]
 \ge\frac35B_l[f].                                        \label{eq:EcontrolsB}
\end{equation}
\end{proposition}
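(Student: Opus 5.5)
The plan is to derive both inequalities from the identity $\mathcal E_l=A_l-B_l$ in \eqref{eq:Ecore} together with the comparison \eqref{eq:BvsA}, $B_l\le\frac{15}{4J_l}A_l$. The only numerical input is $J_l=l(l+1)\ge6$ for $l\ge2$, which gives $15/(4J_l)\le15/24=5/8$.

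\emph{First inequality.} Insert \eqref{eq:BvsA} into $\mathcal E_l=A_l-B_l$ to get
\[
\mathcal E_l[f]\ge\Bigl(1-\tfrac{15}{4J_l}\Bigr)A_l[f].
\]
Since $1-\frac{15}{4J_l}\ge1-\frac58=\frac38$ for $l\ge2$, this is \eqref{eq:EcontrolsA}.

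\emph{Second inequality.} Read \eqref{eq:BvsA} the other way, as $A_l\ge\frac{4J_l}{15}B_l$. Then
\[
\mathcal E_l=A_l-B_l\ge\Bigl(\tfrac{4J_l}{15}-1\Bigr)B_l .
\]
With $J_l\ge6$ the coefficient is at least $\frac{24}{15}-1=\frac35$, which is \eqref{eq:EcontrolsB}. In both cases the second bound uses only that the coefficient is increasing in $J_l$.

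\emph{Checks.} The derivation of \eqref{eq:BvsA} cancels $B_l$, so I will note that it is trivial when $B_l=0$. All quantities should be finite and the identity $B_l=-\operatorname{Re}\int f\bar w\,s^2\dd s$ should be valid. For that I will restrict to smooth compactly supported $f$ (or any $f$ with $A_l[f]<\infty$), so that $w=\Delta_l^{-1}f$ has the decay needed for the integration by parts in \eqref{eq:Deltal-nonpositive}. The constant $15/4$ in \eqref{eq:Ucap} enters through $\int\bar U|w|^2s^2\dd s\le\frac{15}{4}\int|w|^2\dd s\le\frac{15}{4J_l}B_l$, and this step depends on the phase-plane bound \eqref{eq:ybregion} holding along the whole universal trajectory. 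That bound, already stated earlier, is the one substantive input; the rest is arithmetic, so I expect no real obstacle.
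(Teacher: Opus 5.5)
Your proposal is correct and is essentially the paper's own proof. The paper also derives $B_l\le\frac{15}{4J_l}A_l$ from $B_l=-\operatorname{Re}\int_0^\infty f\overline w\,s^2\dd s$, using Cauchy--Schwarz with weight $\bar U$, the bound $s^2\bar U\le15/4$ of \eqref{eq:Ucap}, and $B_l\ge J_l\int_0^\infty|w|^2\dd s$; it disposes of the case $B_l=0$ separately and then reads this comparison in both directions with $J_l\ge6$, exactly as you do (your extra remarks on the admissible class of $f$ go beyond what the paper states but do not change the argument).
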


\begin{proof}
From \eqref{eq:Ecore}, $\mathcal E_l=A_l-B_l$.  Moreover,
\[
 B_l=-\operatorname{Re}\int_0^\infty f\overline w\,s^2\dd s,
 \qquad w=\Delta_l^{-1}f.
\]
Cauchy--Schwarz with the weight $\bar U$, together with
$s^2\bar U\le15/4$, gives
\[
 B_l
 \le A_l^{1/2}
 \left(\int_0^\infty\bar U|w|^2s^2\dd s\right)^{1/2}
 \le A_l^{1/2}\left(\frac{15}{4J_l}B_l\right)^{1/2}.
\]
If $B_l=0$, both conclusions are immediate.  If $B_l>0$, squaring and
cancelling one factor of $B_l$ gives
\[
 B_l\le\frac{15}{4J_l}A_l.
\]
Therefore
\[
 \mathcal E_l=A_l-B_l
 \ge\left(1-\frac{15}{4J_l}\right)A_l.
\]
Since $J_l=l(l+1)\ge6$ for $l\ge2$, the coefficient is at least
$1-15/24=3/8$, proving \eqref{eq:EcontrolsA}.  Conversely,
\eqref{eq:BvsA} gives $A_l\ge(4J_l/15)B_l$, and hence
\[
 \mathcal E_l=A_l-B_l
 \ge\left(\frac{4J_l}{15}-1\right)B_l
 \ge\left(\frac{24}{15}-1\right)B_l=\frac35B_l.
\]
This is \eqref{eq:EcontrolsB}.
\end{proof}

If $\bar{\mathcal L}_lf=zf$, then
$z\mathcal E_l[f]=\mathcal D_l[f]\ge0$.  Since
$\mathcal E_l[f]>0$ for $f\ne0$, the inner limiting operator has no
left-half-plane eigenvalue.  If $z=0$, the angular term in
\eqref{eq:Dcore} forces $\mathcal M_lf=0$, and
\eqref{eq:EcontrolsA} then gives $f=0$.

\subsubsection{Two-scale limits of the matched coefficients}

To apply the two limiting quadratic forms to the actual profile, we control
the coefficients $\mathsf h_n,\mathsf a_n,\mathsf b_n$ in
\eqref{eq:qMellin}.  From \eqref{eq:UPPhi} and the corrected matching
representation, for every fixed \(S>0\), uniformly for \(0\le s\le S\),
\begin{equation}
 \mu_n^2U_n(\mu_ns)\to\bar U(s),\quad
 \mu_n^3U_n'(\mu_ns)\to\bar U'(s),\quad
 \mu_n^4U_n''(\mu_ns)\to\bar U''(s).                       \label{eq:innercoeffconv}
\end{equation}
The derivative limits do not require a third-derivative bound
for the inner remainder. They can be recovered successively
from the exact stationary system. Equivalently, use
\begin{equation}
 P_n'=U_n-\frac2rP_n,\qquad
 U_n'=\frac r2U_n-\frac12P_n-U_nP_n,                        \label{eq:UPsystem}
\end{equation}
and differentiate the second equation:
\begin{equation}
 U_n''=\frac12U_n+\frac r2U_n'-\frac12P_n'
 -U_n'P_n-U_nP_n'.                                         \label{eq:Usecond}
\end{equation}
Thus $U_n''$ is recovered from the already controlled first-order quantities.

The corrected two-scale estimate in Proposition~\ref{prop:correctedprofile-main},
proved in Proposition~\ref{prop:correctedtwoscale}, gives
\begin{equation}
 \lim_{R\to\infty}\limsup_{n\to\infty}\sup_{r\ge R\mu_n}
 (|r^2U_n-2|+|r^3U_n'+4|+|r^4U_n''-12|)=0.                 \label{eq:outercoeffconv}
\end{equation}
These are exactly \eqref{eq:matchedinnerconv-main} and
\eqref{eq:matchedouterconv-main} in the notation needed by the quadratic form.

\subsection{The modes \texorpdfstring{$l\ge3$}{l>=3}: a direct quadratic-form lower bound}

This subsection converts the inner phase-plane estimates into global bounds
for the three coefficients in \eqref{eq:qMellin}.  First,
$\mathsf h_{\rm core},\mathsf a_{\rm core},\mathsf b_{\rm core}$ are computed
from \eqref{eq:yazsystem}; the two-scale convergence then transfers these
bounds to $U_n$.  Finally, they are substituted into \eqref{eq:qMellin}, and
\eqref{eq:AinvJbounds} controls the Newton inverse, yielding
\eqref{eq:lge3coercive}.

The three universal inner coefficients have already been estimated in
Proposition~\ref{prop:coreprofileproperties}; Appendix
\ref{sec:JDEcoreproperties} derives them directly from the corrected inner
equation and the phase-plane region \eqref{eq:ybregion}, differentiates
$\bar U$ term by term, and obtains \eqref{eq:corecoefficientbounds}.  Only the
short independent estimate $y<5/3$ is used here; the finer piecewise barriers
from the $l=1$ potential estimate are not needed.  Thus only
\begin{equation}
 \mathsf h_{\rm core}\le\frac{15}{4},\qquad
 \mathsf a_{\rm core}\ge0,\qquad
 \mathsf b_{\rm core}\le\frac{1300}{27}.                   \label{eq:hbexactbounds}
\end{equation}
By \eqref{eq:innercoeffconv}--\eqref{eq:outercoeffconv}, for every $\eta>0$
one can first choose a fixed large overlap parameter and then take $n$ large
so that, on the whole half-line,
\begin{equation}
 \mathsf h_n\le\frac{15}{4}+\eta,\qquad
 \mathsf a_n\ge-\eta,\qquad
 \mathsf b_n\le\frac{1300}{27}+\eta.                       \label{eq:habglobalbounds}
\end{equation}
Here compact inner convergence is used for $r\le R\mu_n$, outer coefficient
convergence is used for $r\ge R\mu_n$, and the two regions cover every $r>0$.

Set $c_l=J_l-15/4$.  By \eqref{eq:Asymbol},
\begin{equation}
 \|v\|_2^2\le c_l^{-2}\|F\|_2^2,
 \qquad
 \| (\partial_t+3/2)v\|_2^2\le c_l^{-1}\|F\|_2^2.          \label{eq:AinvJbounds}
\end{equation}
The second inequality uses only
$(x+9/4)/[(x+c_l)^2+16x]\le1/c_l$; for $l\ge2$ one has $c_l\ge9/4$.
Substitution of \eqref{eq:habglobalbounds} and \eqref{eq:AinvJbounds} into
the exact form \eqref{eq:qMellin}, followed by discarding its first
nonnegative term, gives
\begin{align}
 q_{n,l}[f]\ge{}&\left[J_l-\frac32\left(\frac{15}{4}+\eta\right)
 -\frac{\eta}{2c_l}
 -\frac{J_l}{2c_l^2}\left(\frac{1300}{27}+\eta\right)\right]\|F\|_2^2
 +\frac14\|e^tF\|_2^2.                                    \label{eq:lge3direct}
\end{align}

The coefficient in brackets is increasing for $J_l\ge12$.  At the endpoint
$l=3$, $J_l=12$, and $\eta=0$, it is exactly
\begin{equation}
 12-\frac{45}{8}
 -\frac{6}{(33/4)^2}\frac{1300}{27}
 =\frac{167051}{78408}>0.                                  \label{eq:l3margin}
\end{equation}
After a fixed sufficiently small $\eta$ is chosen, the resulting positive
constant is therefore uniform in both $n$ and $l$.

\begin{proposition}[Uniform lower bound and exclusion of point spectrum for \texorpdfstring{$l\ge3$}{l>=3}]
\label{prop:lge3}
There are $n_0$ and $c_3>0$ such that, for $n\ge n_0$, $l\ge3$, and every \(f\in D(L_{n,l})\)
\begin{equation}
 \operatorname{Re}\langle L_{n,l}f,f\rangle_{L^2(r^2\dd r)}
 \ge c_3\int_0^\infty|f|^2\dd r
 +\frac14\|f\|_{L^2(r^2\dd r)}^2.                         \label{eq:lge3coercive}
\end{equation}
Consequently,
\begin{equation}
 \sigma_p(L_{n,l})\cap\{z:\operatorname{Re}z<1/4\}=\varnothing,
 \qquad l\ge3.                                             \label{eq:lge3pointgap}
\end{equation}
\end{proposition}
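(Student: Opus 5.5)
The proof assembles the bracket estimate \eqref{eq:lge3direct} into a statement that is uniform in $n$ and $l$. It then extends that statement from smooth functions to the closed domain, and finally reads off the point-spectrum exclusion.

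\emph{Step 1: fix $\eta$ and $n_0$.} Denote the bracket in \eqref{eq:lge3direct} by $\kappa_l(\eta)$. At $\eta=0$ it is increasing in $J_l$ for $J_l\ge12$ and equals $167051/78408$ at $l=3$, by \eqref{eq:l3margin}. The $\eta$-dependent part of $\kappa_l(\eta)$ is
\[
-\eta\left(\tfrac32+\tfrac1{2c_l}+\tfrac{J_l}{2c_l^2}\right).
\]
Its coefficient is bounded uniformly in $l\ge3$, because $J_l/c_l^2\le 12/(33/4)^2$ decreases in $l$. I will therefore fix $\eta>0$ so small that $\kappa_l(\eta)\ge c_3:=\tfrac12\cdot167051/78408$ for every $l\ge3$. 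With this $\eta$ fixed, \eqref{eq:habglobalbounds} supplies $n_0$ such that those bounds hold on all of $(0,\infty)$ for $n\ge n_0$. Substituting them into \eqref{eq:qMellin} goes as follows:
\begin{itemize}
\item drop the nonnegative first term;
\item bound $\tfrac12\int\mathsf a_n|(\partial_t+\tfrac32)v|^2$ from below by $-\tfrac\eta2 c_l^{-1}\|F\|_2^2$;
\item bound $-\tfrac{J_l}{2}\int\mathsf b_n|v|^2$ from below by $-\tfrac{J_l}{2}(\tfrac{1300}{27}+\eta)c_l^{-2}\|F\|_2^2$, using \eqref{eq:AinvJbounds}.
\end{itemize}
The $\mathsf b_n$ bound uses only its upper bound. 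Any region where $\mathsf b_n<0$ contributes positively, so only $\sup\mathsf b_n$ matters. Since $\|F\|_2^2=\int_0^\infty|f|^2\,dr$ and $\|e^tF\|_2^2=\|f\|_{L^2(r^2dr)}^2$, this gives \eqref{eq:lge3coercive} for smooth, rapidly decaying $f$ that are regular at the origin, i.e.\ the class in Lemma~\ref{lem:qexact}.

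\emph{Step 2: extend to $f\in D(L_{n,l})$.} This is the step I expect to be the main obstacle. The identity \eqref{eq:qexact} was derived only for nice $f$, while \eqref{eq:lge3coercive} is claimed on the closed domain, which involves the nonlocal Newton term and endpoint behaviour. My first attempt will be a core argument: show that smooth compactly supported functions in $(0,\infty)$, or the regular rapidly decaying class, form a core for the closed realization $L_{n,l}$. Then for $f\in D(L_{n,l})$ take $f_k\to f$ in graph norm. The left side $\operatorname{Re}\langle L_{n,l}f_k,f_k\rangle$ converges. On the right, the term $\|f_k\|_{L^2(r^2dr)}$ converges, and Fatou's lemma (after passing to an a.e.\ convergent subsequence) controls $\int|f|^2dr$.

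If the core property is not directly available, I will argue on eigenfunctions instead, which suffices for \eqref{eq:lge3pointgap}. The endpoint asymptotics are as in Lemma~\ref{lem:l1domaintransfer}: $f\sim c r^l$ at $0$, and polynomial growth times Gaussian-type weight at infinity after excluding the $e^{r^2/4}$ branch. With these, the boundary terms in each integration by parts of Lemma~\ref{lem:qexact} vanish. The Newton boundary terms are also controlled, because $w=\Delta_l^{-1}f$ is $O(r^l)$ at $0$ and $O(r^{-l-1})$ at $\infty$ by \eqref{eq:Deltal-inverse}.

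\emph{Step 3: exclude point spectrum.} Suppose $L_{n,l}f=zf$ with $0\ne f$. Taking the real part of the pairing with $f$ gives
\[
\operatorname{Re}z\,\|f\|_{L^2(r^2dr)}^2\ge c_3\int|f|^2dr+\tfrac14\|f\|_{L^2(r^2dr)}^2>\tfrac14\|f\|_{L^2(r^2dr)}^2.
\]
Hence $\operatorname{Re}z>1/4$, which proves \eqref{eq:lge3pointgap}.
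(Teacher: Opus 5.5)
Your Steps 1 and 3 are the paper's proof. They use monotonicity of the bracket in $J_l$ for $J_l\ge12$, the margin $167051/78408$ at $l=3$, an $\eta$-loss of at most $\frac53\eta$, the choice $c_3=\frac12\cdot\frac{167051}{78408}$ with $\eta$ fixed before $n_0$, and then pairing an eigenfunction with itself. Your Step 2 fills a closure step the paper passes over: the core you need is the one built in the proof of Lemma~\ref{lem:fixedmodeunitary} (radial profiles of $\Pi_{l,m}C_c^\infty(\R^3)$, which are smooth, compactly supported and $O(r^l)$ at the origin, so Lemma~\ref{lem:qexact} applies to them), and your graph-norm limit plus Fatou then gives \eqref{eq:lge3coercive} on all of $D(L_{n,l})$, making the eigenfunction fallback unnecessary (its tail description is also inaccurate, since eigenfunctions decay like $r^{2(z-1)}$ with no Gaussian factor).
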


\begin{proof}
Set $J=J_l$ and $c=J-15/4$.  When $l\ge3$, one has $J\ge12$ and
$c\ge33/4$.  Substituting the global coefficient bounds
\eqref{eq:habglobalbounds} and the inverse bounds \eqref{eq:AinvJbounds} into
the exact quadratic form \eqref{eq:qMellin} gives \eqref{eq:lge3direct}.
Denote the coefficient of $\|F\|_2^2$ there by
\[
 m_\eta(J)=J-\frac32\left(\frac{15}{4}+\eta\right)
 -\frac{\eta}{2c}
 -\frac{J}{2c^2}\left(\frac{1300}{27}+\eta\right).
\]
For $J\ge12$, differentiation and $c=J-15/4$ give
\[
 \frac{\dd}{\dd J}\frac{J}{c^2}
 =-\frac{J+15/4}{c^3}<0,
\]
and hence $m_\eta'(J)>0$.  The minimum is therefore attained at $J=12$.
When $\eta=0$, \eqref{eq:l3margin} gives
\[
 m_0(12)=\frac{167051}{78408}>0.
\]
Moreover, for $J\ge12$,
\[
 0\le m_0(J)-m_\eta(J)
 =\eta\left(\frac32+\frac1{2c}+\frac{J}{2c^2}\right)
 \le\frac53\eta.
\]
By \eqref{eq:habglobalbounds}, $n$ can be taken sufficiently large so that
$\eta$ is small enough to ensure
$m_\eta(J)\ge m_0(12)/2=:c_3>0$, uniformly for all $l\ge3$.  Therefore
\eqref{eq:lge3direct} becomes
\[
 q_{n,l}[f]\ge c_3\|F\|_2^2+\frac14\|e^tF\|_2^2.
\]
Since
$\|F\|_2^2=\int_0^\infty|f|^2\dd r$ and
$\|e^tF\|_2^2=\|f\|_{L^2(r^2\dd r)}^2$, this is precisely
\eqref{eq:lge3coercive}.

If $L_{n,l}f=zf$ and $f\ne0$, then
\[
 \operatorname{Re}z\,\|f\|_{L^2(r^2\dd r)}^2
 =\operatorname{Re}\langle L_{n,l}f,f\rangle
 \ge\frac14\|f\|_{L^2(r^2\dd r)}^2.
\]
Thus $\operatorname{Re}z\ge1/4$, proving \eqref{eq:lge3pointgap}.
\end{proof}

\subsection{The mode \texorpdfstring{$l=2$}{l=2}: endpoint estimates}

The mode $l=2$ is simply the lowest angular degree covered by the preceding
unified estimate.  It is not associated with a new symmetry and does not
produce a new spectral phenomenon.  At $J_2=6$, the direct coarse estimate
gives only a nonnegative lower bound; the argument below strengthens it to
a strictly positive lower bound.

We still start from $L_{n,2}$.  The target is the same threshold $1/4$ as
for all $l\ge3$; the four steps below isolate exactly where the endpoint
argument differs.

This subsection proves only
\[
 \sigma_p(L_{n,2})\cap\{\operatorname{Re}z<1/4\}=\varnothing.
\]
The Fredholm step from exclusion of point spectrum to exclusion of the full
spectrum is identical for all Fourier modes and is deferred to Proposition
\ref{prop:fixedmodefullclosure}.

\paragraph{Outline of the proof.}
The role of each calculation is as follows.
\begin{enumerate}
\item The partial-localization identity in
      \cite[Lemma~2.7]{LiZhou2025} turns the second-order Newton term in
      $L_{n,2}$ into two first-order Volterra terms.  It uses only
      $P_n=\mathscr D_2^{-1}U_n$, so it applies to the present non-explicit
      matched profile.  Lemma \ref{lem:partiallocalization} checks the
      notation and hypotheses instead of repeating the commutator calculation
      from that paper.
\item On the inner scale $r\sim\mu_n$, the coefficients converge to the
      universal core $(\bar U,\bar P)$.  Proposition
      \ref{prop:alpha1core} computes its positive lower bound for conjugation
      exponent $\alpha=1$.
\item On the outer scale $r\gg\mu_n$, the coefficients converge to
      $(U_*,P_*)=(2r^{-2},2r^{-1})$.  A constant exponent cannot control both
      endpoints, so the variable-exponent Mellin--Volterra form and the
      endpoint estimate \eqref{eq:singulargapvariable} must be proved here.
      This is the calculation that reaches the threshold $1/4$.
\item Lemma \ref{lem:l2matchinginput} supplies two-scale convergence,
      Proposition \ref{prop:pureinout} transfers the inner and outer bounds,
      and a partition with $\chi^2+\eta^2=1$ glues them.  To apply the form
      bound to an actual eigenfunction, its endpoint behavior is checked as
      well.  The origin regularity follows from
      \cite[Lemma~B.1--B.2]{LiZhou2025}; only the Kummer--Volterra iteration
      needed for the sharp threshold is computed here.
\end{enumerate}

We first record separately the coefficient limits supplied by the matching
construction.  They follow from Proposition
\ref{prop:correctedprofile-main}.  The identity
$U=6\Phi+2r\Phi'$ and the stationary ordinary differential equation recover
successively all derivatives listed below from the first-order weighted
estimates.

\begin{lemma}[Two-scale coefficient limits for the constructed family]
\label{lem:l2matchinginput}
For every fixed $S>0$, uniformly for $0<s\le S$,
\begin{equation}
 \mu_n^2U_n(\mu_ns)\to\bar U(s),\qquad
 \mu_n^3U_n'(\mu_ns)\to\bar U'(s),\qquad
 \mu_n^4U_n''(\mu_ns)\to\bar U''(s).                 \label{eq:innerconv}
\end{equation}
Moreover,
\begin{equation}
 \lim_{R\to\infty}\limsup_{n\to\infty}
 \sup_{r\ge R\mu_n}
 \left(|r^2U_n-2|+|r^3U_n'+4|+|r^4U_n''-12|\right)=0. \label{eq:outerconv}
\end{equation}
\end{lemma}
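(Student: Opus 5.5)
The plan is to derive both limits directly from the corrected matched representation \eqref{eq:Unfamily-summary} and the exact stationary system \eqref{eq:UPsystem}--\eqref{eq:Usecond}. The strategy is to control only $\Phi_n,\Phi_n'$ in each regime and then recover $U_n,U_n',U_n''$ algebraically. Differentiating these quantities directly is what we want to avoid.

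\emph{Inner limit.} For the inner limit \eqref{eq:innerconv}, I write $\Phi_n(\mu_ns)=\mu_n^{-2}q_n(s)$ with $q_n=\bar Q+\mu_n^2R_n$, valid for $s\le r_0/\mu_n$, which contains $[0,S]$ once $n$ is large. Then
\[
 \mu_n^2U_n(\mu_ns)=6q_n(s)+2sq_n'(s)
\]
and $\mu_n P_n(\mu_ns)=2sq_n(s)$. The bounds $|R_n|\le C$ and $|R_n'|\le Cs$ on $[0,1]$, together with \eqref{eq:Rprimepieces} on $[1,S]$, give uniform convergence of $q_n,q_n'$ to $\bar Q,\bar Q'$. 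Hence $\mu_n^2U_n(\mu_n\cdot)\to\bar U=6\bar Q+2s\bar Q'$ and $\mu_nP_n(\mu_n\cdot)\to\bar P$.

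Next I rescale \eqref{eq:UPsystem}: writing $\tilde U=\mu_n^2U_n(\mu_n s)$ and $\tilde P=\mu_nP_n(\mu_ns)$,
\[
 \mu_n^3U_n'(\mu_ns)=\mu_n^2\Bigl(\tfrac s2\tilde U-\tfrac12\tilde P\Bigr)-\tilde U\tilde P .
\]
The self-similar terms carry a factor $\mu_n^2$ and vanish on $[0,S]$, so the limit is $-\bar U\bar P=\bar U'$ by the core system \eqref{eq:coreUP}. The same rescaling of \eqref{eq:Usecond}, using $P_n'=U_n-2P_n/r$ (whose rescaled limit is $\bar U-2\bar P/s$), gives $\mu_n^4U_n''\to-\bar U'\bar P-\bar U\bar P'=\bar U''$.

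The only delicate point is the quotient $2\bar P/s$ near $s=0$. I would handle it by writing $P_n/r=2\Phi_n$, which is regular, so every expression stays bounded uniformly down to $s=0$.

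\emph{Outer limit.} For \eqref{eq:outerconv} I split $r\ge R\mu_n$ into $R\mu_n\le r\le r_0$ and $r\ge r_0$.

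On $r\ge r_0$ the outer form $\Phi_n=r^{-2}+\varepsilon_n(u_1+w_n)$, with the weighted bounds \eqref{eq:Xnormglobal} (i.e. $|u_1|\lesssim r^{-2}$, $|u_1'|\lesssim r^{-3}$ and $w_n$ smaller), gives $r^2\Phi_n=1+O(\varepsilon_n)$ and $r^3\Phi_n'=-2+O(\varepsilon_n)$. Therefore $r^2U_n\to2$ and $rP_n=2r^2\Phi_n\to2$. Substituting into \eqref{eq:UPsystem} multiplied by $r^3$,
\[
 r^3U_n'=\tfrac{r^2}{2}\bigl(r^2U_n-rP_n\bigr)-(r^2U_n)(rP_n).
\]
The bracket is $O(\varepsilon_n)$ times a factor that must be shown to grow at most mildly. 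I would get this from the precise $u_1$ asymptotics, where $r^2(6u_1+2ru_1')-2r u_1\cdot r$ cancels to order $r^{-2}$, exactly as in the computation leading to $a_n=2+O(\varepsilon_nr^{-2})$. This yields $r^3U_n'\to-4$, and the same substitution in \eqref{eq:Usecond} gives $r^4U_n''\to12$.

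On $R\mu_n\le r\le r_0$ I use the inner variables again with $s\ge R$. The needed inputs are the asymptotics $s^2\bar Q\to1$ and $s^3\bar Q'\to-2$ as $s\to\infty$ (Proposition~\ref{prop:coreprofileproperties}), together with the relative smallness \eqref{eq:relativeQprime0}--\eqref{eq:relativeQvalue}. These give $y_n=r^2\Phi_n$ and $a_n$ within $o_R(1)+o_n(1)$ of $(1,2)$. The weighted quantities $r^2U_n=2y_n(3-a_n)$ and $rP_n=2y_n$ then follow. For $r^3U_n'$ and $r^4U_n''$ the self-similar terms $\frac{r^2}{2}(\cdot)$ are bounded by $r_0^2$ and must be small. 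So I would pick $r_0$ small first, as in Proposition~\ref{prop:wavepotential}, then $R$ large, then $n$ large. Their contribution $\frac{r^2}{2}(r^2U_n-rP_n)=O(r_0^2)$ is harmless only if the final constant is required to tend to zero. I therefore take $\lim_{R}\limsup_n$ jointly with the freedom of the fixed overlap point, or equivalently invoke Proposition~\ref{prop:correctedtwoscale} directly.

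\emph{Main obstacle.} I expect the hardest step to be this intermediate region, where uniformity of the coefficient limits must hold simultaneously in $R$ and $n$ while the self-similar $r^2$ terms are not negligible at fixed $r_0$. Concretely, one needs $r^2U_n-rP_n=O(r^{-2})$-type cancellation throughout the overlap. My first attempt would be to express everything through $(y_n,a_n)$ and use the flow \eqref{eq:fullayflow}, whose fixed point $(1,2)$ makes the relevant combinations $2y(3-a)-2y=2y(2-a)$ small like $a_n-2$. I would then bound $r^2(a_n-2)$ uniformly via the corrected two-scale estimate of Proposition~\ref{prop:correctedtwoscale}.
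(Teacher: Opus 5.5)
Your inner argument matches the paper's: read $\mu_n^2U_n$ and $\mu_nP_n$ off $q_n=\bar Q+\mu_n^2R_n$, then recover $U_n'$ and $U_n''$ from the rescaled exact system, whose self-similar terms carry an extra $\mu_n^2$. For \eqref{eq:outerconv} the paper only invokes the two-scale conclusion of Proposition~\ref{prop:correctedprofile-main}, proved as Proposition~\ref{prop:correctedtwoscale}. That is your fallback, so your proof closes.

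The obstacle you single out in the overlap is spurious, however. Write $h_n=r^2U_n$ and $p_n=rP_n$. On $R\mu_n\le r\le r_0$ the self-similar contributions are $\tfrac{r^2}{2}(h_n-p_n)$ in $r^3U_n'$ and $r^2\bigl(\tfrac12r^3U_n'+p_n\bigr)$ in $r^4U_n''$. Each is at most $r_0^2$ times a combination that vanishes at the limit point $(p_n,h_n,r^3U_n')=(2,2,-4)$. They are therefore $o(1)$ in $\lim_{R}\limsup_n$ with $r_0$ fixed.

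So there is no residual $O(r_0^2)$ error. There is also no need to let $r_0\to0$, which \eqref{eq:outerconv}, a statement about one fixed family, would not allow anyway.

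A genuine $O(\varepsilon_nr^{-2})$ cancellation is needed only on $r\ge r_0$, where $r$ is unbounded, and there it is needed twice. First, for $h_n-p_n$, to obtain $r^3U_n'\to-4$. Second, for $\tfrac12r^3U_n'+p_n$, before \eqref{eq:Usecond} yields $r^4U_n''\to12$. Your phrase ``the same substitution'' skips this second cancellation.
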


\begin{proof}
Since $P_n=2r\Phi_n$ and $U_n=6\Phi_n+2r\Phi_n'$, the matching norm first
gives
$\mu_nP_n(\mu_ns)\to\bar P(s)$ and
$\mu_n^2U_n(\mu_ns)\to\bar U(s)$.  No additional second-order remainder
assumption is needed for the other two limits.  Use the exact system
\[
 P_n'=U_n-\frac2rP_n,\qquad
 U_n'=\frac r2U_n-\frac12P_n-U_nP_n.
\]
Multiply the second identity by $\mu_n^3$ and set $r=\mu_ns$.  The first two
linear terms contain an additional factor $\mu_n^2$, whereas the product
term converges to $-\bar U\bar P=\bar U'$.  Hence
$\mu_n^3U_n'(\mu_ns)\to\bar U'(s)$.  Differentiating the second identity gives
\[
 U_n''=\frac12U_n+\frac r2U_n'-\frac12P_n'-U_n'P_n-U_nP_n'.
\]
The first identity also yields
$\mu_n^2P_n'(\mu_ns)\to\bar P'(s)$.  After multiplication by $\mu_n^4$,
the two linear terms again tend to zero, while the last two terms tend to
$-\bar U'\bar P-\bar U\bar P'=\bar U''$.  This proves the third inner limit.

In the outer region, use first
$\bar Q=s^{-2}+O(s^{-5/2})$ and \eqref{eq:Rnormglobal} in the long inner
region, and then the weighted remainder estimate from the outer fixed-point
argument in the fixed outer region.  This is precisely the two-scale
conclusion of Proposition \ref{prop:correctedprofile-main}, and gives
\eqref{eq:outerconv}.
\end{proof}

\subsubsection{The partial-localization identity from the literature and its applicability}

No new estimate is proved in this subsection.  We only rewrite the identity
in \cite[Lemma~2.7]{LiZhou2025} in the present notation and verify
that the identity does not depend on the explicit formula for their profile.
Its role is to reduce the second-order Newton structure to first-order
Volterra remainders; both limiting estimates below start from this identity.

For every integer $k$, define
\begin{equation}
 \mathscr D_k=\partial_r+\frac{k}{r},                    \label{eq:Dkdef}
\end{equation}
and, on regular decaying functions, define its right inverse by
\begin{equation}
 \mathscr D_k^{-1}h(r)=
 \begin{cases}
 r^{-k}\displaystyle\int_0^r h(s)s^k\dd s,&k>0,\\[2mm]
 -r^{-k}\displaystyle\int_r^\infty h(s)s^k\dd s,&k\le0.
 \end{cases}                                             \label{eq:Dkinv}
\end{equation}
Multiplying the first-order operators and collecting the coefficients of
$f'$ and $r^{-2}f$ gives
\begin{equation}
 \Delta_l=\mathscr D_{l+2}\mathscr D_{-l},\qquad
 \Delta_l^{-1}=\mathscr D_{-l}^{-1}\mathscr D_{l+2}^{-1}.
                                                               \label{eq:Dfactor}
\end{equation}

\begin{lemma}[Partial localization for a general radial stationary solution]
\label{lem:partiallocalization}
Let $P_n=\mathscr D_2^{-1}U_n$ and define
\begin{equation}
 V_{1,n}=-\partial_r\left(\frac{P_n}{r}\right),\qquad
 V_{2,n}=-\frac{U_n'}{r}.                                \label{eq:V12}
\end{equation}
Then, for every $l\ge0$, the following exact identities hold:
\begin{align}
 \mathscr D_{l+2}^{-1}L_{n,l}\mathscr D_{l+2}
={}&-\partial_r^2-\frac2r\partial_r
 +\frac{(l+1)(l+2)}{r^2}+\frac12(r\partial_r+1)\notag\\
 &-P_n\mathscr D_2-U_n+l\mathcal T_{n,l},               \label{eq:partialloc}\\
 \mathcal T_{n,l}f
={}&\mathscr D_{l+2}^{-1}
 \left(V_{1,n}f+V_{2,n}\mathscr D_{-l}^{-1}f\right).   \label{eq:Tnl}
\end{align}
Here $L_{n,l}$ is defined in \eqref{eq:Ll}.
\end{lemma}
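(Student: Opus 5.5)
The plan is to prove \eqref{eq:partialloc} by computing $L_{n,l}\mathscr D_{l+2}g$ for a smooth, regular, rapidly decaying test function $g$. I would sort its terms into an exact $\mathscr D_{l+2}$-image and a remainder that is visibly $l\,\mathscr D_{l+2}\mathcal T_{n,l}g$, and then apply $\mathscr D_{l+2}^{-1}$. Throughout I use only $P_n=\mathscr D_2^{-1}U_n$, equivalently $P_n'=U_n-2P_n/r$; the profile equation itself is not needed. This matches the claim that the identity does not depend on any explicit formula. I would first record that $\mathscr D_{l+2}^{-1}\mathscr D_{l+2}g=g$ for such $g$. Since $l+2>0$, the lower limit $0$ in \eqref{eq:Dkinv} produces no boundary term when $g=O(r^{l})$ near the origin.

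The local pieces are handled in three steps.

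\emph{Laplacian.} By \eqref{eq:Dfactor}, $-\Delta_l\mathscr D_{l+2}=-\mathscr D_{l+2}\mathscr D_{-l}\mathscr D_{l+2}$. A direct expansion gives
\[
\mathscr D_{-l}\mathscr D_{l+2}g=g''+\tfrac2rg'-\tfrac{(l+1)(l+2)}{r^2}g,
\]
which yields the first three terms of \eqref{eq:partialloc}.

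\emph{Drift.} The commutator $[r\partial_r,\mathscr D_k]=-\mathscr D_k$ gives $\tfrac12(2+r\partial_r)\mathscr D_{l+2}=\mathscr D_{l+2}\tfrac12(r\partial_r+1)$.

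\emph{Newton term.} Since $\Delta_l^{-1}\mathscr D_{l+2}g=\mathscr D_{-l}^{-1}g$ and $\partial_r\mathscr D_{-l}^{-1}g=g+\tfrac lr\mathscr D_{-l}^{-1}g$, we get
\[
-U_n'\partial_r\Delta_l^{-1}\mathscr D_{l+2}g=-U_n'g+l\,V_{2,n}\mathscr D_{-l}^{-1}g .
\]
The second summand is exactly the $V_{2,n}$ part of $l\,\mathscr D_{l+2}\mathcal T_{n,l}g$.

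The remaining step is to show
\[
-2U_n\mathscr D_{l+2}g-P_n(\mathscr D_{l+2}g)'-U_n'g
=\mathscr D_{l+2}\bigl(-P_n\mathscr D_2g-U_ng\bigr)+l\,V_{1,n}g .
\]
I would expand $\mathscr D_{l+2}(-P_ng'-2P_ng/r-U_ng)$ and subtract it from the left side. After substituting $P_n'=U_n-2P_n/r$, the $g''$, $g'$ and $U_n'g$ terms cancel. The leftover zeroth-order coefficient should reduce to $l\bigl(P_n/r^2-P_n'/r\bigr)=l\,V_{1,n}$. This is elementary but sign-sensitive bookkeeping, and I expect it to be the main place where errors can creep in. To guard against that, I would check it at $l=0$, where both sides must agree with no remainder, and against the explicit profile $U_0$ of \cite{LiZhou2025}.

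Finally, applying $\mathscr D_{l+2}^{-1}$ to the total identity $L_{n,l}\mathscr D_{l+2}g=\mathscr D_{l+2}\bigl(\cdots\bigr)$ gives \eqref{eq:partialloc}. Here $(\cdots)$ is the right side of \eqref{eq:partialloc} applied to $g$, and it is regular, so the left inverse property above applies. The remaining analytic point is that $\mathscr D_{-l}^{-1}g$ is well defined and decays, so the Volterra term $\mathcal T_{n,l}$ makes sense. This holds for rapidly decaying $g$ because $s^{-l}g(s)$ is integrable at infinity.
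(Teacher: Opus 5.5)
Your computation is correct, and it follows essentially the route of the argument the paper relies on. The paper simply cites \cite[Lemma~2.7]{LiZhou2025}, whose proof uses the same ingredients you use: the factorization $\Delta_l=\mathscr D_{l+2}\mathscr D_{-l}$, the commutator $[r\partial_r,\mathscr D_{l+2}]=-\mathscr D_{l+2}$, and only the relation $\mathscr D_2P_n=U_n$. Your sign-sensitive bookkeeping also closes: the leftover zeroth-order coefficient is $l(-U_n/r+3P_n/r^2)=l\,V_{1,n}$.

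One step needs tightening, namely the final inversion. The function $F$ to which you apply $\mathscr D_{l+2}^{-1}\mathscr D_{l+2}$ is the right side of \eqref{eq:partialloc} evaluated on $g$. It contains the term $(l+1)(l+2)r^{-2}g$, so under your hypothesis $g=O(r^l)$ it is only $O(r^{l-2})$. The left-inverse property you recorded, which assumed $O(r^l)$, therefore does not cover it. What is actually needed is $r^{l+2}F(r)\to0$ as $r\to0$.

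This holds for $l\ge1$, but it fails for $l=0$ whenever $g(0)\ne0$. In that case $r^2F\to2g(0)$, and since $r^{-2}\in\ker\mathscr D_2$, the two sides of \eqref{eq:partialloc} differ by $2g(0)r^{-2}$. So the identity genuinely fails on that class of test functions.

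The fix is to restrict the test functions in one of two ways:
\begin{itemize}
\item take $g=O(r^{l+1})$ near the origin, equivalently $g=\mathscr D_{l+2}^{-1}u$ with $u$ regular in the $l$th mode;
\item or take $g\in C_c^\infty(0,\infty)$.
\end{itemize}
These are exactly the classes on which the paper uses the lemma. With either restriction $r^{l+2}F=O(r^{2l+1})\to0$, and your argument then goes through for every $l\ge0$.
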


\begin{proof}
The same identity is proved in \cite[Lemma~2.7]{LiZhou2025}.  The notation
there is related to ours by
\[
 Q\longleftrightarrow U_n,\qquad
 D_2^{-1}Q\longleftrightarrow P_n,\qquad
 D_k\longleftrightarrow\mathscr D_k.
\]
The only structural hypothesis to verify is
$\mathscr D_2P_n=P_n'+2P_n/r=U_n$, which follows directly from
\eqref{eq:UPPhi}.  The proof in \cite[Lemma~2.7]{LiZhou2025} first uses
$\Delta_l=D_{l+2}D_{-l}$ and
$[r\partial_r,D_{l+2}]=-D_{l+2}$ and then commutes
$D_{l+2}^{-1}$ through multiplication operators.  The explicit formula for
the profile considered there is inserted only after the operator identity has
been obtained, when they evaluate $V_1$ and $V_2$.  Thus the identity itself
does not require a rational or explicit profile.  The displayed dictionary
turns it exactly into \eqref{eq:partialloc}--\eqref{eq:Tnl}.
\end{proof}

\begin{remark}[Scope of the citation]
Only the algebraic identity in \cite[Lemma~2.7]{LiZhou2025} is used here.
The argument there then inserts the explicit fundamental profile and uses a fixed exponent
$\alpha=0.2$ in a GGMT estimate.  The present $U_n$ is a two-scale matched
profile and the target threshold is $1/4$, so those subsequent explicit
potential estimates cannot be quoted.  The inner estimate, the
variable-exponent outer estimate, and the two-scale gluing below must be
recomputed.
\end{remark}

\subsubsection{The inner lower bound for the conjugation exponent
\texorpdfstring{$\alpha=1$}{alpha=1}}

We now set $l=2$ in \eqref{eq:partialloc}.  In the inner limit
$(U_n,P_n)\to(\bar U,\bar P)$, we conjugate by
$r\mathscr D_4^{-1}(\cdot)\mathscr D_4r^{-1}$.  We then set
$g=\mathscr D_{-3}^{-1}f$ and $t=\log r$.  This writes the quadratic form in
terms of the phase variables $(y,\beta)$ as in \eqref{eq:alpha1tform}; the
elementary bounds \eqref{eq:clower}--\eqref{eq:dlower} then prove
Proposition \ref{prop:alpha1core}.

For the universal stationary core $\bar U,\bar P$, remove the self-similar
drift and define
\begin{equation}
 \widetilde{\mathcal L}_{2}^{(1)}
 =r\mathscr D_4^{-1}\bar{\mathcal L}_2\mathscr D_4r^{-1}.
                                                               \label{eq:Ltilde1}
\end{equation}
Introduce the phase variables
\begin{equation}
 p=r\bar P=2y,\qquad h=r^2\bar U=2y\beta.              \label{eq:phaseph}
\end{equation}
Their phase equations follow directly from the first-order stationary system
\eqref{eq:coreUP}.  Indeed,
\begin{align*}
 \dot p
 &=r\partial_r(r\bar P)
 =r\bar P+r^2\bar P'
 =p+r^2\left(\bar U-\frac2r\bar P\right)
 =h-p,\\
 \dot h
 &=r\partial_r(r^2\bar U)
 =2h+r^3\bar U'
 =2h-(r\bar P)(r^2\bar U)
 =h(2-p).
\end{align*}
Since $p=2y$ and $\beta=h/p$,
\begin{align*}
 \dot y&=\frac12\dot p=y(\beta-1),\\
 \dot\beta
 &=\frac{\dot h}{p}-\frac{h\dot p}{p^2}
 =\beta(2-p)-\beta(\beta-1)
 =\beta(3-2y-\beta).
\end{align*}
Thus
\begin{equation}
 \dot y=y(\beta-1),\qquad
 \dot\beta=\beta(3-2y-\beta),\qquad \dot{}=\partial_{\log r}.
                                                               \label{eq:phaseflow}
\end{equation}

\begin{proposition}[Localized inner lower bound for $\alpha=1$]
\label{prop:alpha1core}
Let $f\in C_c^\infty(0,\infty)$ and set
\begin{equation}
 g=\mathscr D_{-3}^{-1}f,\qquad
 k(t)=r^{-3/2}g(r),\qquad t=\log r.                    \label{eq:gkdef}
\end{equation}
Then
\begin{equation}
 \operatorname{Re}
 \langle\widetilde{\mathcal L}_{2}^{(1)}f,f\rangle_{L^2(\dd r)}
 \ge\frac{513}{80}\|k\|_{L^2(\dd t)}^2.              \label{eq:alpha1coercive}
\end{equation}
\end{proposition}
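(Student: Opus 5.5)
Starting from Lemma~\ref{lem:partiallocalization} with $l=2$, I would drop the self-similar terms and replace $(U_n,P_n)$ by $(\bar U,\bar P)$. This gives
\[
 \mathscr D_4^{-1}\bar{\mathcal L}_2\mathscr D_4
 =-\partial_r^2-\frac2r\partial_r+\frac{12}{r^2}-\bar P\mathscr D_2-\bar U
 +2\mathscr D_4^{-1}\bigl(\bar V_1\,\cdot+\bar V_2\mathscr D_{-2}^{-1}\bigr),
\]
where $\bar V_1=-(\bar P/r)'$ and $\bar V_2=-\bar U'/r$. Conjugating by $r$ converts the $L^2(r^2\dd r)$ structure into $L^2(\dd r)$. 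The local part then becomes $-\partial_r^2+12/r^2-\bar P(\partial_r+1/r)-\bar U$ up to first-order terms produced by the conjugation.

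The key substitution is $f=\mathscr D_{-3}g=g'-3g/r$, i.e.\ $g=\mathscr D_{-3}^{-1}f$, followed by $g=r^{3/2}k(e^t)$. In these variables $f=r^{1/2}(\partial_t-\tfrac32)k$ and $f'$ involves $r^{-1/2}(\partial_t-\tfrac12)(\partial_t-\tfrac32)k$. The two Volterra terms also become local in $k$: the $\mathscr D_4^{-1}$ applied to $r\cdot$ is paired against $f$, and its adjoint $\mathscr D_{-3}^{-1}$-type inverse is exactly what produces $g$. So every term of $\operatorname{Re}\langle\widetilde{\mathcal L}_2^{(1)}f,f\rangle$ becomes, after integration by parts in $t$, a quadratic form
\[
 \int_\R\Bigl(|\partial_t^2 k|^2\text{-type}+c_1(t)|\partial_tk|^2+c_0(t)|k|^2\Bigr)\dd t
\]
(possibly lower order if the Volterra kernel cancels one derivative). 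The coefficients are polynomials in the phase variables $p=2y$ and $h=2y\beta$ and in $\dot p,\dot h$. Using the phase flow \eqref{eq:phaseflow}, I would express $\dot p=h-p$ and $\dot h=h(2-p)$ so that all coefficients become explicit polynomials in $(y,\beta)$. This is the form I expect to be labelled \eqref{eq:alpha1tform}.

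Next I would discard or bound the highest-derivative terms as nonnegative. The cross terms $\operatorname{Re}(k\,\overline{\partial_tk})=\tfrac12\partial_t|k|^2$ get integrated by parts, which transfers $t$-derivatives onto the coefficients; these are again rewritten by \eqref{eq:phaseflow}. For any leftover $\operatorname{Re}(\partial_tk\,\overline{\partial_t^2k})$ terms I would use the same trick. The result should be
\[
 \text{form}\ge\int\bigl(\mathfrak d(y,\beta)|\partial_tk|^2+\mathfrak e(y,\beta)|k|^2\bigr)\dd t.
\]
Compact support of $f$, together with the behaviour of $g=\mathscr D_{-3}^{-1}f$ (which vanishes for large $r$ and is $O(r^3)$ at $0$), makes $k$ decay at both ends of $\R$, so all boundary terms vanish.

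The main obstacle is the final step, proving $\mathfrak d\ge0$ and $\mathfrak e\ge 513/80$ along the core trajectory. These should be the elementary bounds \eqref{eq:clower}--\eqref{eq:dlower}. Here I would use the invariant region \eqref{eq:ybregion}, namely $0<y<5/3$, $0<\beta<3$ and $\beta\le 3-\tfrac65y$, as well as $h=s^2\bar U\le 15/4$ from \eqref{eq:Ucap}. The value $513/80$ suggests it is attained at an endpoint; I would first check the origin $(y,\beta)=(0,3)$ and the limit $(1,1)$, then verify positivity of $\mathfrak e-513/80$ on the region by a concavity or linearity argument in $\beta$ for fixed $y$. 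That reduces it to one-variable polynomial inequalities on the boundary curves. If the pointwise bound on $\mathfrak e$ fails somewhere, I would instead keep part of the $|\partial_tk|^2$ term and use a weighted Hardy inequality in $t$ to absorb the deficit.
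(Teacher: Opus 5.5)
Your plan is correct, and up to the local form \eqref{eq:alpha1tform} it is the paper's argument. Conjugation by $r$ gives exactly $-\partial_r^2+12r^{-2}-\bar P\mathscr D_1-\bar U+2\mathscr D_3^{-1}(V_1\,\cdot+V_2\mathscr D_{-3}^{-1}\,\cdot)$, with no leftover first-order terms. The identity $(\mathscr D_3^{-1})^*=-\mathscr D_{-3}^{-1}$ in $L^2(\dd r)$ makes both Volterra terms local in $g$, and $g=r^{3/2}k$ finishes the reduction. There is one small slip: $f'=r^{-1/2}(\partial_t+\tfrac12)(\partial_t-\tfrac32)k$, not $(\partial_t-\tfrac12)$. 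It is harmless because the two symbols have the same modulus.

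The routes part at the final estimate. The paper never differentiates a coefficient. It keeps the factorized form $\|(\partial_t+\tfrac12)(\partial_t-\tfrac32)k\|^2+\int c\,|(\partial_t-\tfrac32)k|^2+\int d\,|k|^2$, with $c=12-y\beta-4y$ and $d=4y[3-(1+y)\beta]$. It bounds $c\ge\tfrac{11}{3}$ and $d\ge-\tfrac{12}{5}$ separately on \eqref{eq:ybregion}, then applies the constant-coefficient Plancherel bounds $\tfrac9{16}$ and $\tfrac94$. So $\tfrac{513}{80}=\tfrac9{16}+\tfrac{11}3\cdot\tfrac94-\tfrac{12}5$ is a sum of bounds realized at different boundary points of the region, not a value attained on the orbit. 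Your heuristic of checking $(0,3)$ and $(1,1)$ would therefore not locate it.

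Your route, expanding and integrating the cross term by parts, gives
\[
 \|\partial_t^2k\|^2+\int(\tfrac52+c)|\partial_tk|^2\dd t
 +\int\bigl(\tfrac9{16}+\tfrac94c+\tfrac32\dot c+d\bigr)|k|^2\dd t .
\]
From \eqref{eq:phaseflow}, $\dot c=-2y\beta(1-y)-4y(\beta-1)$, hence $\tfrac94c+\tfrac32\dot c+d=27+9y-\tfrac{61}{4}y\beta-y^2\beta$. This is decreasing in $\beta$, so its infimum over the region lies on $\beta=3-\tfrac65y$. There it equals $27-\tfrac{147}{4}y+\tfrac{153}{10}y^2+\tfrac65y^3$, which stays above $6.6$ on $[0,\tfrac53]$, with minimum near $y\approx1.07$. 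So your pointwise bound closes, even after discarding the $\tfrac9{16}$, and the Hardy fallback is not needed.

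The trade-off is as follows. The paper's version needs only two one-variable inequalities and no flow derivative. It also retains the factorized form that Lemma~\ref{lem:coreHcoercive} reuses to control $\|(\partial_t-\tfrac32)k\|_2$. Yours requires computing $\dot c$ and a two-variable polynomial check (linear in $\beta$), but yields a slightly better constant.
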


\begin{proof}
First take $l=2$ in \eqref{eq:partialloc}, and then conjugate by $r$.  Since
$r\mathscr D_jr^{-1}=\mathscr D_{j-1}$ and the drift has been removed in
this paragraph,
\begin{equation}
 \widetilde{\mathcal L}_{2}^{(1)}f
 =-f''+\frac{12}{r^2}f-\bar P\mathscr D_1f-\bar Uf
 +2\mathscr D_3^{-1}(V_1f+V_2\mathscr D_{-3}^{-1}f),   \label{eq:Ltilde1expanded}
\end{equation}
where
\[
 V_1=-\partial_r(\bar P/r),\qquad V_2=-\bar U'/r=\bar P\bar U/r.
\]
In $L^2(\dd r)$ one has
$(\mathscr D_3^{-1})^*=-\mathscr D_{-3}^{-1}$.  Put
$g=\mathscr D_{-3}^{-1}f$, so that $f=\mathscr D_{-3}g$.  Integration by
parts term by term gives
\begin{align}
 \operatorname{Re}\langle\mathscr D_3^{-1}(V_1f),f\rangle
 &=\int_0^\infty W_1|g|^2\dd r,
 &W_1&=\frac12V_1'+\frac3rV_1,                           \label{eq:T1square}\\
 \operatorname{Re}\langle\mathscr D_3^{-1}(V_2g),f\rangle
 &=-\int_0^\infty V_2|g|^2\dd r.                       \label{eq:T2square}
\end{align}
For example, the right-hand side in the first identity is
\[
 -\operatorname{Re}\int V_1(g'-3g/r)\bar g\dd r
 =\int\left(\frac12V_1'+\frac3rV_1\right)|g|^2\dd r.
\]
The local first-order term similarly gives
\begin{align*}
 \operatorname{Re}\int(-\bar P\mathscr D_1f-\bar Uf)\bar f\dd r
 &=\int\left(\frac12\bar P'-\frac{\bar P}{r}-\bar U\right)|f|^2\dd r\\
 &=\int\left(-\frac12\bar U-\frac{2\bar P}{r}\right)|f|^2\dd r,
\end{align*}
where $\bar P'=\bar U-2\bar P/r$ was used.

We next compute the three potential coefficients explicitly.  From
$\bar P=2y/r$, $\bar U=2y\beta/r^2$, and
$\dot y=y(\beta-1)$,
\begin{align*}
 V_1
 &=-\partial_r\left(\frac{2y}{r^2}\right)
 =-\frac{2}{r^3}(\dot y-2y)
 =\frac{2y(3-\beta)}{r^3},\\
 V_2
 &=-\frac{\bar U'}r
 =\frac{\bar P\bar U}{r}
 =\frac{4y^2\beta}{r^4}.
\end{align*}
If $A=2y(3-\beta)$, then $V_1=r^{-3}A$ and
\[
 \dot A
 =2\dot y(3-\beta)-2y\dot\beta
 =2y(\beta-3+2y\beta).
\]
Consequently,
\[
 W_1=\frac12V_1'+\frac3rV_1
 =\frac1{r^4}\left(\frac12\dot A+\frac32A\right)
 =\frac{2y(3-\beta+y\beta)}{r^4}.
\]
In summary,
\begin{equation}
 V_1=\frac{2y(3-\beta)}{r^3},\qquad
 V_2=\frac{4y^2\beta}{r^4},\qquad
 W_1=\frac{2y(3-\beta+y\beta)}{r^4}.                   \label{eq:Vphase}
\end{equation}
The real part of \eqref{eq:Ltilde1expanded} is therefore exactly
\begin{align}
 &\int_0^\infty |f'|^2\dd r
 +\int_0^\infty\frac{12-y\beta-4y}{r^2}|f|^2\dd r\notag\\
 &\hspace{25mm}+\int_0^\infty
 \frac{4y[3-(1+y)\beta]}{r^4}|g|^2\dd r.              \label{eq:alpha1radialform}
\end{align}

Now write $g=r^{3/2}k(t)$.  Since $f=\mathscr D_{-3}g$,
\[
 f=r^{1/2}(\partial_t-3/2)k,\qquad
 f'=r^{-1/2}(\partial_t+1/2)(\partial_t-3/2)k.
\]
Substitution in \eqref{eq:alpha1radialform}, together with
$\dd r=r\dd t$, gives the exact local form
\begin{align}
 &\left\|(\partial_t+\tfrac12)(\partial_t-\tfrac32)k\right\|_2^2
 +\int_\R(12-y\beta-4y)
   \left|(\partial_t-\tfrac32)k\right|^2\dd t\notag\\
 &\hspace{20mm}+\int_\R4y[3-(1+y)\beta]|k|^2\dd t.    \label{eq:alpha1tform}
\end{align}

Only three elementary bounds remain.  By \eqref{eq:ybregion},
\begin{align}
 y\beta+4y
 &\le y\left(7-\frac65y\right)\le\frac{25}{3},
 &12-y\beta-4y&\ge\frac{11}{3},                        \label{eq:clower}\\
 4y[3-(1+y)\beta]
 &\ge\frac{12}{5}y^2(2y-3)\ge-\frac{12}{5}.           \label{eq:dlower}
\end{align}
In the first line, the derivative of $y(7-6y/5)$ on $[0,5/3]$ is
$7-12y/5\ge3$, so the maximum is attained at $y=5/3$ and equals $25/3$.
In the second line, the minimum of $y^2(2y-3)$ on $[0,5/3]$ is $-1$, attained
at $y=1$.  Fourier transformation gives
\begin{align*}
 \left\|(\partial_t+\tfrac12)(\partial_t-\tfrac32)k\right\|_2^2
 &\ge\frac9{16}\|k\|_2^2,\\
 \left\|(\partial_t-\tfrac32)k\right\|_2^2
 &\ge\frac94\|k\|_2^2.
\end{align*}
Substituting these inequalities and
\eqref{eq:clower}--\eqref{eq:dlower} into \eqref{eq:alpha1tform} yields
\[
 \frac9{16}+\frac{11}{3}\frac94-\frac{12}{5}
 =\frac{513}{80},
\]
which is \eqref{eq:alpha1coercive}.
\end{proof}
\subsubsection{The exact formula for a variable-exponent conjugation}
\label{sec:variable}

Proposition \ref{prop:alpha1core} shows that the conjugating weight should
behave like $r^1$ near the origin.  On the other hand, the real part of the
drift after conjugation by a constant power $r^\alpha$ equals
$(1-2\alpha)/4$, so one must have $\alpha<1/2$ at infinity.  We therefore
let the exponent vary with $r$ and keep every term generated by this
variation.

More precisely, we start from the partial localization formula
\eqref{eq:partialloc} and conjugate
$\mathscr D_4^{-1}L_{n,2}\mathscr D_4$ by a multiplier $m$.  The quantity
$\gamma=rm'/m$ records the local exponent.  Expanding every commutator and
passing to $t=\log r$ produces the complete variable-exponent quadratic form
in Lemma \ref{lem:variablemellin}.

Let $m>0$ and define
\begin{equation}
 \gamma(r)=\frac{rm'(r)}{m(r)},\qquad
 \dot\gamma=\partial_t\gamma=r\gamma'(r),\qquad t=\log r,
                                                               \label{eq:gammadef}
\end{equation}
The conjugated operator on $L^2((0,\infty),\dd r)$ is
\begin{equation}
 \mathcal A_{n,m}=m\mathscr D_4^{-1}L_{n,2}\mathscr D_4m^{-1}
 \quad\hbox{on }L^2((0,\infty),\dd r).
                                                               \label{eq:Anm}
\end{equation}

\begin{lemma}[Termwise formula for the variable-exponent conjugation]
\label{lem:variableoperator}
Let
\[
 K_+=m\mathscr D_4^{-1}m^{-1},\qquad
 K_-=m\mathscr D_{-2}^{-1}m^{-1}.
\]
Then
\begin{align}
 \mathcal A_{n,m}f={}&-f''+\frac{2(\gamma-1)}r f'
 +\frac{12+\gamma-\gamma^2+\dot\gamma}{r^2}f
 +\frac12\bigl(rf'+(1-\gamma)f\bigr)\notag\\
 &-P_n\left(f'+\frac{2-\gamma}{r}f\right)-U_nf
 +2K_+\bigl(V_{1,n}f+V_{2,n}K_-f\bigr).               \label{eq:variableoperator}
\end{align}
The two Volterra kernels are
\begin{align}
 K_+h(r)&=m(r)r^{-4}\int_0^r m(s)^{-1}h(s)s^4\dd s,    \label{eq:Kplusm}\\
 K_-h(r)&=-m(r)r^2\int_r^\infty m(s)^{-1}h(s)s^{-2}\dd s.
                                                               \label{eq:Kminusm}
\end{align}
If $m=rq$, then in $L^2(\dd r)$,
\begin{equation}
 K_-=q^2(-K_+^*)q^{-2}.                                  \label{eq:adjointmismatch}
\end{equation}
In particular, $K_-=-K_+^*$ only when $q$ is constant, that is, when $m$ is
proportional to $r$.
\end{lemma}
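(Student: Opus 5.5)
We prove the lemma by writing the conjugation $\mathcal A_{n,m}=m\mathscr D_4^{-1}L_{n,2}\mathscr D_4m^{-1}$ as $m\bigl(\mathscr D_4^{-1}L_{n,2}\mathscr D_4\bigr)m^{-1}$ and substituting the partial-localization identity \eqref{eq:partialloc} with $l=2$:
\[
 \mathscr D_4^{-1}L_{n,2}\mathscr D_4
 =-\partial_r^2-\frac2r\partial_r+\frac{12}{r^2}+\frac12(r\partial_r+1)
 -P_n\mathscr D_2-U_n+2\mathscr D_4^{-1}\bigl(V_{1,n}\,\cdot+V_{2,n}\mathscr D_{-2}^{-1}\cdot\bigr).
\]
We then conjugate each term separately by the multiplication operator $m$. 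Multiplication operators ($12/r^2$, $U_n$, and the potentials $V_{1,n},V_{2,n}$) commute with $m$. The nonlocal term therefore becomes
\[
 2\,m\mathscr D_4^{-1}m^{-1}\bigl(V_{1,n}+V_{2,n}\,m\mathscr D_{-2}^{-1}m^{-1}\bigr)=2K_+(V_{1,n}+V_{2,n}K_-),
\]
after inserting $m^{-1}m$ between $V_{2,n}$ and $\mathscr D_{-2}^{-1}$. The kernel formulas \eqref{eq:Kplusm}--\eqref{eq:Kminusm} follow immediately from \eqref{eq:Dkinv} with $k=4$ and $k=-2$.

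For the differential terms we use the basic commutation rule $m\partial_r m^{-1}=\partial_r-m'/m=\partial_r-\gamma/r$. This gives
\[
 m\mathscr D_k m^{-1}=\partial_r+\frac{k-\gamma}{r},
\]
so the drift term contributes $-P_n\bigl(f'+(2-\gamma)f/r\bigr)$, and the self-similar term contributes $\frac12\bigl(rf'+(1-\gamma)f\bigr)$. For the second-order part, square the rule:
\[
 m\partial_r^2m^{-1}=\Bigl(\partial_r-\frac\gamma r\Bigr)^2
 =\partial_r^2-\frac{2\gamma}{r}\partial_r-\Bigl(\frac\gamma r\Bigr)'+\frac{\gamma^2}{r^2}.
\]
Here
\[
 \Bigl(\frac\gamma r\Bigr)'=\frac{\dot\gamma-\gamma}{r^2}\qquad(\dot\gamma=r\gamma').
\]
Collecting all terms from $-\partial_r^2-\frac2r\partial_r+\frac{12}{r^2}$, the coefficient of $f'$ is $2(\gamma-1)/r$. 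The coefficient of $f/r^2$ is
\[
 12+2\gamma-\gamma^2+\dot\gamma-\gamma .
\]
This uses the $-\frac2r\cdot(-\gamma/r)=2\gamma/r^2$ contribution and the $+(\dot\gamma-\gamma)-\gamma^2$ terms from $-m\partial_r^2m^{-1}$, and it equals $12+\gamma-\gamma^2+\dot\gamma$. This is \eqref{eq:variableoperator}. The only real risk is a sign slip in this bookkeeping, so I would double-check it on $m=r^\alpha$, where $\gamma\equiv\alpha$ and $\dot\gamma=0$. For $\alpha=1$ the formula must reduce to \eqref{eq:Ltilde1expanded} without drift: $-f''+12f/r^2-P\mathscr D_1f-Uf+\cdots$. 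It does: $2(\gamma-1)=0$ and $12+1-1=12$.

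For \eqref{eq:adjointmismatch}, write $m=rq$ and compute the $L^2(\dd r)$ adjoint of $K_+$ from its kernel. $K_+$ has kernel
\[
 k_+(r,s)=m(r)r^{-4}m(s)^{-1}s^4\,\mathbf 1_{s<r},
\]
so $K_+^*$ has kernel
\[
 k_+(s,r)=m(s)s^{-4}m(r)^{-1}r^4\,\mathbf 1_{r<s}=q(s)q(r)^{-1}s^{-3}r^{3}\,\mathbf 1_{s>r}.
\]
The kernel of $K_-$ is
\[
 -m(r)r^2m(s)^{-1}s^{-2}\,\mathbf 1_{s>r}=-q(r)q(s)^{-1}r^{3}s^{-3}\,\mathbf 1_{s>r}.
\]
The kernels differ by the factor $q(r)^2q(s)^{-2}$, which is exactly the kernel identity for $q^2(-K_+^*)q^{-2}$. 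Finally, $q^2(-K_+^*)q^{-2}=-K_+^*$ for all inputs iff $q(r)^2/q(s)^2\equiv1$ on $s>r$, that is, iff $q$ is constant, or equivalently $m\propto r$. I expect no genuine obstacle in this proof; the main care point is the commutator bookkeeping in the second-order term.
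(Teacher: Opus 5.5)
Your proposal is correct and follows the paper's proof. You substitute the $l=2$ partial-localization identity, conjugate term by term using $m\partial_r m^{-1}=\partial_r-\gamma/r$ (your bookkeeping, including $(\gamma/r)'=(\dot\gamma-\gamma)/r^2$, agrees with the paper), read off $K_\pm$ from \eqref{eq:Dkinv}, and obtain \eqref{eq:adjointmismatch} by transposing the kernel of $K_+$ after writing $m=rq$, which is the paper's Fubini computation. Your kernel comparison, using $q=m/r>0$, also supplies the short justification for the final ``only when $q$ is constant'' remark, which the paper leaves implicit.
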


\begin{proof}
Conjugation by multiplication replaces $\partial_r$ by
\[
 m\partial_rm^{-1}=\partial_r-\frac\gamma r.
\]
Hence
\begin{align*}
 &m\left(-\partial_r^2-\frac2r\partial_r+\frac{12}{r^2}\right)m^{-1}\\
 &\quad=-\partial_r^2+\frac{2(\gamma-1)}r\partial_r
 +\frac{12+\gamma-\gamma^2+\dot\gamma}{r^2},
\end{align*}
where $(\gamma/r)'=(\dot\gamma-\gamma)/r^2$ was used.  Similarly,
\begin{align*}
 m\frac12(r\partial_r+1)m^{-1}
 &=\frac12(r\partial_r+1-\gamma),\\
 m(-P_n\mathscr D_2-U_n)m^{-1}
 &=-P_n\left(\partial_r+\frac{2-\gamma}{r}\right)-U_n.
\end{align*}
Substitution into the $l=2$ case of Lemma
\ref{lem:partiallocalization} proves \eqref{eq:variableoperator}.
Equations \eqref{eq:Kplusm}--\eqref{eq:Kminusm} follow directly from
\eqref{eq:Dkinv}.

Fubini's theorem gives
\[
 K_+^*h(r)=m(r)^{-1}r^4\int_r^\infty m(s)s^{-4}h(s)\dd s.
\]
After substituting $m=rq$, the right-hand side of
\eqref{eq:adjointmismatch} becomes
\[
 q(r)^2(-K_+^*)q^{-2}h(r)
 =-r^3q(r)\int_r^\infty s^{-3}q(s)^{-1}h(s)\dd s,
\]
which is exactly \eqref{eq:Kminusm}.
\end{proof}

We now pass the complete real-part form to logarithmic variables.  Write
\begin{equation}
 p_n=rP_n,\quad h_n=r^2U_n,\quad
 c_{1,n}=r^3V_{1,n}=3p_n-h_n,\quad
 c_{2,n}=r^4V_{2,n}=-r^3U_n'.                           \label{eq:dimensionlesscoeff}
\end{equation}
The identity $\dot p_n=h_n-p_n$ follows from
$P_n'+2P_n/r=U_n$; substituting it into
$V_{1,n}=-(P_n/r)'$ gives $c_{1,n}=3p_n-h_n$.

\begin{lemma}[Exact Mellin--Volterra quadratic form]
\label{lem:variablemellin}
Let
\begin{equation}
 f(r)=r^{1/2}H(t),\quad
 a=\frac52-\gamma,\quad b=\gamma+\frac12,              \label{eq:abdef}
\end{equation}
and define the inverses with their indicated endpoint conditions by
\begin{align}
 A_\gamma h(t)&=(D+a)^{-1}h(t)
 =\int_{-\infty}^t
 e^{-\int_s^t a(\tau)\dd\tau}h(s)\dd s,               \label{eq:Agamma}\\
 B_\gamma h(t)&=(D-b)^{-1}h(t)
 =-\int_t^\infty
 e^{-\int_t^s b(\tau)\dd\tau}h(s)\dd s.              \label{eq:Bgamma}
\end{align}
Then
\begin{align}
 \operatorname{Re}\langle\mathcal A_{n,m}f,f\rangle
={}&\|H'\|_2^2+\int_\R W_{n,\gamma}|H|^2\dd t\notag\\
 &+2\operatorname{Re}\langle
 A_\gamma(c_{1,n}H+c_{2,n}B_\gamma H),H\rangle
 +\int_\R d_\gamma r^2|H|^2\dd t,                    \label{eq:fullvariableform}
\end{align}
where
\begin{equation}
 W_{n,\gamma}=\frac{45}{4}+2\gamma-\gamma^2
 -ap_n-h_n+\frac12\dot p_n,\qquad
 d_\gamma=\frac{1-2\gamma}{4}.                         \label{eq:Wd}
\end{equation}
\end{lemma}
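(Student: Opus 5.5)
The plan is to start from the termwise formula \eqref{eq:variableoperator} of Lemma~\ref{lem:variableoperator}, pair each term with $\bar f$ in $L^2(\dd r)$, and substitute $f=r^{1/2}H(t)$ with $\dd r=r\dd t$. Throughout I would use the identities
\[
 f'=r^{-1/2}\bigl(\partial_t+\tfrac12\bigr)H,\qquad
 \int_0^\infty|f|^2r^{-2}\dd r=\int_\R|H|^2\dd t,\qquad
 \int_0^\infty|f|^2\dd r=\int_\R r^2|H|^2\dd t.
\]

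\textbf{Local terms.} For $-f''$, one integration by parts gives $\int|f'|^2\dd r=\|(\partial_t+\tfrac12)H\|_2^2=\|H'\|_2^2+\tfrac14\|H\|_2^2$; the cross term $\operatorname{Re}\int H'\bar H\dd t$ vanishes. For $2(\gamma-1)r^{-1}f'$, the real part is $\int(\gamma-1)r^{-1}(|f|^2)'\dd r$. After integrating by parts this becomes $-\int\bigl((\gamma-1)/r\bigr)'|f|^2\dd r$, and in $t$-variables it equals $\int[(\gamma-1)-\dot\gamma]|H|^2\dd t$. The $\dot\gamma$ produced here cancels the $+\dot\gamma$ in the potential $(12+\gamma-\gamma^2+\dot\gamma)/r^2$.

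The drift $\tfrac12(rf'+(1-\gamma)f)$ has real part $\int d_\gamma r^2|H|^2\dd t$ with $d_\gamma=(1-2\gamma)/4$, because $\operatorname{Re}\int rf'\bar f\dd r=-\tfrac12\int|f|^2\dd r$. For the term $-P_n(f'+(2-\gamma)f/r)$, I write $P_n=p_n/r$. Integrating by parts gives
\[
 \operatorname{Re}\int-\frac{p_n}{r}f'\bar f\,\dd r
 =\int\Bigl(\frac{\dot p_n}{2}-\frac{p_n}{2}\Bigr)|H|^2\dd t,
\]
and the second piece contributes $-(2-\gamma)p_n$. Together these give $-(\tfrac52-\gamma)p_n+\tfrac12\dot p_n=-ap_n+\tfrac12\dot p_n$. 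The term $-U_nf$ gives $-h_n$.

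Collecting constants, $\tfrac14+12+\gamma-\gamma^2+\gamma-1=\tfrac{45}{4}+2\gamma-\gamma^2$, which reproduces $W_{n,\gamma}$ in \eqref{eq:Wd}. This is a good consistency check on the signs.

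\textbf{Volterra terms.} I need to show that $K_+$ and $K_-$ become $A_\gamma$ and $B_\gamma$ after the substitution and the weight bookkeeping. Write $K_+h=\mathscr D^{(m)}$-inverse, where $m\mathscr D_4m^{-1}=\partial_r+(4-\gamma)/r=r^{-1}(D+4-\gamma)$. Acting on $r^{\sigma}\times(\text{function of }t)$ shifts $D$ by $\sigma$.

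I track the term $2\langle K_+(V_{1,n}f),f\rangle$. Here $V_{1,n}f=c_{1,n}r^{-3}r^{1/2}H=r^{-5/2}c_{1,n}H$. Solving $r^{-1}(D+4-\gamma)u=r^{-5/2}c_1H$ with $u=r^{-3/2}\tilde u$ gives $(D+\tfrac52-\gamma)\tilde u=c_1H$. The inverse selected by regularity at $0$ (integration from $-\infty$) is exactly $A_\gamma$ of \eqref{eq:Agamma}. Pairing then gives $\int r^{-3/2}A_\gamma(c_1H)\,r^{1/2}\bar H\,\dd r=\langle A_\gamma(c_1H),H\rangle_{L^2(\dd t)}$.

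Similarly, $m\mathscr D_{-2}m^{-1}=r^{-1}(D-2-\gamma)$ with the decaying inverse. Applied to $r^{1/2}H$, it yields $r^{3/2}B_\gamma H$ with $b=\gamma+\tfrac12$, and the integration from $+\infty$ matches \eqref{eq:Bgamma}. Then $V_{2,n}K_-f=c_2r^{-4}r^{3/2}B_\gamma H=r^{-5/2}c_2B_\gamma H$, which fits the same $A_\gamma$ step. This yields the term $2\operatorname{Re}\langle A_\gamma(c_{1,n}H+c_{2,n}B_\gamma H),H\rangle$.

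\textbf{Main obstacle.} The main difficulty is justifying these manipulations rather than the algebra. The boundary terms in the integrations by parts must vanish, which requires $H$ compactly supported in $t$ (e.g.\ $f\in C_c^\infty(0,\infty)$, the setting already used in Proposition~\ref{prop:alpha1core}). One must also check that the exponential weights $e^{-\int a}$ and $e^{-\int b}$ are consistent with the endpoint conditions of \eqref{eq:Dkinv}, i.e.\ that $K_\pm$ genuinely coincide with $A_\gamma$ and $B_\gamma$. For that I would write $m$ explicitly as $\exp\int\gamma\,\dd t$ and verify the kernels \eqref{eq:Kplusm}--\eqref{eq:Kminusm} directly against \eqref{eq:Agamma}--\eqref{eq:Bgamma}.
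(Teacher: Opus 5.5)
Your proposal is correct and is essentially the paper's own proof: you substitute $f=r^{1/2}H$ term by term into \eqref{eq:variableoperator}, and your computations match the paper's, namely the cancellation of $\dot\gamma$, the $\tfrac12\dot p_n$ term coming from $-P_nf'$, the drift coefficient $d_\gamma=(1-2\gamma)/4$, and the kernel identities $K_+(r^{-3}f)=r^{-3/2}A_\gamma H$ and $K_-f=r^{3/2}B_\gamma H$. The only difference is cosmetic: you integrate by parts in $r$ and then change variables, whereas the paper first rewrites the local part as $-D^2+2(\gamma-1)D+\cdots-p_nD-ap_n-h_n$ in $t$ and integrates by parts there.
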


\begin{proof}
Consider first the homogeneous local part.  Since $f=r^{1/2}H$,
\[
 f'=r^{-1/2}(D+\tfrac12)H,\qquad
 f''=r^{-3/2}(D^2-\tfrac14)H.
\]
Substitute these identities into \eqref{eq:variableoperator}, temporarily
omitting the drift and Newton terms, and use $\dd r=r\dd t$.  The resulting
differential expression is
\[
 -D^2+2(\gamma-1)D+\frac{45}{4}+2\gamma-\gamma^2
 +\dot\gamma-p_nD-ap_n-h_n.
\]
In the real-part integral, $2(\gamma-1)D$ produces $-\dot\gamma$, which
cancels the displayed $+\dot\gamma$, while $-p_nD$ produces
$+\dot p_n/2$.  This gives
$\|H'\|_2^2+\int W_{n,\gamma}|H|^2$.

Direct differentiation of \eqref{eq:Kplusm}--\eqref{eq:Kminusm} yields
\begin{align*}
 K_+(r^{-3}f)&=r^{-3/2}A_\gamma H,\\
 K_-f&=r^{3/2}B_\gamma H.
\end{align*}
Using $V_{1,n}=r^{-3}c_{1,n}$ and
$V_{2,n}=r^{-4}c_{2,n}$, the Newton term becomes exactly the first term on
the second line of \eqref{eq:fullvariableform}.

Finally, in the $H$ variable the drift equals
\[
 \frac12r^{1/2}(D+\tfrac32-\gamma)H.
\]
Its pairing with $r^{1/2}H$ uses the measure $r^2\dd t$.  Integration by
parts gives
\[
 \operatorname{Re}\int\frac12r^2H'\bar H\dd t
 =-\frac12\int r^2|H|^2\dd t.
\]
Adding $(3/2-\gamma)/2$ leaves the coefficient
$d_\gamma=(1-2\gamma)/4$.
\end{proof}

\subsubsection{A global variable-exponent gap for the singular limit}
\label{sec:singularvariable}

We now substitute the singular coefficients \eqref{eq:singdimcoeff} into the
variable-exponent quadratic form.  The weight $m_R$ is chosen so that
$\gamma$ passes from $1$ at the origin to $0$ at infinity.  The Newton term
is then localized by \eqref{eq:HAy}.  The final bound is
\eqref{eq:Qvariablecoercive}; together with the drift contribution
\eqref{eq:driftchoice}, it gives the spectral gap $1/4$ for the singular
operator.

Let $U_*=2r^{-2}$ and $P_*=2r^{-1}$, and denote the associated $l=2$
operator by $L_{*,2}$.  Then
\begin{equation}
 c_{1,*}=c_{2,*}=4,\qquad p_*=h_*=2.                    \label{eq:singdimcoeff}
\end{equation}
Substituting \eqref{eq:singdimcoeff} into Lemma
\ref{lem:variablemellin}, and first removing the drift term, gives the
homogeneous quadratic form
\begin{align}
 Q_\gamma[H]={}&\|H'\|_2^2
 +\int_\R(2+5b-b^2)|H|^2\dd t\notag\\
 &+8\operatorname{Re}\langle A_\gamma H,H\rangle
 +8\operatorname{Re}\langle A_\gamma B_\gamma H,H\rangle,
                                                               \label{eq:Qgamma}
\end{align}
where $a,b$ are still given by \eqref{eq:abdef}.  The crucial algebraic
relation is
\begin{equation}
 a+b=3.                                                    \label{eq:absum}
\end{equation}

\begin{lemma}[Exact localization of the variable-coefficient Newton term]
\label{lem:variablefactor}
Let
\begin{equation}
 z=A_\gamma B_\gamma H,\qquad y=(D-b)z.                 \label{eq:zydef}
\end{equation}
Then
\begin{equation}
 H=(D+a)y,\qquad A_\gamma H=y,                          \label{eq:HAy}
\end{equation}
and
\begin{align}
 \operatorname{Re}\langle A_\gamma H,H\rangle
 &=\int_\R a|y|^2\dd t,                                \label{eq:Apositive}\\
 \operatorname{Re}\langle A_\gamma B_\gamma H,H\rangle
 &=-\|z'\|_2^2-\int_\R ab|z|^2\dd t.                 \label{eq:ABlocalized}
\end{align}
\end{lemma}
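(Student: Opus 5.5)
The plan is to verify everything directly from the definitions \eqref{eq:Agamma}--\eqref{eq:Bgamma}, treating $A_\gamma=(D+a)^{-1}$ and $B_\gamma=(D-b)^{-1}$ as genuine inverses on the relevant function classes. Take $H\in C_c^\infty(\R)$ first, so that $A_\gamma H$, $B_\gamma H$ and $z$ are smooth. Because $a=\tfrac52-\gamma$ and $b=\gamma+\tfrac12$ are bounded with $a,b$ tending to positive limits, the Volterra integrals decay exponentially at the endpoints where they are not identically zero. This will justify integrations by parts without boundary terms; the general case then follows by density.

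First I derive \eqref{eq:HAy}. By definition $(D+a)A_\gamma h=h$ and $(D-b)B_\gamma h=h$. Set $z=A_\gamma B_\gamma H$. Then $(D+a)z=B_\gamma H$, and applying $(D-b)$ gives $(D-b)(D+a)z=H$. The operators do not commute when $\gamma$ varies, so I expand both products carefully:
\[
(D-b)(D+a)z=z''+(a-b)z'+(\dot a-ab)z,\qquad (D+a)(D-b)z=z''+(a-b)z'-(\dot b+ab)z.
\]
The relation \eqref{eq:absum} gives $\dot a=-\dot b$, so the two products coincide. Hence, with $y=(D-b)z$, we get $(D+a)y=H$. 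Uniqueness of the decaying solution at $-\infty$ then gives $y=A_\gamma H$, because $y$ vanishes at $-\infty$ for compactly supported $H$ and the homogeneous solution of $D+a$ blows up there. This uniqueness check is the one point I would treat carefully.

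Next, \eqref{eq:Apositive} follows from $\langle y,(D+a)y\rangle$. The real part of $\langle y,y'\rangle$ vanishes by integration by parts, which leaves $\int a|y|^2\,dt$.

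For \eqref{eq:ABlocalized}, I write $\langle z,H\rangle=\langle z,(D-b)(D+a)z\rangle$. Using the expansion above, the real part is
\[
-\|z'\|_2^2-\tfrac12\int(a-b)'|z|^2\,dt+\int(\dot a-ab)|z|^2\,dt.
\]
Since $(a-b)'=2\dot a$, the terms $-\dot a$ and $+\dot a$ cancel and only $-\int ab|z|^2\,dt$ remains.

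The main obstacle I expect is the endpoint bookkeeping: checking that $z$, $z'$, $y$ decay at both ends so that no boundary terms survive. At $+\infty$, $z=A_\gamma B_\gamma H$ is built from $B_\gamma H$, which vanishes beyond the support of $H$; there $z$ solves the homogeneous equation $(D+a)z=0$ and decays like $e^{-\int a}$. At $-\infty$, $B_\gamma H$ behaves like $e^{\int b}$ and decays, and $A_\gamma$ preserves this decay. Both facts need $a,b>0$ asymptotically, which holds for the weight $m_R$ with $\gamma$ running from $1$ to $0$.
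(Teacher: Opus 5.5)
Your proposal is correct and follows the paper's proof: expand both orderings of the first-order factors, use $a+b=3$ (so $\dot a=-\dot b$) to identify them, conclude $H=(D+a)y$ and $y=A_\gamma H$, and then integrate by parts. Pairing with the $(D-b)(D+a)$ expansion instead of the $(D+a)(D-b)$ one in the last step is an inessential variation. Your explicit check that $z$ and $y$ decay at both ends fills in details the paper leaves implicit: it is what justifies $A_\gamma(D+a)y=y$ and the absence of boundary terms.
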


\begin{proof}
Expand the two second-order products term by term:
\begin{align*}
 (D-b)(D+a)&=D^2+(a-b)D+a'-ab,\\
 (D+a)(D-b)&=D^2+(a-b)D-b'-ab.
\end{align*}
Since $a+b=3$, one has $a'=-b'$, so the two operators agree.  Moreover,
$z=A_\gamma B_\gamma H$ implies
\[
 (D+a)z=B_\gamma H,\qquad (D-b)(D+a)z=H.
\]
Interchanging the two first-order factors and using $y=(D-b)z$ gives
$H=(D+a)y$.  Applying $A_\gamma$ gives $A_\gamma H=y$.

Therefore
\[
 \operatorname{Re}\langle A_\gamma H,H\rangle
 =\operatorname{Re}\int y\,\overline{(D+a)y}\dd t
 =\int a|y|^2\dd t,
\]
because $\operatorname{Re}\int y\bar y'\dd t=0$.  On the other hand,
\[
 (D+a)(D-b)z=z''+(a-b)z'-(b'+ab)z.
\]
Pairing with $z$ and integrating by parts, the second derivative gives
$-\|z'\|_2^2$, while the first-order term gives
\[
 -\frac12\int(a'-b')|z|^2\dd t=\int b'|z|^2\dd t.
\]
This cancels the term $-b'$ in the zeroth-order coefficient, leaving
$-ab$.  Thus \eqref{eq:ABlocalized} follows.
\end{proof}

Choose the endpoint weight that reaches the ordinary $L^2$ spectral
threshold:
\begin{equation}
 m_R(r)=r\left(1+\frac{r^2}{R^2}\right)^{-1/2},
 \qquad 0<R\le1.                                        \label{eq:mR}
\end{equation}
To avoid ambiguity, set
\[
 \theta(r)=\frac{r^2}{R^2+r^2}.
\]
Taking logarithmic derivatives gives
\begin{equation}
 \gamma=1-\theta,\qquad
 b=\frac32-\theta,\qquad a=3-b,                         \label{eq:gammachoice}
\end{equation}
and
\begin{equation}
 b'=\dot b=-2\theta(1-\theta)
 =-2\left(\frac32-b\right)\left(b-\frac12\right).    \label{eq:bdot}
\end{equation}
Consequently,
\begin{equation}
 b_0:=\frac12\le b\le\frac32,\qquad
 \frac32\le a\le\frac52,\qquad a'=-b'\ge0.          \label{eq:abbounds}
\end{equation}

\begin{proposition}[Endpoint homogeneous Mellin--Newton quadratic form]
\label{prop:Qvariablecoercive}
For the weight \eqref{eq:mR} and every $H\in C_c^\infty(\R)$,
\begin{equation}
 Q_\gamma[H]\ge\|H'\|_2^2+\frac{21}{20}\|H\|_2^2.   \label{eq:Qvariablecoercive}
\end{equation}
\end{proposition}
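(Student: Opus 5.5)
The plan is to reduce \(Q_\gamma[H]\) entirely to a local quadratic form in the doubly integrated variable \(z=A_\gamma B_\gamma H\). Lemma~\ref{lem:variablefactor} makes this possible.

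\emph{Step 1: rewrite \(Q_\gamma\) via Lemma~\ref{lem:variablefactor}.} Inserting \eqref{eq:Apositive}--\eqref{eq:ABlocalized} into \eqref{eq:Qgamma} gives
\[
 Q_\gamma[H]=\|H'\|_2^2+\int_\R(2+5b-b^2)|H|^2\dd t+8\int_\R a|y|^2\dd t
 -8\|z'\|_2^2-8\int_\R ab|z|^2\dd t,
\]
with \(y=(D-b)z\) and \(H=(D+a)y\).

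\emph{Step 2: two exact Hardy-type identities.} Using \(\operatorname{Re}\int 2a\,y'\bar y=-\int a'|y|^2\), and the analogous identity for \(b\), I obtain
\[
 \|H\|_2^2=\|y'\|_2^2+\int(a^2-a')|y|^2,\qquad
 \|y\|_2^2=\|z'\|_2^2+\int(b^2+b')|z|^2 .
\]
The second identity eliminates \(\|z'\|_2^2\). The Newton contribution then becomes
\[
 8\int(a-1)|y|^2-8\int\bigl(b(3-2b)-b'\bigr)|z|^2 .
\]
The first term is positive, since \(a-1\ge1/2\). By \eqref{eq:bdot} and \eqref{eq:abbounds}, the weight \(b(3-2b)-b'\) is nonnegative and is an explicit polynomial in \(b\) (equivalently in \(\theta\)) on \([1/2,3/2]\). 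This is the only bad term.

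\emph{Step 3: absorb the bad term.} Pointwise, \(2+5b-b^2\ge17/4\) on \(b\in[1/2,3/2]\). After reserving \(\|H'\|_2^2+\frac{21}{20}\|H\|_2^2\), I still have at least \(\frac{16}{5}\|H\|_2^2\) to spend. I will split this into a portion \(\kappa\|y'\|_2^2+\kappa\int(a^2-a')|y|^2\), and then expand \(\|y'\|_2^2\) and \(\int w|y|^2=\int w|(D-b)z|^2\) for suitable weights \(w\) as local forms in \(z\). Every term is then of the form \(\int(\alpha|z''|^2+\beta|z'|^2+\delta|z|^2)\), whose coefficients are rational functions of \(b\). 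The relation \(\dot b=-2(3/2-b)(b-1/2)\) lets me rewrite all derivatives of \(a,b\), including \(b''\), as polynomials in \(b\).

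I will then choose the splitting (possibly with a \(b\)-dependent multiplier, i.e.\ a ground-state substitution \(z=e^{\phi}\tilde z\)) so that the resulting coefficients \(\beta,\delta\) are nonnegative polynomials on \([1/2,3/2]\). Nonnegativity will be certified by Bernstein coefficients, as in Appendix~\ref{sec:polynomial-checks}. The cross terms \(\operatorname{Re}\int c\,z'\bar z\) are handled by integration by parts into \(-\frac12\int \dot c|z|^2\), using the same ODE.

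\emph{Main obstacle.} The hard part is the endpoint \(b\to1/2\), i.e.\ \(t\to+\infty\), where the margin is tight. With frozen coefficients \(a=5/2\), \(b=1/2\), the Fourier symbol check gives \(\|z\|_2^2\le4\|y\|_2^2\) and \(\|y\|_2^2\le\frac4{25}\|H\|_2^2\). The needed \(8\|z\|_2^2\le(12+\frac{16}{5}\cdot\frac{25}{4})\|y\|_2^2\) then holds only with a modest margin. Accordingly, I would first verify \eqref{eq:Qvariablecoercive} for the frozen-coefficient Fourier multiplier at both endpoints, \(b=1/2\) and \(b=3/2\), exactly as in \eqref{eq:Ccert}.

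The remaining task is to show that the \(\theta\)-dependent corrections (terms in \(b'\)) have a favourable sign or are small. Here \(a'\ge0\) helps, because it appears with a favourable sign after the integrations by parts. If pointwise certification fails in the transition region, I would fall back on a commutator estimate in the spirit of \eqref{eq:comm1}. That fallback uses \(|\dot b|\le1/2\), which is independent of \(R\), and the slack available at interior values of \(b\).
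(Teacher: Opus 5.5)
Steps~1 and~2 are correct exact identities. However, Step~3, which is the whole content of the proposition, is only announced: you never exhibit the splitting or multiplier that makes the coefficients nonnegative.

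Your quantitative reading of the endpoint is also wrong, and this undermines your fallback. In the frozen model $b=\frac12$, $a=\frac52$, the zero-frequency symbol of $Q_\gamma-\|H'\|_2^2$ is $c+\frac8a-\frac8{ab}=\frac{17}{4}+\frac{16}{5}-\frac{32}{5}=\frac{21}{20}$. Equivalently, both of your inequalities $\|z\|_2^2\le4\|y\|_2^2$ and $\|y\|_2^2\le\frac4{25}\|H\|_2^2$ are saturated as $\xi\to0$. So $8\|z\|_2^2\le32\|y\|_2^2$ holds with \emph{zero} margin, not a modest one. The constant $\frac{21}{20}$ is therefore sharp: it is approached by slowly varying $H$ supported near $t=+\infty$.

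Consequently, two of your fallbacks cannot close the argument near $b=\frac12$. Treating the $b'$-corrections as small fails, and so does a commutator bound with a loss proportional to $\sup|\dot b|$ that relies on interior slack, because there is no slack there. Any certificate must be exact at $b=\frac12$ and carry the correct sign at first order in $b-\frac12$.

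The idea you are missing is a weighted Hardy identity that removes $z$ altogether. Expand $\frac ab|z'-bz|^2$ and integrate the cross term $-2a\operatorname{Re}(z'\bar z)$ by parts:
\[
 \int_\R\frac ab|y|^2\dd t=\int_\R\bigl(|z'|^2+ab|z|^2\bigr)\dd t+\int_\R\Bigl(\frac ab-1\Bigr)|z'|^2\dd t+\int_\R a'|z|^2\dd t .
\]
Since $a\ge b>0$ and $a'\ge0$ by \eqref{eq:abbounds}, this gives $\|z'\|_2^2+\int ab|z|^2\le\int\frac ab|y|^2$. In your notation it reads $\int w|z|^2\le\int(\frac ab-1)|y|^2$ with $w=b(3-2b)-b'$. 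Hence your Newton contribution is bounded below by the local weight $8\int a(1-\frac1b)|y|^2$.

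Next write $\int c|H|^2$ and $\|H\|_2^2$ through $H=(D+a)y$. This reduces \eqref{eq:Qvariablecoercive} to $c\ge\frac{21}{20}$ together with the single pointwise inequality $ca^2-(ca)'+8a(1-\frac1b)-\frac{21}{20}(a^2-a')\ge0$. After substituting $a=3-b$ and \eqref{eq:bdot}, the left side equals $\frac{2b-1}{40b}$ times a quartic with positive Bernstein coefficients on $[\frac12,\frac32]$. The factor $2b-1$ is precisely the zero margin noted above.

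With this identity you need no expansion of $|H|^2$ in $z$, no $|z''|^2$ or $b''$ terms, and no ground-state substitution.
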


\begin{proof}
Use the functions $z,y$ from Lemma \ref{lem:variablefactor}.  Since
$y=z'-bz$, direct expansion followed by integration by parts in the cross
term gives the exact identity
\begin{align}
 \int_\R\frac ab|y|^2\dd t
 ={}&\int_\R(|z'|^2+ab|z|^2)\dd t\notag\\
 &+\int_\R\left(\frac ab-1\right)|z'|^2\dd t
 +\int_\R a'|z|^2\dd t.                                \label{eq:l2weightedzidentity}
\end{align}
Indeed, the cross term is
$-2\int a\operatorname{Re}(z'\bar z)\dd t
=-\int a(|z|^2)'\dd t=\int a'|z|^2\dd t$.
Since \eqref{eq:abbounds} gives $a\ge b>0$ and $a'\ge0$,
\begin{equation}
 \int_\R(|z'|^2+ab|z|^2)\dd t
 \le\int_\R\frac ab|y|^2\dd t.                         \label{eq:l2Tsharp}
\end{equation}

Set $c(b)=2+5b-b^2$.  Substituting \eqref{eq:Apositive},
\eqref{eq:ABlocalized}, and \eqref{eq:l2Tsharp} into
\eqref{eq:Qgamma}, and using $H=(D+a)y$, gives
\begin{align}
 Q_\gamma[H]\ge{}&\|H'\|_2^2+\int_\R c|y'|^2\dd t\notag\\
 &+\int_\R\left[ca^2-(ca)'+8a\left(1-\frac1b\right)\right]
 |y|^2\dd t.                                             \label{eq:l2Qylower}
\end{align}
Also,
\begin{equation}
 \|H\|_2^2=\|y'\|_2^2+\int_\R(a^2-a')|y|^2\dd t.      \label{eq:l2Hfromy}
\end{equation}
Clearly $c(b)\ge17/4>21/20$.  Substituting $a=3-b$ and
\eqref{eq:bdot} into the zeroth-order coefficient and reducing to a common
denominator gives
\begin{align}
 &ca^2-(ca)'+8a\left(1-\frac1b\right)
 -\frac{21}{20}(a^2-a')\notag\\
 &\qquad=\frac{2b-1}{40b}
 \left(960+1141b-2178b^2+1030b^3-140b^4\right).         \label{eq:l2endpointpolynomial}
\end{align}
The polynomial in parentheses is strictly positive for
$b\in[1/2,3/2]$.  Put $s=b-1/2\in[0,1]$.  Its Bernstein coefficients with
respect to the basis $\binom4k s^k(1-s)^{4-k}$ are
\begin{equation}
 1106,\qquad\frac{8179}{8},\qquad\frac{3193}{4},\qquad
 \frac{4969}{8},\qquad\frac{1077}{2},                  \label{eq:l2endpointbernstein}
\end{equation}
all of which are positive.  Hence \eqref{eq:l2endpointpolynomial} is
nonnegative, and \eqref{eq:l2Qylower}--\eqref{eq:l2Hfromy} prove
\eqref{eq:Qvariablecoercive}.
\end{proof}

\begin{proposition}[Endpoint gap for the singular $l=2$ operator]
\label{prop:singulargapvariable}
For every $0<R\le1$ in \eqref{eq:mR} and every
$f\in C_c^\infty(0,\infty)$, if $f=r^{1/2}H(\log r)$, then
\begin{equation}
 \operatorname{Re}\langle
 m_R\mathscr D_4^{-1}L_{*,2}\mathscr D_4m_R^{-1}f,f\rangle
 \ge\frac14\|f\|_{L^2(\dd r)}^2+\frac{11}{20}\|H\|_2^2.
                                                               \label{eq:singulargapvariable}
\end{equation}
\end{proposition}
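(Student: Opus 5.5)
The plan is to combine the exact real-part formula of Lemma~\ref{lem:variablemellin}, specialized to the singular coefficients \eqref{eq:singdimcoeff}, with the homogeneous bound of Proposition~\ref{prop:Qvariablecoercive}, and to treat the drift term separately.

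\textbf{Step 1: split into homogeneous and drift parts.} Apply Lemma~\ref{lem:variablemellin} with $U_n,P_n$ replaced by $U_*,P_*$ and with $m=m_R$. Since $\dot p_*=0$, $p_*=h_*=2$, $c_{1,*}=c_{2,*}=4$ and $a=5/2-\gamma$, the local potential is
\[
 W_{*,\gamma}=\tfrac{45}{4}+2\gamma-\gamma^2-2a-2 .
\]
Rewritten in terms of $b=\gamma+1/2$, this should coincide with $2+5b-b^2$. I will check that identity directly; it is presumably how \eqref{eq:Qgamma} was obtained. The real part then splits as
\[
 \operatorname{Re}\langle m_R\mathscr D_4^{-1}L_{*,2}\mathscr D_4m_R^{-1}f,f\rangle
 =Q_\gamma[H]+\int_\R d_\gamma r^2|H|^2\dd t,
\]
where $d_\gamma=(1-2\gamma)/4$ and $r=e^t$.

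\textbf{Step 2: bound the two parts.} Proposition~\ref{prop:Qvariablecoercive} gives $Q_\gamma[H]\ge\|H'\|_2^2+\tfrac{21}{20}\|H\|_2^2$. For the drift term, \eqref{eq:gammachoice} gives $\gamma=1-\theta$, so
\[
 d_\gamma=\frac{2\theta-1}{4}=\frac14-\frac{1-\theta}{2}=\frac14-\frac{R^2}{2(R^2+r^2)} .
\]
Hence
\[
 \int d_\gamma r^2|H|^2\dd t=\frac14\int r^2|H|^2\dd t-\frac12\int\frac{R^2r^2}{R^2+r^2}|H|^2\dd t .
\]
Because $R^2r^2/(R^2+r^2)\le R^2\le1$, the negative term is at least $-\tfrac12\|H\|_2^2$. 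Finally, $f=r^{1/2}H$ and $\dd r=r\dd t$ give $\int r^2|H|^2\dd t=\|f\|_{L^2(\dd r)}^2$.

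\textbf{Step 3: conclude.} Adding the two bounds and discarding $\|H'\|_2^2\ge0$ yields
\[
 \tfrac14\|f\|^2_{L^2(\dd r)}+\bigl(\tfrac{21}{20}-\tfrac12\bigr)\|H\|_2^2=\tfrac14\|f\|^2_{L^2(\dd r)}+\tfrac{11}{20}\|H\|_2^2,
\]
which is \eqref{eq:singulargapvariable}.

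\textbf{Main obstacle.} The only delicate point is justifying Lemma~\ref{lem:variablemellin} for $f\in C_c^\infty(0,\infty)$ with the singular coefficients. The Volterra inverses $A_\gamma$ and $B_\gamma$ must produce $z,y$ decaying at both ends of $\R$, so that the integrations by parts behind Lemma~\ref{lem:variablefactor} carry no boundary terms. This holds because $a\ge3/2>0$ and $b\ge1/2>0$ by \eqref{eq:abbounds}: for compactly supported $H$, the function $A_\gamma H$ decays like $e^{-3t/2}$ as $t\to+\infty$ and vanishes to the left of the support, while $B_\gamma H$ decays like $e^{t/2}$ as $t\to-\infty$ and vanishes to the right. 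After these boundary checks, the remaining work is the bookkeeping of the potential identity in Step 1.
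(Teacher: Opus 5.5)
Your proof is correct and follows the paper's argument exactly. You split the real part via Lemma~\ref{lem:variablemellin} into $Q_\gamma[H]$ plus the drift term, apply Proposition~\ref{prop:Qvariablecoercive}, and bound $\int(d_\gamma-\tfrac14)r^2|H|^2\dd t=-\tfrac12\int\frac{R^2r^2}{R^2+r^2}|H|^2\dd t\ge-\tfrac{R^2}{2}\|H\|_2^2\ge-\tfrac12\|H\|_2^2$. Your check that the singular potential reduces to $2+5b-b^2$, and your remarks on the one-sided supports and exponential decay of $A_\gamma H$ and $B_\gamma H$, are correct verifications that the paper leaves implicit.
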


\begin{proof}
By \eqref{eq:gammachoice}, the drift coefficient is
\begin{equation}
 d_\gamma=-\frac14+\frac12\frac{r^2}{R^2+r^2}.          \label{eq:driftchoice}
\end{equation}
Since $\|f\|_2^2=\int_\R r^2|H|^2\dd t$,
\begin{align*}
 \int_\R(d_\gamma-\tfrac14)r^2|H|^2\dd t
 &=-\frac12\int_\R\frac{R^2r^2}{R^2+r^2}|H|^2\dd t\\
 &\ge-\frac{R^2}{2}\|H\|_2^2.
\end{align*}
Proposition \ref{prop:Qvariablecoercive} and $R\le1$ now give
\begin{align*}
 &\operatorname{Re}\langle
 m_R\mathscr D_4^{-1}L_{*,2}\mathscr D_4m_R^{-1}f,f\rangle
 -\frac14\|f\|_2^2\\
 &\qquad\ge\|H'\|_2^2+
 \left(\frac{21}{20}-\frac12\right)\|H\|_2^2
 \ge\frac{11}{20}\|H\|_2^2,
\end{align*}
which is \eqref{eq:singulargapvariable}.
\end{proof}

\subsubsection{Transfer from the two limiting scales to the complete matched solution}
\label{sec:fullclosure}

The two preceding subsections give separate estimates for the universal inner
region and the singular outer region.  We now fix the conjugating weight
\eqref{eq:fixedm} and compare the quadratic form of the actual operator with
these two limiting forms.  The inner comparison uses
\eqref{eq:l2innercoeffconv}, the outer comparison uses
\eqref{eq:l2outercoeffconv}, and the overlap is absorbed by a signed
localization argument.  The endpoint of the calculation is the lower bound
\eqref{eq:fullconjugatedgap} for the complete operator.

Fix $R=1$ and abbreviate
\begin{equation}
 m(r)=r(1+r^2)^{-1/2},\qquad
 \mathcal A_n=\mathcal A_{n,m}.                          \label{eq:fixedm}
\end{equation}
We first state the inner estimate in exactly the norm needed for the later
localization partition.

\begin{lemma}[Core lower bound in the $H$ variable]
\label{lem:coreHcoercive}
In the notation of Proposition \ref{prop:alpha1core}, let
$H=r^{-1/2}f=(D-3/2)k$.  After deleting the self-similar drift, the universal
inner solution satisfies
\begin{equation}
 \operatorname{Re}
 \langle\widetilde{\mathcal L}_{2}^{(1)}f,f\rangle
 \ge\frac{13}{5}\|H\|_{L^2(\dd t)}^2.                 \label{eq:coreHcoercive}
\end{equation}
\end{lemma}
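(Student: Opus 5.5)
The plan is to start from the exact local form \eqref{eq:alpha1tform} established in the proof of Proposition~\ref{prop:alpha1core}, and to rewrite each of its three terms in the variable $H=(\partial_t-\tfrac32)k$ rather than $k$. Proposition~\ref{prop:alpha1core} itself bounds everything below by $\|k\|_2^2$, whereas the present statement needs a lower bound by $\|H\|_2^2$, which is the norm used in the later localization partition. Before that, I would check the admissibility of $k$. For $f\in C_c^\infty(0,\infty)$, the function $g=\mathscr D_{-3}^{-1}f=-r^3\int_r^\infty f(s)s^{-3}\dd s$ vanishes for large $r$ and equals a constant multiple of $r^3$ near $r=0$. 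Hence $k=r^{-3/2}g$ is smooth, vanishes for large $t$, and decays like $e^{3t/2}$ as $t\to-\infty$. So $k,H,H'\in L^2(\R)$ and all the integrations by parts below are justified.

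\emph{Step 1 (principal term).} Since $H=(\partial_t-\tfrac32)k$, the first term of \eqref{eq:alpha1tform} is $\|(\partial_t+\tfrac12)H\|_2^2$. Because $\operatorname{Re}\int H'\bar H\dd t=0$, this equals
\[
\|H'\|_2^2+\tfrac14\|H\|_2^2\ge\tfrac14\|H\|_2^2 .
\]

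\emph{Step 2 (local potential term).} The second term is $\int_\R(12-y\beta-4y)|H|^2\dd t$. By \eqref{eq:clower}, which relies on the phase-plane region \eqref{eq:ybregion}, it is at least $\tfrac{11}{3}\|H\|_2^2$.

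\emph{Step 3 (negative Volterra remainder).} This is the only term with a bad sign, so it is the main point. By \eqref{eq:dlower} the third term is at least $-\tfrac{12}{5}\|k\|_2^2$. I would convert $\|k\|_2$ back to $\|H\|_2$ by Plancherel: the multiplier of $\partial_t-\tfrac32$ has modulus squared $\xi^2+\tfrac94\ge\tfrac94$, so
\[
\|k\|_2^2\le\tfrac49\|H\|_2^2 .
\]
Thus the third term is at least $-\tfrac{16}{15}\|H\|_2^2$.

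Adding the three contributions gives the coefficient
\[
\tfrac14+\tfrac{11}{3}-\tfrac{16}{15}=\tfrac{171}{60}>\tfrac{13}{5}=\tfrac{156}{60},
\]
which proves \eqref{eq:coreHcoercive}, with a little room to spare for the later gluing. The only genuine inputs are the two phase-plane bounds \eqref{eq:clower}--\eqref{eq:dlower}, already established. The only point needing care is the direction of the Fourier comparison in Step~3: we need an upper bound on $\|k\|_2$ in terms of $\|H\|_2$, and the symbol of $\partial_t-\tfrac32$ is bounded away from zero. It is exactly the absence of a zero of this symbol that lets the negative Volterra remainder be absorbed.
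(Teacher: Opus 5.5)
Your proof is correct and is essentially the paper's argument: rewrite \eqref{eq:alpha1tform} in terms of $H=(\partial_t-\tfrac32)k$, apply \eqref{eq:clower}--\eqref{eq:dlower}, and absorb the negative Volterra term using $\|k\|_2^2\le\tfrac49\|H\|_2^2$. The only difference is that the paper discards the principal term $\|(\partial_t+\tfrac12)H\|_2^2$ entirely and obtains exactly $\tfrac{11}{3}-\tfrac{16}{15}=\tfrac{13}{5}$, whereas you keep its $\tfrac14\|H\|_2^2$ contribution and get the slightly better constant $\tfrac{171}{60}$.
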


\begin{proof}
By \eqref{eq:alpha1tform}, \eqref{eq:clower}, and \eqref{eq:dlower}, the
left-hand side is at least
\[
 \|(D+\tfrac12)H\|_2^2+\frac{11}{3}\|H\|_2^2
 -\frac{12}{5}\|k\|_2^2.
\]
Since $H=(D-3/2)k$, integration by parts gives
\[
 \|H\|_2^2=\|k'\|_2^2+\frac94\|k\|_2^2,
 \qquad \|k\|_2^2\le\frac49\|H\|_2^2.
\]
Discard the first nonnegative term and substitute the second inequality:
\[
 \frac{11}{3}-\frac{12}{5}\frac49=\frac{13}{5}.
\]
This proves \eqref{eq:coreHcoercive}.
\end{proof}

To apply the two limiting forms in the inner and outer regions, we convert
Lemma \ref{lem:l2matchinginput} into convergence of the dimensionless
coefficients in \eqref{eq:dimensionlesscoeff}.

\begin{lemma}[Two-scale convergence of the dimensionless coefficients]
\label{lem:coefftwoscales}
Assume that $\mu_n\to0$ and that
\eqref{eq:innerconv}--\eqref{eq:outerconv} hold.  For every fixed $S>0$,
uniformly for $0<s\le S$,
\begin{align}
 p_n(\mu_ns)&\longrightarrow \bar p(s):=s\bar P(s),
 &h_n(\mu_ns)&\longrightarrow \bar h(s):=s^2\bar U(s),\notag\\
 c_{1,n}(\mu_ns)&\longrightarrow3\bar p(s)-\bar h(s),
 &c_{2,n}(\mu_ns)&\longrightarrow-s^3\bar U'(s).       \label{eq:l2innercoeffconv}
\end{align}
Moreover,
\begin{equation}
 \lim_{R\to\infty}\limsup_{n\to\infty}
 \sup_{r\ge R\mu_n}
 \bigl(|p_n-2|+|h_n-2|+|c_{1,n}-4|+|c_{2,n}-4|\bigr)=0. \label{eq:l2outercoeffconv}
\end{equation}
For all sufficiently large $n$, these coefficients also have a global bound
independent of $n$.
\end{lemma}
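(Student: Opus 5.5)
The plan is to translate Lemma~\ref{lem:l2matchinginput} directly through the definitions \eqref{eq:dimensionlesscoeff}. Every dimensionless coefficient is scale invariant: $p_n(\mu_ns)=\mu_ns\,P_n(\mu_ns)=s\,[\mu_nP_n(\mu_ns)]$, $h_n(\mu_ns)=s^2[\mu_n^2U_n(\mu_ns)]$, and $c_{2,n}(\mu_ns)=-s^3[\mu_n^3U_n'(\mu_ns)]$. The inner statement \eqref{eq:l2innercoeffconv} should therefore follow from \eqref{eq:innerconv}, multiplied by the bounded factors $s,s^2,s^3$ on $(0,S]$. It also needs $\mu_nP_n(\mu_ns)\to\bar P(s)$, which was recorded in the proof of Lemma~\ref{lem:l2matchinginput}. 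Alternatively, I would derive it from $P_n=\mathscr D_2^{-1}U_n$: this gives $\mu_nP_n(\mu_ns)=s^{-2}\int_0^s\mu_n^2U_n(\mu_n\sigma)\sigma^2\dd\sigma$, and uniform convergence of the integrand on $[0,S]$ yields uniform convergence, with the bound $\le Cs$ near $0$. The identity $c_{1,n}=3p_n-h_n$ is exact, so its limit $3\bar p-\bar h$ is immediate.

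For the outer statement \eqref{eq:l2outercoeffconv}, the terms $|h_n-2|$ and $|c_{2,n}-4|=|r^3U_n'+4|$ are already controlled by \eqref{eq:outerconv}. The only new term is $|p_n-2|$, and this is the step needing care, since $p_n$ is a nonlocal (integrated) quantity. I would write
\[
 p_n(r)=\frac1r\int_0^rU_n(\rho)\rho^2\dd\rho
\]
and split the integral at $R\mu_n$. The inner piece equals $\frac{R\mu_n}{r}\,p_n(R\mu_n)$. By the scale-invariant inner convergence, $p_n(R\mu_n)\to\bar p(R)$, which tends to $2$ as $R\to\infty$ by the universal core asymptotics $\bar Q\sim s^{-2}$ (so $\bar y\to1$). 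The outer piece is $\frac1r\int_{R\mu_n}^r(2+o(1))\dd\rho=(2+o(1))(1-R\mu_n/r)$, using \eqref{eq:outerconv}.

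Adding the two pieces gives $p_n(r)=2+o(1)+\frac{R\mu_n}{r}\bigl(p_n(R\mu_n)-2\bigr)$, and the last term is $o(1)$ in the iterated limit $\lim_R\limsup_n$. A more direct route is to use $p_n=2y_n$ together with the matched representation \eqref{eq:Unfamily-summary}, provided the two-scale estimate of Proposition~\ref{prop:correctedprofile-main} controls $r^2\Phi_n-1$ on $r\ge R\mu_n$. Finally, $c_{1,n}-4=3(p_n-2)-(h_n-2)$ is controlled by the two terms just handled.

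For the global bound, I would cover $(0,\infty)$ by $(0,R\mu_n]$ and $[R\mu_n,\infty)$, fixing $R$ so that the outer deviation is at most $1$ for large $n$. On the inner piece, the coefficients converge uniformly to continuous limits that are bounded on $[0,R]$, since $\bar p,\bar h=O(s^2)$ and $s^3\bar U'=O(s^4)$ at $0$. Hence they are bounded independently of $n$ there, and on the outer piece they lie within $1$ of $2$ or $4$. The main obstacle is the nonlocal coefficient $p_n$ in the outer region, where the iterated limit order in $R$ and $n$ must be respected.
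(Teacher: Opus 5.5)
Your proposal is correct and follows essentially the paper's proof. The inner limits come from rescaling the Newton average $p_n(r)=r^{-1}\int_0^r h_n(\rho)\,d\rho$ (equivalently $P_n=\mathscr D_2^{-1}U_n$), the only new outer term $p_n-2$ is handled by splitting this average, and the global bound uses the same two-region covering. The one minor difference: the paper splits at an auxiliary point $A\mu_n$ and lets $R/A\to\infty$, so the inner piece is damped by $A/R$ using only boundedness of $h_n$ on the inner scale, whereas you split at $R\mu_n$ and use $p_n(R\mu_n)\to\bar p(R)\to2$, which invokes $\bar Q\sim s^{-2}$ (this limit also follows directly from combining the inner and outer hypotheses).
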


\begin{proof}
The Newton relation gives
\begin{equation}
 p_n(r)=\frac1r\int_0^r h_n(\rho)\dd\rho.               \label{eq:paverage}
\end{equation}
Set $r=\mu_ns$ and then $\rho=\mu_n\sigma$.  Equation
\eqref{eq:innerconv} gives $p_n(\mu_ns)\to\bar p(s)$.  The remaining inner
limits follow term by term from
\[
 c_{1,n}=3p_n-h_n,\qquad c_{2,n}=-r^3U_n'.
\]
Indeed,
\begin{align*}
 c_{1,n}(\mu_ns)
 &=3p_n(\mu_ns)-h_n(\mu_ns)
 \longrightarrow3\bar p(s)-\bar h(s),\\
 c_{2,n}(\mu_ns)
 &=-s^3\bigl[\mu_n^3U_n'(\mu_ns)\bigr]
 \longrightarrow-s^3\bar U'(s).
\end{align*}

In the outer region it remains only to prove $p_n\to2$.  From
\eqref{eq:paverage},
\begin{equation}
 p_n(r)-2=\frac1r\int_0^r(h_n(\rho)-2)\dd\rho.          \label{eq:pminus2}
\end{equation}
Fix a large number $A$.  For $r\ge R\mu_n$ and $R\ge A$, split the integral
into $(0,A\mu_n)$ and $(A\mu_n,r)$.  The inner uniform bound controls the
first part by $CA/R$, while \eqref{eq:outerconv} controls the second part by
$\sup_{\rho\ge A\mu_n}|h_n(\rho)-2|$.  Let successively
$n\to\infty$, $A\to\infty$, and $R/A\to\infty$.  Then $p_n\to2$, so
$c_{1,n}=3p_n-h_n\to4$; the limit $c_{2,n}\to4$ is already contained in
\eqref{eq:outerconv}.

Finally choose a fixed large $A$.  The region $r\le A\mu_n$ is controlled by
inner uniform convergence and the region $r\ge A\mu_n$ by the outer
estimate.  These two regions cover $(0,\infty)$, giving the claimed uniform
global bound.
\end{proof}

Denote the positive coefficient of the homogeneous term and the spectral
threshold obtained from Proposition~\ref{prop:singulargapvariable} at $R=1$
by
\begin{equation}
 \kappa_*:=\frac{11}{20},
 \qquad \delta_*:=\frac14.                              \label{eq:kappadelta}
\end{equation}

\begin{proposition}[Pure inner and pure outer estimates]
\label{prop:pureinout}
For the fixed weight \eqref{eq:fixedm}, the following statements hold.
\begin{enumerate}
 \item For every fixed $S>0$, there is $n_S$ such that, if $n\ge n_S$ and
       $\operatorname{supp}f\subset(0,S\mu_n]$, then
       \begin{equation}
        \operatorname{Re}\langle\mathcal A_nf,f\rangle
        \ge\|H\|_2^2,\qquad H=r^{-1/2}f.               \label{eq:pureinner}
       \end{equation}
 \item There is $R_0$ such that, for every $R\ge R_0$, there is $n_R$ for
       which $n\ge n_R$ and
       $\operatorname{supp}f\subset[R\mu_n,\infty)$ imply
       \begin{equation}
        \operatorname{Re}\langle\mathcal A_nf,f\rangle
        \ge\delta_*\|f\|_2^2+\frac{\kappa_*}{2}\|H\|_2^2.
                                                               \label{eq:pureouter}
       \end{equation}
\end{enumerate}
\end{proposition}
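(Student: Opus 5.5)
For both parts the plan is to compare the exact form \eqref{eq:fullvariableform} of Lemma~\ref{lem:variablemellin}, written for the fixed weight \eqref{eq:fixedm}, with the appropriate limiting form. The variable exponent $\gamma=1-r^2/(1+r^2)$ is fixed. The coefficient dependence on $n$ enters only through $p_n,h_n,\dot p_n=h_n-p_n,c_{1,n},c_{2,n}$. Since $A_\gamma$ and $B_\gamma$ are Volterra operators with one-sided supports, the direction of support matters, and I would exploit this in each part.

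\emph{Inner part.} Let $\operatorname{supp}f\subset(0,S\mu_n]$, i.e.\ $t\le\log(S\mu_n)$. Then $A_\gamma$ integrates from $-\infty$ to $t$, so $A_\gamma(c_{1,n}H)$ only sees inner coefficients. The term $B_\gamma H$ integrates to the right, but since $H$ vanishes beyond $\log(S\mu_n)$, $B_\gamma H$ is supported in $t\le\log(S\mu_n)$ as well. Hence every coefficient is sampled on $r\le S\mu_n$; this is the favorable direction, with no nonlocal tail, in contrast to the Remark. On this set $\gamma=1+O(S^2\mu_n^2)$, $a=3/2+O(\mu_n^2)$, $b=3/2+O(\mu_n^2)$. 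The drift term $d_\gamma r^2|H|^2$ is $\ge -\tfrac14 S^2\mu_n^2|H|^2$.

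I would rescale $r=\mu_n s$. By \eqref{eq:l2innercoeffconv} the coefficients converge uniformly on $(0,S]$ to the core ones. With constant $\gamma=1$ the form is exactly the real part of $\widetilde{\mathcal L}_2^{(1)}$, which is scale invariant after removing the drift. Lemma~\ref{lem:coreHcoercive} then gives $\tfrac{13}{5}\|H\|_2^2$. The errors are of three kinds: $(\gamma-1)$ errors of size $O(S^2\mu_n^2)$, coefficient errors $o(1)$, and the drift term. Each is bounded by a constant times $\|H\|_2^2$ using $\|A_\gamma\|,\|B_\gamma\|\le C$ on $L^2$, which holds because $a\ge 3/2$ and $b\ge1/2$ are uniformly positive. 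Taking $n\ge n_S$ leaves at least $\|H\|_2^2$.

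\emph{Outer part.} This is the main obstacle. The local and drift terms use only coefficients on the support, where \eqref{eq:l2outercoeffconv} gives $|p_n-2|+|h_n-2|+|c_{j,n}-4|\le\eta(R)$. The difficulty is $A_\gamma$, which integrates from $-\infty$; but $c_{1,n}H$ is supported in $t\ge\log(R\mu_n)$, so $A_\gamma(c_{1,n}H)$ only involves $c_{1,n}$ on the support. For the term $A_\gamma(c_{2,n}B_\gamma H)$, note that $B_\gamma H$ is nonzero for $t<\log(R\mu_n)$, which is the left tail. I would bound its contribution directly: $B_\gamma H(t)$ for $t$ below the support decays like $e^{b(t-t_0)}$ with $b\ge1/2$. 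Pairing with $H$ through $A_\gamma$, however, only needs $A_\gamma(\cdot)$ evaluated on the support of $H$. So I write $c_{2,n}=4+(c_{2,n}-4)$ and split the integral in $A_\gamma$ into $[t_0,t]$, which is controlled by $\eta$, and $(-\infty,t_0)$. The latter is bounded by $\sup|c_{2,n}|\cdot\|B_\gamma H\|$, times the exponential factor $e^{-\int a}$. That term is not small, so instead I keep the full $4$ there and only use $|c_{2,n}-4|\le\eta$ on the support, together with the uniform global bound from Lemma~\ref{lem:coefftwoscales} on the left tail.

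If this left-tail term still fails to be small, the fallback is the following. Since $a\ge3/2$, the tail contributes at most $C\int_{-\infty}^{t_0}|B_\gamma H|^2$ weighted by $e^{-3(t-s)/2}$. I would compare it with the singular form, where the same tail appears with coefficient $4$: the difference is $(c_{2,n}-4)$ on $(\log(A\mu_n),t_0)$, which is $\le\eta$, plus a region $r\le A\mu_n$ weighted by $e^{-(\log R-\log A)}$, which is small as $R/A\to\infty$. This is the same $n\to\infty$, $A\to\infty$, $R/A\to\infty$ ordering as in Lemma~\ref{lem:coefftwoscales}. With all differences $\le\varepsilon(R)\|H\|_2^2$, Proposition~\ref{prop:singulargapvariable} with $R=1$ gives $\delta_*\|f\|_2^2+\kappa_*\|H\|_2^2-\varepsilon\|H\|_2^2$. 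Choosing $R_0$ and $n_R$ with $\varepsilon\le\kappa_*/2$ yields \eqref{eq:pureouter}.
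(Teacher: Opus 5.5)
Your proposal is correct and follows the paper's proof. The inner part is the same comparison with the $\alpha=1$ core form of Lemma~\ref{lem:coreHcoercive}, using that all Volterra kernels stay in $r\le S\mu_n$ and that $|\gamma-1|=O(S^2\mu_n^2)$. The outer part is closed exactly by your ``fallback'': the paper splits the left tail of $z=B_\gamma H$ at $t_1=\log(\sqrt R\,\mu_n)$, which is your intermediate scale with $A=\sqrt R$, and uses $|z(t)|\le e^{-b_0(t_0-t)}|z(t_0)|$ to get an $R^{-b_0/2}$ bound below $t_1$. Note that your intermediate step, which uses only the global bound on $c_{2,n}$ over the whole left tail, does not close by itself, so the fallback is the actual argument rather than an optional alternative.
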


\begin{proof}
We begin with the inner estimate.  On $r\le S\mu_n$,
\eqref{eq:gammachoice} gives
\begin{equation}
 \|\gamma-1\|_\infty+\|\dot\gamma\|_\infty
 \le C S^2\mu_n^2.                                     \label{eq:gammainnersmall}
\end{equation}
Pairing $(D+a)y=h$ with $y$ gives
$\|A_\gamma\|_{2\to2}\le(\inf a)^{-1}$, and similarly
$\|B_\gamma\|_{2\to2}\le(\inf b)^{-1}$.  The resolvent identities are
\begin{align}
 A_\gamma-A_1&=-A_\gamma(a-\tfrac32)A_1,\notag\\
 B_\gamma-B_1&= B_\gamma(b-\tfrac32)B_1.              \label{eq:resolventidentity}
\end{align}
The signs can be checked by multiplying each identity by the relevant
first-order operators; only the absolute values are used below.  Since $H$
is supported in $t\le\log(S\mu_n)$, both the left Volterra kernel of
$A_\gamma$ and the right Volterra kernel of $B_\gamma$ pass only through the
same inner region.  Hence
\eqref{eq:gammainnersmall}--\eqref{eq:resolventidentity} show that the two
inverse operators differ by $O(S^2\mu_n^2)$ on the relevant supports.  More
explicitly,
\begin{align*}
 \|A_\gamma-A_1\|
 &\le \|A_\gamma\|\,\|a-\tfrac32\|_\infty\,\|A_1\|,\\
 \|B_\gamma-B_1\|
 &\le \|B_\gamma\|\,\|b-\tfrac32\|_\infty\,\|B_1\|.
\end{align*}
Since $a-3/2=1-\gamma$ and $b-3/2=\gamma-1$, the right-hand sides are indeed
$O(S^2\mu_n^2)$.

Lemma \ref{lem:coefftwoscales} and the exact identity
\eqref{eq:fullvariableform} now show that all homogeneous multiplication
coefficients and both Newton terms converge uniformly to the $\alpha=1$
quadratic form of the universal inner solution.  The differential principal
part $\|H'\|_2^2$ is identical, so no derivative error occurs.  To display
the nonlocal error explicitly, set
\[
 \varepsilon_{n,S}:=
 \sup_{0<r\le S\mu_n}
 \bigl(|p_n-\bar p|+|h_n-\bar h|
 +|c_{1,n}-\bar c_1|+|c_{2,n}-\bar c_2|
 +|\gamma-1|+|\dot\gamma|\bigr),
\]
where every barred function is evaluated at $s=r/\mu_n$.  Lemma
\ref{lem:coefftwoscales} and \eqref{eq:gammainnersmall} imply
$\varepsilon_{n,S}\to0$.  For the first Newton term,
\begin{align*}
 &2\left|\left\langle
 (A_\gamma c_{1,n}-A_1\bar c_1)H,H\right\rangle\right|\\
 &\quad\le2\left(
 \|A_\gamma-A_1\|\,\|c_{1,n}\|_\infty
 +\|A_1\|\,\|c_{1,n}-\bar c_1\|_\infty\right)\|H\|_2^2
 \le C_S\varepsilon_{n,S}\|H\|_2^2.
\end{align*}
For the second Newton operator, split
\begin{align*}
 A_\gamma c_{2,n}B_\gamma-A_1\bar c_2B_1
 ={}&(A_\gamma-A_1)c_{2,n}B_\gamma
 +A_1(c_{2,n}-\bar c_2)B_\gamma\\
 &+A_1\bar c_2(B_\gamma-B_1).
\end{align*}
Each term has the same bound
$C_S\varepsilon_{n,S}\|H\|_2^2$.  The multiplication terms are controlled
directly by uniform coefficient convergence.  Thus
\begin{equation}
 \operatorname{Re}\langle\mathcal A_nf,f\rangle
 \ge\left(\frac{13}{5}-C_S\varepsilon_{n,S}\right)\|H\|_2^2
 -\frac14\int r^2|H|^2\dd t.                           \label{eq:innerperturb}
\end{equation}
The constant $13/5$ is exactly the one supplied by Lemma
\ref{lem:coreHcoercive}.  The last term is the worst possible lower bound for
the drift.  Since $r\le S\mu_n$ on this support,
\[
 -\frac14\int r^2|H|^2\dd t
 \ge-\frac{S^2\mu_n^2}{4}\|H\|_2^2.
\]
For fixed $S$, taking $n$ sufficiently large proves
\eqref{eq:pureinner}.

We next prove the outer estimate.  Here $A_\gamma,B_\gamma$ are exactly the
same as for the singular model; only $p_n,h_n,c_{1,n},c_{2,n}$ differ.  Put
$t_0=\log(R\mu_n)$ and $z=B_\gamma H$.  Since $H=0$ for $t<t_0$,
\eqref{eq:Bgamma} gives
\begin{align}
 |z(t_0)|&\le(2b_0)^{-1/2}\|H\|_2,\notag\\
 |z(t)|&\le e^{-b_0(t_0-t)}|z(t_0)|,
 \qquad t<t_0.                                          \label{eq:Blefttail}
\end{align}
Let $t_1=\log(\sqrt R\,\mu_n)=t_0-\frac12\log R$.  Integration yields
\begin{equation}
 \|\mathbf1_{(-\infty,t_1)}z\|_2
 \le\frac1{2b_0}R^{-b_0/2}\|H\|_2.                    \label{eq:BlefttailL2}
\end{equation}
On $t\ge t_1$, Lemma \ref{lem:coefftwoscales} controls the difference of
the four coefficients from their singular values by $o_R(1)+o_n(1)$.  On
$t<t_1$ the coefficients are uniformly bounded, while the only function
that can occur is the tail $z$ controlled in \eqref{eq:BlefttailL2}.  Define
\[
 \varepsilon^{\rm c}_{R,n}:=
 \sup_{t\ge t_1}
 \bigl(|p_n-2|+|h_n-2|+|c_{1,n}-4|+|c_{2,n}-4|\bigr).
\]
Then
$\lim_{R\to\infty}\limsup_{n\to\infty}\varepsilon^{\rm c}_{R,n}=0$.
The local multiplication error is at most
$C\varepsilon^{\rm c}_{R,n}\|H\|_2^2$.  The first Newton error is
\[
 2\operatorname{Re}\langle A_\gamma((c_{1,n}-4)H),H\rangle.
\]
Since $H$ is supported in $t\ge t_0>t_1$, its absolute value is at most
$2\|A_\gamma\|\varepsilon^{\rm c}_{R,n}\|H\|_2^2$.  For the second Newton
term, write $q=c_{2,n}-4$ and $z=B_\gamma H$.  Splitting $qz$ gives
\begin{align*}
 \|qz\|_2
 &\le \varepsilon^{\rm c}_{R,n}\|\mathbf1_{[t_1,\infty)}z\|_2
   +C\|\mathbf1_{(-\infty,t_1)}z\|_2\\
 &\le\left(b_0^{-1}\varepsilon^{\rm c}_{R,n}
   +\frac{C}{2b_0}R^{-b_0/2}\right)\|H\|_2.
\end{align*}
Multiplying by $\|A_\gamma\|\le2/3$ and pairing with $H$ gives
\begin{equation}
 \left|\operatorname{Re}\langle\mathcal A_nf,f\rangle
 -\operatorname{Re}\langle\mathcal A_*f,f\rangle\right|
 \le C\left(\varepsilon^{\rm c}_{R,n}+R^{-b_0/2}\right)\|H\|_2^2.
                                                               \label{eq:outerformdifferenceexplicit}
\end{equation}
Thus, with
$\epsilon_{R,n}=C(\varepsilon^{\rm c}_{R,n}+R^{-b_0/2})$,
\begin{equation}
 \left|\operatorname{Re}\langle\mathcal A_nf,f\rangle
 -\operatorname{Re}\langle\mathcal A_*f,f\rangle\right|
 \le\epsilon_{R,n}\|H\|_2^2,                          \label{eq:outerformdifference}
\end{equation}
and
$\lim_{R\to\infty}\limsup_{n\to\infty}\epsilon_{R,n}=0$.
The proof of Proposition \ref{prop:singulargapvariable} actually gives
\[
 \operatorname{Re}\langle\mathcal A_*f,f\rangle
 \ge\delta_*\|f\|_2^2+\kappa_*\|H\|_2^2.
\]
First choose $R$ so that the limiting error is below $\kappa_*/4$, and then
choose $n$ so that $\epsilon_{R,n}\le\kappa_*/2$.  This proves
\eqref{eq:pureouter}.
\end{proof}

\begin{lemma}[A partition with $\chi^2+\eta^2=1$ and Volterra commutators]
\label{lem:variablesplicing}
Let the real-valued smooth functions $\chi,\eta$ satisfy
\begin{equation}
 \chi^2+\eta^2=1,\qquad
 \|\chi'\|_\infty+\|\eta'\|_\infty\le L,\qquad
 \chi'=\partial_t\chi,\quad\eta'=\partial_t\eta.       \label{eq:squarepartition}
\end{equation}
If $c_{1,n},c_{2,n}$ are uniformly bounded, then there is a constant $C_*$,
independent of $n$ and $f$, such that
\begin{align}
 \operatorname{Re}\langle\mathcal A_nf,f\rangle
 \ge{}&\operatorname{Re}\langle\mathcal A_n(\chi f),\chi f\rangle
 +\operatorname{Re}\langle\mathcal A_n(\eta f),\eta f\rangle\notag\\
 &-\bigl(L^2+C_*L\bigr)\|H\|_2^2.                      \label{eq:variablesplicing}
\end{align}
\end{lemma}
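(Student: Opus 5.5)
The plan is to expand everything in the logarithmic variable $t=\log r$, with $f=r^{1/2}H$, using the exact form \eqref{eq:fullvariableform}. Since $\chi,\eta$ are functions of $t$ and multiplication by them commutes with $f\mapsto H$, we have $\chi f=r^{1/2}\chi H$ and likewise for $\eta$. It therefore suffices to compare $\operatorname{Re}\langle\mathcal A_n f,f\rangle$, written as a quadratic form $\mathcal Q[H]$, with $\mathcal Q[\chi H]+\mathcal Q[\eta H]$. We split $\mathcal Q$ into four pieces: the kinetic term $\|H'\|_2^2$, the multiplication terms $\int(W_{n,\gamma}+d_\gamma r^2)|H|^2$, the first Newton term $2\operatorname{Re}\langle A_\gamma(c_{1,n}H),H\rangle$, and the second Newton term $2\operatorname{Re}\langle A_\gamma c_{2,n}B_\gamma H,H\rangle$.

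\emph{Local terms.} Because $\chi^2+\eta^2=1$, the multiplication terms split exactly: $\int V|H|^2=\int V|\chi H|^2+\int V|\eta H|^2$. For the kinetic term, the IMS localization identity gives
\[
\|(\chi H)'\|_2^2+\|(\eta H)'\|_2^2=\|H'\|_2^2+\int(\chi'^2+\eta'^2)|H|^2\dd t .
\]
To see this, expand each square and observe that the cross terms combine into $\int(\chi\chi'+\eta\eta')(|H|^2)'\dd t$, which vanishes because $(\chi^2+\eta^2)'=0$. Hence the kinetic error is at most $L^2\|H\|_2^2$. This produces the $L^2$ in \eqref{eq:variablesplicing}.

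\emph{Nonlocal terms.} For a bounded operator $K$ on $L^2(\R,\dd t)$, the same partition identity gives
\[
\langle KH,H\rangle-\langle K\chi H,\chi H\rangle-\langle K\eta H,\eta H\rangle
=\langle[\chi,K]\chi H,H\rangle+\langle[\eta,K]\eta H,H\rangle .
\]
This follows by writing $\langle KH,H\rangle=\langle \chi KH,\chi H\rangle+\langle \eta KH,\eta H\rangle$ and moving $\chi$ through $K$. The remaining task is the commutator bound $\|[\chi,K]\|\le C L$ for $K=A_\gamma c_{1,n}$ and $K=A_\gamma c_{2,n}B_\gamma$. Multiplication by $c_{j,n}$ commutes with $\chi$, so Leibniz gives
\[
[\chi,A_\gamma c_2B_\gamma]=[\chi,A_\gamma]c_2B_\gamma+A_\gamma c_2[\chi,B_\gamma].
\]
Since $\|A_\gamma\|\le(\inf a)^{-1}\le2/3$, $\|B_\gamma\|\le b_0^{-1}=2$ by \eqref{eq:abbounds}, and the $c_{j,n}$ are uniformly bounded by Lemma~\ref{lem:coefftwoscales}, it remains to bound $[\chi,A_\gamma]$ and $[\chi,B_\gamma]$.

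\emph{Main step: the Volterra commutators.} This is where I expect the work to lie, because $a$ and $b$ vary with $t$ and the Young-inequality argument of Lemma~\ref{lem:mellinnewton} does not apply directly. The first route I would try is to use the explicit Volterra kernels \eqref{eq:Agamma}--\eqref{eq:Bgamma}. The commutator $[\chi,A_\gamma]$ has kernel $(\chi(t)-\chi(s))e^{-\int_s^ta}\mathbf 1_{s<t}$, whose absolute value is bounded by $L|t-s|e^{-\frac32(t-s)}$ because $a\ge3/2$. Schur's test then gives $\|[\chi,A_\gamma]\|\le L\int_0^\infty\tau e^{-3\tau/2}\dd\tau=\tfrac49L$. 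Similarly, $b\ge1/2$ gives $\|[\chi,B_\gamma]\|\le 4L$. An alternative route avoids the kernels altogether. From $(D+a)A_\gamma=I$ one gets $[\chi,A_\gamma]=A_\gamma\chi'A_\gamma$, so $\|[\chi,A_\gamma]\|\le\|A_\gamma\|^2L$; the same computation gives $[\chi,B_\gamma]=B_\gamma\chi'B_\gamma$. Either way, the Newton errors contribute at most $C_*L\|H\|_2^2$, with $C_*$ depending only on $\sup|c_{j,n}|$. Since $\|H\|$ is the same in both pieces, summing the kinetic and Newton errors gives \eqref{eq:variablesplicing}. Finally, the identities should first be justified for $H\in C_c^\infty(\R)$ and then extended by density.
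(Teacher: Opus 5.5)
Your proposal is correct and follows essentially the paper's proof: exact splitting of the multiplication terms via $\chi^2+\eta^2=1$, the IMS identity for $\|H'\|_2^2$, and the resolvent commutator identities $[j,A_\gamma]=A_\gamma j'A_\gamma$ and $[j,B_\gamma]=B_\gamma j'B_\gamma$ with $\|A_\gamma\|\le 2/3$ and $\|B_\gamma\|\le 2$. Your Schur-test route yields the same constants, $4L/9$ and $4L$. The only slip is the sign in your nonlocal localization identity: the correct remainder is $\sum_{j=\chi,\eta}\langle j[j,K]H,H\rangle=\sum_{j}\langle [K,j]jH,H\rangle$, the negative of what you wrote, but this is harmless because you only bound it in absolute value.
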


\begin{proof}
In \eqref{eq:fullvariableform}, every term except $\|H'\|_2^2$ and the two
Volterra terms is a multiplication term.  Since $\chi^2+\eta^2=1$, these
terms add exactly before and after localization.  The derivative term obeys
the standard IMS identity
\begin{equation}
 \|(\chi H)'\|_2^2+\|(\eta H)'\|_2^2
 =\|H'\|_2^2+\int(|\chi'|^2+|\eta'|^2)|H|^2\dd t.      \label{eq:HIMS}
\end{equation}
Thus the derivative contribution loses at most $L^2\|H\|_2^2$; an
inessential absolute constant may be absorbed into the definition of $L$.

For the nonlocal terms, let $j$ denote either $\chi$ or $\eta$.  Since $j$
commutes with $a,b$, multiplication by the first-order operators gives
\begin{equation}
 [j,A_\gamma]=A_\gamma j'A_\gamma,\qquad
 [j,B_\gamma]=B_\gamma j'B_\gamma.                     \label{eq:resolventcommutators}
\end{equation}
By \eqref{eq:abbounds},
\begin{equation}
 \|A_\gamma\|_{2\to2}\le\frac23,
 \qquad \|B_\gamma\|_{2\to2}\le2.                   \label{eq:ABnorms}
\end{equation}
The first Newton operator is $T_1=A_\gamma c_{1,n}$.  Since $c_{1,n}$
commutes with $j$,
\[
 \sum_{j=\chi,\eta}jT_1j-T_1
 =\sum_{j=\chi,\eta}j[T_1,j],
\]
and \eqref{eq:resolventcommutators}--\eqref{eq:ABnorms} imply
$\|[T_1,j]\|\le C\|c_{1,n}\|_\infty\|j'\|_\infty$.

The second Newton operator is $T_2=A_\gamma c_{2,n}B_\gamma$, and
\begin{equation}
 [T_2,j]=A_\gamma c_{2,n}[B_\gamma,j]
 +[A_\gamma,j]c_{2,n}B_\gamma.                         \label{eq:T2commutator}
\end{equation}
The same bounds give
$\|[T_2,j]\|\le C\|c_{2,n}\|_\infty\|j'\|_\infty$.
Lemma \ref{lem:coefftwoscales} supplies the uniform coefficient bounds.
Pairing with $H$ and summing the $\chi$ and $\eta$ contributions produces an
error at most $C_*L\|H\|_2^2$.  Combining this with \eqref{eq:HIMS} proves
\eqref{eq:variablesplicing}.
\end{proof}

The energy domain used below is now specified.  For $H=r^{-1/2}f$, define
\begin{equation}
 \mathcal Q:=\left\{f:\ H,H',rH\in L^2(\mathbb R,\dd t)\right\},
 \qquad
 \|f\|_{\mathcal Q}^2
 :=\|H\|_2^2+\|H'\|_2^2+\|rH\|_2^2.                    \label{eq:l2formdomain}
\end{equation}
Equivalently, $\mathcal Q$ is the closure of $C_c^\infty(0,\infty)$ in this
energy norm.  The two Volterra operators in \eqref{eq:fullvariableform} are
bounded on $L^2(\dd t)$, so every term in that identity is defined and
continuous on $\mathcal Q$.  The requirement $f\in\mathcal Q$ is precisely
what permits a quadratic-form estimate proved first for smooth compactly
supported functions to be applied to an actual eigenfunction; it is weaker
than the operator-domain condition $\mathcal A_nf\in L^2$.

\begin{lemma}[Endpoint asymptotics and the form domain for an $l=2$
eigenfunction]
\label{lem:l2endpointdomain}
Suppose $\operatorname{Re}z<1/4$.  Let $u\ne0$ belong to the natural
discrete-spectrum domain of $L_{n,2}$, regular at the origin and with the
exponentially growing branch excluded at infinity, and suppose
$L_{n,2}u=zu$.  Set
\[
 \lambda=2(z-1),\qquad f=m\mathscr D_4^{-1}u,
\]
where $m$ is given by \eqref{eq:fixedm}.  Then
\begin{align}
 u(r)&=O(r^2), &&r\to0,                                  \label{eq:l2originasympt}\\
 u(r)&=c_zr^\lambda\bigl(1+O(r^{-2}(1+\log r)^{N_z})\bigr)
       +O\bigl(r^{-7}(1+\log r)^{N_z}\bigr),
 &&r\to\infty,                                          \label{eq:l2infinityasympt}
\end{align}
where $c_z\in\mathbb C$ and $N_z$ is a finite nonnegative integer; in the
nonresonant case one may take $N_z=0$.  We do not require $c_z\ne0$, because
the inner moment of the Newton term can generate an $r^{-7}$ tail.  Moreover,
\begin{equation}
 f\in L^2(0,\infty),                                     \label{eq:l2fL2}
\end{equation}
and $f$ belongs to the closed quadratic-form domain of $\mathcal A_n$.
\end{lemma}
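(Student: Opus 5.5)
The plan is to treat the two endpoints of the eigenvalue equation $L_{n,2}u=zu$ separately. At each one I would first derive the asymptotics of $u$, then transfer them through $f=m\mathscr D_4^{-1}u$, and finally check the three integrability conditions defining $\mathcal Q$ in \eqref{eq:l2formdomain}.

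\textbf{Origin.} Near $r=0$ the coefficients of \eqref{eq:Ll} are smooth and even: $U_n$ and $P_n/r$ are regular. The Newton term $U_n'\partial_r\Delta_2^{-1}u$ is a Volterra-type perturbation of lower order. The principal part is $-\Delta_2$, whose indicial roots are $2$ and $-3$. Regularity in the natural domain excludes the $r^{-3}$ branch. A Picard iteration on the integral equation, with $\Delta_2^{-1}$ written via \eqref{eq:Deltal-inverse}, then gives $u=O(r^2)$, exactly as in \cite[Lemma~B.1--B.2]{LiZhou2025}. Only the fact that $U_n$ is a smooth radial profile is needed there.

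Consequently $\mathscr D_4^{-1}u=r^{-4}\int_0^r u s^4\,ds=O(r^3)$ and $f=m\mathscr D_4^{-1}u=O(r^4)$. Hence $H=r^{-1/2}f=O(e^{7t/2})$ and $H'=O(e^{7t/2})$ as $t\to-\infty$, which are square integrable there. Near the origin $rH$ is dominated by $H$.

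\textbf{Infinity (the main step).} Here $U_n=2r^{-2}+O(r^{-4})$ and $P_n=2r^{-1}+O(r^{-3})$ by the outer expansion. For large $r$ the eigenvalue equation has principal part
\[
 -u''+\tfrac r2u'+(1-z)u+O(r^{-1})u'+O(r^{-2})u,
\]
together with the Newton term. This is a Kummer-type equation. Its two branches are $r^{\lambda}$ with $\lambda=2(z-1)$ and $e^{r^2/4}r^{-\lambda-3}$, and the second is excluded by hypothesis.

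The Newton term must be handled separately. I would write $\Delta_2^{-1}u$ via \eqref{eq:Deltal-inverse}: its inner moment $r^{-3}\int_0^r u s^4\,ds$ contributes a term $c\,r^{-3}$ to $\Delta_2^{-1}u$. After multiplication by $U_n'\sim-4r^{-3}$ this becomes a forcing of size $r^{-7}$. I would then run a Volterra iteration from infinity (variation of parameters with the Kummer fundamental system) on the decaying branch, treating the $O(r^{-2})$ coefficient corrections and this forcing as perturbations. The iteration produces the expansion \eqref{eq:l2infinityasympt}, with logarithms entering only at resonances, where $\lambda$ differs from $-7$ by an even integer. This is the delicate part: one must check that the iteration closes in a weighted space $\sup r^{-\operatorname{Re}\lambda}(1+\log r)^{-N}|u|$, and that the self-consistent moment term does not destroy the bound. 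I would close this by using that $\operatorname{Re}\lambda<-3/2$ makes $\int^\infty u s^{-1}\,ds$ convergent.

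\textbf{Transfer to $f$ and form domain.} From \eqref{eq:l2infinityasympt}, $\operatorname{Re}z<1/4$ gives $\operatorname{Re}\lambda<-3/2$, so $u\in L^2(r^2dr)$ at infinity. Next, $\mathscr D_4^{-1}u$ equals $(\lambda+5)^{-1}c_zr^\lambda$ plus a $C r^{-4}$ moment term plus a remainder; the $r^{-7}$ tail contributes at most $r^{-4}\log^{N}r$. Since $m\to1$, we get $f=O(r^{\max(\operatorname{Re}\lambda,-4)}\log^Nr)$, and this lies in $L^2(dr)$ at infinity because $\operatorname{Re}\lambda<-3/2$; this is \eqref{eq:l2fL2}. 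In the $t$ variable the same bound gives $rH=r^{1/2}f$ and $H'$ decaying like $e^{(\operatorname{Re}\lambda+1/2)t}$, hence square integrable.

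Finally, I would approximate $f$ by cutoffs $\chi(r/\varepsilon)\chi(\varepsilon r)f$. The derivative errors are controlled by the two endpoint bounds, so the cutoffs converge in $\|\cdot\|_{\mathcal Q}$. This shows that $f$ lies in the closed form domain.
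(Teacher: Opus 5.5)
Your architecture matches the paper's: origin regularity via \cite[Lemma~B.1]{LiZhou2025}; at infinity a Kummer fundamental system, variation of constants, and the Newton term as a forcing whose inner moment gives the $r^{-7}$ tail; then transfer to $f$ and cutoff approximation. Two points need repair, the first a genuine gap and the second an exponent error.

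\textbf{Gap: the iteration at infinity has no starting point.} The step you call delicate is not closed. Because the Newton term is nonlocal, $u$ does not a priori solve a second-order ODE with two branches. To exclude $Y_+$ and place $u$ in your weighted space, you need an a priori bound on the given $u$ that turns $U_n'\partial_r\Delta_2^{-1}u$ into a controlled forcing. Your proposed closure, that $\operatorname{Re}\lambda<-3/2$ makes $\int^\infty us^{-1}\dd s$ converge, presupposes the decay $u\sim r^{\lambda}$ you are trying to prove. You also later deduce $u\in L^2(r^2\dd r)$ from \eqref{eq:l2infinityasympt}, which is backwards. The paper starts the iteration from the rough bound of \cite[Lemma~A.1]{LiZhou2025}, which gives $|u|+r|u'|=O(r^{-3/2+\varepsilon})$, and then lowers the exponent by \eqref{eq:l2bootstrapmap}.

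A self-contained substitute uses the hypothesis $u\in D(L_{n,2})\subset L^2(r^2\dd r)$. Cauchy--Schwarz in both integrals of \eqref{eq:delta2newtonbound} gives $|\partial_r\Delta_2^{-1}u(r)|\le Cr^{-1/2}\|u\|_{L^2(r^2\dd r)}$, so the Newton term is a known $O(r^{-7/2})$ forcing. The remaining local equation has a Kummer-type fundamental system independent of $u$, and $L^2$ membership removes the Gaussian branch. Hence $u=O(r^{\max\{\operatorname{Re}\lambda,-7/2\}})$ up to logarithms, after which your bootstrap runs.

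Two smaller corrections to the iteration itself:
\begin{itemize}
\item The inner integral $\int_0^r us^4\dd s$ tends to a constant (so gives a $c\,r^{-3}$ term) only when $\operatorname{Re}\lambda<-5$. Otherwise it grows like $r^{\operatorname{Re}\lambda+5}$; this is the case split in \eqref{eq:l2newtoncases}.
\item When $\operatorname{Re}\lambda<-7$, the $r^{-7}$ tail dominates $r^{\operatorname{Re}\lambda}$, so the weight of your space must be $r^{\max\{\operatorname{Re}\lambda,-7\}}$, not $r^{\operatorname{Re}\lambda}$.
\end{itemize}

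\textbf{Exponent error in the transfer to $f$.} This sits exactly where the threshold enters. $\mathscr D_4^{-1}$ raises the power by one, since $r^{-4}\int^r s^{\lambda+4}\dd s=r^{\lambda+1}/(\lambda+5)$. Hence $f=O\bigl(r^{\max\{\operatorname{Re}\lambda+1,-4\}}(1+\log r)^{N_z+1}\bigr)$, not $O(r^{\max\{\operatorname{Re}\lambda,-4\}}\log^N r)$. Then $f\in L^2(\dd r)$ at infinity if and only if $2\operatorname{Re}\lambda+2<-1$, i.e.\ $\operatorname{Re}z<1/4$. The hypothesis is therefore used sharply, with no margin. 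With your exponent you would only have needed $\operatorname{Re}\lambda<-1/2$, so your appeal to $-3/2$ was right for the wrong reason. Correspondingly $|H|+|\partial_tH|=O(r^{\operatorname{Re}\lambda+1/2}(1+\log r)^{N_z+1})$ and $rH=O(r^{\operatorname{Re}\lambda+3/2}(1+\log r)^{N_z+1})$. All three lie in $L^2(\dd t)$ near $t=+\infty$ precisely because $\operatorname{Re}\lambda<-3/2$.

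With these two repairs, your origin analysis and the cutoff approximation in $\mathcal Q$ go through as you describe.
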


\begin{proof}
We first identify the endpoint facts that can be quoted.  Local elliptic
regularity for the eigenvalue equation gives
$u(r)Y_{2,m}(\omega)\in H^\infty_{\rm loc}(\mathbb R^3)$.  The origin
regularity statement \cite[Lemma~B.1]{LiZhou2025} depends only on the angular
Fourier degree, not on an explicit formula for the profile.  With $l=2$ it
gives
\begin{equation}
 u(r)=Ar^2+O(r^4),\qquad
 u'(r)=2Ar+O(r^3),\qquad u''(r)=2A+O(r^2).              \label{eq:l2originquoted}
\end{equation}
In particular, \eqref{eq:l2originasympt} holds.  Equivalently, the indicial
equation is $-\rho(\rho-1)-2\rho+6=0$, or
$(\rho-2)(\rho+3)=0$, and regularity selects $\rho=2$.  Since
\[
 \mathscr D_4^{-1}u(r)=r^{-4}\int_0^ru(s)s^4\dd s,
\]
we obtain
\begin{equation}
 \mathscr D_4^{-1}u=O(r^3),\qquad
 f=m\mathscr D_4^{-1}u=O(r^4),\qquad r\to0.             \label{eq:l2forigin}
\end{equation}

At infinity, the rough decay from the literature is not enough by itself.
Indeed, $u=O(r^{-3/2+\varepsilon})$ would only give
$\mathscr D_4^{-1}u=O(r^{-1/2+\varepsilon})$, which does not imply
$m\mathscr D_4^{-1}u\in L^2(\dd r)$.  To use the form estimate at the exact
threshold $1/4$, we must reach the power $r^{2\operatorname{Re}z-1}$.
This is the purpose of the Kummer--Volterra calculation below.

The exact $l=2$ Newton formula is
\begin{equation}
 \Delta_2^{-1}u(r)
 =-\frac15\left[
 r^{-3}\int_0^ru(s)s^4\dd s
 +r^2\int_r^\infty u(s)s^{-1}\dd s
 \right].                                                \label{eq:delta2newton}
\end{equation}
After differentiation, the two boundary terms containing $ru(r)$ cancel,
so
\begin{equation}
 \left|\partial_r\Delta_2^{-1}u(r)\right|
 \le C\left[
 r^{-4}\int_0^r|u(s)|s^4\dd s
 +r\int_r^\infty|u(s)|s^{-1}\dd s
 \right].                                                \label{eq:delta2newtonbound}
\end{equation}

The matched outer expansion gives
\[
 P_n=O(r^{-1}),\qquad U_n=O(r^{-2}),\qquad U_n'=O(r^{-3}).
\]
Thus the eigenvalue equation becomes
\begin{equation}
 -u''+\frac r2u'+(1-z)u=\mathcal R_n[u],                 \label{eq:l2asymptoticequation}
\end{equation}
where $\mathcal R_n$ is a linear combination of
$r^{-1}u'$, $r^{-2}u$, and
$r^{-3}\partial_r\Delta_2^{-1}u$.  By
\eqref{eq:delta2newtonbound}, the Newton term, like the first two terms, is
two powers of $r$ below the principal term $ru'$.

First omit the right-hand side and set
\[
 x=\frac{r^2}{4},\qquad y(x)=u(r),\qquad
 a=1-z,\qquad b=\frac12.
\]
Since $u'=(r/2)y_x$ and $u''=xy_{xx}+y_x/2$, the homogeneous equation
$-u''+(r/2)u'+au=0$ becomes
\[
 -xy_{xx}-\frac12y_x+xy_x+ay=0.
\]
Multiplication by $-1$ gives Kummer's equation
\begin{equation}
 xy_{xx}+(b-x)y_x-ay=0.                                  \label{eq:l2kummer}
\end{equation}
Since $\operatorname{Re}z<1/4$,
$a\notin\{0,-1,-2,\ldots\}$.  We may take
\[
 Y_-(r)=U(a,b,r^2/4),\qquad Y_+(r)=M(a,b,r^2/4),
\]
where $M,U$ are the standard Kummer solutions.  The Wronskian and
positive-real-axis asymptotics used here are in
\cite[Sections~13.2 and 13.7]{DLMF}.  Explicitly,
\begin{align}
 Y_-(r)&=C_-r^\lambda\bigl(1+O(r^{-2})\bigr),\notag\\
 Y_+(r)&=C_+e^{r^2/4}r^{1-2z}\bigl(1+O(r^{-2})\bigr),   \label{eq:l2kummerbranches}
\end{align}
with $C_-C_+\ne0$; the smaller algebraic term in $Y_+$ has been absorbed in
the error.  Moreover,
\[
 W_x(M(a,b,x),U(a,b,x))
 =-\frac{\Gamma(b)}{\Gamma(a)}e^x x^{-b}.
\]
Multiplication by $\dd x/\dd r=r/2$ yields
\begin{equation}
 |W_r(Y_-,Y_+)(r)|\asymp e^{r^2/4}.                       \label{eq:l2kummerW}
\end{equation}
Here $\asymp$ is a two-sided estimate: there are constants depending only
on $z$,
$R_z>0$ and $0<c_W(z)\le C_W(z)<\infty$, such that, for every $r\ge R_z$,
\[
 c_W(z)e^{r^2/4}
 \le |W_r(Y_-,Y_+)(r)|
 \le C_W(z)e^{r^2/4}.
\]

For the complete equation, use the variation-of-constants operator
\begin{equation}
 (\mathcal V_zF)(r)=
 -Y_-(r)\int_r^\infty\frac{Y_+(s)F(s)}{W_r(s)}\dd s
 +Y_+(r)\int_r^\infty\frac{Y_-(s)F(s)}{W_r(s)}\dd s.    \label{eq:l2volterra}
\end{equation}
If the homogeneous operator is $\mathcal L_{0,z}$, two differentiations and
the Wronskian identity verify
$\mathcal L_{0,z}\mathcal V_zF=F$.  The common upper endpoint excludes a
free exponentially growing branch.  From
\eqref{eq:l2kummerbranches}--\eqref{eq:l2kummerW}, the bound
\[
 |F(r)|\le A r^{\operatorname{Re}\lambda-2}
\]
implies, by direct integration,
\begin{equation}
 |\mathcal V_zF(r)|+r|(\mathcal V_zF)'(r)|
 \le C_z A r^{\operatorname{Re}\lambda-2},\qquad r\ge R.
                                                               \label{eq:l2volterraestimate}
\end{equation}
Thus in the norm
\[
 \|v\|_{X_R}:=\sup_{r\ge R}
 r^{-\operatorname{Re}\lambda}\bigl(|v(r)|+r|v'(r)|\bigr),
\]
the terms $r^{-1}u'$ and $r^{-2}u$, after applying $\mathcal V_z$, have
operator norm at most $C_zR^{-2}$.

The Newton term must be treated separately.  Equation
\eqref{eq:delta2newtonbound} gives the three cases
\begin{equation}
 \partial_r\Delta_2^{-1}u=
 \begin{cases}
 O(r^{\operatorname{Re}\lambda+1}(1+\log r)^{N_z}),
     &\operatorname{Re}\lambda>-5,\\
 O(r^{-4}(1+\log r)^{N_z+1}),
     &\operatorname{Re}\lambda=-5,\\
 O(r^{-4})+O(r^{\operatorname{Re}\lambda+1}(1+\log r)^{N_z}),
     &\operatorname{Re}\lambda<-5.
 \end{cases}                                              \label{eq:l2newtoncases}
\end{equation}
The $r^{-4}$ term in the third line is contributed by the inner moment
$\int_0^\infty u(s)s^4\dd s$.  After multiplication by
$U_n'=O(r^{-3})$, it produces an $O(r^{-7})$ forcing.  Formula
\eqref{eq:l2volterra} preserves this order, apart from finitely many
logarithmic factors when $\lambda$ resonates with the corresponding power.

The rough estimate needed to start the Volterra iteration can be taken from
an existing argument.  For each fixed $n$, Proposition
\ref{prop:correctedprofile-main} gives
\[
 |U_n(r)|\le C_n\langle r\rangle^{-2},\qquad
 |U_n'(r)|\le C_n r\langle r\rangle^{-4},\qquad
 |P_n(r)|\le C_n\langle r\rangle^{-1}.
\]
The heat-kernel resolvent proof of
\cite[Lemma~A.1]{LiZhou2025} uses only these coefficient bounds and the
$H^2$ regularity of the eigenfunction.  Replacing the explicit fundamental
profile there by $U_n$ leaves each estimate unchanged and gives, for every
sufficiently small $\varepsilon>0$,
\[
 |u(r)|\le C_{n,z,\varepsilon}\langle r\rangle^{-
 \min\{2,\,2(1-\operatorname{Re}z)-\varepsilon\}}.
\]
Substituting this bound in \eqref{eq:l2asymptoticequation} and applying the
standard local elliptic estimate on dyadic intervals yields
\begin{equation}
 |u(r)|+r|u'(r)|=O(r^{-3/2+\varepsilon}).                \label{eq:l2roughdecay}
\end{equation}
Here $\operatorname{Re}z<1/4$, so one may first choose
$0<\varepsilon<1/2-2\operatorname{Re}z$; the quoted bound is in fact
slightly stronger than \eqref{eq:l2roughdecay}.  The required radial
derivative integrability also follows from
\cite[Lemma~B.2]{LiZhou2025}.

Suppose at some stage that
\[
 |u(r)|+r|u'(r)|=O(r^\alpha(1+\log r)^N),\qquad \alpha<0.
\]
For the inner integral in \eqref{eq:delta2newtonbound}, split at $1$:
\begin{align*}
 r^{-4}\int_0^r|u(s)|s^4\dd s
 &\le Cr^{-4}
 +Cr^{-4}\int_1^rs^{\alpha+4}(1+\log s)^N\dd s\\
 &=O\!\left(r^{\max\{\alpha+1,-4\}}(1+\log r)^{N+1}\right).
\end{align*}
The value $\alpha=-5$ is the critical logarithmic case, so the factor
$(1+\log r)^{N+1}$ is retained uniformly.  Since $\alpha<0$, the outer
integral satisfies
\[
 r\int_r^\infty|u(s)|s^{-1}\dd s
 =O\!\left(r^{\alpha+1}(1+\log r)^N\right).
\]
Consequently,
\[
 \partial_r\Delta_2^{-1}u
 =O\!\left(r^{\max\{\alpha+1,-4\}}(1+\log r)^{N+1}\right),
\]
and the complete right-hand side in
\eqref{eq:l2asymptoticequation} obeys
\begin{equation}
 \mathcal R_n[u]
 =O\!\left(r^{\max\{\alpha-2,-7\}}(1+\log r)^{N+1}\right).
                                                               \label{eq:l2bootstrapforcing}
\end{equation}
If the forcing at this stage decays more slowly than $Y_-$, replace the first
integral in the variation-of-constants formula by
\[
 Y_-(r)\int_R^r\frac{Y_+(s)\mathcal R_n[u](s)}{W_r(s)}\dd s;
\]
the second still runs from $r$ to infinity.  Substitution of
\eqref{eq:l2kummerbranches}--\eqref{eq:l2kummerW} and termwise integration
give the new exponent
\begin{equation}
 \alpha_{\rm new}
 =\max\{\operatorname{Re}\lambda,\,\alpha-2,\,-7\}.    \label{eq:l2bootstrapmap}
\end{equation}
Beginning with $\alpha_0=-3/2+\varepsilon$, each iteration lowers the
exponent by at least $2$ until it reaches
$\max\{\operatorname{Re}\lambda,-7\}$.  After finitely many steps the
solution lies in the weighted space of \eqref{eq:l2volterraestimate}.  Choose
$R$ so large that $C_zR^{-2}<1/2$; the Neumann series for the tail
perturbation then converges.  The natural domain at infinity excludes the
other free homogeneous branch $Y_+$.  The remaining power branch together
with the inner-moment forcing gives \eqref{eq:l2infinityasympt}; the same
estimate holds for one derivative.

We now estimate $f$.  If $\operatorname{Re}\lambda>-5$, then
\eqref{eq:l2infinityasympt} yields
\[
 \mathscr D_4^{-1}u
 =\frac{c_z}{\lambda+5}r^{\lambda+1}
 \bigl(1+O(r^{-2}(1+\log r)^{N_z})\bigr)+O(r^{-4}).
\]
Since
\[
 m(r)=r(1+r^2)^{-1/2}=1+O(r^{-2}),
\]
we obtain
\begin{equation}
 |f(r)|+r|f'(r)|
 =O\left(r^{\operatorname{Re}\lambda+1}(1+\log r)^{N_z+1}\right)
 =O\left(r^{2\operatorname{Re}z-1}(1+\log r)^{N_z+1}\right).
                                                               \label{eq:l2finfinity}
\end{equation}
When $\lambda=-5$,
$\mathscr D_4^{-1}u=O(r^{-4}\log r)$; when
$\operatorname{Re}\lambda<-5$,
$\mathscr D_4^{-1}u=O(r^{-4})$.  These two cases decay faster.  Since
\[
 \operatorname{Re}z<\frac14
 \quad\Longrightarrow\quad 2\operatorname{Re}z-1<-\frac12,
\]
equation \eqref{eq:l2forigin} proves $f\in L^2(\dd r)$.

Finally, put $H=r^{-1/2}f$.  Equations \eqref{eq:l2forigin} and
\eqref{eq:l2finfinity} imply that $H,H'$, and $rH$ are in $L^2(\dd t)$.
The Volterra operators $A_\gamma,B_\gamma$ are bounded on $L^2(\dd t)$, so
every term in \eqref{eq:fullvariableform} is finite.  Let $\rho_N(t)$ be a
smooth logarithmic cutoff equal to $1$ on $[-N,N]$, with
$\|\rho_N'\|_\infty\le C/N$.  Each $\rho_Nf$ can be approximated further by
compactly supported smooth functions.  The local terms converge by dominated
convergence, and \eqref{eq:resolventcommutators} bounds the Volterra
commutators by $O(N^{-1})\|H\|_2$.  Hence $\rho_Nf$ converges to $f$ in the
form norm, proving membership in the closed quadratic-form domain.
\end{proof}

\begin{proposition}[The $l=2$ quadratic-form bound and exclusion of point spectrum for large $n$]
\label{prop:fulll2gap}
Assume that the constructed family satisfies
\eqref{eq:innerconv}--\eqref{eq:outerconv}.  There is $n_0$ such that, for
every $n\ge n_0$ and $f\in C_c^\infty(0,\infty)$,
\begin{equation}
 \operatorname{Re}\langle
 m\mathscr D_4^{-1}L_{n,2}\mathscr D_4m^{-1}f,f\rangle_{L^2(\dd r)}
 \ge\frac14\|f\|_{L^2(\dd r)}^2,                       \label{eq:fullconjugatedgap}
\end{equation}
where $m$ is given by \eqref{eq:fixedm}.  Consequently, in the natural
regular decaying domain,
\begin{equation}
 \sigma_p(L_{n,2})\cap
 \{z\in\mathbb C:\operatorname{Re}z<1/4\}=\varnothing. \label{eq:l2pointgap}
\end{equation}
\end{proposition}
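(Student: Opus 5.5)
The plan is to glue the two limiting estimates of Proposition~\ref{prop:pureinout} with the partition of Lemma~\ref{lem:variablesplicing}, keeping the localization losses below the available margins. Then \eqref{eq:fullconjugatedgap} is transferred to an eigenfunction through the form domain of Lemma~\ref{lem:l2endpointdomain}.

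\textbf{Step 1: choice of scales.} First fix $R\ge R_0$ as in part~2 of Proposition~\ref{prop:pureinout}. Then take a large ratio $\Lambda$ and set $S=\Lambda R$. On the logarithmic scale, choose a smooth pair $\chi,\eta$ with $\chi^2+\eta^2=1$. Let $\chi=1$ for $t\le\log(R\mu_n)$ and $\eta=1$ for $t\ge\log(S\mu_n)$. The transition occurs over a $t$-interval of length $\log\Lambda$, so $L\lesssim(\log\Lambda)^{-1}$.

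\textbf{Step 2: apply the regional estimates.} Part~1 of Proposition~\ref{prop:pureinout}, with this $S$ and $n\ge n_S$, gives
\[
\operatorname{Re}\langle\mathcal A_n(\chi f),\chi f\rangle\ge\|\chi H\|_2^2 .
\]
Part~2, with $n\ge n_R$, gives
\[
\operatorname{Re}\langle\mathcal A_n(\eta f),\eta f\rangle\ge\delta_*\|\eta f\|_2^2+\tfrac{\kappa_*}{2}\|\eta H\|_2^2 .
\]
We still need $\delta_*\|\chi f\|_2^2$ from the inner piece. On $\operatorname{supp}\chi$ we have $r\le S\mu_n$, so
\[
\tfrac14\|\chi f\|_2^2=\tfrac14\int r^2|\chi H|^2\,\dd t\le\tfrac{S^2\mu_n^2}{4}\|\chi H\|_2^2 ,
\]
and this is absorbed into $\|\chi H\|_2^2$ for large $n$. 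Summing both pieces and using Lemma~\ref{lem:variablesplicing}, with $\chi^2+\eta^2=1$ in both $\|\cdot\|_{L^2(\dd r)}$ and $\|H\|_2$, gives
\[
\operatorname{Re}\langle\mathcal A_nf,f\rangle\ge\tfrac14\|f\|_2^2+\bigl(\min\{\tfrac12,\tfrac{\kappa_*}{2}\}-L^2-C_*L\bigr)\|H\|_2^2 .
\]
Take $\Lambda$ large so that the bracket is nonnegative; it is even positive, at least $\kappa_*/4$.

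\textbf{Main obstacle: uniformity of $C_*$.} The constant $C_*$ must not depend on $R$, $S$, or $n$. It is controlled by \eqref{eq:ABnorms} and the uniform coefficient bounds of Lemma~\ref{lem:coefftwoscales}, so it is uniform. The order of choices is then: $R$ first, then $\Lambda$ (hence $S$), then $n\ge\max\{n_S,n_R\}$.

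\textbf{Step 3: point spectrum.} Suppose $L_{n,2}u=zu$ with $\operatorname{Re}z<1/4$ and $u\ne0$. Set $f=m\mathscr D_4^{-1}u$; then $f\ne0$ since $\mathscr D_4^{-1}$ is injective. Lemma~\ref{lem:l2endpointdomain} puts $f$ in the closed form domain $\mathcal Q$. Each term of \eqref{eq:fullvariableform} is continuous on $\mathcal Q$, so \eqref{eq:fullconjugatedgap} extends to $f$ by density.

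Formally $\mathcal A_nf=zf$. This pairing identity $\operatorname{Re}\langle\mathcal A_nf,f\rangle=\operatorname{Re}z\|f\|_2^2$ must be justified. I would use the cutoff approximants $\rho_Nf$ from the proof of Lemma~\ref{lem:l2endpointdomain}, relying on the asymptotics \eqref{eq:l2forigin}--\eqref{eq:l2finfinity} so that boundary terms vanish. We then get $\operatorname{Re}z\|f\|_2^2\ge\frac14\|f\|_2^2$, which is a contradiction. The delicate point is the decay $f=O(r^{2\operatorname{Re}z-1}\log^k r)$, which holds only because $\operatorname{Re}z<1/4$. This is exactly what makes the pairing legitimate.
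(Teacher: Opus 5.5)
Your proposal is correct and follows the paper's proof step by step. The shared steps are: the square partition $\chi^2+\eta^2=1$ with logarithmic transition width $\log\Lambda$ (the paper's $M$); the pure inner and outer estimates of Proposition~\ref{prop:pureinout}; absorbing $\frac14\|\chi f\|_2^2$ using $r\le S\mu_n$ on $\operatorname{supp}\chi$; the splicing Lemma~\ref{lem:variablesplicing} with $C_*$ independent of $R$, $S$ and $n$; and the transfer to an eigenfunction through the form domain of Lemma~\ref{lem:l2endpointdomain}. The differences are cosmetic: you fix $R$ before the width, the paper fixes $M$ first, and this is harmless because $L$ and $C_*$ do not depend on $R$. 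Your explicit justification of $\operatorname{Re}\langle\mathcal A_nf,f\rangle=\operatorname{Re}z\,\|f\|_2^2$ for the eigenfunction is slightly more careful than the paper, which takes this from the similarity transform without comment.
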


\begin{proof}
Choose a fixed smooth function $\vartheta$ satisfying
$\vartheta=0$ on $(-\infty,0]$ and $\vartheta=\pi/2$ on $[1,\infty)$.
First choose a large transition width $M$, then a large number $R$, and set
\begin{equation}
 t_0=\log(R\mu_n),\qquad
 \chi(t)=\cos\vartheta\left(\frac{t-t_0}{M}\right),\qquad
 \eta(t)=\sin\vartheta\left(\frac{t-t_0}{M}\right).    \label{eq:chosenpartition}
\end{equation}
Then
\begin{equation}
 \chi^2+\eta^2=1,\quad
 \operatorname{supp}\chi\subset\{r\le Re^M\mu_n\},\quad
 \operatorname{supp}\eta\subset\{r\ge R\mu_n\},      \label{eq:partitionsupports}
\end{equation}
and
$\|\chi'\|_\infty+\|\eta'\|_\infty\le C_\vartheta/M$.

Choose $M$ so large that the total error $L^2+C_*L$ in Lemma
\ref{lem:variablesplicing} is smaller than
\begin{equation}
 \frac14\min\left\{1,\frac{\kappa_*}{2}\right\}.       \label{eq:Mchoice}
\end{equation}
Next take $R\ge R_0$ so that Proposition \ref{prop:pureinout} gives the
pure outer estimate.  Then $S=Re^M$ is fixed independently of $n$.  Finally
take $n$ large enough that both the pure inner estimate for this $S$ and the
pure outer estimate hold.

Insert \eqref{eq:pureinner} and \eqref{eq:pureouter} into
\eqref{eq:variablesplicing}.  Since
$\|\chi H\|_2^2+\|\eta H\|_2^2=\|H\|_2^2$, there is a constant $c_0>0$,
independent of $n$, such that
\begin{equation}
 \operatorname{Re}\langle\mathcal A_nf,f\rangle
 \ge\delta_*\|\eta f\|_2^2+c_0\|H\|_2^2.             \label{eq:almostglobalgap}
\end{equation}
On the support of $\chi$, $r\le S\mu_n$, hence
\begin{equation}
 \|\chi f\|_2^2
 =\int r^2\chi^2|H|^2\dd t
 \le S^2\mu_n^2\|\chi H\|_2^2
 \le S^2\mu_n^2\|H\|_2^2.                            \label{eq:innerL2absorb}
\end{equation}
Increase $n$ so that $\delta_*S^2\mu_n^2\le c_0$.  Then
\eqref{eq:almostglobalgap}--\eqref{eq:innerL2absorb} give
\begin{align*}
 \operatorname{Re}\langle\mathcal A_nf,f\rangle
 &\ge\delta_*\bigl(\|\chi f\|_2^2+\|\eta f\|_2^2\bigr)\\
 &=\delta_*\|f\|_2^2,
\end{align*}
which is \eqref{eq:fullconjugatedgap}.  Since both Volterra terms are bounded
on $L^2(\dd t)$, density extends this bound from compactly supported smooth
functions to the corresponding closed form domain.

For the spectral conclusion, suppose that $L_{n,2}u=zu$ with
$\operatorname{Re}z<1/4$ and put $f=m\mathscr D_4^{-1}u$.  Lemma
\ref{lem:l2endpointdomain} shows at both endpoints that $f\in L^2(\dd r)$
and belongs to the closed form domain.  Moreover,
\[
 \mathscr D_4\mathscr D_4^{-1}u=u.
\]
Thus $\mathscr D_4^{-1}$ is injective on the regular class at the origin;
since $m>0$, $u\ne0$ implies $f\ne0$.  By the similarity transform,
$\mathcal A_nf=zf$.  Substitution in \eqref{eq:fullconjugatedgap} yields
\[
 \operatorname{Re}z\,\|f\|_2^2
 =\operatorname{Re}\langle\mathcal A_nf,f\rangle
 \ge\frac14\|f\|_2^2.
\]
Cancelling the nonzero norm gives $\operatorname{Re}z\ge1/4$, proving
\eqref{eq:l2pointgap}.  Surjectivity is not used here; the upgrade from
point-spectrum exclusion to full-spectrum exclusion is carried out uniformly
in Proposition~\ref{prop:fixedmodefullclosure}.
\end{proof}

\begin{remark}[Relation with the preceding construction]
The universal inner solution $\bar Q$, the outer linear solution $u_1$, and
the outer fixed-point estimates are taken from
\cite[Sections~2.2--2.3]{NWZ2026}.  The corrected inner remainder, matching
scale, and the two-scale limits required by Lemma
\ref{lem:l2matchinginput} are summarized in Proposition
\ref{prop:correctedprofile-main}; their full proof is in Appendix
\ref{sec:correctedmatchingappendix}.  The partial-localization identity is
quoted from \cite[Lemma~2.7]{LiZhou2025}; the origin regularity and rough
far-field starting estimate use Lemmas~B.1--B.2 and Lemma~A.1 of the same
paper.  What remains genuinely new in the present $l=2$ argument is the
two-scale transfer for the matched profile, the variable-exponent
Mellin--Volterra identity, the endpoint lower bound $1/4$ for the singular
outer model, and the sharp tail iteration placing the transformed
eigenfunction in the closed form domain.
\end{remark}

\section{Adjoint discrete modes, biorthogonal normalization, and endpoint asymptotics}
\label{sec:adjointmodes}

We construct the adjoint functions needed for the later geometric
decomposition and modulation equations.  We return to the three-dimensional
density variable and use the complex Hilbert pairing in unweighted
$L^2(\R^3)$:
\[
 \langle f,g\rangle=\int_{\R^3}f(y)\overline{g(y)}\dd y.
\]
All profiles and discrete modes selected here may be taken real.  Throughout
this section, $\mathbf L_n$ denotes the densely defined closed realization in
the same unweighted density space in which the isolated discrete modes above
are considered; all adjoints and Riesz projections refer to this realization.
To avoid an indexing ambiguity, write $N_n$ for the number of radial unstable
modes.  If the matching roots are relabeled so that $N_n=n$, the range
$1\le j\le N_n$ below becomes $1\le j\le n$.

\subsection{Formal adjoint and angular Fourier-mode reduction}

In density variables, write
\begin{equation}
 \mathbf L_nf=-\Delta f+\frac12(2+y\cdot\nabla)f-2U_nf
 -\nabla\Delta^{-1}U_n\cdot\nabla f
 -\nabla U_n\cdot\nabla\Delta^{-1}f.                     \label{eq:fullLn}
\end{equation}
We retain the convention $\Delta(\Delta^{-1}f)=f$.  Hence, if
\[
 P_n(r)=\frac1{r^2}\int_0^rU_n(s)s^2\dd s,
\]
then $\nabla\Delta^{-1}U_n=P_n(r)y/r$.

\begin{lemma}[Formal adjoint in the unweighted pairing]\label{lem:formaladjoint}
On $C_c^\infty(\R^3)$, the formal adjoint of $\mathbf L_n$ is
\begin{equation}
 \mathbf L_n^*g
 =-\Delta g-\frac12y\cdot\nabla g-\frac12g-U_ng
 +\nabla\Delta^{-1}U_n\cdot\nabla g
 +\Delta^{-1}\nabla\cdot(g\nabla U_n).                    \label{eq:formaladjoint}
\end{equation}
If $g(y)=g_l(r)Y_{l,m}(\omega)$ and $J_l=l(l+1)$, then
\begin{align}
 \mathbf L_{n,l}^*g_l
={}&-g_l''-\frac2r g_l'+\frac{J_l}{r^2}g_l
 -\frac r2g_l'-\frac12g_l-U_ng_l+P_ng_l'\notag\\
 &+\Delta_l^{-1}\left[
  \frac1{r^2}\partial_r(r^2U_n'g_l)\right].               \label{eq:adjointangular}
\end{align}
\end{lemma}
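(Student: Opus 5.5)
I will compute the adjoint term by term in the unweighted pairing $\langle f,g\rangle=\int f\bar g\,dy$ for $f,g\in C_c^\infty(\R^3)$, and then restrict to a single spherical harmonic. The local terms are routine. The Laplacian is symmetric. For the drift, $\frac12\langle(2+y\cdot\nabla)f,g\rangle=\langle f,g\rangle-\frac12\langle f,\nabla\cdot(yg)\rangle$, and $\nabla\cdot(yg)=3g+y\cdot\nabla g$. This gives $-\frac12y\cdot\nabla g-\frac12 g$. The potential $-2U_n$ is symmetric. For the transport term, integration by parts gives
\[
-\langle\nabla\Delta^{-1}U_n\cdot\nabla f,g\rangle=\langle f,\nabla\cdot(g\nabla\Delta^{-1}U_n)\rangle ,
\]
and $\nabla\cdot\nabla\Delta^{-1}U_n=U_n$ by the convention $\Delta\Delta^{-1}=\mathrm{id}$. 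This term therefore becomes $U_ng+\nabla\Delta^{-1}U_n\cdot\nabla g$. Combined with $-2U_ng$, it produces the $-U_ng$ and the $+\nabla\Delta^{-1}U_n\cdot\nabla g$ in \eqref{eq:formaladjoint}.

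The nonlocal term is the only one needing care. I will write
\[
-\langle\nabla U_n\cdot\nabla\Delta^{-1}f,g\rangle=\langle\Delta^{-1}f,\nabla\cdot(g\nabla U_n)\rangle .
\]
Then I will use that $\Delta^{-1}$ (convolution with $-1/(4\pi|y|)$) is symmetric with a real kernel. This gives $\langle f,\Delta^{-1}\nabla\cdot(g\nabla U_n)\rangle$, with the sign conventions exactly as in \eqref{eq:formaladjoint}. Two points need justification. First, the boundary terms at infinity vanish, since $g$ is compactly supported while $\Delta^{-1}f=O(|y|^{-1})$ and $\nabla U_n$ is bounded. Second, $\nabla\cdot(g\nabla U_n)\in C_c^\infty$, so its Newton potential is well defined. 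I expect this symmetry step to be the main (mild) point, because the formula must be read with the regular decaying Newton inverse, not an arbitrary right inverse.

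For the mode reduction, I will take $g=g_l(r)Y_{l,m}$ and use $\nabla U_n=U_n'y/r$ and $\nabla\Delta^{-1}U_n=P_ny/r$. Then $y\cdot\nabla g=rg_l'Y_{l,m}$ and $\nabla\Delta^{-1}U_n\cdot\nabla g=P_ng_l'Y_{l,m}$. The Laplacian reduces to $\Delta_l$ by \eqref{eq:laplacian-mode-reduction}. Finally, $\nabla\cdot(g\,U_n'y/r)=r^{-2}\partial_r(r^2U_n'g_l)Y_{l,m}$, because the radial vector field has divergence $r^{-2}\partial_r(r^2\,\cdot)$ and preserves the angular factor. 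Applying \eqref{eq:newton-mode-reduction} then turns $\Delta^{-1}$ into $\Delta_l^{-1}$, which yields \eqref{eq:adjointangular}.
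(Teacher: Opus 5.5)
Your proposal is correct and follows the paper's proof essentially line by line. Both compute the drift and transport adjoints by integration by parts using $\nabla\cdot\nabla\Delta^{-1}U_n=U_n$, handle the nonlocal term through the symmetry of the regular decaying Newton inverse, and finish with the radial-divergence identity $\nabla\cdot\bigl(g_lY_{l,m}U_n'y/r\bigr)=r^{-2}\partial_r(r^2U_n'g_l)Y_{l,m}$ together with the mode reduction of $\Delta^{-1}$. Your justification of that symmetry, via the real kernel $-1/(4\pi|y|)$ and the compact support of $\nabla\cdot(g\nabla U_n)$, just makes explicit what the paper states in one phrase.
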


\begin{proof}
Let $b_n=\nabla\Delta^{-1}U_n$.  First,
\[
 \left[\frac12(2+y\cdot\nabla)\right]^*
 =1-\frac12\nabla\cdot(y\,\cdot)
 =1-\frac32-\frac12y\cdot\nabla
 =-\frac12-\frac12y\cdot\nabla.
\]
Second,
\[
 (-b_n\cdot\nabla)^*g
 =\nabla\cdot(b_ng)
 =b_n\cdot\nabla g+(\nabla\cdot b_n)g
 =b_n\cdot\nabla g+U_ng.
\]
This $+U_ng$ combines with $-2U_ng$ to give $-U_ng$.  For the final nonlocal
term, let $w=\Delta^{-1}f$.  Symmetry of $\Delta^{-1}$ and integration by
parts give
\begin{align*}
 \left\langle-\nabla U_n\cdot\nabla\Delta^{-1}f,g\right\rangle
 &=-\int_{\R^3}g\nabla U_n\cdot\nabla w\\
 &=\int_{\R^3}w\,\nabla\cdot(g\nabla U_n)\\
 &=\left\langle f,\Delta^{-1}\nabla\cdot(g\nabla U_n)\right\rangle.
\end{align*}
This proves \eqref{eq:formaladjoint}.

Finally, $\nabla U_n=U_n'(r)y/r$, and the angular Fourier basis function is independent
of $r$, so
\[
 \nabla\cdot\left(g_l(r)Y_{l,m}(\omega)U_n'(r)\frac yr\right)
 =\frac1{r^2}\partial_r(r^2U_n'g_l)Y_{l,m}.
\]
Using \eqref{eq:Deltal} proves \eqref{eq:adjointangular}.
\end{proof}

\begin{lemma}[Leading outer terms of the profile and self-adjoint weight]\label{lem:adjointprofiletail}
There are constants $a_n>0$ and $\mathfrak m_{\infty,n}>0$ such that
\begin{align}
 \Phi_n(r)&=a_nr^{-2}+O(r^{-4}),&
 P_n(r)&=2a_nr^{-1}+O(r^{-3}),\notag\\
 U_n(r)&=2a_nr^{-2}+O(r^{-4}),&
 U_n'(r)&=-4a_nr^{-3}+O(r^{-5}),                           \label{eq:adjointprofiletail}\\
 m_{\Phi_n}(r)
 &=\mathfrak m_{\infty,n}r^{4+2a_n}e^{-r^2/4}
   \bigl(1+O(r^{-2})\bigr).                                \label{eq:adjointweighttail}
\end{align}
Moreover, $a_n=1+O(\varepsilon_n)$, so $a_n$ stays uniformly away from zero
for large $n$.  The remainders may be differentiated any fixed finite number
of times.
\end{lemma}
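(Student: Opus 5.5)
The plan is to work entirely in the outer region $r\ge r_0$, where Proposition~\ref{prop:correctedprofile-main} gives $\Phi_n=r^{-2}+\varepsilon_n(u_1+w_n)$. All other tails will then follow algebraically from $U_n=6\Phi_n+2r\Phi_n'$, $P_n=2r\Phi_n$, and the definition of $m_{\Phi_n}$.

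\textbf{Step 1: expansion of $\Phi_n$.} Any solution of the profile equation \eqref{eq:profile} that decays at infinity admits an expansion in inverse powers:
\[
\Phi(r)=a r^{-2}+a_4r^{-4}+\cdots .
\]
Exponentially growing branches are absent, since the linearization at infinity has Kummer-type behaviour, with one branch algebraic and one $e^{r^2/4}$. To see this, substitute $\Phi=ar^{-2}+\psi$ into \eqref{eq:profile}. The leading terms $-\Phi-\frac r2\Phi'$ cancel identically on $ar^{-2}$, and the remaining forcing is $O(r^{-4})$. The equation for $\psi$ has the form
\[
\psi''-\tfrac r2\psi'-\psi+\dots=O(r^{-4}).
\]
Its algebraic branch behaves like $r^{-2}$. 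This branch is absorbed into $a$, and the particular solution is $O(r^{-4})$.

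Concretely, I would use the outer fixed-point representation. The fundamental solution satisfies $u_1=r^{-2}-2r^{-4}+O(r^{-6})$, and the weighted bound gives $w_n=O(\varepsilon_n r^{-2})$ with its own $r^{-4}$ correction. This yields
\[
a_n=1+\varepsilon_n(1+c_n),\qquad c_n=O(\varepsilon_n),
\]
hence $a_n=1+O(\varepsilon_n)>0$. Differentiability of the remainders follows by bootstrapping through the ODE, exactly as the second derivative was recovered from \eqref{eq:UPsystem}.

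\textbf{Step 2: tails of $P_n$, $U_n$, $U_n'$.} These are immediate:
\begin{itemize}
\item $P_n=2r\Phi_n=2a_nr^{-1}+O(r^{-3})$;
\item $U_n=6\Phi_n+2r\Phi_n'=(6-4)a_nr^{-2}+O(r^{-4})=2a_nr^{-2}+O(r^{-4})$;
\item differentiating, $U_n'=-4a_nr^{-3}+O(r^{-5})$.
\end{itemize}

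\textbf{Step 3: the weight.} Write
\[
\int_0^r2s\Phi_n\,ds=\int_0^{1}2s\Phi_n\,ds+\int_1^r\bigl(2a_ns^{-1}+O(s^{-3})\bigr)ds=2a_n\log r+C_n+O(r^{-2}),
\]
where the $O(s^{-3})$ tail is integrable. Exponentiating gives
\[
m_{\Phi_n}=e^{C_n}\,r^{4+2a_n}e^{-r^2/4}\bigl(1+O(r^{-2})\bigr),
\]
so one can take $\mathfrak m_{\infty,n}=e^{C_n}>0$.

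\textbf{Main obstacle.} The main difficulty is in Step 1. One must show that the exact coefficient of $r^{-4}$ is genuine, with no $r^{-4}\log r$ resonance, and that the remainder is $O(r^{-4})$ rather than merely $o(r^{-2})$. I would handle this with a Volterra iteration for $\psi$ like the one in Lemma~\ref{lem:l2endpointdomain}. The $e^{r^2/4}$ branch is excluded by decay. The forcing $6\Phi^2+2r\Phi\Phi'-$(linear part) yields $r^{-4}$ at the first step. Whether a logarithm arises is decided by checking the indicial power: the homogeneous algebraic branch is $r^{-2}$, not $r^{-4}$, so there is no logarithm.
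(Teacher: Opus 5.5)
Your proposal follows the paper's proof: read off $a_n$ from the outer representation $\Phi_n=r^{-2}+\varepsilon_n(u_1+w_n)$ using $u_1=r^{-2}-2r^{-4}+O(r^{-6})$, obtain the tails of $P_n,U_n,U_n'$ algebraically, and integrate $m_{\Phi_n}'/m_{\Phi_n}=(4+2a_n)/r-r/2+O(r^{-3})$ to get the weight. Your non-resonance check is correct and adds something the paper omits: $-\tfrac r2\partial_r-1$ maps $r^{-4}$ to $r^{-4}\neq0$, so the $O(r^{-4})$ remainder carries no logarithm; the paper simply asserts this remainder.
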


\begin{proof}
The outer matching formula and $u_1=r^{-2}-2r^{-4}+O(r^{-6})$ give
$\Phi_n=a_nr^{-2}+O(r^{-4})$, where $a_n=1+O(\varepsilon_n)$.
The expansion for $P_n$ follows immediately from $P_n=2r\Phi_n$.  Substitution
in $U_n=6\Phi_n+2r\Phi_n'$ gives
\[
 U_n=6a_nr^{-2}+2r(-2a_nr^{-3})+O(r^{-4})
 =2a_nr^{-2}+O(r^{-4}).
\]
Differentiation gives the expansion of $U_n'$.  Finally,
\[
 \frac{m_{\Phi_n}'}{m_{\Phi_n}}
 =\frac4r+2r\Phi_n-\frac r2
 =\frac{4+2a_n}{r}-\frac r2+O(r^{-3}).
\]
Integrating from a fixed large radius to $r$ gives
\[
 \log m_{\Phi_n}
 =(4+2a_n)\log r-\frac{r^2}{4}
 +\log\mathfrak m_{\infty,n}+O(r^{-2}),
\]
Exponentiation proves \eqref{eq:adjointweighttail}; derivative expansions
follow from the same weighted outer estimate.
\end{proof}

\subsection{Finite-dimensional spectral duality and the biorthogonal family}

Two radial operators have been used above:
\[
 L_{\Phi_n}:\mathcal H_{\Phi_n}\longrightarrow\mathcal H_{\Phi_n},
 \qquad
 L_{n,0}:D(L_{n,0})\subset L^2((0,\infty),r^2\dd r)
 \longrightarrow L^2((0,\infty),r^2\dd r).
\]
The first acts on the reduced-mass space and the second on the ordinary
radial density space; these two notations will be retained throughout this
section.  We first define the map $\mathfrak T$ between them by
\eqref{eq:radialreconstruction}, and then use the operator identity
\eqref{eq:radialdensitymassidentity} to transform the mass equation
$L_{\Phi_n}h=-\nu h$ into the density eigenvalue equation.  Finally, for each
isolated spectral point, dual bases are chosen in the finite-dimensional
ranges of the Riesz projection and its adjoint, giving
\eqref{eq:biorthogonal}.  This last step is standard finite-dimensional
spectral duality; only the statement needed below is retained in the main
text, while its proof is placed in
Appendix~\ref{app:proof-adjointdualexistence}.  The explicit integral formulas
and endpoint asymptotics in the next two subsections depend on the particular
stationary state $U_n$ and are therefore kept in the main text.

Define the radial density reconstruction operator
\begin{equation}
 \mathfrak T h:=2(rh'+3h),\qquad
 \mathfrak T^{-1}f(r)=\frac1{2r^3}\int_0^rf(s)s^2\dd s.     \label{eq:radialreconstruction}
\end{equation}
By \eqref{eq:UPPhi}, $\mathfrak T\Phi_n=U_n$ and
$\mathfrak T(\Lambda\Phi_n)=\Lambda U_n$.

\begin{lemma}[Identity relating the radial density and mass operators]\label{lem:radialdensitymassidentity}
On radial functions regular at the origin and decaying at infinity,
\begin{equation}
 L_{n,0}\mathfrak T
 =\mathfrak T L_{\Phi_n}.                                  \label{eq:radialdensitymassidentity}
\end{equation}
\end{lemma}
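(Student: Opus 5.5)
The plan is to obtain \eqref{eq:radialdensitymassidentity} by linearizing a \emph{nonlinear} intertwining identity between the density form and the mass form of the stationary equation. For a smooth radial density $V$ that is regular at the origin, let $\phi=\mathfrak T^{-1}V$, so that $V=\mathfrak T\phi=6\phi+2r\phi'$ and $P_V=2r\phi$ by \eqref{eq:UPPhi}. Write the density stationary operator as
\[
 \mathcal G(V):=\Delta V-\nabla\!\cdot(V\nabla\Psi_V)-\tfrac12\Lambda V .
\]
The first step is to prove that
\begin{equation*}
 \mathcal G(\mathfrak T\phi)=\mathfrak T\,\cF(\phi)
\end{equation*}
for every such $\phi$, with $\cF$ as in \eqref{eq:profile}. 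Once this holds, differentiate it at $\phi=\Phi_n$ in a direction $h$. Since $\mathfrak T$ is linear and $\mathfrak T\Phi_n=U_n$, the left side gives $D\mathcal G(U_n)[\mathfrak Th]=-L_{n,0}\mathfrak Th$. Here $L_{n,0}=-D\mathcal G(U_n)$ by the derivation of \eqref{eq:full-linearized-operator}--\eqref{eq:Ll} with $l=0$. The right side gives $\mathfrak T D\cF(\Phi_n)h=-\mathfrak TL_{\Phi_n}h$, using $L_{\Phi_n}=-D\cF(\Phi_n)$ as in the derivation of \eqref{eq:scalemode}. Together these yield \eqref{eq:radialdensitymassidentity}.

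To verify the nonlinear identity, I would write both sides as radial divergences. On the right,
\[
 \mathfrak T\cF=2(r\cF'+3\cF)=\frac{2}{r^2}\,\partial_r\bigl(r^3\cF(\phi)\bigr).
\]
On the left, the radial parts of the three terms of $\mathcal G$ are $\Delta V=r^{-2}(r^2V')'$, $\nabla\!\cdot(V\nabla\Psi_V)=-r^{-2}(r^2VP_V)'$ (recall $\nabla\Psi_V=-P_V\,y/r$ since $-\Delta\Psi_V=V$), and
\[
 \Lambda V=2V+rV'=r^{-2}(r^3V)'-V .
\]
The stray $-V$ is itself a divergence, because $V=r^{-2}(2r^3\phi)'$ by the computation following \eqref{eq:UPPhi}. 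Hence
\[
 \mathcal G(V)=r^{-2}\partial_r\Bigl(r^2V'+r^2VP_V-\tfrac12 r^3V+r^3\phi\Bigr).
\]
It then remains to check the pointwise flux identity
\[
 r^2V'+r^2VP_V-\tfrac12r^3V+r^3\phi=2r^3\cF(\phi)+c .
\]
This is a routine substitution of $V=6\phi+2r\phi'$, $V'=8\phi'+2r\phi''$ and $P_V=2r\phi$. The $\phi''$, $\phi'/r$, $\phi^2$ and $\phi\phi'$ terms should match those of $2r^3\cF$ exactly, which gives $c=0$. For instance, $2r^3\phi''$ and $8r^2\phi'$ match $2r^3(\phi''+4\phi'/r)$. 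The self-similar terms $-3r^3\phi-r^4\phi'+r^3\phi$ match $2r^3(-\phi-\tfrac r2\phi')$, and this is exactly where the correction $+r^3\phi$ from $\Lambda$ is needed. This bookkeeping, in particular tracking the $-V$ in $\Lambda V$ and the sign convention for $\Psi$, is the one place an error could creep in, so I would check it coefficient by coefficient. Alternatively, $c=0$ also follows by letting $r\to0$, where both sides are $O(r^3)$.

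Finally, I would address the functional setting. For $h$ regular at the origin ($h=h_0+O(r^2)$) and decaying at infinity, $\mathfrak Th$ is again regular and decaying. All manipulations above are pointwise identities between smooth functions on $(0,\infty)$, with no boundary terms except at $r=0$, where regularity kills them. The identity therefore holds on the stated class, and the closed-domain transfer is left to Lemma~\ref{lem:radialunweightedspectrum}, as in the paper.
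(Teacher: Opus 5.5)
Your proof is correct, but it takes a different route from the paper. The paper works purely at the linear level. It sets $f=\mathfrak Th$, uses $\partial_r\Delta_0^{-1}\mathfrak Th=2rh$ and $P_n'=U_n-2P_n/r$, and compares the coefficients of $h''',h'',h',h$ in the two third-order expressions $L_{n,0}\mathfrak Th$ and $\mathfrak TL_{\Phi_n}h$.

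You instead prove the nonlinear intertwining $\mathcal G(\mathfrak T\phi)=\mathfrak T\cF(\phi)$ for every regular $\phi$, by writing both sides as $r^{-2}\partial_r$ of a flux. You then differentiate at $\Phi_n$, using $L_{n,0}=-D\mathcal G(U_n)$ and $L_{\Phi_n}=-D\cF(\Phi_n)$. I checked the flux identity $r^2V'+r^2VP_V-\tfrac12r^3V+r^3\phi=2r^3\cF(\phi)$ term by term. I also checked both linearization identities, including the sign convention $\nabla\Psi_V=-P_V\,y/r$ and the radial Newton term $\partial_r\Delta_0^{-1}g=r^{-2}\int_0^r g s^2\dd s$. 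All of them hold.

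What your route buys:
\begin{itemize}
\item It explains why the identity is true: the density equation is the radial divergence of the enclosed-mass equation, so $\mathfrak T$ intertwines the full nonlinear stationary maps.
\item It makes plain that nothing about $\Phi_n$ beyond regularity at the origin is used. The profile equation never enters, which is also true, but less visible, in the paper's computation.
\item It replaces a third-order coefficient comparison by a first-order flux check.
\end{itemize}
The paper's computation, in turn, is self-contained at the linear level. It does not need to identify the two operators as Fr\'echet derivatives of specific nonlinear maps, and it records explicitly the coefficients $A,B$ of $L_{\Phi_n}$.

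One minor point: the flux enters only through $r^{-2}\partial_r$, so an additive constant $c$ would be harmless. Your discussion of $c$ is therefore unnecessary, though your computation does give $c=0$.
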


\begin{proof}
Let $f=\mathfrak Th=6h+2rh'$.  The radial Newton formula gives
\[
 \partial_r\Delta_0^{-1}f
 =\frac1{r^2}\int_0^r(6h+2sh')s^2\dd s
 =\frac2{r^2}[s^3h(s)]_0^r=2rh.                            \label{eq:radialnewtonT}
\]
Write
\[
 A=\frac r2-\frac4r-P_n,\qquad
 B=1-U_n-\frac{3P_n}{r};
\]
By $P_n=2r\Phi_n$, this is exactly
$L_{\Phi_n}h=-h''+Ah'+Bh$.  First compute
\[
 f=\mathfrak Th=6h+2rh',\qquad
 f'=8h'+2rh'',\qquad f''=10h''+2rh'''.
\]
Substitute these identities and
$\partial_r\Delta_0^{-1}f=2rh$ into the radial density operator
\[
 L_{n,0}f
 =-f''-\frac2r f'+f+\frac r2f'
  -2U_nf-P_nf'-U_n'\partial_r\Delta_0^{-1}f.
\]
Collecting the coefficients of $h'''$, $h''$, $h'$, and $h$ gives
\begin{align*}
 L_{n,0}\mathfrak Th
={}&-2rh'''+(r^2-14-2rP_n)h''\\
 &+\left(6r-\frac{16}{r}-4rU_n-8P_n\right)h'\\
 &+(6-12U_n-2rU_n')h.
\end{align*}
On the other hand, if
$q=L_{\Phi_n}h=-h''+Ah'+Bh$, then
\[
 q'=-h'''+Ah''+(A'+B)h'+B'h.
\]
Consequently, $\mathfrak Tq=6q+2rq'$ gives
\begin{align*}
 \mathfrak T L_{\Phi_n}h
={}&-2rh'''+(-6+2rA)h''\\
 &+\{6A+2r(A'+B)\}h'
 +(6B+2rB')h.
\end{align*}
The second-order coefficients in the first lines agree.  Using
$P_n'=U_n-2P_n/r$, the coefficient of $h'$ becomes
\begin{align*}
 6A+2r(A'+B)
 &=6r-\frac{16}{r}-12P_n-2rP_n'-2rU_n\\
 &=6r-\frac{16}{r}-4rU_n-8P_n,
\end{align*}
and the coefficient of $h$ becomes
\begin{align*}
 6B+2rB'
 &=6-6U_n-\frac{12P_n}{r}-2rU_n'-6P_n'\\
 &=6-12U_n-2rU_n'.
\end{align*}
Every derivative-order coefficient agrees, proving
\eqref{eq:radialdensitymassidentity}.
\end{proof}

\begin{lemma}[Existence of adjoint discrete modes and nondegenerate pairing]\label{lem:adjointdualexistence}
Let $-\nu$ be one of the semisimple discrete eigenvalues obtained above.  Then
\[
 \dim\ker(\mathbf L_n+\nu)
 =\dim\ker(\mathbf L_n^*+\nu),
\]
and the $L^2$ pairing between the two eigenspaces is nondegenerate.  Hence one
may choose real adjoint eigenfunctions
\[
 \zeta_0,\qquad \zeta_k^{\rm tr}\ (1\le k\le3),\qquad
 \zeta_j^{\rm u}\ (1\le j\le N_n),
\]
such that
\begin{align}
 \langle\Lambda U_n,\zeta_0\rangle&=1,\notag\\
 \langle\partial_kU_n,\zeta_\ell^{\rm tr}\rangle&=\delta_{k\ell},
                                                               \label{eq:biorthogonal}\\
 \langle\varphi_{i,n},\zeta_j^{\rm u}\rangle&=\delta_{ij},\notag
\end{align}
all other cross pairings being zero.  Here
$\mathbf L_n\varphi_{j,n}=-\mu_{j,n}\varphi_{j,n}$,
$\mu_{j,n}>1$.
\end{lemma}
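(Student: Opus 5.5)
The plan is to use the standard Riesz-projection duality for isolated eigenvalues of a closed, densely defined operator, working in $L^2(\R^3)$ with the realization $\mathbf L_n$.

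\textbf{Step 1 (Riesz projections and their adjoints).} Theorem~\ref{thm:main} shows that each point $-\nu\in\{-1,-\tfrac12,\lambda_{1,n},\dots,\lambda_{N_n,n}\}$ is isolated in $\sigma(\mathbf L_n)$, with finite algebraic multiplicity. Radially the multiplicity is $1$. For $-\tfrac12$ it is $3$, one for each $m$, and the eigenvalue is semisimple. Let $\Gamma$ be a small circle around $-\nu$ and set
\[
 P_\nu=\frac1{2\pi i}\oint_\Gamma(z-\mathbf L_n)^{-1}\dd z.
\]
Since $((z-\mathbf L_n)^{-1})^*=(\bar z-\mathbf L_n^*)^{-1}$, the adjoint $P_\nu^*$ is the Riesz projection of $\mathbf L_n^*$ at $\overline{-\nu}=-\nu$ (these eigenvalues are real). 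Hence $\operatorname{rank}P_\nu^*=\operatorname{rank}P_\nu$.

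Semisimplicity gives $\operatorname{Ran}P_\nu=\ker(\mathbf L_n+\nu)$. On the finite-dimensional space $\operatorname{Ran}P_\nu^*$, the operator $\mathbf L_n^*+\nu$ is the adjoint of the zero operator $(\mathbf L_n+\nu)|_{\operatorname{Ran}P_\nu}$ under the pairing described in Step 2, so it also vanishes there. Therefore $\operatorname{Ran}P_\nu^*=\ker(\mathbf L_n^*+\nu)$. This holds because $\mathbf L_n^*$ commutes with $P_\nu^*$ and the restricted adjoint is the adjoint of the restriction. This proves the dimension equality.

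\textbf{Step 2 (nondegenerate pairing).} Suppose $\varphi\in\ker(\mathbf L_n+\nu)$ satisfies $\langle\varphi,\zeta\rangle=0$ for every $\zeta\in\operatorname{Ran}P_\nu^*$. Then
\[
 \|\varphi\|^2=\langle P_\nu\varphi,\varphi\rangle=\langle\varphi,P_\nu^*\varphi\rangle=0.
\]
So the pairing $\operatorname{Ran}P_\nu\times\operatorname{Ran}P_\nu^*\to\mathbb C$ is nondegenerate, and by symmetry of the argument nondegenerate on both sides.

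For distinct spectral points $-\nu\neq-\nu'$ the Riesz projections satisfy $P_{\nu}P_{\nu'}=0$. This gives
\[
 \langle\varphi,\zeta'\rangle=\langle P_\nu\varphi,P_{\nu'}^*\zeta'\rangle=\langle P_{\nu'}P_\nu\varphi,\zeta'\rangle=0,
\]
so all cross pairings between different eigenvalues vanish. As a check, the same conclusion follows from $(\nu-\nu')\langle\varphi,\zeta'\rangle=0$ after pairing the two eigen-equations.

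\textbf{Step 3 (real normalization).} The coefficients of $\mathbf L_n$ are real, so $\mathbf L_n$ commutes with complex conjugation, and therefore so do $P_\nu$ and $P_\nu^*$. This lets us choose real bases.

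Within each eigenspace we take the Gram matrix $G_{ij}=\langle e_i,\zeta_j\rangle$, which is invertible by Step 2, and replace $\{\zeta_j\}$ by $\{\sum_k (G^{-1})_{kj}\zeta_k\}$ to obtain a dual basis. For $-1$ the eigenspace is spanned by $\Lambda U_n$; for the $\lambda_{j,n}$ it is spanned by $\varphi_{j,n}$ (we set $\mu_{j,n}=-\lambda_{j,n}>1$); and for $-\tfrac12$ it is spanned by $\partial_kU_n$, $k=1,2,3$.

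For the translation block, rotation equivariance is an extra check. By \eqref{eq:adjointangular}, the adjoint kernel at $-\tfrac12$ is $g_1(r)\,y_k/r$, and we could take $\zeta_k^{\rm tr}=c\,g_1(r)y_k/r$. Pairing then gives $\langle\partial_kU_n,\zeta^{\rm tr}_\ell\rangle=\delta_{k\ell}\,c\,\tfrac{4\pi}{3}\int U_n'g_1r^2\dd r$. This integral is nonzero by Step 2, so the choice of $c$ makes the block diagonal.

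\textbf{Main obstacle.} The delicate point is identifying the Hilbert-space adjoint $\mathbf L_n^*$ of the closed realization with the formal expression \eqref{eq:formaladjoint} on a domain of decaying functions. Without this, the $\zeta$'s are only abstract elements of $L^2$. The abstract duality above does not need this identification. For the formula and the asymptotics I would argue by local elliptic regularity together with the endpoint analysis at infinity, where the adjoint's Gaussian-weighted branch is the one selected. That analysis is deferred to the subsequent subsections.
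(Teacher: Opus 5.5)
Your proposal is correct and follows essentially the same route as the paper: take the Riesz projection $P_\nu$, identify $P_\nu^*$ as the Riesz projection of $\mathbf L_n^*$ of equal rank, get nondegeneracy from $\langle f,g\rangle=\langle f,P_\nu^*g\rangle$, and choose dual bases in finite dimensions. The differences are minor. You derive $\operatorname{Ran}P_\nu^*=\ker(\mathbf L_n^*+\nu)$ directly from the pairing, where the paper cites equality of Jordan-block sizes. You kill all cross pairings at once with $P_{\nu'}P_\nu=0$, where the paper uses eigenvalue differences plus angular orthogonality. You also justify real bases via commutation with complex conjugation, which the paper leaves implicit.
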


\begin{proof}
The complete proof is given in Appendix~\ref{app:proof-adjointdualexistence}.
\end{proof}

\subsection{Explicit integral formula for radial adjoint modes}

We start from the mass eigenvalue equation
$L_{\Phi_n}h_j=-\nu_jh_j$ and the self-adjoint weight $m_{\Phi_n}$.
After defining \eqref{eq:radialadjointformula}, we differentiate it to obtain
\eqref{eq:radialadjointderivative} and then integrate by parts against
$\mathfrak Th$.  Self-adjointness reduces the result to
$\langle h,h_j\rangle_{m_{\Phi_n}}$, thereby verifying both the adjoint
eigenvalue equation and the normalization.  The endpoint asymptotics are
obtained by substituting the expansions of $h_j$ and $m_{\Phi_n}$ at the two
endpoints into the same integral formula.

Index the scaling mode and radial unstable modes together.  Let
\[
 \nu_0=1,\qquad h_0=\Lambda\Phi_n,
\]
and, for $1\le j\le N_n$, let
\[
 h_j=\mathfrak T^{-1}\varphi_{j,n},\qquad \nu_j=\mu_{j,n}.
\]
By Lemma~\ref{lem:radialdensitymassidentity},
$L_{\Phi_n}h_j=-\nu_jh_j$.  Set
\[
 \kappa_j=\int_0^\infty h_j(r)^2m_{\Phi_n}(r)\dd r>0.
\]

\begin{lemma}[Explicit formula for radial adjoint functions]\label{lem:radialadjointformula}
Define
\begin{equation}
 Z_j(r)=\frac1{8\pi\kappa_j}
 \int_r^\infty s^{-3}m_{\Phi_n}(s)h_j(s)\dd s,
 \qquad 0\le j\le N_n.                                    \label{eq:radialadjointformula}
\end{equation}
Then
\[
 \mathbf L_n^*Z_j=-\nu_jZ_j,\qquad
 \langle\mathfrak Th_i,Z_j\rangle=\delta_{ij}.
\]
In particular, $\zeta_0=Z_0$ and $\zeta_j^{\rm u}=Z_j$ give all radial
adjoint modes in \eqref{eq:biorthogonal}.
\end{lemma}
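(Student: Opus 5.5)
The plan is to verify both claims by direct duality through the mass--density map $\mathfrak T$, rather than by computing $\mathbf L_n^*Z_j$ term by term from \eqref{eq:adjointangular}.

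\emph{Step 1: pairing identity.} For radial $Z_j$ and radial $f=\mathfrak Th$, with $h$ regular at $0$ and decaying at infinity, I would compute $\langle\mathfrak Th,Z_j\rangle$ in $L^2(\R^3)$. Since $\mathfrak Th\,r^2=2(r^3h)'$, we have
\[
 \langle\mathfrak Th,Z_j\rangle=4\pi\int_0^\infty 2(r^3h)'Z_j\dd r .
\]
Integrating by parts and using $Z_j'=-(8\pi\kappa_j)^{-1}r^{-3}m_{\Phi_n}h_j$ gives
\[
 \langle\mathfrak Th,Z_j\rangle=\frac1{\kappa_j}\int_0^\infty h\,h_j\,m_{\Phi_n}\dd r
 =\frac{\langle h,h_j\rangle_{m_{\Phi_n}}}{\kappa_j}.
\]
The boundary terms vanish for two reasons. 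At the origin, $r^3h\to0$ while $Z_j$ stays bounded, because $m_{\Phi_n}\sim r^4$ makes the integrand $O(r)$. At infinity, $Z_j$ has Gaussian decay by \eqref{eq:adjointweighttail}, and $h$ grows at most polynomially. Taking $h=h_i$ yields $\langle\mathfrak Th_i,Z_j\rangle=\delta_{ij}$. Here I use that the $h_i$ are eigenfunctions of the self-adjoint operator $L_{\Phi_n}$ for the distinct eigenvalues $-\nu_i$, hence $m_{\Phi_n}$-orthogonal, and the definition of $\kappa_j$.

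\emph{Step 2: eigenvalue equation.} I would test against radial $f=\mathfrak Th$ with $h$ smooth, compactly supported away from $0$, and in the form domain. By Lemma~\ref{lem:radialdensitymassidentity} and Step~1,
\[
 \langle\mathfrak Th,\mathbf L_n^*Z_j\rangle
 =\langle L_{n,0}\mathfrak Th,Z_j\rangle
 =\langle\mathfrak TL_{\Phi_n}h,Z_j\rangle
 =\kappa_j^{-1}\langle L_{\Phi_n}h,h_j\rangle_{m_{\Phi_n}}.
\]
Self-adjointness of $L_{\Phi_n}$ in \eqref{eq:selfadjoint} turns the last expression into $-\nu_j\kappa_j^{-1}\langle h,h_j\rangle_{m_{\Phi_n}}$, which equals $-\nu_j\langle\mathfrak Th,Z_j\rangle$. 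Such $\mathfrak Th$ are dense among radial test functions, since $\mathfrak T^{-1}$ maps $C_c^\infty$ functions with vanishing mass into compactly supported $h$. Hence the radial part of $\mathbf L_n^*Z_j+\nu_jZ_j$ vanishes distributionally. Because $Z_j$ is radial, \eqref{eq:adjointangular} with $l=0$ shows that $\mathbf L_n^*Z_j$ is radial, so $\mathbf L_n^*Z_j=-\nu_jZ_j$.

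The remaining issue is that the mass constraint $\int f=0$ loses one direction. I would handle it by also testing with $f=\mathfrak Th$ for $h$ equal to $r^{-3}$ near infinity, or simply by noting that a radial distribution annihilating all mean-zero test functions is a constant. That constant must then vanish by the decay of both sides.

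\emph{Step 3: identification with Lemma~\ref{lem:adjointdualexistence}.} Since $\mathfrak Th_0=\Lambda U_n$ and $\mathfrak Th_j=\varphi_{j,n}$, the normalization \eqref{eq:biorthogonal} holds. The cross pairings with the translation modes vanish by angular orthogonality ($l=0$ versus $l=1$). By simplicity of each radial eigenvalue, the $Z_j$ are the radial adjoint eigenfunctions.

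The main obstacle is the justification of the boundary terms and the membership of $Z_j$ in $D(\mathbf L_n^*)$. Near the origin, $Z_j$ is smooth and even; near infinity, $Z_j\sim Cr^{2a_n-\nu_j\ldots}e^{-r^2/4}$ by Laplace asymptotics of the tail integral. I would settle this through the expansions of Lemma~\ref{lem:adjointprofiletail}.
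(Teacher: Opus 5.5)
Your argument is correct and follows the paper's proof essentially step for step. Both use the same integration by parts $\langle\mathfrak Th,Z_j\rangle=\kappa_j^{-1}\langle h,h_j\rangle_{m_{\Phi_n}}$, then the intertwining identity $L_{n,0}\mathfrak T=\mathfrak TL_{\Phi_n}$ together with self-adjointness of $L_{\Phi_n}$, and finally density of radial test functions. The only difference is the test class: the paper takes $h=\mathfrak T^{-1}f$ for every radial $f\in C_c^\infty(\R^3)$, which is regular at the origin and equals $Cr^{-3}$ beyond the support of $f$, so every radial test function is reached directly and the mass-zero issue you patch separately (your first alternative) never arises.
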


\begin{proof}
By \eqref{eq:radialadjointformula},
\begin{equation}
 Z_j'(r)=-\frac{m_{\Phi_n}(r)h_j(r)}
 {8\pi\kappa_jr^3}.                                       \label{eq:radialadjointderivative}
\end{equation}
For any regular decaying radial $h$, integration by parts and the endpoint
decay give
\begin{align*}
 \langle\mathfrak Th,Z_j\rangle_{L^2(\R^3)}
 &=4\pi\int_0^\infty(6h+2rh')Z_jr^2\dd r\\
 &=8\pi\int_0^\infty(r^3h)'Z_j\dd r\\
 &=-8\pi\int_0^\infty hZ_j'r^3\dd r\\
 &=\kappa_j^{-1}\int_0^\infty hh_jm_{\Phi_n}\dd r.
\end{align*}
Since $L_{\Phi_n}$ is self-adjoint in $L^2(m_{\Phi_n}\dd r)$, the functions
$h_i,h_j$ corresponding to distinct eigenvalues are orthogonal.  Thus the last
expression equals $\delta_{ij}$ when $h=h_i$.

For an arbitrary test function $h$, use the preceding operator identity:
\begin{align*}
 \langle\mathfrak Th,\mathbf L_n^*Z_j\rangle
 &=\langle\mathbf L_n\mathfrak Th,Z_j\rangle\\
 &=\langle\mathfrak T L_{\Phi_n}h,Z_j\rangle\\
 &=\kappa_j^{-1}
   \langle L_{\Phi_n}h,h_j\rangle_{m_{\Phi_n}}\\
 &=-\nu_j\kappa_j^{-1}
   \langle h,h_j\rangle_{m_{\Phi_n}}
 =-\nu_j\langle\mathfrak Th,Z_j\rangle.
\end{align*}
The required density can be checked directly.  Given radial
$f\in C_c^\infty(\R^3)$, let $h=\mathfrak T^{-1}f$.  Then $h$ is regular at
the origin and $h(r)=Cr^{-3}$ beyond the support of $f$.  The preceding
integration by parts has no boundary term, and every compactly supported
radial test function is of the form $f=\mathfrak Th$.  Radial
$C_c^\infty$ is dense in $L^2_{\rm rad}(\R^3)$, so
$\mathbf L_n^*Z_j=-\nu_jZ_j$.  The formula also gives smoothness and
integrability at both endpoints; hence the equality holds in the closed
operator domain.
\end{proof}

\begin{lemma}[Endpoint asymptotics of radial adjoint modes]\label{lem:radialadjointasymptotics}
For each $0\le j\le N_n$, there are constants $A_{j,n}\in\R$,
$b_{j,n}\ne0$, and $c_{j,n}\ne0$ such that
\begin{align}
 Z_j(r)
 &=A_{j,n}-\frac{h_j(0)}{16\pi\kappa_j}r^2+O(r^4),
 &&r\to0,                                                  \label{eq:radialadjointorigin}\\
 Z_j(r)
 &=c_{j,n}r^{2a_n-2-2\nu_j}e^{-r^2/4}
   \bigl(1+O(r^{-2})\bigr),
 &&r\to\infty.                                             \label{eq:radialadjointinfinity}
\end{align}
More precisely, for each fixed integer $q\ge0$,
\begin{equation}
 \partial_r^qZ_j(r)
 =c_{j,n}\left(-\frac12\right)^q
 r^{2a_n-2-2\nu_j+q}e^{-r^2/4}
 \bigl(1+O(r^{-2})\bigr).                                  \label{eq:radialadjointderivatives}
\end{equation}
\end{lemma}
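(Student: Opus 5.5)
Both expansions will be read off directly from the integral formula \eqref{eq:radialadjointformula} and its derivative \eqref{eq:radialadjointderivative}, after inserting the endpoint behaviour of $h_j$ and of the weight $m_{\Phi_n}$. The origin and infinity are handled separately.

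\emph{Origin.} By Lemma~\ref{lem:radialrealization}, $h_j$ lies on the constant branch and is smooth and even: $h_j(r)=h_j(0)+O(r^2)$. The value $h_j(0)$ is nonzero, because otherwise uniqueness for the regular-singular ODE $L_{\Phi_n}h=-\nu_jh$, whose indicial roots at $0$ are $0$ and $-3$, would force $h_j\equiv0$. The weight has the even expansion $m_{\Phi_n}(r)=r^4\bigl(1+O(r^2)\bigr)$, since $\Phi_n$ is even and smooth. Hence \eqref{eq:radialadjointderivative} gives
\[
Z_j'(r)=-\frac{h_j(0)}{8\pi\kappa_j}\,r\bigl(1+O(r^2)\bigr).
\]
Integrating from $0$ with $A_{j,n}:=Z_j(0)$, which is finite because the integrand of \eqref{eq:radialadjointformula} is $O(s)$ near $0$, yields \eqref{eq:radialadjointorigin}. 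This part is routine.

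\emph{Infinity.} First I determine the growth of $h_j$. By Lemma~\ref{lem:adjointprofiletail}, $L_{\Phi_n}$ has coefficients $A=r/2-(4+2a_n)/r+O(r^{-3})$ and $B=1-U_n-3P_n/r=1-8a_nr^{-2}+O(r^{-4})$. After $x=r^2/4$, the equation $L_{\Phi_n}h=-\nu_jh$ becomes a perturbed Kummer equation with parameter $a=1+\nu_j$. It has one branch $\sim r^{-2(1+\nu_j)}$ and a growing branch $\sim e^{r^2/4}r^{2\nu_j+2-(4+2a_n)-1}$. Membership $h_j\in L^2(m_{\Phi_n}\dd r)$ excludes the growing branch, because $m_{\Phi_n}$ only decays like $r^{4+2a_n}e^{-r^2/4}$. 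So
\[
h_j=d_jr^{-2-2\nu_j}\bigl(1+O(r^{-2})\bigr),\qquad d_j\ne0,
\]
with derivative expansions following from the ODE. The error control is the same Volterra argument with the Kummer Wronskian that was used in Lemma~\ref{lem:l2endpointdomain}, now in a simpler local setting. I expect this to be the main obstacle, mostly bookkeeping; $d_j\neq0$ holds since otherwise $h_j$ would be recessive beyond every power and hence zero.

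Multiplying by \eqref{eq:adjointweighttail} gives the integrand of \eqref{eq:radialadjointformula}:
\[
s^{-3}m_{\Phi_n}h_j=\mathfrak m_{\infty,n}d_j\,s^{2a_n-1-2\nu_j}e^{-s^2/4}\bigl(1+O(s^{-2})\bigr).
\]
Writing $s^{\beta}e^{-s^2/4}=-2s^{\beta-1}\partial_s e^{-s^2/4}$ and integrating by parts repeatedly gives
\[
\int_r^\infty s^{\beta}e^{-s^2/4}\bigl(1+O(s^{-2})\bigr)\dd s=2r^{\beta-1}e^{-r^2/4}\bigl(1+O(r^{-2})\bigr).
\]
With $\beta=2a_n-1-2\nu_j$ this is exactly \eqref{eq:radialadjointinfinity}, with $c_{j,n}=\mathfrak m_{\infty,n}d_j/(4\pi\kappa_j)\ne0$.

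For \eqref{eq:radialadjointderivatives}, the first derivative is just \eqref{eq:radialadjointderivative} with the expansion above, i.e.\ $(-1/2)r$ times the leading term. Each further derivative falls on $e^{-r^2/4}$ at leading order, producing a factor $-r/2$, while derivatives of the power and of the $O(r^{-2})$ remainder lose two powers and are absorbed. This uses that the remainders in Lemma~\ref{lem:adjointprofiletail} and in the expansion of $h_j$ may be differentiated, as stated there.
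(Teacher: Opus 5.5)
Your proposal is correct and follows the paper's proof essentially step by step. Both arguments obtain the origin expansion by integrating $Z_j'$ using $h_j=h_j(0)+O(r^2)$ and $m_{\Phi_n}=r^4(1+O(r^2))$. Both get the tail $h_j=b_{j,n}r^{-2(1+\nu_j)}(1+O(r^{-2}))$ from a Volterra argument that excludes the $e^{r^2/4}$ branch, and then produce $c_{j,n}=\mathfrak m_{\infty,n}b_{j,n}/(4\pi\kappa_j)$ by integrating the Gaussian tail by parts. Your Kummer-variable framing is only a cosmetic variant of the paper's substitution $h_j=r^{-2(1+\nu_j)}v_j$, and your extra observation $h_j(0)\neq0$ is correct but not needed for the statement.
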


\begin{proof}
The regular branch at the origin satisfies $h_j(r)=h_j(0)+O(r^2)$, while
$m_{\Phi_n}(r)=r^4(1+O(r^2))$.  Substitution in
\eqref{eq:radialadjointderivative} gives
\[
 Z_j'(r)=-\frac{h_j(0)}{8\pi\kappa_j}r+O(r^3).
\]
Integration from $0$ to $r$ proves \eqref{eq:radialadjointorigin}.

At infinity, $L_{\Phi_n}h_j=-\nu_jh_j$ and
\eqref{eq:adjointprofiletail} give
\[
 -h_j''+\left(\frac r2-\frac{4+2a_n}{r}+O(r^{-3})\right)h_j'
 +\left(1+\nu_j-\frac{8a_n}{r^2}+O(r^{-4})\right)h_j=0.
\]
The leading first-order balance is
\[
 \frac r2h_j'+(1+\nu_j)h_j=0,
\]
so the square-integrable branch begins with $r^{-2(1+\nu_j)}$.  Set
$h_j=r^{-2(1+\nu_j)}v_j$ and substitute this expression into the equation.
The resulting Volterra equation for $(v_j,rv_j')$ has a kernel of size
$O(r^{-2})$; the other independent branch contains $e^{r^2/4}$ and is not in
$L^2(m_{\Phi_n}\dd r)$.  Therefore
\begin{equation}
 h_j(r)=b_{j,n}r^{-2(1+\nu_j)}
 \bigl(1+O(r^{-2})\bigr),\qquad b_{j,n}\ne0,                \label{eq:massmodeinfinity}
\end{equation}
and the expansion may be differentiated term by term.

Substitute \eqref{eq:adjointweighttail} and
\eqref{eq:massmodeinfinity} into \eqref{eq:radialadjointformula}.  Its
integrand is
\[
 \mathfrak m_{\infty,n}b_{j,n}
 s^{2a_n-1-2\nu_j}e^{-s^2/4}
 \bigl(1+O(s^{-2})\bigr).
\]
One integration by parts gives
\[
 \int_r^\infty s^\beta e^{-s^2/4}\dd s
 =2r^{\beta-1}e^{-r^2/4}\bigl(1+O(r^{-2})\bigr).
\]
Hence \eqref{eq:radialadjointinfinity} holds, with
\[
 c_{j,n}
 =\frac{\mathfrak m_{\infty,n}b_{j,n}}{4\pi\kappa_j}\ne0.
\]
When differentiating this expansion, the leading contribution each time comes
from $\partial_re^{-r^2/4}=-(r/2)e^{-r^2/4}$.  This proves
\eqref{eq:radialadjointderivatives}.
\end{proof}

\subsection{The Newton dipole tail of the translation adjoint modes}

\begin{lemma}[Translation adjoint functions at the origin and infinity]\label{lem:translationadjointasymptotics}
After the normalization \eqref{eq:biorthogonal}, there is a smooth radial
function $g_{\rm tr}$ such that
\[
 \zeta_k^{\rm tr}(y)=g_{\rm tr}(r)\omega_k,\qquad
 \omega_k=\frac{y_k}{r}.
\]
There is $\beta_n\in\R$ such that
\begin{align}
 \zeta_k^{\rm tr}(y)
 &=\beta_n y_k+O(|y|^3), &&y\to0,                           \label{eq:translationorigin}\\
 \zeta_k^{\rm tr}(y)
 &=-\frac1{4\pi}\frac{y_k}{|y|^3}
   +O(|y|^{-4}), &&|y|\to\infty.                            \label{eq:translationinfinity}
\end{align}
For every multi-index $\alpha$,
\begin{equation}
 \partial^\alpha\zeta_k^{\rm tr}(y)
 =-\frac1{4\pi}\partial^\alpha\left(\frac{y_k}{|y|^3}\right)
 +O(|y|^{-4-|\alpha|}),\qquad |y|\to\infty.                \label{eq:translationderivatives}
\end{equation}
\end{lemma}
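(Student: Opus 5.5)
The plan is to work entirely in the $l=1$ component, using the adjoint equation from Lemma~\ref{lem:formaladjoint}. By Theorem~\ref{thm:main}(2) and Lemma~\ref{lem:adjointdualexistence}, $\ker(\mathbf L_n^*+\tfrac12)$ is three-dimensional and pairs nondegenerately with $\operatorname{span}\{\partial_kU_n\}\subset\mathcal H_1$. Both $\mathbf L_n^*$ and the normalization \eqref{eq:biorthogonal} are equivariant under rotations. The adjoint Riesz projection therefore maps $\mathcal H_1^*$ onto this kernel and commutes with rotations, so the kernel lies in $\mathcal H_1$. On each fixed $(1,m)$ component it is one-dimensional, because algebraic simplicity on $L_{n,1}$ transfers to the adjoint. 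This gives $\zeta_k^{\rm tr}=g_{\rm tr}(r)\omega_k$ with a single radial profile $g_{\rm tr}$ solving \eqref{eq:adjointangular} with $l=1$ and eigenvalue $-1/2$, normalized by
\[
 \langle\partial_kU_n,\zeta_k^{\rm tr}\rangle=\tfrac{4\pi}{3}\int_0^\infty U_n'g_{\rm tr}r^2\dd r=1 .
\]
Smoothness follows from local elliptic regularity.

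At the origin, the $l=1$ principal part $-\Delta_1$ has indicial roots $1$ and $-2$. Regularity (the function lies in $L^2(\R^3)$ and is smooth) selects the branch $r$. The Newton term $\Delta_1^{-1}[r^{-2}(r^2U_n'g)']$ is regular and odd-order in $r$, since $U_n'=O(r)$. Hence $g_{\rm tr}=\beta_nr+O(r^3)$, which is \eqref{eq:translationorigin}.

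The main work, and the expected obstacle, is the far field. I would first establish a rough decay of $g_{\rm tr}$ by the same Volterra/Kummer scheme as in Lemma~\ref{lem:l2endpointdomain}. Here the homogeneous part $-g''-\frac2rg'+\frac2{r^2}g-\frac r2g'$ has one polynomially growing branch and one Gaussian branch. Membership in $L^2$ excludes the growing free branch, so $g_{\rm tr}$ is driven only by the Newton forcing. Write the Newton term in full-space form $\Delta^{-1}\nabla\!\cdot(\zeta\nabla U_n)$. Its source has compact-like decay, $O(r^{-3}|g|)$ after one derivative, and its first moment is
\[
 \int y_j\,\nabla\!\cdot(\zeta_k^{\rm tr}\nabla U_n)\dd y
 =-\langle\zeta_k^{\rm tr},\partial_jU_n\rangle=-\delta_{jk}.
\]
Its zeroth moment vanishes by oddness. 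The multipole expansion of the Newton kernel, with the convention $\Delta\Delta^{-1}=I$, therefore gives
\[
 \Delta^{-1}\nabla\!\cdot(\zeta_k^{\rm tr}\nabla U_n)=\frac1{4\pi}\frac{y_k}{|y|^3}+O(|y|^{-3}).
\]
The $O(|y|^{-3})$ remainder comes from the quadrupole term, and any slow tail of $g$ is closed by bootstrap.

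Substituting into the adjoint equation, the terms $-\frac12 g$ and $+\frac12 g$ cancel. The leading balance becomes
\[
 -\frac12\,y\!\cdot\!\nabla\zeta+\frac1{4\pi}\frac{y_k}{|y|^3}\approx0 .
\]
Note that $U_n\zeta$ and $P_n\partial_r\zeta$ are two powers smaller. This forces the degree $-2$ homogeneous profile: $y\!\cdot\!\nabla\zeta=-2\zeta$ yields $\zeta=-\frac1{4\pi}y_k|y|^{-3}$. The remainder is obtained by applying the variation-of-constants operator to the residual forcing $O(r^{-4})$, which gives \eqref{eq:translationinfinity}. Derivative bounds \eqref{eq:translationderivatives} follow by differentiating the Volterra representation and applying dyadic elliptic estimates. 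The delicate points are the bootstrap, which must start from mere $L^2$ decay and use Lemma~\ref{lem:adjointprofiletail}, and checking that the quadrupole and resonant logarithmic contributions stay at order $O(r^{-4})$.
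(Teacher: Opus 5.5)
Your route is essentially the paper's. You reduce to the $l=1$ profile $g_{\rm tr}$, select the $r$ branch at the origin, seed a decay bound and iterate. You then identify the $r^{-2}$ part of the Newton term from the dipole moment fixed by the normalization; your identity $\int y_j\nabla\!\cdot(\zeta_k^{\rm tr}\nabla U_n)\dd y=-\delta_{jk}$ is the same integration by parts as the paper's $\mathcal M_{\rm tr}=-\int_0^\infty U_n'g_{\rm tr}r^2\dd r=-3/(4\pi)$. Finally you read off $A=-1/(4\pi)$ from the balance $-\frac r2 g_{\rm tr}'+W_{\rm tr}=0$.

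One step, however, does not deliver the error term. You bound the Newton potential by $\frac1{4\pi}y_k|y|^{-3}+O(|y|^{-3})$ and attribute the error to a quadrupole term. Insert an $O(r^{-3})$ error into $-\frac r2g_{\rm tr}'=-W_{\rm tr}+\cdots$ and integrate from infinity: you only get $g_{\rm tr}=-\frac1{4\pi}r^{-2}+O(r^{-3})$, i.e.\ an $O(|y|^{-3})$ remainder, not the $O(|y|^{-4})$ of \eqref{eq:translationinfinity}. Your later claim that the residual forcing is $O(r^{-4})$ does not follow from what precedes it.

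You flag this as delicate, and it is resolved as follows. The source $\nabla\!\cdot(\zeta_k^{\rm tr}\nabla U_n)=H_{\rm tr}(r)\omega_k$ is a pure $l=1$ function, so there is no quadrupole or any other non-dipole term; the only error is the tail of the source. The paper quantifies this with the exact formula \eqref{eq:translationnewton}. Splitting $\int_0^r=\int_0^\infty-\int_r^\infty$ gives $W_{\rm tr}=-\frac{\mathcal M_{\rm tr}}3r^{-2}+O(r^{\alpha-2})$ whenever $g_{\rm tr}=O(r^\alpha)$. One pass from $\alpha=-\frac32+\varepsilon$ gives $g_{\rm tr}=O(r^{-2})$, and a second pass makes the Newton error $O(r^{-4})$. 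Since $\Delta_1(r^{-2})=0$ and $U_ng_{\rm tr}$, $P_ng_{\rm tr}'$ are $O(r^{-4})$, backward integration then yields the $O(r^{-4})$ remainder with no logarithms.

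Two smaller corrections.

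\begin{itemize}
\item The homogeneous branch excluded at infinity is of constant order ($g\to\mathrm{const}$, i.e.\ $\zeta\sim\omega_k\notin L^2$), not polynomially growing. The admissible branch is the Gaussian one, which is the reverse of the roles in Lemma~\ref{lem:l2endpointdomain}.
\item That lemma imports its starting decay from the literature rather than deriving it from $L^2$ membership, so your seed bound needs its own argument. The paper obtains $O(R^{-3/2+\varepsilon})$ on dyadic annuli by applying Cauchy--Schwarz to the two pieces of the Newton formula together with rescaled local estimates.
\end{itemize}
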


\begin{proof}
Set $l=1$ and the eigenvalue to $-1/2$ in
\eqref{eq:adjointangular}.  If
\[
 H_{\rm tr}(r)=\frac1{r^2}\partial_r(r^2U_n'g_{\rm tr}),\qquad
 W_{\rm tr}=\Delta_1^{-1}H_{\rm tr},
\]
then the radial equation is
\begin{equation}
 -g_{\rm tr}''-\frac2r g_{\rm tr}'+\frac2{r^2}g_{\rm tr}
 -\frac r2g_{\rm tr}'-U_ng_{\rm tr}+P_ng_{\rm tr}'
 +W_{\rm tr}=0.                                            \label{eq:translationadjointeq}
\end{equation}
The indicial exponents of the Laplace principal part at the origin are
$1,-2$.  The closed operator domain excludes $r^{-2}$, and hence
$g_{\rm tr}(r)=\beta_nr+O(r^3)$.  This proves
\eqref{eq:translationorigin}; repeated differentiation of the equation gives
the full odd Taylor expansion.

We now compute the leading term at infinity.  The $l=1$ Newton formula is
\begin{equation}
 W_{\rm tr}(r)=-\frac13\left[
 r^{-2}\int_0^rH_{\rm tr}(s)s^3\dd s
 +r\int_r^\infty H_{\rm tr}(s)\dd s\right].                \label{eq:translationnewton}
\end{equation}
We first establish a rough decay bound.  The adjoint eigenfunction belongs to
$L^2(r^2\dd r)$.  Restrict the equation to $[R,2R]$, put $r=R\rho$, apply
Cauchy--Schwarz separately to the inner and outer pieces of the Newton
formula, and then use the rescaled local elliptic and one-dimensional Sobolev
estimates.  For each $\varepsilon>0$,
\begin{equation}
 \sup_{R\le r\le2R}\left(
 |g_{\rm tr}|+r|g_{\rm tr}'|+r^2|g_{\rm tr}''|\right)
 \le C_\varepsilon R^{-3/2+\varepsilon}.                   \label{eq:translationinitialdecay}
\end{equation}
The $\varepsilon$ loss only starts the iteration and disappears from the final
estimate.  Suppose for some $\alpha<0$ that
$|(r\partial_r)^qg_{\rm tr}|=O(r^\alpha)$ for $0\le q\le2$.  Since
$U_n'=O(r^{-3})$,
\[
 H_{\rm tr}=O(r^{\alpha-4}).
\]
Write $\int_0^r=\int_0^\infty-\int_r^\infty$ in
\eqref{eq:translationnewton}.  Both tail integrals are
$O(r^{\alpha-2})$, so
\[
 W_{\rm tr}(r)=-\frac{\mathcal M_{\rm tr}}{3}r^{-2}
 +O(r^{\alpha-2}),\qquad
 \mathcal M_{\rm tr}:=\int_0^\infty H_{\rm tr}(s)s^3\dd s.
\]
Substitute this into \eqref{eq:translationadjointeq}.  The leading balance at
infinity is $-(r/2)g_{\rm tr}'+W_{\rm tr}=0$.  Integrating backward from
infinity and excluding the constant branch, which is not in
$L^2(r^2\dd r)$, gives
\[
 g_{\rm tr}=O\!\left(r^{\max\{-2,\alpha-2\}}\right).
\]
Starting with $\alpha=-3/2+\varepsilon$ from
\eqref{eq:translationinitialdecay}, the first iteration gives
$g_{\rm tr}=O(r^{-2})$.  Substitution once more gives
$H_{\rm tr}=O(r^{-6})$ and
$W_{\rm tr}=-(\mathcal M_{\rm tr}/3)r^{-2}+O(r^{-4})$.  Comparing the equation
and integrating backward again yields
\begin{equation}
 g_{\rm tr}(r)=Ar^{-2}+O(r^{-4}),\qquad
 g_{\rm tr}'(r)=-2Ar^{-3}+O(r^{-5}).                        \label{eq:translationroughasympt}
\end{equation}
The homogeneous equation also has a constant-order branch and a Gaussian
branch.  The former is not in $L^2(r^2\dd r)$, while the latter is absorbed in
the remainder of \eqref{eq:translationroughasympt}.

The normalization determines $A$.  Integrating
$H_{\rm tr}=r^{-2}(r^2U_n'g_{\rm tr})'$ by parts gives
\begin{equation}
 \mathcal M_{\rm tr}
 =\int_0^\infty(r^2U_n'g_{\rm tr})'r\dd r
 =-\int_0^\infty U_n'g_{\rm tr}r^2\dd r.                  \label{eq:translationmoment}
\end{equation}
On the other hand,
\[
 1=\langle\partial_kU_n,\zeta_k^{\rm tr}\rangle
 =\int_{\mathbb S^2}\omega_k^2\dd\omega
  \int_0^\infty U_n'g_{\rm tr}r^2\dd r
 =\frac{4\pi}{3}\int_0^\infty U_n'g_{\rm tr}r^2\dd r.
\]
Therefore
\[
 \mathcal M_{\rm tr}=-\frac3{4\pi},\qquad
 W_{\rm tr}(r)=\frac1{4\pi}r^{-2}+O(r^{-4}).               \label{eq:translationWtail}
\]
In \eqref{eq:translationadjointeq}, $-\Delta_1(r^{-2})=0$, while
\[
 -\frac r2\partial_r(Ar^{-2})=Ar^{-2}.
\]
Comparison of the $r^{-2}$ coefficients gives $A+1/(4\pi)=0$, namely
$A=-1/(4\pi)$.  This proves \eqref{eq:translationinfinity}.  Repeated
differentiation of \eqref{eq:translationadjointeq} and
\eqref{eq:translationnewton} improves the remainder by one power of $r$ each
time, proving \eqref{eq:translationderivatives}.
\end{proof}

\begin{proposition}[Unified asymptotics and biorthogonal normalization of adjoint modes]
\label{prop:adjointmodeasymptotics}
For every sufficiently large matched profile, there is a unique real adjoint
family normalized by \eqref{eq:biorthogonal},
\[
 \zeta_0,\qquad \zeta_k^{\rm tr}\ (1\le k\le3),\qquad
 \zeta_j^{\rm u}\ (1\le j\le N_n),
\]
satisfying
\[
 \mathbf L_n^*\zeta_0=-\zeta_0,\qquad
 \mathbf L_n^*\zeta_k^{\rm tr}=-\frac12\zeta_k^{\rm tr},\qquad
 \mathbf L_n^*\zeta_j^{\rm u}=-\mu_{j,n}\zeta_j^{\rm u}.
\]
Let $\nu=1$ for $\zeta_0$ and $\nu=\mu_{j,n}$ for
$\zeta_j^{\rm u}$.  Each radial adjoint mode has the even expansion
\[
 \zeta(y)=A+B|y|^2+O(|y|^4),
\]
and, for $|\alpha|\le4$,
\[
 \partial^\alpha\bigl(\zeta-A-B|y|^2\bigr)
 =O(|y|^{4-|\alpha|}).
\]
At infinity,
\[
 \partial_r^q\zeta(r)
 =c\left(-\frac12\right)^q
 r^{2a_n-2-2\nu+q}e^{-r^2/4}
 \bigl(1+O(r^{-2})\bigr),\qquad q\ge0,
\]
where $c\ne0$.  At the origin and infinity, respectively, the translation
adjoint modes satisfy
\[
 \zeta_k^{\rm tr}(y)=\beta_ny_k+O(|y|^3),
\]
and, for $|\alpha|\le3$,
\[
 \partial^\alpha\bigl(\zeta_k^{\rm tr}-\beta_ny_k\bigr)
 =O(|y|^{3-|\alpha|}),
\]
\[
 \partial^\alpha\zeta_k^{\rm tr}(y)
 =-\frac1{4\pi}\partial^\alpha\left(\frac{y_k}{|y|^3}\right)
 +O(|y|^{-4-|\alpha|}).
\]
In particular, all these adjoint functions belong to $L^2(\R^3)$.  The radial
adjoint modes have Gaussian decay, whereas the translation adjoint modes have
the algebraic $|y|^{-2}$ tail determined by the Newton dipole moment.
\end{proposition}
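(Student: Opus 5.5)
The proposition is essentially an assembly of Lemmas~\ref{lem:adjointdualexistence}, \ref{lem:radialadjointformula}, \ref{lem:radialadjointasymptotics} and \ref{lem:translationadjointasymptotics}. I would prove it in three steps: existence and normalization, uniqueness, and then the transfer of asymptotics to the full-space functions.

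\textbf{Existence and normalization.} For the radial modes I would take $\zeta_0=Z_0$ and $\zeta^{\rm u}_j=Z_j$ from \eqref{eq:radialadjointformula}. Lemma~\ref{lem:radialadjointformula} gives the adjoint eigenvalue equations and the relation $\langle\mathfrak Th_i,Z_j\rangle=\delta_{ij}$. Since $\mathfrak Th_0=\Lambda U_n$ and $\mathfrak Th_j=\varphi_{j,n}$, this is the radial part of \eqref{eq:biorthogonal}.

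For the translation modes I would take the $l=1$ adjoint eigenfunction from Lemma~\ref{lem:adjointdualexistence}. Rotation invariance of $\mathbf L_n^*$ lets me write it as $g_{\rm tr}(r)\omega_k$, and I would scale it so that $\langle\partial_kU_n,\zeta^{\rm tr}_k\rangle=1$.

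The cross pairings vanish for two separate reasons:
\begin{itemize}
\item between different angular modes (radial versus $l=1$, and $k\neq\ell$ within $l=1$), by orthogonality of spherical harmonics;
\item between different eigenvalues within the radial mode, because $\langle \mathbf L_n\varphi,\zeta\rangle=\langle\varphi,\mathbf L_n^*\zeta\rangle$ forces the pairing to vanish.
\end{itemize}
Reality is automatic, since every ingredient is real.

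\textbf{Uniqueness.} By Lemma~\ref{lem:adjointdualexistence}, each adjoint eigenspace has the same dimension as the corresponding eigenspace of $\mathbf L_n$. Proposition~\ref{prop:radialdensityspectrum-main} and Theorem~\ref{thm:main} show these eigenspaces are one-dimensional per radial eigenvalue and three-dimensional at $-1/2$. Each adjoint eigenspace is therefore spanned by the functions just built, and the finitely many normalization conditions in \eqref{eq:biorthogonal} fix the coefficients uniquely. The step needing care is that the pairing matrix is invertible. This is exactly the nondegeneracy asserted in Lemma~\ref{lem:adjointdualexistence}, and it is already exhibited by the identity matrix obtained above.

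\textbf{Asymptotics.} The radial expansions at infinity are Lemma~\ref{lem:radialadjointasymptotics}, with $c=c_{j,n}\ne0$. At the origin, the radial functions have the even expansion $A+Br^2+O(r^4)$. To bound derivatives up to order four in Cartesian variables, I would use that $Z_j$ is smooth and even in $r$:
\begin{itemize}
\item $Z_j'=-m_{\Phi_n}h_j/(8\pi\kappa_j r^3)$ is an odd smooth function, since $m_{\Phi_n}$ is $r^4$ times an even smooth function and $h_j$ is even;
\item hence $Z_j$ is a smooth function of $|y|^2$, so the Taylor remainder bounds transfer to $\partial^\alpha$ with loss $|y|^{-|\alpha|}$.
\end{itemize}
The translation statements are Lemma~\ref{lem:translationadjointasymptotics}. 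The origin derivative bounds follow the same way: $g_{\rm tr}$ is odd and smooth, so $g_{\rm tr}(r)/r$ is a smooth even function, and $\zeta^{\rm tr}_k=y_k\cdot(g_{\rm tr}/r)$ is smooth.

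Finally, $L^2$ membership follows directly from the tails: Gaussian decay for the radial modes, and $|y|^{-2}$ decay for the translation modes, which is square integrable in three dimensions because $\int^\infty r^{-4}r^2\,dr<\infty$. The only point I expect to need real care is the parity and smoothness argument converting the radial ODE expansions into Cartesian derivative bounds; I would handle it through the explicit integral formula \eqref{eq:radialadjointderivative} together with the even expansion of $\Phi_n$.
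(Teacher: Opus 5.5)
Your proposal is correct and follows the paper's route: the paper proves the proposition by assembling Lemmas~\ref{lem:adjointdualexistence}, \ref{lem:radialadjointformula}, \ref{lem:radialadjointasymptotics} and \ref{lem:translationadjointasymptotics}, and it reads off $L^2$ membership from the Gaussian and $|y|^{-2}$ tails. Your dimension-count uniqueness argument and your parity argument (writing $Z_j$ and $g_{\rm tr}/r$ as smooth functions of $|y|^2$) make explicit the unique normalization and the Cartesian derivative bounds at the origin, which the paper leaves implicit in those lemmas.
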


\begin{proof}
Existence, unique normalization, and cross-orthogonality follow from
Lemma~\ref{lem:adjointdualexistence}.  The radial formulas and asymptotics
come from Lemmas~\ref{lem:radialadjointformula} and
\ref{lem:radialadjointasymptotics}; translation modes are covered by
Lemma~\ref{lem:translationadjointasymptotics}.  Gaussian functions plainly
belong to $L^2(\R^3)$.  The square of $|y|^{-2}$ times the three-dimensional
volume element is $r^{-2}\dd r$, integrable at infinity; the regular origin
expansions give local integrability.
\end{proof}

\section{Stable projection, semigroup decay, and an equivalent \texorpdfstring{$L^2$}{L2} energy}
\label{sec:stablelinear}

After the scaling mode, the three translation modes, and the $N_n$ genuine
radial unstable modes have been removed, we seek estimates in the ordinary
space $L^2(\R^3)$.  A spectral gap alone does not imply the numerical-range
inequality
\[
 \operatorname{Re}\langle\mathbf L_nf,f\rangle\ge c\|f\|_2^2,
\]
nor does it by itself identify the growth bound of an arbitrary strongly
continuous semigroup.  We therefore prove that $-\mathbf L_n$ is a compact
bounded perturbation of a strictly dissipative generator.  Quasi-compact
semigroup theory and the spectral classification above then yield exponential
decay on the full stable subspace.  This is the route used in
\cite[Propositions~2.3 and 2.7]{LiZhouKSNS2025} and
\cite[Proposition~2.16]{LiZhou2025}.  We finally construct a quadratic energy
equivalent to the ordinary $L^2$ norm.

Before invoking the semigroup decomposition, we also close an operator-theoretic
point left open by the quadratic-form sections.  Those sections exclude point
spectrum for the radial coefficient operators $L_{n,l}$, whereas the compact
perturbation and essential-spectrum arguments below are first carried out for
the full-space operator $\mathbf L_n$.  We therefore identify the closed
realizations mode by mode and use Fredholm index zero to pass from exclusion of
eigenvalues to exclusion of the full spectrum.

\subsection{The finite-dimensional unstable projection}

Use the normalization in Proposition~\ref{prop:adjointmodeasymptotics} and set
\[
 e_0=\Lambda U_n,\qquad e_k^{\rm tr}=\partial_kU_n,\qquad
 e_j^{\rm u}=\varphi_{j,n}.
\]
Define
\begin{align}
 P_{{\rm u},n}f={}&\langle f,\zeta_0\rangle e_0
 +\sum_{k=1}^3\langle f,\zeta_k^{\rm tr}\rangle e_k^{\rm tr}
 +\sum_{j=1}^{N_n}\langle f,\zeta_j^{\rm u}\rangle e_j^{\rm u},
                                                               \label{eq:unstableprojection}\\
 P_{{\rm s},n}={}&I-P_{{\rm u},n}.                           \label{eq:stableprojection}
\end{align}

\begin{lemma}[Exact orthogonality conditions for the stable space]
\label{lem:stablespaceprojection}
$P_{{\rm u},n}$ is a bounded finite-rank projection on $L^2(\R^3)$ and
commutes with $\mathbf L_n$ on $D(\mathbf L_n)$.  Moreover,
\begin{align}
 X_{{\rm s},n}:=\operatorname{Ran}P_{{\rm s},n}
 =\{f\in L^2(\R^3):{}&\langle f,\zeta_0\rangle=0,\notag\\
 &\langle f,\zeta_k^{\rm tr}\rangle=0\ (1\le k\le3),\notag\\
 &\langle f,\zeta_j^{\rm u}\rangle=0\ (1\le j\le N_n)\}. \label{eq:stablespace}
\end{align}
Thus, for the non-self-adjoint operator $\mathbf L_n$, removing the unstable
modes means orthogonality to the adjoint eigenfunctions, not to the right
eigenfunctions.
\end{lemma}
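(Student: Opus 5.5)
The plan is to verify the three claims directly from the biorthogonality relations \eqref{eq:biorthogonal} and the eigen-relations for the right and adjoint modes. There are four steps.

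\emph{Step 1: boundedness and finite rank.} By Proposition~\ref{prop:adjointmodeasymptotics}, every adjoint function $\zeta_0,\zeta_k^{\rm tr},\zeta_j^{\rm u}$ lies in $L^2(\R^3)$. Every right mode $e_0,e_k^{\rm tr},e_j^{\rm u}$ lies in $L^2(\R^3)$ as well. This is clear for $\Lambda U_n$ and $\partial_kU_n$ from the tail \eqref{eq:adjointprofiletail}, since $|y|^{-2}$ and $|y|^{-3}$ are square integrable at infinity in three dimensions. For $\varphi_{j,n}$ it holds because it is an eigenfunction of the closed realization. Hence each map $f\mapsto\langle f,\zeta\rangle e$ is bounded by Cauchy--Schwarz, and $P_{{\rm u},n}$ is bounded with rank at most $N_n+4$.

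\emph{Step 2: $P_{{\rm u},n}$ is a projection.} Apply \eqref{eq:unstableprojection} to a basis vector $e$. The biorthogonality \eqref{eq:biorthogonal}, with all cross pairings equal to zero, gives $P_{{\rm u},n}e=e$. Therefore $P_{{\rm u},n}^2=P_{{\rm u},n}$, and its range is exactly the span $E_{\rm u}$ of the $N_n+4$ right modes. These modes are linearly independent by the same biorthogonality.

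\emph{Step 3: commutation with $\mathbf L_n$.} Take $f\in D(\mathbf L_n)$ and one summand with pair $(e,\zeta)$, where $\mathbf L_ne=-\nu e$ and $\mathbf L_n^*\zeta=-\nu\zeta$ with the same real $\nu$. Then
\[
 \langle\mathbf L_nf,\zeta\rangle e=\langle f,\mathbf L_n^*\zeta\rangle e=-\nu\langle f,\zeta\rangle e=\langle f,\zeta\rangle\mathbf L_ne.
\]
Summing over the summands gives $P_{{\rm u},n}\mathbf L_nf=\mathbf L_nP_{{\rm u},n}f$. Here $P_{{\rm u},n}f\in E_{\rm u}\subset D(\mathbf L_n)$, and the eigenvalues are real.

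The only point needing care is the first equality in the display. It requires $\zeta\in D(\mathbf L_n^*)$ for the \emph{closed} adjoint, not just the formal identity \eqref{eq:formaladjoint} on $C_c^\infty$. For the radial modes, Lemma~\ref{lem:radialadjointformula} already states that the equation holds in the closed operator domain. For the translation modes, I would argue as follows. By Lemma~\ref{lem:adjointdualexistence}, $\zeta_k^{\rm tr}$ is chosen in $\ker(\mathbf L_n^*+1/2)$, where $\mathbf L_n^*$ is the Hilbert adjoint of the closed realization. The adjoint identity then holds by the definition of the Hilbert adjoint, so no extra work is needed.

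If instead one only has the ODE characterization, I would justify the pairing by approximating $f$ with cutoffs $\chi(y/R)f$. The boundary terms involve $\zeta^{\rm tr}=O(|y|^{-2})$ and $\nabla\zeta^{\rm tr}=O(|y|^{-3})$ from \eqref{eq:translationderivatives}, together with the drift $y\cdot\nabla$. That drift produces a term like $\int_{R<|y|<2R}|f||\zeta|$. This is the step I expect to be the main, though mild, obstacle: the tail $|y|^{-2}$ is only barely square integrable. Even so, $\|\zeta\mathbf 1_{|y|>R}\|_2\to0$ suffices, so the boundary term vanishes as $R\to\infty$.

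\emph{Step 4: characterization \eqref{eq:stablespace}.} Suppose $f$ is orthogonal to all $\zeta$'s. Then $P_{{\rm u},n}f=0$, so $f=P_{{\rm s},n}f\in\operatorname{Ran}P_{{\rm s},n}$. Conversely, if $f=g-P_{{\rm u},n}g$, then pairing with any $\zeta$ and using \eqref{eq:biorthogonal} gives $\langle f,\zeta\rangle=\langle g,\zeta\rangle-\langle g,\zeta\rangle=0$. This proves the characterization of $X_{{\rm s},n}$.

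Finally, to see that $P_{{\rm u},n}$ coincides with the Riesz projection, I would combine Step 3 with Theorem~\ref{thm:main}. The eigenvalues involved are semisimple and algebraically simple per $(l,m)$ component, so the Riesz projection onto $\{-\nu\}$ has range exactly the corresponding span. A commuting projection with that range and with kernel invariant under $\mathbf L_n$ must equal it. This is consistent with Lemma~\ref{lem:adjointdualexistence}.
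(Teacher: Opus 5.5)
Your proposal is correct and follows the paper's proof: boundedness comes from $L^2$ membership of all left and right modes, $P_{{\rm u},n}^2=P_{{\rm u},n}$ and \eqref{eq:stablespace} come from the biorthogonality \eqref{eq:biorthogonal}, and commutation comes from summing $\langle\mathbf L_nf,\zeta\rangle e=-\nu\langle f,\zeta\rangle e=\langle f,\zeta\rangle\mathbf L_ne$ over the pairs. You also note that this step needs $\zeta\in D(\mathbf L_n^*)$ for the closed adjoint, which the paper leaves implicit; it holds because the $\zeta$'s lie in the range of the adjoint Riesz projection in Lemma~\ref{lem:adjointdualexistence}, so your cutoff fallback is not needed.
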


\begin{proof}
The complete proof is given in Appendix~\ref{app:proof-stablespaceprojection}.
\end{proof}

\subsection{Closed realization in \texorpdfstring{$L^2$}{L2} and the stable semigroup}

Let
\begin{equation}
 \mathbf L_0=-\Delta+\frac12(2+y\cdot\nabla)                 \label{eq:freeOU}
\end{equation}
and write $\mathbf L_n=\mathbf L_0+\mathbf K_n$, where
\begin{equation}
 \mathbf K_nf=-2U_nf-b_n\cdot\nabla f
 -\nabla U_n\cdot\nabla\Delta^{-1}f,\qquad
 b_n=\nabla\Delta^{-1}U_n.                                  \label{eq:compactperturbation}
\end{equation}

\begin{lemma}[Relative compactness and semigroup generation]
\label{lem:compactgenerator}
Fix a sufficiently large $n$, and realize $\mathbf L_0$ as the closure of its
action on $C_c^\infty(\R^3)$.  Then:
\begin{enumerate}
\item $\mathbf K_n:D(\mathbf L_0)\to L^2(\R^3)$ is compact;
\item $\mathbf L_n$ is closed on $D(\mathbf L_n)=D(\mathbf L_0)$, and
      $-\mathbf L_n$ generates a strongly continuous semigroup;
\item there are a maximal dissipative operator $A_{0,n}$ and a compact
      operator $K_{0,n}$ such that
      \begin{equation}
       -\mathbf L_n=A_{0,n}-\frac1{16}+K_{0,n}.              \label{eq:maxdissdecomp}
      \end{equation}
\end{enumerate}
\end{lemma}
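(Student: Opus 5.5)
We would prove the three items in order, with the numerical-range identity for the free Ornstein--Uhlenbeck part as the backbone. First we describe $D(\mathbf L_0)$. For $f\in C_c^\infty(\R^3)$, integration by parts as in Lemma~\ref{lem:qexact} gives
\[
 \operatorname{Re}\langle\mathbf L_0f,f\rangle=\|\nabla f\|_2^2+\frac14\|f\|_2^2 .
\]
A standard commutator computation for $-\Delta+\frac12y\cdot\nabla$ then gives the graph-norm equivalence
\[
 \|f\|_2+\|\mathbf L_0f\|_2\simeq\|f\|_{H^2}+\|y\cdot\nabla f\|_2 .
\]
Hence $D(\mathbf L_0)\subset\{f\in H^1:\ |y|\,|\nabla f|\in L^2\}$, with the weighted gradient bound obtained from the identity above tested against $|y|^2$-cutoffs. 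The closure of $\mathbf L_0$ on $C_c^\infty$ is maximal accretive, since its adjoint $-\Delta-\frac12y\cdot\nabla-\frac12$ is also dissipative on the same core.

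\emph{Compactness of $\mathbf K_n$.} We treat the three terms of \eqref{eq:compactperturbation} separately, using the bounds supplied by Lemma~\ref{lem:adjointprofiletail}: $|U_n|\lesssim\langle y\rangle^{-2}$, $|b_n|=|P_n|\lesssim\langle y\rangle^{-1}$, $|\nabla U_n|\lesssim\langle y\rangle^{-3}$, all smooth. For $f$ in the unit ball of $D(\mathbf L_0)$, each of $U_nf$, $b_n\cdot\nabla f$ and $\nabla U_n\cdot\nabla\Delta^{-1}f$ is bounded in $H^1_{\rm loc}$, so Rellich gives compactness on every ball $|y|\le R$. The tail must then be uniformly small, and this is where the weights enter:
\begin{itemize}
 \item $\|\mathbf 1_{|y|>R}U_nf\|_2\lesssim R^{-2}\|f\|_2$;
 \item $\|\mathbf 1_{|y|>R}b_n\cdot\nabla f\|_2\lesssim R^{-2}\||y|\nabla f\|_2$, using $|b_n|\lesssim R^{-1}\le R^{-2}|y|$ there;
 \item for the Newton term, Hardy--Littlewood--Sobolev gives $\nabla\Delta^{-1}f\in L^6$ with norm $\lesssim\|f\|_{6/5}$, which we control by $\|\langle y\rangle f\|_2$. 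Alternatively, the Hardy inequality $\|\nabla\Delta^{-1}f\|_{L^2(|y|^{-2})}\lesssim\|f\|_2$ together with $|\nabla U_n|\lesssim\langle y\rangle^{-3}$ gives the tail factor $R^{-2}$.
\end{itemize}
We also need $\langle y\rangle f\in L^2$ on $D(\mathbf L_0)$. We get it from the $|y|^2$-weighted energy identity: the drift produces $\frac14\|\,|y|f\|^2$-type control after conjugation by $e^{|y|^2/8}$. This tail control for the Newton term is the main obstacle. If $\langle y\rangle f$ is not controlled, we fall back on splitting $\Delta^{-1}f$ into near-field and far-field pieces with the dipole estimate.

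\emph{Closedness and generation.} Once $\mathbf K_n$ is $\mathbf L_0$-compact, it has $\mathbf L_0$-bound zero. The Kato perturbation theorem then gives closedness on $D(\mathbf L_0)$, and the Miyadera/Desch--Schappacher perturbation of m-accretive operators (bound $0$ relative perturbation of a generator of a contraction semigroup, up to a shift) gives generation of a $C_0$ semigroup by $-\mathbf L_n$.

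\emph{The decomposition \eqref{eq:maxdissdecomp}.} We pick a cutoff $\chi_R=\chi(y/R)$ and set $K_{0,n}=-\mathbf K_n\chi_R$-type pieces, precisely $K_{0,n}:=-\chi_R\mathbf K_n(\mathbf L_0+1)^{-1}(\mathbf L_0+1)$, realized as the compact part. It is cleaner to write
\[
 A_{0,n}:=-\mathbf L_0-(1-\chi_R)\mathbf K_n+\frac1{16} .
\]
Its numerical range satisfies
\[
 \operatorname{Re}\langle A_{0,n}f,f\rangle\le-\|\nabla f\|^2-\frac3{16}\|f\|^2+CR^{-1}\bigl(\|f\|^2+\|\nabla f\|^2\bigr),
\]
using $\operatorname{Re}\langle b_n\cdot\nabla f,f\rangle=-\frac12\int U_n|f|^2$ and the tail bounds above. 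This is dissipative for $R$ large. Maximality follows because $A_{0,n}$ is an $\mathbf L_0$-bounded, small-bound perturbation of the m-dissipative $-\mathbf L_0+\frac1{16}$, so the range condition persists. The remainder $K_{0,n}=-\chi_R\mathbf K_n$ is bounded but not compact on $L^2$, because of the gradient term. To repair this, we absorb $\chi_R b_n\cdot\nabla$ into $A_{0,n}$ too, since its real part $-\frac12\int\chi_RU_n|f|^2+\frac12\int b_n\cdot\nabla\chi_R|f|^2$ is a bounded multiplication. The leftover $-2U_n\chi_R$ and the Newton term, plus that bounded multiplication, are then compact on $L^2$: the compactly supported multiplier is compact when composed with the resolvent gain, and the Newton term is compact by Rellich on $\nabla\Delta^{-1}$ combined with the $\langle y\rangle^{-3}$ decay. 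Any nonnegative bounded multiplication that would spoil dissipativity is moved into $K_{0,n}$ after multiplying by a compactly supported cutoff, which keeps it compact as a map $L^2\to L^2$ only if smoothing is present. Therefore we keep the multiplication terms in $A_{0,n}$, shifted by their $L^\infty$ bound, which is $\le CR^{-2}$ outside the ball and is absorbed inside via $-\|\nabla f\|^2$ plus local Hardy. The margin between the drift value $\frac14$ and $\frac1{16}$ is what allows this absorption.
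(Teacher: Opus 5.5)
Item 3 has a genuine gap. In \eqref{eq:maxdissdecomp}, $K_{0,n}$ must be a bounded compact operator on $L^2$; it is used as a bounded perturbation in the proof of Proposition~\ref{prop:stablelinearenergy}. So $\mathbf L_0$-compactness of $\chi_R\mathbf K_n$ does not help. The piece $\chi_Rb_n\cdot\nabla$ is unbounded on $L^2$, and multiplication by $\chi_RU_n\not\equiv0$ is never compact. You notice this and end up keeping the multiplication and transport terms in $A_{0,n}$, with only the Newton term available for $K_{0,n}$. But then, since $\nabla\cdot b_n=U_n$, every $f$ supported where $\chi_R=1$ satisfies
\[
 \operatorname{Re}\langle A_{0,n}f,f\rangle
 =-\|\nabla f\|_2^2-\tfrac3{16}\|f\|_2^2+\tfrac32\int_{\R^3}U_n|f|^2\dd y ,
\]
and this is not $\le0$.

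By \eqref{eq:matchedouterconv-main}, $|y|^2U_n\approx2$ on all of $|y|\ge A\mu_n$. So $\frac32U_n\approx3|y|^{-2}$ there, twelve times the sharp Hardy weight $\frac14|y|^{-2}$. Take $f=|y|^{-1/2}$ smoothly cut off to $A\mu_n\le|y|\le1$. Then $\|\nabla f\|_2^2\approx\pi\log\frac1{A\mu_n}$ and $\|f\|_2^2=O(1)$, but $\frac32\int U_n|f|^2\approx12\pi\log\frac1{A\mu_n}$. Hence neither a local Hardy inequality nor the margin $\frac14-\frac1{16}$ (which absorbs a potential of size $\frac3{16}$ only) gives dissipativity. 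Shifting by a bound for $U_n$ (at least $U_n(0)=\mu_n^{-2}$) is not an option either. It replaces $-\frac1{16}$ by a large positive constant and destroys the decay $\|e^{s(A_{0,n}-1/16)}\|\le e^{-s/16}$ that the decomposition exists to provide.

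Structurally, if $\mathbf L_ne=-\nu e$ with $\nu>0$, dissipativity of $A_{0,n}$ forces $\operatorname{Re}\langle K_{0,n}e,e\rangle\ge(\nu+\frac1{16})\|e\|_2^2$. So $K_{0,n}$ must be built to carry the negative directions. The missing idea is this finite-rank correction.

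The form $-\frac32\int U_n|f|^2-\operatorname{Re}\langle\nabla U_n\cdot\nabla\Delta^{-1}f,f\rangle$ is compact relative to $\|\nabla f\|_2^2+\frac14\|f\|_2^2$. This follows from Rellich on balls, the decay of $U_n$, and $\|(1-\chi_R)\nabla U_n\|_3\to0$. Hence it is at most $\varepsilon\|f\|_{H^1}^2$ off finitely many directions, which yields orthonormal $q_1,\dots,q_N\in C_c^\infty$ with $\operatorname{Re}\langle\mathbf L_nf,f\rangle\ge\frac18\|f\|_2^2-C\sum_j|\langle f,q_j\rangle|^2$. Set $K_{0,n}=C_0\sum_j\langle\cdot,q_j\rangle q_j$ with $C_0\ge C$; it is finite rank. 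Then $A_{0,n}:=-\mathbf L_n+\frac1{16}-K_{0,n}$ is dissipative. It is maximal because $\|(\mathbf K_n+K_{0,n})(\mathbf L_0-\frac1{16}-\lambda)^{-1}\|<1$ for $\lambda\ll0$, by the relative bound zero from your item 1. This is the paper's argument.

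Items 1 and 2 are repairable, but several supporting claims are false.
\begin{itemize}
\item $D(\mathbf L_0)$ does not control $|y||\nabla f|$ or $\langle y\rangle f$ in $L^2$. On $C_c^\infty$ one has $\|\mathbf L_0f\|_2^2+\frac12\|f\|_2^2=\|\Delta f\|_2^2+\frac14\|y\cdot\nabla f\|_2^2+\frac32\|\nabla f\|_2^2$, which controls only the radial weighted derivative. High angular modes at large radius violate the first bound, and wide slowly varying bumps violate the second. Conjugation by $e^{|y|^2/8}$ is not unitary on $L^2$, so it does not rescue the second.
\item Hardy--Littlewood--Sobolev gives $\|\nabla\Delta^{-1}f\|_6\lesssim\|f\|_2$, not a bound by $\|f\|_{6/5}$. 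In any case $\langle y\rangle^{-1}\notin L^3(\R^3)$.
\item None of these weights is needed. The tails are bounded by $\sup_{|y|>R}(|U_n|+|b_n|)\,\|f\|_{H^1}$ and by $\|(1-\chi_R)\nabla U_n\|_3\|\nabla\Delta^{-1}f\|_6$. Your Hardy-inequality variant for the Newton tail is also correct.
\item For generation, relative bound zero is not the hypothesis of the Miyadera--Voigt or Desch--Schappacher theorems, and $e^{-s\mathbf L_0}$ is not analytic. What closes item 2 is the quasi-accretivity $\operatorname{Re}\langle\mathbf L_nf,f\rangle\ge\|\nabla f\|_2^2-C\|f\|_2^2$, which comes from $\nabla\cdot b_n=U_n$ and the $L^6$ bound. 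Combine it with the same Neumann-series range argument, i.e.\ Lumer--Phillips.
\end{itemize}
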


\begin{proof}
The complete proof is given in Appendix~\ref{app:proof-compactgenerator}.
\end{proof}

\begin{lemma}[Exact essential spectrum and the free-semigroup threshold]
\label{lem:exactessentialthreshold}
With the Fredholm definition of essential spectrum,
\begin{equation}
 \sigma_{\rm ess}(\mathbf L_0)=\sigma_{\rm ess}(\mathbf L_n)
 =\{z\in\mathbb C:\operatorname{Re}z\ge1/4\}.               \label{eq:exactessentialspectrum}
\end{equation}
If $S_0(s)=e^{-s\mathbf L_0}$ and $S_n(s)=e^{-s\mathbf L_n}$, then
\begin{equation}
 \|S_0(s)\|_{2\to2}=\|S_0(s)\|_{\rm ess}=e^{-s/4},
 \qquad \|S_n(s)\|_{\rm ess}=e^{-s/4}.                     \label{eq:exactessentialnorm}
\end{equation}
Thus the essential growth bound of $-\mathbf L_n$ is exactly $-1/4$; the
$-1/16$ in \eqref{eq:maxdissdecomp} is only a convenient nonsharp number.
The corresponding half-plane theorem for more general Ornstein--Uhlenbeck
operators on $L^p(\R^d)$ is proved in \cite{Metafune2001}; the calculation
below verifies the precise coefficients and sign convention used here.
\end{lemma}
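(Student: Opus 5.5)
The plan is to work first with the free operator $\mathbf L_0=-\Delta+1+\frac12 y\cdot\nabla$, which can be solved explicitly, and then pass to $\mathbf L_n$ by compact perturbation. The free semigroup has a closed form. Put $v(s,y)=e^{-s}w\bigl(\tau(s),e^{s/2}y\bigr)$ with $\tau(s)=e^{s}-1$. Then $\partial_s v=\Delta v-v-\frac12 y\cdot\nabla v$ reduces to the heat equation for $w$, which gives
\[
 S_0(s)f(y)=e^{-s}\bigl(e^{\tau(s)\Delta}f\bigr)(e^{s/2}y).
\]
This rescaling is my own choice of normalization and must be checked against the sign conventions of \eqref{eq:freeOU}. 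The dilation $f\mapsto f(e^{s/2}\cdot)$ multiplies $L^2(\R^3)$ norms by $e^{-3s/4}$. With the prefactor $e^{-s}$, the rescaling part therefore has norm exactly $e^{-s/4}$. The heat multiplier $e^{-\tau|\xi|^2}$ has supremum $1$, approached only as $\xi\to0$. Hence $\|S_0(s)\|_{2\to2}\le e^{-s/4}$. For the reverse inequality I use the test functions $f_R=R^{-3/2}\phi(y/R)$ with $R\to\infty$. Their Fourier transforms concentrate at $\xi=0$, so $\|S_0(s)f_R\|_2\to e^{-s/4}$. Since $f_R\rightharpoonup0$ weakly, every compact $K$ satisfies $\|Kf_R\|_2\to0$, and therefore $\|S_0(s)-K\|\ge e^{-s/4}$. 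This gives $\|S_0(s)\|_{\rm ess}=e^{-s/4}$.

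Next I identify the essential spectrum of $\mathbf L_0$. \emph{The line $\operatorname{Re}z=1/4$.} Take $z=\frac14+i\sigma$, so $z=1+\alpha/2$ with $\alpha=-\frac32+2i\sigma$. Let $f_k=\chi_k(|y|)\,|y|^{\alpha}$, where the cutoffs $\chi_k$ are logarithmic and supported in annuli $R_k\le|y|\le R_k^{M_k}$ with disjoint ranges. The transport part acts exactly as $(1+\alpha/2)$ on $|y|^\alpha$. The error comes from $\Delta|y|^\alpha=O(|y|^{\alpha-2})$ and from the logarithmic cutoff derivatives, and both are $o(\|f_k\|_2)$. This yields a singular Weyl sequence, so the line lies in $\sigma_{\rm ess}$. \emph{The open half-plane $\operatorname{Re}z>1/4$.} Here I show that $\mathbf L_0-z$ is not Fredholm, by proving that the adjoint kernel is infinite-dimensional. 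The adjoint is $\mathbf L_0^*=-\Delta-\frac12 y\cdot\nabla-\frac12$. Fix a spherical harmonic degree $l$ and look for $\ker(\mathbf L_0^*-\bar z)$ on that mode. After $x=r^2/4$ the radial equation becomes a Kummer equation. Take the solution regular at the origin. At infinity it is a combination of a Gaussian-decaying branch and the power branch $r^{\beta}$ with $\beta=-2\bar z-1$. When $\operatorname{Re}z>1/4$ we have $\operatorname{Re}\beta<-3/2$, so both branches are in $L^2(r^2\dd r)$ at infinity. Each $l$ and each $m$ therefore contributes an eigenfunction, and the adjoint kernel is infinite-dimensional. \emph{The region $\operatorname{Re}z<1/4$.} Here $\|S_0(s)\|\le e^{-s/4}$ and Hille--Yosida make $\mathbf L_0-z$ invertible. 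Putting the three regions together, and using that the essential spectrum is closed, gives \eqref{eq:exactessentialspectrum} for $\mathbf L_0$.

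For $\mathbf L_n$, Lemma~\ref{lem:compactgenerator} gives $D(\mathbf L_n)=D(\mathbf L_0)$ with $\mathbf K_n$ relatively compact. Fredholmness and the index of $\mathbf L_n-z$ therefore coincide with those of $\mathbf L_0-z$, and the Fredholm essential spectra agree. For the semigroup I use the Duhamel formula
\[
 S_n(s)-S_0(s)=-\int_0^s S_0(s-\sigma)\mathbf K_nS_n(\sigma)\dd\sigma .
\]
I will show that this difference is compact. The key input is that $S_0(\varepsilon)$ is smoothing and gains spatial decay, so $\mathbf K_nS_0(\varepsilon)$ is compact for each $\varepsilon>0$. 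This uses the decay $U_n=O(r^{-2})$, $b_n=O(r^{-1})$, $\nabla U_n=O(r^{-3})$ from Lemma~\ref{lem:adjointprofiletail}. The short-time part near $\sigma=s$ and $\sigma=0$ is then handled by norm-continuity of the integrand on $[\varepsilon,s-\varepsilon]$ together with the limit $\varepsilon\to0$. The difference is then a norm limit of compact operators, so $\|S_n(s)\|_{\rm ess}=\|S_0(s)\|_{\rm ess}=e^{-s/4}$.

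I expect the main obstacle to be this compactness step. The drift term $b_n\cdot\nabla$ decays only like $r^{-1}$, and $\nabla$ is not controlled by $S_0(\varepsilon)$ uniformly as $\varepsilon\to0$. The plan is to absorb the drift into a dissipative part, as in \eqref{eq:maxdissdecomp}, and to apply the Duhamel compactness argument only to the genuinely compact remainder $K_{0,n}$.
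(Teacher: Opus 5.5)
Your identification of $\sigma_{\rm ess}(\mathbf L_0)$ and $\sigma_{\rm ess}(\mathbf L_n)$ is sound. The last step, transferring the essential norm from $S_0$ to $S_n$, has a genuine gap.

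\textbf{The gap.} Your proposed remedy cannot give the claim. Suppose you absorb the drift into the dissipative part of \eqref{eq:maxdissdecomp} and run Duhamel only with the finite-rank $K_{0,n}$. Then you prove that $S_n(s)-e^{s(A_{0,n}-1/16)}$ is compact. That compares $S_n$ with the dissipative semigroup, whose essential norm is only known to be at most $e^{-s/16}$. The link with $S_0$ is lost, and you recover exactly the nonsharp exponent the lemma says must be improved.

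Separately, the Duhamel formula you wrote puts $\mathbf K_n$ to the left of $S_n(\sigma)$. So the input you identify, compactness of $\mathbf K_nS_0(\varepsilon)$, is not the one that formula uses.

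\textbf{The fix.} The obstacle you anticipated is only an integrable singularity. Use the other ordering,
\[
 S_n(s)-S_0(s)=-\int_0^sS_n(s-\tau)\,\mathbf K_nS_0(\tau)\,\dd\tau .
\]
The Fourier formula for $S_0$ gives $\|\nabla S_0(\tau)\|_{2\to2}\lesssim\tau^{-1/2}$ for small $\tau$. Combine this with $U_n,b_n\in L^\infty$ and $\|\nabla U_n\cdot\nabla\Delta^{-1}g\|_2\le\|\nabla U_n\|_3\|\nabla\Delta^{-1}g\|_6\lesssim\|g\|_2$. You get $\|\mathbf K_nS_0(\tau)\|_{2\to2}\le C_s(1+\tau^{-1/2})$ on $(0,s]$, which is integrable at $\tau=0$.

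For each $\tau>0$, $\mathbf K_nS_0(\tau)$ is compact, because $S_0(\tau)$ maps $L^2$ into every $H^k$. The coefficients only need to decay at infinity, so the slow $r^{-1}$ decay of $b_n$ is harmless. The integrand is norm continuous on $[\varepsilon,s]$, so the truncated integrals are compact. The integrable bound makes them converge in operator norm as $\varepsilon\downarrow0$. This is the paper's argument, and it gives $\|S_n(s)\|_{\rm ess}=\|S_0(s)\|_{\rm ess}=e^{-s/4}$ with no absorption.

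\textbf{The explicit formula for $S_0$.} Your formula reverses the dilation. The function $e^{-s}\bigl(e^{(e^s-1)\Delta}f\bigr)(e^{s/2}y)$ solves $\partial_sv=\Delta v-v+\frac12y\cdot\nabla v$, not $\partial_sv=-\mathbf L_0v$. Its norm is $e^{-s}e^{-3s/4}=e^{-7s/4}$, not $e^{-s/4}$. The correct formula is $S_0(s)f(y)=e^{-s}\bigl(e^{(1-e^{-s})\Delta}f\bigr)(e^{-s/2}y)$, whose dilation multiplies norms by $e^{3s/4}$. With it, your spreading test functions $f_R$ give both lower bounds as you describe.

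\textbf{Comparison for $\sigma_{\rm ess}(\mathbf L_0)$.} Your route differs from the paper's but is correct. The paper works in log-polar Fourier variables $\xi=e^\rho\theta$, where $\mathbf L_0$ becomes the first-order operator $B=-\frac12\partial_\rho+\frac14+e^{2\rho}$. For $\operatorname{Re}z>1/4$ the adjoint kernel consists of the explicit functions $\exp\bigl(2(\overline z-\frac14)\rho-e^{2\rho}\bigr)h(\theta)$ with $h\in L^2(\mathbb S^2)$. On the line, the Weyl sequence is $e^{-2i\tau\rho}$ cut off far out toward $\rho\to-\infty$.

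Your truncated $|y|^\alpha$ at large $|y|$, and your regular Kummer solutions of $(\mathbf L_0^*-\overline z)g=0$ in each spherical harmonic, are the physical-space versions of the same objects. Large $|y|$ corresponds to low frequency. Your version needs the Kummer asymptotics at infinity, which you handle correctly. The Fourier picture reduces everything to a first-order ODE with no special functions.
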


\begin{proof}
The complete proof is given in Appendix~\ref{app:proof-exactessentialthreshold}.
\end{proof}

\subsection{Fixed Fourier modes, radial coefficient operators, and Fredholm closure}
\label{subsec:fixedmodefredholm}

The space $\mathscr X_{l,m}$ is the one already defined in
\eqref{eq:fixedmodespace}.  Let $Y_{l,m}$ be normalized by
\[
 -\Delta_{\mathbb S^2}Y_{l,m}=J_lY_{l,m},\qquad
 \int_{\mathbb S^2}|Y_{l,m}(\omega)|^2\dd\omega=1,
\]
and define the map between the radial coefficient space and this closed
subspace by
\begin{equation}
 \mathcal U_{l,m}:L^2((0,\infty),r^2\dd r)\longrightarrow\mathscr X_{l,m},
 \qquad
 (\mathcal U_{l,m}f)(r,\omega)=f(r)Y_{l,m}(\omega).        \label{eq:fixedmodeunitarymap}
\end{equation}

\begin{lemma}[Unitary identification of the full-space and fixed-mode closed realizations]
\label{lem:fixedmodeunitary}
The map $\mathcal U_{l,m}$ is unitary, and
\begin{align}
 \mathcal U_{l,m}^{-1}
 \bigl(\mathbf L_n|_{\mathscr X_{l,m}}\bigr)\mathcal U_{l,m}
 &=L_{n,l},                                                \label{eq:fixedmodeidentification}\\
 \mathcal U_{l,m}^{-1}
 \bigl(\mathbf L_0|_{\mathscr X_{l,m}}\bigr)\mathcal U_{l,m}
 &=L_{0,l},                                                \label{eq:fixedmodefreeidentification}
\end{align}
where
\begin{equation}
 L_{0,l}f=-f''-\frac2r f'+\frac{J_l}{r^2}f+f+\frac r2f'.  \label{eq:fixedmodefreeoperator}
\end{equation}
The identities include the closed domains:
\begin{equation}
 \mathcal U_{l,m}^{-1}
 \bigl(D(\mathbf L_n)\cap\mathscr X_{l,m}\bigr)=D(L_{n,l}),
 \qquad
 \mathcal U_{l,m}^{-1}
 \bigl(D(\mathbf L_0)\cap\mathscr X_{l,m}\bigr)=D(L_{0,l}).
                                                               \label{eq:fixedmodedomains}
\end{equation}
Thus the two realizations have identical full, point, continuous, and residual
spectra, and identical Fredholm indices.
\end{lemma}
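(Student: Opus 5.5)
The plan is to split the statement into three parts: unitarity of $\mathcal U_{l,m}$, the operator identity on smooth functions, and the identification of closed domains. The spectral consequences then follow formally from unitary equivalence.

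\emph{Unitarity.} In polar coordinates $\dd y=r^2\dd r\,\dd\omega$. Normalization of $Y_{l,m}$ then gives $\|\mathcal U_{l,m}f\|_{L^2(\R^3)}=\|f\|_{L^2(r^2\dd r)}$. Surjectivity onto $\mathscr X_{l,m}$ is the definition \eqref{eq:fixedmodespace}.

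\emph{Action on smooth functions.} For $f\in C_c^\infty(0,\infty)$ the function $fY_{l,m}$ lies in $C_c^\infty(\R^3\setminus\{0\})$. The identities \eqref{eq:laplacian-mode-reduction}--\eqref{eq:newton-mode-reduction}, together with $y\cdot\nabla=r\partial_r$, $\nabla\Delta^{-1}U_n=P_n y/r$ and $\nabla U_n=U_n'y/r$, give \eqref{eq:full-to-radial-mode}. This yields \eqref{eq:fixedmodeidentification} and \eqref{eq:fixedmodefreeidentification} on this core, exactly as already computed in \eqref{eq:Ll} and \eqref{eq:fixedmodefreeoperator}.

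\emph{Closed domains.} By Lemma~\ref{lem:compactgenerator}, $D(\mathbf L_n)=D(\mathbf L_0)$, and $\mathbf L_0$ is the closure from $C_c^\infty(\R^3)$. I would first show that the orthogonal projection $\Pi_{l,m}f(r)=\int_{\mathbb S^2}f(r\omega)\overline{Y_{l,m}(\omega)}\dd\omega$ commutes with $\mathbf L_0$ and with $\mathbf K_n$. For $\mathbf L_0$ this holds because the operator is rotation invariant. For $\mathbf K_n$ it holds because $U_n$ is radial and $\Delta^{-1}$ commutes with rotations. Hence $\Pi_{l,m}$ maps $D(\mathbf L_0)$ into $D(\mathbf L_0)\cap\mathscr X_{l,m}$, and $\mathbf L_n|_{\mathscr X_{l,m}}$ is a closed operator on the invariant subspace. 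Next, $\Pi_{l,m}C_c^\infty(\R^3)$ is a core for the restriction, by applying $\Pi_{l,m}$ to a graph-norm approximating sequence. Its elements are $g(r)Y_{l,m}$ with $g$ smooth, $g=O(r^l)$ at $0$, and $g$ compactly supported.

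I then define $L_{n,l}$ (and $L_{0,l}$) as the closure of the radial expression on this regular core, consistent with "regular at the origin, square integrable at infinity" in \eqref{eq:matched-mode-operator}. Unitarity then transports graph closures to graph closures, which gives \eqref{eq:fixedmodedomains}. The remaining task is to check that this closure coincides with the natural maximal realization used elsewhere (regular branch $r^l$, not $r^{-l-1}$). That is an endpoint argument: at the origin the indicial roots are $l$ and $-l-1$, and for $l\ge1$ the singular branch is not in the local domain of the three-dimensional Laplacian. For $l=0$ the singular branch $r^{-1}$ is in $L^2$, but $\Delta(r^{-1})$ produces a delta, so it is excluded from $H^2_{\rm loc}$. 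At infinity both realizations are defined as graph closures with the same Ornstein--Uhlenbeck form domain.

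\emph{Spectral conclusions.} Unitary equivalence of closed operators preserves the resolvent set, kernels and ranges (hence point, continuous and residual spectra), as well as Fredholm indices.

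I expect the main obstacle to be the core and endpoint matching at the origin for $l=0$ and $l=1$. There the singular branch lies near $L^2$, so one must use the $H^2_{\rm loc}$ regularity inherited from $D(\mathbf L_0)\subset H^2_{\rm loc}$ rather than $L^2$ alone. My first attempt would be to express the domain via $\|\Delta f\|_2+\|y\cdot\nabla f\|_2+\|f\|_2$ and use the mode decomposition of $\Delta$.
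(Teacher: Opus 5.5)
Your proposal is correct and follows the paper's argument step for step: unitarity from the normalization of $Y_{l,m}$, the modewise formula on the regular core, commutation of $\Pi_{l,m}$ with $\mathbf L_0$ and $\mathbf K_n$, projection of graph-norm approximants onto a fixed-mode core, and transport of graph closures using $D(\mathbf L_n)=D(\mathbf L_0)$. The paper takes this graph closure itself as the realization $L_{n,l}$, so your extra endpoint comparison with a regular-branch maximal realization is not needed there; however, state the action identity on $\Pi_{l,m}C_c^\infty(\mathbb R^3)$ and not only on $C_c^\infty(0,\infty)$, because for $l=0$ the latter is not a graph core (graph-norm limits of such functions vanish at the origin).
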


\begin{proof}
The complete proof is given in Appendix~\ref{app:proof-fixedmodeunitary}.
\end{proof}

Define $K_{n,l}:=L_{n,l}-L_{0,l}$.  Equations
\eqref{eq:fixedmodeidentification}--\eqref{eq:fixedmodefreeidentification} show
that $K_{n,l}$ is the unitary restriction of $\mathbf K_n$ to
$\mathscr X_{l,m}$ and is independent of $m$.

\begin{lemma}[Fredholm index in a fixed mode below the free threshold]
\label{lem:fixedmodefredholm}
For every fixed $l,m$ and every $z$ with $\operatorname{Re}z<1/4$,
\begin{equation}
 L_{n,l}-z:D(L_{n,l})\longrightarrow L^2((0,\infty),r^2\dd r)
 \quad\hbox{is Fredholm},\qquad
 \operatorname{ind}(L_{n,l}-z)=0.                         \label{eq:fixedmodeindexzero}
\end{equation}
Moreover,
\begin{equation}
 \sigma(L_{n,l})\cap\{\operatorname{Re}z<1/4\}
 =\sigma_p(L_{n,l})\cap\{\operatorname{Re}z<1/4\}.       \label{eq:fixedmodespectrumpoint}
\end{equation}
\end{lemma}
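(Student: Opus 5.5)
The plan is to transfer everything to the full-space operators, where the free Ornstein--Uhlenbeck threshold and the compactness of the perturbation are already known, and then restrict back to the fixed mode by Lemma~\ref{lem:fixedmodeunitary}. First I will establish the statement for the free fixed-mode operator $L_{0,l}$. By Lemma~\ref{lem:exactessentialthreshold}, together with the numerical-range bound $\|S_0(s)\|_{2\to2}=e^{-s/4}$, the free semigroup satisfies $\operatorname{Re}\langle\mathbf L_0f,f\rangle\ge\frac14\|f\|_2^2$. This is the drift computation of Lemma~\ref{lem:qexact} with $U=0$. Hence for $\operatorname{Re}z<1/4$ the operator $\mathbf L_0-z$ is injective with closed range, satisfying $\|(\mathbf L_0-z)f\|\ge(\frac14-\operatorname{Re}z)\|f\|$. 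The same bound holds for the adjoint, so $\mathbf L_0-z$ is bijective. Since $\mathbf L_0$ commutes with rotations, $\mathscr X_{l,m}$ reduces it, and the resolvent maps $\mathscr X_{l,m}$ into $D(\mathbf L_0)\cap\mathscr X_{l,m}$. By \eqref{eq:fixedmodefreeidentification} and \eqref{eq:fixedmodedomains}, $L_{0,l}-z$ is then invertible, in particular Fredholm of index zero.

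Next I will treat the perturbation. By Lemma~\ref{lem:compactgenerator}, $\mathbf K_n:D(\mathbf L_0)\to L^2$ is compact and $D(\mathbf L_n)=D(\mathbf L_0)$. Because $\mathbf K_n$ commutes with rotations, it maps $D(\mathbf L_0)\cap\mathscr X_{l,m}$ into $\mathscr X_{l,m}$. The restriction of a compact operator to a closed subspace of $D(\mathbf L_0)$, where the graph norm is closed because $\mathscr X_{l,m}$ is closed and the domain is closed, stays compact. Conjugating by $\mathcal U_{l,m}$ then shows that $K_{n,l}:D(L_{0,l})\to L^2(r^2\dd r)$ is compact, with $D(L_{n,l})=D(L_{0,l})$ as graph-norm spaces; this uses the closedness of both realizations. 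Since Fredholmness and the index are stable under relatively compact perturbations, $L_{n,l}-z=(L_{0,l}-z)+K_{n,l}$ is Fredholm of index zero. This proves \eqref{eq:fixedmodeindexzero}.

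Finally, for \eqref{eq:fixedmodespectrumpoint}, take $z$ with $\operatorname{Re}z<1/4$ and $z\notin\sigma_p(L_{n,l})$. Then $L_{n,l}-z$ has trivial kernel, and index zero forces its cokernel to be trivial as well. Since the range is closed, the operator is a bijection from $D(L_{n,l})$ onto $L^2$, and the closed graph theorem makes the inverse bounded. Thus $z\in\rho(L_{n,l})$. The reverse inclusion $\sigma_p\subset\sigma$ is trivial.

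The main obstacle I expect is the domain bookkeeping in the restriction step. Specifically, I must check that $D(\mathbf L_0)\cap\mathscr X_{l,m}$ is exactly the image of $D(L_{0,l})$ and that the graph norms correspond, so that compactness of $\mathbf K_n$ descends and the two Fredholm properties are really equivalent. Lemma~\ref{lem:fixedmodeunitary} supplies precisely this, including the closed domains, so I would lean on it directly. I would also verify that orthogonal projection onto $\mathscr X_{l,m}$ commutes with $\mathbf L_0$ on its domain, since rotation averaging against $\overline{Y_{l,m}}$ preserves $D(\mathbf L_0)$.
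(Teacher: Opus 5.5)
Your proposal is correct and follows essentially the paper's route: invert the free fixed-mode operator for $\operatorname{Re}z<1/4$, add the graph-norm compact $K_{n,l}$ to get index zero, and conclude that a non-eigenvalue with $\operatorname{Re}z<1/4$ lies in the resolvent set. The differences are cosmetic. The paper obtains $(L_{0,l}-z)^{-1}$ directly from the Laplace transform of the semigroup restricted to $\mathscr X_{l,m}$, which also spares you the adjoint-domain check hidden in ``the same bound holds for the adjoint''. It then writes $L_{n,l}-z=[I+K_{n,l}(L_{0,l}-z)^{-1}](L_{0,l}-z)$ and applies Atkinson's theorem, instead of citing stability of the index under relatively compact perturbations.
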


\begin{proof}
The complete proof is given in Appendix~\ref{app:proof-fixedmodefredholm}.
\end{proof}

\begin{proposition}[The full spectrum and Riesz decomposition in the $l=1$ mode]
\label{prop:l1fullclosure}
Let $\delta_1$ be the constant in Proposition~\ref{prop:l1gap}.  For every
sufficiently large $n$,
\begin{equation}
 \sigma(L_{n,1})\cap\{z\in\mathbb C:\operatorname{Re}z<\delta_1\}
 =\{-1/2\}.                                                \label{eq:l1fullspectrum}
\end{equation}
Moreover,
\[
 \ker(L_{n,1}+1/2)=\operatorname{span}\{U_n'\},
\]
and $-1/2$ is an algebraically simple eigenvalue.  If $P_{n,1}$ is the Riesz
projection of $L_{n,1}$ at $-1/2$, then
\begin{equation}
 \sigma\!\left(L_{n,1}|_{\operatorname{Ran}(I-P_{n,1})}\right)
 \subset\{z\in\mathbb C:\operatorname{Re}z\ge\delta_1\}.  \label{eq:l1rieszgap}
\end{equation}
On the complete $l=1$ mode space $\mathcal H_1$,
\begin{equation}
 \ker(\mathbf L_n+1/2)\cap\mathcal H_1
 =\operatorname{span}\{\partial_{y_1}U_n,\partial_{y_2}U_n,
                         \partial_{y_3}U_n\}.               \label{eq:fulltranslationkernel}
\end{equation}
Consequently, the geometric and algebraic multiplicities of $-1/2$ on
$\mathcal H_1$ both equal $3$; equivalently, this eigenvalue is semisimple on
the complete $l=1$ mode space.
\end{proposition}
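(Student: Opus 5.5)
The plan is to combine the point-spectrum result of Proposition~\ref{prop:l1gap} with the Fredholm statement of Lemma~\ref{lem:fixedmodefredholm}, and then to assemble the three $m$-components by means of Lemma~\ref{lem:fixedmodeunitary}.

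\emph{Step 1: the spectrum in the strip.} Fix $l=1$ and any $z$ with $\operatorname{Re}z<\delta_1<1/4$. Lemma~\ref{lem:fixedmodefredholm} says that $L_{n,1}-z$ is Fredholm of index zero. It also says that every spectral point in this half-plane is an eigenvalue. Proposition~\ref{prop:l1gap} shows that the only eigenvalue with $\operatorname{Re}z<\delta_1$ is $-1/2$. Together these give \eqref{eq:l1fullspectrum}.

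\emph{Step 2: isolation and the Riesz projection.} Since $\{-1/2\}$ is the only spectral point in an open half-plane, it is isolated, and the Riesz projection $P_{n,1}$ is well defined. Index zero at $z=-1/2$ comes from an analytic Fredholm family, so $-1/2$ is not in the essential spectrum. Hence $\operatorname{Ran}P_{n,1}$ is finite-dimensional, and it equals the generalized eigenspace $\bigcup_q\ker(L_{n,1}+1/2)^q$. Proposition~\ref{prop:l1gap} gives $\ker(L_{n,1}+1/2)=\operatorname{span}\{U_n'\}$ and rules out Jordan chains. Therefore the generalized eigenspace is one-dimensional, and $-1/2$ is algebraically simple.

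For \eqref{eq:l1rieszgap} I will use the standard Riesz decomposition: $\sigma(L_{n,1}|_{\operatorname{Ran}(I-P_{n,1})})=\sigma(L_{n,1})\setminus\{-1/2\}$. Step 1 shows this set lies in $\{\operatorname{Re}z\ge\delta_1\}$.

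The one point I expect to need care here is that the Riesz range really coincides with the algebraic eigenspace. This holds because an isolated spectral point at which the operator is Fredholm is a pole of the resolvent of finite rank. I will cite this from Appendix~\ref{sec:toolkit}.

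\emph{Step 3: the complete mode space $\mathcal H_1$.} Write $\mathcal H_1=\bigoplus_{m=-1}^{1}\mathscr X_{1,m}$. Because $U_n$ is radial, $\mathbf L_n$ commutes with rotations, so each $\mathscr X_{1,m}$ is invariant under $\mathbf L_n$. By Lemma~\ref{lem:fixedmodeunitary}, each restriction is unitarily equivalent to $L_{n,1}$, with matching domains. The kernel of $\mathbf L_n+1/2$ on $\mathcal H_1$ is therefore the direct sum over $m$ of $\mathcal U_{1,m}(\operatorname{span}\{U_n'\})$. The same direct-sum decomposition applies to the generalized kernels, so they coincide with the kernels.

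To identify this kernel, note that $\partial_{y_j}U_n=U_n'(r)\,y_j/r$. The functions $y_j/r$, $j=1,2,3$, span $\mathcal Y_1$. Hence $\operatorname{span}\{U_n'Y_{1,m}\}=\operatorname{span}\{\partial_{y_j}U_n\}$, which is \eqref{eq:fulltranslationkernel}.

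Finally, the geometric multiplicity is $3$. The algebraic multiplicity, being the sum of three copies of $1$, is also $3$, so $-1/2$ is semisimple on $\mathcal H_1$.
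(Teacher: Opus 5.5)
Your proposal is correct and follows essentially the same route as the paper: Fredholm closure combined with the point-spectrum exclusion of Proposition~\ref{prop:l1gap}, algebraic simplicity from the one-dimensional kernel and the absence of Jordan chains, the Riesz splitting for \eqref{eq:l1rieszgap}, and assembly of the three unitarily equivalent $m$-blocks via $\partial_{y_j}U_n=U_n'(r)y_j/r$. One small correction: the fact that an isolated spectral point at which $L_{n,1}+1/2$ is Fredholm has a finite-rank Riesz projection is not stated in Appendix~\ref{sec:toolkit}. It is the standard result for closed operators on isolated semi-Fredholm spectral points, which the paper also uses implicitly when it says that Lemma~\ref{lem:fixedmodefredholm} gives finite algebraic multiplicity.
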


\begin{proof}
Since $\delta_1<1/4$, combining the point-spectrum identity
\eqref{eq:l1theorem} of Proposition~\ref{prop:l1gap} with the Fredholm closure
\eqref{eq:fixedmodespectrumpoint} gives \eqref{eq:l1fullspectrum}.  In
particular, $-1/2$ is isolated; Lemma~\ref{lem:fixedmodefredholm} also shows
that it has finite algebraic multiplicity.  Proposition~\ref{prop:l1gap}
proves that its eigenspace is one-dimensional and that there is no nontrivial
Jordan chain.  Hence it is algebraically simple.

The Riesz projection gives the invariant direct-sum decomposition
\[
 L^2((0,\infty),r^2\dd r)
 =\operatorname{Ran}P_{n,1}\oplus\operatorname{Ran}(I-P_{n,1}),
 \qquad \operatorname{Ran}P_{n,1}=\operatorname{span}\{U_n'\}.
\]
The spectrum of a closed operator under a Riesz decomposition is the union of
the spectra of the two restrictions, and
$L_{n,1}|_{\operatorname{Ran}P_{n,1}}=-1/2$.  Removing $-1/2$ from
\eqref{eq:l1fullspectrum} therefore leaves no spectral point with
$\operatorname{Re}z<\delta_1$ on the complementary subspace.  This is
\eqref{eq:l1rieszgap}.

Finally, apply Lemma~\ref{lem:fixedmodeunitary}.  The three fixed components
$m=-1,0,1$ are all unitarily equivalent to the same $L_{n,1}$.  In the
angular space $\mathcal Y_1$, the functions $y_1/r,y_2/r,y_3/r$ form a basis,
and
\[
 \partial_{y_j}U_n(r)=U_n'(r)\frac{y_j}{r},\qquad1\le j\le3.
\]
Thus the one-dimensional kernels in the three fixed modes combine to give
\eqref{eq:fulltranslationkernel}.  None of the three blocks has a
nontrivial Jordan chain, and neither does their direct sum; hence both
multiplicities equal $3$.
\end{proof}

\begin{proposition}[Full spectral closure in every \texorpdfstring{$l\ge2$}{l>=2} mode]
\label{prop:fixedmodefullclosure}
For all sufficiently large $n$ and every $l\ge2$,
\begin{equation}
 \sigma(L_{n,l})\cap\{z:\operatorname{Re}z<1/4\}=\varnothing,  \label{eq:fixedmodefullgap}
\end{equation}
and
\begin{equation}
 \frac14+i\mathbb R\subset\sigma(L_{n,l}).                \label{eq:fixedmodeboundaryline}
\end{equation}
Thus $1/4$ is the exact spectral left boundary in every fixed $l\ge2$ mode.
If
\begin{equation}
 \mathscr X_{\ge2}:=
 \overline{\bigoplus_{l\ge2}\ \bigoplus_{m=-l}^l\mathscr X_{l,m}},
\end{equation}
then
\begin{equation}
 \sigma(\mathbf L_n|_{\mathscr X_{\ge2}})
 \cap\{z:\operatorname{Re}z<1/4\}=\varnothing.           \label{eq:fullnonradialgap}
\end{equation}
\end{proposition}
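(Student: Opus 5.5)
The plan has three parts: the half-plane exclusion \eqref{eq:fixedmodefullgap}, the boundary line \eqref{eq:fixedmodeboundaryline}, and the direct-sum statement \eqref{eq:fullnonradialgap}.

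\textbf{Step 1: the half-plane exclusion.} Fix $l\ge2$, $m$, and $z$ with $\operatorname{Re}z<1/4$. By Lemma~\ref{lem:fixedmodefredholm}, $L_{n,l}-z$ is Fredholm of index zero, and by \eqref{eq:fixedmodespectrumpoint} the spectrum in this half-plane consists only of eigenvalues. Propositions~\ref{prop:fulll2gap} (for $l=2$) and \ref{prop:lge3} (for $l\ge3$) exclude eigenvalues there. One should check that the eigenfunction class in those propositions, the natural regular and decaying domain, contains $\ker(L_{n,l}-z)$ for the closed realization. For $l=2$ this is exactly the hypothesis of Lemma~\ref{lem:l2endpointdomain}. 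For $l\ge3$, the form bound \eqref{eq:lge3coercive} must extend from smooth rapidly decaying functions to $D(L_{n,l})$; I will use density of $C_c^\infty$ in the graph norm, which comes from $D(\mathbf L_n)=D(\mathbf L_0)$ (Lemma~\ref{lem:compactgenerator}) and the unitary identification of domains \eqref{eq:fixedmodedomains}. Hence the kernel is trivial, index zero gives surjectivity, and $z\in\rho(L_{n,l})$.

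\textbf{Step 2: the boundary line.} For $\tau\in\mathbb R$ and $z=1/4+i\tau$, I will build a singular Weyl sequence supported far out, where the potential terms $U_n,P_n,U_n'$ and the Newton term are small. The model is the free operator $L_{0,l}$ of \eqref{eq:fixedmodefreeoperator}. Substitute $f=r^{-3/2}g(\log r)$. Then $L_{0,l}f=zf$ becomes, to leading order at large $r$, a first-order transport equation: $\tfrac12 r\partial_r f+f\approx zf$. It has the exact solutions $r^{2(z-1)}=r^{-3/2+2i\tau}$, which lie just outside $L^2(r^2\dd r)$. Take $f_k=r^{-3/2+2i\tau}\psi_k(\log r)$ with $\psi_k$ a smooth bump of width $k$ centered at $t_k\to\infty$, and normalize. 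The transport part then leaves an error $O(\|\psi_k'\|/\|\psi_k\|)=O(k^{-1})$. The diffusion and angular terms contribute $O(e^{-2t_k})$ relative factors. The multiplicative perturbations $U_n,P_n/r$ are $O(r^{-2})$, hence small. The Newton term $U_n'\partial_r\Delta_l^{-1}f_k$ has $U_n'=O(r^{-3})$ and is estimated with \eqref{eq:Deltal-inverse}. Together these give $\|(L_{n,l}-z)f_k\|\to0$ with $\|f_k\|=1$, so $z\in\sigma(L_{n,l})$. Alternatively, Fredholm-index stability under the compact $K_{n,l}$ lets one transfer the free essential spectrum from Lemma~\ref{lem:exactessentialthreshold} restricted to a fixed mode. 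I expect the main care to go into the Newton tail estimate, which I would handle by the same Volterra-type split as in \eqref{eq:delta2newtonbound}.

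\textbf{Step 3: the direct sum, which I expect to be the main obstacle.} Knowing that each mode has no spectrum in the half-plane does not automatically give a uniform resolvent bound over infinitely many modes. I plan to use the uniform coercivity \eqref{eq:lge3coercive}, whose constant $c_3$ does not depend on $l$. It gives $\operatorname{Re}\langle (L_{n,l}-z)f,f\rangle\ge(1/4-\operatorname{Re}z)\|f\|^2$ for all $l\ge3$, and hence $\|(L_{n,l}-z)^{-1}\|\le(1/4-\operatorname{Re}z)^{-1}$ uniformly in $l\ge3$. The finitely many components with $l=2$ each have a bounded resolvent by Step 1. Since $\mathbf L_n$ commutes with the mode decomposition, I then assemble the resolvent on $\mathscr X_{\ge2}$ as a bounded block-diagonal operator. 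Finally I will check that it maps into $D(\mathbf L_n)$, by closedness of $\mathbf L_n$ and convergence of partial sums in the graph norm, which proves \eqref{eq:fullnonradialgap}.
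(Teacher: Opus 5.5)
Your proposal is correct and follows the paper's proof. The half-plane exclusion comes from Fredholm index zero (Lemma~\ref{lem:fixedmodefredholm}) together with the point-spectrum exclusions of Propositions~\ref{prop:fulll2gap} and~\ref{prop:lge3}. The direct-sum statement comes from the uniform bound $\|(L_{n,l}-z)^{-1}\|\le(1/4-\operatorname{Re}z)^{-1}$ for $l\ge3$, plus the finitely many $l=2$ blocks, assembled in the graph direct sum as you describe.

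The only variation is the boundary line $1/4+i\mathbb R$. You build the physical-space Weyl sequence $r^{-3/2+2i\tau}\psi_k(\log r)$ directly for $L_{n,l}$ and estimate the potential and Newton terms by hand; take $t_k-k\to\infty$ so the whole support escapes to infinity. The paper instead reuses the Fourier-side Weyl sequence of Lemma~\ref{lem:exactessentialthreshold} with angular factor $Y_{l,m}$ and lets relative compactness of $\mathbf K_n$ carry it over. Both routes are valid.
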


\begin{proof}
Proposition~\ref{prop:fulll2gap} excludes point spectrum for $l=2$, and
Proposition~\ref{prop:lge3} does so for every $l\ge3$.  Combining these facts
with \eqref{eq:fixedmodespectrumpoint} proves
\eqref{eq:fixedmodefullgap}.  In the Weyl sequence from Lemma
\ref{lem:exactessentialthreshold}, fix $h(\omega)=Y_{l,m}(\omega)$; this proves
\eqref{eq:fixedmodeboundaryline}.

It remains to justify the infinite direct sum.  Let
$\Pi_{[2,N]}=\sum_{l=2}^N\sum_{m=-l}^l\Pi_{l,m}$.  The commutation relations
proved above show that, for
$F\in D(\mathbf L_n)\cap\mathscr X_{\ge2}$,
\[
 \Pi_{[2,N]}F\longrightarrow F,
 \qquad
 \mathbf L_n\Pi_{[2,N]}F
 =\Pi_{[2,N]}\mathbf L_nF\longrightarrow\mathbf L_nF
\]
in $L^2$.  Consequently,
$\mathbf L_n|_{\mathscr X_{\ge2}}$ is the Hilbert direct sum of the closed
fixed-mode operators, including their graph domains; this is more than a
formal Fourier decomposition.

Fix $\operatorname{Re}z<1/4$.  For $l\ge3$,
coercivity \eqref{eq:lge3coercive}, Cauchy--Schwarz, and surjectivity give
\begin{equation}
 \|(L_{n,l}-z)^{-1}\|_{2\to2}
 \le\frac1{1/4-\operatorname{Re}z},\qquad l\ge3.           \label{eq:lge3uniformresolvent}
\end{equation}
For $l=2$ there are only finitely many values of $m$, all unitarily equivalent
to $L_{n,2}$, so those resolvent norms are finite.  Hence the block
resolvents are uniformly bounded for all $l\ge2$.  Given
$G=\sum_{l,m}G_{l,m}\in\mathscr X_{\ge2}$, set
\[
 F_{l,m}=(L_{n,l}-z)^{-1}G_{l,m},
 \qquad F=\sum_{l,m}F_{l,m}.
\]
The uniform resolvent bound first gives $F\in L^2$.  Since
$L_{n,l}F_{l,m}=zF_{l,m}+G_{l,m}$, it also gives
$\sum_{l,m}\|L_{n,l}F_{l,m}\|_2^2<\infty$.  Thus $F$ belongs to the graph
direct-sum domain and $(\mathbf L_n-z)F=G$.  This proves
\eqref{eq:fullnonradialgap}.
\end{proof}

To use the radial reduced-mass spectrum in the density space
$L^2(\R^3)$, one further transfer argument is needed.

\begin{lemma}[Two-way transfer between the radial mass spectrum and the ordinary \texorpdfstring{$L^2$}{L2} spectrum]
\label{lem:radialunweightedspectrum}
In $\{z:\operatorname{Re}z<1/4\}$,
\begin{equation}
 \sigma(L_{n,0})\cap
 \{z:\operatorname{Re}z<\tfrac14\}
 =\sigma(L_{\Phi_n})\cap(-\infty,\tfrac14).                 \label{eq:radialfullspectraltransfer}
\end{equation}
The eigenvalues on the left are real, have no nontrivial Jordan chains, and
have the same algebraic multiplicities as their counterparts on the right.
If $f\in L^2_{\rm rad}(\R^3)$ is a generalized eigenfunction on the left, then
\begin{equation}
 h=\mathfrak T^{-1}f=\frac1{2r^3}\int_0^rf(s)s^2\dd s       \label{eq:radialmassfromL2}
\end{equation}
belongs to the closed domain of $L_{\Phi_n}$, and the generalized
eigenvalue equation transfers level by level.  Conversely, every eigenfunction
of $L_{\Phi_n}$ with eigenvalue $\lambda<1/4$ is sent by
$\mathfrak T$ to a density eigenfunction in ordinary radial $L^2$.
In particular, the radial spectrum with nonpositive real part consists
exactly of the $N_n$ genuinely unstable eigenvalues and the simple scaling
eigenvalue $-1$.
\end{lemma}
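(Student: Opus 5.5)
The plan is to combine Lemma~\ref{lem:fixedmodefredholm} with the intertwining identity \eqref{eq:radialdensitymassidentity}, and to handle the two directions separately with endpoint asymptotics.

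\emph{Reduction to point spectrum.} By Lemma~\ref{lem:fixedmodefredholm} with $l=0$, every spectral point of $L_{n,0}$ in $\operatorname{Re}z<1/4$ is an eigenvalue of finite algebraic multiplicity. By Lemma~\ref{lem:radialrealization}, $L_{\Phi_n}$ is self-adjoint with compact resolvent in $L^2(m_{\Phi_n}\dd r)$. Its spectrum is therefore real, simple, and free of Jordan chains. It suffices to produce, for each $z$ with $\operatorname{Re}z<1/4$, an isomorphism between the generalized eigenspaces $\ker(L_{n,0}-z)^q$ and $\ker(L_{\Phi_n}-z)^q$ for every $q$.

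\emph{Direction $L_{n,0}\to L_{\Phi_n}$.} Let $(L_{n,0}-z)^qf=0$, and set $h=\mathfrak T^{-1}f$. At the origin, $f$ lies in the regular branch, $f=f(0)+O(r^2)$, so $h=f(0)/6+O(r^2)$ is in the regular (constant) branch of $L_{\Phi_n}$.

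At infinity I would rerun the Kummer--Volterra iteration of Lemma~\ref{lem:l2endpointdomain} with $l=0$. This gives $f=c\,r^{\lambda}(1+o(1))$ with $\lambda=2(z-1)$, up to a faster tail. The tail is generated by the inner mass moment through $U_n'\partial_r\Delta_0^{-1}f$; note that $\partial_r\Delta_0^{-1}f=r^{-2}\int_0^r fs^2$.

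Then $h=\frac1{2r^3}\int_0^r fs^2\dd s$ behaves like $r^{\lambda}$ plus $M r^{-3}$, where $M=\frac12\int_0^\infty fs^2$ is the total mass. The key point is that $h$ must lie in $L^2(m_{\Phi_n}\dd r)$. Since $m_{\Phi_n}$ carries $e^{-r^2/4}$ (Lemma~\ref{lem:adjointprofiletail}), any algebraic growth is harmless, so $h$ belongs to the form domain. Moreover $h$ excludes the $e^{r^2/4}$ branch, because $f$ excludes it.

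The intertwining \eqref{eq:radialdensitymassidentity} holds on smooth functions. I would extend it to $h$ by a cutoff argument, with boundary terms controlled by the endpoint expansions, as in Lemma~\ref{lem:l1domaintransfer}. This gives $\mathfrak T(L_{\Phi_n}-z)^qh=(L_{n,0}-z)^qf=0$.

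Since $\ker\mathfrak T$ on functions regular at the origin is trivial (its only kernel element is $r^{-3}$), we conclude $(L_{\Phi_n}-z)^qh=0$. Self-adjointness then forces $z$ to be real and $q=1$ to suffice. Hence there are no Jordan chains for $L_{n,0}$, and $f\mapsto h$ is injective level by level.

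\emph{Converse direction.} Let $L_{\Phi_n}h=\lambda h$ with $\lambda<1/4$, and set $f=\mathfrak Th=6h+2rh'$. At the origin $f$ is regular. At infinity the $L^2(m_{\Phi_n})$ eigenfunction behaves like $b\,r^{-2(1-\lambda)}$, by the same computation as \eqref{eq:massmodeinfinity} with $-\nu_j$ replaced by $\lambda$. Hence $f=O(r^{2\lambda-2})$ with one derivative, and $\lambda<1/4$ gives $f\in L^2(r^2\dd r)$ exactly because $4\lambda-4+2<-1$.

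Two cases then need care. One is the resonant case, where the leading coefficient $(6-4(1-\lambda))b$ may vanish; this only improves decay. The other is showing that $f$ lies in the closed domain $D(L_{n,0})$. For the latter I would use Lemma~\ref{lem:fixedmodeunitary} together with local elliptic regularity and the decay of $f'$ and $f''$.

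Combining the two directions with $\mathfrak T(\Lambda\Phi_n)=\Lambda U_n$, the Sturm count, and Proposition~\ref{prop:radialgap-main} yields the final assertion.

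\emph{Main obstacle.} The hardest step is the sharp far-field asymptotics of a density generalized eigenfunction for complex $z$ near the threshold. These are needed to guarantee that $h$ does not pick up the growing Kummer branch. They are also needed to justify the cutoff extension of the intertwining identity, so that no boundary term survives at infinity when $\operatorname{Re}z$ is close to $1/4$.
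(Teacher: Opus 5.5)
Your proposal is correct and uses the same skeleton as the paper:
\begin{itemize}
\item reduction to point spectrum via Lemma~\ref{lem:fixedmodefredholm};
\item transfer by $\mathfrak T^{-1}$ in one direction and $\mathfrak T$ in the other;
\item the kernel $\ker\mathfrak T=\operatorname{span}\{r^{-3}\}$ is excluded by regularity at the origin;
\item self-adjointness rules out Jordan chains;
\item the Weyl tail $h\sim c\,r^{2\lambda-2}$ gives $\mathfrak Th\in L^2(r^2\dd r)$ exactly when $\lambda<1/4$.
\end{itemize}
Your converse direction matches the paper's. Your explicit check there that $\mathfrak Th$ lies in the closed domain $D(L_{n,0})$, and not merely in radial $L^2$, is a point the paper passes over quickly and is worth keeping.

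The difference is in the forward direction, where your route is heavier than needed. The paper never computes far-field asymptotics of the density eigenfunction. Instead it argues as follows:
\begin{itemize}
\item Cauchy--Schwarz applied to $h=\frac1{2r^3}\int_0^r fs^2\dd s$ gives $|h(r)|\le Cr^{-3/2}\|f\|_{L^2(r^2\dd r)}$.
\item The identity $h'=f/(2r)-3h/r$ then controls $h'$.
\item The factor $e^{-r^2/4}$ in $m_{\Phi_n}$ makes $h$ and $h'$ square integrable, so $h$ is in the form domain and automatically free of the $e^{r^2/4}$ branch.
\end{itemize}

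No cutoff is needed for the intertwining identity either. Since $\partial_r\Delta_0^{-1}(\mathfrak Th)=2rh$ holds pointwise once $r^3h\to0$ at the origin, \eqref{eq:radialdensitymassidentity} is a pointwise (distributional) identity. Hence $g=(L_{\Phi_n}-z)h$ satisfies $\mathfrak Tg=0$, so $g=Cr^{-3}$, and boundedness of $g$ at zero forces $C=0$. Both endpoints are limit-point, so $h\in L^2(m_{\Phi_n}\dd r)$ with $L_{\Phi_n}h=zh$ already lies in the operator domain. The same argument runs level by level along a Jordan chain.

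So the step you call the main obstacle is not needed: sharp Kummer--Volterra asymptotics for complex $z$ near $1/4$. Rerunning Lemma~\ref{lem:l2endpointdomain} for generalized eigenfunctions would also force you to track logarithmic resonances. That analysis would only give finer tail information about $h$, which the lemma never uses.

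One small slip: your total mass $\frac12\int_0^\infty fs^2\dd s$ diverges when $-1/2\le\operatorname{Re}z<1/4$ and $c\ne0$. This is harmless, since only algebraic growth of $h$ matters against the Gaussian weight.
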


\begin{proof}
The complete proof is given in Appendix~\ref{app:proof-radialunweightedspectrum}.
\end{proof}

\begin{proposition}[Stable semigroup and an equivalent modified energy]
\label{prop:stablelinearenergy}
For every fixed sufficiently large $n$, there are $M_n\ge1$ and $\omega_n>0$
such that
\begin{equation}
 \|e^{-s\mathbf L_n}P_{{\rm s},n}f\|_2
 \le M_ne^{-\omega_ns}\|P_{{\rm s},n}f\|_2,\qquad s\ge0.  \label{eq:stablesemigroup}
\end{equation}
On $X_{{\rm s},n}$ define the bounded positive self-adjoint operator
\begin{equation}
 Q_nf=\int_0^\infty
 \bigl(e^{-s\mathbf L_n}|_{X_{{\rm s},n}}\bigr)^*
 e^{-s\mathbf L_n}f\dd s.                                  \label{eq:LyapunovQ}
\end{equation}
If $a_n\ge0$ satisfies
\begin{equation}
 \operatorname{Re}\langle\mathbf L_nf,f\rangle
 \ge-a_n\|f\|_2^2,\qquad f\in D(\mathbf L_n),              \label{eq:quasiaccretive}
\end{equation}
and
\begin{equation}
 \alpha_n=2a_n+1,\qquad B_n=I+\alpha_nQ_n,\qquad
 \mathcal E_n[f]=\langle B_nf,f\rangle,                    \label{eq:modifiedenergydefinition}
\end{equation}
then
\begin{equation}
 \|f\|_2^2\le\mathcal E_n[f]
 \le\left(1+\frac{\alpha_nM_n^2}{2\omega_n}\right)\|f\|_2^2, \label{eq:energyequivalence}
\end{equation}
and, for some $\gamma_n>0$,
\begin{equation}
 2\operatorname{Re}\langle B_n\mathbf L_nf,f\rangle
 \ge\gamma_n\mathcal E_n[f],
 \qquad f\in D(\mathbf L_n)\cap X_{{\rm s},n}.             \label{eq:energydissipation}
\end{equation}
Consequently, if $\partial_sf+\mathbf L_nf=0$ and
$f(0)\in X_{{\rm s},n}$, then
\begin{equation}
 \mathcal E_n[f(s)]\le e^{-\gamma_ns}\mathcal E_n[f(0)].  \label{eq:energydecay}
\end{equation}
\end{proposition}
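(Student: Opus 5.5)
The plan is to first establish the semigroup decay \eqref{eq:stablesemigroup} from the spectral information, and then obtain the energy statements from a standard Lyapunov-operator construction.

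\emph{Decay bound.} By Lemma~\ref{lem:compactgenerator}, $-\mathbf L_n=A_{0,n}-\frac1{16}+K_{0,n}$ with $A_{0,n}$ maximal dissipative and $K_{0,n}$ compact. Hence $e^{-s\mathbf L_n}$ is a compact perturbation of a semigroup with norm at most $e^{-s/16}$, and its essential growth bound is at most $-1/16$; Lemma~\ref{lem:exactessentialthreshold} even gives exactly $-1/4$. Quasi-compactness then implies that the growth bound of the semigroup restricted to the invariant space $X_{{\rm s},n}$ equals $\max\{-1/4,\ \sup\operatorname{Re}(-\sigma_d)\}$ over the isolated eigenvalues of finite multiplicity of $\mathbf L_n|_{X_{{\rm s},n}}$. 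Invariance of $X_{{\rm s},n}$ comes from Lemma~\ref{lem:stablespaceprojection}.

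To locate those eigenvalues, I would decompose into angular modes, using Lemma~\ref{lem:fixedmodeunitary} and the direct-sum argument of Proposition~\ref{prop:fixedmodefullclosure}.
\begin{itemize}
\item For $l\ge2$: no spectrum in $\operatorname{Re}z<1/4$.
\item For $l=1$: after the Riesz projection at $-1/2$ (Proposition~\ref{prop:l1fullclosure}), the remaining spectrum satisfies $\operatorname{Re}z\ge\delta_1$.
\item For $l=0$: Proposition~\ref{prop:radialdensityspectrum-main} together with Lemma~\ref{lem:radialunweightedspectrum} shows that, after removing $\lambda_{j,n}$ and $-1$, the spectrum in $\operatorname{Re}z<1/4$ is real and discrete, and lies in $(0,1/4)$.
\end{itemize}
The finitely many eigenvalues in any strip $\operatorname{Re}z\le c<1/4$ cannot accumulate, since $\mathbf L_n-z$ is Fredholm there. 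So $\omega_n:=\inf\operatorname{Re}\sigma(\mathbf L_n|_{X_{{\rm s},n}})$, if positive, can be reduced slightly to get \eqref{eq:stablesemigroup}. Positivity holds because no point has $\operatorname{Re}z\le0$, and the discrete points with real part below $1/4$ are isolated and of finite number in $\operatorname{Re}z\le1/8$.

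The main obstacle is the identification of $P_{{\rm s},n}$ with the Riesz projection complementary to all spectrum in $\operatorname{Re}z\le0$. Equivalently, one must check that no spectrum of $\mathbf L_n|_{X_{{\rm s},n}}$ survives at the removed points, which requires semisimplicity. This is supplied by the algebraic simplicity in Theorem~\ref{thm:main} and the biorthogonality \eqref{eq:biorthogonal}.

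\emph{Energy.} Given \eqref{eq:stablesemigroup}, the integral \eqref{eq:LyapunovQ} converges in operator norm, with $0\le Q_n\le M_n^2/(2\omega_n)$. This immediately yields \eqref{eq:energyequivalence}.

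For \eqref{eq:energydissipation}, I would differentiate $\|e^{-s\mathbf L_n}f\|^2$ for $f\in D(\mathbf L_n)\cap X_{{\rm s},n}$ and integrate over $s\in(0,\infty)$. This gives $2\operatorname{Re}\langle Q_n\mathbf L_nf,f\rangle=\|f\|_2^2$. Combining this with \eqref{eq:quasiaccretive},
\[
2\operatorname{Re}\langle B_n\mathbf L_nf,f\rangle\ge-2a_n\|f\|_2^2+\alpha_n\|f\|_2^2=\|f\|_2^2 .
\]
By \eqref{eq:energyequivalence}, the right-hand side is at least $\gamma_n\mathcal E_n[f]$ with $\gamma_n=(1+\alpha_nM_n^2/(2\omega_n))^{-1}$.

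Finally, \eqref{eq:energydecay} follows by Gronwall's inequality for $f(0)\in D(\mathbf L_n)\cap X_{{\rm s},n}$. It extends to all of $X_{{\rm s},n}$ by density of the domain and continuity of the semigroup, using the invariance of $X_{{\rm s},n}$.
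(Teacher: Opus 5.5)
Your proposal is correct and follows the paper's proof essentially step for step. It uses the quasi-compact decomposition from Lemma~\ref{lem:compactgenerator}, then the mode-by-mode location of the stable spectrum with $P_{{\rm u},n}$ identified as the complementary Riesz projection (via semisimplicity and biorthogonality), and then the Lyapunov operator $Q_n$ with $2\operatorname{Re}\langle Q_n\mathbf L_nf,f\rangle=\|f\|_2^2$, giving $\gamma_n=(1+\alpha_nM_n^2/(2\omega_n))^{-1}$. The differences are cosmetic: you invoke the abstract formula $\omega_0=\max\{\omega_{\mathrm{ess}},s(A)\}$ where the paper runs the finite-dimensional Riesz-space and spectral-radius argument by hand with the cruder essential bound $-1/16$, and finiteness of eigenvalues in strips $\operatorname{Re}z\le c<1/4$ actually comes from quasi-compactness rather than from the Fredholm property alone, though your mode-by-mode positivity argument does not need it.
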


\begin{proof}
The complete proof is given in Appendix~\ref{app:proof-stablelinearenergy}.
\end{proof}

The preceding proposition gives one positive decay rate.  To determine the
exact logarithmic decay exponent, we next use the essential spectrum
\eqref{eq:exactessentialspectrum} and the definition of the stable spectral
bound \eqref{eq:mainsharpgap} to identify the low-mode discrete spectrum that
may lie below $1/4$.  The standard passage from the spectral endpoint to the
fixed-time semigroup spectral radius, and then to a nearly critical
Lyapunov energy, is proved in Appendix~\ref{app:proof-sharpspectraldecay}.

\begin{proposition}[Exact stable spectral endpoint and logarithmic semigroup decay]
\label{prop:sharpspectraldecay}
Let
\[
 A_{{\rm s},n}=\mathbf L_n|_{X_{{\rm s},n}},\qquad
 S_{{\rm s},n}(s)=e^{-sA_{{\rm s},n}}.
\]
Let $\gamma_n^{\rm sp}$ be defined by \eqref{eq:mainsharpgap}.  Then:
\begin{enumerate}
\item $0<\gamma_n^{\rm sp}\le1/4$ and
      \begin{equation}
       \gamma_n^{\rm sp}=\min\{1/4,\gamma_{0,n}^{\rm st},
       \gamma_{1,n}^{\rm st}\}.                            \label{eq:sharpgaplowmodes}
      \end{equation}
      Here, for $l=0,1$, $\gamma_{l,n}^{\rm st}$ is the infimum of the real
      parts of the stable discrete spectrum of $\mathbf L_{n,l}$ after the
      known negative eigenspaces in that mode have been removed.  If no such
      discrete spectrum lies in $0<\operatorname{Re}z<1/4$, this quantity is
      defined to be $+\infty$.
\item The logarithmic decay exponent is \eqref{eq:mainsharprate}, and
      \eqref{eq:almostsharprate} holds for every
      $0<\varepsilon<\gamma_n^{\rm sp}$.
\item For every $0<\eta<\gamma_n^{\rm sp}$, there is an equivalent quadratic
      energy $\mathcal E_{n,\eta}[f]=\langle B_{n,\eta}f,f\rangle$ such that
      \begin{equation}
       2\operatorname{Re}\langle B_{n,\eta}A_{{\rm s},n}f,f\rangle
       \ge2\eta\mathcal E_{n,\eta}[f],                      \label{eq:nearsharpenergycoercive}
      \end{equation}
      and
      \begin{equation}
       \mathcal E_{n,\eta}[S_{{\rm s},n}(s)f]
       \le e^{-2\eta s}\mathcal E_{n,\eta}[f].             \label{eq:nearsharpenergydecay}
      \end{equation}
      Thus the exponential decay rate in the modified energy can be chosen
      arbitrarily close to $\gamma_n^{\rm sp}$.
\end{enumerate}
\end{proposition}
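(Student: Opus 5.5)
The plan is to combine the essential-spectrum description of Lemma~\ref{lem:exactessentialthreshold} with the mode-by-mode spectral information, and then pass from spectrum to semigroup growth through quasi-compactness. Set $A=A_{{\rm s},n}$.

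\emph{Step 1: location of $\sigma(A)$ and formula \eqref{eq:sharpgaplowmodes}.} The space $X_{{\rm s},n}$ is the range of the complementary Riesz projection. Hence $\sigma(A)=\sigma(\mathbf L_n)\setminus\{\lambda_{1,n},\dots,\lambda_{N_n,n},-1,-1/2\}$. By \eqref{eq:exactessentialspectrum}, and since the removed projection has finite rank, the essential spectrum of $A$ is still $\{\operatorname{Re}z\ge1/4\}$. This already gives $\gamma_n^{\rm sp}\le1/4$.

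Below $1/4$ only isolated eigenvalues of finite multiplicity can occur. The spherical harmonic decomposition commutes with $\mathbf L_n$ (Lemma~\ref{lem:fixedmodeunitary}), so each such eigenvalue lives in some fixed $(l,m)$ block. Proposition~\ref{prop:fixedmodefullclosure} excludes the modes $l\ge2$. What remains in $\operatorname{Re}z<1/4$ is the stable discrete spectrum of $L_{n,0}$ and $L_{n,1}$. By Theorem~\ref{thm:main}, this spectrum lies in $\operatorname{Re}z>0$; for $l=1$ it even lies in $\operatorname{Re}z\ge\delta_1$. The eigenvalues can accumulate only at the line $\operatorname{Re}z=1/4$, so their infimum is attained or equals $1/4$. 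This gives \eqref{eq:sharpgaplowmodes} and $\gamma_n^{\rm sp}>0$.

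\emph{Step 2: spectral bound equals growth bound.} By Lemma~\ref{lem:compactgenerator}, $-\mathbf L_n$ is a compact perturbation of a generator. Together with \eqref{eq:exactessentialnorm}, this gives $\|S_n(s)\|_{\rm ess}=e^{-s/4}$, and the same holds for the restriction to the invariant subspace $X_{{\rm s},n}$. The semigroup is therefore quasi-compact, and I would invoke the standard result (Engel--Nagel) that the growth bound equals $\max\{\omega_{\rm ess},s(-A)\}$. Here $s(-A)=-\gamma_n^{\rm sp}$ and $\omega_{\rm ess}=-1/4$, so $\omega_0(-A)=-\gamma_n^{\rm sp}$. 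Concretely, the spectral radius of $S_{{\rm s},n}(1)$ outside the disc of radius $e^{-1/4}$ consists of the eigenvalues $e^{-z}$ given by the spectral mapping theorem for the point spectrum. Then Gelfand's formula gives $\lim s^{-1}\log\|S_{{\rm s},n}(s)\|=-\gamma_n^{\rm sp}$, which is \eqref{eq:mainsharprate}. The bound \eqref{eq:almostsharprate} follows from the definition of the growth bound.

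The point I expect to be delicate is that the radius $e^{-s/4}$ in the essential part need not be attained. Avoiding any loss there is exactly why only $\varepsilon$-loss estimates are claimed; the semigroup norm $\|S_0(s)\|=e^{-s/4}$ controls this part.

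\emph{Step 3: near-sharp energy.} Fix $\eta<\gamma_n^{\rm sp}$ and pick $\eta<\eta'<\gamma_n^{\rm sp}$. Using \eqref{eq:almostsharprate} with rate $\eta'$, define
\[
B_{n,\eta}=I+\int_0^\infty e^{2\eta s}S_{{\rm s},n}(s)^*S_{{\rm s},n}(s)\dd s .
\]
The integral converges, and $B_{n,\eta}$ is bounded, self-adjoint and $\ge I$.

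Differentiating $\langle B_{n,\eta}S(\tau)f,S(\tau)f\rangle$ at $\tau=0$ gives
\[
2\operatorname{Re}\langle B_{n,\eta}Af,f\rangle=\|f\|^2+2\eta(\mathcal E_{n,\eta}[f]-\|f\|^2).
\]
The $I$ term, however, contributes $2\operatorname{Re}\langle Af,f\rangle$, which is not controlled by $\|f\|^2$. I would therefore drop the identity, in the spirit of the Lyapunov construction of Proposition~\ref{prop:stablelinearenergy}, and take
\[
B_{n,\eta}=\int_0^\infty e^{2\eta s}S^*S\dd s+cI
\]
with a small correction, or simply $B_{n,\eta}=\int_0^\infty e^{2\eta s}S^*S\dd s$. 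For the latter choice, the identity $2\operatorname{Re}\langle BAf,f\rangle=\|f\|^2+2\eta\langle Bf,f\rangle\ge2\eta\mathcal E$ holds exactly.

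Equivalence with $\|f\|_2^2$ still needs a lower bound for $B$. For this I would use the a priori estimate $\|S(s)f\|\ge e^{-a's}\|f\|$ for small $s$, which comes from \eqref{eq:quasiaccretive}; it gives $\langle Bf,f\rangle\ge c\|f\|^2$. The decay bound \eqref{eq:nearsharpenergydecay} then follows by Gronwall, extending by density from $D(A)$ to all stable data.
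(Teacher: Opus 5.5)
Your Steps 1 and 2 follow the paper's route. The finite-codimensional restriction preserves the essential spectral radius $e^{-s/4}$, only $l=0,1$ contribute spectrum below $1/4$, and the growth bound equals $\max\{\omega_{\rm ess},s(-A)\}$; the paper derives this last fact by hand from the spectral-radius formula instead of quoting Engel--Nagel.

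\textbf{The gap in Step~3.} The estimate $\|S_{{\rm s},n}(s)f\|_2\ge e^{-a's}\|f\|_2$ does not follow from \eqref{eq:quasiaccretive}. That inequality gives $\frac{d}{ds}\|S(s)f\|_2^2\le 2a_n\|S(s)f\|_2^2$, which is only the \emph{upper} bound $\|S(s)f\|_2\le e^{a_ns}\|f\|_2$. A lower bound would require $\operatorname{Re}\langle\mathbf L_nf,f\rangle\le a'\|f\|_2^2$. This is false, because the form contains $\|\nabla f\|_2^2$ (see \eqref{eq:L2numericalform}).

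The conclusion itself is also false. The flow is parabolic, so data concentrated at frequency $|\xi|\sim k$ lose their mass in time $O(k^{-2})$. For the free part, \eqref{eq:freeOUexactnormcalc} gives $\int_0^\infty e^{2\eta s}\|S_0(s)f\|_2^2\,ds\lesssim k^{-2}\|f\|_2^2$ (recall $2\eta<1/2$). This persists for $S_{{\rm s},n}$ by Duhamel, since $\mathbf K_n$ is lower order and $P_{{\rm u},n}$ has smooth adjoint modes. Hence $Q_{n,\eta}:=\int_0^\infty e^{2\eta s}S_{{\rm s},n}(s)^*S_{{\rm s},n}(s)\,ds$ admits no bound $Q_{n,\eta}\ge cI$. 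Your choice $B_{n,\eta}=Q_{n,\eta}$ therefore does not give an equivalent energy. This is exactly the point of the paper's remark following the proof.

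\textbf{The repair is the version you discarded.} You need a lower bound on $2\operatorname{Re}\langle A_{{\rm s},n}f,f\rangle$, not control by $\|f\|_2^2$, and \eqref{eq:quasiaccretive} supplies it. Take $B_{n,\eta}=I+c_{n,\eta}Q_{n,\eta}$ with $c_{n,\eta}=2a_n+2\eta+1$. Since $2\operatorname{Re}\langle Q_{n,\eta}(A_{{\rm s},n}-\eta)f,f\rangle=\|f\|_2^2$,
\[
 2\operatorname{Re}\langle B_{n,\eta}(A_{{\rm s},n}-\eta)f,f\rangle
 =2\operatorname{Re}\langle(A_{{\rm s},n}-\eta)f,f\rangle+c_{n,\eta}\|f\|_2^2
 \ge\|f\|_2^2\ge0 .
\]
This is \eqref{eq:nearsharpenergycoercive}. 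The bounds $I\le B_{n,\eta}\le(1+c_{n,\eta}\|Q_{n,\eta}\|)I$ give the equivalence. Equivalently, $Q_{n,\eta}+cI$ works for any fixed $0<c\le1/(2(a_n+\eta))$, but not for $c=0$. Gronwall then gives \eqref{eq:nearsharpenergydecay} as you planned.

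\textbf{A smaller point in Step~1.} Discreteness in $\{\operatorname{Re}z<1/4\}$ alone does not exclude eigenvalues with $\operatorname{Re}z_k\downarrow0$ and $|\operatorname{Im}z_k|\to\infty$. So your claim that eigenvalues ``accumulate only at $\operatorname{Re}z=1/4$'', and hence $\gamma_n^{\rm sp}>0$, needs the reality of the radial spectrum below $1/4$ (Lemma~\ref{lem:radialunweightedspectrum}). This is in addition to the bound $\operatorname{Re}z\ge\delta_1$ you already use for $l=1$.
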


\begin{proof}
We first identify the modes that can determine the exact left endpoint of
the stable spectrum.  Proposition~\ref{prop:fixedmodefullclosure} gives a
unitary equivalence, on the closed domains, between
$\mathbf L_n|_{\mathscr X_{l,m}}$ and $L_{n,l}$.  Its Fredholm-index-zero
argument proves that every $l\ge2$ mode has no spectrum in
$\operatorname{Re}z<1/4$, while its fixed-mode Weyl sequence proves that the
left spectral boundary of each such mode is exactly $1/4$.  Only the modes
$l=0,1$ can therefore move the stable spectral bound below $1/4$, which
proves \eqref{eq:sharpgaplowmodes}.

The low-spectrum results above show that, after the finite-dimensional
unstable eigenspaces have been removed, zero is separated from the stable
spectrum.  Hence $\gamma_n^{\rm sp}>0$.  On the other hand, the essential
spectrum begins at $1/4$, so $\gamma_n^{\rm sp}\le1/4$.

The remaining passage from the spectral boundary to the exact logarithmic
semigroup exponent, together with the construction of the nearly critical
Lyapunov energy, uses only quasi-compactness, the spectral-radius formula,
and a weighted orbit integral.  The complete calculation is given in
Appendix~\ref{app:proof-sharpspectraldecay}.
\end{proof}

\clearpage
\def\KSMainDocument{1}
\ifdefined\KSMainDocument
\providecommand{\ep}{\epsilon}
\providecommand{\bt}{\bigtriangleup}
\providecommand{\btd}{\bigtriangledown}
\providecommand{\Om}{\Omega}
\providecommand{\dd}{\,d}
\else
\documentclass[11pt,letterpaper]{amsart}

% Page layout matching the reference AMS-style manuscript.
\pdfpagewidth=8.5in
\pdfpageheight=11in
\oddsidemargin=0in
\evensidemargin=0in
\textwidth=6.5in
\topmargin=-0.25in
\textheight=9in

\usepackage{mathrsfs}
\usepackage{cite}
\usepackage{amssymb}

\allowdisplaybreaks[2]

% Commands
\newcommand{\ep}{\epsilon}
\newcommand{\bt}{\bigtriangleup}
\newcommand{\btd}{\bigtriangledown}
\newcommand{\Om}{\Omega}
\providecommand{\dd}{\,d}

\subjclass[2020]{35B44, 35C06, 35B35, 35K57, 35Q92}
\keywords{Keller--Segel system, self-similar blow-up, non-radial stability,
finite-codimensional manifold, Lipschitz graph}

\title[Non-radial stability for Keller--Segel blow-up profiles]
{Non-radial stability of the self-similar blow-up profiles for the
three-dimensional Keller--Segel system}

\begin{document}

\numberwithin{equation}{section}

\theoremstyle{plain}
\newtheorem{theorem}{Theorem}[section]
\newtheorem{proposition}[theorem]{Proposition}
\newtheorem{lemma}[theorem]{Lemma}
\newtheorem{corollary}[theorem]{Corollary}
\newtheorem{hyp}[theorem]{Hypothesis}
\newtheorem{analytic}[theorem]{Analytic extension principle}
\theoremstyle{definition}
\newtheorem{definition}[theorem]{Definition}
\newtheorem{exam}[theorem]{Example}
\theoremstyle{remark}
\newtheorem{remark}[theorem]{Remark}
\newtheorem{notation}[theorem]{Notation}

\maketitle
\fi

\ifdefined\KSMainDocument
\else
\section{Introduction and main results}

We consider the parabolic--elliptic Keller--Segel system
\begin{equation}\label{main}
\left\{\begin{array}{l}
 u_t=\Delta u-\nabla \cdot\left(u \nabla \Phi_{u}\right),  \\
0=\Delta \Phi_{u}+u,
\end{array} (x,t)\in \mathbb{R}^{3}\times(0,T),\ \right.
\end{equation}

Let $U_0=\frac{4(6+|x|^2)}{(2+|x|^2)^2}$, and $U_n$ be the corresponding self-similar blow-up profiles constructed in NWZ.

Let $\zeta_{0,n}$, $\zeta^{\mathrm{tr}}_{k,n}$ and $\zeta^{\mathrm{u}}_{j,n}$
denote the adjoint modes associated with scaling, translation, and the
genuinely unstable directions, respectively. Set
\[
\Lambda f = 2f + y\cdot\nabla f,
\qquad
\mathcal{Y}
=
\left\{
f\in H^2(\mathbb{R}^3):
\Lambda f\in L^2(\mathbb{R}^3)
\right\},
\]
with
$\|f\|_{\mathcal{Y}}
=
\|f\|_{H^2}
+
\|\Lambda f\|_{L^2}$,
and define
\begin{equation}\label{eq:Vn-definition}
V_n
=
\left\{
v\in\mathcal{Y}:
\langle v,\zeta^{\mathrm{u}}_{j,n}\rangle=0,
\quad
1\le j\le N_n
\right\}.
\end{equation}

\begin{theorem}[Finite-codimensional stability of $U_n$]
\label{thm:finite-codimensional-stability}
Assume that $n$ is sufficiently large. There exist $\delta>0$ and a map
\[
c=(c_1,\ldots,c_{N_n}):
V_n\cap B_{\mathcal{Y}}(0,\delta)
\longrightarrow
\mathbb{R}^{N_n}
\]
such that $c(0)=0$ and
$|c(v)-c(w)|
\le
C_n\|v-w\|_{H^2}$.
For every
$v_0\in V_n\cap B_{\mathcal{Y}}(0,\delta)$,
the initial data
\begin{equation}\label{eq:initial-data-manifold}
u_0
=
U_n+v_0
+
\sum_{j=1}^{N_n}
c_j(v_0)\phi_{j,n}
\end{equation}
produces a solution to \eqref{main} that blows up in finite time $0<T<+\infty$ (depending on $v_0$) and can be decomposed as
\begin{equation}\label{dec-thm}
 u(t,x)=\frac{1}{T-t} \left[U_n\left(\frac{x-x(t)}{\sqrt{T-t}}\right) +\tilde{u}\left(t,\frac{x-x(t)}{\sqrt{T-t}}\right) \right]
\end{equation}

where:\\
$(\mathrm{i})$ \emph{Asymptotic stability at the self-similar scale}: there holds the asymptotic stability of the self-similar profile
\begin{equation}\label{norms}
\lim_{t\to T}||\tilde{u}(t)||_{H^2}=0.
\end{equation}
In particular, the blow-up is type I.\\
$(\mathrm{ii})$ \emph{Blow-up point:} There holds 
\begin{equation}\label{blowup-point1}
    \lim_{t\to T}x(t)=x(T),
\end{equation}
where $x(T)$ is a blow-up point.
\end{theorem}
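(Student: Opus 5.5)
The plan is to follow the modulation--bootstrap--Brouwer scheme of \cite{LiZhou2025}, now using the linear input of Theorem~\ref{thm:stable-semigroup} and the adjoint family of Proposition~\ref{prop:adjointmodeasymptotics}. We introduce time-dependent modulation parameters, a scale $\lambda(t)$ and a center $x(t)$, and set
\[
 s=\int_0^t\lambda(\tau)^{-2}\dd\tau,\qquad
 u(t,x)=\lambda^{-2}\bigl(U_n+\varepsilon\bigr)\bigl(s,(x-x(t))/\lambda\bigr).
\]
The four parameters are fixed by the orthogonality conditions $\langle\varepsilon,\zeta_0\rangle=\langle\varepsilon,\zeta_k^{\rm tr}\rangle=0$. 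The implicit function theorem applies near $\varepsilon=0$ because the pairing matrix is the identity by \eqref{eq:biorthogonal}, and the adjoint modes lie in $L^2$. The perturbation then satisfies
\[
 \partial_s\varepsilon+\mathbf L_n\varepsilon
 =\mathcal N(\varepsilon)
 +\Bigl(\frac{\lambda_s}{\lambda}+\frac12\Bigr)\Lambda(U_n+\varepsilon)
 +\frac{x_s}{\lambda}\cdot\nabla(U_n+\varepsilon).
\]
Pairing this equation with $\zeta_0$ and $\zeta_k^{\rm tr}$ gives the modulation laws
\[
 \Bigl|\frac{\lambda_s}{\lambda}+\frac12\Bigr|+\Bigl|\frac{x_s}{\lambda}\Bigr|
 \lesssim\|\varepsilon\|_{H^2}^2.
\]
Here the Gaussian decay of $\zeta_0$ and the dipole tail $|y|^{-2}$ of $\zeta_k^{\rm tr}$ are used so that $\langle\Lambda\varepsilon,\zeta\rangle$ and $\langle\mathcal N(\varepsilon),\zeta\rangle$ are bounded by $\|\varepsilon\|_{\mathcal Y}$-type norms.

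Next we decompose $\varepsilon=\varepsilon_{\rm s}+\sum_ja_j(s)\varphi_{j,n}$ with $a_j=\langle\varepsilon,\zeta^{\rm u}_j\rangle$. The unstable coefficients obey
\[
 a_j'=\mu_{j,n}a_j+O(\|\varepsilon\|_{H^2}^2).
\]
The stable part $\varepsilon_{\rm s}=P_{{\rm s},n}\varepsilon$ is controlled through the modified energy $\mathcal E_n$ of \eqref{eq:mainmodifiedL2}, equivalently by Duhamel's formula with \eqref{eq:mainstableSG}, which gives $L^2$ decay at rate $e^{-\gamma_n s}$.

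Decay in $L^2$ does not close the nonlinearity, so we add a higher-order energy estimate for $\|\varepsilon\|_{H^2}$ and $\|\Lambda\varepsilon\|_{L^2}$. Differentiating the equation produces the damping term $\frac12\Lambda$, which gains $+k/2$ on the $k$th derivative. The lower-order terms generated by $U_n,\,b_n$ are bounded, and the dangerous ones localize to a compact region, where we use the interpolation
\[
 \|\nabla^k\varepsilon\|_{L^2(|y|\le R)}\lesssim\|\varepsilon\|_{L^2}^{\theta}\|\varepsilon\|_{H^{k+1}}^{1-\theta}.
\]
Combined with the elliptic bound $\|\mathcal N(\varepsilon)\|_{H^1}\lesssim\|\varepsilon\|_{H^2}^2$, this gives an exponential bound of the form $e^{-\gamma s}$ on the $\mathcal Y$ norm under the bootstrap hypotheses
\[
 \|\varepsilon_{\rm s}\|_{\mathcal Y}\le K\delta e^{-\gamma s},\qquad
 |a(s)|\le\delta e^{-\gamma s},
\]
with $0<\gamma<\min(\gamma_n,1/2)$. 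We expect this step to be the main obstacle. The $H^2$ energy is not coercive for $\mathbf L_n$ itself, and the nonlocal term $\nabla U_n\cdot\nabla\Delta^{-1}\varepsilon$ has to be bounded in $H^2$ using $U_n'=O(r^{-3})$ together with Hardy-type bounds on $\nabla\Delta^{-1}\varepsilon$. The plan is to write the $H^2$ estimate as a damped inequality whose forcing is $C\|\varepsilon\|_{L^2}$, so that the $L^2$ decay from the linear theory drives it.

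The unstable coefficients are closed by the standard outgoing argument. On the boundary of the box $\{|a|\le\delta e^{-\gamma s}\}$ the flow exits strictly, since $\mu_{j,n}>1>\gamma$. If every initial choice $c$ led to an exit, the exit map would be a continuous retraction of the ball in $\mathbb R^{N_n}$ onto its boundary, which Brouwer's theorem forbids. Hence some $c$ with $|c|\lesssim\|v_0\|_{\mathcal Y}$ gives a global solution in $s$. Finally we integrate the modulation laws. The scale satisfies $\lambda(t)^2=(T-t)(1+o(1))$ with $T<\infty$; a time-dependent correction of the scale parameter, together with the decay in $s$, lets us absorb this into the exact normalization $\sqrt{T-t}$ and the remainder $\widetilde u$. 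The center converges because $|x_t|=\lambda^{-1}|x_s/\lambda|\lesssim\lambda^{-1}e^{-2\gamma s}$, which is integrable in $t$. Lastly, $\|\widetilde u\|_{H^2}\to0$ follows from the decay of $\varepsilon$, which yields \eqref{eq:physical-decomposition} and \eqref{eq:nonlinear-convergence}.
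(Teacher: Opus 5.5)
Up to the final step your plan is the paper's proof: modulation of scale and center through $\zeta_0,\zeta^{\rm tr}_k$, stable control from Theorem~\ref{thm:stable-semigroup}, an $H^2$ energy whose forcing is $\|\varepsilon\|_{L^2}^2$, a $\Lambda\varepsilon$ estimate, Brouwer for the $N_n$ radial coefficients, and integration of the modulation laws. Your local variants are sound. Duhamel suffices for the stable $L^2$ bound, because $\|\Lambda\varepsilon\|_{L^2}$ is in your bootstrap; so $P_{{\rm s},n}(b\Lambda\varepsilon)$ is an ordinary $L^2$ forcing of size $|b|\,\|\Lambda\varepsilon\|_{L^2}$, and you do not need Lemma~\ref{lem:projected-generator-energy}. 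Localizing the potential and Newton terms can replace the G\aa rding Lemma~\ref{lem:finite-dimensional-Garding} in the $\Lambda\varepsilon$ estimate, since $|\Lambda\varepsilon|\lesssim_R|\varepsilon|+|\nabla\varepsilon|$ on $\{|y|\le R\}$ and $\nabla\Delta^{-1}\Lambda\varepsilon=(1+y\cdot\nabla)\nabla\Delta^{-1}\varepsilon$.

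The gap is that you never prove the part of the statement about the map $c$, namely $c(0)=0$ and $|c(v)-c(w)|\le C_n\|v-w\|_{H^2}$. The no-retraction argument only shows that for each $v_0$ at least one admissible $c$ exists. It gives no uniqueness, hence no well-defined map, and no information on how $c$ varies with $v_0$. (Even the bound $|c|\lesssim\|v_0\|_{\mathcal Y}$ needs the bootstrap amplitude proportional to $\|v_0\|_{\mathcal Y}$, not to the fixed radius $\delta$.)

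What is missing is a contraction estimate between two trapped solutions. The renormalized system for $(\varepsilon,a,b,\beta)$ is autonomous in $s$, so compare the solutions generated by $v$ and $w$ at equal $s$, and set $\alpha=a^v-a^w$, $g=\varepsilon^v-\varepsilon^w$.

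\begin{itemize}
\item You need a stable difference bound: $\|P_{{\rm s},n}g(s)\|$ controlled by $\|v-w\|_{H^2}$ plus $\delta$ times past values of $|\alpha|+\|g\|$, with the differences of $b,\beta$ of size $\delta(|\alpha|+\|g\|)$.
\item Since $a^v$ and $a^w$ both stay in $\{|a|\le\delta e^{-\gamma s}\}$ for all $s$, each $\alpha_j$ must be the bounded solution of $\alpha_j'=\mu_{j,n}\alpha_j+O(\delta e^{-\gamma s})(|\alpha|+\|g\|)$, that is, $\alpha_j(s)=-\int_s^\infty e^{-\mu_{j,n}(\tau-s)}(\cdots)\,d\tau$.
\item Together these close to $\sup_s(|\alpha(s)|+\|g(s)\|)\lesssim\|v-w\|_{H^2}$. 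This gives uniqueness of the trapped $c$, hence $c(0)=0$, and after inverting the initial re-modulation, the Lipschitz bound.
\end{itemize}

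This difference estimate cannot be done by Duhamel in $L^2$ as your stable bound was. The term $b^w\Lambda g$ puts the weight $y\cdot\nabla$ on the difference, which neither a bootstrap nor parabolic smoothing controls. It must be absorbed at the energy level, e.g.\ via $\operatorname{Re}\langle\Lambda g,g\rangle=\frac12\|g\|_2^2$, or via Lemma~\ref{lem:projected-generator-energy} for the modified energy. The divergence-form difference of the nonlinearity must likewise be integrated by parts against the dissipation.

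A smaller point: here $V_n$ is orthogonal only to the $\zeta^{\rm u}_j$. The initial decomposition therefore absorbs the $\zeta_0,\zeta^{\rm tr}_k$ components of $v_0$ and gives $a(0)=c+O\bigl((\|v_0\|_{H^2}+|c|)^2\bigr)$. Brouwer must be run in the variable $a(0)$ and pulled back through this near-identity $C^1$ map.

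Finally, the paper's own proof of this statement simply invokes Proposition~\ref{bootstrap}, so it has the same limitation. The main-text Theorem~\ref{thm:nonradial-stability} correspondingly asserts only that some $c$ with $|c|\le C_n\|v_0\|_{\mathcal Y}$ exists, for $v_0$ orthogonal to all adjoint modes.
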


For the already established $H^2$ stability of $U_0$ in [], the positive-time
regularization also gives stability under small $L^p$ perturbations for
$3/2\le p\le2$. 
\begin{theorem}[$L^p$ stability of $U_0$]
\label{thm:Lp-stability-U0}
Let
$\frac{3}{2}\le p\le 2$.
There exists $\delta_p>0$ such that, if
\[
u_0=U_0+\widetilde{u}_0
\qquad\text{and}\qquad
\|\widetilde{u}_0\|_{L^p}\le \delta_p,
\]
then the solution of \eqref{main} blows up in finite time and admits the decomposition
\eqref{dec-thm}, with $U_n$ replaced by $U_0$, such that
\begin{equation}\label{eq:Lp-stability-limit}
\lim_{t\to T}\|\widetilde{u}(t)\|_{H^2}=0,
\qquad
\lim_{t\to T}x(t)=x_*.
\end{equation}
In particular, $x_*$ is a blow-up point and the blow-up is of type~I.
\end{theorem}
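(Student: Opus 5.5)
The plan is to reduce the $L^p$ statement to the $H^2$ nonradial stability of $U_0$ established in \cite[Theorem~1.1]{LiZhou2025}. The idea is to run the flow for a short positive time and use parabolic smoothing. Fix $T_0=1$, $x_*=0$ and let $u_{0,1,0}$ be the explicit blow-up solution \eqref{eq:explicitU0}, which is smooth and bounded on $[0,1/2]\times\R^3$. Write $u=u_{0,1,0}+\varepsilon$. Then the perturbation satisfies
\[
 \partial_t\varepsilon=\Delta\varepsilon-\nabla\!\cdot\bigl(\varepsilon\nabla\Psi_{u_{0,1,0}}+u_{0,1,0}\nabla\Psi_\varepsilon\bigr)-\nabla\!\cdot(\varepsilon\nabla\Psi_\varepsilon),
 \qquad \varepsilon(0)=\widetilde u_0 .
\]

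\textbf{Step 1: mild formulation and a priori estimate.} I will solve this equation by Duhamel's formula with the heat semigroup. The fixed-point space is the Kato-type space
\[
 \sup_{0<t\le t_0}\Bigl(\|\varepsilon(t)\|_{L^p}+t^{\frac32(\frac1p-\frac1q)}\|\varepsilon(t)\|_{L^q}\Bigr),
\]
with an auxiliary exponent $q\in(3,6)$. The linear terms with the smooth coefficients $u_{0,1,0}$ and $\nabla\Psi_{u_{0,1,0}}\in L^\infty$ are estimated in the standard way, using Hardy--Littlewood--Sobolev for $\nabla\Psi_\varepsilon$. They give a factor $t_0^{1/2}$ times the norm, or $t_0^{\theta}$ with some $\theta>0$. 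The quadratic term is the usual Keller--Segel bilinear estimate, which closes precisely for $p\ge3/2$, the scaling-critical exponent in $\R^3$. In the critical case $p=3/2$ no smallness of time is available from scaling, but smallness of $\|\widetilde u_0\|_{L^{3/2}}\le\delta_p$ closes the bilinear estimate directly. The linear background terms still gain a power of $t_0$, since the background is bounded. The output is a unique solution on $[0,t_0]$ with $\|\varepsilon(t)\|_{L^p}\lesssim\delta_p$.

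\textbf{Step 2: smoothing to $H^2$.} I will bootstrap regularity on $[t_0/2,t_0]$. Using $L^p\to L^2\cap W^{k,q}$ heat smoothing in the same Duhamel formula, and differentiating twice, I expect
\[
 \|\varepsilon(t_0)\|_{H^2}\le C(t_0,p)\,\|\widetilde u_0\|_{L^p}.
\]
The lower bound $p\le2$ matters here: it guarantees the $L^2$ part of the $H^2$ norm is controlled by heat smoothing from $L^p$ with $p\le 2$.

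\textbf{Step 3: return to self-similar variables.} I will restart at time $t_0$. In the variables \eqref{eq:rescaling} with $T=1$, the solution is $U_0+v(s_0)$ with $s_0=-\log(1-t_0)$ and $\|v(s_0)\|_{H^2}\lesssim_{t_0}\delta_p$. Since $U_0$ has only the four symmetry instabilities (scaling and translation), \cite[Theorem~1.1]{LiZhou2025} applies with no finite-dimensional correction. Its modulated blow-up time and center produce $T$, $x(t)$ and the decomposition \eqref{dec-thm}, together with \eqref{eq:Lp-stability-limit} and the type~I conclusion.

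\textbf{Main obstacle.} The step I expect to be hardest is matching the function space of the $H^2$ stability theorem. If that theorem requires a weighted condition such as $\Lambda f\in L^2$, as in the space $\mathcal Y$ of Theorem~\ref{thm:nonradial-stability}, then an $L^p$ perturbation has no spatial decay and heat smoothing does not create any. I would first check whether the Li--Zhou argument closes in pure $H^2$; the Ornstein--Uhlenbeck semigroup on $L^2$ from Lemma~\ref{lem:exactessentialthreshold} suggests that it can. If it does not, I would propagate $|y|\varepsilon\in L^2$ only for a dense class of data and argue by continuous dependence. A secondary point is uniformity in the critical case $p=3/2$, which must be handled by smallness of the datum rather than by shrinking $t_0$.
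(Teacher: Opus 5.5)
Your proposal is correct and follows the same route the paper indicates. The paper gives no separate proof, only the remark that positive-time parabolic regularization turns a small $L^p$ perturbation ($3/2\le p\le 2$) into a small $H^2$ perturbation, to which the already established nonradial $H^2$ stability of $U_0$ is applied after a time shift and rescaling; since $U_0$ has no unstable directions beyond the four symmetry modes, no coefficient selection is needed. The weighted-space issue you flag is exactly what that remark takes for granted. In the paper's own bootstrap for $U_n$, the bound on $\|\Lambda\varepsilon\|_{L^2}$ is never fed back into the modulation, $L^2$ and $H^2$ estimates, so the weight enters only qualitatively, and an approximation argument of the kind you sketch is the natural way to dispense with it.
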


\begin{proposition}[Spectral properties of $L_n$]\label{Spectral-analysis}
\label{prop:nonradial-spectrum}
Assume $n=0$ or $n\gg1$. Let $L_n$ be the linearized operator around the radial
self-similar profile $U_n$ in $\mathbb R^3$, which is defined by
$$
L_nf=-\Delta f+\frac{1}{2}\Lambda f -2U_nf-\nabla \Delta^{-1}U_n\cdot\nabla f-\nabla \Delta^{-1}f\cdot\nabla U_n.
$$
Decomposing with respect to spherical harmonics, let
$L_{n,\ell}$ denote the restriction of $L_n$ to the angular
mode $\ell\ge0$. Then the following properties hold.

$(\mathrm{i})$ For $\ell=0$, the non-positive eigenvalues are
\[
-\mu_{j,n},\ -\mu_{-1,n}=-1,\ \ \mu_{j,n}>0,\ 1\le j\le N_n,\ N_0=0.
\]
The eigenvalues $-\mu_{j,n}\ (1\le j\le N_n)$ and $-\mu_{-1,n}$  are simple and associated to spherically symmetric eigenvectors
$$
\phi_{j,n},\quad \phi_{-1,n}={\Lambda U_n}.
$$

$(\mathrm{ii})$ For $\ell=1$, the only non-positive eigenvalue is
$-\frac12$. The corresponding eigenspace is
\[
\operatorname{span}
\{\partial_1U_n,\partial_2U_n,\partial_3U_n\}.
\]
In particular,
\[
L_n(\partial_kU_n)
=
-\frac12\partial_kU_n,
\qquad 1\le k\le3.
\]

$(\mathrm{iii})$ For every $\ell\ge2$, the spectrum of
$L_{n,\ell}$ is contained in
$
\{z\in\mathbb C:\operatorname{Re}z>0\}.$
In particular, there are no eigenvalues with non-positive real
part in the higher angular modes $\ell\ge2$.
\end{proposition}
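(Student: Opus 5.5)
The statement to prove is Proposition~\ref{prop:nonradial-spectrum}, in the two regimes $n=0$ and $n\gg1$. For $n\gg1$ it is essentially a repackaging of Theorem~\ref{thm:main}, so I will proceed mode by mode and quote the propositions already proved. For $n=0$ the same three mechanisms apply to the explicit profile $U_0$, and I will check each mechanism there directly.

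\emph{Radial mode.} For $\ell=0$ and $n$ large, Proposition~\ref{prop:radialdensityspectrum-main} gives
\[
\sigma(L_{n,0})\cap\{\operatorname{Re}z\le0\}=\{\lambda_{1,n},\dots,\lambda_{N_n,n},-1\}.
\]
Each of these points is real and algebraically simple, and $\Lambda U_n$ spans the eigenspace at $-1$. I set $\mu_{j,n}=-\lambda_{j,n}>1$ and let $\phi_{j,n}$ be the corresponding eigenfunctions. Their radiality is automatic, since they live in $\mathscr X_{0,0}$.

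For $n=0$ I use $\Phi_0=2/(2+r^2)$, for which $\Lambda\Phi_0=8/(2+r^2)^2$ has no positive zero. The Sturm count then makes $-1$ the lowest radial eigenvalue, which gives $N_0=0$. The gap on $(-1,0]$ for $U_0$ I would quote from Glogi\'c--Sch\"orkhuber; alternatively it follows from the same mass-variable Sturm--Liouville realization, after checking that the second eigenvalue is positive. Transfer to the density space goes through Lemma~\ref{lem:radialunweightedspectrum}, whose proof uses only the operator identity \eqref{eq:radialdensitymassidentity} and is therefore profile-independent.

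\emph{Translation mode.} For $\ell=1$ I apply Proposition~\ref{prop:l1fullclosure}. It gives $\sigma(L_{n,1})\cap\{\operatorname{Re}z<\delta_1\}=\{-1/2\}$ with $\delta_1>0$. It also gives algebraic simplicity in each $m$-component, and via \eqref{eq:fulltranslationkernel} the eigenspace is $\operatorname{span}\{\partial_kU_n\}$. The relation $L_n\partial_kU_n=-\tfrac12\partial_kU_n$ is \eqref{eq:translation} multiplied by $y_k/r$; equivalently, one differentiates the stationary equation in $y_k$.

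For $n=0$ the wave-operator argument needs two inputs. The first is $K_0=2r\Phi_0'<0$, which is explicit. The second is positivity of the conjugated potential, and the formula listed after \eqref{eq:Wsplit} gives it:
\[
W_0=\frac{12}{r^2}+\frac{r^2}{16}-\frac{8}{2+r^2}-\frac34 .
\]
I still have to check that $\inf W_0>0$. Splitting at $r^2=2$ looks enough: for $r^2\le2$ the $12/r^2\ge6$ term dominates, and for $r^2\ge2$ the remaining terms satisfy $12/r^2+r^2/16\ge\sqrt3>3/4+2$ is false. Since that last inequality fails, I would refine the split or fall back on the explicit verification in Li--Zhou (Proposition~2.11 of \cite{LiZhou2025}). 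Once positivity holds, Lemma~\ref{lem:l1domaintransfer} and the Fredholm closure (Lemma~\ref{lem:fixedmodefredholm}) run unchanged.

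\emph{Higher modes.} For $\ell\ge2$ and $n$ large, Proposition~\ref{prop:fixedmodefullclosure} places $\sigma(L_{n,\ell})$ inside $\{\operatorname{Re}z\ge1/4\}$, which is contained in $\{\operatorname{Re}z>0\}$. For $n=0$ I would instead cite the Li--Zhou result \cite[Theorem~1.1]{LiZhou2025}, which uses the same partial-localization identity with exponent $\alpha=0.2$. It gives a positive real-part lower bound for $\ell=2$; for $\ell\ge3$ the direct estimate \eqref{eq:lge3direct} suffices once the bounds on $\mathsf h,\mathsf a,\mathsf b$ are computed for $U_0$. The Fredholm closure then turns point-spectrum exclusion into spectrum exclusion.

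\emph{Main obstacle.} I expect the main obstacle to be the $n=0$ lower bounds, that is, the radial gap and $\inf W_0>0$. If direct elementary estimates fail, I will import them from the literature.
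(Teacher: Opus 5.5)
Your proposal is correct and is essentially how the paper supports this statement: for $n\gg1$ it is a repackaging of Theorem~\ref{thm:main} through Propositions~\ref{prop:radialdensityspectrum-main}, \ref{prop:l1fullclosure} and \ref{prop:fixedmodefullclosure}, while for $n=0$ the paper relies on the existing analyses of $U_0$ by Glogi\'c--Sch\"orkhuber and Li--Zhou, which is exactly your fallback. The one estimate you left open closes directly. With $x=r^2$ one has $W_0=(x^3-10x^2+40x+384)/(16x(2+x))$, and the cubic is increasing because $3x^2-20x+40$ has negative discriminant. Hence $W_0>0$ on $(0,\infty)$, and since $W_0\to+\infty$ at both ends, $\inf W_0>0$ (numerically about $0.41$, near $x=10$).
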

\fi

\ifdefined\KSMainDocument
  \ifdefined\KSChineseDocument
  \section{非线性稳定性}
  \label{nl:sec:nonlinear-dynamics}

  本节证明定理~\ref{thm:nonradial-stability}。证明所用的线性输入是定理
  \ref{thm:stable-semigroup}，而后者建立在定理~\ref{thm:main} 的谱隙和谱投影
  之上。具体地，定理~\ref{thm:main} 确定缩放、平移及真正径向不稳定模；定理
  \ref{thm:stable-semigroup} 在这些模的谱补空间上给出指数衰减半群和等价线性能量。
  下面先用正交条件确定缩放与平移参数，再分别闭合稳定分量的 $L^2$、$H^2$ 和
  缩放导数估计，最后用 Brouwer 不回缩定理选择真正不稳定方向的初始系数。
  \else
  \section{Nonlinear asymptotic stability}
  \label{nl:sec:nonlinear-dynamics}

  This section proves Theorem~\ref{thm:nonradial-stability}.  The logical input
  is Theorem~\ref{thm:stable-semigroup}, which in turn is derived from the
  spectral properties and gap of Theorem~\ref{thm:main}.  More precisely,
  Theorem~\ref{thm:main} identifies the symmetry modes and the genuinely
  unstable radial modes, while Theorem~\ref{thm:stable-semigroup} supplies the
  decaying semigroup and equivalent energy on their spectral complement.  The
  argument below fixes the modulation parameters by orthogonality, closes the
  Duhamel and nonlinear energy estimates on the stable component, and finally
  chooses the genuinely unstable coefficients by the Brouwer no-retraction
  argument.
  \fi
\else
  \section{Dynamical control of the flow}
\fi
\label{section4}

\ifdefined\KSChineseDocument
\subsection{自相似变量与几何分解}

固定一个充分大的整数 $n$，并令 $U_n$ 为相应的径向自相似稳态解。沿用前文
谱分析的记号，本节始终令
\[
 L_n:=\mathbf L_n.
\]
因此，$L_n$ 是普通空间 $L^2(\mathbb R^3)$ 上的全空间闭线性化算子；非线性
证明不再引入新的权或新的闭实现。本节固定 $n$，并为简化记号写
\[
 \phi_j:=\varphi_{j,n},\quad
 \zeta_0:=\zeta_{0,n},\quad
 \zeta_k^{\rm tr}:=\zeta_{k,n}^{\rm tr},\quad
 \zeta_j^{\rm u}:=\zeta_{j,n}^{\rm u},\quad
 \mu_j:=\mu_{j,n},\quad N:=N_n.
\]
以下函数均取实值，故 $L^2$ 配对中不再另写实部。
\else
\subsection{Renormalization and geometrical decomposition}

Fix a sufficiently large integer $n$.
Let $U_n$ be the corresponding radial self-similar profile.  In the notation
of the spectral part, throughout this section we set
\[
 L_n:=\mathbf L_n,
\]
namely, $L_n$ is the closed full-space linearized operator on the ordinary
space $L^2(\mathbb R^3)$; no new weight or new realization is introduced in
the nonlinear argument.  Throughout this section, \(n\) is fixed. For simplicity, we write $\phi_j$, $\zeta_{0}$, $
\zeta_{k}^{\rm tr}$,
$\zeta_{j}^{\rm u}$, $\mathcal X_{\delta}$, $\mu_{j}$, $N$
instead of $\phi_{j,n}$, $\zeta_{0,n}$, $
\zeta_{k,n}^{\rm tr}$,
$\zeta_{j,n}^{\rm u}$, $\mathcal X_{n,\delta}$, $\mu_{j,n}$, $N_n$ in the following. Since all functions under consideration below are real-valued, we omit
$\operatorname{Re}$ in the $L^2$ scalar products below.
\fi

\ifdefined\KSChineseDocument
由定理~\ref{thm:main}，缩放方向、三个平移方向以及 $N$ 个真正径向不稳定方向
分别满足
\else
Recall that
\fi
\[
L_n(\Lambda U_n)=-\Lambda U_n,
\qquad
L_n(\partial_k U_n)=-\frac12\partial_k U_n,
\qquad 1\le k\le3,
\]
\ifdefined\KSChineseDocument
其中
\else
where
\fi
\[
\Lambda f=2f+y\cdot\nabla f.
\]
\ifdefined\KSChineseDocument
此外，真正径向不稳定特征函数满足
\[
 L_n\phi_j=-\mu_j\phi_j,
 \qquad \mu_j>0,\qquad 1\le j\le N.
\]
因为 $L_n$ 不是自伴算子，稳定投影必须使用伴随特征函数，而不能用右特征函数
作正交投影。记相应伴随模为
$\zeta_0,\zeta_k^{\rm tr},\zeta_j^{\rm u}$，则
\[
 L_n^*\zeta_0=-\zeta_0,\qquad
 L_n^*\zeta_k^{\rm tr}=-\frac12\zeta_k^{\rm tr},
 \qquad 1\le k\le3,
\]
以及
\[
 L_n^*\zeta_j^{\rm u}=-\mu_j\zeta_j^{\rm u},
 \qquad 1\le j\le N.
\]
按命题~\ref{prop:adjointmodeasymptotics} 的双正交方式归一化，使
\else
Moreover, according to
\ifdefined\KSMainDocument
Theorem~\ref{thm:main},
\else
Proposition~\ref{Spectral-analysis},
\fi
apart from the scaling mode,
there also exist exactly $N$ unstable radial eigenfunctions
$\phi_{j}$ $(1\le j\le N)$, satisfying
\[
L_n\phi_{j}=-\mu_{j}\phi_{j},
\qquad
\mu_{j}>0.
\]

Since $L_n$ is not a self-adjoint operator,  we
introduce the corresponding eigenfunctions of the adjoint operator $\zeta_{0}$, $
\zeta_{k}^{\rm tr}$,
$\zeta_{j}^{\rm u}$
satisfying
\[
L_n^*\zeta_{0}
=
-\zeta_{0},
\ 
L_n^*\zeta_{k}^{\rm tr}
=
-\frac12\zeta_{k}^{\rm tr},
\qquad 1\le k\le3,
\]
and
\[
L_n^*\zeta_{j}^{\rm u}
=
-\mu_{j}\zeta_{j}^{\rm u},
\qquad
1\le j\le N.
\]
We normalize them so that
\fi
\begin{equation}\label{dual-normalization-mod}
\begin{aligned}
\langle \Lambda U_n,\zeta_{0}\rangle =1,\ 
\langle \partial_kU_n,\zeta_{l}^{\rm tr}\rangle
=\delta_{k\ell},\ 
\langle \phi_{i},\zeta_{j}^{\rm u}\rangle
=\delta_{ij},\ \ 1\le k,\ell\le3,\ 1\le i,j\le N
\end{aligned}
\end{equation}
\ifdefined\KSChineseDocument
其余交叉配对均为零。以下使用的配对均指普通 $L^2(\mathbb R^3)$ 配对
\else
and all the remaining cross pairings vanish.
 Here and below, we define
\fi
\[
\langle f,g\rangle
=\int_{\mathbb R^3}f(y)g(y)\,dy.
\]

\ifdefined\KSChineseDocument
在 $U_n$ 的缩放和平移轨道附近定义 $H^2$ 管状邻域
\else
We define the $H^2$ tube around the rescaled and translated
version of $U_n$ by
\fi
\begin{equation}\label{tube-Un}
\mathcal X_{\delta}
=
\left\{
u(x)
=
\frac1{\mu^2}
\left(U_n+\bar u\right)
\left(\frac{x-x'}{\mu}\right);
\ 
\mu>0,\ x'\in\mathbb R^3,\ 
\|\bar u\|_{H^2}<\delta
\right\}.
\end{equation}

\ifdefined\KSChineseDocument
\begin{lemma}[几何分解]\label{lemma-geometric-Un}
\else
\begin{lemma}[Geometrical decomposition]\label{lemma-geometric-Un}
\fi
\ifdefined\KSChineseDocument
存在 $\delta>0$ 和 $C>0$，使每个 $u\in\mathcal X_\delta$ 都可以在上述
管状邻域内唯一地写成
\else
There exist $\delta>0$ and $C>0$ such that every
$u\in\mathcal X_{\delta}$ admits a locally unique decomposition
\fi
\begin{equation}\label{geometric-decomposition-Un}
u(x)
=
\frac1{\lambda^2}
\left(
U_n
+\sum_{j=1}^N a_j\phi_{j}
+\varepsilon
\right)
\left(\frac{x-\bar x}{\lambda}\right),
\end{equation}
\ifdefined\KSChineseDocument
其中 $\lambda>0$、$\bar x\in\mathbb R^3$、$a_j\in\mathbb R$，且余项
$\varepsilon$ 满足
\else
where
$\lambda>0,\ 
\bar x\in\mathbb R^3,\ 
a_j\in\mathbb R,$
and the remainder $\varepsilon$ satisfies
\fi
\begin{equation}\label{orthogonality-Un}
\begin{aligned}
\langle\varepsilon,\zeta_{0}\rangle=0,\ 
\langle\varepsilon,\zeta_{k}^{\rm tr}\rangle=0,\ 
\langle\varepsilon,\zeta_{j}^{\rm u}\rangle=0, \  1\le k\le3,
\ 1\le j\le N.
\end{aligned}
\end{equation}
\ifdefined\KSChineseDocument
此外，下列参数映射在 $H^2(\mathbb R^3)$ 原点的一个小邻域中有定义且属于
$C^1$：
\else
Moreover, the maps
\fi
\[
\bar u\mapsto \bar\lambda(\bar u),\qquad
\bar u\mapsto \widetilde x(\bar u),\qquad
\bar u\mapsto a_j(\bar u),
\]
\ifdefined\KSChineseDocument
并且
\else
are $C^1$ and well-defined in a  small neighborhood of the origin in
$H^2(\mathbb R^3)$, and satisfy
\fi
\[
\lambda=\mu\bar\lambda(\bar u),\qquad
\bar x=x'+\mu\widetilde x(\bar u).
\]
\ifdefined\KSChineseDocument
这里的局部唯一性是指：对 $u$ 在 \eqref{tube-Un} 中的一种表示，在所有满足
$|\lambda/\mu-1|+|\bar x-x'|/\mu+|a|+\|\varepsilon\|_{H^2}\ll1$ 的分解中，
上述参数唯一。特别地，
\else
Here local uniqueness means uniqueness among decompositions satisfying
$|\lambda/\mu-1|+|\bar x-x'|/\mu+|a|+\|\varepsilon\|_{H^2}\ll1$ for one
representation of $u$ in \eqref{tube-Un}.  In particular,
\fi
\[
|\bar\lambda(\bar u)-1|
+|\widetilde x(\bar u)|
+\sum_{j=1}^N|a_j(\bar u)|
\le C\|\bar u\|_{H^2}.
\]
\end{lemma}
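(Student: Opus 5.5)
By the scaling and translation invariance of the tube \eqref{tube-Un}, I would first reduce to $\mu=1$ and $x'=0$. Set $v=U_n+\bar u$ with $\|\bar u\|_{H^2}<\delta$. The unknowns are then $(\bar\lambda,\widetilde x,a)\in(0,\infty)\times\R^3\times\R^N$, and I define
\[
 \varepsilon(\bar\lambda,\widetilde x,a;\bar u)(y)
 =\bar\lambda^2 v(\bar\lambda y+\widetilde x)-U_n(y)-\sum_{j=1}^N a_j\phi_j(y),
\]
so that \eqref{geometric-decomposition-Un} holds by construction with $\lambda=\bar\lambda$ and $\bar x=\widetilde x$. The orthogonality conditions \eqref{orthogonality-Un} form a system of $4+N$ scalar equations $\mathcal F(\bar\lambda,\widetilde x,a;\bar u)=0$. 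Its components are $\langle\varepsilon,\zeta_0\rangle$, $\langle\varepsilon,\zeta_k^{\rm tr}\rangle$ for $1\le k\le3$, and $\langle\varepsilon,\zeta_j^{\rm u}\rangle$ for $1\le j\le N$. At the base point $(1,0,0;0)$ we have $\varepsilon=0$, so $\mathcal F=0$ there. I would obtain the lemma from the implicit function theorem for $\mathcal F$ near this point.

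The regularity of $\mathcal F$ has to be checked in a form that does not differentiate $\bar u$. I would move the dilation and translation onto the test functions:
\[
 \langle\bar\lambda^2 v(\bar\lambda\cdot+\widetilde x),\zeta\rangle
 =\bar\lambda^{-1}\int v(x)\,\zeta\bigl((x-\widetilde x)/\bar\lambda\bigr)\dd x .
\]
This expression is linear and bounded in $v$. It is $C^1$ in $(\bar\lambda,\widetilde x)$ provided $y\cdot\nabla\zeta$ and $\nabla\zeta$ lie in $L^2$, locally uniformly under small dilations. Proposition~\ref{prop:adjointmodeasymptotics} supplies exactly this. The radial adjoint modes decay like a Gaussian, so they pose no difficulty. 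The translation modes $\zeta_k^{\rm tr}$ have the dipole tail $|y|^{-2}$, with $\nabla\zeta_k^{\rm tr}=O(|y|^{-3})$ and $y\cdot\nabla\zeta_k^{\rm tr}=O(|y|^{-2})$; both are in $L^2(\R^3)$, and the bounds are uniform for $\bar\lambda$ near $1$. The term $U_n$ is handled the same way, using $\Lambda U_n\in L^2$ and $\nabla U_n\in L^2$, and the $a$-dependence is affine. Hence $\mathcal F$ is $C^1$ jointly in $(\bar\lambda,\widetilde x,a)$ and $\bar u\in L^2$, and a fortiori in $\bar u\in H^2$.

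Next I compute the Jacobian in $(\bar\lambda,\widetilde x,a)$ at the base point. Since $\partial_{\bar\lambda}\varepsilon=\Lambda U_n$, $\partial_{\widetilde x_k}\varepsilon=\partial_kU_n$ and $\partial_{a_j}\varepsilon=-\phi_j$, the biorthogonality \eqref{dual-normalization-mod} together with the vanishing cross pairings shows that the Jacobian is $\operatorname{diag}(1,I_3,-I_N)$, which is invertible. The implicit function theorem then gives $C^1$ maps $\bar u\mapsto(\bar\lambda,\widetilde x,a)$ near $0$, together with the Lipschitz bound $|\bar\lambda-1|+|\widetilde x|+|a|\le C\|\bar u\|_{H^2}$. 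It also gives uniqueness in a neighbourhood of the base point, which is exactly the local uniqueness asserted in the lemma. Since $\varepsilon$ is a continuous function of the parameters and $\bar u$ in $H^2$, we get $\|\varepsilon\|_{H^2}\le C\|\bar u\|_{H^2}$; in particular $\varepsilon\in H^2$. Undoing the normalisation, by applying the decomposition to the rescaled function $\mu^2u(\mu\cdot+x')$, yields $\lambda=\mu\bar\lambda$ and $\bar x=x'+\mu\widetilde x$.

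The main obstacle I expect is the $C^1$ dependence on the dilation parameter in the translation directions. Here the adjoint functions are not Gaussian, so integrability of $y\cdot\nabla\zeta_k^{\rm tr}$ must be taken from the dipole asymptotics \eqref{translationderivatives}; its $|y|^{-2}$ tail is only barely square-integrable in three dimensions. A second point to check is that uniformity of the constants $\delta$ and $C$ across different representations $(\mu,x')$ of the same $u$ follows from the invariance reduction. This holds because $\mathcal F$ commutes with the symmetry group action.
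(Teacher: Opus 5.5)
Your proposal is correct and is essentially the paper's proof. It uses the same map $\nu^2(U_n+\bar u)(\nu\,\cdot+z)-U_n-\sum_j b_j\phi_j$ paired against the adjoint modes, and it moves the dilation and translation onto $\zeta$ via $\zeta,\nabla\zeta,y\cdot\nabla\zeta\in L^2$ from Proposition~\ref{prop:adjointmodeasymptotics}. It then obtains the Jacobian $\operatorname{diag}(1,I_3,-I_N)$ from biorthogonality, applies the implicit function theorem, and finally undoes the normalization to get $\lambda=\mu\bar\lambda$, $\bar x=x'+\mu\widetilde x$.

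Two minor quibbles. First, the $|y|^{-2}$ tail is not borderline in $L^2(\mathbb R^3)$, since it gives an integrable $r^{-2}$ radial integrand. Second, the linear bound on $\|\varepsilon\|_{H^2}$ needs Lipschitz dependence on the parameters, not mere continuity. That Lipschitz bound does hold because $\Lambda U_n,\nabla U_n\in H^2$.
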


\begin{proof}
\ifdefined\KSChineseDocument
对 $\nu>0$、$z\in\mathbb R^3$ 和
$b=(b_1,\ldots,b_N)\in\mathbb R^N$，定义
\else
For
\[
\nu>0,\qquad
z\in\mathbb R^3,\qquad
b=(b_1,\ldots,b_N)\in\mathbb R^N,
\]
we define
\fi
\ifdefined\KSChineseDocument
\[
\nu>0,\qquad z\in\mathbb R^3,\qquad
b=(b_1,\ldots,b_N)\in\mathbb R^N.
\]
\fi
\begin{equation}\label{Fn-geometric}
F_n(\bar u,\nu,z,b)(y)
=
\nu^2
\left(U_n+\bar u\right)(\nu y+z)
-U_n(y)
-\sum_{j=1}^N b_j\phi_j(y).
\end{equation}
\ifdefined\KSChineseDocument
为了用正交条件确定 $1+3+N$ 个参数，再定义有限维映射
\else
We next define the finite-dimensional map
\fi
\begin{equation}\label{Gn-geometric}
\begin{aligned}
G_n(\bar u,\nu,z,b)
=
\Big(
&\langle F_n,\zeta_{0}\rangle,
\langle F_n,\zeta_{1}^{\rm tr}\rangle,
\langle F_n,\zeta_{2}^{\rm tr}\rangle,
\langle F_n,\zeta_{3}^{\rm tr}\rangle,
\langle F_n,\zeta_{1}^{\rm u}\rangle,
\ldots,
\langle F_n,\zeta_{N}^{\rm u}\rangle
\Big).
\end{aligned}
\end{equation}
\ifdefined\KSChineseDocument
由定义直接得到 $G_n(0,1,0,0)=0$。下面计算 $F_n$ 在
$(0,1,0,0)$ 处关于参数 $(\nu,z,b)$ 的微分。首先，
\else
Clearly,
$G_n(0,1,0,0)=0.$
We now compute the differential of $F_n$ at the point $(0,1,0,0)$.
Since
\fi
\[
\left.
\partial_\nu
\big[
\nu^2U_n(\nu y)
\big]
\right|_{\nu=1}
=
2U_n+y\cdot\nabla U_n
=
\Lambda U_n,
\]
\ifdefined\KSChineseDocument
故
\else
we have
\fi
\[
\left.\partial_\nu F_n\right|_{(0,1,0,0)}
=
\Lambda U_n.
\]
\ifdefined\KSChineseDocument
同理，
\else
Similarly,
\fi
\[
\left.\partial_{z_k}F_n\right|_{(0,1,0,0)}
=
\partial_kU_n,
\qquad
1\le k\le3,
\]
\ifdefined\KSChineseDocument
并且
\else
and
\fi
\[
\left.\partial_{b_j}F_n\right|_{(0,1,0,0)}
=
-\phi_j,
\qquad
1\le j\le N.
\]
\ifdefined\KSChineseDocument
把这些导数与 \eqref{dual-normalization-mod} 中的伴随模逐一配对，得到
\else
Consequently, by the duality relations \eqref{dual-normalization-mod}, we have
\fi
\[
\left.
D_{(\nu,z,b)}G_n
\right|_{(0,1,0,0)}
=
\begin{pmatrix}
1 & 0 & 0\\
0 & I_3 & 0\\
0 & 0 & -I_N
\end{pmatrix}.
\]
\ifdefined\KSChineseDocument
其中 $I_N$ 是 $N$ 阶单位矩阵。因此这个参数微分可逆，这正是隐函数定理所需的
非退化条件。
\else
Here $I_N$ is the identity matrix of order $N$; hence the displayed matrix is
invertible.
\fi

\ifdefined\KSChineseDocument
还需验证 $G_n$ 对 $\bar u\in H^2$ 为 $C^1$。对任一伴随模 $\zeta$，换元给出
\else
For every adjoint mode $\zeta$, a change of variables gives
\fi
\[
\left\langle
\nu^2\bar u(\nu\,\cdot+z),\zeta
\right\rangle
=
\nu^{-1}
\int_{\mathbb R^3}
\bar u(x)
\zeta\left(\frac{x-z}{\nu}\right)\,dx.
\]
\ifdefined\KSChineseDocument
由命题~\ref{prop:adjointmodeasymptotics}，
\else
By Proposition~\ref{prop:adjointmodeasymptotics},
\fi
\[
\zeta,\qquad \nabla\zeta,\qquad
y\cdot\nabla\zeta
\in L^2(\mathbb R^3).
\]
\ifdefined\KSChineseDocument
因此，对 $\nu$ 和 $z$ 求导时，可以在上述换元公式中把导数落到伴随模上，而
不必对 $\bar u$ 多求一个导数。所得各项均由
$C\|\bar u\|_{L^2}\le C\|\bar u\|_{H^2}$ 控制，故 $G_n$ 在
$(0,1,0,0)$ 的一个邻域内属于 $C^1$。
\else
Consequently, the derivatives with respect to $\nu$ and $z$ fall on
the adjoint mode rather than on $\bar u$, and the resulting terms are
bounded by $C\|\bar u\|_{L^2}$, hence by
$C\|\bar u\|_{H^2}$.
It follows that the finite-dimensional map $G_n$ is of class $C^1$
in a neighborhood of $(0,1,0,0)$.
\fi

\ifdefined\KSChineseDocument
由隐函数定理，存在唯一的 $C^1$ 参数映射
\else
By the implicit function theorem, there exist unique $C^1$ maps
\fi
\[
\bar u\mapsto
\bar\lambda(\bar u),\qquad
\bar u\mapsto\widetilde x(\bar u),\qquad
\bar u\mapsto a_j(\bar u),
\]
\ifdefined\KSChineseDocument
它们在 $\|\bar u\|_{H^2}$ 充分小时有定义，并满足
\else
defined for $\|\bar u\|_{H^2}$ sufficiently small, such that
\fi
\[
 G_n\bigl(
 \bar u,\bar\lambda(\bar u),
 \widetilde x(\bar u),a(\bar u)
 \bigr)=0.
\]
\ifdefined\KSChineseDocument
隐函数定理中的唯一性还说明：同一邻域内任何满足
$G_n(\bar u,\nu,z,b)=0$ 的参数组 $(\nu,z,b)$ 都等于
$(\bar\lambda,\widetilde x,a)$。这就证明了引理中的局部唯一性。此外，
\else
The uniqueness clause in the implicit function theorem also shows that any
second tuple $(\nu,z,b)$ in the same neighborhood satisfying
$G_n(\bar u,\nu,z,b)=0$ coincides with
$(\bar\lambda,\widetilde x,a)$.  This is precisely the local uniqueness
claimed in the lemma.
Moreover,
\fi
$
\bar\lambda(0)=1,\ 
\widetilde x(0)=0,\ 
a_j(0)=0.$
\ifdefined\KSChineseDocument
这些映射属于 $C^1$，故必要时缩小邻域可得
\else
Since these maps are $C^1$, decreasing the neighborhood if necessary gives
\fi
\begin{equation}\label{modulation-parameter-bound}
|\bar\lambda(\bar u)-1|
+
|\widetilde x(\bar u)|
+
\sum_{j=1}^N|a_j(\bar u)|
\le
C\|\bar u\|_{H^2}.
\end{equation}

\ifdefined\KSChineseDocument
令
\else
Setting
\fi
\begin{equation}\label{epsilon-geometric-Un}
\varepsilon(y)
=
F_n\big(
\bar u,\bar\lambda,\widetilde x,a
\big)(y),
\end{equation}
\ifdefined\KSChineseDocument
则 $G_n=0$ 恰好给出正交条件 \eqref{orthogonality-Un}，而
\eqref{Fn-geometric} 给出
\else
we obtain exactly the orthogonality conditions
\eqref{orthogonality-Un}.  Equation \eqref{Fn-geometric} also gives
\fi
\begin{equation}\label{relative-decomposition-Un}
\left(U_n+\bar u\right)
(\bar\lambda y+\widetilde x)
=
\frac1{\bar\lambda^2}
\left(
U_n+\sum_{j=1}^Na_j\phi_{j}
+\varepsilon
\right)(y).
\end{equation}

\ifdefined\KSChineseDocument
现取 $u\in\mathcal X_\delta$，并按 \eqref{tube-Un} 写成
\else
Now let $u\in\mathcal X_{\delta}$ be written as
\fi
\[
u(x)
=
\frac1{\mu^2}
\left(U_n+\bar u\right)
\left(\frac{x-x'}{\mu}\right).
\]
\ifdefined\KSChineseDocument
把 \eqref{relative-decomposition-Un} 代入上式，得到
\else
Using \eqref{relative-decomposition-Un}, we find
\fi
\[
u(x)
=
\frac1{(\mu\bar\lambda)^2}
\left(
U_n+\sum_{j=1}^Na_j\phi_{j}
+\varepsilon
\right)
\left(
\frac{x-x'-\mu\widetilde x}
{\mu\bar\lambda}
\right).
\]
\ifdefined\KSChineseDocument
因此取
\else
Therefore, defining
\fi
\[
\lambda=\mu\bar\lambda,
\qquad
\bar x=x'+\mu\widetilde x,
\]
\ifdefined\KSChineseDocument
便得到分解 \eqref{geometric-decomposition-Un}。
\else
gives the decomposition \eqref{geometric-decomposition-Un}.
\fi

\ifdefined\KSChineseDocument
最后，由 $U_n$、$\phi_j$ 的正则性与衰减，以及
\eqref{modulation-parameter-bound}，得到
\else
Using the regularity and decay properties of $U_n$ and $\phi_j$,
together with \eqref{modulation-parameter-bound}, we further obtain
\fi
\[
\|\varepsilon\|_{H^2}
+
\left|\frac{\lambda}{\mu}-1\right|
+
\sum_{j=1}^N|a_j|
+
\frac{|\bar x-x'|}{\mu}
\le
C\|\bar u\|_{H^2}.
\]
\ifdefined\KSChineseDocument
证明完毕。
\else
This completes the proof.
\fi
\end{proof}

\ifdefined\KSChineseDocument
\subsection{初值、演化方程与 bootstrap 命题}
\else
\subsection{Description of the initial data}
\fi

\ifdefined\KSChineseDocument
先说明本节使用的局部解结论。若 $u_0\in H^2(\mathbb R^3)$，则方程
\else
We use the standard local theory in the following form.  If
$u_0\in H^2(\mathbb R^3)$, then the Duhamel formula for
\fi
\[
 \partial_tu=\Delta u+\nabla\!\cdot
      (u\nabla\Delta^{-1}u)
\]
\ifdefined\KSChineseDocument
的 Duhamel 公式在一个短时间区间内给出唯一的 $H^2$ 解；该解在 $t>0$ 时光滑，
并且只要 $H^2$ 范数保持有限就可以继续延拓。局部压缩映射使用
\else
has a unique local $H^2$ solution, which is smooth for $t>0$ and can be
continued while its $H^2$ norm stays finite.  The contraction estimate is
based on
\fi
\[
 \|\nabla\!\cdot(f\nabla\Delta^{-1}g)\|_{H^1}
 \le C\|f\|_{H^2}\|g\|_{H^2}
\]
\ifdefined\KSChineseDocument
以及热半群获得一个导数的估计。若另有 $\Lambda u_0\in L^2$，则对 Duhamel
公式作用 $\Lambda$，再使用同一乘积估计，可得解在 $\mathcal Y$ 中连续。因此，
下文所有求导先对光滑化初值进行，所得估计与光滑化参数无关，最后由逼近传回
$\mathcal Y$ 初值。
\else
and the one-derivative heat-semigroup bound.  If in addition
$\Lambda u_0\in L^2$, commuting $\Lambda$ through the Duhamel formula and
using the same estimate gives continuity in $\mathcal Y$.  Thus the
differentiations below may be justified first for regularized data and then
passed to $\mathcal Y$ data by approximation.
\fi

\ifdefined\KSChineseDocument
取初值
\else
Consider initial data of the form
\fi
\begin{equation}\label{initial-data}
u_0=\frac{1}{\lambda_0^2}\left(U_n+v_0\right)\left(\frac{x}{\lambda_0}\right),
\end{equation}
\ifdefined\KSChineseDocument
其中
\else
where
\fi
\begin{equation}\label{1.2-2}
v_0=\sum_{j=1}^N a_{j,0}\phi_{j}
+\varepsilon_0
\end{equation}
\ifdefined\KSChineseDocument
且稳定部分 $\varepsilon_0$ 满足
\else
where $\varepsilon_0$ satisfies
\fi
\begin{equation}\label{initial-stable-orthogonality}
\begin{aligned}
\langle\varepsilon_0,\zeta_{0}\rangle=0,\ 
\langle\varepsilon_0,\zeta_{k}^{\rm tr}\rangle=0,\ 
\langle\varepsilon_0,\zeta_{j}^{\rm u}\rangle=0, \  1\le k\le3,
\ 1\le j\le N.
\end{aligned}
\end{equation}
\ifdefined\KSChineseDocument
取稍后确定的 $s_0\gg1$ 和 $0<\mu,K_0\ll1$，并假设
\else
Let $s_0\gg1$ and $0<\mu,K_0\ll1$ be fixed below, and assume
\fi
\begin{equation}\label{initial-setting-KS}
\lambda_0=\lambda(s_0)=e^{-\frac{s_0}{2}},\ ||\Lambda \varepsilon_0||_2\le K_0e^{-\mu s_0},\ 
\left\|\varepsilon_0\right\|_{H^2} \le K_0 e^{-\mu s_0}.
\end{equation}
\ifdefined\KSChineseDocument
以及
\else
and
\fi
\begin{equation}\label{initial-unstable-model-KS}
    \sum_{j=1}^{N}|a_{j,0}|^2\le e^{-2\mu s_0}.
\end{equation}
\ifdefined\KSChineseDocument
\textbf{自相似变量中的演化方程。}
只要从 \eqref{initial-data} 出发的解 $u(t)$ 仍属于 $\mathcal X_\delta$，引理
\ref{lemma-geometric-Un} 就给出分解
\else
\textbf{The renormalized flow.}
As long as the solution $u(t)$ starting from \eqref{initial-data} belongs to $\mathcal X_{\delta}$, we apply  Lemma \ref{lemma-geometric-Un} to deduce that it can be written in the form
\fi
\begin{equation}\label{re}
u(t, x)=\frac{1}{\lambda(t)^2}\left(U_n+v\right)(s, y),\ y=\frac{x-x(t)}{\lambda(t)},
\end{equation}
\ifdefined\KSChineseDocument
其中
\else
where \begin{equation}\label{dec}
\fi
\ifdefined\KSChineseDocument
\begin{equation}\label{dec}
\fi
v=\varepsilon+\psi,\ \ \psi=\sum_{j=1}^{N}a_j\phi_j\end{equation}
\ifdefined\KSChineseDocument
其中 $\varepsilon$ 满足
\else
and
$\varepsilon$  satisfying
\fi
\begin{equation}\label{Ortho}
\begin{aligned}
\langle\varepsilon,\zeta_{0}\rangle=0,\ 
\langle\varepsilon,\zeta_{k}^{\rm tr}\rangle=0,\ 
\langle\varepsilon,\zeta_{j}^{\rm u}\rangle=0, \  1\le k\le3,
\ 1\le j\le N.
\end{aligned}
\end{equation}
\ifdefined\KSChineseDocument
自相似时间定义为
\else
and $s$ is the  renormalized time defined by
\fi
\begin{equation}\label{renormalized-time}
   s(t):=\int_0^t\frac{d\tau}{\lambda^2(\tau)}+s_0. 
\end{equation}
%In addition, we give a further decomposition
%$$
%\phi+\varepsilon=\chi_{\frac{1}{\lambda}}\phi+z.
%$$
%Hence $z=(1-\chi_{\frac{1}{\lambda}})\phi+\varepsilon$.
\ifdefined\KSChineseDocument
这些参数由有限维隐函数定理得到。另一方面，方程
\else
Since the modulation parameters are obtained through the finite-dimensional
implicit function theorem, and since the solution of
\fi
\ifdefined\KSMainDocument
\eqref{eq:KS}
\else
\eqref{main}
\fi
\ifdefined\KSChineseDocument
的解由抛物正则化在正时间变得光滑。因此，只要解仍在管状邻域
$\mathcal X_\delta$ 内，$\lambda(t)$、$x(t)$ 和 $a_j(t)$ 就可微。
\else
is smooth for positive
times by parabolic regularization, the modulation parameters
$\lambda(t)$, $x(t)$ and $a_j(t)$ are differentiable as long as the
solution remains in the tube $\mathcal X_{\delta}$.
\fi

\ifdefined\KSChineseDocument
下面从原方程 \eqref{eq:KS} 逐项推出后文使用的余项方程。令
\else
We now derive the equation used below.  Set
\fi
\[
 W(s,y)=U_n(y)+v(s,y),\qquad
 u(t,x)=\lambda(t)^{-2}W(s,y),\qquad
 y=\frac{x-x(t)}{\lambda(t)},\qquad \frac{\dd s}{\dd t}=\lambda^{-2}.
\]
\ifdefined\KSChineseDocument
由 $\lambda_s=\lambda^2\lambda_t$、$x_s=\lambda^2x_t$ 和链式法则，
\else
Since $\lambda_s=\lambda^2\lambda_t$ and $x_s=\lambda^2x_t$, the chain rule
gives
\fi
\begin{align}
 \lambda^4\partial_tu
 &=\partial_sW-\frac{\lambda_s}{\lambda}
      (2W+y\cdot\nabla W)-\frac{x_s}{\lambda}\cdot\nabla W\notag\\
 &=\partial_sW-\frac{\lambda_s}{\lambda}\Lambda W
      -\frac{x_s}{\lambda}\cdot\nabla W.                 \label{eq:time-change-calculation}
\end{align}
\ifdefined\KSChineseDocument
Laplacian 项和 Keller--Segel 漂移项在该变换下都带因子 $\lambda^{-4}$。
因此，把 \eqref{re} 代入
\else
The Laplacian and the Keller--Segel drift both scale by $\lambda^{-4}$.
Consequently, substitution of \eqref{re} into
\fi
\ifdefined\KSMainDocument
\eqref{eq:KS}
\else
\eqref{main}
\fi
\ifdefined\KSChineseDocument
并约去共同因子，得到
\else
yields
\fi
\[
 \partial_sW-\frac{\lambda_s}{\lambda}\Lambda W
 -\frac{x_s}{\lambda}\cdot\nabla W
 =\Delta W+\nabla\!\cdot\bigl(W\nabla\Delta^{-1}W\bigr).
\]
\ifdefined\KSChineseDocument
接着使用 $U_n$ 的稳态方程，并把二次漂移项在 $U_n$ 处线性化。两条恒等式分别为
\else
The stationary profile equation and the linearization of its quadratic term
are, respectively,
\fi
\begin{align*}
 -\Delta U_n+\frac12\Lambda U_n
 -\nabla\!\cdot(U_n\nabla\Delta^{-1}U_n)&=0,\\
 \left.\frac{\dd}{\dd\theta}\right|_{\theta=0}
 \nabla\!\cdot\bigl((U_n+\theta f)
       \nabla\Delta^{-1}(U_n+\theta f)\bigr)
 &=2U_nf+\nabla U_n\cdot\nabla\Delta^{-1}f
   +\nabla f\cdot\nabla\Delta^{-1}U_n.
\end{align*}
\ifdefined\KSChineseDocument
这里在展开散度时使用了 $\Delta\Delta^{-1}g=g$。从 $W=U_n+v$ 的方程中减去
第一条稳态恒等式，再代入 $v=\varepsilon+\psi$ 以及
$L_n\phi_j=-\mu_j\phi_j$，便得到余项方程
\else
Here we used $\Delta\Delta^{-1}g=g$ in expanding both divergences.
Subtracting the first identity, inserting $v=\varepsilon+\psi$, and using
$L_n\phi_j=-\mu_j\phi_j$ now gives the renormalized remainder equation
\fi
\begin{equation}\label{renormalized equation}
\partial_{s} \varepsilon+L_n \varepsilon=F+\operatorname{Mod},
\end{equation}
\ifdefined\KSChineseDocument
其中参数项为
\else
with the modulation term
\fi
\begin{equation}\label{modulation-term}
\operatorname{Mod}=\sum_{j=1}^N[\mu_ja_j-(a_j)_s]\phi_j+\left(\frac{\lambda_s}{\lambda}+\frac{1}{2}\right) \left(\Lambda U_n+\Lambda\psi\right)+\frac{x_s}{\lambda}\cdot(\nabla U_n+\nabla\psi),
\end{equation}
\ifdefined\KSChineseDocument
而 $F$ 收集所有含稳定余项的缩放、平移和二次项：
\else
and the force terms are decomposed as follows:
\fi
\[
 F=\widetilde L(\varepsilon)+NL+UN,
 \qquad
 NL=\varepsilon^2+\nabla\varepsilon\cdot\nabla\Delta^{-1}\varepsilon,
 \qquad
 UN=\psi^2+\nabla\psi\cdot\nabla\Delta^{-1}\psi.
\]
\begin{equation}\label{force}
\widetilde L(\varepsilon)=\left(\frac{\lambda_s}{\lambda}+\frac{1}{2}\right) \Lambda\varepsilon+\frac{x_s}{\lambda}\cdot\nabla\varepsilon+\nabla\varepsilon\cdot\nabla\Delta^{-1}\psi+\nabla\psi\cdot\nabla\Delta^{-1}\varepsilon+2\psi\varepsilon.
\end{equation}

\ifdefined\KSChineseDocument
下面给出本节的核心闭合命题。
\begin{proposition}[非线性 bootstrap]\label{bootstrap}
\else
We claim the following bootstrap proposition.
\begin{proposition}[Nonlinear bootstrap]\label{bootstrap}
\fi
\ifdefined\KSChineseDocument
对每个固定且充分大的 $n$，存在常数
$0<\mu,K_0\ll1$、$K\gg1$ 和 $K'\gg1$，使下述结论成立。若
$s_0\ge s_0(n,K_0,K,\mu)$ 充分大，且 $\varepsilon_0$ 满足
\eqref{initial-stable-orthogonality} 的正交条件和
\eqref{initial-setting-KS} 的两个范数条件，则可以选取满足
\eqref{initial-unstable-model-KS} 的初始不稳定系数
$(a_{1,0},\ldots,a_{N,0})$，使相应解对所有 $s\ge s_0$ 都具有分解
\eqref{re}，并满足：
\else
For each sufficiently large fixed $n$, there exist constants
$0<\mu,K_0\ll1$, $K\gg1$, and $K'\gg1$ with the following property.
For every sufficiently large
$s_0\ge s_0(n,K_0,K,\mu)$ and every $\varepsilon_0$ satisfying the
orthogonality conditions in \eqref{initial-stable-orthogonality} and the two norm bounds in
\eqref{initial-setting-KS}, there exists
$(a_{1,0},\ldots,a_{N,0})$ satisfying
\eqref{initial-unstable-model-KS} such that the solution with initial datum
\eqref{initial-data} admits the decomposition \eqref{re} for all
$s\ge s_0$ and satisfies the following estimates:
\fi
\begin{enumerate}
\item[(i)] \ifdefined\KSChineseDocument 缩放参数满足\else The scaling parameter satisfies\fi
\begin{equation}\label{1.7}
0<\lambda(s)<e^{-\mu s};
\end{equation}
\item[(ii)] \ifdefined\KSChineseDocument 不稳定系数满足\else The unstable coefficients satisfy\fi
\begin{equation}\label{chca}
\sum_{j=1}^{N}\left|a_{j}\right|^2 \le e^{-2\mu s};
\end{equation}
\item[(iii)] \ifdefined\KSChineseDocument 稳定余项满足\else The stable remainder satisfies\fi
\begin{equation}\label{1.9}
\left\|\varepsilon\right\|_{H^2} < K e^{-\mu s};
\end{equation}
\item[(iv)] \ifdefined\KSChineseDocument 其缩放导数满足\else Its scaling derivative satisfies\fi
\begin{equation}\label{L2Lambda}
\left\|\Lambda\varepsilon\right\|_{L^2} < K' e^{-\mu s};
\end{equation}
\end{enumerate}
\end{proposition}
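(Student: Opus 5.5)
We follow the standard bootstrap scheme of \cite{LiZhou2025}, adapted to the $N$ extra radial unstable directions. Fix the constants in the order $\mu\ll\gamma_n$ (with $\gamma_n$ from Theorem~\ref{thm:stable-semigroup}), then $K'\gg K\gg1$, then $K_0\ll1$, and finally $s_0$ large. For each choice of $a_0=(a_{j,0})$ in the closed ball $\mathcal B=\{\sum|a_{j,0}|^2\le e^{-2\mu s_0}\}$, the local theory and Lemma~\ref{lemma-geometric-Un} give a solution with decomposition \eqref{re} on a maximal interval. Let $s^*(a_0)$ be the exit time of the bootstrap set defined by \eqref{1.7}--\eqref{L2Lambda}. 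The goal is to show that, for some $a_0\in\mathcal B$, $s^*(a_0)=\infty$.

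\textbf{Step 1: modulation equations.} Pair \eqref{renormalized equation} with $\zeta_0,\zeta_k^{\rm tr},\zeta_j^{\rm u}$. By \eqref{Ortho}, $\langle\partial_s\varepsilon,\zeta\rangle=0$, and $\langle L_n\varepsilon,\zeta\rangle=\langle\varepsilon,L_n^*\zeta\rangle=0$. The biorthogonality relations \eqref{dual-normalization-mod} then give a linear system for $(\lambda_s/\lambda+\tfrac12,\ x_s/\lambda,\ \mu_ja_j-(a_j)_s)$. Its matrix is the identity plus $O(|a|)$, and the right side is $\langle F,\zeta\rangle$. To control $\langle\widetilde L(\varepsilon),\zeta\rangle$, put the derivatives on $\zeta$ by integrating by parts; Proposition~\ref{prop:adjointmodeasymptotics} ensures $\zeta,\nabla\zeta,y\cdot\nabla\zeta\in L^2$. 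The result is
\[
\Bigl|\frac{\lambda_s}{\lambda}+\frac12\Bigr|+\Bigl|\frac{x_s}{\lambda}\Bigr|+\bigl|(a_j)_s-\mu_ja_j\bigr|\lesssim\|\varepsilon\|_{H^2}^2+|a|\,\|\varepsilon\|_{H^2}+|a|^2\lesssim K^2e^{-2\mu s}.
\]
Integrating the first bound gives $\lambda(s)=e^{-s/2+O(1)}$, which improves \eqref{1.7} once $\mu<1/2$. It also gives $|x_s|\lesssim e^{-s/2}e^{-2\mu s}$, integrable in $s$; this yields $\lim x(t)$ and $T<\infty$ through \eqref{renormalized-time}.

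\textbf{Step 2: stable remainder.} Apply the modified energy of Theorem~\ref{thm:stable-semigroup}. Since $\varepsilon\in X_{{\rm s},n}$ by \eqref{Ortho}, write $\varepsilon$ by Duhamel, $\varepsilon(s)=e^{-(s-s_0)\mathbf L_n}\varepsilon_0+\int_{s_0}^s e^{-(s-\sigma)\mathbf L_n}P_{{\rm s},n}(F+\operatorname{Mod})\,d\sigma$. On the support of $P_{{\rm s},n}$ the $\phi_j$, $\Lambda U_n$, $\partial_kU_n$ components of $\operatorname{Mod}$ are annihilated, leaving only the $O(|a|\cdot|\text{mod}|)$ pieces such as $\Lambda\psi$ and $\nabla\psi$. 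Using \eqref{eq:mainstableSG} with $\mu<\gamma_n/2$ gives $\|\varepsilon\|_{L^2}\le C(K_0+K^2e^{-\mu s_0})e^{-\mu s}$.

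\textbf{Step 3: derivatives and the unstable directions.} For $H^2$, we run a standard energy estimate for $\partial^2\varepsilon$ with the dissipation $-\Delta$ and the drift term $\tfrac12 y\cdot\nabla$. The latter gives a damping $+\tfrac14\|\partial^2\varepsilon\|^2$-type contribution after integration by parts, plus lower-order terms controlled by $\|\varepsilon\|_{L^2}$; this closes $\|\varepsilon\|_{H^2}\le \sqrt{C(K_0+1)}\,e^{-\mu s}<Ke^{-\mu s}$. The dangerous term is $(\lambda_s/\lambda+\tfrac12)\Lambda\varepsilon$, which has an unbounded weight and is the reason \eqref{L2Lambda} is carried. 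We estimate $\Lambda\varepsilon$ by applying $\Lambda$ to \eqref{renormalized equation}. Since $[\Lambda,\Delta]=-2\Delta$ and $[\Lambda,y\cdot\nabla]=0$, an $L^2$ energy identity holds with coercive diffusion and commutator terms bounded by $\|\varepsilon\|_{H^2}$, giving $\|\Lambda\varepsilon\|\le C K e^{-\mu s}<K'e^{-\mu s}$. This $\Lambda$-estimate is where we expect the main difficulty. The commutators $[\Lambda,\nabla U_n\cdot\nabla\Delta^{-1}]$ must be checked to be bounded using the $r^{-2}$ tails of $U_n$, and the weighted drift must not cost growth; we will first try commuting $\Lambda$ through the nonlocal term via homogeneity of $\Delta^{-1}$.

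Finally, the unstable coefficients satisfy $(a_j)_s=\mu_ja_j+O(K^2e^{-2\mu s})$ with $\mu_j>\mu$. This gives an outgoing property: at the boundary $|a|^2e^{2\mu s}=1$, one has $\frac{d}{ds}\bigl(|a|^2e^{2\mu s}\bigr)>0$. Consequently, if every $a_0\in\mathcal B$ led to exit, the only bound that could fail is \eqref{chca}. The map $a_0\mapsto e^{\mu s^*}a(s^*)$ would then be a continuous retraction of the ball onto its boundary sphere, and it is the identity on the sphere. This contradicts Brouwer's theorem, so some $a_0$ gives $s^*=\infty$.
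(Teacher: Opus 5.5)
Your architecture matches the paper's: modulation by pairing with the adjoint modes, then $L^2$, $\Delta\varepsilon$ and $\Lambda\varepsilon$ estimates, then the outgoing property and Brouwer.

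\textbf{The $L^2$ step takes a different route.} You run Duhamel on $X_{{\rm s},n}$ and treat $b\Lambda\varepsilon$ as a forcing controlled by the bootstrap bound \eqref{L2Lambda}. The paper instead differentiates $\mathcal E_n[\varepsilon]=\langle B_n\varepsilon,\varepsilon\rangle$ and controls $b\langle B_nD_{0,n}\varepsilon,\varepsilon\rangle$ by Lemma~\ref{lem:projected-generator-energy}, a nontrivial commutator estimate between the stable semigroup and $\Lambda,\partial_k$. As a result the paper never uses \eqref{L2Lambda} to close the $L^2$ bound and can choose $K'$ last. Your route is valid and avoids that lemma. The price is that $b\Lambda\varepsilon$ contributes roughly $CK^2K'e^{-2\mu s_0}e^{-\mu s}$, so $s_0$ must be chosen after $K'$. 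This is consistent only if the $\Lambda\varepsilon$ estimate yields a constant that depends on $K,K_0$ but not on $K'$ or $s_0$. That is true once the gap below is repaired.

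\textbf{The gap is coercivity in the $\Lambda\varepsilon$ estimate.} You assert that the $L^2$ energy identity for $Z=\Lambda\varepsilon$ closes ``with coercive diffusion'', with commutators as the only issue. But $Z\notin X_{{\rm s},n}$, so neither \eqref{eq:mainstableSG} nor $\mathcal E_n$ applies to it, and
\[
 \operatorname{Re}\langle\mathbf L_nZ,Z\rangle
 =\|\nabla Z\|_2^2+\frac14\|Z\|_2^2-\frac32\int_{\mathbb R^3}U_n|Z|^2
 -\operatorname{Re}\langle\nabla U_n\cdot\nabla\Delta^{-1}Z,Z\rangle
\]
is not bounded below by a positive multiple of $\|Z\|_2^2$. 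Indeed $U_n(0)=\mu_n^{-2}$, and $\mathbf L_n$ has $N_n+4$ negative eigenvalues. Diffusion and drift alone give only $\frac{d}{ds}\|Z\|_2^2\le2a_n\|Z\|_2^2+\cdots$, which does not close.

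In your $H^2$ step the same large potential is harmless, because $\|\Delta\varepsilon\|_2^2\le\delta\|\nabla\Delta\varepsilon\|_2^2+C_\delta\|\varepsilon\|_2^2$ converts it into the already-controlled $L^2$ norm. No such interpolation exists for $Z$: its natural lower-order norm would involve $\|y\varepsilon\|_2$, which you do not control.

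The missing idea is that the non-coercive part of $\mathbf L_n$ is compact, or localized. The paper proves the G\aa rding inequality of Lemma~\ref{lem:finite-dimensional-Garding},
\[
\operatorname{Re}\langle\mathbf L_nf,f\rangle\ge c_n(\|\nabla f\|_2^2+\|f\|_2^2)-C_n\sum_j|\langle f,q_j\rangle|^2,\qquad q_j\in C_c^\infty,
\]
and then uses $\langle Z,q_j\rangle=\langle\varepsilon,\Lambda^*q_j\rangle=O(\|\varepsilon\|_2)$. An equivalent fix is to split $U_n$ and $\nabla U_n$ into a part supported in $|y|\le R$ plus a small tail, and use $\|\mathbf 1_{|y|\le R}\Lambda\varepsilon\|_2\lesssim R\|\varepsilon\|_{H^1}$.

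With either fix the bad terms become a source bounded by a constant times $\|\varepsilon\|_{H^1}^2\le K^2e^{-2\mu s}$. You then get $\|\Lambda\varepsilon\|_2\le C(K_0+K)e^{-\mu s}$ with $C$ independent of $K'$ and $s_0$, which is exactly what your constant ordering needs.

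By contrast, the commutator $[\Lambda,\nabla U_n\cdot\nabla\Delta^{-1}]$ that you flagged as the main difficulty is routine. Homogeneity gives $[\Lambda,\nabla\Delta^{-1}]=\nabla\Delta^{-1}$, and the resulting terms are bounded by $\|\varepsilon\|_{H^2}$ using the decay of $U_n$.
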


\ifdefined\KSChineseDocument
下面依次证明参数方程、$L^2$ 与 $H^2$ 估计、缩放导数估计以及不稳定方向的
向外穿越性质；最后由 Brouwer 不回缩定理选择初始不稳定系数。为此定义首次
离开 bootstrap 区域的时间
\else
The remainder of this section proves Proposition \ref{bootstrap}; the final
subsection then deduces the nonlinear theorem from it.
We next define the exit time
\fi
\begin{equation}\label{mmbb}
\ifdefined\KSChineseDocument
s^*=\sup\{s\ge s_0:\eqref{1.7}\text{--}\eqref{L2Lambda}
\text{ 在 }[s_0,s)\text{ 上成立}\}.
\else
s^*=\sup\{s\ge s_0: \text{the bounds}\ \eqref{1.7}-\eqref{L2Lambda}\ \text{hold on}\ [s_0,s) \},
\fi
\end{equation}
\ifdefined\KSChineseDocument
并暂时反设
\else
and we assume, by contradiction, that
\fi
$$
s^*<+\infty.
$$
\ifdefined\KSChineseDocument
以下估计均在 $[s_0,s^*]$ 上进行；在这个区间上
\eqref{1.7}--\eqref{L2Lambda} 可以作为先验假设使用。
\else
From now on we work on $[s_0,s^*]$, where
\eqref{1.7}--\eqref{L2Lambda} hold.
\fi

\medskip
\noindent
\ifdefined\KSChineseDocument
\textbf{常数的选取顺序。}
先固定充分大的 $n$；从此以后，所有只依赖 $U_n$ 和 $L_n$ 谱性质的常数均视为
固定，特别是 $\gamma_n$、$c_n$ 和 $N$ 已经固定。接着，先把 Young 不等式中
的辅助小常数取得充分小，使相应项可以被 $H^2$ 能量和缩放导数能量的正项吸收；
由此确定一个 $\kappa_n>0$。然后取
\else
\textbf{Choice of the bootstrap constants.}
We briefly specify the order in which the constants appearing in
the above bootstrap argument are chosen. First, we fix a sufficiently
large integer $n$. All constants depending only on the profile
$U_n$ and on the spectral properties of $L_n$ are then regarded as
fixed, i.e., $\gamma_n$, $c_n$ and $N$ are fixed now.

Next, the small constants appearing in the Young inequalities are
chosen sufficiently small so that the corresponding terms can be
absorbed by the coercive parts of the $H^2$ energy
estimate and scale derivative estimate. This determines a constant $\kappa_n>0$. We then choose
\fi
\[
0<\mu<
\min\left\{
\frac{\gamma_n}{4},
\frac{\kappa_n}{2},
\frac12
\right\}.
\]

\ifdefined\KSChineseDocument
再固定充分小的 $K_0>0$，随后取 $K\gg1$，使只含初值常数的项可以被改进后的
$H^2$ bootstrap 界吸收。在 $n,\mu,K_0,K$ 均已固定后，取
\else
After fixing $K_0>0$ sufficiently small, we choose $K\gg1$ large
enough so that all terms depending only on the initial setting \eqref{initial-setting-KS}
can be absorbed in the improvement of the $H^2$ bootstrap bound.

After $n,\mu,K_0$, and $K$ have been fixed, we choose
\fi
\[
s_0=s_0(n,\mu,K_0,K)\gg1
\]
\ifdefined\KSChineseDocument
充分大，使下文所有小量条件和吸收条件同时成立。最后，在 $s_0$ 固定以后，
取 $K'\gg1$，使 $\|\Lambda\varepsilon\|_2$ 的估计严格改进其 bootstrap 界。
$K'$ 只在最后一步选取，因此不进入 $s_0$ 的选取。
\else
sufficiently large so that all the smallness and
absorption conditions required in the bootstrap estimates below
are satisfied.

Once $s_0$ has been fixed, we choose $K'\gg1$ sufficiently large so
that the estimate for $\|\Lambda\varepsilon\|_{L^2}$ strictly improves
its bootstrap bound. Notice that $K'$ is chosen only at this final
stage and therefore does not enter the choice of $s_0$.
\fi

\ifdefined\KSChineseDocument
\subsection{缩放、平移与不稳定系数的参数方程}
\else
\subsection{Modulation equation}
\fi
\ifdefined\KSChineseDocument
\begin{lemma}[参数方程的二次误差界]\label{modulation}
\else
\begin{lemma}[Quadratic bounds for the modulation equations]\label{modulation}
\fi
\ifdefined\KSChineseDocument
当 $s_0$ 充分大时，
\else
If $s_0$ is sufficiently large, then
\fi
\begin{equation}\label{semin}
\bigg|\frac{\lambda_s}{\lambda}+\frac{1}{2}\bigg|+\sum_{j=1}^N|(a_j)_s-\mu_ja_j|+\frac{|x_s|}{\lambda}\lesssim||\varepsilon||^2_{L^2}+\sum_{j=1}^N|a_j|^2,\qquad
s\in[s_0,s^*].
\end{equation}
\end{lemma}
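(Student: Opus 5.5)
The plan is to differentiate the orthogonality conditions \eqref{Ortho} in $s$, insert the remainder equation \eqref{renormalized equation}, and solve the resulting $(4+N)\times(4+N)$ linear system for the modulation quantities
\[
 \Theta:=\Bigl(\frac{\lambda_s}{\lambda}+\frac12,\ \frac{x_s}{\lambda},\ (a_j)_s-\mu_ja_j\Bigr).
\]
Let $\zeta$ be any of $\zeta_0,\zeta_k^{\rm tr},\zeta_j^{\rm u}$. Since $\langle\varepsilon,\zeta\rangle\equiv0$ on $[s_0,s^*]$, we have $0=\langle\partial_s\varepsilon,\zeta\rangle$. Moreover $\langle L_n\varepsilon,\zeta\rangle=\langle\varepsilon,L_n^*\zeta\rangle=-\nu_\zeta\langle\varepsilon,\zeta\rangle=0$. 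This uses that each $\zeta$ lies in $D(L_n^*)$, by Proposition~\ref{prop:adjointmodeasymptotics}, together with the regularity of $\varepsilon(s)$ for $s>s_0$. Hence
\[
 \langle \operatorname{Mod},\zeta\rangle=-\langle F,\zeta\rangle .
\]

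We expand the left side using the biorthogonality \eqref{dual-normalization-mod}. The main terms $\Lambda U_n$, $\nabla U_n$ and $\phi_j$ pair against $(\zeta_0,\zeta^{\rm tr}_k,\zeta_j^{\rm u})$ to produce the matrix $\operatorname{diag}(1,I_3,-I_N)$ applied to $\Theta$ (with the sign of the last block flipped). The extra pieces $\Lambda\psi$ and $\nabla\psi$ contribute $\Theta$-multiples of size $O(|a|)$. After the pairings are computed, the system reads
\[
 (M+O(|a|))\,\Theta=-\bigl(\langle F,\zeta\rangle\bigr)_\zeta ,
\]
where $M$ is invertible. Under the bootstrap bound \eqref{chca} and for $s_0$ large, $|a|$ is small, so the matrix inverts and gives $|\Theta|\lesssim\sum_\zeta|\langle F,\zeta\rangle|$.

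It remains to estimate $\langle F,\zeta\rangle$ for $F=\widetilde L(\varepsilon)+NL+UN$.
\begin{itemize}
\item For the quadratic terms, H\"older and the decay of $\zeta$ give $|\langle UN,\zeta\rangle|\lesssim|a|^2$. The same argument gives $|\langle NL,\zeta\rangle|\lesssim\|\varepsilon\|_{L^2}^2$, provided $\nabla\varepsilon\cdot\nabla\Delta^{-1}\varepsilon$ is paired after integration by parts, so that derivatives fall on $\zeta$ and $\Delta^{-1}$ is controlled in $L^2$-type norms.
\item The linear-in-$\varepsilon$ pieces of $\widetilde L(\varepsilon)$ split into two kinds. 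The terms $\nabla\varepsilon\cdot\nabla\Delta^{-1}\psi+\nabla\psi\cdot\nabla\Delta^{-1}\varepsilon+2\psi\varepsilon$ are bounded by $|a|\,\|\varepsilon\|_{L^2}\le|a|^2+\|\varepsilon\|^2_{L^2}$. The modulation-weighted terms $(\frac{\lambda_s}{\lambda}+\frac12)\Lambda\varepsilon+\frac{x_s}{\lambda}\cdot\nabla\varepsilon$, after moving $\Lambda$ and $\nabla$ onto $\zeta$ by their adjoints, are bounded by $|\Theta|\,\|\varepsilon\|_{L^2}$. That bound can then be absorbed into the left side, since $\|\varepsilon\|_{L^2}\le Ke^{-\mu s_0}\ll1$.
\end{itemize}

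The main obstacle is justifying these integrations by parts against the translation adjoint modes. Those modes decay only like $|y|^{-2}$, so one must check which of $\zeta$, $\nabla\zeta$, $y\cdot\nabla\zeta$ and $\nabla\Delta^{-1}$-type quantities lie in $L^2$ or in suitable $L^p$ spaces. For the nonlocal terms, $\nabla\Delta^{-1}\varepsilon$ is not controlled in $L^2$ by $\|\varepsilon\|_{L^2}$ alone. The first approach I would try is to transfer the nonlocal operator to the adjoint side: for instance, $\langle\nabla\psi\cdot\nabla\Delta^{-1}\varepsilon,\zeta\rangle=\langle\varepsilon,\Delta^{-1}\nabla\cdot(\zeta\nabla\psi)\rangle$, with the latter function in $L^2$ thanks to the decay of $\psi$ and of the profile (Proposition~\ref{prop:adjointmodeasymptotics}). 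If this fails, the fallback is to use $\|\varepsilon\|_{H^2}$ via Hardy–Littlewood–Sobolev and interpolation, which would yield a bound of the same form.
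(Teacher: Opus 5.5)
Your proposal is correct and follows the paper's route. You pair the remainder equation with each adjoint mode, use biorthogonality to get a near-diagonal $(4+N)\times(4+N)$ system with $O(|a|)$ off-diagonal terms, move $\Lambda$ and $\nabla$ onto $\zeta$ in the $\Theta$-weighted terms, and absorb.

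The one point to sharpen is the nonlocal terms, in particular $NL$, where your duality trick is unavailable because both factors are $\varepsilon$. The paper writes these terms in divergence form, integrates by parts onto $\nabla\zeta$, and uses $\|\nabla\Delta^{-1}f\|_{L^6}\lesssim\|f\|_{L^2}$ together with $\nabla\zeta\in L^3$. This gives $\|\varepsilon\|_{L^2}^2$ directly, so no $H^2$ fallback is needed. An $H^2$ fallback would in fact only give $\|\varepsilon\|_{L^2}\|\varepsilon\|_{H^2}$, not the stated bound.
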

\begin{proof}
\ifdefined\KSChineseDocument
\emph{步骤 1：缩放参数。}
\else
\emph{$\mathbf{Step\ 1.}$}\ {Estimate for the scaling parameter}.
\fi
\ifdefined\KSChineseDocument
先将方程 \eqref{renormalized equation} 与缩放伴随模 $\zeta_0$ 配对。由
\else
Taking the pairing with $\zeta_0$, we have
\fi
\[
L_n^*\zeta_0=-\zeta_0.
\]
\ifdefined\KSChineseDocument
以及正交条件 \eqref{Ortho}，有
\else
Thus,
\fi
$\langle L_n\varepsilon,\zeta_0\rangle
=
-\langle\varepsilon,\zeta_0\rangle=0$.
\ifdefined\KSChineseDocument
又因为 $\langle\varepsilon(s),\zeta_0\rangle=0$ 对所有
$s\in[s_0,s^*]$ 成立，故
$\langle\partial_s\varepsilon(s),\zeta_0\rangle=0$。因此配对后的方程化为
\else
Taking the $L^2$ inner product of \eqref{renormalized equation} with
$\zeta_0$, the orthogonality condition \eqref{Ortho} gives
$\langle \varepsilon(s),\zeta_0\rangle=0$, and hence $\langle \partial_s\varepsilon(s),\zeta_0\rangle=0$.  Therefore,
\fi
\[
-\langle\operatorname{Mod},\zeta_0\rangle
=
\langle F,\zeta_0\rangle .
\]
\ifdefined\KSChineseDocument
将 \eqref{modulation-term} 代入，并使用
$\langle\Lambda U_n,\zeta_0\rangle=1$ 以及其余交叉配对为零，得到
\else
Using
$\langle\Lambda U_n,\zeta_0\rangle=1$,
we obtain
\fi
\begin{equation}\label{lambda-equation}
-\left(
\frac{\lambda_s}{\lambda}
+\frac12
\right)
=
\langle F,\zeta_0\rangle
+
\left(
\frac{\lambda_s}{\lambda}
+\frac12
\right)
\langle\Lambda\psi,\zeta_0\rangle
+
\frac{x_s}{\lambda}
\cdot
\langle\nabla\psi,\zeta_0\rangle .
\end{equation}
\ifdefined\KSChineseDocument
以下令
\else
Let
\fi
$\zeta\in
\left\{
\zeta_0,\,
\zeta_k^{\rm tr},\,
\zeta_j^{\rm u}
\right\}.$
\ifdefined\KSChineseDocument
命题~\ref{prop:adjointmodeasymptotics} 给出
\else
Proposition~\ref{prop:adjointmodeasymptotics} gives
\fi
\begin{equation}\label{Integra}
\zeta,\ 
\nabla\zeta,\ 
y\cdot\nabla\zeta
\in L^2(\mathbb R^3),\ \nabla\zeta\in L^3(\mathbb R^3). 
\end{equation}
\ifdefined\KSChineseDocument
这里径向伴随模及其导数具有 Gaussian 衰减。对平移伴随模，命题
\ref{prop:adjointmodeasymptotics} 给出
$\zeta(y)=O(|y|^{-2})$ 和 $\nabla\zeta(y)=O(|y|^{-3})$。因而在球坐标中，
$\zeta$、$y\cdot\nabla\zeta$ 和 $\nabla\zeta$ 的 $L^2$ 被积函数在无穷远处
分别为 $O(r^{-2})$、$O(r^{-2})$ 和 $O(r^{-4})$，而
$|\nabla\zeta|^3r^2=O(r^{-7})$。这些函数在 $(1,\infty)$ 上可积；原点处的
正则展开给出局部可积性。结合 \eqref{Integra}、$\phi_j$ 的衰减及
Cauchy--Schwarz 不等式，由 \eqref{lambda-equation} 得到
\else
Indeed, the radial adjoint modes and their derivatives have Gaussian decay.
For a translation adjoint mode, the same proposition gives
$\zeta(y)=O(|y|^{-2})$ and $\nabla\zeta(y)=O(|y|^{-3})$ as
$|y|\to\infty$.  Thus the radial integrands for
$\zeta$, $y\cdot\nabla\zeta$, and $\nabla\zeta$ in $L^2$ are bounded,
respectively, by constant multiples of $r^{-2}$, $r^{-2}$, and $r^{-4}$;
the one for $\nabla\zeta$ in $L^3$ is bounded by $r^{-7}$.  These powers are
integrable on $(1,\infty)$.  The regular expansions at the origin give the
corresponding local integrability.
The integrability in \eqref{Integra}, the decay of $\phi_j$, and the
Cauchy--Schwarz inequality give
\fi
\begin{equation}\label{lambda-estimate}
\left|
\frac{\lambda_s}{\lambda}
+\frac12
\right|
\lesssim
\left|\langle F,\zeta_0\rangle\right|
+
\left|
\frac{\lambda_s}{\lambda}
+\frac12
\right|
\sum_{j=1}^{N}|a_j|
+
\frac{|x_s|}{\lambda}
\sum_{j=1}^{N}|a_j|.
\end{equation}

\ifdefined\KSChineseDocument
\emph{步骤 2：真正不稳定系数。}
\else
\emph{$\mathbf{Step\ 2.}$}\ {Estimate for the unstable modes}.
\fi
\ifdefined\KSChineseDocument
将 \eqref{renormalized equation} 与 $\zeta_j^{\rm u}$ 配对，其中
$1\le j\le N$。由
\else
Pairing with $\zeta_j^{\rm u}$ ($1\le j\le N$) gives
\fi
\[
L_n^*\zeta_j^{\rm u}
=
-\mu_j\zeta_j^{\rm u}.
\]
\ifdefined\KSChineseDocument
并再次使用正交条件 \eqref{Ortho}，得到
\else
The same pairing argument gives
\fi
\[
-\langle\operatorname{Mod},
\zeta_j^{\rm u}\rangle
=
\langle F,\zeta_j^{\rm u}\rangle .
\]
\ifdefined\KSChineseDocument
将 \eqref{modulation-term} 代入，再使用归一化关系
\eqref{dual-normalization-mod}，可得
\else
The normalization \eqref{dual-normalization-mod} therefore yields
\fi
$$
(a_j)_s-\mu_ja_j=\langle F,\zeta_j^{\rm u}\rangle
+
\left(
\frac{\lambda_s}{\lambda}
+\frac12
\right)
\langle\Lambda\psi,\zeta_j^{\rm u}\rangle
+
\frac{x_s}{\lambda}
\cdot
\langle\nabla\psi,\zeta_j^{\rm u}\rangle .
$$
\ifdefined\KSChineseDocument
由 \eqref{Integra}、$\phi_j$ 的衰减以及 Cauchy--Schwarz 不等式，
\else
By \eqref{Integra},
the decay property of $\phi_j$ and Cauchy-Schwarz inequality, we have
\fi
\begin{equation}\label{UnS}
    |(a_j)_s-\mu_ja_j|\lesssim|\langle F,\zeta_j^{\rm u}\rangle|+\left|
\frac{\lambda_s}{\lambda}
+\frac12
\right|
\sum_{j=1}^{N}|a_j|
+
\frac{|x_s|}{\lambda}
\sum_{j=1}^{N}|a_j|,\ \ 1\le j\le N.
\end{equation}

\ifdefined\KSChineseDocument
\emph{步骤 3：平移参数。}
\else
\emph{$\mathbf{Step\ 3.}$}\ {Estimate for the translation parameters}.
\fi
\ifdefined\KSChineseDocument
将 \eqref{renormalized equation} 与 $\zeta_k^{\rm tr}$ 配对，其中
$1\le k\le3$。由
\else
Pairing with $\zeta_k^{\rm tr}$ $(1\le k\le3)$ gives
\fi
\[
L_n^*\zeta_k^{\rm tr}
=
-\frac12\zeta_k^{\rm tr}.
\]
\ifdefined\KSChineseDocument
正交条件 \eqref{Ortho} 给出
\else
Again, by \eqref{Ortho}, we have
\fi
\[
\langle L_n\varepsilon,\zeta_k^{\rm tr}\rangle=\langle \varepsilon,L_n^*\zeta_k^{\rm tr}\rangle=-\frac12\langle \varepsilon,\zeta_k^{\rm tr}\rangle=0.
\]
\ifdefined\KSChineseDocument
并且 $\langle\partial_s\varepsilon,\zeta_k^{\rm tr}\rangle=0$，故
\else
Therefore,
\fi
\[
-\langle\operatorname{Mod},
\zeta_k^{\rm tr}\rangle
=
\langle F,\zeta_k^{\rm tr}\rangle .
\]
\ifdefined\KSChineseDocument
将 \eqref{modulation-term} 代入，再用
\else
Using
\fi
$\langle\partial_\ell U_n,
\zeta_k^{\rm tr}\rangle
=
\delta_{\ell k}$ $(1\le k,\ell\le3)$,
\ifdefined\KSChineseDocument
得到
\else
we obtain
\fi
\begin{equation}\label{x-equation}
\frac{-x_{k,s}}{\lambda}
=
\langle F,\zeta_k^{\rm tr}\rangle
+
\left(
\frac{\lambda_s}{\lambda}
+\frac12
\right)
\langle\Lambda\psi,\zeta_k^{\rm tr}\rangle
+
\frac{x_s}{\lambda}
\cdot
\langle\nabla\psi,\zeta_k^{\rm tr}\rangle .
\end{equation}
\ifdefined\KSChineseDocument
最后使用 $|x_s|=(\sum^3_{k=1}x_{k,s}^2)^{1/2}$、\eqref{Integra} 和
Cauchy--Schwarz 不等式，得到
\else
Using $|x_s|=(\sum^3_{k=1}x_{k,s}^2)^{1/2}$, \eqref{Integra}, and the
Cauchy--Schwarz inequality, we obtain
\fi
\begin{equation}\label{x-estimate}
\frac{|x_s|}{\lambda}
\lesssim
\sum_{k=1}^3|\langle F,\zeta_k^{\rm tr}\rangle|
+
\left|
\frac{\lambda_s}{\lambda}
+\frac12
\right|
\sum_{j=1}^{N}|a_j|
+
\frac{|x_s|}{\lambda}
\sum_{j=1}^{N}|a_j|.
\end{equation}

\ifdefined\KSChineseDocument
\emph{步骤 4：估计外力 $F$ 在伴随离散模上的投影。}
\else
\emph{$\mathbf{Step\ 4.}$}\
Estimates of the projections of $F$.
\fi
\ifdefined\KSChineseDocument
先估计 $\langle\widetilde L(\varepsilon),\zeta\rangle$。对缩放项分部积分，
\else
We next estimate the term $|\langle F,\zeta\rangle|$. We first treat the term
$|\langle \widetilde L(\varepsilon),\zeta\rangle|$. By integrating by parts,
\fi
\[
\begin{aligned}
\langle\Lambda\varepsilon,\zeta\rangle=
\int_{\mathbb R^3}
(2\varepsilon+y\cdot\nabla\varepsilon)\zeta\,dy=
-\int_{\mathbb R^3}
\varepsilon
\bigl(\zeta+y\cdot\nabla\zeta\bigr)\,dy.
\end{aligned}
\]
\ifdefined\KSChineseDocument
故由 \eqref{Integra} 和 Cauchy--Schwarz 不等式，
\else
Hence by \eqref{Integra} and Cauchy-Schwarz inequality,
\fi
\begin{equation}\label{Lambda-eps-proj}
|\langle\Lambda\varepsilon,\zeta\rangle|
\lesssim
\|\varepsilon\|_{L^2}.
\end{equation}
\ifdefined\KSChineseDocument
对平移项作同样的分部积分，得到
\else
Similarly,
\fi
\begin{equation}\label{translation-eps-proj}
\left|\left\langle\frac{x_s}{\lambda}\cdot\nabla\varepsilon,\zeta\right\rangle\right|
\lesssim
\left|\frac{x_s}{\lambda}\right|\|\varepsilon\|_{L^2}.
\end{equation}
\ifdefined\KSChineseDocument
三个混合项可合并成散度形式：
\else
From
\fi
\[
\begin{aligned}
2\psi\varepsilon
+
\nabla\varepsilon\cdot\nabla\Delta^{-1}\psi
+
\nabla\psi\cdot\nabla\Delta^{-1}\varepsilon=
\nabla\cdot
\left(
\varepsilon\nabla\Delta^{-1}\psi
+
\psi\nabla\Delta^{-1}\varepsilon
\right),
\end{aligned}
\]
\ifdefined\KSChineseDocument
因此再次分部积分可得
\else
we have
\fi
\[
\begin{aligned}
\left|
\left\langle
2\psi\varepsilon
+
\nabla\varepsilon\cdot\nabla\Delta^{-1}\psi
+
\nabla\psi\cdot\nabla\Delta^{-1}\varepsilon,
\zeta
\right\rangle
\right|&=\left|
\left\langle\nabla\cdot
\left(
\varepsilon\nabla\Delta^{-1}\psi
+
\psi\nabla\Delta^{-1}\varepsilon
\right),
\zeta
\right\rangle
\right|\\
&\leq
\int_{\mathbb R^3}
|\varepsilon|\,|\nabla\Delta^{-1}\psi|\,|\nabla\zeta|\,dy
+
\int_{\mathbb R^3}
|\psi|\,|\nabla\Delta^{-1}\varepsilon|\,|\nabla\zeta|\,dy.
\end{aligned}
\]
\ifdefined\KSChineseDocument
三维 Hardy--Littlewood--Sobolev 不等式给出
\else
By the Hardy--Littlewood--Sobolev inequality in $\mathbb{R}^3$,
\fi
\begin{equation}\label{H-L-ineq}
   \|\nabla\Delta^{-1}f\|_{L^6}
\lesssim
\|f\|_{L^2}, 
\end{equation}
\ifdefined\KSChineseDocument
于是由 H\"older 不等式和 $\|\psi\|_2\lesssim\sum_{j=1}^N|a_j|$，
\else
and H\"older's inequality gives
\fi
$$
\int_{\mathbb R^3}
|\varepsilon|\,|\nabla\Delta^{-1}\psi|\,|\nabla\zeta|\,dy\le ||\varepsilon||_{L^2}||\nabla\Delta^{-1}\psi||_{L^6}||\nabla\zeta||_{L^3}\le ||\varepsilon||_{L^2}||\psi||_{L^2}||\nabla\zeta||_{L^3}\le \left(\sum^N_{j=1}|a_j|\right)||\varepsilon||_{L^2}.
$$
\ifdefined\KSChineseDocument
另一项同理满足
\else
Similarly, we have
\fi
$$
\int_{\mathbb R^3}
|\psi|\,|\nabla\Delta^{-1}\varepsilon|\,|\nabla\zeta|\,dy\le\left(\sum^N_{j=1}|a_j|\right)||\varepsilon||_{L^2}.
$$
\ifdefined\KSChineseDocument
将上述两式相加，得到
\else
Therefore, we have
\fi
\begin{equation}\label{Linear1}
\begin{aligned}
\left|
\left\langle
2\psi\varepsilon
+
\nabla\varepsilon\cdot\nabla\Delta^{-1}\psi
+
\nabla\psi\cdot\nabla\Delta^{-1}\varepsilon,
\zeta
\right\rangle
\right|\le\left(\sum^N_{j=1}|a_j|\right)||\varepsilon||_{L^2}.
\end{aligned}
\end{equation}
\ifdefined\KSChineseDocument
由 \eqref{Lambda-eps-proj}、\eqref{translation-eps-proj} 和
\eqref{Linear1}，得到
\else
Combining \eqref{Lambda-eps-proj}, \eqref{translation-eps-proj} and \eqref{Linear1}, we obtain
\fi
\begin{equation}\label{NL-es}
|\langle \tilde{L}(\varepsilon),\zeta\rangle|\lesssim \left(\left|\frac{\lambda_s}{\lambda}+\frac{1}{2}\right|+\left|\frac{x_s}{\lambda}\right|+\sum_{j=1}^N|a_j|\right)\|\varepsilon\|_{L^2}
\end{equation}

\ifdefined\KSChineseDocument
下面估计纯稳定二次项。由于
\else
We next treat the term $|\langle NL,\zeta\rangle|$.
Since
\fi
$NL
=
\nabla\cdot
\left(
\varepsilon\nabla\Delta^{-1}\varepsilon
\right)$
\ifdefined\KSChineseDocument
，分部积分后有
\else
, we have
\fi
\[
\begin{aligned}
|\langle NL,\zeta\rangle|=
\left|
\int_{\mathbb R^3}
\varepsilon
\nabla\Delta^{-1}\varepsilon
\cdot\nabla\zeta\,dy
\right|\le
\|\varepsilon\|_{L^2}
\|\nabla\Delta^{-1}\varepsilon\|_{L^6}
\|\nabla\zeta\|_{L^3}.
\end{aligned}
\]
\ifdefined\KSChineseDocument
结合 \eqref{H-L-ineq}，得到
\else
Using \eqref{H-L-ineq}, we obtain
\fi
\begin{equation}\label{NL-proj}
|\langle NL,\zeta\rangle|
\lesssim
\|\varepsilon\|_{L^2}^2.
\end{equation}
\ifdefined\KSChineseDocument
最后估计纯不稳定二次项。由于
\else
Finally, we estimate the term
$|\langle UN,\zeta\rangle|$. Since
\fi
$UN
=
\nabla\cdot
\left(
\psi\nabla\Delta^{-1}\psi
\right),$
\ifdefined\KSChineseDocument
同样的计算给出
\else
the same calculation gives
\fi
\begin{equation}\label{Un-es}
|\langle UN,\zeta\rangle|
\lesssim
\|\psi\|_{L^2}^2
\lesssim
\sum_{j=1}^N|a_j|^2.
\end{equation}
\ifdefined\KSChineseDocument
再用 Young 不等式
\else
Using
\fi
\[
\sum_{j=1}^N|a_j|\|\varepsilon\|_{L^2}
\lesssim
\sum_{j=1}^N|a_j|^2
+
\|\varepsilon\|_{L^2}^2,
\]
\ifdefined\KSChineseDocument
并合并 \eqref{NL-es}、\eqref{NL-proj} 和 \eqref{Un-es}，可得
\else
we conclude from \eqref{NL-es}, \eqref{NL-proj} and \eqref{Un-es} that
\fi
\begin{equation}\label{F-projection-L2}
|\langle F,\zeta\rangle|
\lesssim
\left(\left|\frac{\lambda_s}{\lambda}+\frac{1}{2}\right|+\left|\frac{x_s}{\lambda}\right|\right)\|\varepsilon\|_{L^2}+\|\varepsilon\|_{L^2}^2+\sum_{j=1}^N|a_j|^2
\end{equation}

\ifdefined\KSChineseDocument
将 \eqref{F-projection-L2} 分别代入 \eqref{lambda-estimate}、\eqref{UnS}
和 \eqref{x-estimate}。由命题~\ref{bootstrap} 的 bootstrap 假设，当
$s_0$ 充分大时，$\left|
\frac{\lambda_s}{\lambda}+\frac12\right|$ 和 $|x_s|/\lambda$ 前面的系数
可以任意小。把这两项移到左端，得到
\else
Insert \eqref{F-projection-L2} into \eqref{lambda-estimate}, \eqref{UnS},
and \eqref{x-estimate}.  The bootstrap bounds in Proposition~\ref{bootstrap}
make the coefficients of $\left|
\frac{\lambda_s}{\lambda}
+\frac12
\right|$ and $|x_s|/\lambda$ arbitrarily small when $s_0$ is sufficiently
large.  Absorbing these terms into the left-hand side yields
\fi
\begin{equation}\label{final-modulation}
\left|
\frac{\lambda_s}{\lambda}
+\frac12
\right|
+
\frac{|x_s|}{\lambda}
+
\sum_{j=1}^{N}
|(a_j)_s-\mu_j a_j|
\lesssim
\|\varepsilon\|_{L^2}^2
+
\sum_{j=1}^{N}|a_j|^2 .
\end{equation}

\ifdefined\KSChineseDocument
这就是 \eqref{semin}，证明完毕。
\else
This completes the proof.
\fi
\end{proof}
\ifdefined\KSChineseDocument
\subsection{\texorpdfstring{稳定余项的 $L^2$ 与 $H^2$ 估计}
{稳定余项的 L2 与 H2 估计}}
\else
\subsection{\texorpdfstring{The estimates of the $L^2$ and $H^2$ norms}
{The estimates of the L2 and H2 norms}}
\fi

\ifdefined\KSChineseDocument
为简化公式，记
\else
For simplicity, we write
\fi
$$
b:=\frac{\lambda_s}{\lambda}+\frac12,
    \qquad
    \beta:=\frac{x_s}{\lambda},\qquad |a|^2:=\sum_{j=1}^N|a_j|^2,
$$
\ifdefined\KSChineseDocument
第一步先在 $[s_0,s^*]$ 上估计 $\varepsilon$ 的 $L^2$ 范数。
\else
in the following.  We first estimate the $L^2$ norm of $\varepsilon$ on
$[s_0,s^*]$.
\fi

\ifdefined\KSChineseDocument
先证明命题~\ref{prop:stablelinearenergy} 的修正线性能量与缩放、平移生成元
相容。这里不能只使用 $B_n$ 在 $L^2$ 上有界，因为 $B_n$ 不是乘法算子；真正
需要的是下面的二次型估计。
\else
We first record the compatibility between the Lyapunov energy of
Proposition~\ref{prop:stablelinearenergy} and the infinitesimal scaling and
translation operators.  This is needed because $B_n$ is not a multiplication
operator, so the required estimate does not follow merely from the boundedness
of $B_n$ on $L^2$.
\fi

\ifdefined\KSChineseDocument
\begin{lemma}[修正能量与投影后缩放、平移算子的相容性]
\else
\begin{lemma}[Lyapunov energy under projected scaling and translation]
\fi
\label{lem:projected-generator-energy}
\ifdefined\KSChineseDocument
令
\else
Let
\fi
\[
 \Gamma_0=\Lambda,
 \qquad \Gamma_k=\partial_k\quad(1\le k\le3),
 \qquad
 D_{j,n}=P_{{\rm s},n}\Gamma_j|_{X_{{\rm s},n}}.
\]
\ifdefined\KSChineseDocument
设 $B_n$ 和 $\mathcal E_n[f]=\langle B_nf,f\rangle$ 由
\eqref{eq:LyapunovQ}--\eqref{eq:modifiedenergydefinition} 定义，则存在常数
$C_n>0$，使对 $0\le j\le3$ 及任意 $f\in D(D_{j,n})$，
\else
Let $B_n$ and $\mathcal E_n[f]=\langle B_nf,f\rangle$ be defined by
\eqref{eq:LyapunovQ}--\eqref{eq:modifiedenergydefinition}.  There is a
constant $C_n$ such that, for $0\le j\le3$ and every $f$ in the domain of
$D_{j,n}$,
\fi
\begin{equation}\label{eq:projected-generator-form}
 \left|\operatorname{Re}\langle B_nD_{j,n}f,f\rangle\right|
 \le C_n\mathcal E_n[f].
\end{equation}
\end{lemma}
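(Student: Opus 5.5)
The plan is to split
\[
\operatorname{Re}\langle B_nD_{j,n}f,f\rangle=\operatorname{Re}\langle D_{j,n}f,f\rangle+\alpha_n\operatorname{Re}\langle Q_nD_{j,n}f,f\rangle
\]
and to use $\|f\|_2^2\le\mathcal E_n[f]$ from \eqref{eq:energyequivalence}. With that, it suffices to bound both pieces by $C_n\|f\|_2^2$. Where $\Gamma_j f$ appears, I first take $f$ smooth and decaying in $X_{{\rm s},n}$; at the end I pass to the domain of $D_{j,n}$ by density, using that the bound only involves $\|f\|_2$.

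\emph{Step 1: the identity part.} Since $f=P_{{\rm s},n}f$, I write
\[
\langle P_{{\rm s},n}\Gamma_jf,f\rangle=\langle\Gamma_jf,f\rangle-\langle P_{{\rm u},n}\Gamma_jf,f\rangle .
\]
Integration by parts gives $\operatorname{Re}\langle\Lambda f,f\rangle=\tfrac12\|f\|_2^2$ and $\operatorname{Re}\langle\partial_kf,f\rangle=0$. The finite-rank part is a sum of terms $\langle\Gamma_jf,\zeta\rangle\langle e,f\rangle$ with $\zeta$ one of the adjoint modes of Proposition~\ref{prop:adjointmodeasymptotics}. I move $\Gamma_j$ onto $\zeta$ exactly as in \eqref{Lambda-eps-proj}, using $\zeta,\nabla\zeta,y\cdot\nabla\zeta\in L^2$ from \eqref{Integra}. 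This gives the bound $C_n\|f\|_2^2$.

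\emph{Step 2: the Lyapunov part.} By \eqref{eq:LyapunovQ},
\[
\operatorname{Re}\langle Q_nD_{j,n}f,f\rangle=\int_0^\infty\operatorname{Re}\langle S(s)P_{{\rm s},n}\Gamma_jf,\,S(s)f\rangle\,\dd s,\qquad S(s)=e^{-s\mathbf L_n}.
\]
\begin{itemize}
\item For translations, I use Duhamel to commute $\Gamma_j$ with $S(s)$:
\[
S(s)\partial_kf=\partial_kS(s)f-\int_0^sS(s-\sigma)[\partial_k,\mathbf L_n]S(\sigma)f\,\dd\sigma ,
\]
where $[\partial_k,\mathbf L_n]=[\partial_k,\mathbf K_n]$ is a first-order operator whose coefficients involve $\partial_kU_n$, $\nabla b_n$ and $\nabla\partial_kU_n$. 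The main term contributes $\operatorname{Re}\langle\partial_kS f,Sf\rangle=0$. The commutator term is controlled by the parabolic smoothing bound $\|\nabla S(\sigma)\|_{2\to2}\lesssim\sigma^{-1/2}$ for $\sigma\le1$, which follows from the decomposition \eqref{maxdissdecomp}. Together with the exponential decay \eqref{eq:stablesemigroup} on $X_{{\rm s},n}$, the resulting double integral in $s,\sigma$ converges. The projection $P_{{\rm s},n}$ only adds the finite-rank correction already handled in Step~1.
\item For $\Lambda$ the commutator is second order, since $[y\cdot\nabla,\Delta]=-2\Delta$, so I avoid it. From \eqref{eq:freeOU}, $\Lambda=2\mathbf L_0+2\Delta=2\mathbf L_n-2\mathbf K_n+2\Delta$, and I treat the three pieces separately:
 \begin{itemize}
 \item The piece $2\mathbf L_n$ gives $2\operatorname{Re}\langle Q_n\mathbf L_nf,f\rangle=\|f\|_2^2$ by the Lyapunov identity $Q_n\mathbf L_n+\mathbf L_n^*Q_n=I$ on $X_{{\rm s},n}$.
 \item The piece $-2\mathbf K_n$ is handled by Duhamel/smoothing as in the translation case.
 \item For the piece $2\Delta$, I move one derivative to each side: $\operatorname{Re}\langle S(s)\Delta f,S(s)f\rangle$ is rewritten via Duhamel as $-\|\nabla S(s)f\|_2^2$ plus a commutator term. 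The point is that $\int_0^\infty\|\nabla S(s)f\|_2^2\,\dd s\lesssim\|f\|_2^2$ by the standard energy identity for $\partial_s g+\mathbf L_ng=0$, in which the dissipation $\|\nabla g\|_2^2$ appears, combined with the exponential decay on $X_{{\rm s},n}$.
 \end{itemize}
\end{itemize}

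The main obstacle is the scaling generator $\Lambda$. It is unbounded and is not relatively bounded by a fractional power of $\mathbf L_n$ in $L^2$, so the Lyapunov kernel must absorb it through the semigroup identity rather than through operator norms. If the $\Delta$-decomposition above leaves an uncontrolled term, I would fall back on the exact conjugation by dilations. Writing $G_\tau f=e^{2\tau}f(e^\tau\cdot)$, one has $G_\tau^{-1}\mathbf L_nG_\tau$ equal to the linearization around a dilate of $U_n$, which is a bounded-coefficient perturbation of $\mathbf L_n$. Differentiating $\tau\mapsto\mathcal E_n[P_{{\rm s},n}G_\tau f]$ at $\tau=0$, and showing that this quantity is at most $e^{C|\tau|}\mathcal E_n[f]$ by semigroup perturbation, would then give \eqref{eq:projected-generator-form}.
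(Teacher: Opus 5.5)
Your translation argument is viable, with two repairs. First, the drift gives $[\partial_k,\mathbf L_n]=\tfrac12\partial_k+[\partial_k,\mathbf K_n]$; this is harmless, since it is still first order. Second, you must keep $P_{{\rm s},n}$ attached to the semigroup: use $P_{{\rm s},n}S(s-\sigma)=T_n(s-\sigma)P_{{\rm s},n}$ in the Duhamel term, and apply Step~1 to $g=S(s)f$. If you split $P_{{\rm s},n}=I-P_{{\rm u},n}$ inside the orbit integral, you get pieces of size $e^{\nu s}$ with $\nu\ge\tfrac12$, set against decay $e^{-\omega_ns}$ with $\omega_n\le\tfrac14$, and these diverge.

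The genuine gap is the scaling generator. In $\Lambda=2\mathbf L_n-2\mathbf K_n+2\Delta$, the $2\mathbf L_n$ piece is exact by the Lyapunov identity. The $\mathbf K_n$ piece follows from $\|S(s)\mathbf K_n\|_{2\to2}\lesssim s^{-1/2}$; this is smoothing of the adjoint semigroup, not the antisymmetry you used for $\partial_k$. But the entire difficulty has moved into the ``commutator term'' of the $2\Delta$ piece, which you never estimate. The decomposition does not avoid second-order commutators. Indeed $[\Delta,\mathbf L_n]=\Delta+[\Delta,\mathbf K_n]$: the first $\Delta=[\Delta,\tfrac12 y\cdot\nabla]$ comes from the drift, and $[\Delta,b_n\cdot\nabla]$ contains $2\nabla b_n:\nabla^2$. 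So even the perturbative part is now second order, whereas $[\Lambda,\mathbf K_n]$ is only first order.

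A Duhamel bound for $[S(s),\Delta]$ then has an integrand of size $\|\nabla^2S(\sigma)\|\sim\sigma^{-1}$, which is not integrable. If instead you commute $\partial_i$ twice, you need a bound on $[S(s),\partial_i]\partial_i$. That requires moving the derivative in $[\mathbf L_n,\partial_i]$ onto $S(s-\sigma)$ by adjoint smoothing, and this is not in the proposal.

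Your fallback rests on a false claim. Since $G_\tau^{-1}\Delta G_\tau=e^{2\tau}\Delta$ while $y\cdot\nabla$ commutes with dilations, $G_\tau^{-1}\mathbf L_nG_\tau-\mathbf L_n$ contains $(1-e^{2\tau})\Delta$, an unbounded second-order term. Dilation is not a symmetry of the self-similar linearized flow, which is exactly why $[\Lambda,\mathbf L_0]=2\Delta$. Moreover, the bound $\mathcal E_n[P_{{\rm s},n}G_\tau f]\le e^{C|\tau|}\mathcal E_n[f]$ is essentially a restatement of the lemma: its derivative at $\tau=0$ is \eqref{eq:projected-generator-form}, and you give no independent route to it.

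The missing idea is the one the paper uses, the explicit Ornstein--Uhlenbeck formula $e^{-s\mathbf L_0}f(y)=e^{-s}[e^{(1-e^{-s})\Delta}f](e^{-s/2}y)$. Dilations commute with $\Lambda$, and $[e^{a\Delta},\Lambda]=2a\Delta e^{a\Delta}$ with $\|a\Delta e^{a\Delta}\|_{2\to2}\le C$. Hence $\|[e^{-s\mathbf L_0},\Lambda]\|_{2\to2}\le C$ for $0<s\le1$, even though the generator commutator is second order. Because $[\mathbf K_n,\Lambda]$ is first order, Duhamel with $\tau^{-1/2}$ singularities gives $\|[S(s),\Lambda]\|_{2\to2}\le C_n$. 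The identity $C_j(s+t)=T_n(s)C_j(t)+C_j(s)T_n(t)$ for the projected commutators upgrades this to a bound $(1+s)e^{-\omega_ns}$, which makes $[Q_n,D_{j,n}]$ bounded. Together with boundedness of $D_{j,n}+D_{j,n}^\dagger$, this gives the lemma for all $j$ at once.

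In your route the corresponding exact fact is $e^{-s\mathbf L_0}\Delta=e^{s}\Delta e^{-s\mathbf L_0}$. So the free part of $[S(s),\Delta]$ is $(e^s-1)\Delta e^{-s\mathbf L_0}$, which is bounded for $s\le1$. Combined with a divergence-form, two-sided smoothing treatment of $[\Delta,\mathbf K_n]$, this would close your $\Delta$ piece, but that argument has to be supplied.
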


\begin{proof}
\ifdefined\KSChineseDocument
记
\[
 S_n(s)=e^{-sL_n},\qquad T_n(s)=S_n(s)|_{X_{{\rm s},n}}.
\]
由命题~\ref{prop:stablelinearenergy}，必要时减小衰减指数后，存在
$C_n,\omega_n>0$ 使
\begin{equation}\label{eq:stable-semigroup-used-nonlinear}
 \|T_n(s)\|_{2\to2}\le C_ne^{-\omega_ns},\qquad s\ge0.
\end{equation}

先证明短时间交换子估计。写成
\[
 L_0=-\Delta+\frac12\Lambda,\qquad K_n=L_n-L_0.
\]
自由半群具有显式表达式
\begin{equation}\label{eq:free-OU-semigroup-nonlinear}
 e^{-sL_0}f(y)
 =e^{-s}\bigl[e^{(1-e^{-s})\Delta}f\bigr](e^{-s/2}y).
\end{equation}
伸缩变换与 $\Lambda$ 交换，并且
\[
 [e^{a\Delta},\Lambda]=2a\Delta e^{a\Delta}.
\]
由 $\|a\Delta e^{a\Delta}\|_{2\to2}\le C$，得到
\begin{equation}\label{eq:free-scaling-commutator-nonlinear}
 \|[e^{-sL_0},\Lambda]\|_{2\to2}\le C,\qquad0<s\le1.
\end{equation}
同理由 \eqref{eq:free-OU-semigroup-nonlinear} 直接求导，或者使用
$[L_0,\partial_k]=-\frac12\partial_k$，有
\begin{equation}\label{eq:free-translation-commutator-nonlinear}
 \|[e^{-sL_0},\partial_k]\|_{2\to2}\le Cs^{1/2},\qquad0<s\le1.
\end{equation}

令 $b_n=\nabla\Delta^{-1}U_n$，$R=\nabla\Delta^{-1}$。由乘积法则以及
$R(\Lambda f)=(1+y\cdot\nabla)Rf$，逐项计算得到
\begin{align}
 [K_n,\Lambda]f
 ={}&2(y\cdot\nabla U_n)f
 -(b_n-y\cdot\nabla b_n)\cdot\nabla f \notag\\
 &+(\nabla U_n+y\cdot\nabla\nabla U_n)\cdot Rf,
 \label{eq:K-scaling-commutator-nonlinear}\\
 [K_n,\partial_k]f
 ={}&2(\partial_kU_n)f+(\partial_kb_n)\cdot\nabla f
 +(\partial_k\nabla U_n)\cdot Rf.
 \label{eq:K-translation-commutator-nonlinear}
\end{align}
结合前文的 $U_n$ 估计、$\|Rf\|_6\lesssim\|f\|_2$ 及 H\"older
不等式，可得
\begin{equation}\label{eq:K-commutator-H1-nonlinear}
 \|K_nf\|_2+\|[K_n,\Gamma_j]f\|_2\le C_n\|f\|_{H^1}.
\end{equation}

记 $S_0(s)=e^{-sL_0}$。两种 Duhamel 表达式
\[
 S_n(s)=S_0(s)-\int_0^sS_0(s-\tau)K_nS_n(\tau)\,\dd\tau
       =S_0(s)-\int_0^sS_n(s-\tau)K_nS_0(\tau)\,\dd\tau
\]
与 \eqref{eq:K-commutator-H1-nonlinear} 给出
\[
 \|S_n(s)\|_{2\to2}+s^{1/2}\|S_n(s)\|_{2\to H^1}\le C_n,
 \qquad0<s\le1.
\]
逐次代入只产生 $s^{-1/2}$ 的卷积；例如
\[
 \int_0^s(s-\tau)^{-1/2}\tau^{-1/2}\,\dd\tau=\pi,
\]
其余各阶卷积由相应的 beta 积分一致控制。

令 $C_j(s)=[S_n(s),\Gamma_j]$，$C_j^0(s)=[S_0(s),\Gamma_j]$。
在 $C_c^\infty$ 上将 $\Gamma_j$ 与第二个 Duhamel 公式交换，得到
\begin{align*}
 C_j(s)=C_j^0(s)-\int_0^s\bigl[{}
 &C_j(s-\tau)K_nS_0(\tau)
 +S_n(s-\tau)[K_n,\Gamma_j]S_0(\tau)\\
 &+S_n(s-\tau)K_nC_j^0(\tau)\bigr],\dd\tau.
\end{align*}
自由半群公式还给出
\[
 \|S_0(\tau)\|_{2\to H^1}
 +\|C_j^0(\tau)\|_{2\to H^1}
 \le C\tau^{-1/2},\qquad0<\tau\le1.
\]
因此
\[
 \|C_j(s)\|_{2\to2}
 \le C_n+C_n\int_0^s\tau^{-1/2}
       \bigl(1+\|C_j(s-\tau)\|_{2\to2}\bigr)\,\dd\tau.
\]
迭代此式并再次使用 beta 积分，得到
\begin{equation}\label{eq:full-short-commutator-nonlinear}
 \|[S_n(s),\Gamma_j]\|_{2\to2}\le C_n,\qquad0<s\le1.
\end{equation}

由有限秩投影的显式公式 \eqref{eq:unstableprojection}，
\begin{equation}\label{eq:projection-generator-commutator}
 [P_{{\rm s},n},\Gamma_j]\in\mathcal B(L^2).
\end{equation}
事实上，在 $\langle\Gamma_jf,\zeta\rangle$ 中把 $\Gamma_j$ 移到伴随模
上即可：命题~\ref{prop:adjointmodeasymptotics} 保证
$\Gamma_j^*\zeta\in L^2$，而右特征函数及其 $\Gamma_j$ 像也属于 $L^2$。
又因 $S_n(s)$ 与 $P_{{\rm s},n}$ 交换，对 $f\in X_{{\rm s},n}$ 有
\begin{equation}\label{eq:stable-projected-commutator-identity}
 [T_n(s),D_{j,n}]f=P_{{\rm s},n}[S_n(s),\Gamma_j]f.
\end{equation}
式 \eqref{eq:full-short-commutator-nonlinear} 控制 $0<s\le1$。再由半群
恒等式
\[
 C_j(s+t)=T_n(s)C_j(t)+C_j(s)T_n(t),
\]
在单位区间上迭代，并使用 \eqref{eq:stable-semigroup-used-nonlinear}，得到
\begin{equation}\label{eq:stable-long-commutator-nonlinear}
 \|C_j(s)\|_{2\to2}\le C_n(1+s)e^{-\omega_ns},\qquad s>0,
\end{equation}
其中必要时再次减小 $\omega_n$。

最后把交换子估计代入修正能量。记 $D_{j,n}^{\dagger}$ 为
$X_{{\rm s},n}$ 中相对于 $L^2$ 内积的伴随算子。分部积分给出
\[
 \Lambda+\Lambda^*=I,\qquad \partial_k+\partial_k^*=0.
\]
利用 $P_{{\rm u},n}$ 的有限秩公式，把导数移到伴随模上，可得
\begin{equation}\label{eq:projected-generator-symmetric-part}
 \mathcal R_{j,n}:=D_{j,n}+D_{j,n}^{\dagger}
 \in\mathcal B(X_{{\rm s},n}).
\end{equation}
记
\[
 Q_n=\int_0^\infty T_n(s)^{\dagger}T_n(s)\,\dd s.
\]
由 \eqref{eq:projected-generator-symmetric-part}，
\[
 [T_n(s)^{\dagger},D_{j,n}]
 =[T_n(s),D_{j,n}]^{\dagger}
 -[T_n(s),\mathcal R_{j,n}]^{\dagger}.
\]
从而
\begin{align*}
 [T_n(s)^{\dagger}T_n(s),D_{j,n}]
 ={}&T_n(s)^{\dagger}[T_n(s),D_{j,n}]\\
 &+[T_n(s)^{\dagger},D_{j,n}]T_n(s).
\end{align*}
由 \eqref{eq:stable-semigroup-used-nonlinear} 和
\eqref{eq:stable-long-commutator-nonlinear}，右端的算子范数不超过
$C_n(1+s)e^{-2\omega_ns}$，故可在 $(0,\infty)$ 上积分，并得到
\begin{equation}\label{eq:Q-generator-commutator-nonlinear}
 [Q_n,D_{j,n}]\in\mathcal B(X_{{\rm s},n}).
\end{equation}
由 $B_n=I+\alpha_nQ_n$，
\[
 B_nD_{j,n}+D_{j,n}^{\dagger}B_n
 =\mathcal R_{j,n}
 +\alpha_n\bigl(\mathcal R_{j,n}Q_n+[Q_n,D_{j,n}]\bigr),
\]
右端是 $X_{{\rm s},n}$ 上的有界算子。因此由
\eqref{eq:energyequivalence}，
\[
 2\left|\operatorname{Re}\langle B_nD_{j,n}f,f\rangle\right|
 \le C_n\|f\|_2^2\le C_n\mathcal E_n[f].
\]
这就证明了 \eqref{eq:projected-generator-form}。
\else
Write
\[
 S_n(s)=e^{-sL_n},
 \qquad
 T_n(s)=S_n(s)|_{X_{{\rm s},n}}.
\]
By Proposition~\ref{prop:stablelinearenergy}, after decreasing the decay
exponent if necessary, there are $C_n>0$ and $\omega_n>0$ such that
\begin{equation}\label{eq:stable-semigroup-used-nonlinear}
 \|T_n(s)\|_{2\to2}\le C_ne^{-\omega_ns},
 \qquad s\ge0.
\end{equation}

We first prove a short-time commutator bound.  Put
\[
 L_0=-\Delta+\frac12\Lambda,
 \qquad
 K_n=L_n-L_0.
\]
The free semigroup is
\begin{equation}\label{eq:free-OU-semigroup-nonlinear}
 e^{-sL_0}f(y)
 =e^{-s}\bigl[e^{(1-e^{-s})\Delta}f\bigr](e^{-s/2}y).
\end{equation}
Since dilations commute with $\Lambda$ and
\[
 [e^{a\Delta},\Lambda]=2a\Delta e^{a\Delta},
\]
the heat-semigroup estimate
$\|a\Delta e^{a\Delta}\|_{2\to2}\le C$ gives
\begin{equation}\label{eq:free-scaling-commutator-nonlinear}
 \|[e^{-sL_0},\Lambda]\|_{2\to2}\le C,
 \qquad 0<s\le1.
\end{equation}
Similarly, either differentiating \eqref{eq:free-OU-semigroup-nonlinear} or
using $[L_0,\partial_k]=-\frac12\partial_k$ gives
\begin{equation}\label{eq:free-translation-commutator-nonlinear}
 \|[e^{-sL_0},\partial_k]\|_{2\to2}\le Cs^{1/2},
 \qquad 0<s\le1.
\end{equation}

Let $b_n=\nabla\Delta^{-1}U_n$ and $R=\nabla\Delta^{-1}$.  Direct use of
the product rule, together with
$R(\Lambda f)=(1+y\cdot\nabla)Rf$, gives
\begin{align}
 [K_n,\Lambda]f
 ={}&2(y\cdot\nabla U_n)f
 -(b_n-y\cdot\nabla b_n)\cdot\nabla f \notag\\
 &+(\nabla U_n+y\cdot\nabla\nabla U_n)\cdot Rf,             \label{eq:K-scaling-commutator-nonlinear}\\
 [K_n,\partial_k]f
 ={}&2(\partial_kU_n)f
 +(\partial_kb_n)\cdot\nabla f
 +(\partial_k\nabla U_n)\cdot Rf.                          \label{eq:K-translation-commutator-nonlinear}
\end{align}
The profile estimates proved above, the Sobolev inequality
$\|Rf\|_6\lesssim\|f\|_2$, and H\"older's inequality imply
\begin{equation}\label{eq:K-commutator-H1-nonlinear}
 \|K_nf\|_2+\|[K_n,\Gamma_j]f\|_2
 \le C_n\|f\|_{H^1}.
\end{equation}
Let $S_0(s)=e^{-sL_0}$.  The two Duhamel formulas
\[
 S_n(s)=S_0(s)-\int_0^sS_0(s-\tau)K_nS_n(\tau)\,\dd\tau
       =S_0(s)-\int_0^sS_n(s-\tau)K_nS_0(\tau)\,\dd\tau
\]
together with \eqref{eq:K-commutator-H1-nonlinear} first give
\[
 \|S_n(s)\|_{2\to2}
 +s^{1/2}\|S_n(s)\|_{2\to H^1}\le C_n,
 \qquad 0<s\le1.
\]
Indeed, successive substitution produces only convolutions of
$s^{-1/2}$; the first nontrivial convolution is
\[
 \int_0^s(s-\tau)^{-1/2}\tau^{-1/2}\,d\tau=\pi,
\]
and all higher convolutions are bounded on $0<s\le1$ by the corresponding
beta integrals.

For completeness, put
$C_j(s)=[S_n(s),\Gamma_j]$ and
$C_j^0(s)=[S_0(s),\Gamma_j]$.  Commuting $\Gamma_j$ through the second
Duhamel formula gives, initially on $C_c^\infty$,
\begin{align*}
 C_j(s)=C_j^0(s)-\int_0^s\bigl[{}
 &C_j(s-\tau)K_nS_0(\tau)
 +S_n(s-\tau)[K_n,\Gamma_j]S_0(\tau)\\
 &+S_n(s-\tau)K_nC_j^0(\tau)\bigr],\dd\tau.
\end{align*}
Besides \eqref{eq:free-scaling-commutator-nonlinear} and
\eqref{eq:free-translation-commutator-nonlinear}, the explicit free formula
also gives
\[
 \|S_0(\tau)\|_{2\to H^1}
 +\|C_j^0(\tau)\|_{2\to H^1}\le C\tau^{-1/2},\qquad0<\tau\le1.
\]
Thus the preceding identity implies
\[
 \|C_j(s)\|_{2\to2}
 \le C_n+C_n\int_0^s\tau^{-1/2}
       \bigl(1+\|C_j(s-\tau)\|_{2\to2}\bigr)\,\dd\tau.
\]
Iteration and the same beta-integral calculation yield
\begin{equation}\label{eq:full-short-commutator-nonlinear}
 \|[S_n(s),\Gamma_j]\|_{2\to2}\le C_n,
 \qquad 0<s\le1.
\end{equation}

The explicit finite-rank formula \eqref{eq:unstableprojection} shows that
\begin{equation}\label{eq:projection-generator-commutator}
 [P_{{\rm s},n},\Gamma_j]\in\mathcal B(L^2).
\end{equation}
To see this directly, move $\Gamma_j$ from $f$ onto each adjoint mode in
$\langle\Gamma_jf,\zeta\rangle$.  The required functions
$\Gamma_j^*\zeta$ belong to $L^2$ by
Proposition~\ref{prop:adjointmodeasymptotics}, while the right eigenfunctions
and their images under $\Gamma_j$ belong to $L^2$ by the profile and elliptic
estimates above.  Since $S_n(s)$ commutes with $P_{{\rm s},n}$, for
$f\in X_{{\rm s},n}$ we have
\begin{equation}\label{eq:stable-projected-commutator-identity}
 [T_n(s),D_{j,n}]f
 =P_{{\rm s},n}[S_n(s),\Gamma_j]f.
\end{equation}
Thus \eqref{eq:full-short-commutator-nonlinear} controls this commutator for
$0<s\le1$.  If $C_j(s)=[T_n(s),D_{j,n}]$, the semigroup identity gives
\[
 C_j(s+t)=T_n(s)C_j(t)+C_j(s)T_n(t).
\]
Iterating this identity over unit intervals and using
\eqref{eq:stable-semigroup-used-nonlinear}, we obtain
\begin{equation}\label{eq:stable-long-commutator-nonlinear}
 \|C_j(s)\|_{2\to2}
 \le C_n(1+s)e^{-\omega_ns},
 \qquad s>0,
\end{equation}
after decreasing $\omega_n$ once more if necessary.

Let $D_{j,n}^{\dagger}$ denote the adjoint in the Hilbert space
$X_{{\rm s},n}$ with its inherited $L^2$ inner product.  Integration by
parts gives
\[
 \Lambda+\Lambda^*=I,
 \qquad
 \partial_k+\partial_k^*=0.
\]
Using again the finite-rank formula for $P_{{\rm u},n}$ and moving the
derivatives onto the adjoint modes, we obtain
\begin{equation}\label{eq:projected-generator-symmetric-part}
 \mathcal R_{j,n}:=D_{j,n}+D_{j,n}^{\dagger}
 \in\mathcal B(X_{{\rm s},n}).
\end{equation}

Finally, write
\[
 Q_n=\int_0^\infty T_n(s)^{\dagger}T_n(s)\,ds.
\]
From \eqref{eq:projected-generator-symmetric-part},
\[
 [T_n(s)^{\dagger},D_{j,n}]
 =[T_n(s),D_{j,n}]^{\dagger}
 -[T_n(s),\mathcal R_{j,n}]^{\dagger}.
\]
Consequently,
\begin{align*}
 [T_n(s)^{\dagger}T_n(s),D_{j,n}]
 ={}&T_n(s)^{\dagger}[T_n(s),D_{j,n}]\\
 &+[T_n(s)^{\dagger},D_{j,n}]T_n(s).
\end{align*}
Equations \eqref{eq:stable-semigroup-used-nonlinear} and
\eqref{eq:stable-long-commutator-nonlinear} bound the last display by
$C_n(1+s)e^{-2\omega_ns}$ in operator norm.  Hence it is integrable and
\begin{equation}\label{eq:Q-generator-commutator-nonlinear}
 [Q_n,D_{j,n}]\in\mathcal B(X_{{\rm s},n}).
\end{equation}
If $B_n=I+\alpha_nQ_n$, then
\[
 B_nD_{j,n}+D_{j,n}^{\dagger}B_n
 =\mathcal R_{j,n}
 +\alpha_n\bigl(\mathcal R_{j,n}Q_n+[Q_n,D_{j,n}]\bigr),
\]
which is bounded on $X_{{\rm s},n}$ by
\eqref{eq:projected-generator-symmetric-part} and
\eqref{eq:Q-generator-commutator-nonlinear}.  Therefore
\[
 2\left|\operatorname{Re}\langle B_nD_{j,n}f,f\rangle\right|
 \le C_n\|f\|_2^2
 \le C_n\mathcal E_n[f],
\]
where the last inequality is \eqref{eq:energyequivalence}.  This proves
\eqref{eq:projected-generator-form}.
\fi
\end{proof}

\ifdefined\KSChineseDocument
\begin{lemma}[稳定余项的 $L^2$ 衰减]\label{lem:ordinary-L2}
\else
\begin{lemma}[Decay of the stable remainder in $L^2$]\label{lem:ordinary-L2}
\fi
\ifdefined\KSChineseDocument
设 $0<\mu<\frac{\gamma_n}{4}$，则存在常数 $C_n^1>0$，使
\else
Assume that
$0<\mu<\frac{\gamma_n}{4}$.
Then there exists a constant $C^1_n>0$ such that
\fi
\begin{equation}\label{eq:ordinary-L2-decay}
\|\varepsilon(s)\|_{L^2}
\le
C^1_n
\left(
K_0+K^2e^{-\mu s_0}
\right)e^{-\mu s},
\qquad
s\in[s_0,s^*].
\end{equation}
\end{lemma}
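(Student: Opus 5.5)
The plan is to run a Lyapunov argument with the modified energy of Proposition~\ref{prop:stablelinearenergy} applied to the stable remainder, $\mathcal E(s):=\mathcal E_n[\varepsilon(s)]=\langle B_n\varepsilon,\varepsilon\rangle$. This makes sense because, by \eqref{Ortho} and Lemma~\ref{lem:stablespaceprojection}, $\varepsilon(s)\in X_{{\rm s},n}$ for every $s\in[s_0,s^*]$. For the same reason $\partial_s\varepsilon=P_{{\rm s},n}\partial_s\varepsilon$, and since $P_{{\rm s},n}$ commutes with $L_n$, equation \eqref{renormalized equation} may be replaced by its projected form
\[
 \partial_s\varepsilon+L_n\varepsilon=P_{{\rm s},n}F+P_{{\rm s},n}\operatorname{Mod}.
\]
In $P_{{\rm s},n}\operatorname{Mod}$ the pieces $\phi_j$, $\Lambda U_n$, $\partial_kU_n$ are annihilated. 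Only $P_{{\rm s},n}(b\Lambda\psi+\beta\cdot\nabla\psi)$ survives, and its $L^2$ norm is $\lesssim(|b|+|\beta|)|a|$. All differentiations are first carried out for regularized data, as explained before \eqref{initial-data}.

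Differentiating $\mathcal E$ and using \eqref{eq:energydissipation} gives
\[
 \frac{\dd}{\dd s}\mathcal E\le-\gamma_n\mathcal E
 +2\operatorname{Re}\langle B_nP_{{\rm s},n}(F+\operatorname{Mod}),\varepsilon\rangle.
\]
I split $F$ into three groups and treat them separately.

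\textbf{(a) The unbounded transport terms.} These are $b\Lambda\varepsilon+\beta\cdot\nabla\varepsilon$. They cannot be bounded in $L^2$ by $\|\varepsilon\|_2$, so I write $P_{{\rm s},n}\Lambda\varepsilon=D_{0,n}\varepsilon$ and $P_{{\rm s},n}\partial_k\varepsilon=D_{k,n}\varepsilon$. Lemma~\ref{lem:projected-generator-energy} then bounds their contribution by $C_n(|b|+|\beta|)\mathcal E$. By Lemma~\ref{modulation} and the bootstrap bounds, $|b|+|\beta|\lesssim K^2e^{-2\mu s}$, so for $s_0$ large this term is absorbed into $\frac{\gamma_n}{2}\mathcal E$.

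\textbf{(b) The remaining terms.} These are bounded in $L^2$ and handled by Cauchy--Schwarz together with the boundedness of $B_n$:
\begin{itemize}
 \item the mixed terms $2\psi\varepsilon+\nabla\varepsilon\cdot\nabla\Delta^{-1}\psi+\nabla\psi\cdot\nabla\Delta^{-1}\varepsilon$ have $L^2$ norm $\lesssim|a|\,\|\varepsilon\|_{H^1}\lesssim Ke^{-2\mu s}$, using \eqref{H-L-ineq} and the decay of $\phi_j$;
 \item $\|NL\|_2\lesssim\|\varepsilon\|_{H^2}^2\le K^2e^{-2\mu s}$, by the estimate $\|\nabla\cdot(f\nabla\Delta^{-1}g)\|_{H^1}\lesssim\|f\|_{H^2}\|g\|_{H^2}$;
 \item $\|UN\|_2\lesssim|a|^2\le e^{-2\mu s}$;
 \item the Mod residual is $\lesssim K^2e^{-3\mu s}$.
\end{itemize}

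Collecting these bounds and using $\|\varepsilon\|_2\le\sqrt{\mathcal E}$ gives
\[
 \frac{\dd}{\dd s}\mathcal E\le-\frac{\gamma_n}{2}\mathcal E+C_nK^2e^{-2\mu s}\sqrt{\mathcal E},
 \qquad\text{hence}\qquad
 \frac{\dd}{\dd s}\sqrt{\mathcal E}\le-\frac{\gamma_n}{4}\sqrt{\mathcal E}+C_nK^2e^{-2\mu s}.
\]
Integrating from $s_0$ gives
\[
 \sqrt{\mathcal E(s)}\le e^{-\gamma_n(s-s_0)/4}\sqrt{\mathcal E(s_0)}
 +C_nK^2\int_{s_0}^se^{-\gamma_n(s-\tau)/4}e^{-2\mu\tau}\,\dd\tau .
\]
For the first term, \eqref{eq:energyequivalence} and \eqref{initial-setting-KS} give $\sqrt{\mathcal E(s_0)}\le C_nK_0e^{-\mu s_0}$. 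Because $\mu<\gamma_n/4$, we have $e^{-\gamma_n(s-s_0)/4}e^{-\mu s_0}\le e^{-\mu s}$, so this term is $\le C_nK_0e^{-\mu s}$. For the second term, $\mu<\gamma_n/4$ makes $\gamma_n/4-2\mu>-\gamma_n/4$ harmless: the integral is $\lesssim e^{-2\mu s}$ when $2\mu<\gamma_n/4$, and is $\lesssim e^{-\gamma_n s/4}e^{(\gamma_n/4-2\mu)s_0}$ otherwise. In either case it is bounded by $C_n e^{-\mu s_0}e^{-\mu s}$. Since $\|\varepsilon\|_2^2\le\mathcal E$, this yields \eqref{eq:ordinary-L2-decay}.

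\textbf{Main obstacle.} I expect the delicate point to be step (a). Pairing $b\Lambda\varepsilon$ naively with $B_n\varepsilon$ would require $\|\Lambda\varepsilon\|_2$, which is controlled only by $K'$, the constant chosen last. Using $K'$ here would break the ordering of constants. The fix is exactly Lemma~\ref{lem:projected-generator-energy}, combined with the observation that $P_{{\rm s},n}$ applied to $b\Lambda\varepsilon$ gives $b D_{0,n}\varepsilon$, whose contribution $\operatorname{Re}\langle B_nD_{0,n}\varepsilon,\varepsilon\rangle$ is form-bounded by $\mathcal E$.
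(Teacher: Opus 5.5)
Your proposal is correct and follows the paper's proof essentially step by step: you project the renormalized equation onto $X_{{\rm s},n}$, use Lemma~\ref{lem:projected-generator-energy} to absorb $bD_{0,n}\varepsilon+\sum_k\beta_kD_{k,n}\varepsilon$ into the dissipation, bound the remaining forcing in plain $L^2$, and close with a Gronwall integration that uses $\mu<\gamma_n/4$. The differences are cosmetic: you run Gronwall on $\sqrt{\mathcal E_n[\varepsilon]}$ rather than absorbing $\langle B_nG,\varepsilon\rangle$ by Young's inequality, and you note that $P_{{\rm s},n}$ annihilates all of $\operatorname{Mod}$ except $b\Lambda\psi+\beta\cdot\nabla\psi$; your case split for the convolution integral also omits the resonant case $2\mu=\gamma_n/4$, which the bound $e^{-2\mu\tau}\le e^{-\mu s_0}e^{-\mu\tau}$ handles uniformly and makes the case split unnecessary.
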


\begin{proof}
\ifdefined\KSChineseDocument
正交条件沿演化保持，故
\[
 \varepsilon(s)\in X_{{\rm s},n},\qquad s\in[s_0,s^*].
\]
记投影后的缩放和平移生成元为
\[
 D_{0,n}:=P_{{\rm s},n}\Lambda|_{X_{{\rm s},n}},\qquad
 D_{k,n}:=P_{{\rm s},n}\partial_k|_{X_{{\rm s},n}},quad1\le k\le3.
\]
将 \eqref{renormalized equation} 投影到 $X_{{\rm s},n}$，得到
\begin{equation}\label{stable-projected-equation}
 \partial_s\varepsilon+L_n\varepsilon
 =bD_{0,n}\varepsilon+
 \sum_{k=1}^3\beta_kD_{k,n}\varepsilon+G,
\end{equation}
其中
\begin{equation}\label{def-G}
 G:=P_{{\rm s},n}
 \left[G_\psi(\varepsilon)+NL+UN+\operatorname{Mod}\right],
\end{equation}
且
\[
 G_\psi(\varepsilon)
 =\nabla\varepsilon\cdot\nabla\Delta^{-1}\psi
 +\nabla\psi\cdot\nabla\Delta^{-1}\varepsilon+2\psi\varepsilon.
\]

先估计右端的非线性部分。由于 $P_{{\rm s},n}$ 在 $L^2$ 上有界，不稳定
空间是有限维的，结合 Sobolev 嵌入与 Hardy--Littlewood--Sobolev 不等式，
\[
 \|G_\psi(\varepsilon)\|_2\lesssim |a|\,\|\varepsilon\|_{H^2},
 \qquad
 \|NL\|_2\lesssim\|\varepsilon\|_{H^2}^2,
 \qquad
 \|UN\|_2\lesssim|a|^2.
\]
参数方程 \eqref{semin} 还给出
\[
 \|\operatorname{Mod}\|_2
 \lesssim\|\varepsilon\|_2^2+|a|^2.
\]
因此由 bootstrap 假设 \eqref{chca} 和 \eqref{1.9}，
\begin{equation}\label{G-estimate}
 \begin{aligned}
 \|G\|_2
 &\lesssim\|\varepsilon\|_{H^2}^2
 +|a|\,\|\varepsilon\|_{H^2}+|a|^2\\
 &\lesssim\|\varepsilon\|_{H^2}^2+|a|^2
 \lesssim K^2e^{-2\mu s}.
 \end{aligned}
\end{equation}

对修正能量
$\mathcal E_n[\varepsilon]=\langle B_n\varepsilon,\varepsilon\rangle$
沿 \eqref{stable-projected-equation} 求导，得到
\begin{align}
 \frac12\frac{\dd}{\dd s}\mathcal E_n[\varepsilon]
 ={}&-\langle B_nL_n\varepsilon,\varepsilon\rangle
 +b\langle B_nD_{0,n}\varepsilon,\varepsilon\rangle\notag\\
 &+\sum_{k=1}^3\beta_k
 \langle B_nD_{k,n}\varepsilon,\varepsilon\rangle
 +\langle B_nG,\varepsilon\rangle.
 \label{modified-energy-new}
\end{align}
命题~\ref{prop:stablelinearenergy} 给出
\begin{equation}\label{coercive-new}
 \langle B_nL_n\varepsilon,\varepsilon\rangle
 \ge\frac{\gamma_n}{2}\mathcal E_n[\varepsilon].
\end{equation}
另一方面，由引理~\ref{lem:projected-generator-energy}，
\begin{equation}\label{generator-modulation-estimate}
 \left|b\langle B_nD_{0,n}\varepsilon,\varepsilon\rangle
 +\sum_{k=1}^3\beta_k
 \langle B_nD_{k,n}\varepsilon,\varepsilon\rangle\right|
 \le C_n(|b|+|\beta|)\mathcal E_n[\varepsilon].
\end{equation}
由 \eqref{semin} 及 bootstrap 假设，
\[
 |b|+|\beta|\lesssim\|\varepsilon\|_2^2+|a|^2
 \lesssim K^2e^{-2\mu s}.
\]
取 $s_0$ 充分大，便有
\begin{equation}\label{generator-absorb}
 C_n(|b|+|\beta|)\mathcal E_n[\varepsilon]
 \le\frac{\gamma_n}{8}\mathcal E_n[\varepsilon].
\end{equation}
再由 $B_n$ 的有界性、\eqref{eq:energyequivalence}、\eqref{G-estimate}
和 Young 不等式，
\begin{align}
 |\langle B_nG,\varepsilon\rangle|
 &\lesssim\|G\|_2\mathcal E_n[\varepsilon]^{1/2}\notag\\
 &\le\frac{\gamma_n}{8}\mathcal E_n[\varepsilon]
 +C_n\|G\|_2^2
 \le\frac{\gamma_n}{8}\mathcal E_n[\varepsilon]
 +C_nK^4e^{-4\mu s}.
 \label{forcing-new}
\end{align}
把以上估计代入 \eqref{modified-energy-new}，得到
\begin{equation}\label{energy-differential-new}
 \frac{\dd}{\dd s}\mathcal E_n[\varepsilon]
 +\frac{\gamma_n}{2}\mathcal E_n[\varepsilon]
 \lesssim K^4e^{-4\mu s}.
\end{equation}

乘以 $e^{\gamma_ns/2}$，再从 $s_0$ 积分到 $s$，可得
\begin{align}
 \mathcal E_n[\varepsilon(s)]
 \lesssim{}&e^{-\frac{\gamma_n}{2}(s-s_0)}
 \mathcal E_n[\varepsilon(s_0)]
 +K^4\int_{s_0}^s
 e^{-\frac{\gamma_n}{2}(s-\tau)}e^{-4\mu\tau}\,\dd\tau.
 \label{eq:energy-integration}
\end{align}
因为 $0<\mu<\gamma_n/4$，由初值条件和能量等价性，
\begin{equation}\label{F1}
 e^{-\frac{\gamma_n}{2}(s-s_0)}\mathcal E_n[\varepsilon(s_0)]
 \lesssim K_0^2e^{-2\mu s}.
\end{equation}
又因 $\tau\ge s_0$，
\begin{equation}\label{F2}
 \begin{aligned}
 \int_{s_0}^s e^{-\frac{\gamma_n}{2}(s-\tau)}e^{-4\mu\tau}\,\dd\tau
 &\le e^{-2\mu s_0}e^{-2\mu s}
 \int_{s_0}^s e^{-(\frac{\gamma_n}{2}-2\mu)(s-\tau)}\,\dd\tau\\
 &\lesssim e^{-2\mu s_0}e^{-2\mu s}.
 \end{aligned}
\end{equation}
合并 \eqref{F1} 和 \eqref{F2}，得到
\[
 \mathcal E_n[\varepsilon(s)]
 \lesssim(K_0^2+K^4e^{-2\mu s_0})e^{-2\mu s}.
\]
最后使用 \eqref{eq:energyequivalence} 并开平方，便得到
\begin{equation}\label{L2}
 \|\varepsilon(s)\|_2
 \le C_n^1(K_0+K^2e^{-\mu s_0})e^{-\mu s},
\end{equation}
即 \eqref{eq:ordinary-L2-decay}。证明完毕。
\else
Since the orthogonality conditions are preserved along the flow,
we have
\[
\varepsilon(s)\in X_{{\rm s},n},
\qquad s\in[s_0,s^*].
\]
We first recall the projected generators
\[
D_{0,n}:=P_{{\rm s},n}\Lambda|_{X_{{\rm s},n}},
\qquad
D_{k,n}:=P_{{\rm s},n}\partial_k|_{X_{{\rm s},n}},
\quad 1\le k\le3.
\]
Projecting the renormalized equation \eqref{renormalized equation} onto $X_{{\rm s},n}$, we obtain
\begin{equation}\label{stable-projected-equation}
\partial_s\varepsilon+L_n\varepsilon
=
bD_{0,n}\varepsilon
+
\sum_{k=1}^3\beta_kD_{k,n}\varepsilon
+
G,
\end{equation}
where
\begin{equation}\label{def-G}
G
:=
P_{{\rm s},n}
\left[
G_\psi(\varepsilon)+NL+UN+\operatorname{Mod}
\right],
\end{equation}
with
\[
G_\psi(\varepsilon)
=
\nabla\varepsilon\cdot\nabla\Delta^{-1}\psi
+
\nabla\psi\cdot\nabla\Delta^{-1}\varepsilon
+
2\psi\varepsilon.
\]

Since $P_{{\rm s},n}$ is bounded on $L^2$, the finite-dimensionality
of the unstable space, Sobolev embedding, and the
Hardy--Littlewood--Sobolev inequality yield
\[
\|G_\psi(\varepsilon)\|_{L^2}
\lesssim
|a|\,\|\varepsilon\|_{H^2}.
\]
Moreover,
\[
\|NL\|_{L^2}
\lesssim
\|\varepsilon\|_{H^2}^2,
\qquad
\|UN\|_{L^2}
\lesssim
|a|^2.
\]
By the modulation estimate,
\[
\|\operatorname{Mod}\|_{L^2}
\lesssim
\|\varepsilon\|_{L^2}^2+|a|^2.
\]
Consequently,
\begin{equation}\label{G-estimate}
\begin{aligned}
\|G\|_{L^2}
&\lesssim
\|\varepsilon\|_{H^2}^2
+
|a|\,\|\varepsilon\|_{H^2}
+
|a|^2\lesssim
\|\varepsilon\|_{H^2}^2+|a|^2\lesssim
K^2e^{-2\mu s}.
\end{aligned}
\end{equation}

Differentiating
\[
\mathcal E_n[\varepsilon]
=
\langle B_n\varepsilon,\varepsilon\rangle
\]
along the flow and using \eqref{stable-projected-equation}, we obtain
\begin{align}
\frac12\frac{d}{ds}\mathcal E_n[\varepsilon]
={}&
-\langle B_nL_n\varepsilon,\varepsilon\rangle
+b\langle B_nD_{0,n}\varepsilon,\varepsilon\rangle
+
\sum_{k=1}^3
\beta_k
\langle B_nD_{k,n}\varepsilon,\varepsilon\rangle
+
\langle B_nG,\varepsilon\rangle .
\label{modified-energy-new}
\end{align}

By Proposition~\ref{prop:stablelinearenergy},
\begin{equation}\label{coercive-new}
\langle B_nL_n\varepsilon,\varepsilon\rangle
\ge
\frac{\gamma_n}{2}\mathcal E_n[\varepsilon].
\end{equation}
On the other hand, Lemma~\ref{lem:projected-generator-energy} gives
\begin{equation}\label{generator-modulation-estimate}
\begin{aligned}
&
\left|
b\langle B_nD_{0,n}\varepsilon,\varepsilon\rangle
+
\sum_{k=1}^3
\beta_k
\langle B_nD_{k,n}\varepsilon,\varepsilon\rangle
\right|
\le
C_n(|b|+|\beta|)
\mathcal E_n[\varepsilon].
\end{aligned}
\end{equation}

By the modulation estimate and the bootstrap assumptions,
\[
|b|+|\beta|
\lesssim
\|\varepsilon\|_{L^2}^2+|a|^2
\lesssim
K^2e^{-2\mu s}.
\]
Taking $s_0$ sufficiently large therefore gives
\begin{equation}\label{generator-absorb}
C_n(|b|+|\beta|)
\mathcal E_n[\varepsilon]
\le
\frac{\gamma_n}{8}\mathcal E_n[\varepsilon].
\end{equation}

Since $B_n$ is bounded on $L^2$ and
$\mathcal E_n$ is equivalent to the $L^2$ norm, by
\eqref{G-estimate} and Young's inequality,
\begin{align}
|\langle B_nG,\varepsilon\rangle|
\lesssim
\|G\|_{L^2}\mathcal E_n[\varepsilon]^{1/2}
\le
\frac{\gamma_n}{8}\mathcal E_n[\varepsilon]
+
C_n\|G\|_{L^2}^2
\le
\frac{\gamma_n}{8}\mathcal E_n[\varepsilon]
+
C_nK^4e^{-4\mu s}.
\label{forcing-new}
\end{align}

Combining
\eqref{modified-energy-new}--\eqref{forcing-new}, we obtain
\begin{equation}\label{energy-differential-new}
\frac{d}{ds}\mathcal E_n[\varepsilon]
+
\frac{\gamma_n}{2}\mathcal E_n[\varepsilon]
\lesssim
K^4e^{-4\mu s}.
\end{equation}
Multiplying the above inequality
 by $e^{\frac{\gamma_n }{2}s}$
and integrating from $s_0$ to $s$, we obtain
\begin{align}
\mathcal E_n[\varepsilon(s)]
\lesssim&
e^{-\frac{\gamma_n}{2}(s-s_0)}
\mathcal E_n[\varepsilon(s_0)]
+
K^4
\int_{s_0}^s
e^{-\frac{\gamma_n}{2}(s-\tau)}
e^{-4\mu\tau}\,d\tau.
\label{eq:energy-integration}
\end{align}
Since
$0<\mu<\frac{\gamma_n}{4}$,
we have
$\frac{\gamma_n}{2}-2\mu>0.$
By \eqref{initial-setting-KS} and the equivalence of
$\mathcal E_n$ and the $L^2$ norm, we have
$\mathcal E_n[\varepsilon(s_0)]
\lesssim
K_0^2e^{-2\mu s_0}$.
Hence
\begin{equation}\label{F1}
 e^{-\frac{\gamma_n}{2}(s-s_0)}
\mathcal E_n[\varepsilon(s_0)]
\lesssim
K_0^2e^{-2\mu s}.   
\end{equation}
Moreover, since $\tau\geq s_0$, we have
$e^{-4\mu\tau}
\leq
e^{-2\mu s_0}e^{-2\mu\tau}$,
and therefore
\begin{equation}\label{F2}
\int_{s_0}^s
e^{-\frac{\gamma_n}{2}(s-\tau)}
e^{-4\mu\tau}\,d\tau
\leq
e^{-2\mu s_0}e^{-2\mu s}
\int_{s_0}^s
e^{-\left(\frac{\gamma_n}{2}-2\mu\right)(s-\tau)}
\,d\tau
\lesssim
e^{-2\mu s_0}e^{-2\mu s}.
\end{equation}
Combining \eqref{F1} and \eqref{F2} gives
$$\mathcal E_n[\varepsilon(s)]
\lesssim
\left(
K_0^2+K^4e^{-2\mu s_0}
\right)e^{-2\mu s}.$$
Finally, by the equivalence of
$\mathcal E_n^{\frac12}$ and the $L^2$ norm, there exists a constant $C^1_n>0$ such that 
\begin{equation}\label{L2}
    \|\varepsilon(s)\|_{L^2}
    \le C^1_n\left(
K_0+K^2e^{-\mu s_0}
\right)
    e^{-\mu s}.
\end{equation}
This completes the proof.
\fi
\end{proof}

\ifdefined\KSChineseDocument
\begin{lemma}[稳定余项的 $H^2$ 衰减]\label{lem:H2-estimate}
\else
\begin{lemma}[Decay of the stable remainder in $H^2$]\label{lem:H2-estimate}
\fi
\ifdefined\KSChineseDocument
存在 $\kappa_n>0$，使得若
$0<\mu<
\else
There exists $\kappa_n>0$ such that, if
$0<\mu<
\fi
    \min\left\{
    \frac{\gamma_n}{4},
    \frac{\kappa_n}{2}
    \right\}$, 
\ifdefined\KSChineseDocument
则先取 $K$ 充分大，再取 $s_0$ 充分大，有
\else
then, after first choosing $K$ sufficiently large and then $s_0$
sufficiently large, one has
\fi
\begin{equation}\label{H^2-es}
    \|\varepsilon(s)\|_{H^2}
    \leq
    \frac{K}{2}e^{-\mu s},\qquad
s\in[s_0,s^*].
\end{equation}
\end{lemma}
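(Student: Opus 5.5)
The plan is a high-order energy estimate for $\partial_s\varepsilon+L_n\varepsilon=F+\operatorname{Mod}$, closed by the $L^2$ decay of Lemma~\ref{lem:ordinary-L2}. I will not propagate the modified energy $B_n$ at the $H^2$ level. Instead I will use a standard parabolic energy
\[
 \mathcal E_2[\varepsilon]:=\|\Delta\varepsilon\|_2^2+A\|\varepsilon\|_2^2 ,
\]
with $A$ large, which is equivalent to $\|\varepsilon\|_{H^2}^2$. The point is that on $\|\Delta\varepsilon\|_2^2$ the operator $L_n$ is coercive up to lower-order terms. I apply $\Delta$ to the equation and pair with $\Delta\varepsilon$. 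The free part $-\Delta+\frac12\Lambda$ gives
\[
 \|\nabla\Delta\varepsilon\|_2^2+\bigl(1+\tfrac12\cdot2-\tfrac34\bigr)\|\Delta\varepsilon\|_2^2 .
\]
This uses $[\Delta,\Lambda]=2\Delta$ and $\langle y\cdot\nabla g,g\rangle=-\frac32\|g\|_2^2$, so the damping coefficient is positive.

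The potential terms $2U_n$, $b_n\cdot\nabla$ and $\nabla U_n\cdot\nabla\Delta^{-1}$ are handled by integrating by parts, which puts at most one derivative on $\Delta\varepsilon$. Since $U_n$ and $b_n$ are bounded with bounded derivatives (for fixed $n$), and since $\|\nabla\Delta^{-1}f\|_6\lesssim\|f\|_2$, these terms are bounded by
\[
 \delta\|\nabla\Delta\varepsilon\|_2^2+C_{n,\delta}\|\varepsilon\|_{H^1}^2 .
\]
Interpolating $\|\varepsilon\|_{H^1}^2\le\delta\|\Delta\varepsilon\|_2^2+C_\delta\|\varepsilon\|_2^2$ then gives
\[
 \tfrac12\tfrac{d}{ds}\|\Delta\varepsilon\|_2^2+\tfrac12\|\nabla\Delta\varepsilon\|_2^2+\kappa\|\Delta\varepsilon\|_2^2\le C_n\|\varepsilon\|_2^2+\text{(forcing)} .
\]
The constant $\kappa_n$ of the statement is fixed at this point, with $\kappa_n\le$ the free damping minus the absorbed $\delta$.

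Next I estimate the forcing paired with $\Delta^2\varepsilon$. Moving one derivative onto $\Delta\varepsilon$, the quantity to control is $\|\nabla(F+\operatorname{Mod})\|_2\|\nabla\Delta\varepsilon\|_2$. The terms are handled as follows.
\begin{itemize}
 \item $NL$ and $UN$: the product estimate $\|\nabla\cdot(f\nabla\Delta^{-1}g)\|_{H^1}\lesssim\|f\|_{H^2}\|g\|_{H^2}$ bounds them by $K^2e^{-2\mu s}$.
 \item The $\psi$-coupling in $\widetilde L$: bounded by $|a|\|\varepsilon\|_{H^2}$.
 \item $\operatorname{Mod}$: by Lemma~\ref{modulation} and the smoothness and decay of $\Lambda U_n,\nabla U_n,\phi_j$, it is bounded by $\|\varepsilon\|_2^2+|a|^2$.
 \item The modulation transport terms $b\Lambda\varepsilon$ and $\beta\cdot\nabla\varepsilon$: these are the delicate ones. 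I treat them via $\langle\Delta(b\Lambda\varepsilon),\Delta\varepsilon\rangle=b(2-\frac32+2)\|\Delta\varepsilon\|_2^2$, which requires no extra derivatives. The translation term is antisymmetric and contributes zero.
\end{itemize}
Since $|b|\lesssim K^2e^{-2\mu s}$, the transport contribution is $\le\frac{\kappa}{4}\|\Delta\varepsilon\|_2^2$ once $s_0$ is large.

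Collecting these bounds and using Young's inequality gives
\[
 \frac{d}{ds}\|\Delta\varepsilon\|_2^2+\kappa_n\|\Delta\varepsilon\|_2^2\le C_n\|\varepsilon\|_2^2+C_nK^4e^{-4\mu s} .
\]
Inserting \eqref{eq:ordinary-L2-decay} and integrating with $2\mu<\kappa_n$ yields
\[
 \|\Delta\varepsilon(s)\|_2^2\lesssim\bigl(K_0^2+C_n(K_0+K^2e^{-\mu s_0})^2+K^4e^{-2\mu s_0}\bigr)e^{-2\mu s} .
\]
Combining this with the $L^2$ bound, $\|\varepsilon\|_{H^2}\le C_n'(K_0+K^2e^{-\mu s_0})e^{-\mu s}$ with $C_n'$ independent of $K$. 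I then choose $K\ge4C_n'K_0$ and $s_0$ so large that $C_n'K^2e^{-\mu s_0}\le K/4$, which gives \eqref{H^2-es}.

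The main obstacle is justifying the $\Delta^2$ pairing and the transport commutators rigorously. The plan there is to argue first for regularized data, as the paper already allows, and to make sure the constant in front of $\|\varepsilon\|_2^2$ does not involve $K$, so that the $L^2$ lemma closes the argument without circularity.
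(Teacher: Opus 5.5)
Your proposal is correct and is essentially the paper's proof. It applies $\Delta$, pairs with $w=\Delta\varepsilon$, and uses the free coercivity $\|\nabla w\|_2^2+\frac54\|w\|_2^2$ together with the exact identities $\langle\Delta(b\Lambda\varepsilon),w\rangle=\frac52 b\|w\|_2^2$ and $\langle\Delta(\beta\cdot\nabla\varepsilon),w\rangle=0$; it then controls the forcing through its $H^1$ norm, inserts Lemma~\ref{lem:ordinary-L2}, integrates using $2\mu<\kappa_n$, and chooses $K$ before $s_0$ (the extra term $A\|\varepsilon\|_2^2$ in your energy is never actually used). One small fix: $\nabla(b_n\cdot\nabla\varepsilon)$ contains $\nabla^2\varepsilon\, b_n$, so the potential terms are bounded by $\delta\|\nabla w\|_2^2+C_\delta\|\varepsilon\|_{H^2}^2$ rather than by an $H^1$ norm, and you must close with the interpolation $\|w\|_2^2+\|\nabla\varepsilon\|_2^2\le\delta'\|\nabla w\|_2^2+C_{\delta'}\|\varepsilon\|_2^2$ (choosing $\delta'$ after $\delta$), exactly as the paper does.
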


\begin{proof}
\ifdefined\KSChineseDocument
先在正的自相似时间上计算，此时抛物正则化保证解光滑。下述估计在初始时刻附近
一致，最后由连续性传回 $s=s_0$。将算子分解为
\[
 L_n=L^0+K_n,\qquad L^0\varepsilon=-\Delta\varepsilon+\frac12\Lambda\varepsilon,
\]
其中
\[
 K_n\varepsilon=-2U_n\varepsilon-b_n\cdot\nabla\varepsilon
 -\nabla U_n\cdot\nabla\Delta^{-1}\varepsilon,\qquad
 b_n:=\nabla\Delta^{-1}U_n.
\]
令
\[
 w:=\Delta\varepsilon.
\]
对 \eqref{renormalized equation} 作用 $\Delta$，再与 $w$ 作 $L^2$ 配对，得到
\begin{align}
 \frac12\frac{\dd}{\dd s}\|w\|_2^2
 +\langle\Delta L^0\varepsilon,w\rangle
 ={}&-\langle\Delta K_n\varepsilon,w\rangle
 +\langle\Delta\widetilde L(\varepsilon),w\rangle\notag\\
 &+\langle\Delta NL,w\rangle+\langle\Delta UN,w\rangle
 +\langle\Delta\operatorname{Mod},w\rangle.
 \label{eq:H2-basic-energy}
\end{align}

先计算左端的自由部分。由交换关系
\begin{equation}\label{commutator}
 [\Delta,y\cdot\nabla]=2\Delta,
\end{equation}
有
\[
 \Delta L^0\varepsilon=-\Delta w+2w+\frac12y\cdot\nabla w.
\]
分部积分给出
\begin{align}
 \langle\Delta L^0\varepsilon,w\rangle
 &=\|\nabla w\|_2^2+2\|w\|_2^2
 +\frac12\langle y\cdot\nabla w,w\rangle\notag\\
 &=\|\nabla w\|_2^2+\frac54\|w\|_2^2,
 \label{eq:free-H2-coercivity}
\end{align}
其中使用了
$\langle y\cdot\nabla w,w\rangle=-\frac32\|w\|_2^2$。

下面估计 $K_n$。我们先证明：对任意 $\delta>0$，
\begin{equation}\label{eq:Kn-H2}
 |\langle\Delta K_n\varepsilon,w\rangle|
 \le\delta\|\nabla w\|_2^2+C_\delta\|\varepsilon\|_2^2.
\end{equation}
对乘法项，乘积法则给出
\[
 \Delta(U_n\varepsilon)
 =U_nw+2\nabla U_n\cdot\nabla\varepsilon+\varepsilon\Delta U_n.
\]
由 $U_n$ 及其前两阶导数有界，
\begin{equation}\label{Tir1}
 |\langle\Delta(U_n\varepsilon),w\rangle|
 \lesssim\|w\|_2^2+\|\nabla\varepsilon\|_2^2+\|\varepsilon\|_2^2.
\end{equation}
对输运项，
\[
 \Delta(b_n\cdot\nabla\varepsilon)
 =b_n\cdot\nabla w+2\nabla b_n:\nabla^2\varepsilon
 +(\Delta b_n)\cdot\nabla\varepsilon,
\]
这里 $A:B=\sum_{i,j=1}^3A_{ij}B_{ij}$。最高阶项满足
\[
 \langle b_n\cdot\nabla w,w\rangle
 =-\frac12\int_{\mathbb R^3}(\nabla\cdot b_n)|w|^2
 =-\frac12\int_{\mathbb R^3}U_n|w|^2.
\]
此外，由 Plancherel 等式 $\|\nabla^2\varepsilon\|_2=\|\Delta\varepsilon\|_2$
以及 $\Delta b_n=\nabla U_n$，
\[
 |\langle\nabla b_n:\nabla^2\varepsilon,w\rangle|
 \lesssim\|w\|_2^2.
\]
因此
\begin{equation}\label{Tir2}
 |\langle\Delta(b_n\cdot\nabla\varepsilon),w\rangle|
 \lesssim\|w\|_2^2+\|\nabla\varepsilon\|_2^2.
\end{equation}

记 $T=\nabla\Delta^{-1}$。再由乘积法则，
\[
 \Delta(\nabla U_n\cdot T\varepsilon)
 =\nabla\Delta U_n\cdot T\varepsilon
 +2\nabla^2U_n:\nabla T\varepsilon
 +\nabla U_n\cdot\nabla\varepsilon.
\]
Hardy--Littlewood--Sobolev 不等式和 Riesz 变换的 $L^2$ 有界性给出
\[
 \|T\varepsilon\|_6\lesssim\|\varepsilon\|_2,\qquad
 \|\nabla T\varepsilon\|_2\lesssim\|\varepsilon\|_2.
\]
结合 $U_n$ 的正则性和衰减，得到
\begin{equation}\label{Tir3}
 |\langle\Delta(\nabla U_n\cdot T\varepsilon),w\rangle|
 \lesssim(\|\varepsilon\|_2+\|\nabla\varepsilon\|_2)\|w\|_2.
\end{equation}
合并 \eqref{Tir1}--\eqref{Tir3}，再使用插值不等式
\begin{equation}\label{eq:H2-interpolation}
 \|w\|_2^2+\|\nabla\varepsilon\|_2^2
 \le\delta\|\nabla w\|_2^2+C_\delta\|\varepsilon\|_2^2,
\end{equation}
便得到 \eqref{eq:Kn-H2}。

下面处理 $\widetilde L(\varepsilon)$。由
$\Lambda=2+y\cdot\nabla$ 和 \eqref{commutator}，
$\Delta\Lambda\varepsilon=4w+y\cdot\nabla w$，故
\begin{equation}\label{eq:scaling-H2N}
 \langle\Delta(b\Lambda\varepsilon),w\rangle
 =b\left(4\|w\|_2^2+\langle y\cdot\nabla w,w\rangle\right)
 =\frac52b\|w\|_2^2.
\end{equation}
又因为 $\beta$ 只依赖于 $s$，
$\Delta(\beta\cdot\nabla\varepsilon)=\beta\cdot\nabla w$，所以
\begin{equation}\label{eq:translation-H2}
 \langle\Delta(\beta\cdot\nabla\varepsilon),w\rangle=0.
\end{equation}

对混合项 $G_\psi(\varepsilon)$，有限维范数等价给出
\[
 \|\psi\|_{W^{2,\infty}}+\|T\psi\|_{W^{1,\infty}}
 +\|\nabla^2\psi\|_3\le C_n|a|.
\]
结合乘积法则、$\|T\varepsilon\|_6\lesssim\|\varepsilon\|_2$ 和
$\|\nabla T\varepsilon\|_2\lesssim\|\varepsilon\|_2$，逐项得到
\begin{align*}
 \|\nabla\varepsilon\cdot T\psi\|_{H^1}
 &\le C_n|a|(\|\nabla\varepsilon\|_2+\|\nabla^2\varepsilon\|_2),\\
 \|\nabla\psi\cdot T\varepsilon\|_{H^1}
 &\le C_n|a|(\|T\varepsilon\|_6+\|\nabla T\varepsilon\|_2),\\
 \|2\psi\varepsilon\|_{H^1}&\le C_n|a|\|\varepsilon\|_{H^1}.
\end{align*}
从而 $\|G_\psi(\varepsilon)\|_{H^1}\le C_n|a|\|\varepsilon\|_{H^2}$。
分部积分并使用 Young 不等式，得到
\begin{equation}\label{eq:Gpsi-energy}
 |\langle\Delta G_\psi(\varepsilon),w\rangle|
 \le\delta\|\nabla w\|_2^2+C_\delta|a|^2\|\varepsilon\|_{H^2}^2.
\end{equation}
因此由 \eqref{eq:scaling-H2N}、\eqref{eq:translation-H2} 和
\eqref{eq:Gpsi-energy}，
\begin{equation}\label{Linear-estimate}
 |\langle\Delta\widetilde L(\varepsilon),w\rangle|
 \le\frac52|b|\|w\|_2^2+
 \delta\|\nabla w\|_2^2+C_\delta|a|^2\|\varepsilon\|_{H^2}^2.
\end{equation}

再处理纯稳定二次项
$NL=\varepsilon^2+\nabla\varepsilon\cdot T\varepsilon$。我们先证明
\begin{equation}\label{eq:NL-H1}
 \|NL\|_{H^1}\lesssim\|\varepsilon\|_{H^2}^2.
\end{equation}
事实上，
\[
 \|\varepsilon^2\|_{H^1}
 \lesssim\|\varepsilon\|_\infty\|\varepsilon\|_{H^1}
 \lesssim\|\varepsilon\|_{H^2}^2.
\]
又有
\[
 \|T\varepsilon\|_\infty\lesssim\|\varepsilon\|_{H^1},\qquad
 \|\nabla T\varepsilon\|_3\lesssim\|\varepsilon\|_{H^1}.
\]
故由 Sobolev 不等式，
\begin{align*}
 \|\nabla\varepsilon\cdot T\varepsilon\|_{H^1}
 \lesssim{}&\|\nabla\varepsilon\|_2\|T\varepsilon\|_\infty
 +\|\nabla^2\varepsilon\|_2\|T\varepsilon\|_\infty\\
 &+\|\nabla\varepsilon\|_6\|\nabla T\varepsilon\|_3
 \lesssim\|\varepsilon\|_{H^2}^2.
\end{align*}
这就证明了 \eqref{eq:NL-H1}，进而
\begin{equation}\label{eq:NL-H2-energy}
 |\langle\Delta NL,w\rangle|
 =|\langle\nabla NL,\nabla w\rangle|
 \le\delta\|\nabla w\|_2^2+C_\delta\|\varepsilon\|_{H^2}^4.
\end{equation}
有限维范数等价同样给出 $\|UN\|_{H^1}\lesssim|a|^2$，所以
\begin{equation}\label{eq:UN-H2-energy}
 |\langle\Delta UN,w\rangle|
 \le\delta\|\nabla w\|_2^2+C_\delta|a|^4.
\end{equation}
由参数估计 \eqref{semin}，
\begin{equation}\label{eq:Mod-H1}
 \|\operatorname{Mod}\|_{H^1}\lesssim\|\varepsilon\|_2^2+|a|^2,
\end{equation}
因而
\begin{equation}\label{eq:Mod-H2-energy}
 |\langle\Delta\operatorname{Mod},w\rangle|
 \le\delta\|\nabla w\|_2^2
 +C_\delta(\|\varepsilon\|_2^2+|a|^2)^2.
\end{equation}

由 \eqref{semin}、\eqref{chca} 和 \eqref{1.9}，
$|b|\lesssim K^2e^{-2\mu s}$。先取 $s_0$ 充分大，使
\eqref{eq:scaling-H2N} 中的 $|b|\|w\|_2^2$ 被
\eqref{eq:free-H2-coercivity} 的正项吸收；再取 $\delta$ 充分小。
把 \eqref{eq:Kn-H2}、\eqref{Linear-estimate}、
\eqref{eq:NL-H2-energy}、\eqref{eq:UN-H2-energy} 和
\eqref{eq:Mod-H2-energy} 代入 \eqref{eq:H2-basic-energy}，得到某个
$\kappa_n>0$ 使
\begin{align*}
 \frac{\dd}{\dd s}\|w\|_2^2+
 \kappa_n\|w\|_2^2+
 \kappa_n\|\nabla w\|_2^2
 \lesssim
 \|\varepsilon\|_2^2+|a|^2\|\varepsilon\|_{H^2}^2
 +\|\varepsilon\|_{H^2}^4+|a|^4.
\end{align*}
利用 bootstrap 假设以及已经得到的 $L^2$ 估计
\eqref{eq:ordinary-L2-decay}，可化为
\begin{equation}\label{eq:H2-differential-simple}
 \frac{\dd}{\dd s}\|w\|_2^2+
 \kappa_n\|w\|_2^2
 \lesssim K_0^2e^{-2\mu s}+K^4e^{-4\mu s}.
\end{equation}

由于 $2\mu<\kappa_n$，乘以 $e^{\kappa_ns}$ 并从 $s_0$ 积分到 $s$，
\begin{align}
 \|w(s)\|_2^2
 \lesssim{}&e^{-\kappa_n(s-s_0)}\|w(s_0)\|_2^2
 +K_0^2\int_{s_0}^se^{-\kappa_n(s-\tau)}e^{-2\mu\tau}\,\dd\tau\notag\\
 &+K^4\int_{s_0}^se^{-\kappa_n(s-\tau)}e^{-4\mu\tau}\,\dd\tau.
 \label{eq:H2-integration}
\end{align}
由初值估计和 $2\mu<\kappa_n$，
\[
 e^{-\kappa_n(s-s_0)}\|w(s_0)\|_2^2\lesssim K_0^2e^{-2\mu s},
\]
\[
 \int_{s_0}^se^{-\kappa_n(s-\tau)}e^{-2\mu\tau}\,\dd\tau
 \lesssim e^{-2\mu s},
\]
且由 $e^{-4\mu\tau}\le e^{-2\mu s_0}e^{-2\mu\tau}$，
\[
 \int_{s_0}^se^{-\kappa_n(s-\tau)}e^{-4\mu\tau}\,\dd\tau
 \lesssim e^{-2\mu s_0}e^{-2\mu s}.
\]
于是
\begin{equation}\label{eq:Delta-epsilon-final}
 \|\Delta\varepsilon(s)\|_2^2
 \lesssim(K_0^2+K^4e^{-2\mu s_0})e^{-2\mu s}.
\end{equation}
开平方，再结合
\[
 \|\varepsilon\|_{H^2}
 \lesssim\|\varepsilon\|_2+\|\Delta\varepsilon\|_2
\]
和 \eqref{eq:ordinary-L2-decay}，得到
\begin{equation}\label{eq:H2-final}
 \|\varepsilon(s)\|_{H^2}
 \le C_n'(K_0+K^2e^{-\mu s_0})e^{-\mu s}.
\end{equation}
先取 $K$ 充分大，使 $C_n'K_0\le K/4$；再取 $s_0$ 充分大，使
$C_n'K^2e^{-\mu s_0}\le K/4$。因此
\[
 \|\varepsilon(s)\|_{H^2}\le\frac K2e^{-\mu s},
\]
即 \eqref{H^2-es}。证明完毕。
\else
All differentiations below are first carried out for positive renormalized
time, where parabolic regularization makes the solution smooth.  The estimates
are uniform down to $s=s_0$ and hence extend to the initial time by continuity.
Let
   $ L_n=L^0+K_n$
where
\[
   L^0\varepsilon=-\Delta\varepsilon+\frac12 \Lambda\varepsilon,\]\[ K_n\varepsilon
    =
    -2U_n\varepsilon
    -
   b_n\cdot\nabla\varepsilon
    -
    \nabla U_n\cdot\nabla\Delta^{-1}\varepsilon,
    \ \ 
     b_n:=\nabla\Delta^{-1}U_n.
\]

Set
\[
    w:=\Delta\varepsilon.
\]
Applying $\Delta$ to the equation \eqref{renormalized equation} and taking the $L^2$ inner
product with $w$, we obtain
\begin{align}
\frac12\frac{d}{ds}\|w\|_{L^2}^2
+
\langle\Delta L^0\varepsilon,w\rangle
={}&
-\langle\Delta K_n\varepsilon,w\rangle
+
\langle\Delta\widetilde L(\varepsilon),w\rangle
\nonumber\\
&+
\langle\Delta NL,w\rangle
+
\langle\Delta UN,w\rangle
+
\langle\Delta\operatorname{Mod},w\rangle.
\label{eq:H2-basic-energy}
\end{align}
We first consider the term $\langle\Delta L^0\varepsilon,w\rangle$. By the commutator relation
\begin{equation}\label{commutator}
[\Delta,y\cdot\nabla]=2\Delta, 
\end{equation}
we have
\[
\Delta L^0\varepsilon
=
-\Delta w+2w+\frac12y\cdot\nabla w.
\]
Consequently,
\begin{align}
\langle\Delta L^0\varepsilon,w\rangle
&=
\|\nabla w\|_{L^2}^2
+
2\|w\|_{L^2}^2
+
\frac12\langle y\cdot\nabla w,w\rangle
=
\|\nabla w\|_{L^2}^2
+
\frac54\|w\|_{L^2}^2,
\label{eq:free-H2-coercivity}
\end{align}
where we used
   $ \langle y\cdot\nabla w,w\rangle
    =
    -\frac32\|w\|_{L^2}^2$.
    
We next estimate the term $\langle\Delta K_n\varepsilon,w\rangle$. We claim that for every
$\delta>0$,
\begin{equation}\label{eq:Kn-H2}
\left|
\langle\Delta K_n\varepsilon,w\rangle
\right|
\leq
\delta\|\nabla w\|_{L^2}^2
+
C_{\delta}\|\varepsilon\|_{L^2}^2.
\end{equation}
For the first term,
\[
\Delta(U_n\varepsilon)
=
U_nw
+
2\nabla U_n\cdot\nabla\varepsilon
+\varepsilon
\Delta U_n,
\]
and hence
\begin{equation}\label{Tir1}
\left|
\langle\Delta(U_n\varepsilon),w\rangle
\right|
\lesssim
\|w\|_{L^2}^2
+
\|\nabla\varepsilon\|_{L^2}^2
+
\|\varepsilon\|_{L^2}^2.
\end{equation}
For the transport term,
\[
\Delta(b_n\cdot\nabla\varepsilon)
=
 b_n\cdot\nabla w
+
2\nabla b_n:\nabla^2\varepsilon
+
(\Delta b_n)\cdot\nabla\varepsilon,
\]
where ``$:$'' denotes the Frobenius inner product, i.e.,
\[
A:B=\sum_{i,j=1}^3 A_{ij}B_{ij},
\qquad
\nabla b_n:\nabla^2\varepsilon
=
\sum_{i,j=1}^3
\partial_i b_{n,j}\,\partial_{ij}\varepsilon.
\]
By integrating by parts, the highest-order term satisfies
\[
\langle b_n\cdot\nabla w,w\rangle
=
-\frac12
\int_{\mathbb R^3}
(\nabla\cdot b_n)|w|^2
=
-\frac12
\int_{\mathbb R^3}
U_n|w|^2,
\]
In addition,
$$
\left|\left\langle\nabla b_n:\nabla^2\varepsilon,w\right\rangle\right|\lesssim||\nabla b_n||_\infty||\nabla^2\varepsilon||_{L^2}||w||_{L^2}=||\nabla b_n||_\infty||w||_{L^2}^2\lesssim||w||_{L^2}^2,
$$
where $\|\nabla^2\varepsilon\|_2=\|\Delta\varepsilon\|_2$ follows from
Plancherel's theorem.  The remaining term is controlled by
$\Delta b_n=\nabla U_n$ and the boundedness of $\nabla U_n$.
Consequently,
\begin{equation}\label{Tir2}
\left|\langle
\Delta(b_n\cdot\nabla\varepsilon),w
\rangle
\right|
\lesssim
\|w\|_{L^2}^2
+
\|\nabla\varepsilon\|_{L^2}^2.
\end{equation}

Setting
\[
T:=\nabla\Delta^{-1},
\]
we have
\begin{align}
\Delta(\nabla U_n\cdot T\varepsilon)
={}&
\nabla\Delta U_n\cdot T\varepsilon
+
2\nabla^2U_n:\nabla T\varepsilon
+
\nabla U_n\cdot\nabla\varepsilon.
\end{align}
By the Hardy--Littlewood--Sobolev inequality and the
$L^2$ boundedness of the Riesz transforms,
\[
    \|T\varepsilon\|_{L^6}
    \lesssim
    \|\varepsilon\|_{L^2},
    \qquad
    \|\nabla T\varepsilon\|_{L^2}
    \lesssim
    \|\varepsilon\|_{L^2}.
\]
Using the smoothness and decay of $U_n$, we infer
\begin{equation}\label{Tir3}
\left|
\left\langle
\Delta(\nabla U_n\cdot T\varepsilon),w
\right\rangle
\right|
\lesssim
\left(
\|\varepsilon\|_{L^2}
+
\|\nabla\varepsilon\|_{L^2}
\right)
\|w\|_{L^2}.
\end{equation}
Combining the estimates \eqref{Tir1}, \eqref{Tir2} and \eqref{Tir3} yields
\[
\left|
\langle\Delta K_n\varepsilon,w\rangle
\right|
\lesssim
\|w\|_{L^2}^2
+
\|\nabla\varepsilon\|_{L^2}^2
+
\|\varepsilon\|_{L^2}^2.
\]
Using the interpolation inequality
\begin{equation}\label{eq:H2-interpolation}
    \|w\|_{L^2}^2
    +
    \|\nabla\varepsilon\|_{L^2}^2
    \leq
    \delta\|\nabla w\|_{L^2}^2
    +
    C_\delta\|\varepsilon\|_{L^2}^2,
\end{equation}
we obtain \eqref{eq:Kn-H2}.

We now estimate the modulation terms contained in
$\widetilde L(\varepsilon)$. Since
    $\Lambda=2+y\cdot\nabla$, by \eqref{commutator},
we have
    $\Delta\Lambda\varepsilon
    =
    4w+y\cdot\nabla w$.
Therefore,
\begin{equation}\label{eq:scaling-H2N}
\langle\Delta(b\Lambda\varepsilon),w\rangle
=
b\left(
4\|w\|_{L^2}^2
+
\langle y\cdot\nabla w,w\rangle
\right)
=
\frac52 b\|w\|_{L^2}^2.
\end{equation}
Similarly,
$\Delta(\beta\cdot\nabla\varepsilon)
=
\beta\cdot\nabla w$,
and $\nabla_y\cdot\beta=0$; hence
\begin{equation}\label{eq:translation-H2}
    \langle
    \Delta(\beta\cdot\nabla\varepsilon),w
    \rangle
    =
    0.
\end{equation}

To estimate the mixed term, recall that $T=\nabla\Delta^{-1}$ and that the
finite-dimensionality of the unstable eigenspace gives
\[
 \|\psi\|_{W^{2,\infty}}+\|T\psi\|_{W^{1,\infty}}
 +\|\nabla^2\psi\|_{L^3}\le C_n|a|.
\]
The product rule, $\|T\varepsilon\|_6\lesssim\|\varepsilon\|_2$, and
$\|\nabla T\varepsilon\|_2\lesssim\|\varepsilon\|_2$ then give
\begin{align*}
 \|\nabla\varepsilon\cdot T\psi\|_{H^1}
 &\le C_n|a|\bigl(\|\nabla\varepsilon\|_2
                   +\|\nabla^2\varepsilon\|_2\bigr),\\
 \|\nabla\psi\cdot T\varepsilon\|_{H^1}
 &\le C_n|a|\bigl(\|T\varepsilon\|_6
                   +\|\nabla T\varepsilon\|_2\bigr),\\
 \|2\psi\varepsilon\|_{H^1}
 &\le C_n|a|\|\varepsilon\|_{H^1}.
\end{align*}
Consequently,
\[
 \|G_\psi(\varepsilon)\|_{H^1}
 \le C_n|a|\|\varepsilon\|_{H^2}.
\]
Consequently, by integrating by parts,
\begin{equation}\label{eq:Gpsi-energy}
\begin{aligned}
\left|
\langle\Delta (\nabla\varepsilon\cdot\nabla\Delta^{-1}\psi+\nabla\psi\cdot\nabla\Delta^{-1}\varepsilon+2\psi\varepsilon),w\rangle
\right|
&=
\left|
\langle\nabla (\nabla\varepsilon\cdot\nabla\Delta^{-1}\psi+\nabla\psi\cdot\nabla\Delta^{-1}\varepsilon+2\psi\varepsilon),\nabla w\rangle
\right|
\\
&\leq
\delta\|\nabla w\|_{L^2}^2
+
C_{\delta}
|a|^2
\|\varepsilon\|_{H^2}^2.
\end{aligned}
\end{equation}
Combining \eqref{eq:scaling-H2N}, \eqref{eq:translation-H2}, and
\eqref{eq:Gpsi-energy} gives
\begin{equation}\label{Linear-estimate}
\left|\langle\Delta\widetilde L(\varepsilon),w\rangle\right|\le \frac52 |b|\|w\|_{L^2}^2+\delta\|\nabla w\|_{L^2}^2
+
C_{\delta}
|a|^2
\|\varepsilon\|_{H^2}^2
\end{equation}

We next consider the quadratic nonlinearity
    $NL
    =
    \varepsilon^2
    +
    \nabla\varepsilon\cdot
    \nabla\Delta^{-1}\varepsilon.$
We claim that
\begin{equation}\label{eq:NL-H1}
    \|NL\|_{H^1}
    \lesssim
    \|\varepsilon\|_{H^2}^2.
\end{equation}
Indeed,
\[
    \|\varepsilon^2\|_{H^1}
    \lesssim
    \|\varepsilon\|_{L^\infty}
    \|\varepsilon\|_{H^1}
    \lesssim
    \|\varepsilon\|_{H^2}^2.
\]
Moreover, we have
\[
    \|T\varepsilon\|_{L^\infty}
    \lesssim
    \|\varepsilon\|_{H^1},\  \|\nabla T\varepsilon\|_{L^3}
    \lesssim
    \|\varepsilon\|_{H^1}.
\]
The Sobolev inequality therefore gives
\begin{equation}
\begin{aligned}
\|
\nabla\varepsilon\cdot T\varepsilon
\|_{H^1}&\le ||\nabla\varepsilon\cdot T\varepsilon
\|_{L^2}+||\nabla(\nabla\varepsilon\cdot T\varepsilon)
\|_{L^2}\\
&\lesssim{}
\|\nabla\varepsilon\|_{L^2}
\|T\varepsilon\|_{L^\infty}
+
\|\nabla^2\varepsilon\|_{L^2}
\|T\varepsilon\|_{L^\infty}
+
\|\nabla\varepsilon\|_{L^6}
\|\nabla T\varepsilon\|_{L^3}
\lesssim
\|\varepsilon\|_{H^2}^2.
\end{aligned}
\end{equation}
This proves \eqref{eq:NL-H1}. Hence
\begin{equation}\label{eq:NL-H2-energy}
\left|
\langle\Delta NL,w\rangle
\right|
=
\left|
\langle\nabla NL,\nabla w\rangle
\right|\le
\delta\|\nabla w\|_{L^2}^2
+
C_\delta
\|\varepsilon\|_{H^2}^4.
\end{equation}

Since
    $\|UN\|_{H^1}
    \lesssim
    |a|^2$,
\begin{equation}\label{eq:UN-H2-energy}
\left|
\langle\Delta UN,w\rangle
\right|
\leq
\delta\|\nabla w\|_{L^2}^2
+
C_{\delta}|a|^4.
\end{equation}

By the modulation estimate \eqref{semin},
we obtain
\begin{equation}\label{eq:Mod-H1}
    \|\operatorname{Mod}\|_{H^1}
    \lesssim
    \|\varepsilon\|_{L^2}^2
    +
    |a|^2.
\end{equation}
 Hence
\begin{equation}\label{eq:Mod-H2-energy}
\left|
\langle
\Delta\operatorname{Mod},w
\rangle
\right|
=
\left|
\langle
\nabla\operatorname{Mod},\nabla w
\rangle
\right|
\leq
\delta\|\nabla w\|_{L^2}^2
+
C_{\delta}
\left(
\|\varepsilon\|_{L^2}^2
+
|a|^2
\right)^2.
\end{equation}

By the modulation estimate \eqref{semin} and the bootstrap assumptions \eqref{chca} and \eqref{1.9}, we have
$|b|
\lesssim
K^2e^{-2\mu s}$.
Choosing $s_0$ sufficiently large makes the contribution in
\eqref{eq:scaling-H2N} small enough to be absorbed into the positive term
$\frac54\|w\|_{L^2}^2$ in \eqref{eq:free-H2-coercivity}.
Then
combining \eqref{eq:free-H2-coercivity},
\eqref{eq:Kn-H2}, \eqref{Linear-estimate}, \eqref{eq:NL-H2-energy}, \eqref{eq:UN-H2-energy} and \eqref{eq:Mod-H2-energy},
and then choosing $\delta>0$ sufficiently small, we obtain
for some $\kappa_n>0$,
\begin{align}
\frac{d}{ds}\|w\|_{L^2}^2
+
\kappa_n\|w\|_{L^2}^2
+
\kappa_n\|\nabla w\|_{L^2}^2
&\lesssim
\|\varepsilon\|_{L^2}^2
+
|a|^2\|\varepsilon\|_{H^2}^2
+
\|\varepsilon\|_{H^2}^4
+
|a|^4
+
\left(
\|\varepsilon\|_{L^2}^2
+
|a|^2
\right)^2\\
&\lesssim\nonumber
\|\varepsilon\|_{L^2}^2
+
|a|^2\|\varepsilon\|_{H^2}^2
+
\|\varepsilon\|_{H^2}^4
+
|a|^4.
\end{align}
Using the bootstrap assumptions and the already established
$L^2$ estimate \eqref{eq:ordinary-L2-decay}, this yields
\begin{equation}\label{eq:H2-differential-simple}
\begin{aligned}
\frac{d}{ds}\|w\|_{L^2}^2
+
\kappa_n\|w\|_{L^2}^2
&\lesssim
K_0^2e^{-2\mu s}
+K^2e^{-4\mu s}+
K^4e^{-4\mu s}+e^{-4\mu s}\\
&\lesssim K_0^2e^{-2\mu s}
+
K^4e^{-4\mu s}.
\end{aligned}
\end{equation}

Since $2\mu<\kappa_n$, multiplying
\eqref{eq:H2-differential-simple} by $e^{\kappa_n s}$
and integrating from $s_0$ to $s$, we obtain
\begin{align}
\|w(s)\|_{L^2}^2
\lesssim
e^{-\kappa_n(s-s_0)}
\|w(s_0)\|_{L^2}^2
+
K_0^2
\int_{s_0}^s
e^{-\kappa_n(s-\tau)}
e^{-2\mu\tau}\,d\tau+
K^4
\int_{s_0}^s
e^{-\kappa_n(s-\tau)}
e^{-4\mu\tau}\,d\tau.
\label{eq:H2-integration}
\end{align}
By the initial estimate,
    $\|w(s_0)\|_{L^2}^2
    \leq
    K_0^2e^{-2\mu s_0}$,
we have
\[
e^{-\kappa_n(s-s_0)}
\|w(s_0)\|_{L^2}^2
\lesssim
K_0^2e^{-2\mu s}.
\]
Moreover,
$\int_{s_0}^s
e^{-\kappa_n(s-\tau)}
e^{-2\mu\tau}\,d\tau
\lesssim
e^{-2\mu s}$.
Since 
\[
e^{-4\mu\tau}
\leq
e^{-2\mu s_0}e^{-2\mu\tau},\ \ \tau\geq s_0,
\]
and hence
\[
\int_{s_0}^s
e^{-\kappa_n(s-\tau)}
e^{-4\mu\tau}\,d\tau
\lesssim
e^{-2\mu s_0}e^{-2\mu s}.
\]
Consequently,
\begin{equation}\label{eq:Delta-epsilon-final}
\|w(s)\|_{L^2}^2=\|\Delta\varepsilon(s)\|_{L^2}^2
    \lesssim
    \left(
    K_0^2
    +
    K^4e^{-2\mu s_0}
    \right)
    e^{-2\mu s}.
\end{equation}
Taking square roots,
\[
    \|\Delta\varepsilon(s)\|_{L^2}
    \lesssim
    \left(
    K_0
    +
    K^2e^{-\mu s_0}
    \right)e^{-\mu s}.
\]
Finally, by the standard elliptic estimate on $\mathbb R^3$,
\[
    \|\varepsilon\|_{H^2}
    \lesssim
    \|\varepsilon\|_{L^2}
    +
    \|\Delta\varepsilon\|_{L^2},
\]
and the $L^2$ estimate \eqref{eq:ordinary-L2-decay}, we conclude that there exists a  constant $C'_n>0$ such that
\begin{equation}\label{eq:H2-final}
    \|\varepsilon(s)\|_{H^2}
    \leq
    C'_n
    \left(
    K_0
    +
    K^2e^{-\mu s_0}
    \right)e^{-\mu s}.
\end{equation}
We first choose $K$ sufficiently large so that
   $ C'_nK_0\leq\frac{K}{4}$,
and then choose $s_0=s_0(K)$ sufficiently large so that
   $ C'_nK^2e^{-\mu s_0}
    \leq
    \frac{K}{4}$.
It follows that
\[
    \|\varepsilon(s)\|_{H^2}
    \leq
    \frac{K}{2}e^{-\mu s}.
\]
This closes the $H^2$ bootstrap estimate.
\fi
\end{proof}

\ifdefined\KSChineseDocument
\begin{lemma}[有限维 G\aa rding 不等式]
\else
\begin{lemma}[Finite-dimensional G\aa rding inequality]
\fi
\label{lem:finite-dimensional-Garding}
\ifdefined\KSChineseDocument
存在 $q_1,\ldots,q_{N_1}\in C_c^\infty(\mathbb R^3)$ 及常数
$c_n,C_n>0$，使每个 $f\in H^1(\mathbb R^3)$ 都满足
\else
There exist $q_1,\ldots,q_{N_1}\in C_c^\infty(\mathbb R^3)$ and constants
$c_n,C_n>0$ such that every $f\in H^1(\mathbb R^3)$ satisfies
\fi
\begin{align}
 \operatorname{Re}\langle L_nf,f\rangle
 &\ge \frac18\|f\|_2^2
 -C_n\sum_{j=1}^{N_1}|\langle f,q_j\rangle|^2,
                                                               \label{eq:finite-Garding}\\
 \operatorname{Re}\langle L_nf,f\rangle
 &\ge c_n\|\nabla f\|_2^2+c_n\|f\|_2^2
 -C_n\sum_{j=1}^{N_1}|\langle f,q_j\rangle|^2.
                                                               \label{eq:strong-Garding}
\end{align}
\ifdefined\KSChineseDocument
当 $f\notin D(L_n)$ 时，左端按二次型意义理解。
\else
where the numerical range on the left is understood in the quadratic-form
sense when $f\notin D(L_n)$.
\fi
\end{lemma}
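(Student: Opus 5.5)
We compute the quadratic form of $L_n=L_0+K_n$ directly and treat $K_n$ as a relatively compact perturbation of the free Ornstein--Uhlenbeck part. For $f\in C_c^\infty(\R^3)$, the integration by parts in Lemma~\ref{lem:qexact} (done here in the full space) gives
\[
 \operatorname{Re}\langle L_0f,f\rangle=\|\nabla f\|_2^2+\frac14\|f\|_2^2,
\]
because $\operatorname{Re}\langle y\cdot\nabla f,f\rangle=-\frac32\|f\|_2^2$. For the perturbation, the transport term contributes
\[
 -\operatorname{Re}\langle b_n\cdot\nabla f,f\rangle=\frac12\int U_n|f|^2 ,
\]
which combines with the potential term $-2U_n$ to give the local contribution $-\frac32\int U_n|f|^2$. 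The remaining Newton term is $-\operatorname{Re}\langle\nabla U_n\cdot\nabla\Delta^{-1}f,f\rangle$. Hence
\[
 \operatorname{Re}\langle L_nf,f\rangle=\|\nabla f\|_2^2+\frac14\|f\|_2^2-\frac32\int U_n|f|^2-\operatorname{Re}\langle\nabla U_n\cdot\nabla\Delta^{-1}f,f\rangle .
\]
The form sense for $f\in H^1$ then follows by density: each term on the right is continuous in $H^1$, because the HLS bound \eqref{H-L-ineq} and $\nabla U_n\in L^3$ make the Newton term bounded on $L^2$.

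Next we split space at a large radius $R$. Let $\chi_R$ be a smooth cutoff equal to $1$ on $B_R$. By Lemma~\ref{lem:adjointprofiletail}, $U_n\lesssim\langle r\rangle^{-2}$ and $|\nabla U_n|\lesssim\langle r\rangle^{-3}$, so both $\sup_{|y|>R}U_n$ and $\|(1-\chi_R)\nabla U_n\|_{L^3}$ tend to zero as $R\to\infty$. Consequently the contributions of the exterior region to the local term and to the Newton term are bounded by $o_R(1)\|f\|_2^2$; for the Newton term we use Hölder together with $\|\nabla\Delta^{-1}f\|_6\lesssim\|f\|_2$. The interior pieces are
\[
 \int\chi_RU_n|f|^2\qquad\text{and}\qquad\langle\chi_R\nabla U_n\cdot\nabla\Delta^{-1}f,f\rangle ,
\]
and both are quadratic forms of operators compact from $H^1$ to $H^{-1}$. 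Indeed, the multiplier has compact support, so the Rellich theorem applies: $H^1(B_{2R})\hookrightarrow L^2$ is compact, and $\nabla\Delta^{-1}$ gains one derivative locally.

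The main step is to convert these compact pieces into finitely many rank-one terms with an $\varepsilon$-loss, without losing the gradient. Let $P_M$ be the $L^2$-orthogonal projection onto the span of the first $M$ Hermite/Dirichlet eigenfunctions on $B_{2R}$, mollified so that they lie in $C_c^\infty$; call them $q_j$. Standard compactness gives
\[
 |\langle Kf,f\rangle|\le\eta\|f\|_{H^1}^2+C_\eta\sum_{j\le N_1}|\langle f,q_j\rangle|^2 .
\]
Concretely, we use $\|(I-P_M)\chi f\|_{L^2}\le\lambda_M^{-1/2}\|f\|_{H^1}$ on the ball. For the Newton term we split $f=P_Mf+(I-P_M)f$ inside the ball and use $\|\chi\nabla\Delta^{-1}\|_{L^2\to H^1_{\rm loc}}$ bounded. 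We expect this Newton-term splitting to be the only delicate point, because $\Delta^{-1}$ is nonlocal: the cut function $\chi_R\nabla U_n\cdot\nabla\Delta^{-1}f$ still samples $f$ everywhere. To handle this, we write the form as
\[
 \langle\nabla\Delta^{-1}f,\ \chi_R\nabla U_nf\rangle
\]
and place the compactness on the factor $\chi_R f$, so that the localization falls on the $f$ multiplied by the compactly supported weight.

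Finally we choose $R$ and then $\eta$. With $R$ large the exterior errors are below $1/16$ of $\|f\|_2^2$, and with $\eta\le\frac1{2}\min\{1,1/16\}$ the $\eta\|f\|_{H^1}^2$ loss is absorbed. This leaves
\[
 \operatorname{Re}\langle L_nf,f\rangle\ge\frac12\|\nabla f\|_2^2+\frac18\|f\|_2^2-C_n\sum_j|\langle f,q_j\rangle|^2 ,
\]
which gives \eqref{eq:finite-Garding}, and also \eqref{eq:strong-Garding} with $c_n=1/8$.
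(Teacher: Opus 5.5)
Your proposal is correct and is essentially the paper's proof. It uses the same integration-by-parts identity, the same interior Rellich / exterior decay splitting, and a finite-rank reduction to $L^2$ pairings with $C_c^\infty$ functions. You carry out that last step concretely: you use Dirichlet eigenfunctions cut off by $\chi_R$, so no mollification is actually needed, together with the one-sided bound $|\langle\nabla U_n\cdot\nabla\Delta^{-1}f,\chi_Rf\rangle|\lesssim\|f\|_2\|\chi_Rf\|_2$. The paper instead approximates the compact form abstractly in $H^1$.

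Since your gradient term survives with coefficient $\frac12$, \eqref{eq:strong-Garding} follows directly, without the paper's convex-combination step.
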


\begin{proof}
\ifdefined\KSChineseDocument
分部积分恒等式 \eqref{eq:L2numericalform} 可写成
\begin{equation}\label{eq:Garding-form-decomposition}
 \operatorname{Re}\langle L_nf,f\rangle
 =\|\nabla f\|_2^2+\frac14\|f\|_2^2+\mathfrak k_n[f],
\end{equation}
其中
\[
 \mathfrak k_n[f]
 =-\frac32\int_{\mathbb R^3}U_n|f|^2
 -\operatorname{Re}\int_{\mathbb R^3}
 (\nabla U_n\cdot\nabla\Delta^{-1}f)\overline f.
\]
二次型 $\mathfrak k_n$ 在 $H^1(\mathbb R^3)$ 上是紧的。只需验证非局部项。
取截断函数 $\chi_R$，使其在 $|y|\le R$ 上等于一，并支撑在
$|y|\le2R$。在 $\chi_R$ 的支撑内，Rellich 定理给出
$H^1\hookrightarrow L^2$ 的紧性；而
\[
 \nabla\Delta^{-1}:L^2\to L^6,\qquad
 \nabla(\nabla\Delta^{-1}):L^2\to L^2
\]
给出 $\nabla\Delta^{-1}f$ 在每个 $2\le p<6$ 中的局部紧性。另一方面，
$U_n$ 和 $\nabla U_n$ 的衰减保证
\[
 \|(1-\chi_R)U_n\|_\infty
 +\|(1-\chi_R)\nabla U_n\|_3\longrightarrow0.
\]
由 H\"older 不等式，外区部分在 $H^1$ 单位球上一致趋于零。因此
$\mathfrak k_n$ 是紧二次型。

极化以后，$\operatorname{Re}\mathfrak k_n$ 由 $H^1$ 上的紧自伴算子表示。
用值域包含于 $C_c^\infty(\mathbb R^3)$ 的有限秩算子逼近它。由于
\[
 \langle f,h\rangle_{H^1}=\langle f,h-\Delta h\rangle_{L^2},
 \qquad h\in C_c^\infty(\mathbb R^3),
\]
所得有限个线性泛函均可写成
$f\mapsto\langle f,q_j\rangle$，其中 $q_j\in C_c^\infty$。于是对任意
$\eta>0$，
\begin{equation}\label{eq:compact-form-finite-rank}
 |\mathfrak k_n[f]|
 \le\eta\|f\|_{H^1}^2
 +C_{n,\eta}\sum_{j=1}^{N_1}|\langle f,q_j\rangle|^2.
\end{equation}
在 \eqref{eq:Garding-form-decomposition} 中取 $\eta=1/8$，得到
\[
 \operatorname{Re}\langle L_nf,f\rangle
 \ge\frac78\|\nabla f\|_2^2+\frac18\|f\|_2^2
 -C_n\sum_{j=1}^{N_1}|\langle f,q_j\rangle|^2,
\]
这就证明了 \eqref{eq:finite-Garding}。

再由 \eqref{eq:L2numericalform}、
$\|\nabla\Delta^{-1}f\|_6\lesssim\|f\|_2$ 及 H\"older 不等式，
\begin{equation}\label{eq:rough-Garding}
 \operatorname{Re}\langle L_nf,f\rangle
 \ge\|\nabla f\|_2^2-C_n\|f\|_2^2.
\end{equation}
将 \eqref{eq:finite-Garding} 乘以 $1-\theta$，将
\eqref{eq:rough-Garding} 乘以 $\theta$，再相加。取 $\theta>0$ 充分小，
使 $(1-\theta)/8-\theta C_n>0$，便得到 \eqref{eq:strong-Garding}。
证明完毕。
\else
The integration-by-parts identity \eqref{eq:L2numericalform} reads
\begin{equation}\label{eq:Garding-form-decomposition}
 \operatorname{Re}\langle L_nf,f\rangle
 =\|\nabla f\|_2^2+\frac14\|f\|_2^2+\mathfrak k_n[f],
\end{equation}
where
\[
 \mathfrak k_n[f]
 =-\frac32\int_{\mathbb R^3}U_n|f|^2
 -\operatorname{Re}\int_{\mathbb R^3}
 (\nabla U_n\cdot\nabla\Delta^{-1}f)\overline f.
\]
The form $\mathfrak k_n$ is compact on $H^1(\mathbb R^3)$.  We verify the
only point that is not immediate.  Choose a cutoff $\chi_R$ supported in
$|y|\le2R$ and equal to one in $|y|\le R$.  On the support of $\chi_R$,
Rellich's theorem gives compactness of $H^1\hookrightarrow L^2$, while
$\nabla\Delta^{-1}:L^2\to L^6$ and
$\nabla(\nabla\Delta^{-1}):L^2\to L^2$ imply local compactness of
$\nabla\Delta^{-1}f$ in every $L^p$, $2\le p<6$.  On $|y|\ge R$, the
decay of $U_n$ and $\nabla U_n$ gives
\[
 \|(1-\chi_R)U_n\|_\infty
 +\|(1-\chi_R)\nabla U_n\|_3\longrightarrow0.
\]
H\"older's inequality therefore makes the outer part of
$\mathfrak k_n$ uniformly small on the unit ball of $H^1$.  Combining the
inner compactness and the outer smallness proves the claim.

By polarization, the real part of $\mathfrak k_n$ is represented by a
compact self-adjoint operator on $H^1$.  Approximate that operator in norm
by a finite-rank operator whose range is contained in
$C_c^\infty(\mathbb R^3)$.  Since
\[
 \langle f,h\rangle_{H^1}
 =\langle f,h-\Delta h\rangle_{L^2},
 \qquad h\in C_c^\infty(\mathbb R^3),
\]
the resulting finite number of functionals can be written as
$f\mapsto\langle f,q_j\rangle$ with $q_j\in C_c^\infty$.  Thus, for every
$\eta>0$,
\begin{equation}\label{eq:compact-form-finite-rank}
 |\mathfrak k_n[f]|
 \le \eta\|f\|_{H^1}^2
 +C_{n,\eta}\sum_{j=1}^{N_1}|\langle f,q_j\rangle|^2.
\end{equation}
Taking $\eta=1/8$ in
\eqref{eq:Garding-form-decomposition}--\eqref{eq:compact-form-finite-rank}
gives
\[
 \operatorname{Re}\langle L_nf,f\rangle
 \ge\frac78\|\nabla f\|_2^2+\frac18\|f\|_2^2
 -C_n\sum_{j=1}^{N_1}|\langle f,q_j\rangle|^2,
\]
which proves \eqref{eq:finite-Garding}.

For the second estimate, \eqref{eq:L2numericalform},
$\|\nabla\Delta^{-1}f\|_6\lesssim\|f\|_2$, and H\"older's inequality give
\begin{equation}\label{eq:rough-Garding}
 \operatorname{Re}\langle L_nf,f\rangle
 \ge\|\nabla f\|_2^2-C_n\|f\|_2^2.
\end{equation}
Take $(1-\theta)$ times \eqref{eq:finite-Garding} plus $\theta$ times
\eqref{eq:rough-Garding}, where $\theta>0$ is chosen so small that
$(1-\theta)/8-\theta C_n>0$.  The resulting inequality is
\eqref{eq:strong-Garding}.
\fi
\end{proof}

\ifdefined\KSChineseDocument
\subsection{缩放导数的估计}
\else
\subsection{Estimate of the scaling derivative}
\fi

\ifdefined\KSChineseDocument
下面估计稳定余项的缩放导数。
\else
We next control the scaling derivative of the remainder.
\fi

\ifdefined\KSChineseDocument
\begin{lemma}[稳定余项的缩放导数衰减]\label{lem:Lambda-epsilon}
\else
\begin{lemma}[Decay of the scaling derivative]\label{lem:Lambda-epsilon}
\fi
\ifdefined\KSChineseDocument
存在常数 $K'\gg1$，使当 $s_0$ 充分大时，
\else
There exists a constant $K'\gg1$ such that, for $s_0$ sufficiently large,
\fi
\begin{equation}\label{Lambda-es}
\|\Lambda\varepsilon(s)\|_{L^2}
    \le
    \frac{K'}{2}e^{-\mu s},\qquad s\in[s_0,s^*].
\end{equation}
\end{lemma}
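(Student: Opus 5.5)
I will apply $\Lambda$ to the remainder equation \eqref{renormalized equation} and run an $L^2$ energy estimate for $g:=\Lambda\varepsilon$. The starting point is the commutator structure of the linearized operator. The identity $[\Delta,\Lambda]=2\Delta$ (from \eqref{commutator}) together with $[\Lambda,y\cdot\nabla]=0$ gives
\[
 \Lambda L_n\varepsilon=L_n\Lambda\varepsilon-2\Delta\varepsilon+[\Lambda,K_n]\varepsilon .
\]
Here $[\Lambda,K_n]=-[K_n,\Lambda]$ is given explicitly by \eqref{eq:K-scaling-commutator-nonlinear}. Its coefficients $y\cdot\nabla U_n$, $b_n-y\cdot\nabla b_n$ and $\nabla U_n+y\cdot\nabla\nabla U_n$ are bounded by the decay in Lemma~\ref{lem:adjointprofiletail}. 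Hence, using $\|\nabla\Delta^{-1}f\|_6\lesssim\|f\|_2$ and H\"older as in \eqref{eq:K-commutator-H1-nonlinear}, we get
\[
 \|[\Lambda,K_n]\varepsilon\|_2+2\|\Delta\varepsilon\|_2\le C_n\|\varepsilon\|_{H^1}+2\|\varepsilon\|_{H^2}\le C_nKe^{-\mu s}.
\]
The equation for $g$ is therefore
\[
 \partial_sg+L_ng=\mathcal C_n\varepsilon+\Lambda\widetilde L(\varepsilon)+\Lambda NL+\Lambda UN+\Lambda\operatorname{Mod},
\]
and every term in $\mathcal C_n\varepsilon$ is controlled by the already closed $H^2$ bound of Lemma~\ref{lem:H2-estimate}.

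The energy estimate will not use the stable semigroup, because $g$ is no longer orthogonal to the adjoint modes. I will instead use the finite-dimensional G\aa rding inequality \eqref{eq:strong-Garding}:
\[
 \operatorname{Re}\langle L_ng,g\rangle\ge c_n\|\nabla g\|_2^2+c_n\|g\|_2^2-C_n\sum_j|\langle g,q_j\rangle|^2 .
\]
Since $q_j\in C_c^\infty$, integration by parts gives $\langle\Lambda\varepsilon,q_j\rangle=-\langle\varepsilon,q_j+y\cdot\nabla q_j\rangle$, so the defect terms are $O(\|\varepsilon\|_2^2)\lesssim K^2e^{-2\mu s}$ by Lemma~\ref{lem:ordinary-L2}.

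The right-hand side is treated term by term:
\begin{itemize}
\item For the scaling term, $\langle\Lambda(b\Lambda\varepsilon),g\rangle=b\langle\Lambda g,g\rangle=\tfrac12b\|g\|_2^2$, since $\Lambda+\Lambda^*=I$. This term is absorbed because $|b|\lesssim K^2e^{-2\mu s}$.
\item For the translation term, $\Lambda(\beta\cdot\nabla\varepsilon)=\beta\cdot\nabla g+\beta\cdot\nabla\varepsilon$. The first piece gives zero after taking the real part, and the second is $O(|\beta|\,\|\varepsilon\|_{H^1}\|g\|_2)$.
\item For the mixed term $G_\psi(\varepsilon)$, the quadratic terms $NL$ and $UN$, and $\operatorname{Mod}$, I will write $\Lambda=2+y\cdot\nabla$. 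The weight $y$ then falls either on $\varepsilon$ or on $\psi$, $\phi_j$, $\nabla\Delta^{-1}\psi$, $\Lambda U_n$; the latter factors have weighted decay because $\phi_j$ is Gaussian, as seen through $\mathfrak T$ and Lemma~\ref{lem:radialadjointasymptotics}.
\end{itemize}

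The main obstacle is $\Lambda NL$. Here I will write
\[
 y\cdot\nabla(\varepsilon^2)=2\varepsilon\,y\cdot\nabla\varepsilon=2\varepsilon(g-2\varepsilon),
\]
and for the drift part use $\nabla\Delta^{-1}\Lambda=(1+y\cdot\nabla)\nabla\Delta^{-1}$. The conclusion is that $\Lambda NL$ is bilinear in $(\varepsilon,g)$, with $\|\Lambda NL\|_2\lesssim\|\varepsilon\|_{H^2}(\|\varepsilon\|_{H^2}+\|g\|_2)$. Any remaining derivative loss on $g$ is handled by pairing through $\nabla g$ and absorbing it into $c_n\|\nabla g\|_2^2$. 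Summing everything gives
\[
 \frac{d}{ds}\|g\|_2^2+c_n\|g\|_2^2\le C_n\bigl(K^2e^{-2\mu s}+K^4e^{-4\mu s}\bigr)+C\,Ke^{-\mu s}\|g\|_2^2 ,
\]
where the last coefficient is small once $s_0$ is large.

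Provided $2\mu<c_n$ (which I add to the constraints on $\mu$ by shrinking $\mu$ if needed), Gr\"onwall together with \eqref{initial-setting-KS} yields
\[
 \|g(s)\|_2\le C_n\bigl(K_0+K+K^2e^{-\mu s_0}\bigr)e^{-\mu s}.
\]
This constant depends on $K$ and $s_0$ but not on $K'$. Choosing $K'$ last, with $K'/2$ exceeding it, gives \eqref{Lambda-es}. This ordering is consistent with the choice of constants described above.
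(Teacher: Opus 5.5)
Your proposal is correct and follows the paper's proof essentially step for step: you apply $\Lambda$ to the remainder equation, bound $[\Lambda,L_n]\varepsilon$ by $\|\varepsilon\|_{H^2}$, and use the finite-dimensional G\aa rding inequality \eqref{eq:strong-Garding}, whose defect $\langle\Lambda\varepsilon,q_j\rangle=\langle\varepsilon,\Lambda^*q_j\rangle$ is controlled by $\|\varepsilon\|_2$; you then absorb the $\nabla g\cdot\nabla\Delta^{-1}\varepsilon$ derivative loss into $c_n\|\nabla g\|_2^2$, apply Gr\"onwall, and choose $K'$ last. (Three harmless slips: the commutator terms actually carry the opposite signs, $+2\Delta\varepsilon$ and $-\beta\cdot\nabla\varepsilon$; your stated $L^2$ bound on $\Lambda NL$ fails as written because of the $\nabla g\cdot\nabla\Delta^{-1}\varepsilon$ term, though your absorption step already fixes this; and the right eigenfunctions $\varphi_{j,n}=\mathfrak T h_j$ decay only polynomially like $r^{-2-2\mu_{j,n}}$ rather than Gaussianly, since the Gaussian decay belongs to the adjoint modes, but this decay with $\mu_{j,n}>1$ is ample for every weighted factor you need.)
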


\begin{proof}
\ifdefined\KSChineseDocument
与上一小节相同，先对 $[s_0+\eta,s^*]$ 上的光滑解计算。下述估计与
$\eta>0$ 无关，再由抛物正则化及解在 $\mathcal Y$ 中的连续性令
$\eta\downarrow0$。把 \eqref{renormalized equation} 写成
\begin{equation}\label{eq:eps-for-Lambda}
 \partial_s\varepsilon+L_n\varepsilon
 =b\Lambda\varepsilon+\beta\cdot\nabla\varepsilon+\mathcal R,
\end{equation}
其中
\[
 \mathcal R:=G_\psi(\varepsilon)+NL+UN+\operatorname{Mod},
\]
\[
 G_\psi(\varepsilon)
 =\nabla\varepsilon\cdot\nabla\Delta^{-1}\psi
 +\nabla\psi\cdot\nabla\Delta^{-1}\varepsilon+2\psi\varepsilon.
\]
令
\[
 Z:=\Lambda\varepsilon.
\]
对 \eqref{eq:eps-for-Lambda} 作用 $\Lambda$。由
$[\Lambda,\nabla]=-\nabla$，
\[
 \Lambda(\beta\cdot\nabla\varepsilon)
 =\beta\cdot\nabla Z-\beta\cdot\nabla\varepsilon.
\]
又因 $b$ 只依赖于 $s$，
\[
 \Lambda(b\Lambda\varepsilon)=b\Lambda Z,
 \qquad
 \Lambda L_n\varepsilon=L_nZ+[\Lambda,L_n]\varepsilon.
\]
因此 $Z$ 满足
\begin{equation}\label{eq:Z-equation}
 \partial_sZ+L_nZ-b\Lambda Z-\beta\cdot\nabla Z
 =\Lambda\mathcal R-\beta\cdot\nabla\varepsilon
 -[\Lambda,L_n]\varepsilon.
\end{equation}

先估计左端的二次型。这里 $Z$ 未必属于稳定子空间，所以使用引理
\ref{lem:finite-dimensional-Garding}。在三维空间中
$\Lambda^*=-1-y\cdot\nabla$，故
\[
 \langle Z,q_\ell\rangle
 =\langle\varepsilon,\Lambda^*q_\ell\rangle.
\]
由于 $q_\ell\in C_c^\infty$，
\[
 |\langle Z,q_\ell\rangle|
 \lesssim\|\varepsilon\|_2.
\]
将其代入 \eqref{eq:strong-Garding}，得到
\begin{equation}\label{eq:Z-coercivity}
 \langle L_nZ,Z\rangle
 \ge c_n\|\nabla Z\|_2^2+c_n\|Z\|_2^2
 -\widetilde C_n\|\varepsilon\|_2^2.
\end{equation}

下面计算 $[\Lambda,L_n]$。回忆
\[
 L_n=L^0+K_n,\qquad
 L^0=-\Delta+1+\frac12y\cdot\nabla,
\]
\[
 K_nf=-2U_nf-b_n\cdot\nabla f-\nabla U_n\cdot Tf,\qquad
 b_n=TU_n,\qquad T=\nabla\Delta^{-1}.
\]
由
\[
 [\Lambda,L^0]=2\Delta,\qquad [\Lambda,T]=T,
\]
逐项计算得到
\begin{equation}\label{eq:Lambda-K-commutator}
 \begin{aligned}
 [\Lambda,K_n]\varepsilon
 ={}&-2(y\cdot\nabla U_n)\varepsilon
 -T(y\cdot\nabla U_n)\cdot\nabla\varepsilon\\
 &-\nabla(y\cdot\nabla U_n)\cdot T\varepsilon.
 \end{aligned}
\end{equation}
结合 $U_n$ 的正则性、衰减以及
$\|T\varepsilon\|_6\lesssim\|\varepsilon\|_2$，有
\begin{equation}\label{eq:commutator-estimate}
 \|[\Lambda,L_n]\varepsilon\|_2
 \lesssim\|\varepsilon\|_{H^2}.
\end{equation}

将 \eqref{eq:Z-equation} 与 $Z$ 配对。分部积分给出
\[
 \langle\Lambda Z,Z\rangle
 =2\|Z\|_2^2+\langle y\cdot\nabla Z,Z\rangle
 =\frac12\|Z\|_2^2,
\qquad
 \langle\beta\cdot\nabla Z,Z\rangle=0.
\]
因此
\begin{align}
 \frac12\frac{\dd}{\dd s}\|Z\|_2^2
 +\langle L_nZ,Z\rangle-\frac b2\|Z\|_2^2
 ={}&\langle\Lambda\mathcal R,Z\rangle
 -\langle\beta\cdot\nabla\varepsilon,Z\rangle\notag\\
 &-\langle[\Lambda,L_n]\varepsilon,Z\rangle.
 \label{eq:Z-basic-energy}
\end{align}
由 \eqref{semin}，取 $s_0$ 充分大后，
$|b|\|Z\|_2^2/2$ 可被 \eqref{eq:Z-coercivity} 中的正项吸收。
再由 \eqref{eq:commutator-estimate}、Cauchy--Schwarz 及 Young 不等式，
\begin{equation}\label{eq:comm-energy}
 |\langle[\Lambda,L_n]\varepsilon,Z\rangle|
 \le\frac{c_n}{16}\|Z\|_2^2+C_n\|\varepsilon\|_{H^2}^2,
\end{equation}
并且
\begin{equation}\label{eq:beta-source}
 |\langle\beta\cdot\nabla\varepsilon,Z\rangle|
 \le\frac{c_n}{16}\|Z\|_2^2
 +C_n|\beta|^2\|\varepsilon\|_{H^2}^2.
\end{equation}

下面逐项估计 $\langle\Lambda\mathcal R,Z\rangle$。先处理
$NL=\varepsilon^2+\nabla\varepsilon\cdot T\varepsilon$。由乘积法则，
\[
 \Lambda(\varepsilon^2)=2\varepsilon Z-2\varepsilon^2.
\]
再用 $[\Lambda,\nabla]=-\nabla$ 和 $[\Lambda,T]=T$，得到
\begin{equation}\label{eq:Lambda-NL-expansion}
 \begin{aligned}
 \Lambda(NL)
 ={}&2\varepsilon Z-2\varepsilon^2
 +\nabla Z\cdot T\varepsilon+\nabla\varepsilon\cdot TZ\\
 &-2\nabla\varepsilon\cdot T\varepsilon.
 \end{aligned}
\end{equation}
将上式与 $Z$ 配对，并逐项使用 H\"older 不等式，
\begin{equation}\label{eq:Lambda-NL-expansion-inn}
 \begin{aligned}
 |\langle\Lambda NL,Z\rangle|
 \lesssim{}&
 \|\varepsilon\|_\infty\|Z\|_2^2
 +\|\varepsilon\|_\infty\|\varepsilon\|_2\|Z\|_2\\
 &+\|T\varepsilon\|_\infty\|\nabla Z\|_2\|Z\|_2
 +\|\nabla\varepsilon\|_3\|TZ\|_6\|Z\|_2\\
 &+\|\nabla\varepsilon\|_3\|T\varepsilon\|_6\|Z\|_2.
 \end{aligned}
\end{equation}
另一方面，
\[
 \langle\Lambda\varepsilon,\varepsilon\rangle
 =\frac12\|\varepsilon\|_2^2.
\]
Cauchy--Schwarz 不等式因而给出
\begin{equation}\label{equi}
 \|\varepsilon\|_2\le2\|Z\|_2.
\end{equation}
在三维空间中，
\[
 \|\varepsilon\|_\infty+\|\nabla\varepsilon\|_3
 \lesssim\|\varepsilon\|_{H^2},
\]
\[
 \|T\varepsilon\|_\infty
 \lesssim\|\varepsilon\|_{H^1},
\qquad
 \|Tf\|_6\lesssim\|f\|_2.
\]
将这些估计及 \eqref{equi} 代入
\eqref{eq:Lambda-NL-expansion-inn}，再用 Young 不等式，得到
\begin{equation}\label{eq:Lambda-NL-energy}
 \begin{aligned}
 |\langle\Lambda NL,Z\rangle|
 &\lesssim
 \delta\|\nabla Z\|_2^2
 +C_\delta
 \bigl(\|\varepsilon\|_{H^2}+\|\varepsilon\|_{H^2}^2\bigr)
 \|Z\|_2^2.
 \end{aligned}
\end{equation}

下面处理混合项 $G_\psi(\varepsilon)$。令 $W=\Lambda\psi$。由乘积法则以及
$[\Lambda,\nabla]=-\nabla$、$[\Lambda,T]=T$，
\begin{align*}
 \Lambda G_\psi(\varepsilon)
 ={}&2Z\psi+2\varepsilon W-4\varepsilon\psi
 +\nabla Z\cdot T\psi+\nabla\varepsilon\cdot TW
 -2\nabla\varepsilon\cdot T\psi\\
 &+\nabla W\cdot T\varepsilon+\nabla\psi\cdot TZ
 -2\nabla\psi\cdot T\varepsilon.
\end{align*}
由于 $\psi$ 和 $W$ 属于固定的有限维光滑衰减函数空间，
\[
 \|\psi\|_{W^{1,3}\cap L^\infty}+\|T\psi\|_\infty
 +\|W\|_{W^{1,3}\cap L^\infty}+\|TW\|_\infty
 \le C_n|a|.
\]
将展开式与 $Z$ 配对，使用
$\|TZ\|_6\lesssim\|Z\|_2$、
$\|T\varepsilon\|_6\lesssim\|\varepsilon\|_2$ 和
\eqref{equi}，再对含 $\nabla Z$ 的项使用 Young 不等式，得到
\begin{align}
 |\langle\Lambda G_\psi(\varepsilon),Z\rangle|
 \le{}&\delta\|\nabla Z\|_2^2+\delta\|Z\|_2^2
 +C_\delta(|a|+|a|^2)\|Z\|_2^2\notag\\
 &+C_\delta|a|^2\|\varepsilon\|_{H^2}^2.
 \label{eq:Lambda-Gpsi-energy}
\end{align}

由有限维范数等价，$\|\Lambda UN\|_2\lesssim|a|^2$，故
\begin{equation}\label{eq:Lambda-UN-energy}
 |\langle\Lambda UN,Z\rangle|
 \le\frac{c_n}{16}\|Z\|_2^2+C_n|a|^4.
\end{equation}
最后，由参数估计以及 $U_n$ 和离散特征函数的正则性与衰减，
\[
 \|\Lambda\operatorname{Mod}\|_2
 \lesssim(\|\varepsilon\|_2^2+|a|^2)(1+|a|).
\]
取 $s_0$ 充分大，得到
\begin{equation}\label{eq:Lambda-Mod-energy}
 |\langle\Lambda\operatorname{Mod},Z\rangle|
 \le\frac{c_n}{16}\|Z\|_2^2
 +C_n(\|\varepsilon\|_2^2+|a|^2)^2.
\end{equation}

把 \eqref{eq:Z-coercivity}、\eqref{eq:comm-energy}、
\eqref{eq:beta-source}、\eqref{eq:Lambda-NL-energy}、
\eqref{eq:Lambda-Gpsi-energy}、\eqref{eq:Lambda-UN-energy} 和
\eqref{eq:Lambda-Mod-energy} 代入 \eqref{eq:Z-basic-energy}。
先取 $\delta$ 充分小，再取 $s_0$ 充分大，使
$\|\varepsilon\|_{H^2}+|a|+|b|\ll1$，可得某个 $\kappa_n>0$ 使
\begin{align}
 \frac{\dd}{\dd s}\|Z\|_2^2+\kappa_n\|Z\|_2^2
 +\kappa_n\|\nabla Z\|_2^2
 \lesssim{}&
 \|\varepsilon\|_{H^2}^2+\|\varepsilon\|_{H^2}^4\notag\\
 &+|a|^2\|\varepsilon\|_{H^2}^2+|a|^4
 +(\|\varepsilon\|_2^2+|a|^2)^2.
 \label{eq:Z-differential}
\end{align}
这里 $\kappa_n$ 可取为引理~\ref{lem:finite-dimensional-Garding} 与引理
\ref{lem:H2-estimate} 中正数的最小值。

使用 \eqref{L2}、\eqref{eq:H2-final}、$|a(s)|\le e^{-\mu s}$，
并增大 $s_0$ 使 $\|\varepsilon\|_{H^2}\le1$，得到
\begin{equation}\label{eq:Z-differential-simple}
 \frac{\dd}{\dd s}\|Z\|_2^2+\kappa_n\|Z\|_2^2
 \le(1+e^{-2\mu s})
 \left[C_n'(K_0+K^2e^{-\mu s_0})\right]^2e^{-2\mu s}
 +e^{-4\mu s}.
\end{equation}
由于 $2\mu<\kappa_n$，乘以 $e^{\kappa_ns}$ 并从 $s_0$ 积分到 $s$，
\begin{equation}\label{sca-en}
 \begin{aligned}
 \|Z(s)\|_2^2
 \lesssim{}&e^{-\kappa_n(s-s_0)}\|Z(s_0)\|_2^2\\
 &+\left[C_n'(K_0+K^2e^{-\mu s_0})\right]^2
 \left[\int_{s_0}^se^{-\kappa_n(s-\tau)}e^{-2\mu\tau}\,\dd\tau
 +\int_{s_0}^se^{-\kappa_n(s-\tau)}e^{-4\mu\tau}\,\dd\tau\right]\\
 &+\int_{s_0}^se^{-\kappa_n(s-\tau)}e^{-4\mu\tau}\,\dd\tau.
 \end{aligned}
\end{equation}
初值条件和 $2\mu<\kappa_n$ 分别给出
\begin{equation}\label{sca-en1}
 e^{-\kappa_n(s-s_0)}\|Z(s_0)\|_2^2
 \le K_0^2e^{-2\mu s},
\end{equation}
\begin{equation}\label{sca-en2}
 \int_{s_0}^se^{-\kappa_n(s-\tau)}e^{-2\mu\tau}\,\dd\tau
 \le\frac{e^{-2\mu s}}{\kappa_n-2\mu},
\end{equation}
以及
\begin{equation}\label{sca-en3}
 \int_{s_0}^se^{-\kappa_n(s-\tau)}e^{-4\mu\tau}\,\dd\tau
 \le\frac{e^{-2\mu(s_0+s)}}{\kappa_n-2\mu}.
\end{equation}
将 \eqref{sca-en1}--\eqref{sca-en3} 代入 \eqref{sca-en}，得到
\[
 \|Z(s)\|_2^2
 \lesssim
 \left\{K_0^2+
 \left[C_n'(K_0+K^2e^{-\mu s_0})\right]^2+1\right\}e^{-2\mu s}.
\]
最后取 $K'$ 充分大，使其控制大括号中的固定常数，即有
\[
 \|Z(s)\|_2^2=\|\Lambda\varepsilon(s)\|_2^2
 \le\frac{(K')^2}{4}e^{-2\mu s}.
\]
开平方便得到 \eqref{Lambda-es}。证明完毕。
\else
As in the preceding energy estimate, we first argue for a smooth solution on
$[s_0+\eta,s^*]$.  The bounds below are independent of $\eta>0$; parabolic
regularization and continuity in $\mathcal Y$ then allow $\eta\downarrow0$.
For simplicity, we write the renormalized equation \eqref{renormalized equation} in the form
\begin{equation}\label{eq:eps-for-Lambda}
    \partial_s\varepsilon+L_n\varepsilon
    =
    b\Lambda\varepsilon
    +
    \beta\cdot\nabla\varepsilon
    +
    \mathcal R,
\end{equation}
where
\[
   \mathcal R
    := G_\psi(\varepsilon)+NL+UN+\operatorname{Mod},
\]
with
\[
G_\psi(\varepsilon)
=
\nabla\varepsilon\cdot\nabla\Delta^{-1}\psi
+
\nabla\psi\cdot\nabla\Delta^{-1}\varepsilon
+
2\psi\varepsilon.
\]
 Set
\[
    Z:=\Lambda\varepsilon.
\]
Applying $\Lambda$ to \eqref{eq:eps-for-Lambda}, we use
    $[\Lambda,\nabla]=-\nabla$
to obtain
\[
    \Lambda(\beta\cdot\nabla\varepsilon)
    =
    \beta\cdot\nabla Z
    -
    \beta\cdot\nabla\varepsilon.
\]
Since $b$ depends only on $s$,
\[
    \Lambda(b\Lambda\varepsilon)=b\Lambda Z.
\]
Moreover,
\[
    \Lambda L_n\varepsilon
    =
    L_nZ+[\Lambda,L_n]\varepsilon.
\]
Hence $Z$ satisfies
\begin{equation}\label{eq:Z-equation}
    \partial_sZ
    +
    L_nZ
    -
    b\Lambda Z
    -
    \beta\cdot\nabla Z
    =
    \Lambda\mathcal R
    -
    \beta\cdot\nabla\varepsilon
    -
    [\Lambda,L_n]\varepsilon.
\end{equation}

We use Lemma~\ref{lem:finite-dimensional-Garding}; its estimate does not
require $Z$ to belong to the stable subspace $X_{{\rm s},n}$.

Since
$    \Lambda^*
    =
    -1-y\cdot\nabla$
in $\mathbb R^3$, we have
    $\langle Z,q_\ell\rangle
    =
\langle\varepsilon,\Lambda^*q_\ell\rangle$.
Since $q_\ell$ is smooth and compactly supported, we obtain
\[
    |\langle Z,q_\ell\rangle|=|\langle\varepsilon,\Lambda^*q_\ell\rangle|
    \lesssim
    \|\varepsilon\|_{L^2}.
\]
Thus \eqref{eq:strong-Garding} gives
\begin{equation}\label{eq:Z-coercivity}
    \langle L_nZ,Z\rangle
    \geq
    c_n\|\nabla Z\|_{L^2}^2
    +
    c_n\|Z\|_{L^2}^2
    -
    \tilde{C}_n\|\varepsilon\|_{L^2}^2.
\end{equation}

We next compute the commutator $[\Lambda,L_n]$. Recall that
\[
    L_n
    =
    L^0+K_n,
    \qquad
    L^0
    =
    -\Delta+1+\frac12y\cdot\nabla,
\]
and
\[
    K_nf
    =
    -2U_nf
    -
     b_n\cdot\nabla f
    -
    \nabla U_n\cdot Tf,\qquad b_n=TU_n,
    \qquad
    T:=\nabla\Delta^{-1}.
\]
Since
 \[
    [\Lambda,L^0]=2\Delta,
    \ 
    [\Lambda,T]=T,
\]
 a direct computation gives
\begin{equation}\label{eq:Lambda-K-commutator}
[\Lambda,K_n]\varepsilon
=
-2(y\cdot\nabla U_n)\varepsilon
-
T(y\cdot\nabla U_n)\cdot\nabla\varepsilon
-
\nabla(y\cdot\nabla U_n)\cdot T\varepsilon.
\end{equation}
Consequently, since $U_n$ is a smooth function, and by $ \|T\varepsilon\|_{L^6}
    \lesssim
    \|\varepsilon\|_{L^2}$, we have
\begin{equation}\label{eq:commutator-estimate}
    \|[\Lambda,L_n]\varepsilon\|_{L^2}
    \lesssim
    \|\varepsilon\|_{H^2}.
\end{equation}

We now take the $L^2$ inner product of \eqref{eq:Z-equation}
with $Z$. Since
\[
    \langle\Lambda Z,Z\rangle
    =
    2\|Z\|_{L^2}^2
    +
    \langle y\cdot\nabla Z,Z\rangle
    =
    \frac12\|Z\|_{L^2}^2,
\]
and
   $ \langle\beta\cdot\nabla Z,Z\rangle=0$,
we obtain
\begin{align}
\frac12\frac{d}{ds}\|Z\|_{L^2}^2
+
\langle L_nZ,Z\rangle
-
\frac b2\|Z\|_{L^2}^2
={}&
\langle\Lambda\mathcal R,Z\rangle
-
\langle\beta\cdot\nabla\varepsilon,Z\rangle
-
\langle[\Lambda,L_n]\varepsilon,Z\rangle.
\label{eq:Z-basic-energy}
\end{align}

By the modulation estimate \eqref{semin}, taking $s_0$ sufficiently large
makes the term
    $\frac{|b|}{2}\|Z\|_{L^2}^2$
small enough to be absorbed into the positive $c_n\|Z\|_{L^2}^2$ term in
\eqref{eq:Z-coercivity}.

Moreover, by \eqref{eq:commutator-estimate}, Cauchy-Schwarz inequality and Young's inequality,
\begin{align}
\left|
\langle[\Lambda,L_n]\varepsilon,Z\rangle
\right|
\lesssim
\|\varepsilon\|_{H^2}\|Z\|_{L^2}
\leq
\frac{c_n}{16}\|Z\|_{L^2}^2
+
C_n\|\varepsilon\|_{H^2}^2.
\label{eq:comm-energy}
\end{align}
Similarly,
\begin{equation}\label{eq:beta-source}
\left|
\langle\beta\cdot\nabla\varepsilon,Z\rangle
\right|
\leq
\frac{c_n}{16}\|Z\|_{L^2}^2
+
C_n|\beta|^2\|\varepsilon\|_{H^2}^2.
\end{equation}

We next estimate $|\langle\Lambda\mathcal R,Z\rangle|$.
For the quadratic nonlinearity
   $ NL
    =
    \varepsilon^2
    +
    \nabla\varepsilon\cdot T\varepsilon$,
we first note that
\[
    \Lambda(\varepsilon^2)
    =
    2\varepsilon Z-2\varepsilon^2.
\]
Furthermore, using
   $[\Lambda,\nabla]=-\nabla$,
    $[\Lambda,T]=T$,
we obtain
\begin{equation}\label{eq:Lambda-NL-expansion}
\begin{aligned}
\Lambda (NL)
={}&
2\varepsilon Z
-
2\varepsilon^2
+
\nabla Z\cdot T\varepsilon
+
\nabla\varepsilon\cdot TZ
-
2\nabla\varepsilon\cdot T\varepsilon.
\end{aligned}
\end{equation}
H\"older's inequality gives
\begin{equation}\label{eq:Lambda-NL-expansion-inn}
\begin{aligned}
\left|
\langle\Lambda NL,Z\rangle
\right|
\lesssim{}&
\|\varepsilon\|_{L^\infty}\|Z\|_{L^2}^2
+
\|\varepsilon\|_{L^\infty}
\|\varepsilon\|_{L^2}\|Z\|_{L^2}
+
\|T\varepsilon\|_{L^\infty}
\|\nabla Z\|_{L^2}\|Z\|_{L^2}
\\
&+
\|\nabla\varepsilon\|_{L^3}
\|TZ\|_{L^6}\|Z\|_{L^2}
+
\|\nabla\varepsilon\|_{L^3}
\|T\varepsilon\|_{L^6}\|Z\|_{L^2}.
\end{aligned}
\end{equation}
Since 
\[
    \langle\Lambda \varepsilon,\varepsilon\rangle
    =
    2\|\varepsilon\|_{L^2}^2
    +
    \langle y\cdot\nabla \varepsilon,\varepsilon\rangle
    =
    \frac12\|\varepsilon\|_{L^2}^2,
\]
the Cauchy--Schwarz inequality gives
$
\|\varepsilon\|_{L^2}^2\le 2||\Lambda \varepsilon||_2 ||\varepsilon||_{L^2},
$
and hence
\begin{equation}\label{equi}
\|\varepsilon\|_{L^2}\le 2||Z||_2
\end{equation}
In $\mathbb{R}^3$, using
\[
    \|\varepsilon\|_{L^\infty}
    +
    \|\nabla\varepsilon\|_{L^3}
    \lesssim
    \|\varepsilon\|_{H^2},
\]
\[
    \|T\varepsilon\|_{L^\infty}
    \lesssim
    \|\varepsilon\|_{H^1}\le\|\varepsilon\|_{H^2},
    \qquad
    \|Tf\|_{L^6}
    \lesssim
    \|f\|_{L^2},
\]
then combining \eqref{equi}, we  obtain
\begin{equation}\label{eq:Lambda-NL-energy}
\begin{aligned}
\left|
\langle\Lambda NL,Z\rangle
\right|
\lesssim{}&
(\|\varepsilon\|_{L^\infty}+\|\nabla\varepsilon\|_{L^3})\|Z\|_{L^2}^2
+
\|T\varepsilon\|_{L^\infty}
\|\nabla Z\|_{L^2}\|Z\|_{L^2}\\
&\lesssim 
(\|\varepsilon\|_{L^\infty}+\|\nabla\varepsilon\|_{L^3})\|Z\|_{L^2}^2+\delta\|\nabla Z\|_{L^2}^2+C_\delta\|T\varepsilon\|^2_{L^\infty}\|Z\|_{L^2}^2\\
&\lesssim \delta\|\nabla Z\|_{L^2}^2+\bar{C}_\delta(\|\varepsilon\|_{H^2}+\|\varepsilon\|_{H^2}^2)||Z||_{L^2}^2.
\end{aligned}
\end{equation}

For the mixed term $G_\psi(\varepsilon)$, set $W=\Lambda\psi$.  The product
rule and the identities $[\Lambda,\nabla]=-\nabla$ and
$[\Lambda,T]=T$ give
\begin{align*}
 \Lambda G_\psi(\varepsilon)
 ={}&2Z\psi+2\varepsilon W-4\varepsilon\psi
 +\nabla Z\cdot T\psi+\nabla\varepsilon\cdot TW
 -2\nabla\varepsilon\cdot T\psi\\
 &+\nabla W\cdot T\varepsilon+\nabla\psi\cdot TZ
 -2\nabla\psi\cdot T\varepsilon.
\end{align*}
Since $\psi$ and $W$ belong to a fixed finite-dimensional space of smooth
decaying functions,
\[
 \|\psi\|_{W^{1,3}\cap L^\infty}
 +\|T\psi\|_{L^\infty}
 +\|W\|_{W^{1,3}\cap L^\infty}
 +\|TW\|_{L^\infty}
 \le C_n|a|.
\]
Pairing the displayed expansion with $Z$, using
$\|TZ\|_6\lesssim\|Z\|_2$, $\|T\varepsilon\|_6\lesssim
\|\varepsilon\|_2$, and \eqref{equi}, and applying Young's inequality to
the terms containing $\nabla Z$, yields
\begin{align}
\left|
\langle\Lambda G_\psi(\varepsilon),Z\rangle
\right|
\leq{}&
\delta\|\nabla Z\|_{L^2}^2
+\delta\| Z\|_{L^2}^2+
C_{\delta}
\left(
|a|+|a|^2
\right)
\|Z\|_{L^2}^2
\nonumber\\
&+
C_{\delta}
|a|^2
\|\varepsilon\|_{H^2}^2.
\label{eq:Lambda-Gpsi-energy}
\end{align}

Since $UN$ is quadratic in $\psi$ and the unstable space is
finite-dimensional,
\[
    \|\Lambda UN\|_{L^2}
    \lesssim
    |a|^2.
\]
Hence
\begin{equation}\label{eq:Lambda-UN-energy}
\left|
\langle\Lambda UN,Z\rangle
\right|
\leq
\frac{c_n}{16}\|Z\|_{L^2}^2
+
C_n|a|^4.
\end{equation}

Finally, by the modulation estimate and the smoothness and decay of the
profile and the discrete eigenfunctions,
\[
    \|\Lambda\operatorname{Mod}\|_{L^2}
    \lesssim
    \left(
    \|\varepsilon\|_{L^2}^2
    +
    |a|^2
    \right)
    (1+|a|).
\]
Thus, for $s_0$ sufficiently large,
\begin{equation}\label{eq:Lambda-Mod-energy}
\left|
\langle
\Lambda\operatorname{Mod},Z
\rangle
\right|
\leq
\frac{c_n}{16}\|Z\|_{L^2}^2
+
C_n
\left(
\|\varepsilon\|_{L^2}^2
+
|a|^2
\right)^2.
\end{equation}

Combining \eqref{eq:Z-coercivity}--\eqref{eq:Lambda-Mod-energy},
choosing $\delta>0$ sufficiently small, and then taking $s_0$
sufficiently large so that
    $\|\varepsilon\|_{H^2}
    +
    |a|
    +
    |b|
    \ll1$,
we obtain
\begin{align}
\frac{d}{ds}\|Z\|_{L^2}^2
+
\kappa_n\|Z\|_{L^2}^2
+
\kappa_n\|\nabla Z\|_{L^2}^2
\lesssim{}&
\|\varepsilon\|_{H^2}^2
+
\|\varepsilon\|_{H^2}^4
\nonumber\\
&+
|a|^2\|\varepsilon\|_{H^2}^2
+
|a|^4
+
\left(
\|\varepsilon\|_{L^2}^2
+
|a|^2
\right)^2,
\label{eq:Z-differential}
\end{align}
where $\kappa_n>0$ is the minimum of the positive constants obtained in
Lemma~\ref{lem:finite-dimensional-Garding} and Lemma~\ref{lem:H2-estimate}.

Using \eqref{L2}, \eqref{eq:H2-final}, and the bootstrap bound
    $|a(s)|
    \leq
    e^{-\mu s}$, and enlarging $s_0$ so that
$\|\varepsilon(s)\|_{H^2}\le1$ on the bootstrap interval, we have
$\|\varepsilon\|_2^4\le\|\varepsilon\|_{H^2}^4
\le\|\varepsilon\|_{H^2}^2$.  Hence
\begin{equation}\label{eq:Z-differential-simple}
\frac{d}{ds}\|Z\|_{L^2}^2
+
\kappa_n\|Z\|_{L^2}^2
\le \left(1+e^{-2\mu s}\right)
\left[C_n'\left(
K_0
+
K^2e^{-\mu s_0}
\right)\right]^2e^{-2\mu s}
+
e^{-4\mu s}.
\end{equation}

Since $2\mu<\kappa_n$, multiplication by
$e^{\kappa_n s}$ and integration from $s_0$ to $s$ yield
\begin{equation}\label{sca-en}
\begin{aligned}
\|Z(s)\|_{L^2}^2
\lesssim{}&
e^{-\kappa_n(s-s_0)}
\|Z(s_0)\|_{L^2}^2
\\
&+
\left[C_n'\left(
K_0
+
K^2e^{-\mu s_0}
\right)\right]^2
\left[\int_{s_0}^s
e^{-\kappa_n(s-\tau)}
e^{-2\mu\tau}\,d\tau+\int_{s_0}^s
e^{-\kappa_n(s-\tau)}
e^{-4\mu\tau}\,d\tau\right]
\\
&+
\int_{s_0}^s
e^{-\kappa_n(s-\tau)}
e^{-4\mu\tau}\,d\tau.
\end{aligned}
\end{equation}
Using \eqref{initial-setting-KS} and $2\mu<\kappa_n$, we obtain
\begin{equation}\label{sca-en1}
e^{-\kappa_n(s-s_0)}
\|Z(s_0)\|_{L^2}^2\le K_0^2e^{-2\mu s}.
\end{equation}
In addition, we have
\begin{equation}\label{sca-en2}
\int_{s_0}^s
e^{-\kappa_n(s-\tau)}
e^{-2\mu\tau}\,d\tau=e^{-2\mu s}\int_{s_0}^se^{(2\mu-\kappa_n)(s-\tau)}\,d\tau\le \frac{e^{-2\mu s}}{\kappa_n-2\mu}.
\end{equation}
We also have
\begin{equation}\label{sca-en3}
\int_{s_0}^s
e^{-\kappa_n(s-\tau)}
e^{-4\mu\tau}\,d\tau\le e^{-2\mu s_0}\int_{s_0}^s
e^{-\kappa_n(s-\tau)}
e^{-2\mu\tau}\,d\tau\le
\frac{e^{-2\mu (s_0+s)}}{\kappa_n-2\mu}.
\end{equation}
Substituting \eqref{sca-en1}--\eqref{sca-en3} into \eqref{sca-en} gives
$$
\|Z(s)\|_{L^2}^2\lesssim K_0^2e^{-2\mu s}+\left[\left[C_n'\left(
K_0
+
K^2e^{-\mu s_0}
\right)\right]^2+1\right]{e^{-2\mu s}}
.$$
We now choose $K'$ sufficiently large, depending only on the constants fixed
before the bootstrap, so that
$
\|Z(s)\|_{L^2}^2=\|\Lambda\varepsilon(s)\|_{L^2}^2\le \frac{(K')^2}{4}e^{-2\mu s},
$
and hence
$$
\|\Lambda\varepsilon(s)\|_{L^2}\le \frac{K'}{2}e^{-\mu s}.
$$
This completes the proof.
\fi
\end{proof}

\ifdefined\KSChineseDocument
\subsection{bootstrap 的闭合与不稳定初值的选取}
\else
\subsection{Conclusion}
\fi
\ifdefined\KSChineseDocument
现在闭合命题~\ref{bootstrap} 的证明。
\begin{proof}[命题 \ref{bootstrap} 的证明]
\else
We are now in position to conclude the proof of Proposition \ref{bootstrap}.
\begin{proof}[Proof of Proposition \ref{bootstrap}]
\fi

\ifdefined\KSChineseDocument
\emph{步骤 1：改进缩放参数的先验界。}
由 \eqref{chca}、\eqref{1.9} 和 \eqref{semin}，
\begin{equation}\label{fky}
 \left|\frac{\lambda_s}{\lambda}+\frac12\right|
 \lesssim K^2e^{-2\mu s},
 \qquad s\in[s_0,s^*].
\end{equation}
从 $s_0$ 到 $s$ 积分，得到
\[
 \left|\log\frac{\lambda(s)}{\lambda(s_0)}
 +\frac{s-s_0}{2}\right|
 \lesssim K^2\int_{s_0}^se^{-2\mu\tau}\,\dd\tau
 \le\frac{K^2}{2\mu}e^{-2\mu s_0}.
\]
因此
\[
 \lambda(s)=\lambda(s_0)e^{-(s-s_0)/2}
 \exp\!\left(O(K^2e^{-2\mu s_0})\right).
\]
再用 $\lambda(s_0)=e^{-s_0/2}$，得到
\begin{equation}\label{lambda-asymptotic}
 \lambda(s)=e^{-s/2}\exp\!\left(O(K^2e^{-2\mu s_0})\right).
\end{equation}
$K$ 与 $\mu$ 在选择 $s_0$ 以前已经固定。增大 $s_0$，使
$CK^2e^{-2\mu s_0}\le\log2$，则
\begin{equation}\label{control-sca}
 \frac12e^{-s/2}\le\lambda(s)\le2e^{-s/2},
 \qquad s\in[s_0,s^*].
\end{equation}
由于 $\mu<1/2$，进一步增大 $s_0$，使
$2e^{-(1/2-\mu)s_0}<1/2$。于是
\begin{equation}\label{poaf}
 0<\lambda(s)<\frac12e^{-\mu s},
 \qquad s\in[s_0,s^*],
\end{equation}
这严格改进了 \eqref{1.7}。

\emph{步骤 2：不稳定边界上的向外穿越。}
回忆离开时间
\[
 s^*=\sup\{s\ge s_0:\eqref{1.7}\text{--}\eqref{L2Lambda}
 \text{ 在 }[s_0,s)\text{ 上成立}\}.
\]
暂设 $s^*<\infty$。由 \eqref{H^2-es}、\eqref{Lambda-es} 和
\eqref{poaf}，当 $s_0$ 充分大时，$H^2$、缩放导数和缩放参数的先验界在
$s=s^*$ 处均为严格不等式。因此由连续性，解只能从不稳定系数的边界离开，即
\begin{equation}\label{s*}
 \sum_{j=1}^N|a_j(s^*)|^2=e^{-2\mu s^*}.
\end{equation}
令
\[
 \Theta(s):=e^{2\mu s}\sum_{j=1}^N|a_j(s)|^2,
 \qquad R_j:=(a_j)_s-\mu_ja_j.
\]
由 \eqref{semin}、\eqref{chca} 和 \eqref{1.9}，
$|R|\lesssim K^2e^{-2\mu s}$。于是
\[
 \begin{aligned}
 \frac12\Theta'(s)
 &=e^{2\mu s}\sum_{j=1}^Na_j\bigl[(\mu+\mu_j)a_j+R_j\bigr]\\
 &\ge\mu e^{2\mu s}|a|^2-Ce^{2\mu s}|a|\,|R|.
 \end{aligned}
\]
在 $s=s^*$ 处使用 \eqref{s*}，得到
\[
 \frac12\Theta'(s^*)
 \ge\mu-CK^2e^{-\mu s^*}
 \ge\mu-CK^2e^{-\mu s_0}.
\]
取 $s_0$ 充分大，可得
\begin{equation}\label{fas}
 \Theta'(s^*)>0.
\end{equation}
这说明解到达不稳定边界时严格向外穿越。

\emph{步骤 3：Brouwer 不回缩论证。}
令
\[
 \overline{\mathcal B}
 :=\{b=(b_1,\ldots,b_N)\in\mathbb R^N:|b|\le1\},
\]
并记其边界为 $\partial\mathcal B$。对每个
$b\in\overline{\mathcal B}$，取
\[
 a_j(s_0)=e^{-\mu s_0}b_j,\qquad1\le j\le N,
\]
并以 $s^*(b)$ 表示相应的离开时间。

反设每个 $b\in\overline{\mathcal B}$ 都满足 $s^*(b)<\infty$。由步骤 1、
引理~\ref{lem:H2-estimate} 和引理~\ref{lem:Lambda-epsilon}，其他先验界都
已严格改进，所以
\[
 e^{2\mu s^*(b)}\sum_{j=1}^N|a_j(s^*(b))|^2=1.
\]
因此可定义
\[
 \Phi:\overline{\mathcal B}\longrightarrow\partial\mathcal B,
 \qquad
 \Phi(b):=
 \left(e^{\mu s^*(b)}a_j(s^*(b),b)\right)_{1\le j\le N}.
\]

下面验证 $\Phi$ 连续。令
\[
 \theta(s,b):=e^{2\mu s}\sum_{j=1}^N|a_j(s,b)|^2.
\]
固定 $b_0\in\overline{\mathcal B}$，并记
$\sigma_0=s^*(b_0)$。于是 $\theta(\sigma_0,b_0)=1$，且由
\eqref{fas}，
\begin{equation}\label{eq:transversal-exit}
 \partial_s\theta(\sigma_0,b_0)>0.
\end{equation}
局部 $H^2$ 理论允许把解和几何分解延拓到 $\sigma_0$ 右侧的一个短区间；
在那里失效的只是先验不等式。

若 $\sigma_0>s_0$，取
$0<\eta<(\sigma_0-s_0)/2$ 充分小。由
\eqref{eq:transversal-exit}，
\[
 \theta(\sigma_0-\eta,b_0)<1,
 \qquad
 \theta(\sigma_0+\eta,b_0)>1.
\]
在紧区间 $[s_0,\sigma_0-\eta]$ 上，各先验界均为严格不等式。解关于初值
$b$ 的连续依赖保证：当 $b$ 接近 $b_0$ 时，这些严格不等式以及上面两个不等式
仍成立。因此
\[
 \sigma_0-\eta<s^*(b)<\sigma_0+\eta.
\]

再设 $\sigma_0=s_0$；这正是 $b_0\in\partial\mathcal B$ 时需要的端点情形。
由 \eqref{eq:transversal-exit} 和连续性，存在 $c_0,\eta_0>0$，使当
$b$ 接近 $b_0$、$s_0\le s\le s_0+\eta_0$ 且 $\theta(s,b)\le1$ 时，
\[
 \partial_s\theta(s,b)\ge c_0.
\]
因为 $\theta(s_0,b)=|b|^2$，$\theta$ 第一次达到一的时间满足
\[
 0\le s^*(b)-s_0\le c_0^{-1}(1-|b|^2).
\]
必要时缩小 $b_0$ 的邻域，其他先验界在该短区间内仍保持严格。因此
$b\to b_0$ 时 $s^*(b)\to s_0$。

上述两种情形证明了 $b\mapsto s^*(b)$ 连续。局部流和参数关于
$(s,b)$ 连续，故 $\Phi$ 在 $\overline{\mathcal B}$ 上连续。
若 $b\in\partial\mathcal B$，则 $\theta(s_0,b)=1$。由
\eqref{fas}，解在 $s_0$ 处立即向外穿越，所以 $s^*(b)=s_0$，并且
\[
 \Phi(b)=\left(e^{\mu s_0}a_j(s_0)\right)_{1\le j\le N}=b.
\]
因此 $\Phi|_{\partial\mathcal B}=\operatorname{Id}_{\partial\mathcal B}$。
这意味着 $\Phi$ 是闭球到其边界的连续回缩，与 Brouwer 不回缩定理矛盾。
故存在 $b_0\in\overline{\mathcal B}$ 使
\[
 s^*(b_0)=+\infty.
\]
对这一初始不稳定系数的选择，\eqref{1.7}--\eqref{L2Lambda} 对所有
$s\ge s_0$ 成立。命题得证。
\else

\ifdefined\KSChineseDocument
   \emph{步骤 1：改进缩放参数的 bootstrap 界。}
\else
   \emph{$\mathbf{Step\ 1}$. Improved scaling control.}
\fi
    By \eqref{chca}, \eqref{1.9}, and \eqref{semin}, we have
\begin{equation}\label{fky}
\left|
\frac{\lambda_s}{\lambda}
+\frac12
\right|
\lesssim
K^2e^{-2\mu s},
\qquad s\in[s_0,s^*].
\end{equation}
Integrating \eqref{fky} from $s_0$ to $s$, we obtain
\[
\left|
\log\frac{\lambda(s)}{\lambda(s_0)}
+\frac{s-s_0}{2}
\right|
\lesssim
K^2\int_{s_0}^{s}e^{-2\mu\tau}\,d\tau
\leq
\frac{K^2}{2\mu}e^{-2\mu s_0}.
\]
Hence,
\[
\lambda(s)
=
\lambda(s_0)e^{-\frac{s-s_0}{2}}
\exp\left(
O\left(K^2e^{-2\mu s_0}\right)
\right).
\]
Using $
\lambda(s_0)=e^{-s_0/2}$,
we obtain
\begin{equation}\label{lambda-asymptotic}
\lambda(s)
=
e^{-s/2}
\exp\left(
O\left(K^2e^{-2\mu s_0}\right)
\right)
.
\end{equation}
Since $K$ and $\mu$ are fixed before $s_0$ is chosen, by taking
$s_0$ sufficiently large we may ensure that
\[
C K^2e^{-2\mu s_0}\leq \log 2.
\]
It follows from \eqref{lambda-asymptotic} that
\begin{equation}\label{control-sca}
    \frac12e^{-s/2}
\leq
\lambda(s)
\leq
2e^{-s/2},
\qquad s\in[s_0,s^*].
\end{equation}

Since $\mu<1/2$, by further enlarging $s_0$ if necessary so that
$
2e^{-(\frac12-\mu)s}<2e^{-(\frac12-\mu)s_0}<\frac12,$
then
\begin{equation}\label{poaf}
0<\lambda(s)<\frac12e^{-\mu s},
\qquad s\in[s_0,s^*].
\end{equation}
Thus the scaling bootstrap bound is strictly improved.

\ifdefined\KSChineseDocument
\emph{步骤 2：不稳定边界上的向外穿越。}
\else
\emph{$\mathbf{Step\ 2}$. The outgoing property.}
\fi
Recall the exit time
\[
s^*
=
\sup\left\{
s\geq s_0:
\eqref{1.7}-\eqref{L2Lambda}
\text{ hold on }[s_0,s)
\right\}.
\]
Suppose that $s^*<+\infty$.
By the strict improvements \eqref{H^2-es}, \eqref{Lambda-es}, and
\eqref{poaf}, the $H^2$, scaling-derivative, and scaling bootstrap bounds remain strict
at $s=s^*$, provided $s_0$ is sufficiently large. Hence, by
continuity, the solution can exit the bootstrap regime only through
the unstable boundary. Therefore,
\begin{equation}\label{s*}
\sum_{j=1}^N |a_j(s^*)|^2
=
e^{-2\mu s^*}.
\end{equation}

Set
\[
\Theta(s)
:=
e^{2\mu s}\sum_{j=1}^N |a_j(s)|^2.
\]
Writing
$R_j:=(a_j)_s-\mu_j a_j$,
the modulation estimate \eqref{chca}, together with
\eqref{1.9} and \eqref{semin}, yields
$|R|
\lesssim K^2e^{-2\mu s}$.
Hence, since $\mu_j>0$,
we have
\[
\begin{aligned}
\frac12\Theta'(s)
=
e^{2\mu s}
\sum_{j=1}^N
a_j\bigl[(\mu+\mu_j)a_j+R_j\bigr]\ge
\mu e^{2\mu s}|a|^2
-
Ce^{2\mu s}|a|\,|R|.
\end{aligned}
\]
Evaluating at $s=s^*$ and using \eqref{s*}, we obtain
\[
\frac12\Theta'(s^*)
\geq
\mu
-
CK^2e^{-\mu s^*}
\geq
\mu
-
CK^2e^{-\mu s_0}.
\]
Taking $s_0$ sufficiently large therefore gives
\begin{equation}\label{fas}
\Theta'(s^*)>0.
\end{equation}
Therefore, the vector field is strictly outgoing on the unstable
boundary.

\ifdefined\KSChineseDocument
\emph{步骤 3：Brouwer 不回缩论证。}
\else
\emph{$\mathbf{Step\ 3}$. The Brouwer argument.}
\fi
Let
\[
\overline{\mathcal B}
:=
\left\{
 b=(b_1,\ldots,b_N)\in\mathbb R^{N}:
| b|\leq1
\right\}
\]
be the closed unit ball in $\mathbb R^{N}$, and let
$\partial\mathcal B$ denote its boundary.
For $b\in\overline{\mathcal B}$, we choose the initial
unstable parameters according to
\[
a_j(s_0)=e^{-\mu s_0}b_j,
\qquad 1\leq j\leq N,
\]
and denote by $s^*( b)$ the corresponding exit time.

We argue by contradiction and assume that
\[
s^*( b)<+\infty,
\qquad
\text{for every }  b\in\overline{\mathcal B}.
\]
By the strict improvements of all the other bootstrap estimates in Proposition \ref{bootstrap},
the solution can exit the bootstrap regime only through the unstable
boundary. Hence
\[
e^{2\mu s^*( b)}
\sum_{j=1}^N
|a_j(s^*( b))|^2
=1.
\]
We may therefore define the exit map
$\Phi:\overline{\mathcal{B}}\longrightarrow\partial \mathcal{B}$
by
\[
\Phi( b):=
\left(
e^{\mu s^*( b)}
a_j(s^*( b),b)
\right)_{1\leq j\leq N}.
\]

We next prove continuity of the exit map.  Define
\[
\theta(s, b)
:=
e^{2\mu s}\sum_{j=1}^N |a_j(s, b)|^2.
\]
Fix $b_0\in\overline{\mathcal B}$ and put $\sigma_0=s^*(b_0)$.  Then
$\theta(\sigma_0,b_0)=1$ and, by \eqref{fas},
\begin{equation}\label{eq:transversal-exit}
 \partial_s\theta(\sigma_0,b_0)>0.
\end{equation}
The local $H^2$ theory continues the solution and the geometrical
decomposition for a short time past $\sigma_0$; only the bootstrap
inequality has ceased to hold there.

First suppose that $\sigma_0>s_0$.  Choose
$0<\eta<(\sigma_0-s_0)/2$ sufficiently small.  Transversality gives
\[
 \theta(\sigma_0-\eta,b_0)<1,
 \qquad
 \theta(\sigma_0+\eta,b_0)>1.
\]
On the compact interval $[s_0,\sigma_0-\eta]$, the unstable bound and all
the other bootstrap bounds have a uniform strict margin.  Continuous
dependence on $b$ preserves these margins and the two displayed strict
inequalities for $b$ close to $b_0$.  Therefore
\[
 \sigma_0-\eta<s^*(b)<\sigma_0+\eta.
\]

It remains to treat the endpoint case $\sigma_0=s_0$, which is precisely the
case relevant to $b_0\in\partial\mathcal B$.  By
\eqref{eq:transversal-exit} and continuity, there are $c_0,\eta_0>0$ such
that
\[
 \partial_s\theta(s,b)\ge c_0
\]
for $s_0\le s\le s_0+\eta_0$ and $b$ sufficiently close to $b_0$, as long
as $\theta(s,b)\le1$.  Since
$\theta(s_0,b)=|b|^2$, the first time at which $\theta$ reaches one obeys
\[
 0\le s^*(b)-s_0\le c_0^{-1}(1-|b|^2),
\]
after shrinking the neighborhood if necessary; the other bootstrap bounds
remain strict on this short interval.  Hence $s^*(b)\to s_0$ as
$b\to b_0$ within $\overline{\mathcal B}$.

The two cases prove that $b\mapsto s^*(b)$ is continuous.  Since the local
flow and the modulation parameters depend continuously on $(s,b)$, the map
$\Phi$ is continuous on $\overline{\mathcal B}$.

If $b\in\partial\mathcal B$, then
$e^{2\mu s_0}
\sum_{j=1}^N|a_j(s_0)|^2=1$.
By the strictly outgoing property \eqref{fas}, there exists $\eta>0$ such that
$e^{2\mu s}
\sum_{j=1}^N|a_j(s, b)|^2>1$
for every $s\in(s_0,s_0+\eta]$.
Hence $s^*( b)=s_0$, and consequently
$\Phi( b)
=
\left(e^{\mu s_0}a_j(s_0)\right)_{1\leq j\leq N}
=
 b.$
Thus
\[
\Phi|_{\partial\mathcal B}
=
\operatorname{Id}_{\partial\mathcal B}.
\]

Therefore, $\Phi$ is a continuous retraction of the closed ball
$\overline{\mathcal B}$ onto its boundary $\partial\mathcal B$,
which is impossible by the Brouwer no-retraction theorem.
This contradiction shows that there exists
$ b_0\in\overline{\mathcal B}$ such that
\[
s^*( b_0)=+\infty.
\]
For the corresponding choice of the initial unstable parameters,
all the bootstrap estimates hold for every $s\geq s_0$.
This completes the proof of Proposition~\ref{bootstrap}.
\fi
\end{proof}

\ifdefined\KSChineseDocument
下面由命题~\ref{bootstrap} 推出主非线性定理。
\else
We now present the proof of the main theorem.
\fi
\ifdefined\KSMainDocument
  \ifdefined\KSChineseDocument
  \begin{proof}[定理 \ref{thm:nonradial-stability} 的证明]
  \else
  \begin{proof}[Proof of Theorem \ref{thm:nonradial-stability}]
  \fi
\else
\begin{proof}[Proof of Theorem \ref{thm:finite-codimensional-stability}]
\fi
\ifdefined\KSChineseDocument
固定 $v_0\in V_n$。若 $v_0=0$，取 $c=0$ 即得到精确自相似解。以下设
$v_0\ne0$。固定命题~\ref{bootstrap} 中的 $K_0$ 和 $\mu$，并由
\[
 K_0e^{-\mu s_0}=\|v_0\|_{\mathcal Y}
\]
确定 $s_0$。缩小 $\delta_n$ 后，$s_0$ 满足命题~\ref{bootstrap} 中的全部
下界。取
$\varepsilon_0=v_0$ 和 $\lambda_0=e^{-s_0/2}$。Brouwer 论证给出系数
$a_{j,0}$，且
\[
 \left(\sum_{j=1}^N|a_{j,0}|^2\right)^{1/2}
 \le e^{-\mu s_0}=K_0^{-1}\|v_0\|_{\mathcal Y}.
\]
令 $c=(a_{1,0},\ldots,a_{N,0})$，便得到定理中的系数估计，其中可取
$C_n=K_0^{-1}$。

命题~\ref{bootstrap} 使用的初值为
\[
 \widehat u_0(x)=\lambda_0^{-2}
 \left(U_n+v_0+\sum_{j=1}^Nc_j\phi_j\right)(x/\lambda_0).
\]
若 $\widehat u$ 是对应的解，定义
\[
 u(t,x)=\lambda_0^2\widehat u(\lambda_0^2t,\lambda_0x).
\]
由 \eqref{eq:KS} 的缩放不变性，$u$ 仍为解，并且
\[
 u(0,x)=U_n(x)+v_0(x)+\sum_{j=1}^Nc_j\phi_j(x),
\]
这正是 \eqref{eq:corrected-initial-data}。若 $\widehat u$ 在
$\widehat T$ 爆破，则 $u$ 在 $T=\widehat T/\lambda_0^2$ 爆破；这一固定
缩放不改变 I 型爆破性质及自相似变量中的收敛。因此只需对
$\widehat u$ 证明其余结论。为简化记号，以下仍将该解记为 $u$。
由命题~\ref{bootstrap}，它具有分解 \eqref{re}，且
\eqref{1.7}--\eqref{1.9} 对所有 $s\ge s_0$ 成立。

\emph{步骤 1：有限爆破时刻与爆破点。}
由 \eqref{renormalized-time}，$\dd s/\dd t=\lambda^{-2}$，所以
\[
 T=\int_{s_0}^\infty\lambda^2(s)\,\dd s
 <\int_{s_0}^\infty e^{-2\mu s}\,\dd s<\infty.
\]
再由 \eqref{control-sca}，
\[
 \frac14e^{-s}
 \le T-t=\int_s^\infty\lambda^2(\sigma)\,\dd\sigma
 \le4e^{-s}.
\]
特别地，$t\uparrow T$ 时 $\lambda(t)\to0$。由
$\lambda_t=\lambda_s/\lambda^2$ 及 \eqref{semin}，
\[
 \left|\lambda\lambda_t+\frac12\right|
 =\left|\frac{\lambda_s}{\lambda}+\frac12\right|
 \lesssim e^{-2\mu s}\lesssim(T-t)^{2\mu}.
\]
等价地，
\[
 \left|\frac{\dd}{\dd t}\lambda^2+1\right|
 \lesssim(T-t)^{2\mu}.
\]
从 $t$ 积分到 $T$，得到
\[
 \lambda^2(t)=T-t+O((T-t)^{1+2\mu}),
\]
从而
\begin{equation}\label{time0s}
 \lambda(t)=\sqrt{T-t}\bigl(1+O((T-t)^{2\mu})\bigr),
 \qquad t\uparrow T.
\end{equation}
另一方面，由 \eqref{semin} 和 \eqref{poaf}，
\[
 \int_0^T|x_t|\,\dd t
 =\int_{s_0}^\infty|x_s|\,\dd s
 \lesssim\int_{s_0}^\infty e^{-3\mu s}\,\dd s<\infty.
\]
故存在 $x(T)\in\mathbb R^3$ 使 $x(t)\to x(T)$。

\emph{步骤 2：自相似尺度下的收敛。}
由 \eqref{dec}，
\[
 v=\varepsilon+\sum_{j=1}^Na_j\phi_j.
\]
有限维范数等价、\eqref{chca} 和 \eqref{1.9} 给出
\[
 \|v(s)\|_{H^2}\lesssim e^{-\mu s}
 \lesssim(T-t)^\mu.
\]
因此
\begin{equation}\label{v-as}
 \lim_{t\uparrow T}\|v(t)\|_{H^2}=0.
\end{equation}

令 $y=(x-x(t))/\sqrt{T-t}$。于是
\[
 \frac{x-x(t)}{\lambda(t)}
 =\frac{\sqrt{T-t}}{\lambda(t)}\,y.
\]
将其代入 \eqref{re}，得到
\[
 \begin{aligned}
 u(t,x)
 =\frac1{T-t}\Bigg[
 &\frac{T-t}{\lambda(t)^2}
 U_n\!\left(\frac{\sqrt{T-t}}{\lambda(t)}y\right)\\
 &+\frac{T-t}{\lambda(t)^2}
 v\!\left(s,\frac{\sqrt{T-t}}{\lambda(t)}y\right)
 \Bigg].
 \end{aligned}
\]
因此定义
\[
 \begin{aligned}
 \widetilde u(t,y):={}&
 \frac{T-t}{\lambda(t)^2}
 U_n\!\left(\frac{\sqrt{T-t}}{\lambda(t)}y\right)-U_n(y)\\
 &+\frac{T-t}{\lambda(t)^2}
 v\!\left(s,\frac{\sqrt{T-t}}{\lambda(t)}y\right).
 \end{aligned}
\]
则
\[
 u(t,x)=\frac1{T-t}\left[
 U_n\!\left(\frac{x-x(t)}{\sqrt{T-t}}\right)
 +\widetilde u\!\left(t,\frac{x-x(t)}{\sqrt{T-t}}\right)\right].
\]

由 \eqref{time0s}，
\begin{equation}\label{Ra}
 \lim_{t\uparrow T}\frac{\lambda(t)}{\sqrt{T-t}}=1.
\end{equation}
伸缩算子在 $H^2(\mathbb R^3)$ 上强连续，而 $U_n\in H^2(\mathbb R^3)$，
故
\[
 \left\|
 \frac{T-t}{\lambda(t)^2}
 U_n\!\left(\frac{\sqrt{T-t}}{\lambda(t)}\,\cdot\right)-U_n
 \right\|_{H^2}\longrightarrow0.
\]
由 \eqref{Ra}，相应伸缩算子在 $t$ 接近 $T$ 时一致有界；再用
\eqref{v-as}，
\[
 \left\|
 \frac{T-t}{\lambda(t)^2}
 v\!\left(s,\frac{\sqrt{T-t}}{\lambda(t)}\,\cdot\right)
 \right\|_{H^2}
 \lesssim\|v(s)\|_{H^2}\longrightarrow0.
\]
从而
\[
 \lim_{t\uparrow T}\|\widetilde u(t)\|_{H^2}=0,
\]
即得到 \eqref{eq:nonlinear-convergence} 的第一个极限。

最后，由 $H^2(\mathbb R^3)\hookrightarrow L^\infty(\mathbb R^3)$ 及
$U_n(0)>0$，
\[
 u(t,x(t))
 =\frac{U_n(0)+\widetilde u(t,0)}{T-t}
 =\frac{U_n(0)+o(1)}{T-t}\longrightarrow+\infty.
\]
结合 $x(t)\to x(T)$，可知 $x(T)$ 是爆破点。并且
\[
 (T-t)\|u(t)\|_\infty
 \le\|U_n\|_\infty+\|\widetilde u(t)\|_\infty
 =\|U_n\|_\infty+o(1).
\]
故该爆破解具有 I 型速率。定理得证。
\else
\ifdefined\KSMainDocument
Fix $v_0\in V_n$.  If $v_0=0$, take $c=0$ and use the exact self-similar
solution, so assume $v_0\ne0$.  With the constants $K_0$ and $\mu$ fixed in
Proposition~\ref{bootstrap}, choose $s_0$ by
\[
 K_0e^{-\mu s_0}=\|v_0\|_{\mathcal Y}.
\]
After decreasing the radius $\delta_n$, this choice satisfies every lower
bound on $s_0$ imposed in that proposition.  Apply it with
$\varepsilon_0=v_0$ and $\lambda_0=e^{-s_0/2}$.  The Brouwer argument gives
coefficients $a_{j,0}$ satisfying
\[
 \left(\sum_{j=1}^N|a_{j,0}|^2\right)^{1/2}
 \le e^{-\mu s_0}=K_0^{-1}\|v_0\|_{\mathcal Y}.
\]
Thus the vector $c=(a_{1,0},\ldots,a_{N,0})$ satisfies the asserted bound
with $C_n=K_0^{-1}$.

Proposition~\ref{bootstrap} is written for the initial datum
\[
 \widehat u_0(x)=\lambda_0^{-2}
 \left(U_n+v_0+\sum_{j=1}^Nc_j\phi_j\right)(x/\lambda_0).
\]
If $\widehat u$ denotes the corresponding solution, define
\[
 u(t,x)=\lambda_0^2\widehat u(\lambda_0^2t,\lambda_0x).
\]
The scaling invariance of \eqref{eq:KS} shows that $u$ is a solution and that
\[
 u(0,x)=U_n(x)+v_0(x)+\sum_{j=1}^Nc_j\phi_j(x),
\]
which is \eqref{eq:corrected-initial-data}.  If $\widehat u$ blows up at
$\widehat T$, then $u$ blows up at $T=\widehat T/\lambda_0^2$; type~I and
convergence in the rescaled variables are unchanged by this fixed scaling.
It is therefore enough to prove the asserted conclusions for $\widehat u$.
\else
Take the initial data obtained in Proposition~\ref{bootstrap}.
\fi
The
corresponding solution of
\ifdefined\KSMainDocument
\eqref{eq:KS}
\else
\eqref{main}
\fi
admits the decomposition \eqref{re} and satisfies
\eqref{1.7}--\eqref{1.9}
for all $s\ge s_0$.\\
\ifdefined\KSChineseDocument
\emph{步骤 1：有限爆破时刻与爆破点。}
\else
\emph{$\mathbf{Step\ 1}$. Finite time blow-up and blow-up point.}
\fi
Equation \eqref{renormalized-time} gives $s_t=\lambda^{-2}(t)$ and hence
\[
 T=\int_{s_0}^{\infty}\lambda^2(s)\,ds
 <\int_{s_0}^{\infty}e^{-2\mu s}\,ds<\infty.
\]
Moreover, \eqref{control-sca} gives
\[
 \frac14e^{-s}\le T-t=\int_s^\infty\lambda^2(\sigma)\,d\sigma
 \le4e^{-s}.
\]
Thus $\lambda(t)\to0$ as $t\uparrow T$.  Since
$\lambda_t=\lambda_s/\lambda^2$, the modulation estimate \eqref{semin}
implies
\[
 \left|\lambda\lambda_t+\frac12\right|
 =\left|\frac{\lambda_s}{\lambda}+\frac12\right|
 \lesssim e^{-2\mu s}\lesssim(T-t)^{2\mu}.
\]
Equivalently,
\[
 \left|\frac{d}{dt}\lambda^2+1\right|
 \lesssim(T-t)^{2\mu}.
\]
Integrating from $t$ to $T$ yields
\[
 \lambda^2(t)=T-t+O\bigl((T-t)^{1+2\mu}\bigr),
\]
and therefore
\begin{equation}\label{time0s}
\lambda(t)=\sqrt{T-t}\bigl(1+O((T-t)^{2\mu})\bigr),
\qquad t\uparrow T.
\end{equation}
Finally, \eqref{semin} and \eqref{poaf} imply
\[
 \int_0^T|x_t|\,dt
 =\int_{s_0}^\infty|x_s|\,ds
 \lesssim\int_{s_0}^\infty e^{-3\mu s}\,ds<\infty.
\]
Thus
\ifdefined\KSMainDocument
the second limit in \eqref{eq:nonlinear-convergence}
\else
\eqref{blowup-point1}
\fi
is proved.

\ifdefined\KSChineseDocument
\emph{步骤 2：自相似尺度下的收敛。}
\else
\emph{$\mathbf{Step\ 2}$. Convergence at the self-similar scale.}
\fi
By \eqref{dec},
\[
 v=\varepsilon+\sum_{j=1}^{N}a_j\phi_j.
\]
The finite-dimensional norm equivalence, \eqref{chca}, and \eqref{1.9} give
\[
 \|v(s)\|_{H^2}\lesssim e^{-\mu s}\lesssim (T-t)^{\mu}.
\]
Therefore,
\begin{equation}\label{v-as}
\lim_{t\to T}||v(t)||_{H^2}=0.
\end{equation}

Let
$y=\frac{x-x(t)}{\sqrt{T-t}}$.
Then
$\frac{x-x(t)}{\lambda(t)}
=
\frac{\sqrt{T-t}}{\lambda(t)}\,y$.
Therefore, by \eqref{re},
\[
\begin{aligned}
u(t,x)
&=
\frac{1}{\lambda(t)^2}
\left[
U_n\left(
\frac{\sqrt{T-t}}{\lambda(t)}y
\right)
+
v\left(
s,\frac{\sqrt{T-t}}{\lambda(t)}y
\right)
\right]
\\
&=
\frac{1}{T-t}
\Bigg[
\frac{T-t}{\lambda(t)^2}
U_n\left(
\frac{\sqrt{T-t}}{\lambda(t)}y
\right)
+
\frac{T-t}{\lambda(t)^2}
v\left(
s,\frac{\sqrt{T-t}}{\lambda(t)}y
\right)
\Bigg].
\end{aligned}
\]
Hence we define
\[
\begin{aligned}
\widetilde u(t,y)
:={}&
\frac{T-t}{\lambda(t)^2}
U_n\left(
\frac{\sqrt{T-t}}{\lambda(t)}y
\right)
-U_n(y)
+
\frac{T-t}{\lambda(t)^2}
v\left(
s,\frac{\sqrt{T-t}}{\lambda(t)}y
\right).
\end{aligned}
\]
Then
\[
u(t,x)
=
\frac{1}{T-t}
\left[
U_n\left(
\frac{x-x(t)}{\sqrt{T-t}}
\right)
+
\widetilde u\left(
t,\frac{x-x(t)}{\sqrt{T-t}}
\right)
\right].
\]

It remains to prove that
\[
\|\widetilde u(t)\|_{H^2}\to0.
\]
By the definition above,
\[
\begin{aligned}
\|\widetilde u(t)\|_{H^2}
\le{}&
\left\|
\frac{T-t}{\lambda(t)^2}
U_n\left(
\frac{\sqrt{T-t}}{\lambda(t)}\,\cdot
\right)
-U_n
\right\|_{H^2}
+
\left\|
\frac{T-t}{\lambda(t)^2}
v\left(
s,\frac{\sqrt{T-t}}{\lambda(t)}\,\cdot
\right)
\right\|_{H^2}.
\end{aligned}
\]
By \eqref{time0s}, we have
\begin{equation}\label{Ra}
   \lim_{t\to T}\frac{\lambda(t)}{\sqrt{T-t}}=1. 
\end{equation}

Since the dilation map is strongly continuous on
$H^2(\mathbb R^3)$ and $U_n\in H^2(\mathbb R^3)$, we obtain
\[
\left\|
\frac{T-t}{\lambda(t)^2}
U_n\left(
\frac{\sqrt{T-t}}{\lambda(t)}\,\cdot
\right)
-U_n
\right\|_{H^2}
\to0.
\]
Moreover, \eqref{Ra} shows that the corresponding dilation operators are
uniformly bounded on $H^2(\mathbb R^3)$ for $t$ sufficiently close to $T$.
Hence \eqref{v-as} implies
\[
\left\|
\frac{T-t}{\lambda(t)^2}
v\left(
s,\frac{\sqrt{T-t}}{\lambda(t)}\,\cdot
\right)
\right\|_{H^2}
\lesssim
\|v(s)\|_{H^2}
\to0.
\]
Consequently,
\[
\lim_{t\to T}
\|\widetilde u(t)\|_{H^2}=0.
\]
This proves
\ifdefined\KSMainDocument
the first limit in \eqref{eq:nonlinear-convergence}.
\else
\eqref{norms}.
\fi
Moreover, $H^2(\mathbb R^3)\hookrightarrow L^\infty(\mathbb R^3)$ and
$U_n(0)>0$.  Hence
\[
 u(t,x(t))
 =\frac1{T-t}\bigl(U_n(0)+\widetilde u(t,0)\bigr)
 =\frac{U_n(0)+o(1)}{T-t}\longrightarrow+\infty.
\]
Together with $x(t)\to x(T)$, this shows that $x(T)$ is a blowup point and
\[
 (T-t)\|u(t)\|_{L^\infty}
 \le \|U_n\|_{L^\infty}+\|\widetilde u(t)\|_{L^\infty}
 =\|U_n\|_{L^\infty}+o(1).
\]
Thus the blowup rate is of type~I.
\fi
\end{proof}

\let\KSMainDocument\undefined

\clearpage
\appendix

\section{The self-similar stationary family used in this paper}
\label{sec:selfsimilarfamilyappendix}

We first fix the objects studied in the main text.  Let $\Phi_n$ be the
reduced-mass solution constructed and verified by interior--exterior matching in
Proposition~\ref{prop:correctedprofile-main}, and define
\[
 P_n(r)=2r\Phi_n(r),
 \qquad
 U_n(r)=6\Phi_n(r)+2r\Phi_n'(r).
\]
Then $U_n$ satisfies the stationary equation in self-similar variables
\eqref{eq:steadystateU}.  For every $T>0$ and $x_*\in\mathbb R^3$,
\[
 u_{n,T,x_*}(t,x)
 =\frac1{T-t}
 U_n\!\left(\frac{x-x_*}{\sqrt{T-t}}\right)
\]
is the corresponding self-similar blow-up solution in the original variables.
Throughout the paper, the index $n$ always refers to this matched stationary
family.  Proposition~\ref{prop:correctedprofile-main} records the inner and
outer representations and the two-scale derivative estimates used in the
spectral analysis.  Their detailed proof is contained in
Appendix~\ref{sec:correctedmatchingappendix}, while the properties of the
universal inner solution are proved in Appendix~\ref{sec:JDEcoreproperties}.

\section{Standard spectral theory, closed operators, and semigroups}\label{sec:toolkit}

This appendix first states the three standard spectral results used in the
main text: the closed self-adjoint realization and zero count for a singular
Sturm--Liouville operator, the projective-phase matching criterion, and the
deduction of point-spectrum exclusion from a positive quadratic-form lower
bound.  The endpoint and domain hypotheses required by each statement are
specified explicitly.  The standard proofs concerning operator domains,
Fredholm closure, semigroups, and modified energies are then given together.
Estimates that genuinely depend on the Keller--Segel equation and the matched
self-similar family remain fully developed in the corresponding mode sections
of the main text.

\subsection{Closed self-adjoint realization, compact resolvent, and Sturm zero counting}

Let $\Phi$ be a smooth radial profile.  The matching construction gives
\begin{equation}
 \Phi(r)=\Phi(0)+O(r^2),\quad \Phi'(r)=O(r)
 \quad(r\to0),\qquad
 \Phi(r)=O(r^{-2}),\quad \Phi'(r)=O(r^{-3})
 \quad(r\to\infty).                                      \label{eq:profileendpointbounds}
\end{equation}
Write
\[
 p_\Phi=\frac{m_\Phi'}{m_\Phi}
 =\frac4r+2r\Phi-\frac r2,\qquad
 V_\Phi=1-2r\Phi'-12\Phi.
\]

\begin{lemma}[Radial closed realization and compact resolvent]\label{lem:radialrealization}
In the Hilbert space
\[
 \mathcal H_\Phi=L^2((0,\infty),m_\Phi(r)\dd r)
\]
the minimal operator associated with the symmetric expression
\[
 -m_\Phi^{-1}(m_\Phi f')'+V_\Phi f
\]
initially defined on $C_c^\infty(0,\infty)$ is essentially self-adjoint.  Its
closure is the Friedrichs realization and has compact resolvent.  At the
origin the domain selects the regular constant-order branch and excludes the
$r^{-3}$ branch; neither endpoint requires an additional boundary condition.
If the eigenvalues are listed increasingly as
\[
 \lambda_1<\lambda_2<\cdots,
\]
then they are real and simple, $\lambda_j\to+\infty$, and a real
eigenfunction for $\lambda_j$ has exactly $j-1$ zeros in $(0,\infty)$.
\end{lemma}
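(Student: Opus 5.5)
The plan is to remove the first-order term by a Liouville transformation and reduce everything to a one-dimensional Schr\"odinger operator on $L^2((0,\infty),\dd r)$. The limit-point classification at both endpoints then gives essential self-adjointness. Confinement at infinity gives compact resolvent, and the classical Sturm oscillation theorem gives the zero count.

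\emph{Step 1 (Liouville transformation).} Set $g=m_\Phi^{1/2}f$. This map is unitary from $\mathcal H_\Phi$ onto $L^2((0,\infty),\dd r)$ and maps $C_c^\infty(0,\infty)$ onto itself. The expression becomes $-g''+Q_\Phi g$ with
\[
 Q_\Phi=V_\Phi+\tfrac14p_\Phi^2+\tfrac12p_\Phi'.
\]
Insert \eqref{eq:profileendpointbounds}. Near $0$ one has $p_\Phi=4/r+O(r)$, so $Q_\Phi=\frac{4-2+4}{r^2}\cdot\frac12$; more carefully, $\tfrac14\cdot16/r^2-\tfrac12\cdot4/r^2=2/r^2$, hence $Q_\Phi=2/r^2+O(1)$. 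At infinity, $p_\Phi=-r/2+O(r^{-1})$ gives $Q_\Phi=r^2/16+O(1)$, which is the asymptotics quoted in the Sturm-count proof.

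\emph{Step 2 (limit point at both ends, essential self-adjointness).} At $0$ the coefficient $2/r^2\ge\frac34r^{-2}$, so the endpoint is limit point by the standard Weyl criterion. Concretely, the indicial roots $2,-1$ show that only one branch is $L^2$ near $0$. Translated back through $g=m_\Phi^{1/2}f\sim r^2f$, these are exactly the constant branch and the $r^{-3}$ branch. At infinity $Q_\Phi\to+\infty$ is bounded below, so $\infty$ is limit point as well. Both endpoints being limit point, the minimal operator is essentially self-adjoint (Weyl alternative, as in \cite{Zettl2005}). Its closure is bounded below, so it coincides with the Friedrichs extension. No boundary conditions are needed, and the domain contains only the regular branch at $0$.

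\emph{Step 3 (compact resolvent).} Since $Q_\Phi\ge -C$ and $Q_\Phi(r)\to+\infty$ as $r\to\infty$, the form domain embeds compactly into $L^2$. Indeed, on $(0,R)$ the $H^1$ bound plus the Hardy-type control near $0$ gives Rellich compactness, and the tail mass is at most $\sup_{r>R}Q_\Phi^{-1}$ times the form, which tends to $0$. Hence the resolvent is compact, the spectrum is discrete, real, and $\lambda_j\to+\infty$.

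\emph{Step 4 (simplicity and oscillation).} Simplicity follows from the limit-point property at $0$: any two $L^2$ eigenfunctions lie on the same one-dimensional regular branch, so their Wronskian vanishes. The zero count, that $\lambda_j$ has exactly $j-1$ interior zeros, is the classical Sturm oscillation theorem for problems with both endpoints nonoscillatory and limit point. These endpoints are nonoscillatory because $Q_\Phi-\lambda>-\tfrac14r^{-2}$ near $0$, and $Q_\Phi-\lambda\to+\infty$ at infinity. Zeros of $g$ and $f$ coincide because $m_\Phi>0$.

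The main obstacle is the careful verification of the endpoint hypotheses of the oscillation theorem in \cite{Zettl2005}, chiefly nonoscillation at $0$ with the exact constant $2/r^2$. This is an inequality with margin, so it should be routine once $Q_\Phi$ has been computed.
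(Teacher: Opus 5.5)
Your proposal is correct and follows the paper's own proof essentially step by step. Both use the same unitary Liouville map $g=m_\Phi^{1/2}f$ to $-\partial_{rr}+Q_\Phi$, with $Q_\Phi=2r^{-2}+O(1)$ at $0$ and $Q_\Phi=r^2/16+O(1)$ at $\infty$. Both then use limit-point classification at both endpoints (Frobenius exponents $2,-1$ at the origin), compactness via tail confinement plus Rellich on bounded intervals, and the singular Sturm oscillation theorem of \cite{Zettl2005}.

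The differences are minor. For the limit-point property you invoke the standard criteria ($Q\ge\frac34r^{-2}$ near $0$, $Q$ bounded below near $\infty$) and check nonoscillation explicitly; at infinity the paper instead proves the limit-point property by a direct cutoff--Wronskian argument. You should also delete the garbled first expression for $Q_\Phi$ in Step 1, since only your corrected computation $\frac14\cdot16r^{-2}-\frac12\cdot4r^{-2}=2r^{-2}$ is right.
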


\begin{proof}
The complete proof is given in Appendix~\ref{app:proof-radialrealization}.
\end{proof}

\subsection{Projective phase and eigenvalue matching}

For a real solution $y$ of a second-order equation whose Cauchy vector is
nonzero at $r=r_0$, define its projective phase by
\begin{equation}
 \theta[y](r_0)
 :=\arg\bigl(y(r_0)+i r_0y'(r_0)\bigr)\pmod\pi.          \label{eq:projectivephase}
\end{equation}
The phase is taken modulo $\pi$, rather than $2\pi$, because multiplication by
a nonzero real constant does not change the one-dimensional solution space.

\begin{lemma}[Phase-matching criterion]\label{lem:phasematching}
Let $y_{\rm in}(\lambda)$ be the solution regular at the origin and
$y_{\rm out}(\lambda)$ the solution square integrable at infinity.  Assume
that neither Cauchy vector vanishes at $r_0$.  Then
\[
 \lambda\text{ is an eigenvalue}
 \quad\Longleftrightarrow\quad
 \theta[y_{\rm out}(\lambda)]-\theta[y_{\rm in}(\lambda)]
 \in\pi\mathbb Z.
\]
If continuous lifts are chosen on a parameter interval and the phase
difference $D$ satisfies $D(\lambda_0)=0$, $D'>0$, and
$0<D(\lambda_1)<\pi$, then $(\lambda_0,\lambda_1]$ contains no eigenvalue.
\end{lemma}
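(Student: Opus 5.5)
The plan is to reduce the eigenvalue condition to a Wronskian-type determinant at the single cross-section $r_0$, express that determinant through the projective phases, and then get the no-eigenvalue conclusion on $(\lambda_0,\lambda_1]$ from monotonicity of the lifted phase difference.

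\emph{Step 1 (eigenvalues as collinearity at $r_0$).} By Lemma~\ref{lem:radialrealization}, each endpoint is limit-point or has its branch selected by the domain. Hence, for each $\lambda$, the solutions regular at the origin form a one-dimensional space spanned by $y_{\rm in}(\lambda)$, and the solutions in $L^2$ near infinity form a one-dimensional space spanned by $y_{\rm out}(\lambda)$. Consequently $\lambda$ is an eigenvalue if and only if $y_{\rm in}$ and $y_{\rm out}$ are linearly dependent. Since both are solutions of the same second-order ODE, uniqueness for the Cauchy problem at $r_0$ turns this into a condition at one point. Dependence holds if and only if the Cauchy vectors $(y_{\rm in}(r_0),r_0y_{\rm in}'(r_0))$ and $(y_{\rm out}(r_0),r_0y_{\rm out}'(r_0))$ are real-collinear. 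Equivalently, the Wronskian $W(y_{\rm in},y_{\rm out})(r_0)$ vanishes; multiplying by $r_0$ changes nothing.

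\emph{Step 2 (translation into phases).} Both Cauchy vectors are nonzero by hypothesis. Write the complex numbers $y+ir_0y'$ in polar form $\rho e^{i\theta}$ with $\rho>0$. Then
\[
r_0W(y_{\rm in},y_{\rm out})(r_0)=\rho_{\rm in}\rho_{\rm out}\sin(\theta_{\rm out}-\theta_{\rm in}).
\]
This vanishes exactly when $\theta_{\rm out}-\theta_{\rm in}\in\pi\mathbb Z$. Rescaling either solution by a nonzero real constant shifts its argument only by a multiple of $\pi$, so the phase is well defined modulo $\pi$ and the criterion does not depend on the normalisation. This proves the equivalence.

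\emph{Step 3 (the interval statement).} On a parameter interval, choose continuous lifts of $\theta[y_{\rm in}(\lambda)]$ and $\theta[y_{\rm out}(\lambda)]$. This requires continuous dependence of the normalised solutions on $\lambda$ and nonvanishing of the Cauchy vectors, which are the assumptions of the lemma. The difference $D$ is then continuous, and by Step 2 an eigenvalue in the interval is exactly a point where $D\in\pi\mathbb Z$. Since $D'>0$ and $D(\lambda_0)=0$, $D$ is strictly increasing, so on $(\lambda_0,\lambda_1]$ it satisfies
\[
0<D(\lambda)\le D(\lambda_1)<\pi.
\]
It therefore takes no value in $\pi\mathbb Z$ there, and $(\lambda_0,\lambda_1]$ contains no eigenvalue.

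The main obstacle is Step 1. It needs the out-solution to be chosen continuously (indeed differentiably) in $\lambda$, for instance by normalising it through its asymptotics at infinity, for example via Kummer's $U$. It also needs the endpoint classification, so that the decaying branch at infinity and the regular branch at the origin are each unique up to scaling. Both points are supplied by Lemma~\ref{lem:radialrealization} together with the Kummer asymptotics used elsewhere in the paper.
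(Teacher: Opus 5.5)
Your proof is correct and follows the paper's argument. Eigenvalues correspond to collinearity of the two Cauchy vectors at $r_0$ (a vanishing Wronskian, via Cauchy uniqueness and one-dimensionality of the endpoint branches), which is equivalent to the projective phases agreeing modulo $\pi$, and strict monotonicity then gives $0<D\le D(\lambda_1)<\pi$ on $(\lambda_0,\lambda_1]$. Your explicit identity $r_0W(y_{\rm in},y_{\rm out})(r_0)=\rho_{\rm in}\rho_{\rm out}\sin(\theta_{\rm out}-\theta_{\rm in})$ only makes precise a step the paper states in words; the continuous dependence on $\lambda$ that you flag as the main obstacle is not needed for the pointwise equivalence, only for the hypothesis on $D$ in the second part, where it is assumed.
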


\begin{proof}
The complete proof is given in Appendix~\ref{app:proof-phasematching}.
\end{proof}

\subsection{From a quadratic-form bound to spectral exclusion}

\begin{lemma}[Coercivity implies an eigenvalue half-plane]\label{lem:coercivitytospectrum}
Let $A$ be a closed operator on a complex Hilbert space, and let
$\mathcal C$ be a core for its closed form domain.  Suppose
\[
 \operatorname{Re}\langle Af,f\rangle\ge c\|f\|^2,
 \qquad f\in\mathcal C,
\]
extends by closure to the whole form domain.  If an eigenfunction satisfying
$Af=zf$ belongs to that domain, then $\operatorname{Re}z\ge c$.  Consequently,
$\sigma_{\rm p}(A)\cap\{\operatorname{Re}z<c\}=\varnothing$.
\end{lemma}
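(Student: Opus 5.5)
The proof is a one-line pairing argument. The only care needed is that the inequality holds on the whole closed form domain and that the pairing $\langle Af,f\rangle$ agrees with its form extension at an eigenfunction.

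First, fix the meaning of the hypothesis. Let $\mathfrak a[f]:=\langle Af,f\rangle$ on the core $\mathcal C$. By assumption, $\mathfrak a$ is closable in the form norm and its closure $\overline{\mathfrak a}$ is defined on the closed form domain $\mathcal D$. The inequality $\operatorname{Re}\overline{\mathfrak a}[f]\ge c\|f\|^2$ holds on all of $\mathcal D$, because both sides are continuous in the form norm and $\mathcal C$ is dense in $\mathcal D$ for that norm. This is exactly the density step that was carried out concretely for the $l=2$ operator in Lemma~\ref{lem:l2endpointdomain} and Proposition~\ref{prop:fulll2gap}.

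Second, check consistency at an eigenfunction. Let $f\neq0$ satisfy $Af=zf$ with $f\in D(A)\cap\mathcal D$. I would verify that $\overline{\mathfrak a}[f]=\langle Af,f\rangle$. To do so, take $f_k\in\mathcal C$ with $f_k\to f$ in the form norm. Since the form is bounded on its domain, $\overline{\mathfrak a}[f_k]\to\overline{\mathfrak a}[f]$. On the other hand, $\mathfrak a[f_k]=\langle Af_k,f_k\rangle$, and the sesquilinear form represents $A$ on $D(A)\cap\mathcal D$, so $\overline{\mathfrak a}(f,g)=\langle Af,g\rangle$ for every $g\in\mathcal D$. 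Setting $g=f$ gives the claim.

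This representation property is the only point needing care, and I expect it to be the main obstacle. In the applications it is supplied case by case: for $l\ge3$ by the integration-by-parts identity \eqref{eq:qexact}, extended by cutoff; for $l=2$ by the bounded Volterra terms in \eqref{eq:fullvariableform} together with the logarithmic cutoff $\rho_N$. For the abstract lemma I would state it as part of what ``extends by closure'' means, namely that the closed form is the one associated with $A$.

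Finally, compute directly:
\[
 \operatorname{Re}z\,\|f\|^2=\operatorname{Re}\langle zf,f\rangle=\operatorname{Re}\langle Af,f\rangle=\operatorname{Re}\overline{\mathfrak a}[f]\ge c\|f\|^2 .
\]
Dividing by $\|f\|^2>0$ gives $\operatorname{Re}z\ge c$. Hence no eigenvalue with an eigenfunction in the form domain lies in $\{\operatorname{Re}z<c\}$. In every application, all eigenfunctions of the relevant realization lie in the form domain, as shown by the endpoint lemmas, and this yields $\sigma_{\rm p}(A)\cap\{\operatorname{Re}z<c\}=\varnothing$.
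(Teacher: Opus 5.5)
Your argument is the same pairing computation the paper uses. You take the real part of $\langle Af,f\rangle=z\|f\|^2$ at a nonzero eigenfunction and divide by $\|f\|^2$, and like the paper you leave the verification that eigenfunctions lie in the form domain to the individual mode arguments.

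Your second step, identifying $\overline{\mathfrak a}[f]$ with $\langle Af,f\rangle$, rests entirely on the representation property you assume; the approximating sequence $f_k$ does no actual work there. The paper's proof takes that same identification for granted in reading the hypothesis, so nothing is missing relative to it.
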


\begin{proof}
The complete proof is given in Appendix~\ref{app:proof-coercivitytospectrum}.
\end{proof}

\begingroup
\let\KSsavedsection\section
\renewcommand{\section}[1]{\phantomsection}
\section{Complete proofs of the standard spectral and semigroup tools}
\label{app:standardproofs}

This appendix contains only arguments that do not use the special structure of the Keller--Segel self-similar profile.  Every precise statement and every place where it is used remain in the main text.  We record the domain, endpoint-classification, Fredholm, quasi-compactness, and Lyapunov calculations here so that they can be checked without interrupting the modewise spectral estimates.

\subsection{Singular Sturm--Liouville theory and two elementary criteria}

\subsubsection{The radial closed realization and its zero count}\label{app:proof-radialrealization}

We give the complete proof of the statement labelled \ref{lem:radialrealization} in the main text.

\begin{proof}
First apply the unitary transformation
\[
 \mathscr U_\Phi:\mathcal H_\Phi\longrightarrow L^2(0,\infty),
 \qquad g=\mathscr U_\Phi f=m_\Phi^{1/2}f.
\]
Differentiating $f=m_\Phi^{-1/2}g$ gives
\[
 f'=m_\Phi^{-1/2}\left(g'-\frac{p_\Phi}{2}g\right),
\]
\[
 (m_\Phi f')'
 =m_\Phi^{1/2}
 \left(g''-\frac{p_\Phi'}2g-\frac{p_\Phi^2}{4}g\right).
\]
Hence
\begin{equation}
 \mathscr U_\Phi L_\Phi\mathscr U_\Phi^{-1}
 =-\partial_{rr}+Q_\Phi,\qquad
 Q_\Phi=V_\Phi+\frac12p_\Phi'+\frac14p_\Phi^2.          \label{eq:radialschrodinger}
\end{equation}

By \eqref{eq:profileendpointbounds}, near the origin
\[
 p_\Phi=\frac4r+O(r),\qquad V_\Phi=O(1),
 \qquad Q_\Phi=\frac2{r^2}+O(1).
\]
The principal equation $-g''+2r^{-2}g=0$ has the two power-law branches
$g=r^2$ and $g=r^{-1}$.  Only $r^2$ belongs to $L^2(0,1)$.  Returning to
$f=m_\Phi^{-1/2}g$ and using $m_\Phi^{1/2}\sim r^2$ gives precisely
$f\sim1$ and $f\sim r^{-3}$.  Square integrability therefore selects the
regular branch uniquely.

At infinity,
\[
 p_\Phi=-\frac r2+O(r^{-1}),\qquad
 p_\Phi'=-\frac12+O(r^{-2}),\qquad V_\Phi=1+O(r^{-2}),
\]
 Therefore
\begin{equation}
 Q_\Phi(r)=\frac{r^2}{16}+O(1),\qquad r\to\infty.         \label{eq:radialconfining}
\end{equation}
In particular, there are $R,C>0$ such that
$Q_\Phi(r)\ge r^2/32-C$ for $r\ge R$.  The Schr\"odinger quadratic form
\[
 \mathfrak q_\Phi[g]
 =\int_0^\infty\bigl(|g'|^2+Q_\Phi|g|^2\bigr)\dd r
\]
is closed and bounded below after adding a fixed multiple of $\|g\|_2^2$;
hence it defines a unique Friedrichs self-adjoint realization.  To prove
compactness of the resolvent, take a sequence bounded in the form norm.
Estimate \eqref{eq:radialconfining} makes its $L^2$ tail uniformly small as
$R\to\infty$, while on every bounded interval the Rellich embedding
$H^1\hookrightarrow L^2$ supplies a convergent subsequence.  A diagonal
argument yields convergence in $L^2(0,\infty)$.  Thus the form domain embeds
compactly into $L^2$, which is equivalent to compactness of the resolvent.

The limit-point property at infinity can be checked directly.  Choose $R_0$
so that $Q_\Phi\ge0$ on $[R_0,\infty)$.  If
$g\in L^2(R_0,\infty)$ solves
$(-\partial_{rr}+Q_\Phi-z)g=0$, multiply the equation by a compactly supported
cutoff $\chi_N^2\bar g$ and integrate by parts.  After taking real parts, use
\[
 2|\chi_N\chi_N'g'g|
 \le\frac12\chi_N^2|g'|^2+2|\chi_N'|^2|g|^2
\]
to obtain $g'\in L^2(R_0+1,\infty)$.  If $g_1$ and $g_2$ are both
square-integrable solutions, there is a sequence $r_j\to\infty$ along which
$g_i(r_j),g_i'(r_j)\to0$.  Their constant Wronskian therefore vanishes, so
the two solutions are linearly dependent.  Thus infinity has at most one
square-integrable solution.

The endpoint computations above are exactly the hypotheses that must be
checked before invoking singular Sturm--Liouville theory.  At $r=0$, the
principal potential is $2r^{-2}$, the Frobenius exponents are $2$ and $-1$,
and only the $r^2$ branch lies in $L^2$; hence zero is limit-point.  The cutoff
argument proves that infinity is limit-point as well.  Consequently no
additional endpoint boundary condition is imposed, and the Friedrichs
realization is the unique self-adjoint realization used here.  The relevant
Weyl alternative and endpoint-domain statements are
\cite[Theorems~9.9 and 9.6]{Teschl2009}.  The Friedrichs-domain description
for nonoscillatory singular problems is given in
\cite[Theorem~4.2 and Corollary~4.1]{NiessenZettl1992}; a systematic treatment
appears in \cite[Chapters~7 and 10]{Zettl2005}.

Finally, compact resolvent makes the spectrum purely discrete.  The singular
Sturm oscillation theorem implies simplicity and the $j-1$ zero count; see
\cite[Chapters~6 and 10]{Zettl2005}.  The result
\cite[Theorem~5.3]{NiessenZettl1992} belongs to the case where both endpoints are nonoscillatory limit-circle
endpoints, so it cannot be applied verbatim to the present
limit-point/limit-point problem.  Conjugating back to $\mathcal H_\Phi$
proves every assertion of the lemma.
\end{proof}

\subsubsection{The projective-phase matching criterion}\label{app:proof-phasematching}

We give the complete proof of the statement labelled \ref{lem:phasematching} in the main text.

\begin{proof}
The projective phases agree at $r_0$ exactly when the two Cauchy vectors are
linearly dependent, equivalently when their Wronskian
\[
 W[y_{\rm in},y_{\rm out}](r_0)
 =y_{\rm in}y_{\rm out}'-y_{\rm in}'y_{\rm out}
\]
vanishes.  Uniqueness for the Cauchy problem then shows that the solutions are
proportional on the whole half-line.  Their common solution satisfies both
endpoint conditions and is therefore an eigenfunction.  The converse is
immediate.

Under the second set of assumptions, strict monotonicity gives
$0<D(\lambda)\le D(\lambda_1)<\pi$.  This interval contains no point of
$\pi\mathbb Z$, so the first part excludes eigenvalues.
\end{proof}

\subsubsection{Exclusion of point spectrum by a quadratic-form bound}\label{app:proof-coercivitytospectrum}

We give the complete proof of the statement labelled \ref{lem:coercivitytospectrum} in the main text.

\begin{proof}
For a nonzero eigenfunction, take the inner product and then its real part:
\[
 \operatorname{Re}z\,\|f\|^2
 =\operatorname{Re}\langle zf,f\rangle
 =\operatorname{Re}\langle Af,f\rangle
 \ge c\|f\|^2.
\]
Dividing by $\|f\|^2>0$ proves the claim.  When this lemma is applied, one
must also verify that the eigenfunction belongs to the closed form domain;
this is checked separately in the $l=1$ and $l=2$ arguments below.
\end{proof}

\subsection{Finite-dimensional projections, closed realizations, and the free essential spectrum}

\subsubsection{The finite-dimensional stable projection}\label{app:proof-stablespaceprojection}

We give the complete proof of the statement labelled \ref{lem:stablespaceprojection} in the main text.

\begin{proof}
All left and right eigenfunctions in \eqref{eq:unstableprojection} belong to
$L^2$.  Each summand is therefore bounded and rank one.  The biorthogonality
relations \eqref{eq:biorthogonal} give $P_{{\rm u},n}^2=P_{{\rm u},n}$ and
\eqref{eq:stablespace}.  If $e,\zeta$ are paired right and left eigenfunctions
with eigenvalue $-\nu$ and $\langle e,\zeta\rangle=1$, then
\[
 \langle\mathbf L_nf,\zeta\rangle=-\nu\langle f,\zeta\rangle,
 \qquad
 \mathbf L_n(\langle f,\zeta\rangle e)
 =-\nu\langle f,\zeta\rangle e.
\]
Summing these identities proves commutation.
\end{proof}

\subsubsection{Riesz projections and the nondegenerate adjoint pairing}
\label{app:proof-adjointdualexistence}

We give the complete proof of Lemma~\ref{lem:adjointdualexistence}.

\begin{proof}
Let $\Gamma_\nu$ be a small circle enclosing $-\nu$ and no other spectral
point, and define the Riesz projection
\[
 \Pi_\nu=\frac1{2\pi i}\int_{\Gamma_\nu}
 (z-\mathbf L_n)^{-1}\dd z.
\]
Taking adjoints gives $\Pi_\nu^*$, the Riesz projection of $\mathbf L_n^*$ at
the same real spectral point, and the two projections have the same rank.  If
$f\in\operatorname{Ran}\Pi_\nu$ is orthogonal to
$\operatorname{Ran}\Pi_\nu^*$, then for every $g\in L^2$,
\[
 \langle f,g\rangle
 =\langle\Pi_\nu f,g\rangle
 =\langle f,\Pi_\nu^*g\rangle=0.
\]
Thus $f=0$, proving nondegeneracy.  The radial Sturm eigenvalues are simple
and the translation eigenvalue $-1/2$ was proved semisimple.  A Riesz
projection and its adjoint have the same Jordan-block sizes, so the adjoint
points are semisimple as well.  Choosing dual bases in finite dimensions gives
the stated normalization.

Pairings between distinct real eigenvalues vanish automatically.  For example,
\[
 (\mu_{i,n}-\mu_{j,n})
 \langle\varphi_{i,n},\zeta_j^{\rm u}\rangle=0.
\]
Radial and $l=1$ modes are orthogonal by angular Fourier-mode orthogonality.
Rotational invariance shows that the adjoint translation space is the
standard three-dimensional $l=1$ representation.  Hence we may choose
$\zeta_k^{\rm tr}(y)=g_{\rm tr}(|y|)y_k/|y|$ and normalize all three
directions simultaneously as in \eqref{eq:biorthogonal}.
\end{proof}

\subsubsection{Relative compactness and semigroup generation}\label{app:proof-compactgenerator}

We give the complete proof of the statement labelled \ref{lem:compactgenerator} in the main text.

\begin{proof}
The profile estimates give
\[
 U_n=O(r^{-2}),\qquad \nabla U_n=O(r^{-3}),\qquad
 b_n=O(r^{-1})\quad(r\to\infty).
\]
Thus $U_n\in L^\infty$, $\nabla U_n\in L^3$, and all three coefficients tend
to zero at infinity.  The free resolvent satisfies
\begin{equation}
 (\mathbf L_0-z)^{-1}:L^2\longrightarrow H^{3/2},\qquad
 \operatorname{Re}z<\frac14,                               \label{eq:OUresolventsmoothing}
\end{equation}
locally uniformly in $z$ \cite[Lemma~2.2]{LiZhouKSNS2025}.

Multiplication by $U_n$ is compact from $H^1$ to $L^2$: use Rellich's theorem
on a fixed ball and $\|U_n\|_{L^\infty(|y|>R)}\to0$ outside it.  The same
argument, with $H^{1/2}\Subset L^2$ locally, treats
$b_n\cdot\nabla:H^{3/2}\to L^2$.  For the nonlocal term,
\begin{equation}
 \|\nabla\Delta^{-1}f\|_{L^6}\le C\|f\|_{L^2}.             \label{eq:newtonL2L6}
\end{equation}
Hence its exterior part is bounded by
$C\|(1-\chi_R)\nabla U_n\|_3\|f\|_2=o_R(1)\|f\|_2$; on a
fixed ball, local elliptic regularity and $H^1\Subset L^2$ give compactness.
Together with \eqref{eq:OUresolventsmoothing}, this proves relative
compactness in the graph norm of $\mathbf L_0$.

We next construct the finite-dimensional negative part from the real quadratic
form itself; graph-norm compactness alone is not used as a substitute for form
compactness.  Put
\[
 q_0[f]=\|\nabla f\|_2^2+\frac14\|f\|_2^2.
\]
Since $\nabla\cdot b_n=U_n$, integration by parts gives
\[
 \operatorname{Re}\langle\mathbf L_nf,f\rangle
 =q_0[f]-\frac32\int_{\R^3}U_n|f|^2\dd y
 -\operatorname{Re}\langle
   \nabla U_n\cdot\nabla\Delta^{-1}f,f\rangle.
\]
The multiplication form involving $U_n$ is compact relative to $H^1$: use
Rellich compactness on a ball and $U_n(y)\to0$ outside.  The nonlocal operator
in the last term was proved above to be compact on $L^2$.  Thus the last two
terms form a $q_0$-compact quadratic form.

The finite-dimensional approximation property for compact forms now gives,
for every sufficiently small $\varepsilon>0$, a finite-dimensional
$E\subset C_c^\infty(\R^3)$ such that
\[
 \left|\operatorname{Re}\langle\mathbf K_nh,h\rangle\right|
 \le\varepsilon(\|\nabla h\|_2^2+\|h\|_2^2),
 \qquad h\in H^1\cap E^\perp.
\]
This property follows directly by contradiction: otherwise one
constructs a $q_0$-bounded, weakly null sequence on which the compact form does
not converge to zero.  Let $q_1,\ldots,q_N$ be an orthonormal basis of $E$.
For $f=g+h$ with $g\in E$ and $h\perp E$, the preceding estimate controls the
$h$ part.  All terms containing $g$ are continuous on the finite-dimensional
space; Cauchy--Schwarz and Young's inequality absorb their $h$ factors into the
positive part.  After decreasing $\varepsilon$, one obtains
\begin{equation}
 \operatorname{Re}\langle\mathbf L_nf,f\rangle
 \ge\frac18\|f\|_2^2-C_\varepsilon
 \sum_{j=1}^N|\langle f,q_j\rangle|^2.                      \label{eq:finitedimdissipation}
\end{equation}
This proves \eqref{eq:finitedimdissipation}; it is the compact-form version of
the finite-dimensional separation in \cite[Proposition~2.3]{LiZhouKSNS2025}.

Choose $C_0\ge C_\varepsilon$ and set
\[
 K_{0,n}=C_0\sum_{j=1}^N\langle\,\cdot\,,q_j\rangle q_j,
 \qquad A_{0,n}=-\mathbf L_n+\frac1{16}-K_{0,n}.
\]
Then $A_{0,n}$ is dissipative.  For sufficiently negative $\lambda$,
\[
 -A_{0,n}-\lambda=
 [I+(\mathbf K_n+K_{0,n})(\mathbf L_0-\tfrac1{16}-\lambda)^{-1}]
 (\mathbf L_0-\tfrac1{16}-\lambda),
\]
and the bracket is invertible by a Neumann series.  Hence $A_{0,n}$ is maximal
dissipative.  Lumer--Phillips and the bounded perturbation theorem prove the
remaining assertions.
\end{proof}

\subsubsection{The exact free essential-spectrum threshold}\label{app:proof-exactessentialthreshold}

We give the complete proof of the statement labelled \ref{lem:exactessentialthreshold} in the main text.

\begin{proof}
Fourier transformation and
$\mathcal F(y\cdot\nabla f)=-3\widehat f-\xi\cdot\nabla_\xi\widehat f$
give
\[
 \mathcal F\mathbf L_0\mathcal F^{-1}\widehat f
 =|\xi|^2\widehat f-\frac12\widehat f
 -\frac12\xi\cdot\nabla_\xi\widehat f.
\]
The Fourier equation for $u(s)=S_0(s)f$ is
\[
 \partial_s\widehat u-\frac12\xi\cdot\nabla_\xi\widehat u
 +\left(|\xi|^2-\frac12\right)\widehat u=0.
\]
Starting from an initial frequency $\eta$, the characteristic curve satisfies
\[
 \dot\xi(\sigma)=-\frac12\xi(\sigma),\qquad
 \xi(\sigma)=e^{-\sigma/2}\eta.
\]
Along it,
\[
 \frac{\dd}{\dd\sigma}\widehat u(\sigma,\xi(\sigma))
 =\left(\frac12-|\xi(\sigma)|^2\right)
   \widehat u(\sigma,\xi(\sigma)).
\]
Integration and $\eta=e^{s/2}\xi$ give
\begin{align*}
 \widehat u(s,\xi)
 &=\exp\left(\frac s2-\int_0^s
      |e^{(s-\sigma)/2}\xi|^2\dd\sigma\right)
      \widehat f(e^{s/2}\xi)\\
 &=e^{s/2}e^{-(e^s-1)|\xi|^2}\widehat f(e^{s/2}\xi).
\end{align*}
Thus
\begin{equation}
 \widehat{S_0(s)f}(\xi)
 =e^{s/2}e^{-(e^s-1)|\xi|^2}\widehat f(e^{s/2}\xi).         \label{eq:freeOUFouriersemigroup}
\end{equation}
After $\eta=e^{s/2}\xi$,
\begin{equation}
 \|S_0(s)f\|_2^2=e^{-s/2}\int_{\R^3}
 e^{-2(1-e^{-s})|\eta|^2}|\widehat f(\eta)|^2\dd\eta
 \le e^{-s/2}\|f\|_2^2.                                   \label{eq:freeOUexactnormcalc}
\end{equation}
Fourier supports shrinking to zero show sharpness.  Taking disjoint shrinking
annuli gives a weakly null orthonormal sequence, so the essential norm is also
$e^{-s/4}$.

Write $\xi=e^\rho\theta$ and use the unitary map
\[
 v(\rho,\theta)=e^{3\rho/2}\widehat f(e^\rho\theta).
\]
The transformed operator is
\begin{equation}
 B=-\frac12\partial_\rho+\frac14+e^{2\rho}
 \quad\hbox{on }L^2(\R_\rho\times\mathbb S^2).              \label{eq:freelogoperator}
\end{equation}
Integration by parts gives
\[
 \operatorname{Re}\langle Bv,v\rangle
 =\frac14\|v\|_2^2+\|e^\rho v\|_2^2.
\]
Together with the Laplace transform of
\eqref{eq:freeOUFouriersemigroup}, this shows that $B-z$ is invertible for
$\operatorname{Re}z<1/4$.  If $\operatorname{Re}z>1/4$, then
\begin{equation}
 w_z(\rho,\theta)=
 \exp\!\bigl(2(\overline z-\tfrac14)\rho-e^{2\rho}\bigr)h(\theta) \label{eq:freeadjointkernel}
\end{equation}
lies in $L^2$ and satisfies $(B^*-\overline z)w_z=0$ for every
$h\in L^2(\mathbb S^2)$; hence the cokernel is infinite-dimensional.  On the
boundary $z=1/4+i\tau$, take
\[
 v_j(\rho,\theta)=c_j
 \chi\!\left(\frac{\rho+R_j}{L_j}\right)
 e^{-2i\tau\rho}h(\theta),
 \qquad R_j\gg L_j\to\infty,
\]
with pairwise disjoint supports and $\|v_j\|_2=1$.  Then
$v_j\rightharpoonup0$.  The cutoff derivative contributes $O(L_j^{-1})$,
whereas $e^{2\rho}\to0$ uniformly on the support; consequently
$\|(B-z)v_j\|_2\to0$.  This Weyl sequence proves
\eqref{eq:exactessentialspectrum} for $\mathbf L_0$, including the boundary.

Relative compactness transfers the essential spectrum to $\mathbf L_n$.
Moreover, for fixed $s>0$, $\mathbf K_nS_0(\tau)$ is compact for $\tau>0$ and
\begin{equation}
 \|\mathbf K_nS_0(\tau)\|_{2\to2}\le C_s(1+\tau^{-1/2}),
 \qquad0<\tau\le s.                                        \label{eq:KS0integrable}
\end{equation}
The multiplication terms use local Rellich compactness and coefficient decay;
the first-order term also uses
$\|\nabla S_0(\tau)\|\lesssim\tau^{-1/2}$; the nonlocal term uses
\eqref{eq:newtonL2L6}.  The right-hand side of
\eqref{eq:KS0integrable} is integrable on $(0,s)$.  Therefore
\begin{equation}
 S_n(s)-S_0(s)=-\int_0^sS_n(s-\tau)\mathbf K_nS_0(\tau)\dd\tau \label{eq:exactcompactDuhamel}
\end{equation}
is compact: on $[\varepsilon,s]$ it is the operator-norm limit of Riemann sums
of compact operators, and \eqref{eq:KS0integrable} allows
$\varepsilon\downarrow0$.  The two semigroups have the same image in the
Calkin algebra, and \eqref{eq:exactessentialnorm} follows.
\end{proof}

\subsection{Fixed Fourier modes, Fredholm closure, and the radial-variable transfer}

\subsubsection{Unitary equivalence of the fixed-mode closed realizations}\label{app:proof-fixedmodeunitary}

We give the complete proof of the statement labelled \ref{lem:fixedmodeunitary} in the main text.

\begin{proof}
Normalization gives
\begin{align*}
 \|\mathcal U_{l,m}f\|_{L^2(\mathbb R^3)}^2
 &=\int_0^\infty|f(r)|^2r^2\dd r
   \int_{\mathbb S^2}|Y_{l,m}(\omega)|^2\dd\omega\\
 &=\|f\|_{L^2(r^2\dd r)}^2,
\end{align*}
so $\mathcal U_{l,m}$ is unitary.  On the core of smooth compactly supported
functions regular at the origin,
\[
 -\Delta(fY_{l,m})=(-\Delta_lf)Y_{l,m},\qquad
 \frac12(2+y\cdot\nabla)(fY_{l,m})
 =\frac12(2+r\partial_r)f\,Y_{l,m},
\]
and rotational invariance gives
\[
 \Delta^{-1}(fY_{l,m})=(\Delta_l^{-1}f)Y_{l,m}.
\]
Since $\nabla\Delta^{-1}U_n=P_n(r)y/r$ and
$\nabla U_n=U_n'(r)y/r$,
\begin{align*}
 \nabla\Delta^{-1}U_n\cdot\nabla(fY_{l,m})
 &=P_n(r)f'(r)Y_{l,m},\\
 \nabla U_n\cdot\nabla\Delta^{-1}(fY_{l,m})
 &=U_n'(r)\partial_r\Delta_l^{-1}f\,Y_{l,m}.
\end{align*}
Consequently,
\begin{align*}
 \mathbf L_n(fY_{l,m})
 =\bigl[&-\Delta_lf+\tfrac12(2+r\partial_r)f-2U_nf-P_nf'\\
        &-U_n'\partial_r\Delta_l^{-1}f\bigr]Y_{l,m}
 =(L_{n,l}f)Y_{l,m}.
\end{align*}
This proves \eqref{eq:fixedmodeidentification} on the core; setting $U_n=P_n=0$
proves \eqref{eq:fixedmodefreeidentification}.

It remains to verify that this core gives the fixed-mode part of the full
domain.  Define the angular projection
\[
 (\Pi_{l,m}F)(r,\omega)
 =Y_{l,m}(\omega)\int_{\mathbb S^2}
 F(r,\omega')\overline{Y_{l,m}(\omega')}\dd\omega'.
\]
It is the orthogonal projection onto $\mathscr X_{l,m}$.  Every term of
$\mathbf L_0$ is rotation invariant, so
$\Pi_{l,m}\mathbf L_0=\mathbf L_0\Pi_{l,m}$ first on
$C_c^\infty(\mathbb R^3)$.  If
$F\in D(\mathbf L_0)\cap\mathscr X_{l,m}$, approximate it in the graph norm by
$F_k\in C_c^\infty$.  Then $\Pi_{l,m}F_k$ stays in the regular fixed-mode core
and converges to $F$ in the same graph norm.  Hence this core is a graph core
for $D(\mathbf L_0)\cap\mathscr X_{l,m}$.

The radial coefficients and the Newton inverse also commute with
$\Pi_{l,m}$.  Lemma~\ref{lem:compactgenerator} gives
$D(\mathbf L_n)=D(\mathbf L_0)$, so the same projection argument applies to
$\mathbf L_n$.  Taking graph closures now proves
\eqref{eq:fixedmodedomains} and the closed-operator identities.  Unitary
equivalence preserves all the listed spectral properties.
\end{proof}

\subsubsection{Fredholm index zero in a fixed mode}\label{app:proof-fixedmodefredholm}

We give the complete proof of the statement labelled \ref{lem:fixedmodefredholm} in the main text.

\begin{proof}
Equation \eqref{eq:freeOUexactnormcalc} implies
$\|S_0(s)|_{\mathscr X_{l,m}}\|\le e^{-s/4}$.  Hence the Laplace integral
\begin{equation}
 (L_{0,l}-z)^{-1}g
 =\int_0^\infty e^{sz}
 \mathcal U_{l,m}^{-1}S_0(s)\mathcal U_{l,m}g\dd s        \label{eq:fixedmodefreeresolvent}
\end{equation}
converges for $\operatorname{Re}z<1/4$ and gives
\begin{equation}
 \|(L_{0,l}-z)^{-1}\|_{2\to2}
 \le\frac1{1/4-\operatorname{Re}z}.                       \label{eq:fixedmodefreeresolventbound}
\end{equation}
Thus $L_{0,l}-z$ is invertible.  Lemma \ref{lem:compactgenerator} and the
unitary restriction give
\begin{equation}
 K_{n,l}(L_{0,l}-z)^{-1}\quad\hbox{compact on }L^2((0,\infty),r^2\dd r).
                                                               \label{eq:fixedmodecompact}
\end{equation}
On the common domain,
\begin{equation}
 L_{n,l}-z
 =\bigl[I+K_{n,l}(L_{0,l}-z)^{-1}\bigr](L_{0,l}-z).        \label{eq:fixedmodefredholmfactor}
\end{equation}
The bracket is the identity plus a compact operator and hence is Fredholm of
index zero by Atkinson's theorem.  The second factor is invertible and has
index zero.  Additivity proves \eqref{eq:fixedmodeindexzero}.

If a spectral point in this half-plane were not an eigenvalue, then the kernel
would vanish.  Index zero would make the closed Fredholm range have codimension
zero, so the operator would be bijective, a contradiction.  This proves
\eqref{eq:fixedmodespectrumpoint}.
\end{proof}

\subsubsection{Transfer from the radial mass variable to ordinary \texorpdfstring{$L^2$}{L2}}\label{app:proof-radialunweightedspectrum}

We give the complete proof of the statement labelled \ref{lem:radialunweightedspectrum} in the main text.

\begin{proof}
The resolvent smoothing \eqref{eq:OUresolventsmoothing} and the compact
Fredholm perturbation bootstrap every generalized eigenfunction into $H^k$
for all $k$.  Thus $f(r)=f(0)+O(r^2)$ at zero and
$h(r)=f(0)/6+O(r^2)$.  At infinity, Cauchy--Schwarz gives
\begin{align}
 |h(r)|&\le\frac1{2r^3}
 \left(\int_0^r|f(s)|^2s^2\dd s\right)^{1/2}
 \left(\int_0^rs^2\dd s\right)^{1/2}\notag\\
 &\le Cr^{-3/2}\|f\|_{L^2(r^2\dd r)}.                       \label{eq:radialmassL2tail}
\end{align}
Moreover,
\begin{equation}
 h'=\frac{f}{2r}-\frac{3h}{r}.                              \label{eq:radialmassderivative}
\end{equation}
Since $m_{\Phi_n}=r^{4+2a_n}e^{-r^2/4}$ up to bounded factors at infinity,
the derivative estimate can be checked term by term:
\[
 \int_1^\infty |h'|^2m_{\Phi_n}\dd r
 \le C\int_1^\infty
 \bigl(|f|^2r^{-2}+|h|^2r^{-2}\bigr)
 r^{4+2a_n}e^{-r^2/4}\dd r<\infty.
\]
The first term is finite because $f\in L^2(r^2\dd r)$ and
$r^{2a_n}e^{-r^2/4}$ is bounded; the second follows from
\eqref{eq:radialmassL2tail}.  The origin expansion gives the same conclusion
on $(0,1)$.  Since
$V_{\Phi_n}=1-2r\Phi_n'-12\Phi_n$ is bounded, $h$ belongs to the closed
quadratic-form domain of the mass operator.

Let $g=(L_{\Phi_n}-z)h$ distributionally.  The operator identity
\eqref{eq:radialdensitymassidentity} remains valid for distributions.
If $(L_{n,0}-z)f=0$, then $\mathfrak Tg=0$, so $g=Cr^{-3}$.
But $h$ is the regular branch at zero and the coefficients are smooth there;
therefore $g$ is bounded and $C=0$.  Hence
$L_{\Phi_n}h=zh$ and the Friedrichs-domain characterization puts
$h$ in the operator domain.  Repeating the same argument level by level sends
every generalized eigenchain to the mass operator.  Self-adjointness then
forces $z\in\R$.

For the reverse direction, let
$L_{\Phi_n}h=\lambda h$, $\lambda\in\R$.  The regular branch at
zero satisfies
\begin{equation}
 h(r)=h(0)+O(r^2),\qquad h'(r)=O(r),\qquad r\to0.            \label{eq:radialmassoriginreverse}
\end{equation}
Thus $f=\mathfrak Th=2(rh'+3h)$ is bounded at zero.  At infinity the mass
equation is
\begin{equation}
 -h''+\left(\frac r2+O(r^{-1})\right)h'
 +(1+O(r^{-2}))h=\lambda h.                                 \label{eq:radialmasstailode}
\end{equation}
Set
\begin{equation}
 \beta=2\lambda-2,qquad h(r)=r^\beta v(r).                 \label{eq:radialmasspowerfactor}
\end{equation}
Since
\[
 h'=r^\beta\left(v'+\frac\beta rv\right),\qquad
 h''=r^\beta\left(v''+\frac{2\beta}{r}v'
 +\frac{\beta(\beta-1)}{r^2}v\right),
\]
substitution cancels the constant zeroth-order term because
$\beta/2+1-\lambda=0$, and gives
\begin{equation}
 -v''+\left(\frac r2+O(r^{-1})\right)v'+O(r^{-2})v=0.       \label{eq:radialmassvequation}
\end{equation}
With $a(r)=r/2+O(r^{-1})$, choose
\begin{equation}
 \rho(r)=\exp\!\left(-\int_R^ra(s)\dd s\right)
 =e^{-r^2/4}r^{O(1)}.                                      \label{eq:radialmassrho}
\end{equation}
Then
\begin{equation}
 (\rho v')'=\rho O(r^{-2})v.                               \label{eq:radialmassvolterra0}
\end{equation}
The other integration constant would produce
$v'=C\rho^{-1}=e^{r^2/4}r^{O(1)}$, incompatible with the mass space.  Hence
the Weyl branch satisfies
\begin{equation}
 v'(r)=-\rho(r)^{-1}\int_r^\infty\rho(s)O(s^{-2})v(s)\dd s. \label{eq:radialmassvolterra}
\end{equation}
Using the Gaussian tail estimate
\begin{equation}
 \int_r^\infty e^{-s^2/4}s^{M-2}\dd s
 \le C_Me^{-r^2/4}r^{M-3}
\end{equation}
in this Volterra equation gives
$v'=O(r^{-3})$ and $v=c+O(r^{-2})$.  A nonzero Weyl branch has $c\ne0$ by
Volterra uniqueness.  Therefore
\begin{align}
 h(r)&=c r^{2\lambda-2}(1+O(r^{-2})),\notag\\
 h'(r)&=c(2\lambda-2)r^{2\lambda-3}+O(r^{2\lambda-5}).     \label{eq:radialmassweyltail}
\end{align}
It follows that
\begin{equation}
 \mathfrak Th=2(2\lambda+1)c r^{2\lambda-2}+O(r^{2\lambda-4}).
\end{equation}
For $\lambda<1/4$,
\begin{equation}
 \int_1^\infty|\mathfrak Th|^2r^2\dd r
 \le C\int_1^\infty r^{4\lambda-2}\dd r<\infty.           \label{eq:radialtransferthreshold}
\end{equation}
Conversely, if $\lambda\ge1/4$, then $2\lambda+1\ne0$ and the same asymptotic
formula gives a matching lower bound.  At $\lambda=1/4$ the integral is
$\int_1^\infty r^{-1}\dd r$ and diverges; this explains the exact threshold.
Hence $f=\mathfrak Th$ belongs to radial $L^2$ precisely in the required
range.  It is nonzero, because $\mathfrak Th=0$ would imply
$h=Cr^{-3}$, contradicting the regular branch at the origin.  Finally it obeys
$L_{n,0}f=\lambda f$.

Finally, a self-adjoint operator has no nontrivial Jordan chain: if
$(L-z)h_1=h_0$ and $(L-z)h_0=0$, then $z\in\R$ and
\[
 \|h_0\|_{\mathcal H_{\Phi_n}}^2
 =\langle(L_{\Phi_n}-z)h_1,h_0\rangle
 =\langle h_1,(L_{\Phi_n}-z)h_0\rangle=0.
\]
The mass eigenvalues are simple, and $\mathfrak T$ and $\mathfrak T^{-1}$ are
inverse on the corresponding eigenspaces.  This proves equality of geometric
and algebraic multiplicities and completes \eqref{eq:radialfullspectraltransfer}.
The Sturm count and Propositions~\ref{prop:radialgap-main} and
\ref{prop:nodes-main} give the final count of nonpositive radial spectrum.
\end{proof}

\subsection{Quasi-compact semigroups, modified energies, and the exact logarithmic rate}

\subsubsection{The stable semigroup and an equivalent modified energy}\label{app:proof-stablelinearenergy}

We give the complete proof of the statement labelled \ref{prop:stablelinearenergy} in the main text.

\begin{proof}
Put $G_n=-\mathbf L_n$ and $S_n(s)=e^{sG_n}$.  By
Lemma~\ref{lem:compactgenerator},
$G_n=A_{0,n}-1/16+K_{0,n}$ with $A_{0,n}$ maximal dissipative and $K_{0,n}$
finite rank.  If $T_n(s)=e^{s(A_{0,n}-1/16)}$, Lumer--Phillips gives
\begin{equation}
 \|T_n(s)\|_{2\to2}\le e^{-s/16}.                          \label{eq:referencecontraction}
\end{equation}
The bounded-perturbation formula is
\begin{equation}
 S_n(s)-T_n(s)=\int_0^sT_n(s-\tau)K_{0,n}S_n(\tau)\dd\tau. \label{eq:compactsemigroupdifference}
\end{equation}
Write
\[
 K_{0,n}f=\sum_{j=1}^N\langle f,a_j\rangle b_j.
\]
Then the integrand in \eqref{eq:compactsemigroupdifference} is
\[
 T_n(s-\tau)K_{0,n}S_n(\tau)f
 =\sum_{j=1}^N
 \langle f,S_n(\tau)^*a_j\rangle T_n(s-\tau)b_j.
\]
Because $L^2$ is a Hilbert space, the adjoint semigroup $S_n(\tau)^*$ is
strongly continuous.  Both vectors in each rank-one term depend continuously
on $\tau$, so the integrand is continuous in operator norm.  Its Riemann sums
are finite rank, and their operator-norm limit is compact.  Consequently,
\begin{equation}
 \|S_n(s)\|_{\rm ess}\le e^{-s/16},\qquad
 \omega_{\rm ess}(G_n)\le-\frac1{16}.                      \label{eq:essentialgrowthbound}
\end{equation}
Thus the semigroup is quasi-compact.  Lemma~\ref{lem:exactessentialthreshold}
later sharpens the essential exponent to $-1/4$.

The negative eigenvalues of $\mathbf L_n$ become the nondecaying eigenvalues
of $G_n$.  The required spectral statements are Theorem~\ref{thm:main} and
Lemma~\ref{lem:radialunweightedspectrum}.  Together with semisimplicity in the
$l=1$ mode, they show that the nondecaying eigenvalues are exactly the scaling
value $1$, the $N_n$ genuine radial values, and $1/2$ in the three translation
directions:
\[
 1,\qquad \mu_{j,n}\ (1\le j\le N_n),\qquad
 \frac12\quad\hbox{in the three translation directions}.
\]
They are removed by $P_{{\rm u},n}$.  For
$G_{{\rm s},n}=G_n|_{X_{{\rm s},n}}$, the $l=1$ gap,
the full $l\ge2$ spectral closure in
Proposition~\ref{prop:fixedmodefullclosure}, and the fixed-$n$ radial discrete gap give
some $\beta_n>0$ with
\begin{equation}
 \sigma(G_{{\rm s},n})\subset
 \{z\in\mathbb C:\operatorname{Re}z\le-\beta_n\}.         \label{eq:stablegeneratorspectrum}
\end{equation}

Fix $s_0>0$.  Outside the essential spectral disk, every spectral point of
$S_n(s_0)|_{X_{{\rm s},n}}$ has a finite-dimensional Riesz space $E_\mu$.
Because the semigroup commutes with $S_n(s_0)$ and $\mu\ne0$, its restriction
to $E_\mu$ is a finite-dimensional continuous group.  Hence there is a matrix
$B_\mu$ such that
\[
 S_n(t)|_{E_\mu}=e^{tB_\mu}.
\]
The matrix $B_\mu$ is the restriction of $G_{{\rm s},n}$, so
$\mu=e^{s_0z}$ for some $z\in\sigma(G_{{\rm s},n})$.  Therefore
\[
 r(S_n(s_0)|_{X_{{\rm s},n}})
 \le\max\{e^{-s_0/16},e^{-\beta_ns_0}\}<1.
\]
Choose $e^{-\omega_ns_0}$ strictly larger than the right-hand side.  The
spectral-radius formula gives a constant $C_n$ such that
\[
 \|S_n(ks_0)P_{{\rm s},n}\|_{2\to2}
 \le C_ne^{-\omega_nks_0},\qquad k=0,1,2,\ldots.
\]
For arbitrary $s\ge0$, write
\[
 s=ks_0+\tau,\qquad 0\le\tau<s_0.
\]
Strong continuity gives
\[
 \sup_{0\le\tau\le s_0}\|S_n(\tau)\|<\infty.
\]
Absorbing this factor proves \eqref{eq:stablesemigroup} for every $s\ge0$.

The integral \eqref{eq:LyapunovQ} converges in operator norm and
\begin{equation}
 0\le Q_n\le\frac{M_n^2}{2\omega_n}I.                      \label{eq:Qbound}
\end{equation}
This proves \eqref{eq:energyequivalence}.  To verify
\eqref{eq:quasiaccretive}, integrate by parts, use
$\nabla\cdot b_n=U_n$, and obtain
\begin{align}
 \operatorname{Re}\langle\mathbf L_nf,f\rangle
 ={}&\|\nabla f\|_2^2+\frac14\|f\|_2^2
 -\frac32\int_{\R^3}U_n|f|^2\dd y\notag\\
 &-\operatorname{Re}\int_{\R^3}
 (\nabla U_n\cdot\nabla\Delta^{-1}f)\overline f\dd y.     \label{eq:L2numericalform}
\end{align}
By \eqref{eq:newtonL2L6} and H\"older, the last term is bounded by
$C\|\nabla U_n\|_3\|f\|_2^2$.  Thus one may take
\begin{equation}
 a_n=\frac32\|U_n\|_\infty+C\|\nabla U_n\|_3.             \label{eq:anchoice}
\end{equation}
For $f\in D(\mathbf L_n)\cap X_{{\rm s},n}$, differentiation under the
integral gives
\begin{align}
 2\operatorname{Re}\langle Q_n\mathbf L_nf,f\rangle
 &=-\int_0^\infty\frac{\dd}{\dd s}
   \|e^{-s\mathbf L_n}f\|_2^2\dd s\notag\\
 &=\|f\|_2^2.                                               \label{eq:Lyapunovidentity}
\end{align}
Since $\alpha_n=2a_n+1$,
\begin{align*}
 2\operatorname{Re}\langle B_n\mathbf L_nf,f\rangle
 &=2\operatorname{Re}\langle\mathbf L_nf,f\rangle
   +\alpha_n\|f\|_2^2\\
 &\ge\|f\|_2^2\\
 &\ge\left(1+\frac{\alpha_nM_n^2}{2\omega_n}\right)^{-1}
       \mathcal E_n[f].
\end{align*}
This is \eqref{eq:energydissipation}; differentiating the energy along strong
solutions and then using density gives \eqref{eq:energydecay}.
\end{proof}

\subsubsection{From the exact spectral endpoint to the logarithmic semigroup exponent}\label{app:proof-sharpspectraldecay}

We give the complete proof of the statement labelled \ref{prop:sharpspectraldecay} in the main text.

\begin{proof}
Proposition~\ref{prop:fixedmodefullclosure} identifies the full-space closed
realization with $L_{n,l}$ in every fixed Fourier mode, uses Fredholm index
zero to exclude the complete spectrum below $1/4$ for all $l\ge2$, and puts
the boundary line in the spectrum by fixed-mode Weyl sequences.  Only $l=0,1$ can lower
the stable endpoint, proving \eqref{eq:sharpgaplowmodes}.  The preceding low
spectrum results give strict positivity.

Fix $s_0>0$.  One cannot transfer the exact essential-norm identity directly
to the nonorthogonal stable subspace; a finite-codimensional restriction
preserves the essential spectrum and essential spectral radius.  Since
$P_{{\rm s},n}$ commutes with the semigroup, $L^2$ is a bounded direct sum of
the finite-dimensional unstable space and $X_{{\rm s},n}$, and $S_n(s_0)$ is
block diagonal in this decomposition.  The finite-dimensional block vanishes
in the Calkin algebra, so
\[
 r_{\rm ess}(S_{{\rm s},n}(s_0))=r_{\rm ess}(S_n(s_0)).
\]
Lemma~\ref{lem:exactessentialthreshold} and the semigroup property give
\[
 r_{\rm ess}(S_n(s_0))
 =\lim_{k\to\infty}\|S_n(ks_0)\|_{\rm ess}^{1/k}
 =e^{-s_0/4}.
\]
Consequently,
\begin{equation}
 r_{\rm ess}(S_{{\rm s},n}(s_0))=e^{-s_0/4}.                \label{eq:stableessentialradiusexact}
\end{equation}
Outside this disk, the finite-dimensional continuous-group argument used
above maps generator eigenvalues $z$ to $e^{-s_0z}$.  Spectral inclusion gives
the reverse containment, hence
\begin{equation}
 r(S_{{\rm s},n}(s_0))=e^{-s_0\gamma_n^{\rm sp}}.           \label{eq:stablespectralradiusexact}
\end{equation}
The spectral-radius formula first yields the logarithmic limit on integer
multiples of $s_0$:
\[
 \lim_{k\to\infty}\frac1{ks_0}
 \log\|S_{{\rm s},n}(ks_0)\|=-\gamma_n^{\rm sp}.
\]
Writing $s=ks_0+\theta$, $0\le\theta<s_0$, and using uniform boundedness on
$[0,s_0]$ gives the continuous-time upper limit.  For every $s>0$, the same
essential-disk argument gives
$r(S_{{\rm s},n}(s))=e^{-s\gamma_n^{\rm sp}}$, whence
$\|S_{{\rm s},n}(s)\|\ge e^{-s\gamma_n^{\rm sp}}$ and the opposite lower
limit follows.  This proves \eqref{eq:mainsharprate}.  Choosing any
larger radius $e^{-s_0(\gamma_n^{\rm sp}-\varepsilon)}$ in the same argument
gives \eqref{eq:almostsharprate}.

Fix $0<\eta<\gamma_n^{\rm sp}$ and choose
$0<\varepsilon<\gamma_n^{\rm sp}-\eta$.  Then
\begin{equation}
 Q_{n,\eta}:=\int_0^\infty e^{2\eta s}
 S_{{\rm s},n}(s)^*S_{{\rm s},n}(s)\dd s                  \label{eq:weightedLyapunov}
\end{equation}
converges in operator norm.  Let
\begin{equation}
 c_{n,\eta}=2a_n+2\eta+1,qquad
 B_{n,\eta}=I+c_{n,\eta}Q_{n,\eta},\qquad
 \|f\|_{n,\eta}^2=\langle B_{n,\eta}f,f\rangle.            \label{eq:weightedBnorm}
\end{equation}
The almost-sharp semigroup bound yields
\begin{equation}
 0\le Q_{n,\eta}\le
 \frac{C_{n,\varepsilon}^2}
 {2(\gamma_n^{\rm sp}-\varepsilon-\eta)}I,                 \label{eq:weightedQexplicit}
\end{equation}
and therefore
\begin{equation}
 \|f\|_2^2\le\|f\|_{n,\eta}^2\le
 \left[1+\frac{c_{n,\eta}C_{n,\varepsilon}^2}
 {2(\gamma_n^{\rm sp}-\varepsilon-\eta)}\right]\|f\|_2^2. \label{eq:weightedBexplicit}
\end{equation}
The equivalence constant may deteriorate as
$\eta\uparrow\gamma_n^{\rm sp}$.  Integration by parts along the semigroup
orbit gives
\begin{equation}
 2\operatorname{Re}\langle Q_{n,\eta}A_{{\rm s},n}f,f\rangle
 =\|f\|_2^2+2\eta\langle Q_{n,\eta}f,f\rangle.              \label{eq:weightedLyapunovidentity}
\end{equation}
Combining this with \eqref{eq:quasiaccretive} and the definition of
$c_{n,\eta}$ gives, term by term,
\begin{align*}
 &2\operatorname{Re}\langle
 B_{n,\eta}(A_{{\rm s},n}-\eta)f,f\rangle\\
 &\quad=2\operatorname{Re}\langle
 (A_{{\rm s},n}-\eta)f,f\rangle+c_{n,\eta}\|f\|_2^2\\
 &\quad\ge\|f\|_2^2\ge0.
\end{align*}
Consequently,
\[
 2\operatorname{Re}\langle B_{n,\eta}A_{{\rm s},n}f,f\rangle
 \ge2\eta\langle B_{n,\eta}f,f\rangle,
\]
which is \eqref{eq:nearsharpenergycoercive}.  Differentiation along strong
solutions and density prove \eqref{eq:nearsharpenergydecay}.
\end{proof}

\begin{remark}[Why the integrated semigroup orbit is not enough]
The quantity $\int_0^\infty\|e^{-s\mathbf L_n}f\|_2^2\dd s$ gives an upper
bound, but high-frequency components can make it much smaller than
$\|f\|_2^2$.  The explicit $\|f\|_2^2$ term in
\eqref{eq:modifiedenergydefinition} supplies the lower bound, while
$\alpha_n=2a_n+1$ absorbs the finite negative part of the original numerical
range.  Thus \eqref{eq:energyequivalence}--\eqref{eq:energydissipation} give
both norm equivalence and strict dissipation.
\end{remark}
\endgroup

\section{Correction of the preceding matching construction and proof of the radial inputs}
\label{sec:correctedmatchingappendix}

This appendix derives the corrected inner scale, constructs a right inverse
uniformly on the long inner interval, completes the interior--exterior matching, and
proves the radial phase exclusion.  The universal inner solution and the outer
fixed-point framework are taken from \cite{NWZ2026}; all
uses of the inner remainder below start from the rescaled equation and use the
corrected order-$\mu^2$ remainder throughout.

Set
\[
 \Lambda f=2f+sf',\qquad
 \mathcal E(q)=q''+\frac4sq'+6q^2+s(q^2)',
\]
so that $\mathcal E(\bar Q)=0$, and define
\begin{equation}
 \mathcal E(\bar Q)=0,
 \qquad \bar Q(0)=\frac16,
 \qquad \bar Q'(0)=0.                                    \label{eq:core}
\end{equation}
The linearized inner operator is
\begin{equation}
 Hh=-h''-\frac4sh'-12\bar Qh-2s(\bar Qh)'.                 \label{eq:H}
\end{equation}

\begin{proposition}[Corrected matched-profile input]
\label{prop:correctedprofile-main}
There is a smooth universal inner solution $\bar Q$ satisfying
\begin{equation}
 \bar Q''+\frac4s\bar Q'+6\bar Q^2+s(\bar Q^2)'=0,
 \qquad \bar Q(0)=\frac16,\qquad \bar Q'(0)=0.              \label{eq:core-main}
\end{equation}
Define
\begin{equation}
 \bar P(s)=2s\bar Q(s),\qquad
 \bar U(s)=6\bar Q(s)+2s\bar Q'(s).                         \label{eq:barPU-main}
\end{equation}
There exist a sufficiently small $r_0>0$, constants
$0<\delta<\pi/4$ and $\gamma_0\in\R$, and a sequence $\mu_n\to0$ such that
\begin{equation}
 n\pi-\delta<-\frac{\sqrt7}{2}\log\mu_n+\gamma_0
 <n\pi+\delta,                                               \label{eq:phaseinterval-main}
\end{equation}
and these parameters produce smooth global stationary profiles $\Phi_n$.
They satisfy
\begin{equation}
 \Phi_n(r)=
 \begin{cases}
 \mu_n^{-2}\bar Q(r/\mu_n)+R_n(r/\mu_n),&0\le r\le r_0,\\[1mm]
 r^{-2}+\varepsilon_n(u_1+w_n)(r),&r\ge r_0,
 \end{cases}                                                 \label{eq:matching}
\end{equation}
where $\varepsilon_n\to0$.  In the form
$\mu_n^{-2}(\bar Q+\mu_n^2R_n)(r/\mu_n)$, the correction is visibly of order
$\mu_n^2$.  If $S_n=r_0/\mu_n$, then
\begin{equation}
 \sup_{0\le s\le S_n}(1+s)^{1/2}
 \bigl(|R_n(s)|+|sR_n'(s)|+|s^2R_n''(s)|\bigr)\le C.        \label{eq:Rnormglobal}
\end{equation}
The outer fixed point obeys
\begin{equation}
 \|w_n\|_{X_{r_0}}\le C|\varepsilon_n|r_0^{-1/2},\qquad
 \|\partial_\varepsilon w_\varepsilon\|_{X_{r_0}}
 \le Cr_0^{-1/2}.                                          \label{eq:Xnormglobal}
\end{equation}
Let $P_n=2r\Phi_n$ and $U_n=6\Phi_n+2r\Phi_n'$.  For every fixed
$S_0>0$, uniformly on $0\le s\le S_0$,
\begin{equation}
 \mu_nP_n(\mu_ns)\to\bar P(s),\quad
 \mu_n^2U_n(\mu_ns)\to\bar U(s),\quad
 \mu_n^3U_n'(\mu_ns)\to\bar U'(s),\quad
 \mu_n^4U_n''(\mu_ns)\to\bar U''(s).                       \label{eq:matchedinnerconv-main}
\end{equation}
Moreover,
\begin{equation}
 \lim_{R\to\infty}\limsup_{n\to\infty}\sup_{r\ge R\mu_n}
 \left(|rP_n-2|+|r^2U_n-2|+|r^3U_n'+4|+|r^4U_n''-12|\right)=0. \label{eq:matchedouterconv-main}
\end{equation}
For all sufficiently large $n$, the four dimensionless coefficients in this
display also have a global bound independent of $n$.
\end{proposition}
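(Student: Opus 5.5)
The plan is to treat this as a matched-asymptotics construction in three layers: an inner problem on $[0,S_n]$ with $S_n=r_0/\mu_n$, a fixed outer problem on $[r_0,\infty)$, and a finite-dimensional $C^1$ matching at $r=r_0$ which selects $\mu_n$ in the phase window \eqref{eq:phaseinterval-main}.

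\emph{Inner layer.} Existence of $\bar Q$ solving \eqref{eq:core-main}, together with its tail
\[
\bar Q=s^{-2}+s^{-5/2}\bigl(A\cos(\tfrac{\sqrt7}{2}\log s+\gamma)\bigr)+\dots ,
\]
will be taken from \cite{NWZ2026} and Appendix~\ref{sec:JDEcoreproperties}. Rescaling, $q(s)=\mu^2\Phi(\mu s)$ solves $\mathcal E(q)=\mu^2(q+\tfrac s2q')$. The drift therefore enters at order $\mu^2$, and I would write $q=\bar Q+\mu^2R$. The remainder satisfies
\[
HR=-(\bar Q+\tfrac s2\bar Q')+\mu^2\,\mathcal N(R),
\]
where $H$ is the linearized operator \eqref{eq:H} and $\mathcal N$ collects the quadratic and drift terms, with $R(0)=R'(0)=0$.

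The key step is a right inverse of $H$ by variation of constants, using the regular solution $\Lambda\bar Q$ and a second solution. Both homogeneous solutions behave like $s^{-1/2}$ times oscillatory factors at infinity. Integrating the forcing $O(s^{-2})$ from $0$ then gives $|R|\lesssim(1+s)^{-1/2}$, not better, which matches the weight in \eqref{eq:Rnormglobal}.

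On $[0,S_n]$ the perturbation $\mu^2\mathcal N$ costs at most $\mu^2 S_n^{3/2}\sim\mu^{1/2}r_0^{3/2}$ in this weighted norm. This is the same smallness that appears in \eqref{eq:relativeQprime}. A contraction therefore gives $R_n$ together with the three weighted derivative bounds.

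\emph{Outer layer.} On $[r_0,\infty)$, linearize \eqref{eq:profile} around $r^{-2}$. The resulting linear equation has the decaying fundamental solution $u_1=r^{-2}-2r^{-4}+\dots$. I would then solve $\Phi=r^{-2}+\varepsilon(u_1+w)$ by a fixed point in $X_{r_0}$. The quadratic term is $O(\varepsilon^2)$ with constants $\lesssim r_0^{-1/2}$ from the weight, which yields \eqref{eq:Xnormglobal} and $C^1$ dependence on $\varepsilon$.

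\emph{Matching.} Match the Cauchy data $(\Phi,\Phi')$ at $r_0$. The inner data equal $r_0^{-2}$ plus an oscillation of size $\mu^{1/2}r_0^{-5/2}\cos(-\tfrac{\sqrt7}{2}\log\mu+\tfrac{\sqrt7}{2}\log r_0+\gamma)$, plus $O(\mu^2)$ corrections. The outer data equal $r_0^{-2}+\varepsilon u_1(r_0)+O(\varepsilon^2)$.

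I would eliminate $\varepsilon$ with one equation. The other becomes a scalar equation $F(\mu)=0$, whose leading part is a sine in the phase $\tfrac{\sqrt7}{2}\log\mu$. Its simple zeros are transversal, so the implicit function theorem gives one root $\mu_n$ in each window $|-\tfrac{\sqrt7}{2}\log\mu+\gamma_0-n\pi|<\delta$, and then $\varepsilon_n\sim\mu_n^{1/2}\to0$.

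I expect the main obstacle here: the $O(\mu^2 S_n^{3/2})$ inner error and the $\mu^2$-order correction have to be shown not to shift the zero out of the window. This should follow from their smallness relative to the $\mu^{1/2}$ oscillation amplitude, once $r_0$ is fixed first and $n$ is then taken large. Uniqueness of the ODE Cauchy problem makes the glued $\Phi_n$ smooth.

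\emph{Convergences.} The inner limits \eqref{eq:matchedinnerconv-main} follow from $R_n$ being bounded on $[0,S_0]$, combined with the recursive recovery of $U_n',U_n''$ from \eqref{eq:UPsystem}–\eqref{eq:Usecond}, exactly as in Lemma~\ref{lem:l2matchinginput}.

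For the outer limit \eqref{eq:matchedouterconv-main}, I would split the range:
\begin{itemize}
\item on $R\mu_n\le r\le r_0$, use $\bar Q=s^{-2}+O(s^{-5/2})$ with its derivatives, which is small as $R\to\infty$, plus the $O(\mu_n^2 s^{3/2})$ relative remainder from \eqref{eq:Rnormglobal};
\item on $r\ge r_0$, use $\varepsilon_n\to0$ and \eqref{eq:Xnormglobal}.
\end{itemize}
Global bounds for the dimensionless coefficients follow by covering $(0,\infty)$ with these two regions.
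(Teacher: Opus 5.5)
Your architecture is the paper's: a contraction for $R$ on $[0,S_n]$ using a variation-of-constants right inverse of $H$ built on $\Lambda\bar Q$, the outer fixed point around $r^{-2}$, elimination of $\varepsilon$ from the value equation at $r_0$, a sinusoidal derivative mismatch in $-\frac{\sqrt7}{2}\log\mu$, and recovery of $U_n',U_n''$ from the stationary system. The gap is in the estimate this proposition exists to correct, the weight $(1+s)^{1/2}$ in \eqref{eq:Rnormglobal}. Your justification rests on the wrong asymptotics, and the bound does not follow from what you wrote.

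The homogeneous solutions of $Hh=0$ do not behave like $s^{-1/2}$. With $\bar Q\approx s^{-2}$ one has $H(s^\alpha)\approx-(\alpha^2+5\alpha+8)s^{\alpha-2}$, so they are $s^{-5/2}$ times oscillations (indicial roots $-\frac52\pm i\frac{\sqrt7}{2}$), with Wronskian proportional to $M^{-1}$, $M\sim s^6$. The source is also not merely $O(s^{-2})$. It is $-\frac12\Lambda\bar Q=O(s^{-5/2})$, because $\Lambda(s^{-2})=0$ removes the leading term. The weight then comes from $s^{-5/2}\int_1^s\tau^{-5/2}\tau^{-5/2}\tau^{6}\,d\tau\sim s^{-1/2}$.

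This cancellation is indispensable. A genuine $s^{-2}$ source produces $R\sim\mathrm{const}$ (indeed $H(1)\approx-8s^{-2}$). Then at $r=r_0$ one would have $R_\mu(S_n)=O(1)$ and $\mu^{-1}R_\mu'(S_n)=O(r_0^{-1})$. These would swamp the $\mu^{1/2}r_0^{-5/2}$ oscillation that produces the matching roots, and the smallness $\varepsilon_n\to0$ would be lost.

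The size bookkeeping in your matching step is also off, and it governs the order of choices. In $\Phi$-units the correction contributes $R_\mu(S_n)=O(\mu^{1/2}r_0^{-1/2})$ and $\mu^{-1}R_\mu'(S_n)=O(\mu^{1/2}r_0^{-3/2})$ at $r_0$. That is the same power of $\mu$ as the oscillation, not $O(\mu^2)$. Its relative size is $O(r_0^2)$: uniform in $n$, but not tending to zero as $n\to\infty$.

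Similarly, the drift part $\frac{\mu^2}{2}\Lambda R$ costs $\mu^2S_n^2=r_0^2$ in the contraction, not $\mu^{1/2}r_0^{3/2}$ (that is the cost of the quadratic part only). Your ordering ($r_0$ first, then $n$) is right, but $r_0$ must be chosen small quantitatively in terms of the window width; the paper imposes $Cr_0^2<\frac14\sin\delta$. It must also satisfy $|u_1(r_0)|\gtrsim r_0^{-5/2}$, so that the value equation can be solved for $\varepsilon$.

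Because you control these errors only in $C^0$, the root in each window should come from the sign change of the mismatch at the two window endpoints (intermediate value theorem), as in the paper. Invoking transversality and the implicit function theorem would require $\mu$-derivative bounds on $R_\mu$ and $w_\varepsilon$ that you have not established, and uniqueness is not needed.

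Finally, in the overlap region $r^4U_n''$ involves $R_n'''$, which \eqref{eq:Rnormglobal} does not control. Use the stationary system there too, as you do for the inner limits.
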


\begin{proof}
The proof is the combination of Propositions~\ref{prop:inner},
\ref{prop:correctedinnerlimit}, \ref{prop:matchingroots}, and
\ref{prop:correctedtwoscale} below.
\end{proof}

\subsection{The inner scaling and the order of the correction}

Starting from the stationary ordinary differential equation for $\Phi$, this
subsection substitutes $\Phi(r)=\mu^{-2}q(r/\mu)$.  The chain rule reduces
the equation to \eqref{eq:scaled}.  We then write $q=\bar Q+\delta$ and
compare the linear term $-H\delta$ with the forcing
$-\frac{\mu^2}{2}\Lambda\bar Q$; this determines that the first correction
has order $\mu^2$.

\begin{proposition}[Rescaled inner equation]\label{prop:scaling}
Let
\[
 \Phi(r)=\mu^{-2}q(s),\qquad s=\frac r\mu.
\]
Then
\[
 \Phi''+\frac4r\Phi'-\frac12\Lambda\Phi
 +6\Phi^2+r(\Phi^2)'=0
\]
is equivalent to
\begin{equation}
 \mathcal E(q)-\frac{\mu^2}{2}\Lambda q=0.                 \label{eq:scaled}
\end{equation}
Since $\mathcal E(\bar Q)=0$, the inhomogeneous term at $\bar Q$ in
\eqref{eq:scaled} is $-\frac{\mu^2}{2}\Lambda\bar Q$.  Consequently, the
first nontrivial correction has order $\mu^2$.
\end{proposition}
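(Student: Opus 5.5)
This is a direct chain-rule computation. I will substitute $\Phi(r)=\mu^{-2}q(s)$ with $s=r/\mu$ into each term of the reduced-mass equation and then collect powers of $\mu$.

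\emph{Term by term.} Differentiation gives
\[
\Phi'=\mu^{-3}q'(s),\qquad \Phi''=\mu^{-4}q''(s),\qquad \frac4r\Phi'=\mu^{-4}\frac4sq'.
\]
The quadratic terms give
\[
6\Phi^2=\mu^{-4}\,6q^2,\qquad r(\Phi^2)'=\mu s\cdot\mu^{-5}(q^2)'(s)=\mu^{-4}s(q^2)'.
\]
Thus the second-order part and the quadratic part together equal $\mu^{-4}\mathcal E(q)$. The self-similar part scales differently:
\[
\Lambda\Phi=2\Phi+r\Phi'=\mu^{-2}\bigl(2q+sq'\bigr)=\mu^{-2}\Lambda q,
\]
where $\Lambda$ on the right is the inner $\Lambda f=2f+sf'$. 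The full equation is therefore
\[
\mu^{-4}\Bigl[\mathcal E(q)-\tfrac{\mu^2}{2}\Lambda q\Bigr]=0.
\]
Since $\mu>0$, this is equivalent to \eqref{eq:scaled}. The substitution is a bijection between smooth $\Phi$ on $(0,\infty)$ and smooth $q$, so the two equations are equivalent in both directions.

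\emph{Order of the correction.} Write $q=\bar Q+\delta$ and use $\mathcal E(\bar Q)=0$. Linearizing $\mathcal E$ at $\bar Q$ gives exactly $-H\delta$, with $H$ as in \eqref{eq:H}: the derivative of $6q^2+s(q^2)'$ in the direction $\delta$ is $12\bar Q\delta+2s(\bar Q\delta)'$. The equation then becomes
\[
-H\delta-\frac{\mu^2}{2}\Lambda\bar Q-\frac{\mu^2}{2}\Lambda\delta+\bigl(6\delta^2+s(\delta^2)'\bigr)=0.
\]
The only source term independent of $\delta$ is $-\frac{\mu^2}{2}\Lambda\bar Q$. Balancing it against the linear part $-H\delta$ forces $\delta=O(\mu^2)$, and then the terms $\mu^2\Lambda\delta$ and $\delta^2$ are $O(\mu^4)$.

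This last step is heuristic at the level of the proposition. Making it rigorous on the long interval $[0,r_0/\mu]$ requires a uniform right inverse of $H$, and that is deferred to the later propositions; I do not attempt it here.

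There is no substantive obstacle. The only care needed is the bookkeeping check that $r(\Phi^2)'$ also carries the common factor $\mu^{-4}$, which is what produces the single relative factor $\mu^2$ on $\Lambda q$.
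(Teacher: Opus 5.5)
Your proposal is correct and follows the paper's proof essentially line by line: the same chain-rule scaling shows that the second-order and quadratic terms carry $\mu^{-4}$ while $\Lambda\Phi$ carries $\mu^{-2}$, and the same linearization $q=\bar Q+\delta$ gives the linear part $-H\delta$, which is balanced against $-\frac{\mu^2}{2}\Lambda\bar Q$. Your expanded equation is slightly more complete than the paper's, since it also lists $-\frac{\mu^2}{2}\Lambda\delta$ among the remaining terms.
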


\begin{proof}
The chain rule gives
\[
 \Phi'=\mu^{-3}q',\qquad \Phi''=\mu^{-4}q'',\qquad
 \Lambda\Phi=\mu^{-2}(2q+sq'),
\]
and
\[
 6\Phi^2+r(\Phi^2)'=\mu^{-4}[6q^2+s(q^2)'].
\]
Substitution and multiplication by $\mu^4$ give \eqref{eq:scaled}.  If
$q=\bar Q+\delta$, the term linear in $\delta$ is
\[
 \delta''+\frac4s\delta'+12\bar Q\delta+2s(\bar Q\delta)'
 =-H\delta,
\]
while the
inhomogeneous term at $\bar Q$ is $-(\mu^2/2)\Lambda\bar Q$; the remaining
terms are $6\delta^2+s(\delta^2)'$.  Taking $\delta=\mu^2R$ makes the linear
term and $-\frac{\mu^2}{2}\Lambda\bar Q$ have the same order.  Thus the
correction must be written as $\mu^2R$.
\end{proof}

\begin{remark}[Power of the correction term]
The expression $\bar Q+\mu^4Q_1$ in
\cite[Proposition~2.7]{NWZ2026} must be replaced by
$\bar Q+\mu^2R$.  Correspondingly, the weighted estimate in equation
(2.72) of that paper is missing a factor $(1+s)^{1/2}$.  Every argument below
starts from \eqref{eq:scaled} and uses the corrected term $\mu^2R$.
\end{remark}

\subsection{A weighted right inverse for the linearized inner operator}

The principal part of the correction equation is $HR=f$, with $H$ defined
in \eqref{eq:H}.  We first use \eqref{eq:Qorigin}--\eqref{eq:Qinfty} to
construct two fundamental solutions of $Hh=0$, and then define the right
inverse $\mathcal S$ by variation of constants.  The goal is the uniform
bound $\mathcal S:F_S\to Z_S$ in \eqref{eq:stronginverse}, which will be used
for a contraction argument on the long interval $0\le s\le r_0/\mu$.

Write
\begin{equation}
 q=\bar Q+\mu^2R                                             \label{eq:goodansatz}
\end{equation}
and, for $S\ge1$, define
\begin{align}
 \|R\|_{Z_S}&=\sup_{0\le s\le S}(1+s)^{1/2}
 (|R(s)|+|sR'(s)|),                                        \label{eq:Znorm}\\
 \|f\|_{F_S}&=\sup_{0\le s\le S}(1+s)^{5/2}|f(s)|.       \label{eq:Fnorm}
\end{align}
The endpoint expansions are
\begin{align}
 \bar Q(s)&=\frac16-\frac{s^2}{60}+O(s^4),&
 \bar Q'(s)&=-\frac{s}{30}+O(s^3) &&(s\to0),              \label{eq:Qorigin}\\
 \bar Q(s)&=s^{-2}+O(s^{-5/2}),&
 \bar Q'(s)&=-2s^{-3}+O(s^{-7/2}) &&(s\to\infty).         \label{eq:Qinfty}
\end{align}

\begin{lemma}[Endpoint bounds for a fundamental pair]\label{lem:fundamental}
$Hh=0$ has two independent real solutions $u=\Lambda\bar Q$ and $\rho$ with
\begin{align}
 &|u|\le C,\quad |u'|\le Cs,\quad
 |\rho|\le Cs^{-3},\quad |\rho'|\le Cs^{-4},&&0<s\le1,    \label{eq:fund0}\\
 &|u|+|\rho|\le Cs^{-5/2},\quad
 |u'|+|\rho'|\le Cs^{-7/2},&&s\ge1.                       \label{eq:fundinf}
\end{align}
\end{lemma}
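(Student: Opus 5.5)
The plan is to produce $u=\Lambda\bar Q$ directly from scaling symmetry, to build $\rho$ from $u$ by reduction of order near the origin, and to read off the behaviour at infinity from the linearization about the singular state $s^{-2}$.

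\emph{The solution $u$.} The inner equation \eqref{eq:core-main} is invariant under $q\mapsto\lambda^2q(\lambda s)$. Differentiating at $\lambda=1$ gives $-Hu=0$ for $u=\Lambda\bar Q=2\bar Q+s\bar Q'$, exactly as in the derivation of \eqref{eq:scalemode} but without the self-similar term. The expansion \eqref{eq:Qorigin} then gives $u=\tfrac13+O(s^2)$ and $u'=O(s)$ near the origin. Also $u\not\equiv0$, since $u(0)=1/3$.

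\emph{The second solution $\rho$.} Write $H$ in the form $-h''-p\,h'-V h$ with
\[
p=\frac4s+2s\bar Q,\qquad V=12\bar Q+2s\bar Q'.
\]
Abel's formula gives the Wronskian $W=c\,s^{-4}\exp\bigl(-\int_0^s2\sigma\bar Q\dd\sigma\bigr)$. The indicial exponents at $s=0$ are $0$ and $-3$, so the second Frobenius branch satisfies $\rho=s^{-3}(1+O(s^2))$ and $\rho'=O(s^{-4})$. This proves \eqref{eq:fund0}.

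\emph{Behaviour at infinity.} By \eqref{eq:Qinfty}, $s\bar Q\to 0$ and $s^2\bar Q\to1$, so for large $s$ the operator is a perturbation of the Euler operator
\[
-h''-\frac4sh'-\frac{4}{s^2}h,
\]
since $12\bar Q+2s\bar Q'=8s^{-2}+O(s^{-5/2})$ and $2s\bar Q\,h'=2s^{-1}h'+\cdots$. Including the $2s^{-1}h'$ contribution gives the indicial equation $\alpha(\alpha-1)+6\alpha+8=0$, that is $\alpha^2+5\alpha+8=0$. Its roots are $\alpha=-\tfrac52\pm i\tfrac{\sqrt7}2$, which is consistent with the phase $\tfrac{\sqrt7}{2}\log\mu$ in \eqref{eq:phaseinterval-main}. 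The two Euler solutions are therefore $s^{-5/2}\cos$ and $s^{-5/2}\sin$ of $\tfrac{\sqrt7}2\log s$.

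The error terms are $O(s^{-5/2})$ relative to $s^{-2}$, so they are integrable against $ds/s$. Passing to $t=\log s$ turns this into a constant-coefficient $2\times2$ system plus an $L^1(dt)$ perturbation. Levinson's theorem, or Gronwall applied to the variation-of-constants Volterra equation, then shows that every solution satisfies $|h|\le Cs^{-5/2}$ and $|sh'|\le Cs^{-5/2}$. This applies to every solution, in particular to $u$ and $\rho$, and gives \eqref{eq:fundinf}.

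Independence of $u$ and $\rho$ is clear from their different behaviour at $s=0$. On the compact middle interval $[1,S_*]$, continuity gives the bounds with a constant.

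\emph{Main obstacle.} The delicate step is the uniform control at infinity. One has to check that the remainders in \eqref{eq:Qinfty}, and in $s\bar Q'$, are really $O(s^{-5/2})$ after multiplication by $s^2$, so that the perturbation is integrable in $t$. Since both characteristic roots have the same real part $-5/2$, no dichotomy is needed, but the perturbation must be summable for Gronwall to yield the sharp rate $s^{-5/2}$ rather than $s^{-5/2+\epsilon}$. If \eqref{eq:Qinfty} supplies only $o(s^{-2})$, I would first improve it using the autonomous $(y,\beta)$ flow \eqref{eq:phaseflow}. In that flow, convergence to the focus $(1,2)$ is exponential in $t$ with rate $5/2$ for the linearization, with eigenvalues $-\tfrac12\pm i\tfrac{\sqrt7}{2}$ shifted by the scaling. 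This gives the needed $O(s^{-1/2})$ relative remainder.
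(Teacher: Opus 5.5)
Your proof is correct and follows the paper's argument essentially step by step: $u=\Lambda\bar Q$ comes from scaling, and the second solution near $0$ comes from the Wronskian. The paper writes the explicit reduction-of-order integral, which amounts to your Frobenius branch $s^{-3}(1+O(s^2))$. At infinity the paper also passes to $t=\log s$, factors out $e^{-5t/2}$, and applies Gronwall to a constant-coefficient system with an $O(e^{-t/2})$ perturbation, which is exactly what \eqref{eq:Qinfty} supplies, so your fallback via the $(y,\beta)$ flow is not needed. One slip: the limiting Euler operator is $-h''-\frac6sh'-\frac8{s^2}h$, not $-h''-\frac4sh'-\frac4{s^2}h$; the former is the operator your indicial equation $\alpha^2+5\alpha+8=0$ actually comes from.
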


\begin{proof}
The equation is
\begin{equation}
 h''+\left(\frac4s+2s\bar Q\right)h'
 +(12\bar Q+2s\bar Q')h=0.                                \label{eq:Heqplus}
\end{equation}
Scaling gives $u=\Lambda\bar Q$.  Near zero, reduction of order gives
\begin{equation}
 \rho(s)=u(s)\int_s^{s_0}
 \frac{\tau^{-4}\exp(-\int_0^\tau2\sigma\bar Q(\sigma)\dd\sigma)}
 {u(\tau)^2}\dd\tau,                                     \label{eq:reduction}
\end{equation}
Since $u(0)=1/3\ne0$, the integrand is $O(\tau^{-4})$.  Hence
$\rho=O(s^{-3})$ and $\rho'=O(s^{-4})$, which proves
\eqref{eq:fund0}.  At infinity, put $t=\log s$ and
$y=s^2\bar Q$.  Then
\begin{equation}
 y(t)=1+O(e^{-t/2}),
 \qquad \dot y(t)=O(e^{-t/2}).                            \label{eq:yasy}
\end{equation}
Multiplying \eqref{eq:Heqplus} by $s^2$ and using
$s^3\bar Q'=\dot y-2y$ gives
\begin{equation}
 \ddot h+(3+2y)\dot h+(8y+2\dot y)h=0.                  \label{eq:hteq}
\end{equation}
With $h=e^{-5t/2}x$ and $\eta=y-1$, this becomes
\begin{equation}
 \ddot x+2\eta\dot x+
 \left(\frac74+3\eta+2\dot\eta\right)x=0.             \label{eq:xeq}
\end{equation}
Writing $X=(x,\dot x)^{\mathsf T}$ gives
\[
 X'=(A_0+E(t))X,
 \qquad
 A_0=\begin{pmatrix}0&1\\-7/4&0\end{pmatrix},
 \qquad \|E(t)\|\le Ce^{-t/2}.
\]
Since $E\in L^1([0,\infty))$, variation of constants followed by Gronwall's
inequality produces a fundamental matrix $X(t)$ with
$\|X(t)\|+\|X(t)^{-1}\|\le C$.  Returning to
$h=e^{-5t/2}x$ gives $|h|+|sh'|\le Cs^{-5/2}$ for every homogeneous solution,
and hence proves \eqref{eq:fundinf} for both $u$ and $\rho$.
\end{proof}

Set
\begin{equation}
 M(s)=s^4\exp\left(\int_0^s2\tau\bar Q(\tau)\dd\tau\right). \label{eq:M}
\end{equation}
Normalize $\rho$ so that $u'\rho-u\rho'=M^{-1}$.  Then
\begin{equation}
 M(s)\le
 \begin{cases}
 Cs^4,&0<s\le1,\\
 Cs^6,&s\ge1.
 \end{cases}                                              \label{eq:Mbound}
\end{equation}

\begin{lemma}[Strong weighted right inverse]\label{lem:stronginverse}
For continuous $f$, define
\begin{equation}
 \mathcal Sf(s)=\rho(s)\int_0^sf(\tau)u(\tau)M(\tau)\dd\tau
 -u(s)\int_0^sf(\tau)\rho(\tau)M(\tau)\dd\tau.            \label{eq:S}
\end{equation}
Then
\begin{equation}
 H\mathcal Sf=f,\qquad \|\mathcal Sf\|_{Z_S}\le C\|f\|_{F_S}, \label{eq:stronginverse}
\end{equation}
where $C$ is independent of $S\ge1$.
\end{lemma}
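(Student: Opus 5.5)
The plan has two parts. First, the identity $H\mathcal Sf=f$ follows from variation of constants. Second, the weighted bound $\|\mathcal Sf\|_{Z_S}\le C\|f\|_{F_S}$ is obtained by estimating the two integrals in \eqref{eq:S} separately on $0<s\le1$ and on $s\ge1$, using Lemma~\ref{lem:fundamental} and \eqref{eq:Mbound}.

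For the identity, write $H h=-M^{-1}(Mh')'-V h$ with $V=12\bar Q+2s\bar Q'$. This divergence form follows from \eqref{eq:Heqplus}, since $M'/M=4/s+2s\bar Q$. Differentiating \eqref{eq:S}, the boundary terms cancel, leaving
\[
(\mathcal Sf)'=\rho'\!\int_0^s fuM-u'\!\int_0^s f\rho M .
\]
A second differentiation uses $Hu=H\rho=0$ and the Wronskian normalization $M(u'\rho-u\rho')=1$, and gives $H\mathcal Sf=f$. Convergence of the integrals at $0$ must be checked, in particular $\int_0 f\rho M$. By \eqref{eq:fund0}, $\rho M=O(\tau)$, so the integrand is integrable and $\mathcal Sf$ is regular: $u\int_0^s f\rho M=O(s^2)$ and $\rho\int_0^s fuM=O(s^{-3}s^5)=O(s^2)$.

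For the bound on $0<s\le1$, use $|f|\le\|f\|_{F_S}$, $|u|\le C$, $|\rho|\le Cs^{-3}$ and $M\le Cs^4$. This gives $|\rho\int_0^s fuM|\le Cs^{-3}s^5$ and $|u\int_0^s f\rho M|\le Cs^2$. The derivative terms behave the same way, using $|\rho'|\le Cs^{-4}$ and $|u'|\le Cs$. Hence $|\mathcal Sf|+|s(\mathcal Sf)'|\le C\|f\|_{F_S}$ there.

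On $s\ge1$, insert $|f|\le\|f\|_{F_S}(1+\tau)^{-5/2}$, $|u|+|\rho|\le C\tau^{-5/2}$ and $M\le C\tau^6$. Each integrand is then bounded by $C\|f\|_{F_S}\tau^{-5/2-5/2+6}=C\|f\|_{F_S}\tau$. The integral over $[1,s]$ is therefore $O(s^2)$, and the contribution from $[0,1]$ is $O(1)$. Multiplying by $|\rho(s)|+|u(s)|\le Cs^{-5/2}$ gives $|\mathcal Sf(s)|\le C\|f\|_{F_S}s^{-1/2}$. Likewise $s|u'|+s|\rho'|\le Cs^{-5/2}$ gives $|s(\mathcal Sf)'|\le C\|f\|_{F_S}s^{-1/2}$. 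Multiplying by $(1+s)^{1/2}$ yields a bound uniform in $S$, because every constant comes from Lemma~\ref{lem:fundamental} and \eqref{eq:Mbound} and none depends on $S$.

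The main obstacle is that this exponent count is critical. The weights $(1+s)^{5/2}$ for $f$, $s^{-5/2}$ for the fundamental solutions and $s^6$ for $M$ leave exactly the power $s^{-1/2}$, with no room to absorb any loss, so the pointwise bounds in Lemma~\ref{lem:fundamental} must hold without logarithmic factors. Here I rely on the Levinson/Gronwall argument in that lemma, whose perturbation satisfies $\|E(t)\|\le Ce^{-t/2}\in L^1$. That integrability is exactly what gives a bounded fundamental matrix and hence the sharp $s^{-5/2}$ decay. If sharper control were needed, I would instead exploit the oscillation of $x$ at frequency $\sqrt7/2$. Finally, the second-derivative part of \eqref{eq:Rnormglobal} is not needed here: it is recovered afterwards from the equation $H\mathcal Sf=f$.
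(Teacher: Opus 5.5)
Your proposal is correct and follows the paper's proof essentially step for step. The identity $H\mathcal Sf=f$ comes from variation of constants: the boundary terms cancel in $(\mathcal Sf)'$, and the Wronskian normalization $u'\rho-u\rho'=M^{-1}$ finishes the second differentiation. The weighted bound comes from splitting at $s=1$ and using \eqref{eq:fund0}, \eqref{eq:fundinf} and \eqref{eq:Mbound}, which give $O(s^2)$ near the origin and $s^{-5/2}(1+s^2)\lesssim s^{-1/2}$ for $s\ge1$. Your remark that the exponent count is critical, and therefore needs the logarithm-free bounds of Lemma~\ref{lem:fundamental}, is accurate and matches exactly what the paper's argument relies on.
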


\begin{proof}
Let
\[
 I_1(s)=\int_0^sfuM\dd\tau,
 \qquad I_2(s)=\int_0^sf\rho M\dd\tau.
\]
The endpoint terms cancel exactly on differentiation:
\begin{align}
 (\mathcal Sf)'
 &=\rho'I_1+\rho fuM-u'I_2-u f\rho M\\
 &=\rho'I_1-u'I_2.                                      \label{eq:Sprime}
\end{align}
A second differentiation and the Wronskian identity give $H\mathcal Sf=f$.
For $0<s\le1$, the endpoint bounds yield
\begin{align*}
 |I_1(s)|&\le C\|f\|_{F_S}\int_0^s\tau^4\dd\tau
 \le C\|f\|_{F_S}s^5,\\
 |I_2(s)|&\le C\|f\|_{F_S}\int_0^s\tau\dd\tau
 \le C\|f\|_{F_S}s^2.
\end{align*}
Consequently,
\begin{align}
 |\mathcal Sf(s)|
 &\le C\|f\|_{F_S}(s^{-3}s^5+s^2)
 \le C\|f\|_{F_S}s^2,\\
 |s(\mathcal Sf)'(s)|
 &\le Cs\|f\|_{F_S}(s^{-4}s^5+s\,s^2)
 \le C\|f\|_{F_S}s^2.                                  \label{eq:near0S}
\end{align}
For $1\le s\le S$,
$|f(\tau)|\le(1+\tau)^{-5/2}\|f\|_{F_S}$.  Split the integrals into
$[0,1]$ and $[1,s]$ and use \eqref{eq:fundinf}--\eqref{eq:Mbound}; then
\begin{align*}
 |I_1(s)|+|I_2(s)|
 &\le C\|f\|_{F_S}
 +C\|f\|_{F_S}\int_1^s
 \tau^{-5/2}\tau^{-5/2}\tau^6\dd\tau\\
 &\le C\|f\|_{F_S}(1+s^2).
\end{align*}
Using \eqref{eq:fundinf},
\begin{align*}
 |\mathcal Sf(s)|
 &\le Cs^{-5/2}\|f\|_{F_S}(1+s^2)
 \le C\|f\|_{F_S}s^{-1/2},\\
 |s(\mathcal Sf)'(s)|
 &\le Cs\,s^{-7/2}\|f\|_{F_S}(1+s^2)
 \le C\|f\|_{F_S}s^{-1/2}.
\end{align*}
These are exactly the two portions of \eqref{eq:stronginverse}.
\end{proof}

\subsection{Construction of the inner solution}

We now apply the right inverse from the preceding subsection to the
nonlinear correction equation.  Substituting \eqref{eq:goodansatz} into
\eqref{eq:scaled} first gives \eqref{eq:Req}, which is then written as
$R=\mathcal T_\mu(R)$.  We estimate separately the source, the small linear
term, the quadratic term, and the difference of two iterates.  These
estimates show that $\mathcal T_\mu$ is a contraction on a fixed ball in
$Z_S$.

Substitution of \eqref{eq:goodansatz} in \eqref{eq:scaled} gives
\begin{equation}
 HR=-\frac12\Lambda\bar Q-\frac{\mu^2}{2}\Lambda R
 +\mu^2[6R^2+s(R^2)'].                                    \label{eq:Req}
\end{equation}

\begin{proposition}[Inner solution]\label{prop:inner}
There are $r_*>0$ and $C>0$ such that, whenever
$0<r_0\le r_*$, $0<\mu\le r_0$, and $S=r_0/\mu$, equation
\eqref{eq:Req} has a unique fixed-point solution $R_\mu$ on $[0,S]$ satisfying
\begin{equation}
 \|R_\mu\|_{Z_S}\le C.                                    \label{eq:RZ}
\end{equation}
Moreover,
\begin{equation}
 \Phi_{{\rm in},\mu}(r)=\mu^{-2}\bar Q(r/\mu)+R_\mu(r/\mu) \label{eq:Phiinner}
\end{equation}
is regular at zero and solves the original profile equation exactly on
$0\le r\le r_0$.
\end{proposition}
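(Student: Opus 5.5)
The plan is to recast \eqref{eq:Req} as the fixed-point problem
\[
 R=\mathcal T_\mu(R):=\mathcal S\Bigl(-\tfrac12\Lambda\bar Q-\tfrac{\mu^2}{2}\Lambda R+\mu^2[6R^2+s(R^2)']\Bigr)
\]
on a closed ball $\{\|R\|_{Z_S}\le 2C_0\}$. Here $\mathcal S$ is the right inverse of Lemma~\ref{lem:stronginverse}, and $C_0:=\|\mathcal S(\tfrac12\Lambda\bar Q)\|_{Z_S}$. We will then apply the contraction mapping principle, with constants uniform in $S=r_0/\mu$. There is one subtlety: the $Z_S$ norm controls only $R$ and $sR'$, whereas $\Lambda R$ and $s(R^2)'$ involve $R'$ only through $sR'$. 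So every forcing term is already expressible in terms of $R$ and $sR'$, and no second derivative enters the $F_S$ estimate.

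\textbf{Step 1 (source).} By \eqref{eq:Qorigin}--\eqref{eq:Qinfty}, $\Lambda\bar Q=2\bar Q+s\bar Q'$ is bounded near $0$. At infinity the leading terms cancel: $2s^{-2}-2s^{-2}=0$, so $\Lambda\bar Q=O(s^{-5/2})$. Hence $\|\Lambda\bar Q\|_{F_S}\le C$ uniformly in $S$, and \eqref{eq:stronginverse} gives $C_0\le C$.

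\textbf{Step 2 (linear term).} For $R\in Z_S$ we have $|\Lambda R|\le C(1+s)^{-1/2}\|R\|_{Z_S}$. Therefore $\|\mu^2\Lambda R\|_{F_S}\le \mu^2\sup_{s\le S}(1+s)^2\|R\|_{Z_S}\le C\mu^2(1+S)^2\|R\|_{Z_S}\le C(\mu+r_0)^2\|R\|_{Z_S}\le 4Cr_0^2\|R\|_{Z_S}$, using $\mu\le r_0$. This is small once $r_*$ is small.

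\textbf{Step 3 (quadratic term).} We have $|6R^2+s(R^2)'|\le C(|R|^2+|R||sR'|)\le C(1+s)^{-1}\|R\|_{Z_S}^2$. Its $F_S$ norm is bounded by $\mu^2(1+S)^{3/2}\|R\|^2_{Z_S}\le C r_0^{3/2}\mu^{1/2}\|R\|_{Z_S}^2\le Cr_0^2\|R\|^2_{Z_S}$.

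The differences $\mathcal T_\mu(R_1)-\mathcal T_\mu(R_2)$ are bounded in the same way by $Cr_0^2(1+\|R_1\|+\|R_2\|)\|R_1-R_2\|_{Z_S}$. Choosing $r_*$ so that $Cr_*^2(1+4C_0)\le\tfrac12$ makes $\mathcal T_\mu$ map the ball into itself and contract. This yields a unique fixed point with \eqref{eq:RZ}.

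The main obstacle, and the point that dictates the corrected $\mu^2$ scaling, is this bookkeeping of powers. The weight gap between $F_S$ and $Z_S$ is exactly $(1+s)^2$, and the long interval $s\le r_0/\mu$ converts $\mu^2(1+s)^2$ into $r_0^2$ rather than into something $\mu$-small. So smallness must come from $r_0$, not from $\mu$, and only the threshold estimate of Step 2 matters. I will check it first. Once the fixed point exists, regularity at $0$ follows from \eqref{eq:near0S}: $\mathcal S f=O(s^2)$, so $q=\bar Q+\mu^2R_\mu$ is smooth and even. The fixed point satisfies $HR=\text{RHS}$ because $H\mathcal S=\mathrm{id}$, so $q$ solves \eqref{eq:scaled}. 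Proposition~\ref{prop:scaling} then shows that $\Phi_{\mathrm{in},\mu}$ solves the original profile equation on $[0,r_0]$. Note that $\mu^{-2}q(r/\mu)=\mu^{-2}\bar Q(r/\mu)+R_\mu(r/\mu)$, which is \eqref{eq:Phiinner}.
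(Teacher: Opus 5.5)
Your proposal is correct and follows essentially the same route as the paper. It uses the same fixed-point map $\mathcal T_\mu$ built from the right inverse $\mathcal S$, and the same three $F_S$ bounds: an $O(1)$ source, an $O(r_0^2)$ linear term via $\mu(1+S)\le 2r_0$, and an $O(r_0^2)$ quadratic term. It then applies Banach's theorem on a fixed ball, and gets regularity at zero from \eqref{eq:near0S} together with Proposition~\ref{prop:scaling}.

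Two small points of precision. First, $R_\mu=O(s^2)$ and $R_\mu'=O(s)$ already give the claimed regularity at zero; ``smooth and even'' would require the additional (standard) regular-singular-point argument. Second, the ball radius and $r_*$ should be fixed using the $S$-uniform bound on $C_0$, not its $S$-dependent value, so that \eqref{eq:RZ} and uniqueness hold uniformly in $S$.
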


\begin{proof}
Define
\begin{equation}
 \mathcal T_\mu(R)=\mathcal S\left[
 -\frac12\Lambda\bar Q-\frac{\mu^2}{2}\Lambda R
 +\mu^2\bigl(6R^2+s(R^2)'\bigr)\right].                 \label{eq:T}
\end{equation}
At the origin, $\Lambda\bar Q=O(1)$, while at infinity
$\Lambda\bar Q=O(s^{-5/2})$.  Hence
\begin{equation}
 \left\|-\frac12\Lambda\bar Q\right\|_{F_S}\le C_0.   \label{eq:source0}
\end{equation}

For the linear term, \eqref{eq:Znorm} gives
\[
 |\Lambda R|\le2|R|+|sR'|
 \le3(1+s)^{-1/2}\|R\|_{Z_S}.
\]
Since $s\le S=r_0/\mu$ and $\mu\le r_0$,
\begin{align}
 \left\|\frac{\mu^2}{2}\Lambda R\right\|_{F_S}
 &\le C\mu^2(1+S)^2\|R\|_{Z_S}\notag\\
 &\le C(\mu+r_0)^2\|R\|_{Z_S}
 \le Cr_0^2\|R\|_{Z_S}.                                \label{eq:linearbound}
\end{align}

For the quadratic term, use
\[
 6R^2+s(R^2)'=6R^2+2R(sR')
\]
to obtain
\[
 |6R^2+s(R^2)'|\le C(1+s)^{-1}\|R\|_{Z_S}^2.
\]
Consequently,
\begin{align}
 \left\|\mu^2(6R^2+s(R^2)')\right\|_{F_S}
 &\le C\mu^2(1+S)^{3/2}\|R\|_{Z_S}^2\notag\\
 &\le Cr_0^2\|R\|_{Z_S}^2,                             \label{eq:quadraticbound}
\end{align}
where the last step uses
$\mu^2(1+r_0/\mu)^{3/2}\le Cr_0^2$.

For contraction, let $R_1,R_2$ lie in the radius-$M$ ball of $Z_S$, and put
$D=R_1-R_2$.  Then
\begin{align*}
 R_1^2-R_2^2&=(R_1+R_2)D,\\
 s(R_1^2-R_2^2)'
 &=s(R_1+R_2)'D+(R_1+R_2)sD'.
\end{align*}
Applying \eqref{eq:Znorm} to each factor gives
\begin{equation}
 \|\mathcal T_\mu(R_1)-\mathcal T_\mu(R_2)\|_{Z_S}
 \le Cr_0^2(1+M)\|R_1-R_2\|_{Z_S}.                      \label{eq:contraction}
\end{equation}
By Lemma \ref{lem:stronginverse} and
\eqref{eq:source0}--\eqref{eq:quadraticbound}, choose $M=2CC_0$ and then
$r_*$ sufficiently small.  The map $\mathcal T_\mu$ maps the closed ball
$\|R\|_{Z_S}\le M$ into itself and is a contraction there.  Banach's fixed
point theorem gives the unique solution and \eqref{eq:RZ}.

The near-origin estimate in Lemma \ref{lem:stronginverse} also gives
$R_\mu(s)=O(s^2)$ and $R_\mu'(s)=O(s)$.  Thus
\eqref{eq:Phiinner} is regular at $r=0$.  Proposition \ref{prop:scaling} and
\eqref{eq:Req} show that it solves the original profile equation exactly.
\end{proof}

\begin{corollary}[Second derivative and monotonicity]\label{cor:derivatives}
\begin{equation}
 \sup_{0\le s\le S}(1+s)^{1/2}
 (|R_\mu|+|sR_\mu'|+|s^2R_\mu''|)\le C.                    \label{eq:Rsecond}
\end{equation}
Proposition~\ref{prop:coreprofileproperties} and
Appendix~\ref{sec:JDEcoreproperties} give the quantitative monotonicity
\begin{equation}
 -\bar Q'(s)\ge c\frac{s}{(1+s)^4},                       \label{eq:Qmon}
\end{equation}
and therefore, for sufficiently small $r_0$ and $\mu$,
\begin{equation}
 (\bar Q+\mu^2R_\mu)'(s)<0,
 \qquad 0<s\le r_0/\mu.                                 \label{eq:qmon}
\end{equation}
\end{corollary}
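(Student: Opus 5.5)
The plan has two parts. First we upgrade the fixed-point bound \eqref{eq:RZ} to include the second derivative. Then we combine \eqref{eq:Rsecond} with the quantitative monotonicity \eqref{eq:Qmon} to get the sign \eqref{eq:qmon}.

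For \eqref{eq:Rsecond}, the first two weighted quantities $(1+s)^{1/2}(|R_\mu|+|sR_\mu'|)$ are already controlled by \eqref{eq:RZ}. We do not differentiate the integral formula \eqref{eq:S} again. Instead we read $R_\mu''$ off the equation \eqref{eq:Req}, using the form \eqref{eq:Heqplus} of $H$:
\[
 R_\mu''=-\Bigl(\frac4s+2s\bar Q\Bigr)R_\mu'-(12\bar Q+2s\bar Q')R_\mu
 +\frac12\Lambda\bar Q+\frac{\mu^2}{2}\Lambda R_\mu-\mu^2\bigl(6R_\mu^2+2R_\mu\,sR_\mu'\bigr).
\]
We multiply by $s^2$ and bound each term in two regimes.
\begin{itemize}
\item For $s\ge1$: the coefficients satisfy $s\cdot(4/s+2s\bar Q)=O(1)$ and $s^2(12\bar Q+2s\bar Q')=O(1)$ by \eqref{eq:Qinfty}. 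Hence the first two terms contribute $O((1+s)^{-1/2})$. The source $s^2\Lambda\bar Q=O(s^{-1/2})$. The linear term gives $\mu^2s^2(1+s)^{-1/2}\le Cr_0^2(1+s)^{-1/2}$, since $\mu s\le r_0$. The quadratic term gives $\mu^2s^2(1+s)^{-1}\le Cr_0^2(1+s)^{-1/2}$ after using $\mu^2s^{3/2}\le r_0^{3/2}\mu^{1/2}$.
\item For $s\le1$: the near-origin bounds from Lemma~\ref{lem:stronginverse} give $R_\mu=O(s^2)$ and $R_\mu'=O(s)$. Hence $s^2R_\mu''$ is $O(s^2)$. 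Equivalently, $(4/s)R_\mu'$ is bounded, so $R_\mu''$ is bounded.
\end{itemize}
Together these prove \eqref{eq:Rsecond}.

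For \eqref{eq:qmon}, we write $(\bar Q+\mu^2R_\mu)'=\bar Q'\bigl(1-\mu^2R_\mu'/(-\bar Q')\bigr)$. It then suffices to show $\mu^2|R_\mu'|<-\bar Q'$ on $(0,r_0/\mu]$.
\begin{itemize}
\item On $0<s\le1$: $|R_\mu'|\le Cs$ from the origin bound, and \eqref{eq:Qmon} gives $-\bar Q'\ge cs$. The ratio is therefore $\le C\mu^2$.
\item On $1\le s\le r_0/\mu$: \eqref{eq:Rsecond} gives $|R_\mu'|\le Cs^{-3/2}$, while $-\bar Q'\ge cs^{-3}$. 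The ratio is $\le C\mu^2s^{3/2}\le C\mu^{1/2}r_0^{3/2}$.
\end{itemize}
Both ratios are below $1$ for $r_0$ and $\mu$ small, which gives strict negativity. This is the same computation as \eqref{eq:relativeQprime0}--\eqref{eq:relativeQprime} in the main text.

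The main obstacle is the quantitative lower bound \eqref{eq:Qmon} itself: a global strict sign for $\bar Q'$ with the correct $s^{-3}$ decay. It does not follow from the endpoint asymptotics alone. We take it as given from Proposition~\ref{prop:coreprofileproperties}, which proves it via the phase-plane barrier in Appendix~\ref{sec:JDEcoreproperties}. At the two ends it is consistent with \eqref{eq:Qorigin} and \eqref{eq:Qinfty}: $-\bar Q'\sim s/30$ near $0$ and $-\bar Q'\sim2s^{-3}$ at infinity. In between, a positive continuous function on a compact interval is bounded below.

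The one point needing care is that the $s^{-3}$ rate is essential. With a weaker decay of $-\bar Q'$, the factor $\mu^2s^{3/2}$ would not be small up to $s=r_0/\mu$.
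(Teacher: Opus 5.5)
Your proposal is correct and follows the paper's proof essentially line by line. You solve \eqref{eq:Req} for $R_\mu''$ and bound each term in two ranges: for $s\le1$ by the near-origin right-inverse estimates, and for $s\ge1$ by \eqref{eq:RZ}, \eqref{eq:Qinfty} and $\mu s\le r_0$. You then compare $\mu^2|R_\mu'|$ with $-\bar Q'$ on the same two ranges to get \eqref{eq:qmon}, exactly as the paper does. The paper states the $s\ge1$ bounds as ``every term is $O(s^{-5/2})$'' where you write $s^2\cdot(\text{term})=O((1+s)^{-1/2})$, but these are the same estimate.
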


\begin{proof}
Solving \eqref{eq:Req} for the highest derivative gives
\begin{align*}
 R_\mu''={}&-\left(\frac4s+2s\bar Q\right)R_\mu'
 -(12\bar Q+2s\bar Q')R_\mu+\frac12\Lambda\bar Q\\
 &+\frac{\mu^2}{2}\Lambda R_\mu
 -\mu^2\bigl(6R_\mu^2+s(R_\mu^2)'\bigr).
\end{align*}
For $s\ge1$, every term on the right is $O(s^{-5/2})$.  For $s\le1$, the
near-origin right-inverse estimate gives $R_\mu'=O(s)$, and the right-hand
side is uniformly bounded.  This proves \eqref{eq:Rsecond}.

For $0<s\le1$, regular expansions give
$-\bar Q'(s)\ge c_1s$ and $|R_\mu'(s)|\le Cs$, so
\[
 \frac{\mu^2|R_\mu'(s)|}{-\bar Q'(s)}\le C\mu^2.
\]
For $1\le s\le r_0/\mu$, equations \eqref{eq:Rsecond} and
\eqref{eq:Qmon} give
\begin{align*}
 \frac{\mu^2|R_\mu'(s)|}{-\bar Q'(s)}
 &\le C\mu^2s^{3/2}\\
 &\le C\mu^{1/2}r_0^{3/2}.
\end{align*}
Taking both relative errors below $1$ proves \eqref{eq:qmon}.
\end{proof}

\subsection{Inner convergence and the scaled density profile}

The coefficients of the radial spectral equation are expressed through the
density $U_\mu$ and its first two derivatives.  Define
\begin{equation}
 R_0:=\mathcal S\left(-\frac12\Lambda\bar Q\right),
 \qquad \bar P(s):=2s\bar Q(s),
 \qquad \bar U(s):=6\bar Q(s)+2s\bar Q'(s).              \label{eq:R0PUbar}
\end{equation}
The universal equation \eqref{eq:core} gives directly
\begin{align}
 \bar P'&=\bar U-\frac2s\bar P,\notag\\
 \bar U'&=8\bar Q'+2s\bar Q''\notag\\
 &=-12s\bar Q^2-4s^2\bar Q\bar Q'
 =-\bar U\bar P.                                         \label{eq:barPUsystem}
\end{align}
Thus $\bar U'=-\bar U\bar P$ and
$\bar U''=-\bar U'\bar P-\bar U\bar P'$.  For the true inner solution, set
\begin{equation}
 P_\mu(r):=2r\Phi_{{\rm in},\mu}(r),
 \qquad
 U_\mu(r):=6\Phi_{{\rm in},\mu}(r)
 +2r\Phi_{{\rm in},\mu}'(r).                            \label{eq:PUmu}
\end{equation}

\begin{proposition}[Corrected inner-scale convergence]
\label{prop:correctedinnerlimit}
For every fixed $S_0>0$,
\begin{equation}
 R_\mu\longrightarrow R_0\quad\hbox{in }C^2([0,S_0]),      \label{eq:Rlocalconv}
\end{equation}
and, with $P_\mu=2r\Phi_{{\rm in},\mu}$ and
$U_\mu=6\Phi_{{\rm in},\mu}+2r\Phi_{{\rm in},\mu}'$,
uniformly for $0\le s\le S_0$,
\begin{align}
 \mu P_\mu(\mu s)&\longrightarrow\bar P(s),\notag\\
 \mu^2U_\mu(\mu s)&\longrightarrow\bar U(s),\notag\\
 \mu^3U_\mu'(\mu s)&\longrightarrow\bar U'(s),\notag\\
 \mu^4U_\mu''(\mu s)&\longrightarrow\bar U''(s).       \label{eq:correctedinnerconv}
\end{align}
The unscaled absolute remainder does not in general tend to zero.  More
precisely, as $s\to0$,
\begin{equation}
 R_0(s)=\frac{s^2}{60}+O(s^4),
 \qquad
 6R_0(s)+2sR_0'(s)=\frac{s^2}{6}+O(s^4),                 \label{eq:R0origin}
\end{equation}
and therefore, for fixed small $s>0$,
\begin{align}
 \Phi_{{\rm in},\mu}(\mu s)-\mu^{-2}\bar Q(s)
 &\longrightarrow R_0(s),\notag\\
 U_\mu(\mu s)-\mu^{-2}\bar U(s)
 &\longrightarrow6R_0(s)+2sR_0'(s),                    \label{eq:unscaledremainder}
\end{align}
whose right-hand sides are not identically zero.
\end{proposition}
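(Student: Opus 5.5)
The plan is to pass to the limit $\mu\to0$ directly in the fixed-point equation $R_\mu=\mathcal T_\mu(R_\mu)$ of Proposition~\ref{prop:inner}. The key observation is that $\mathcal S$ is a Volterra operator whose value at $s$ only involves $f$ on $[0,s]$. Hence on the fixed interval $[0,S_0]$ we can subtract the defining identity $R_0=\mathcal S(-\tfrac12\Lambda\bar Q)$ and obtain
\[
 R_\mu-R_0=\mathcal S\Bigl[-\frac{\mu^2}{2}\Lambda R_\mu+\mu^2\bigl(6R_\mu^2+s(R_\mu^2)'\bigr)\Bigr]\quad\hbox{on }[0,S_0].
\]
The uniform bound \eqref{eq:RZ}, together with \eqref{eq:Rsecond}, shows that the bracket is $O(\mu^2)$ in $\sup_{[0,S_0]}(1+s)^{5/2}|\cdot|$, with a constant depending only on $S_0$. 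Lemma~\ref{lem:stronginverse}, applied with $S=S_0$ and with $f$ extended by zero or truncated (the inverse is causal), then gives $\|R_\mu-R_0\|_{Z_{S_0}}=O(\mu^2)$. To upgrade this to $C^2$, I would use the differential equation itself. Both $R_\mu$ and $R_0$ satisfy $HR=\text{RHS}$, and subtracting the two equations writes $R_\mu''-R_0''$ as bounded coefficients times $(R_\mu-R_0,R_\mu'-R_0')$ plus $O(\mu^2)$. Near $s=0$ the coefficient $4/s$ is dangerous, so there I would instead use the near-origin estimates \eqref{eq:near0S} for $\mathcal S$ applied to the $O(\mu^2)$ forcing, which give $O(\mu^2s^2)$ control of the value and derivative. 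Dividing by $s$ then keeps $(R_\mu-R_0)'/s$ bounded by $O(\mu^2)$. This proves \eqref{eq:Rlocalconv}.

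For \eqref{eq:correctedinnerconv}, the scaling relations are exact:
\[
 \mu P_\mu(\mu s)=2s\bigl(\bar Q+\mu^2R_\mu\bigr)(s),\qquad
 \mu^2U_\mu(\mu s)=\bar U(s)+\mu^2\bigl(6R_\mu+2sR_\mu'\bigr)(s),
\]
and the analogous formulas for $\mu^3U_\mu'$ and $\mu^4U_\mu''$ involve $R_\mu'''$. To avoid needing a third derivative of $R_\mu$, I would follow the route of Lemma~\ref{lem:l2matchinginput}. Apply the exact first-order system \eqref{eq:UPsystem} to $(U_\mu,P_\mu)$, which holds because $\Phi_{{\rm in},\mu}$ solves the profile equation exactly. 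After rescaling, the linear self-similar terms carry an extra factor $\mu^2$, while the products converge to $-\bar U\bar P=\bar U'$ by \eqref{eq:barPUsystem}. Differentiating once more gives the limit of $\mu^4U_\mu''$ in the same way, using \eqref{eq:Usecond} and $\bar U''=-\bar U'\bar P-\bar U\bar P'$.

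For \eqref{eq:R0origin}, I would compute $R_0$ near zero from \eqref{eq:S}. Since $u=\Lambda\bar Q$ and $f=-\tfrac12 u$, we have
\[
 R_0=-\tfrac12\Bigl[\rho\int_0^s u^2M-u\int_0^s u\rho M\Bigr].
\]
Insert $u=\tfrac13-\tfrac{s^2}{15}+O(s^4)$ and $M=s^4(1+O(s^2))$, and take $\rho$ from the reduction-of-order formula normalized by the Wronskian, so that $\rho\sim-\tfrac{1}{3}\cdot\tfrac{3}{s^{3}}\cdot\ldots$. A cleaner route is to match Taylor coefficients directly. Write $R_0=a s^2+O(s^4)$ (regularity and evenness come from \eqref{eq:near0S}) and plug it into $HR_0=-\tfrac12\Lambda\bar Q$ at order $s^0$. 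Since $-h''-\tfrac4sh'$ applied to $as^2$ gives $-10a$, and $-\tfrac12\Lambda\bar Q(0)=-\tfrac16$, we need $-10a-12\cdot\tfrac16\cdot 0=-\tfrac16$ up to the lower-order terms. This needs care because $12\bar Q R_0$ is $O(s^2)$, so it drops out at this order, giving $a=\tfrac1{60}$. Then $6R_0+2sR_0'=10as^2=\tfrac{s^2}{6}$.

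Finally, \eqref{eq:unscaledremainder} is immediate from the definitions, since
\[
 \Phi_{{\rm in},\mu}(\mu s)-\mu^{-2}\bar Q(s)=R_\mu(s)\longrightarrow R_0(s),
\]
and likewise $U_\mu(\mu s)-\mu^{-2}\bar U(s)=6R_\mu+2sR_\mu'\to6R_0+2sR_0'$. The expansion \eqref{eq:R0origin} shows that this limit is nonzero for small $s>0$.

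The main obstacle I expect is the $C^2$ convergence near $s=0$, where the singular coefficient $4/s$ must be handled through the Volterra representation rather than by differentiating the equation naively.
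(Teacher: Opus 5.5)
Your proposal is correct and follows essentially the same route as the paper. Both subtract $R_0=\mathcal S(-\tfrac12\Lambda\bar Q)$ from the fixed-point identity on $[0,S_0]$, use \eqref{eq:near0S} to control $D_\mu'/s=O(\mu^2)$ at the origin before solving $HD_\mu=f_\mu$ for $D_\mu''$, recover the $\mu^3U_\mu'$ and $\mu^4U_\mu''$ limits from the exact first-order system, and match the $s^0$ coefficient to get $a_2=1/60$.

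One small attribution slip: \eqref{eq:near0S} only gives $R_0=O(s^2)$ and $R_0'=O(s)$. The absence of an $s^3$ term, which the $O(s^4)$ remainder in \eqref{eq:R0origin} needs, comes instead from the standard fact that the regular solution of this radial equation with even forcing is an even power series; the paper uses this implicitly as well.
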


\begin{proof}
Fix $S_0$.  When $0<\mu\le r_0/S_0$, the solution of Proposition
\ref{prop:inner} satisfies on $[0,S_0]$
\[
 R_\mu-R_0=\mathcal S\left[
 -\frac{\mu^2}{2}\Lambda R_\mu
 +\mu^2\bigl(6R_\mu^2+s(R_\mu^2)'\bigr)\right].
\]
By \eqref{eq:RZ}, the bracket and all first-order combinations needed in the
fixed interval are $O_{S_0}(\mu^2)$.  Lemma \ref{lem:stronginverse} gives
\[
 \|R_\mu-R_0\|_{C^1([0,S_0])}\le C_{S_0}\mu^2.
\]
To check the second derivative at the origin separately, set
$D_\mu=R_\mu-R_0=\mathcal Sf_\mu$.  Then
$\|f_\mu\|_{F_{S_0}}\le C_{S_0}\mu^2$, and
\eqref{eq:near0S} yields
\[
 |D_\mu(s)|\le C\mu^2s^2,
 \qquad |D_\mu'(s)|\le C\mu^2s,
 \qquad 0\le s\le\min\{1,S_0\}.
\]
Thus $s^{-1}D_\mu'=O(\mu^2)$ and no singular term is created at $s=0$.
Solving $HD_\mu=f_\mu$ for $D_\mu''$, using these two estimates for $s\le1$
and the ordinary $C^1$ estimate for $s\ge1$, gives
$\|D_\mu''\|_{C([0,S_0])}\le C_{S_0}\mu^2$.  This proves
\eqref{eq:Rlocalconv}.

At $r=\mu s$, the definitions of $P_\mu$ and $U_\mu$ give
\begin{align*}
 P_\mu(\mu s)
 &=2\mu s\bigl(\mu^{-2}\bar Q(s)+R_\mu(s)\bigr),\\
 \Phi_{{\rm in},\mu}'(\mu s)
 &=\mu^{-3}\bar Q'(s)+\mu^{-1}R_\mu'(s),\\
 U_\mu(\mu s)
 &=\mu^{-2}\bigl(6\bar Q(s)+2s\bar Q'(s)\bigr)
 +6R_\mu(s)+2sR_\mu'(s).
\end{align*}
Therefore
\begin{align}
 \mu P_\mu(\mu s)
 &=2s\bar Q(s)+2\mu^2sR_\mu(s),                         \label{eq:Pscaledexact}\\
 \mu^2U_\mu(\mu s)
 &=\bar U(s)+\mu^2\bigl(6R_\mu(s)+2sR_\mu'(s)\bigr).   \label{eq:Uscaledexact}
\end{align}
The first two limits follow from \eqref{eq:Rlocalconv}.  For the derivatives,
use the exact first-order stationary system
\begin{equation}
 P_\mu'=U_\mu-\frac2rP_\mu,
 \qquad
 U_\mu'=\frac r2U_\mu-\frac12P_\mu-U_\mu P_\mu.        \label{eq:PUexactinner}
\end{equation}
Multiply the second equation by $\mu^3$ and set $r=\mu s$:
\begin{align*}
 \mu^3U_\mu'(\mu s)
 ={}&\frac{\mu^2s}{2}[\mu^2U_\mu(\mu s)]
 -\frac{\mu^2}{2}[\mu P_\mu(\mu s)]\\
 &-[\mu^2U_\mu(\mu s)][\mu P_\mu(\mu s)].
\end{align*}
The first two terms tend to zero and the last tends to
$-\bar U\bar P=\bar U'$.  Differentiating the second equation in
\eqref{eq:PUexactinner} gives
\[
 U_\mu''=\frac12U_\mu+\frac r2U_\mu'
 -\frac12P_\mu'-U_\mu'P_\mu-U_\mu P_\mu'.
\]
Multiply by $\mu^4$ and use the first equation together with the three
already proved limits.  The result is
$\mu^4U_\mu''(\mu s)\to-\bar U'\bar P-\bar U\bar P'=\bar U''$.

Finally, write $R_0(s)=a_2s^2+O(s^4)$.  Since
$\bar Q(0)=1/6$ and $\Lambda\bar Q(0)=1/3$, the constant coefficient in
$HR_0=-\frac12\Lambda\bar Q$ is
\[
 -\left(2a_2+4\cdot2a_2\right)=-\frac16.
\]
Thus $a_2=1/60$, and
\[
 6R_0+2sR_0'=\left(6+4\right)\frac{s^2}{60}+O(s^4)
 =\frac{s^2}{6}+O(s^4).
\]
Equations \eqref{eq:Rlocalconv}, \eqref{eq:Phiinner}, and
\eqref{eq:Uscaledexact} now prove \eqref{eq:unscaledremainder}.
\end{proof}

\subsection{The rescaled radial spectral equation}

The inner solution must eventually be substituted into the radial
eigenvalue equation.  Because the inner region extends to
$s=r_0/\mu$, a perturbation that is pointwise $O(\mu^2)$ need not remain
small after integration over the full long interval.  The next proposition
writes the rescaled equation and proves that the accumulated coefficient
error is $O(r_0^2)+o_\mu(1)$.  This is the estimate used later when the
phases of the inner and outer Cauchy vectors are compared.

The radial spectral operator is
\begin{equation}
 L_\Phi f=-f''+\left(\frac r2-\frac4r-2r\Phi\right)f'
 +(1-2r\Phi'-12\Phi)f.                                   \label{eq:Lphi}
\end{equation}
Let $(L_\Phi-\lambda)f=0$, set $r=\mu s$, and write
$h(s)=f(\mu s)$.  Here $\lambda$ is the spectral parameter and is distinct
from the inner scale $\mu$.

\begin{proposition}[The inner spectral equation]\label{prop:spectraleq}
If $\Phi$ is given by \eqref{eq:Phiinner}, then
\begin{align}
 H_{\bar Q}h
 =\mu^2\biggl[&(1-\lambda)h+\frac s2h'
 -2sR_\mu h'\notag\\
 &-(2sR_\mu'+12R_\mu)h\biggr],                           \label{eq:scaledspectral}
\end{align}
where
\[
 H_{\bar Q}h=h''+\left(\frac4s+2s\bar Q\right)h'
 +(12\bar Q+2s\bar Q')h.
\]
In particular, if $G$ is a fundamental matrix for $H_{\bar Q}h=0$ and
$(h,sh')^{\mathsf T}=G(s)a(s)$, then for $s\ge1$,
\begin{equation}
 |a'(s)|\le C\mu^2\bigl(s+s^{1/2}\bigr)|a(s)|.           \label{eq:acoefnew}
\end{equation}
Consequently, the accumulated coefficient error on
$1\le s\le r_0/\mu$ satisfies
\begin{equation}
 \int_1^{r_0/\mu}\mu^2(s+s^{1/2})\dd s
 \le C\left(r_0^2+r_0^{3/2}\mu^{1/2}\right).             \label{eq:accumulated}
\end{equation}
\end{proposition}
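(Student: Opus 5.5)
The plan is to prove the three assertions in order: the rescaled equation \eqref{eq:scaledspectral} by direct substitution, then the pointwise bound \eqref{eq:acoefnew} on the coefficient vector, then the integral bound \eqref{eq:accumulated}.

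\textbf{Step 1: the rescaled equation.} Substitute $\Phi(r)=\mu^{-2}(\bar Q+\mu^2R_\mu)(s)$ with $r=\mu s$ into $(L_\Phi-\lambda)f=0$, using $f'=\mu^{-1}h'$, $f''=\mu^{-2}h''$ and $\Phi'=\mu^{-3}(\bar Q'+\mu^2R_\mu')$, and multiply through by $-\mu^2$. The $\mu$-free terms collect exactly into $H_{\bar Q}h$. What is left is
\begin{itemize}
\item[] the drift $\frac{\mu^2 s}{2}h'$ from $\frac r2 f'$,
\item[] $\mu^2(1-\lambda)h$,
\item[] the $R_\mu$ contributions $-2\mu^2 sR_\mu h'$ and $-\mu^2(2sR_\mu'+12R_\mu)h$,
\end{itemize}
which is \eqref{eq:scaledspectral}. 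The only thing to watch is sign bookkeeping.

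\textbf{Step 2: the bound on $a'$.} Write the equation as a first-order system in $t=\log s$ for $X=(h,sh')^{\mathsf T}$: $\dot X=(A(s)+\mu^2B(s))X$, where $A$ comes from $H_{\bar Q}$ and $B$ collects the right-hand side multiplied by $s^2$. Let $G$ be the fundamental matrix of $\dot X=AX$ built from $(u,\rho)$ as in Lemma~\ref{lem:fundamental}, and set $X=Ga$. Variation of constants gives
\[
 s\,a'=\dot a=\mu^2G^{-1}BG\,a .
\]
For $s\ge1$ the proof of Lemma~\ref{lem:fundamental} shows that $G=e^{-5t/2}\widetilde X(t)$ with $\widetilde X,\widetilde X^{-1}$ uniformly bounded. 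The scalar factors therefore cancel in $G^{-1}BG$, and $\|G^{-1}BG\|\le C\|B\|$.

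The entries of $B$ are bounded as follows:
\begin{itemize}
\item[] the drift term $\frac s2h'\cdot s^2$ becomes $\frac{s^2}{2}(sh')$, of size $s^2$;
\item[] the $(1-\lambda)h$ term contributes $s^2$, with $\lambda$ fixed in a compact set;
\item[] the $R_\mu$ terms are $s^2(|sR_\mu|+|R_\mu|+|sR_\mu'|)$, which by \eqref{eq:Rsecond} are $O(s^2\cdot s^{1/2})$.
\end{itemize}
Dividing $\dot a$ by $s$ gives $|a'|\le C\mu^2(s+s^{3/2})|a|$. This is worse than the claimed $s^{1/2}$ exponent on the $R_\mu$ term.

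The main obstacle is exactly this $R_\mu$ contribution. I would sharpen the bound by using \eqref{eq:Rsecond} correctly: it gives $|sR_\mu|\le Cs\cdot s^{-1/2}=Cs^{1/2}$ and $|R_\mu|+|sR_\mu'|\le Cs^{-1/2}$. After the factor $s^2$ and division by $s$, these terms contribute $\mu^2(s^{3/2}+s^{1/2})$. So the $R_\mu$ part is $O(s^{1/2})$ relative to the drift part $O(s)$, except for the $sR_\mu h'$ term, which is exactly where the discrepancy sits.

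To close the gap I would check whether $-2sR_\mu h'$ can be paired against the drift $\frac s2 h'$, giving a combined coefficient $s(\frac12-2R_\mu)=O(s)$ since $R_\mu$ is bounded. That would yield \eqref{eq:acoefnew} with the pure $s$ growth, and the $s^{1/2}$ from $|R_\mu|+|sR_\mu'|$. If that pairing fails, I would accept a bound of the form $\mu^2(s+s^{1/2})$ plus a correction and recheck the integral.

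\textbf{Step 3: the accumulated error.} Integrate over $[1,r_0/\mu]$:
\[
 \int_1^{r_0/\mu}\mu^2 s\,\dd s\le \tfrac12 r_0^2,
 \qquad
 \int_1^{r_0/\mu}\mu^2 s^{1/2}\,\dd s\le \mu^2(r_0/\mu)^{3/2}=r_0^{3/2}\mu^{1/2}.
\]
This is \eqref{eq:accumulated}.
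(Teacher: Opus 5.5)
Your argument is the paper's: substitute to get \eqref{eq:scaledspectral}, write the first-order system for $(h,sh')^{\mathsf T}$, apply variation of constants using $\|G(s)\|\,\|G(s)^{-1}\|\le C$ from Lemma~\ref{lem:fundamental} together with $|R_\mu|+|sR_\mu'|\le Cs^{-1/2}$, and then integrate. The ``discrepancy'' you flag is only a miscount and needs no pairing: $s^2\cdot(-2sR_\mu h')=-2s^2R_\mu\,(sh')$, so that entry of $B$ is $O(s^{3/2})$ like the other $R_\mu$ entries; after dividing by $s$, all profile-correction terms give the $C\mu^2s^{1/2}$ part of \eqref{eq:acoefnew}, while the drift and $(1-\lambda)h$ terms give the $C\mu^2 s$ part.
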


\begin{proof}
Starting from $(L_\Phi-\lambda)f=0$, multiply by $-1$ and use
\[
 f'=\mu^{-1}h',\quad f''=\mu^{-2}h'',\quad
 \Phi=\mu^{-2}(\bar Q+\mu^2R_\mu).
\]
Combining the terms one by one gives \eqref{eq:scaledspectral}.

Put $X=(h,sh')^{\mathsf T}$.  When \eqref{eq:scaledspectral} is written as a
first-order system, the right-hand side enters only the second component and
acquires one additional factor $s$.  The universal fundamental matrix obeys
\[
 \|G(s)\|\le Cs^{-5/2},
 \qquad \|G(s)^{-1}\|\le Cs^{5/2}.
\]
By \eqref{eq:RZ},
\[
 |R_\mu|+|sR_\mu'|\le Cs^{-1/2},\qquad s\ge1.
\]
The spectral-parameter terms therefore contribute $C\mu^2s|a|$, and the
profile-correction terms contribute $C\mu^2s^{1/2}|a|$.  This proves
\eqref{eq:acoefnew}.  Finally,
\begin{align*}
 \int_1^{r_0/\mu}\mu^2s\dd s&\le\frac12r_0^2,\\
 \int_1^{r_0/\mu}\mu^2s^{1/2}\dd s
 &\le\frac23r_0^{3/2}\mu^{1/2},
\end{align*}
which is \eqref{eq:accumulated}.
\end{proof}

\subsection{Interior--exterior matching}

The inner solution contains the scale $\mu$, while the outer solution
contains the amplitude $\varepsilon$.  At the fixed section $r=r_0$, equality
of the function values first determines $\varepsilon=\varepsilon(\mu)$.
Equality of the derivatives then becomes a scalar equation for $\mu$.  The
logarithmic oscillation of the universal inner solution at infinity makes
this scalar mismatch change sign in every sufficiently late phase interval,
and hence produces a sequence of matching scales.

Equality of the function values is obtained by comparing the inner formula
\eqref{eq:Phiinner} and the outer formula \eqref{eq:outerform} at $r=r_0$;
it determines $\varepsilon=\varepsilon(\mu)$.  Differentiating both formulas
with respect to $r$ and eliminating $\varepsilon$ gives the derivative
mismatch \eqref{eq:Dexact}.  Finally, substitution of the oscillatory
asymptotics \eqref{eq:Aasy}--\eqref{eq:Aprimeasy} yields the sign-changing
formula \eqref{eq:Dexpansion}.

The outer profile has the form
\begin{equation}
 \Phi_{{\rm out},\varepsilon}
 =\Phi_*+\varepsilon(u_1+w_\varepsilon),                  \label{eq:outerform}
\end{equation}
with
$\|w_\varepsilon\|_{X_{r_0}}\le C|\varepsilon|r_0^{-1/2}$
and
$\|\partial_\varepsilon w_\varepsilon\|_{X_{r_0}}\le Cr_0^{-1/2}$, that is,
\begin{equation}
 \|w_\varepsilon\|_{X_{r_0}}
 \le C|\varepsilon|r_0^{-1/2},
 \qquad
 \|\partial_\varepsilon w_\varepsilon\|_{X_{r_0}}
 \le Cr_0^{-1/2}.                                        \label{eq:outerw}
\end{equation}
The second estimate follows by differentiating the outer fixed-point
equation.  If $w_\varepsilon=\varepsilon\mathcal T(w_\varepsilon)$, then
\begin{equation}
 (I-\varepsilon D\mathcal T(w_\varepsilon))
 \partial_\varepsilon w_\varepsilon
 =\mathcal T(w_\varepsilon).                             \label{eq:outerderivative}
\end{equation}
The contraction estimate gives
$\|\varepsilon D\mathcal T\|\le1/2$, and hence the second bound in
\eqref{eq:outerw}.

By the definition of $X_{r_0}$, at the matching point
\begin{equation}
 |w_\varepsilon(r_0)|\le C|\varepsilon|r_0^{-3},
 \qquad
 |w_\varepsilon'(r_0)|\le C|\varepsilon|r_0^{-4}.        \label{eq:wpoint}
\end{equation}
Choose $r_0$ sufficiently small that
\begin{equation}
 |u_1(r_0)|\ge c r_0^{-5/2},
 \qquad |u_1'(r_0)|\le Cr_0^{-7/2}.                      \label{eq:u1point}
\end{equation}
Such an $r_0$ can be selected from the oscillatory expansion of $u_1$ at the
origin.

Finally, set
\begin{equation}
 A(s)=\bar Q(s)-s^{-2}.                                   \label{eq:Adef}
\end{equation}
The exact oscillatory asymptotics of the universal solution give constants
$c_Q\ne0$, $\delta_Q\in\R$, and $\omega=\sqrt7/2$ such that
\begin{align}
 A(s)&=c_Qs^{-5/2}\sin(\omega\log s+\delta_Q)+O(s^{-3}),
                                                               \label{eq:Aasy}\\
 A'(s)&=c_Qs^{-7/2}
 \left[-\frac52\sin(\omega\log s+\delta_Q)
 +\omega\cos(\omega\log s+\delta_Q)\right]+O(s^{-4}). \label{eq:Aprimeasy}
\end{align}

\begin{proposition}[Existence of matching scales]\label{prop:matchingroots}
Fix a sufficiently small $r_0$ satisfying \eqref{eq:u1point}.  There are
constants $\gamma_0\in\R$ and $a_0\ne0$ such that the derivative mismatch
has the expansion
\begin{equation}
 \mathfrak D(\mu)
 =\mu^{1/2}a_0r_0^{-7/2}
 \left[\sin(-\omega\log\mu+\gamma_0)+E(\mu,r_0)\right], \label{eq:Dexpansion}
\end{equation}
where
\begin{equation}
 |E(\mu,r_0)|\le Cr_0^2+C\left(\frac\mu{r_0}\right)^{1/2}. \label{eq:Ebound}
\end{equation}
Choose first $r_0$ sufficiently small and then fix
$0<\delta<\pi/4$ so that $Cr_0^2<\frac14\sin\delta$.  For every sufficiently
large integer $k$, there is at least one $\mu_k$ in the phase interval
\begin{equation}
 k\pi-\delta<-\omega\log\mu+\gamma_0<k\pi+\delta       \label{eq:phaseinterval}
\end{equation}
for which the inner and outer solutions match in both value and first
derivative at $r=r_0$.
\end{proposition}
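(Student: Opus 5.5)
The plan is to carry out the value matching first, then reduce the derivative matching to a scalar equation in $\mu$ whose leading part is an explicit oscillation in $\log\mu$, and finally locate roots by the intermediate value theorem. Throughout I take $r_0$ fixed and small, and let $\mu\to0$.

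\emph{Step 1: value matching.} At $r=r_0$, put $s_0=r_0/\mu$. By \eqref{eq:Phiinner} and \eqref{eq:Adef} the inner value is
\[
 \Phi_{{\rm in},\mu}(r_0)=r_0^{-2}+\mu^{-2}A(s_0)+R_\mu(s_0).
\]
By \eqref{eq:outerform} the outer value is
\[
 r_0^{-2}+\varepsilon\bigl(u_1(r_0)+w_\varepsilon(r_0)\bigr).
\]
Equating them gives
\[
 \varepsilon\bigl(u_1(r_0)+w_\varepsilon(r_0)\bigr)=\mu^{-2}A(s_0)+R_\mu(s_0).
\]
\eqref{eq:Aasy} makes the right side $O(\mu^{1/2}r_0^{-5/2})$, and \eqref{eq:RZ} makes $R_\mu(s_0)=O((\mu/r_0)^{1/2})$. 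Using \eqref{eq:u1point}, \eqref{eq:wpoint} and \eqref{eq:outerw}, the map $\varepsilon\mapsto\varepsilon(u_1(r_0)+w_\varepsilon(r_0))$ has derivative $u_1(r_0)(1+O(r_0^{-1/2}\cdot r_0^{-1/2}\cdots))$, which is nonvanishing for small $\varepsilon$. The implicit function theorem, or a contraction in $\varepsilon$, then gives a unique continuous $\varepsilon(\mu)=O(\mu^{1/2})$. The only care needed is that $w$'s smallness involves $|\varepsilon|r_0^{-1/2}$, so $\mu$ must be small relative to a power of $r_0$.

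\emph{Step 2: derivative mismatch.} I define
\[
 \mathfrak D(\mu)=\Phi_{{\rm in},\mu}'(r_0)-\Phi_{\rm out,\varepsilon(\mu)}'(r_0).
\]
Its inner part is $-2r_0^{-3}+\mu^{-3}A'(s_0)+\mu^{-1}R_\mu'(s_0)$, and its outer part is $-2r_0^{-3}+\varepsilon(u_1'+w'_\varepsilon)(r_0)$. Substituting $\varepsilon(\mu)$ from Step 1 and eliminating the $r_0^{-2}$ and $-2r_0^{-3}$ terms gives
\begin{equation}
 \mathfrak D(\mu)=\mu^{-3}A'(s_0)-\frac{u_1'(r_0)}{u_1(r_0)}\mu^{-2}A(s_0)+\text{errors}. \label{eq:Dexact}
\end{equation}
Inserting \eqref{eq:Aasy}--\eqref{eq:Aprimeasy} with $\log s_0=\log r_0-\log\mu$, both main terms are $\mu^{1/2}r_0^{-7/2}$ times linear combinations of $\sin$ and $\cos$ of $\omega\log s_0+\delta_Q$. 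Here $u_1'/u_1$ contributes $r_0^{-1}\kappa$ with $\kappa=O(1)$, a fixed constant once $r_0$ is fixed by \eqref{eq:u1point}. Combining them into a single sinusoid defines $a_0$ and $\gamma_0$, with $\gamma_0$ absorbing $\omega\log r_0+\delta_Q$ and the phase shift. This gives \eqref{eq:Dexpansion} with $E$ collecting the remaining terms.

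I also need $a_0\neq0$. For that it suffices that the vector $(-\tfrac52-\kappa,\ \omega)$ is nonzero, which holds since $\omega\neq0$.

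\emph{Step 3: error bound.} The $O(s^{-3})$ and $O(s^{-4})$ remainders in \eqref{eq:Aasy}--\eqref{eq:Aprimeasy}, relative to the main size $s_0^{-5/2}$, give $(\mu/r_0)^{1/2}$. The terms $\mu^{-1}R_\mu'(s_0)$ and $R_\mu(s_0)$ are $O((\mu/r_0)^{1/2}\mu^{-1}r_0^{-1}\cdots)$. This is the delicate point: $R_\mu'(s_0)s_0=O(s_0^{-1/2})$ gives $\mu^{-1}R_\mu'(s_0)=O(\mu^{1/2}r_0^{-3/2})$, i.e.\ a relative size $r_0^2$ against $\mu^{1/2}r_0^{-7/2}$, which is exactly the $Cr_0^2$ in \eqref{eq:Ebound}. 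The $w_\varepsilon$ contributions are $O(\varepsilon^2 r_0^{-4.5})=O(\mu r_0^{-\dots})$, which is absorbed into the $(\mu/r_0)^{1/2}$ term for $\mu$ small.

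\emph{Step 4: roots.} At the endpoints of \eqref{eq:phaseinterval} the sine equals $\pm(-1)^k\sin\delta$, while $|E|<\tfrac12\sin\delta$ once $Cr_0^2<\tfrac14\sin\delta$ and $k$ is large, so that $\mu$ is small. Hence $\mathfrak D$ changes sign on the interval. $\mathfrak D$ is continuous in $\mu$, by continuity of the fixed points $R_\mu$ and $w_\varepsilon$ in their parameters, so it has a zero $\mu_k$ there. At that zero both value and derivative match, and ODE uniqueness glues the two solutions into a global smooth profile.

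The main obstacle is Step 3. I expect to need the precise weighted bound on $R_\mu'$ at $s_0$ to obtain a relative error of order $r_0^2$ rather than something $\mu$-dependent that blows up. I also need the continuity of $\mu\mapsto R_\mu(s_0)$ on the moving endpoint $s_0=r_0/\mu$; I would handle this by noting that the contraction $\mathcal T_\mu$ depends continuously on $\mu$ after rescaling to $[0,r_0]$.
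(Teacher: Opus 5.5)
Your proposal is correct and follows essentially the same route as the paper: value matching by the implicit function theorem with $\varepsilon(\mu)=O(\mu^{1/2})$; the derivative mismatch, whose leading part $\mu^{-3}A'(r_0/\mu)-\frac{u_1'(r_0)}{u_1(r_0)}\mu^{-2}A(r_0/\mu)$ is a sinusoid of amplitude at least $\omega|c_Q|\mu^{1/2}r_0^{-7/2}$ uniformly in $r_0$ because the cosine coefficient $\omega$ does not depend on $r_0$; the $R_\mu$ terms, which give the relative $Cr_0^2$ error; and the intermediate value theorem on each phase interval. One small bookkeeping slip: the $w_\varepsilon$ contribution is $\bigl|\mu^{-2}A(r_0/\mu)+R_\mu(r_0/\mu)\bigr|\cdot O(\mu^{1/2}r_0^{-3/2})=O(\mu r_0^{-4})$, not $O(\varepsilon^2r_0^{-9/2})$, and this is what gives the relative error $C(\mu/r_0)^{1/2}$ with $C$ independent of $r_0$ (although for locating the roots at fixed $r_0$, any $o(1)$ as $\mu\to0$ would already suffice).
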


\begin{proof}
First match the function values.  Put $S=r_0/\mu$ and define
\begin{align}
 I_\mu
 &=\Phi_{{\rm in},\mu}(r_0)-\Phi_*(r_0)\notag\\
 &=\mu^{-2}A(S)+R_\mu(S).                                \label{eq:Ivalue}
\end{align}
By \eqref{eq:Aasy} and \eqref{eq:RZ},
\begin{align*}
 |\mu^{-2}A(S)|
 &\le C\mu^{1/2}r_0^{-5/2}+C\mu r_0^{-3},\\
 |R_\mu(S)|&\le CS^{-1/2}=C\mu^{1/2}r_0^{-1/2}.
\end{align*}
Since $\mu\le r_0$,
\begin{equation}
 |I_\mu|\le C\mu^{1/2}r_0^{-5/2}.                       \label{eq:Ibound}
\end{equation}
The value matching equation is
\begin{equation}
 \varepsilon\bigl[u_1(r_0)+w_\varepsilon(r_0)\bigr]=I_\mu. \label{eq:valuematch}
\end{equation}
Equations \eqref{eq:wpoint}, \eqref{eq:u1point}, and \eqref{eq:Ibound}
allow the uniform implicit-function theorem to be applied.  It gives a
unique small solution $\varepsilon=\varepsilon(\mu)$ satisfying
\begin{equation}
 |\varepsilon(\mu)|\le C\mu^{1/2}.                       \label{eq:epsilonbound}
\end{equation}
The outer fixed point and inner ordinary differential equation depend
continuously on the parameters, so $\varepsilon(\mu)$ is continuous.

After value matching, the remaining derivative mismatch can be written,
using \eqref{eq:valuematch}, as
\begin{align}
 \mathfrak D(\mu)
 :={}&\Phi_{{\rm out},\varepsilon(\mu)}'(r_0)
 -\Phi_{{\rm in},\mu}'(r_0)\notag\\
 ={}&I_\mu
 \frac{u_1'(r_0)+w_{\varepsilon(\mu)}'(r_0)}
 {u_1(r_0)+w_{\varepsilon(\mu)}(r_0)}
 -\mu^{-3}A'(S)-\mu^{-1}R_\mu'(S).                      \label{eq:Dexact}
\end{align}
Both the $w_\varepsilon$ and $R_\mu$ contributions are kept in this exact
formula.

We now compute the leading oscillatory term.  Set
\[
 \kappa_0=\frac{u_1'(r_0)}{u_1(r_0)},
 \qquad \alpha=\omega\log S+\delta_Q.
\]
Keeping only the leading terms of $A$ and $A'$ in \eqref{eq:Dexact} gives
\begin{align}
 \mathfrak D_0(\mu)
 ={}&\mu^{1/2}c_Qr_0^{-7/2}\notag\\
 &\times\left[
 \left(r_0\kappa_0+\frac52\right)\sin\alpha
 -\omega\cos\alpha\right].                             \label{eq:D0}
\end{align}
The expression in brackets is a nondegenerate sinusoid because the cosine
coefficient $-\omega$ is nonzero.  Hence there are $B_0\ge\omega$ and a phase
$\theta_0$ such that
\[
 \left(r_0\kappa_0+\frac52\right)\sin\alpha
 -\omega\cos\alpha=B_0\sin(\alpha-\theta_0).
\]
Moreover,
\[
 \alpha-\theta_0=-\omega\log\mu+
 (\omega\log r_0+\delta_Q-\theta_0).
\]
Thus in \eqref{eq:Dexpansion} one may take
\begin{equation}
 a_0=c_QB_0,
 \qquad \gamma_0=\omega\log r_0+\delta_Q-\theta_0.      \label{eq:constants}
\end{equation}

We estimate the remaining terms.  The remainders in
\eqref{eq:Aasy}--\eqref{eq:Aprimeasy} contribute at most
\begin{equation}
 C\mu r_0^{-4}.                                           \label{eq:coreerror}
\end{equation}
By \eqref{eq:RZ},
\begin{equation}
 |R_\mu(S)\kappa_0|+|\mu^{-1}R_\mu'(S)|
 \le C\mu^{1/2}r_0^{-3/2}.                              \label{eq:Rerror}
\end{equation}
Finally, \eqref{eq:wpoint}, \eqref{eq:u1point}, and
\eqref{eq:epsilonbound} imply
\begin{equation}
 \left|\frac{u_1'+w_\varepsilon'}{u_1+w_\varepsilon}
 -\frac{u_1'}{u_1}\right|_{r=r_0}
 \le C\mu^{1/2}r_0^{-3/2}.                              \label{eq:quotienterror}
\end{equation}
After multiplication by $|I_\mu|$, this is at most $C\mu r_0^{-4}$.
The absolute amplitude of \eqref{eq:D0} is at least
$c\mu^{1/2}r_0^{-7/2}$.  Dividing
\eqref{eq:coreerror}--\eqref{eq:quotienterror} by this amplitude gives,
respectively,
\[
 C\left(\frac\mu{r_0}\right)^{1/2},
 \qquad Cr_0^2,
 \qquad C\left(\frac\mu{r_0}\right)^{1/2}.
\]
This proves \eqref{eq:Ebound}.

For the sign change, define $\mu_{k,-}$ and $\mu_{k,+}$ by
\[
 -\omega\log\mu_{k,\pm}+\gamma_0=k\pi\pm\delta.
\]
Both endpoints tend to zero as $k\to\infty$.  By \eqref{eq:Ebound}, for all
sufficiently large $k$,
$|E(\mu_{k,\pm},r_0)|<\frac12\sin\delta$.  Thus
$\mathfrak D(\mu_{k,-})$ and $\mathfrak D(\mu_{k,+})$ have opposite signs.
Continuity and the intermediate value theorem give a zero $\mu_k$ in the
interval.  Equation \eqref{eq:valuematch} already gives value matching there,
and $\mathfrak D(\mu_k)=0$ gives derivative matching.
\end{proof}

\begin{remark}[Quantification of the matching scales]
Equation \eqref{eq:phaseinterval} alone rigorously implies only
\[
 e^{-(\pi+2\delta)/\omega}
 <\frac{\mu_{k+1}}{\mu_k}
 <e^{-(\pi-2\delta)/\omega}
\]
when $\mu_k$ is indexed in decreasing order.  To prove
$\mu_{k+1}/\mu_k\to e^{-\pi/\omega}=e^{-2\pi/\sqrt7}$ one would additionally
have to show that $E(\mu,r_0)$ converges, as $\mu\to0$, to a limit that can be
absorbed into the phase constant, or prove a sharper matching-transfer
theorem directly.  The bound \eqref{eq:Ebound} alone does not imply that
limit.
\end{remark}

\begin{proposition}[Two-scale coefficient bounds for the matched family]
\label{prop:correctedtwoscale}
Choose one matching root $\mu_n\to0$ in every sufficiently late phase
interval of Proposition \ref{prop:matchingroots}, and denote the resulting
global profile by $\Phi_n$.  Define
\begin{equation}
 P_n=2r\Phi_n,
 \qquad U_n=6\Phi_n+2r\Phi_n'.                            \label{eq:PnUnmatched}
\end{equation}
For every fixed $S_0>0$, uniformly on $0\le s\le S_0$,
\begin{equation}
 \mu_nP_n(\mu_ns)\to\bar P(s),\quad
 \mu_n^2U_n(\mu_ns)\to\bar U(s),\quad
 \mu_n^3U_n'(\mu_ns)\to\bar U'(s),\quad
 \mu_n^4U_n''(\mu_ns)\to\bar U''(s).                   \label{eq:matchedinnerconv}
\end{equation}
Moreover,
\begin{equation}
 \lim_{R\to\infty}\limsup_{n\to\infty}
 \sup_{r\ge R\mu_n}
 \left(|rP_n-2|+|r^2U_n-2|+|r^3U_n'+4|+|r^4U_n''-12|\right)=0.
                                                               \label{eq:matchedouterconv}
\end{equation}
These four dimensionless coefficients also have a global bound independent
of all sufficiently large $n$.
\end{proposition}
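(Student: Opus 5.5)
The plan is to prove the two-scale statements separately on the inner region $r\le r_0$ and the outer region $r\ge r_0$, using the matched representation \eqref{eq:matching}.

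\emph{Inner limits.} Since $\mu_n\le r_0/S_0$ for large $n$, on $0\le s\le S_0$ the profile $\Phi_n$ coincides with $\Phi_{{\rm in},\mu_n}$. So \eqref{eq:matchedinnerconv} is exactly Proposition~\ref{prop:correctedinnerlimit} with $\mu=\mu_n$. Its uniformity in $\mu$ rests on $\|R_\mu-R_0\|_{C^2([0,S_0])}\le C_{S_0}\mu^2$.

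\emph{Outer statement.} Split $r\ge R\mu_n$ into three pieces.

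\emph{(a) Long inner range $R\mu_n\le r\le r_0$.} With $s=r/\mu_n\in[R,S_n]$, the exact identities \eqref{eq:Pscaledexact}--\eqref{eq:Uscaledexact} give
\[
 rP_n=2s^2\bar Q(s)+2\mu_n^2s^2R_n(s),\qquad r^2U_n=s^2\bar U(s)+\mu_n^2s^2\bigl(6R_n+2sR_n'\bigr).
\]
By \eqref{eq:Qinfty}, $s^2\bar Q=1+O(s^{-1/2})$ and $s^2\bar U=2+O(s^{-1/2})$; the $s^{-1/2}$ error is at most $R^{-1/2}$. By \eqref{eq:Rnormglobal}, the correction terms are bounded by
\[
 C\mu_n^2s^{3/2}\le C\mu_n^{1/2}r_0^{3/2},
\]
which tends to $0$ as $n\to\infty$.

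For $r^3U_n'$ and $r^4U_n''$ I would avoid differentiating the remainder further. Instead, multiply the exact system \eqref{eq:UPsystem} and \eqref{eq:Usecond} by $r^3$ and $r^4$ respectively:
\[
 r^3U_n'=\tfrac{r^2}{2}\,(r^2U_n)-\tfrac{r^2}{2}\,(rP_n)-(r^2U_n)(rP_n).
\]
On this range $r^2\le r_0^2$. So $r^3U_n'=-4+o(1)+O(r_0^2)$, and similarly $r^4U_n''\to12$ using the algebraic relation $12=-(-4)\cdot2-2\cdot(2-2\cdot2)\ldots$. Concretely, plug the limits $p=h=2$, $r^3U'=-4$ and $r^2P'=h-2p=-2$ into the differentiated formula.

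\emph{(b) Fixed outer range $r_0\le r\le1$, and (c) tail $r\ge1$.} Here $\Phi_n=r^{-2}+\varepsilon_n(u_1+w_n)$ with $\varepsilon_n\le C\mu_n^{1/2}\to0$ by \eqref{eq:epsilonbound}. The weighted $X_{r_0}$ bounds \eqref{eq:Xnormglobal}, together with the decay of $u_1$ (of order $r^{-2}$ with derivatives), give
\[
 r^2|\Phi_n-r^{-2}|+r^3|\Phi_n'+2r^{-3}|\le C\varepsilon_n .
\]
Hence $rP_n\to2$ and $r^2U_n\to2$ uniformly. The derivatives again come from the algebraic system above, with no further regularity needed. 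Adding the limits $R\to\infty$, $n\to\infty$ and the $O(r_0^2)$ defect gives \eqref{eq:matchedouterconv}.

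\emph{The $O(r_0^2)$ defect — the main obstacle.} In (a) the terms $r^2(r^2U_n)/2$ are $O(r_0^2)$ rather than vanishing, and for fixed $r_0$ these do not disappear in the double limit. I would resolve this by not treating those terms perturbatively. On $[R\mu_n,\infty)$ the self-similar terms $\tfrac r2U_n$ are genuinely present in the limiting outer profile $U_*=2r^{-2}$. Indeed $U_*$ satisfies the full system: $-U_*'+\tfrac r2U_*-\tfrac12P_*-U_*P_*=4r^{-3}+r^{-1}-r^{-1}-4r^{-3}=0$. So one compares directly with $(U_*,P_*)$. The quantities $r^3U_n'+4$ and $r^4U_n''-12$ are then bilinear combinations of $r^2U_n-2$ and $rP_n-2$ with coefficients bounded by $C(1+r^2)$. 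On $r\le1$ these are bounded; on $r\ge1$ one uses that the errors there are $O(\varepsilon_n r^{-2})$, so the $r^2$ factors are absorbed.

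\emph{Global bound.} A uniform bound for the four dimensionless coefficients follows by combining the inner and outer pieces.
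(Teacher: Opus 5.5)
Your inner limits, the overlap range $R\mu_n\le r\le r_0$, and the range $[r_0,1]$ follow the paper's proof. The ingredients are the same: Proposition~\ref{prop:correctedinnerlimit}, the bound $Cs^{-1/2}+C\mu_n^2s^{3/2}$, and the exact identities
\[
 r^3U_n'=\tfrac{r^2}{2}(h_n-p_n)-h_np_n,\qquad
 r^4U_n''=\tfrac{r^2}{2}\bigl(r^3U_n'+2p_n\bigr)-r^3U_n'\,p_n-h_n^2+2h_np_n,
\]
with $p_n=rP_n$ and $h_n=r^2U_n$. The ``$O(r_0^2)$ defect'' is not real. The $r^2$-weighted terms enter only through the combinations $h_n-p_n$ and $r^3U_n'+2p_n$, both of which tend to $0$. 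Since $r^2\le1$ on these ranges, the limits there are immediate, which is the one-line step in the paper.

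The genuine gap is the tail $r\ge1$. Your claim that the errors there are $O(\varepsilon_nr^{-2})$ is false. Because $u_1=r^{-2}-2r^{-4}+O(r^{-6})$ decays at the same rate as $\Phi_*=r^{-2}$, one has $rP_n-2=2\varepsilon_nr^2(u_1+w_n)=2\varepsilon_n+O(\varepsilon_nr^{-2})$, and likewise $r^2U_n-2=2\varepsilon_n+O(\varepsilon_nr^{-2})$. Lemma~\ref{lem:adjointprofiletail} confirms this: the leading coefficient of $\Phi_n$ at infinity is $1+O(\varepsilon_n)$, not $1$. Your own bound $r^2|\Phi_n-r^{-2}|\le C\varepsilon_n$ is the sharp one.

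With only $O(\varepsilon_n)$ errors, the identities above bound $\tfrac{r^2}{2}(h_n-p_n)$ and $\tfrac{r^2}{2}(r^3U_n'+2p_n)$ merely by $C\varepsilon_nr^2$, which is unbounded on $[1,\infty)$. So neither the convergence of $r^3U_n'+4$ and $r^4U_n''-12$ nor the global bound follows. Your remark ``no further regularity needed'' is exactly where the argument breaks.

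What is missing is a cancellation that the crude decay of $u_1$ does not see. Note that $h_n-p_n=2r^2\Lambda\Phi_n=2\varepsilon_nr^2\Lambda(u_1+w_n)$ and $\Lambda(r^{-2})=0$. Using $\Lambda u_1=4r^{-4}+O(r^{-6})$ and $|\Lambda w_n|\le C\varepsilon_nr^{-4}$, this gives $\tfrac{r^2}{2}(h_n-p_n)=O(\varepsilon_n)$. For $r^3U_n'+2p_n$ the analogous cancellation appears only at the next order of the expansion, equivalently after using the relation the profile equation imposes on the $r^{-4}$ coefficient.

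The cleanest repair is to abandon the algebraic route in the tail and differentiate the outer representation directly:
\[
 r^3U_n'+4=\varepsilon_n\bigl[8r^3(u_1+w_n)'+2r^4(u_1+w_n)''\bigr],\qquad
 r^4U_n''-12=\varepsilon_n\bigl[10r^4(u_1+w_n)''+2r^5(u_1+w_n)'''\bigr].
\]
Both right-hand sides are $O(\varepsilon_n)$ uniformly on $r\ge r_0$ once you have $|\partial_r^ku_1|\le Cr^{-2-k}$, which comes from the differentiable expansion of $u_1$. You also need the corresponding weighted bounds on $\partial_r^kw_n$ for $k\le3$, recovered from the outer equation. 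This is precisely the role played by ``the expansion of $u_1$ at infinity'' in the paper's outer step.
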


\begin{proof}
For fixed $S_0$, one has $\mu_nS_0<r_0$ for all sufficiently large $n$, so
\eqref{eq:matchedinnerconv} is exactly the convergence
\eqref{eq:correctedinnerconv} from Proposition
\ref{prop:correctedinnerlimit}.

We next treat the overlap.  Put $s=r/\mu_n$.  From
$\bar Q=s^{-2}+O(s^{-5/2})$, the corresponding first two derivative
asymptotics, and \eqref{eq:Rsecond}, for
$1\le s\le r_0/\mu_n$,
\begin{equation}
 |rP_n(r)-2|+|r^2U_n(r)-2|
 \le Cs^{-1/2}+C\mu_n^2s^{3/2}.                         \label{eq:overlapph}
\end{equation}
Indeed, the first term uses
\[
 rP_n(r)=2s^2\bar Q(s)+2\mu_n^2s^2R_{\mu_n}(s),
\]
and the second follows from \eqref{eq:correctedinnerconv} after multiplication
by $s^2$.  If $R\le s\le r_0/\mu_n$, then
\[
 \mu_n^2s^{3/2}\le\mu_n^{1/2}r_0^{3/2}.
\]
Thus the right-hand side of \eqref{eq:overlapph} tends to zero by first
letting $n\to\infty$ and then $R\to\infty$.

The remaining two derivatives do not require a third-derivative estimate on
$R_{\mu_n}$.  Set
\[
 p_n=rP_n,\qquad h_n=r^2U_n,\qquad a_n=r^3U_n'.
\]
The exact stationary system gives
\begin{align}
 a_n&=\frac{r^2}{2}(h_n-p_n)-h_np_n,                     \label{eq:adimmatched}\\
 r^4U_n''
 &=\frac{r^2}{2}a_n+r^2p_n-a_np_n-h_n^2+2h_np_n.        \label{eq:bdimmatched}
\end{align}
Equation \eqref{eq:overlapph} first yields $(p_n,h_n)\to(2,2)$.
Substitution successively in \eqref{eq:adimmatched} and
\eqref{eq:bdimmatched} gives $a_n\to-4$ and $r^4U_n''\to12$.

In the genuine outer region $r\ge r_0$,
\[
 \Phi_n=r^{-2}+\varepsilon_n(u_1+w_{\varepsilon_n}),
 \qquad \varepsilon_n\to0.
\]
The weighted $X_{r_0}$ estimate, the expansion of $u_1$ at infinity, and the
exact stationary system imply
\[
 \sup_{r\ge r_0}
 \left(|rP_n-2|+|r^2U_n-2|+|r^3U_n'+4|+|r^4U_n''-12|\right)\to0.
\]
Combining this with the overlap estimate proves
\eqref{eq:matchedouterconv}.  Finally split the half-line into
$r\le R\mu_n$ and $r\ge R\mu_n$.  Fixed inner convergence controls the first
part, and the overlap and outer bounds control the second, giving the uniform
global bound.
\end{proof}

\subsection{Adapted Cauchy coordinates and the radial spectral gap}

We return to the radial eigenvalue equation
$(L_{\Phi_k}-\lambda)y=0$.  The goal is to compare, at $r=r_0$, the
one-dimensional Cauchy subspaces generated by the solution regular at the
origin and by the solution square integrable at infinity.  The ordinary
Euclidean angle does not read the logarithmic oscillation directly.  We first
replace the Cauchy coordinates by \eqref{eq:adaptedphase}, and then compute
the derivatives with respect to $\lambda$ of the outer and inner phases.

Indeed, if $y=r^{-5/2}\cos t$, then
\[
 (y,ry')=r^{-5/2}
 \left(\cos t,-\frac52\cos t-\omega\sin t\right),
\]
whose Euclidean angle is not an affine function of $t$.  Introduce the fixed
invertible transformation
\begin{equation}
 \mathcal C[y](r)=
 \left(y(r),\frac{ry'(r)+\frac52y(r)}{\omega}\right),
 \qquad
 \vartheta[y](r)=\arg\left(
 y+i\frac{ry'+\frac52y}{\omega}\right).                 \label{eq:adaptedphase}
\end{equation}
Its determinant is $1/\omega>0$.  Thus two Cauchy vectors are collinear if
and only if their adapted phases agree modulo $\pi$.

\subsubsection{The limiting outer phase}

Starting from the limiting equation
$(L_\infty-\lambda)\psi=0$, the change of variables $x=r^2/4$ reduces it to
Kummer's equation.  The connection formula gives the small-$r$ expansion
\eqref{eq:Kummersmall}.  Its Gamma coefficient determines
$\Theta(\lambda)$, from which we compute $\Theta'$ and the total phase
increment.

The limiting outer operator is
\begin{equation}
 L_\infty=-\partial_{rr}
 +\left(\frac r2-\frac6r\right)\partial_r
 +1-\frac8{r^2}.                                         \label{eq:Linfnew}
\end{equation}
Let $\psi_\lambda$ be the real solution of
$(L_\infty-\lambda)\psi=0$ that is square integrable at infinity.  The
Kummer connection formula gives
\begin{equation}
 \psi_\lambda(r)=r^{-5/2}\left[
 \cos\bigl(\omega\log r-\Theta(\lambda)\bigr)
 +r^2E_\lambda(r)\right],                               \label{eq:Kummersmall}
\end{equation}
where, uniformly for $0<r\le r_1$ and $-1\le\lambda\le0$,
\begin{equation}
 |E_\lambda|+|r\partial_rE_\lambda|
 +|\partial_\lambda E_\lambda|
 +|r\partial_r\partial_\lambda E_\lambda|\le C.       \label{eq:Ereg}
\end{equation}
Here
\begin{equation}
 \Theta(\lambda)=\arg\left[
 \frac{2^{i\omega}\Gamma(i\omega)}
 {\Gamma(-\frac14-\lambda+i\frac\omega2)}\right].      \label{eq:Thetanew}
\end{equation}

\begin{lemma}[Alignment of the adapted phase and the Gamma phase]
\label{lem:phasealign}
At $r=r_0$,
\begin{align}
 \vartheta[\psi_\lambda](r_0)
 &=\Theta(\lambda)-\omega\log r_0+O(r_0^2)
 \pmod{2\pi},                                             \label{eq:phasealign}\\
 \partial_\lambda\vartheta[\psi_\lambda](r_0)
 &=\Theta'(\lambda)+O(r_0^2),                             \label{eq:phasealignderivative}
\end{align}
uniformly for $\lambda\in[-1,0]$.
\end{lemma}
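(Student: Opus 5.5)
The plan is to substitute the connection formula \eqref{Kummersmall} directly into the adapted Cauchy map \eqref{adaptedphase}. Write $\phi=\omega\log r-\Theta(\lambda)$, so that
\[
 \psi_\lambda=r^{-5/2}\bigl[\cos\phi+r^2E_\lambda\bigr].
\]
Differentiating, and using $r\partial_r(r^{-5/2})=-\tfrac52 r^{-5/2}$, gives
\[
 r\psi_\lambda'+\tfrac52\psi_\lambda
 =r^{-5/2}\bigl[-\omega\sin\phi+r^2(2E_\lambda+r\partial_rE_\lambda)\bigr].
\]
The shift by $\tfrac52 y$ in $\mathcal C$ removes exactly the $-\tfrac52\cos\phi$ term. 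This cancellation is the reason for the adapted coordinates. After dividing by $\omega$ we obtain
\[
 \psi_\lambda+i\,\frac{r\psi_\lambda'+\frac52\psi_\lambda}{\omega}
 =r^{-5/2}\bigl[e^{-i\phi}+r^2F_\lambda(r)\bigr],
 \qquad
 F_\lambda=E_\lambda+\frac{i}{\omega}\bigl(2E_\lambda+r\partial_rE_\lambda\bigr).
\]
By \eqref{Ereg}, both $F_\lambda$ and $\partial_\lambda F_\lambda$ are bounded uniformly for $0<r\le r_1$ and $\lambda\in[-1,0]$.

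Next we take arguments. The positive factor $r^{-5/2}$ does not change the argument. Since $|e^{-i\phi}|=1$, we have
\[
 e^{-i\phi}+r^2F=e^{-i\phi}\bigl(1+r^2e^{i\phi}F\bigr).
\]
For $r_0\le r_1$ small enough that $Cr_0^2<1/2$, the second factor lies in the disc $|w-1|<1/2$. There the principal branch of $\arg$ is smooth and satisfies $|\arg w|\le C|w-1|$. Therefore
\[
 \vartheta[\psi_\lambda](r_0)=-\phi+O(r_0^2)=\Theta(\lambda)-\omega\log r_0+O(r_0^2)\pmod{2\pi},
\]
which is \eqref{phasealign}. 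The bound is uniform in $\lambda$ because the constant in \eqref{Ereg} is.

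For the derivative \eqref{phasealignderivative}, we choose the continuous lift
\[
 \vartheta=\Theta(\lambda)-\omega\log r_0+\operatorname{Im}\log\bigl(1+r_0^2e^{i\phi}F_\lambda(r_0)\bigr).
\]
Differentiating in $\lambda$ is legitimate because $\Theta$ is smooth on $[-1,0]$. Indeed, $\Gamma(-\tfrac14-\lambda+i\omega/2)$ never vanishes and $\Gamma$ is analytic there, since the imaginary part $\omega/2$ is nonzero. The $\lambda$-derivative of the log term equals
\[
 r_0^2\,\frac{\partial_\lambda\bigl(e^{i\phi}F_\lambda\bigr)}{1+r_0^2e^{i\phi}F_\lambda}.
\]
We have $\partial_\lambda e^{i\phi}=-i\Theta'(\lambda)e^{i\phi}$, and $\Theta'$ is bounded on the compact interval. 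The remaining piece $\partial_\lambda F$ is controlled by the mixed bounds $|\partial_\lambda E|+|r\partial_r\partial_\lambda E|\le C$ in \eqref{Ereg}. Hence the whole correction is $O(r_0^2)$, which gives \eqref{phasealignderivative}.

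The main point needing care is this last step. The derivative estimate requires the mixed $r\partial_r\partial_\lambda E_\lambda$ regularity. I would take that regularity as given by \eqref{Ereg}, which is itself justified by the analytic dependence of the Kummer $U$-function and its connection formula on the parameter $a=1-\lambda$ in compact sets. The rest is the routine uniformity of the $\arg$ expansion near $1$.
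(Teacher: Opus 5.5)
Your argument is correct and is essentially the paper's proof. You substitute \eqref{eq:Kummersmall} into the adapted coordinates, and the $\frac52$ shift cancels the $-\frac52\cos$ term, so the adapted vector is $r^{-5/2}\bigl[e^{-i\phi}+O_{C^1_\lambda}(r^2)\bigr]$; smoothness of the argument near the unit circle together with \eqref{eq:Ereg} then gives both the phase and its $\lambda$-derivative. Your explicit lift via $\operatorname{Im}\log\bigl(1+r_0^2e^{i\phi}F_\lambda\bigr)$ only spells out the step the paper compresses into ``the argument is smooth on a fixed neighborhood of that vector.''
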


\begin{proof}
Set $t=\omega\log r-\Theta(\lambda)$.  Equation
\eqref{eq:Kummersmall} reads
\[
 y=r^{-5/2}(\cos t+r^2E_\lambda).
\]
Direct differentiation gives
\[
 ry'+\frac52y=r^{-5/2}\left[
 -\omega\sin t+r^2(2E_\lambda+r\partial_rE_\lambda)\right].
\]
Hence
\begin{equation}
 \mathcal C[\psi_\lambda](r)=r^{-5/2}
 \left[(\cos t,-\sin t)+O_{C^1_\lambda}(r^2)\right].    \label{eq:adaptedvectorasy}
\end{equation}
The principal vector has unit length.  The argument is smooth on a fixed
neighborhood of that vector, and
\[
 \arg(\cos t-i\sin t)=-t=\Theta(\lambda)-\omega\log r.
\]
Equation \eqref{eq:Ereg} also controls the $\lambda$ derivative, proving
\eqref{eq:phasealign}--\eqref{eq:phasealignderivative}.
\end{proof}

Differentiate \eqref{eq:Thetanew} logarithmically.  If
$z_\lambda=-\frac14-\lambda+i\frac\omega2$, then
\begin{equation}
 \Theta'(\lambda)
 =\operatorname{Im}\psi_{\rm dig}(z_\lambda)
 =\sum_{j=0}^\infty
 \frac{\omega/2}{(j-\frac14-\lambda)^2+(\omega/2)^2}
 \ge\frac{\sqrt7}{4}.                                   \label{eq:Thetapositive}
\end{equation}
The recurrence $\Gamma(z+1)=z\Gamma(z)$ also gives
\begin{equation}
 \Theta(0)-\Theta(-1)=\pi-\arctan\sqrt7<\pi.            \label{eq:Thetaincrement}
\end{equation}

\subsubsection{Transfer of the complete outer phase and its parameter derivative}

The complete outer solution $v_{k,\lambda}$ and the limiting solution
$\psi_\lambda$ satisfy, respectively,
$(L_{\Phi_k}-\lambda)v=0$ and
$(L_\infty-\lambda)\psi=0$.  Subtract the two equations, regard the terms
generated by $\delta\Phi_k=\Phi_k-r^{-2}$ as forcing, and apply the Green
operator of the limiting equation.  This gives $C^1$ convergence of both the
solution and its $\lambda$ derivative in \eqref{eq:outerphaseC1}.

Let $v_{k,\lambda}$ be the square-integrable solution at infinity for the
complete operator, and define
\begin{align}
 d_k={}&\sup_{r_0\le r\le1}
 r^{5/2}\bigl(|\delta\Phi_k|+|r\delta\Phi_k'|\bigr)\notag\\
 &+\sup_{r\ge1}r^2
 \bigl(|\delta\Phi_k|+|r\delta\Phi_k'|\bigr).            \label{eq:dk}
\end{align}
The outer fixed-point estimate and $\varepsilon_k\to0$ give $d_k\to0$.

\begin{lemma}[$C^1$ convergence of the complete Weyl solution]
\label{lem:weylC1new}
For fixed $r_0>0$, the functions $v_{k,\lambda}$ can be normalized uniformly
so that, uniformly for $\lambda\in[-1,0]$,
\begin{align}
 &(v_{k,\lambda}(r_0),r_0v_{k,\lambda}'(r_0))
 \longrightarrow(\psi_\lambda(r_0),r_0\psi_\lambda'(r_0)),\notag\\
 &\partial_\lambda(v_{k,\lambda}(r_0),r_0v_{k,\lambda}'(r_0))
 \longrightarrow
 \partial_\lambda(\psi_\lambda(r_0),r_0\psi_\lambda'(r_0)). \label{eq:C1Cauchy}
\end{align}
\end{lemma}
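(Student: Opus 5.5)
I would treat the complete Weyl solution as a perturbation of $\psi_\lambda$ on $[r_0,\infty)$ via a Volterra integral equation built from the limiting operator $L_\infty-\lambda$. The complete operator is $L_{\Phi_k}$ from \eqref{eq:Lphi}. Writing $\Phi_k=r^{-2}+\delta\Phi_k$ gives
\[
 L_{\Phi_k}-\lambda=(L_\infty-\lambda)-2r\,\delta\Phi_k\,\partial_r-(2r\delta\Phi_k'+12\delta\Phi_k),
\]
so the perturbation is a first-order operator whose coefficients are controlled by $d_k$ through \eqref{eq:dk}. On $[r_0,1]$ these coefficients are bounded by $Cd_kr_0^{-5/2}$, a fixed constant times $d_k$. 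For $r\ge1$ they decay like $d_k r^{-1}$ (the drift coefficient) and $d_k r^{-2}$ (the potential).

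As fundamental pair for $L_\infty-\lambda$ I take $\psi_\lambda$ and a second solution $\chi_\lambda$ growing like $e^{r^2/4}r^{O(1)}$. Both come from the Kummer functions $U$ and $M$ with $x=r^2/4$, exactly as in the $l=2$ endpoint lemma. Their Wronskian is of size $r^{-6}e^{r^2/4}$ times constants depending smoothly on $\lambda$, via the weight $m_{\Phi_*}$. I would then seek $v_{k,\lambda}=\psi_\lambda+\mathcal V_\lambda[v_{k,\lambda}]$, where $\mathcal V_\lambda$ is the variation-of-constants operator \eqref{eq:l2volterra} with both integrals running from $r$ to $\infty$, applied to the forcing $F=(2r\delta\Phi_k\partial_r+2r\delta\Phi_k'+12\delta\Phi_k)v$.

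The common upper endpoint kills the growing branch, so this fixed point is automatically the Weyl solution. Its normalization is fixed by its agreement with $\psi_\lambda$ at infinity, and this normalization is uniform in $k$.

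The key estimate is a contraction in the weighted norm
\[
 \|v\|=\sup_{r\ge r_0}w_\lambda(r)^{-1}\bigl(|v|+r|v'|\bigr),
\]
where the weight $w_\lambda$ is comparable to $r^{-5/2}$ on $[r_0,1]$ and to $r^{2\lambda-2}$, the Weyl decay, for $r\ge1$. On $[1,\infty)$, forcing of size $d_k r^{-1}\cdot r\cdot w/r$, that is $d_kw/r$, maps under $\mathcal V_\lambda$ back to $O(d_k)w$; this matches the computation in \eqref{eq:l2volterraestimate}, where an $r^{-2}$ relative forcing gains $r^{-2}$. On the compact interval $[r_0,1]$ the Green kernel is simply bounded by constants depending on $r_0$. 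Hence the operator norm is at most $C(r_0)d_k\to0$, and a Neumann series gives $\|v_{k,\lambda}-\psi_\lambda\|\le C(r_0)d_k$. This yields the first line of \eqref{eq:C1Cauchy} at $r=r_0$.

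For the $\lambda$-derivative I differentiate the fixed-point equation:
\[
 (I-\mathcal V_\lambda P_k)\,\partial_\lambda v=\partial_\lambda\psi_\lambda+(\partial_\lambda\mathcal V_\lambda)P_kv .
\]
Here $\partial_\lambda\psi_\lambda$ and $\partial_\lambda\mathcal V_\lambda$ introduce factors of $\log r$, since $\partial_\lambda r^{2\lambda-2}=2\log r\cdot r^{2\lambda-2}$. To absorb them I would use the slightly weaker weight $w_\lambda(1+\log r)$. The compact-interval part and the Kummer $\lambda$-smoothness \eqref{eq:Ereg} handle the rest, giving $\|\partial_\lambda(v-\psi)\|\le C(r_0)d_k$ uniformly for $\lambda\in[-1,0]$.

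I expect the main obstacle to be uniform control of the Kummer branches and their $\lambda$-derivatives at large $r$, including the logarithmic losses and the fact that the drift perturbation is only $O(d_k/r)$ rather than $O(d_k/r^2)$. The first thing I would try is a direct integration-by-parts bound on the Gaussian Green kernel, as in the Gaussian tail estimate used for the radial Weyl branch in the mass-transfer appendix.
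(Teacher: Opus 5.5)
Your proposal is correct and is essentially the paper's proof. The paper uses the same variation-of-constants operator built from $\psi_\lambda$ and the Gaussian branch $\chi_\lambda$, with both integrals running from $r$ to $\infty$, and the same weighted space: constant weight on $[r_0,1]$ and $r^{2\lambda-2}$ beyond. It then obtains $\|v_{k,\lambda}-\psi_\lambda\|\le C(r_0)d_k$ by a Neumann series, and handles $\partial_\lambda$ through the differentiated fixed-point equation in a logarithmically weighted space.

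One small slip: on $[1,\infty)$ the forcing is actually $O(d_kw\,r^{-2})$, because the drift coefficient $2r\,\delta\Phi_k=O(d_kr^{-1})$ multiplies $v'=O(w/r)$. Your count $d_kw/r$ therefore carries a spurious factor $r$, and the drift is not the obstacle you anticipate. The overestimate is harmless, since it still maps back to $O(d_k)w$.
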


\begin{proof}
Choose a second fundamental solution $\chi_\lambda$ of the limiting
equation such that at infinity
\[
 \psi_\lambda(r)=O(r^{2\lambda-2}),
 \qquad \chi_\lambda(r)=O(e^{r^2/4}r^{-5-2\lambda}).
\]
Their Wronskian satisfies
\[
 W_\lambda(r)=c_\lambda r^{-6}e^{r^2/4},
 \qquad \inf_{\lambda\in[-1,0]}|c_\lambda|>0.
\]
Set $R_k=L_{\Phi_k}-L_\infty$.  Subtraction term by term gives
\begin{equation}
 R_kf=-2r\delta\Phi_k f'
 -(2r\delta\Phi_k'+12\delta\Phi_k)f.                    \label{eq:Rk}
\end{equation}
The variation-of-parameters operator preserving the leading Weyl behavior
at infinity is
\begin{align}
 (\mathcal K_\lambda F)(r)
 ={}&-\psi_\lambda(r)\int_r^\infty
 \frac{\chi_\lambda(s)F(s)}{W_\lambda(s)}\dd s\notag\\
 &+\chi_\lambda(r)\int_r^\infty
 \frac{\psi_\lambda(s)F(s)}{W_\lambda(s)}\dd s.          \label{eq:Kvolterra}
\end{align}
The boundary terms generated by differentiating the two lower integration
limits cancel.

If $f,rf'=O(r^{2\lambda-2})$, then
\eqref{eq:dk}--\eqref{eq:Rk} give
$R_kf=O(d_kr^{2\lambda-4})$ for $r\ge1$.  Also,
\[
 \frac{\chi_\lambda}{W_\lambda}=O(r^{1-2\lambda}),
 \qquad
 \frac{\psi_\lambda}{W_\lambda}=O(e^{-r^2/4}r^{2\lambda+4}).
\]
The first integral in \eqref{eq:Kvolterra} is therefore bounded by
$Cd_k\int_r^\infty s^{-3}\dd s=\frac12Cd_kr^{-2}$, while the second is
bounded by a Gaussian tail.  On $[r_0,1]$, the fundamental solutions and
their parameter derivatives are uniformly bounded.  Differentiation in $r$
again uses the boundary-term cancellation and yields
\begin{equation}
 \|\mathcal K_\lambda R_kf\|_{X_\lambda}
 \le C(r_0)d_k\|f\|_{X_\lambda},                         \label{eq:Ksmall}
\end{equation}
where
\[
 \|f\|_{X_\lambda}
 =\sup_{r_0\le r\le1}(|f|+|rf'|)
 +\sup_{r\ge1}r^{2-2\lambda}(|f|+|rf'|).
\]

The complete Weyl solution satisfies
\begin{equation}
 v_{k,\lambda}=\psi_\lambda-
 \mathcal K_\lambda R_kv_{k,\lambda}.                    \label{eq:weylfixed}
\end{equation}
When $C(r_0)d_k\le1/2$, the Neumann series gives
\[
 \|v_{k,\lambda}-\psi_\lambda\|_{X_\lambda}\le C(r_0)d_k.
\]

Take difference quotients in $\lambda$.  Parameter derivatives of
$r^{2\lambda-2}$ and of the Kummer coefficients produce only a factor
$1+\log r$.  In the corresponding logarithmically weighted space,
\begin{align}
 &(I+\mathcal K_\lambda R_k)
 (\partial_\lambda v_{k,\lambda}-\partial_\lambda\psi_\lambda)\notag\\
 &\quad=-(\partial_\lambda\mathcal K_\lambda)R_kv_{k,\lambda}
 -\mathcal K_\lambda R_k\partial_\lambda\psi_\lambda.   \label{eq:lambdaVolterra}
\end{align}
By \eqref{eq:Ksmall},
$\|(I+\mathcal K_\lambda R_k)^{-1}\|\le2$.  Repeating the two preceding
integrals with the factor $1+\log r$ retained gives
\[
 \|\partial_\lambda v_{k,\lambda}-\partial_\lambda\psi_\lambda
 \|_{X_\lambda^{\log}}\le C(r_0)d_k.
\]
At the fixed point $r_0$ the logarithmic weight is constant, which is exactly
\eqref{eq:C1Cauchy}.
\end{proof}

Since $\mathcal C$ is a fixed invertible transformation and the limiting
Cauchy vector is uniformly nonzero on the compact parameter interval, Lemma
\ref{lem:weylC1new} also gives
\begin{equation}
 \|\vartheta_{{\rm out},k}
 -\vartheta[\psi_\lambda](r_0)\|_{C^1([-1,0])}\longrightarrow0. \label{eq:outerphaseC1}
\end{equation}

\subsubsection{The inner phase derivative}

We now start from the solution of the rescaled spectral equation
\eqref{eq:scaledspectral} that is regular at the origin.  Differentiate that
equation with respect to $\lambda$ and use variation of constants in the
universal fundamental matrix.  The accumulated bound
\eqref{eq:accumulated} controls the transfer over the long inner interval.
This yields the derivative estimate \eqref{eq:innerphaseestimate} for the
adapted phase at the matching point.

Let $h_{k,\lambda}(s)$ be the origin-regular solution of
\eqref{eq:scaledspectral}, and define
\begin{equation}
 \vartheta_{{\rm in},k}(\lambda)
 =\arg\left[h_{k,\lambda}(S_k)
 +\frac{i}{\omega}\left(
 S_kh_{k,\lambda}'(S_k)+\frac52h_{k,\lambda}(S_k)\right)\right],
 \qquad S_k=\frac{r_0}{\mu_k}.                            \label{eq:inneradapted}
\end{equation}

\begin{lemma}[Parameter derivative of the adapted inner phase]
\label{lem:innerphase}
For fixed sufficiently small $r_0$ and all sufficiently large $k$,
\begin{equation}
 \sup_{\lambda\in[-1,0]}
 |\partial_\lambda\vartheta_{{\rm in},k}(\lambda)|
 \le Cr_0^2.                                             \label{eq:innerphaseestimate}
\end{equation}
\end{lemma}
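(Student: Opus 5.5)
The plan is to differentiate the rescaled spectral equation \eqref{eq:scaledspectral} with respect to $\lambda$ and control the growth of the resulting inhomogeneous solution in the universal fundamental-matrix coordinates of Lemma~\ref{lem:fundamental}. Normalize the origin-regular solution by $h_{k,\lambda}(s)=u(s)+\mu_k^2(\cdots)$ near $s=0$, so that at $\mu_k=0$ it is exactly $u=\Lambda\bar Q$, independent of $\lambda$. Write $X=(h,sh')^{\mathsf T}=G(s)a(s)$ as in Proposition~\ref{prop:spectraleq}. The $\lambda$-dependence then enters only through the term $\mu_k^2(1-\lambda)h$, so $\partial_\lambda a$ satisfies
\[
(\partial_\lambda a)'=\mathcal A_k(s)\,\partial_\lambda a+\mu_k^2 G^{-1}(0,-s)^{\mathsf T}h,
\]
where $\|\mathcal A_k(s)\|\le C\mu_k^2(s+s^{1/2})$ for $s\ge1$ by \eqref{eq:acoefnew}, and the forcing is bounded by $C\mu_k^2 s\,|a|$.

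On $[0,1]$, the regular-branch Volterra construction near the origin gives $|\partial_\lambda a|\le C\mu_k^2$, uniformly in $\lambda\in[-1,0]$. On $[1,S_k]$, Gronwall with \eqref{eq:accumulated} first shows that $|a(s)|$ stays within a factor $1+O(r_0^2+r_0^{3/2}\mu_k^{1/2})$ of $|a(1)|$ and bounded below. Applying Gronwall to the differentiated system then gives
\[
|\partial_\lambda a(S_k)|\le C\int_1^{S_k}\mu_k^2 s\,\dd s\,|a|\le Cr_0^2|a(S_k)|.
\]

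Transfer to the adapted phase \eqref{eq:inneradapted} uses the structure of $G$ at large $s$. By \eqref{eq:xeq}, the adapted Cauchy vector $\mathcal C[h](s)$ equals $s^{-5/2}$ times a rotation by $\omega\log s$, up to an $O(s^{-1/2})$ error, applied to a fixed invertible image of $a$. This is the same $L^1$-perturbed constant-coefficient system, with $\|X(t)\|,\|X(t)^{-1}\|\le C$. Hence $\vartheta_{{\rm in},k}=\omega\log S_k+\arg(Ba(S_k))+O(S_k^{-1/2})$, with a $\lambda$-independent rotation and a bounded invertible $B$. Since the phase of a vector bounded away from zero is Lipschitz, $|\partial_\lambda\vartheta_{{\rm in},k}|\le C|\partial_\lambda a(S_k)|/|a(S_k)|\le Cr_0^2$. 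The $O(S_k^{-1/2})$ error term is $\lambda$-independent because the universal matrix does not depend on $\lambda$.

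The main obstacle is the lower bound on $|a(S_k)|$ together with the uniformity of the Gronwall constant. The profile-correction part of $\mathcal A_k$ contributes $r_0^{3/2}\mu_k^{1/2}$, which tends to zero. The spectral part contributes $r_0^2$, which is small only because $r_0$ is chosen small first. I would fix $r_0$ so that $C r_0^2\le\frac12$ before letting $k\to\infty$; this keeps $|a|$ comparable to $|a(1)|$ and avoids any cancellation in the denominator of the phase derivative.
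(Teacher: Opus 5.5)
Your proposal is correct and is essentially the paper's argument: $O(\mu_k^2)$ control of $\partial_\lambda(h,sh')$ on $[0,1]$; Gronwall in the coordinates $(h,sh')^{\mathsf T}=G(s)a(s)$, using \eqref{eq:acoefnew}--\eqref{eq:accumulated}, to get $c\le|a|\le C$; the single new forcing from $\mu_k^2(1-\lambda)h$, which gives $|\partial_\lambda a(S_k)|\le Cr_0^2$; and transfer to the adapted phase through the $\lambda$-independent matrix $G(S_k)=S_k^{-5/2}B(\log S_k)$ with $B,B^{-1}$ bounded. One small imprecision is harmless: the $O(S_k^{-1/2})$ term in your phase expansion is not literally $\lambda$-independent, since it depends on $a(S_k)$, but all you need is that the adapted vector equals $M_ka(S_k)$ for a $\lambda$-independent $M_k$ of uniformly bounded condition number, whence $|\partial_\lambda\vartheta_{{\rm in},k}|\le\|M_k\|\,\|M_k^{-1}\|\,|\partial_\lambda a(S_k)|/|a(S_k)|$.
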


\begin{proof}
On the fixed interval $s\le1$, the regular initial expansion and parameter
differentiation give
\[
 |\partial_\lambda(h,sh')|\le C\mu_k^2.
\]
For $s\ge1$, let $G(s)$ be a fundamental matrix for
$H_{\bar Q}h=0$ and write
\[
 (h,sh')^{\mathsf T}=G(s)a(s).
\]
Proposition \ref{prop:spectraleq} gives
\[
 |a'(s)|\le C\mu_k^2(s+s^{1/2})|a(s)|.
\]
Equation \eqref{eq:accumulated} and Gronwall's inequality imply
\[
 c\le|a(s)|\le C,
 \qquad 1\le s\le S_k.
\]
Differentiate the coefficient equation with respect to the spectral
parameter.  Only the term $\mu_k^2(1-\lambda)h$ produces a new inhomogeneous
contribution.  Hence
\begin{align*}
 |\partial_\lambda a(S_k)|
 &\le C\mu_k^2+C\mu_k^2\int_1^{S_k}s\dd s\\
 &\le Cr_0^2.
\end{align*}
The adapted coordinates \eqref{eq:adaptedphase} are a fixed invertible linear
transformation of the Cauchy vector.  Moreover,
$G=s^{-5/2}B(\log s)$, with $B$ and $B^{-1}$ uniformly bounded.  The adapted
vector is therefore uniformly bounded away from zero relative to $|a|$.
Differentiating its argument proves \eqref{eq:innerphaseestimate}.
\end{proof}

\begin{proposition}[Complete radial spectral gap]\label{prop:radialgapnew}
For all sufficiently large $k$,
\begin{equation}
 \sigma(L_{\Phi_k})\cap(-1,0]=\varnothing.               \label{eq:radialgapnew}
\end{equation}
\end{proposition}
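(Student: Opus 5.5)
The plan is to combine Lemma~\ref{lem:phasematching} with the adapted phase \eqref{eq:adaptedphase}, comparing the inner and outer Cauchy subspaces at the fixed section $r=r_0$. For $\lambda\in[-1,0]$, let $v_{k,\lambda}$ be the Weyl solution at infinity and $h_{k,\lambda}(r/\mu_k)$ the solution regular at the origin. Define
\[
 D_k(\lambda)=\vartheta_{{\rm out},k}(\lambda)-\vartheta_{{\rm in},k}(\lambda),
\]
with continuous lifts on $[-1,0]$. The adapted transformation has positive determinant, so collinearity is equivalent to equality modulo $\pi$, and $\lambda$ is an eigenvalue of $L_{\Phi_k}$ if and only if $D_k(\lambda)\in\pi\mathbb Z$. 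Equation \eqref{eq:scalemode} shows that $\lambda=-1$ is an eigenvalue, with eigenfunction $\Lambda\Phi_k$. Shift the lift so that $D_k(-1)=0$.

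The key step is strict monotonicity of $D_k$ with a controlled total increment. Lemma~\ref{lem:phasealign}, \eqref{eq:outerphaseC1}, and \eqref{eq:Thetapositive} give
\[
 \partial_\lambda\vartheta_{{\rm out},k}=\Theta'(\lambda)+O(r_0^2)+o_k(1)\ge\tfrac{\sqrt7}{4}-Cr_0^2-o_k(1).
\]
Lemma~\ref{lem:innerphase} gives $|\partial_\lambda\vartheta_{{\rm in},k}|\le Cr_0^2$. Choosing $r_0$ small first and then $k$ large yields $D_k'>0$ on $[-1,0]$.

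For the increment, integrate these derivative estimates and use \eqref{eq:Thetaincrement}:
\[
 D_k(0)=D_k(0)-D_k(-1)=\Theta(0)-\Theta(-1)+O(r_0^2)+o_k(1)=\pi-\arctan\sqrt7+O(r_0^2)+o_k(1).
\]
Since $\arctan\sqrt7$ is a fixed positive margin, this value lies in $(0,\pi)$ for $r_0$ small and $k$ large. Lemma~\ref{lem:phasematching} then excludes eigenvalues in $(-1,0]$. Because $L_{\Phi_k}$ has compact resolvent (Lemma~\ref{lem:radialrealization}), its spectrum is purely point, so \eqref{eq:radialgapnew} follows.

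The main obstacle is checking the hypotheses of the phase lemma uniformly. Both Cauchy vectors must stay nonzero at $r_0$, which follows from the lower bound $|a(s)|\ge c$ in the proof of Lemma~\ref{lem:innerphase} and from the unit-length principal vector in \eqref{eq:adaptedvectorasy}. The lifts must also be chosen consistently. The normalization $D_k(-1)=0$ is legitimate precisely because $-1$ is an eigenvalue, so any multiple of $\pi$ there may be absorbed into the lift. If one prefers not to lean on the exact eigenvalue at $-1$, the fallback is to compute $D_k(-1)$ directly from the matched asymptotics, which should give $O(r_0^2)+o_k(1)$ modulo $\pi$, and then argue identically.

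The last point to confirm is that the inner and outer phases are measured in the same adapted coordinates at $r=r_0$, i.e.\ $s=S_k$. The scaling $ry'=s\,\partial_sh$ preserves the second component, so the two phases \eqref{eq:adaptedphase} and \eqref{eq:inneradapted} are genuinely comparable.
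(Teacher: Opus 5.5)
Your proposal is correct and follows essentially the same route as the paper's proof: you normalize the lifted phase difference by $D_k(-1)=0$ using the scaling eigenfunction $\Lambda\Phi_k$, obtain $D_k'>0$ on $[-1,0]$ from Lemmas~\ref{lem:phasealign}, \ref{lem:weylC1new}, \ref{lem:innerphase} and \eqref{eq:Thetapositive} (fixing $r_0$ small first, then $k$ large), and use \eqref{eq:Thetaincrement} to get $0<D_k(\lambda)\le D_k(0)=\pi-\arctan\sqrt7+O(r_0^2)+o_k(1)<\pi$ on $(-1,0]$, which rules out collinearity of the adapted Cauchy vectors. The only cosmetic difference is that you phrase the last step through Lemma~\ref{lem:phasematching}, whereas the paper states the collinearity criterion directly.
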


\begin{proof}
Let $\vartheta_{{\rm out},k}(\lambda)$ be the adapted phase of
$v_{k,\lambda}$ at $r_0$.  Choose continuous lifts and set
\[
 D_k(\lambda)=\vartheta_{{\rm out},k}(\lambda)
 -\vartheta_{{\rm in},k}(\lambda),
 \qquad D_k(-1)=0.
\]
The last identity follows from the scaling relation
$(L_{\Phi_k}+1)\Lambda\Phi_k=0$: the function $\Lambda\Phi_k$ is regular at
the origin and square integrable at infinity, so at $\lambda=-1$ both the
inner and outer branches are nonzero multiples of it.

By Lemmas \ref{lem:phasealign}, \ref{lem:weylC1new}, and
\ref{lem:innerphase}, and by \eqref{eq:Thetapositive}, one may first fix
$r_0$ sufficiently small and then take $k$ sufficiently large so that
\[
 D_k'(\lambda)>0,
 \qquad -1\le\lambda\le0.
\]
On the other hand,
\begin{align*}
 D_k(0)-D_k(-1)
 &=\Theta(0)-\Theta(-1)+O(r_0^2)+o_k(1)\\
 &=\pi-\arctan\sqrt7+O(r_0^2)+o_k(1)<\pi.
\end{align*}
Therefore
\[
 0<D_k(\lambda)<\pi,
 \qquad -1<\lambda\le0.
\]
An eigenvalue occurs exactly when the inner and outer Cauchy vectors are
collinear.  By invertibility of \eqref{eq:adaptedphase}, this is equivalent
to $D_k(\lambda)\in\pi\mathbb Z$, contradicting the preceding inequality.
Thus \eqref{eq:radialgapnew} holds.
\end{proof}

\begin{corollary}[Exact number of negative radial eigenvalues]
\label{cor:radialcount}
Let $N_k=\#Z_{(0,\infty)}(\Lambda\Phi_k)$.  For all sufficiently large $k$,
$L_{\Phi_k}$ has exactly $N_k+1$ negative eigenvalues: $N_k$ lie strictly
below $-1$, and $-1$ is simple.
\end{corollary}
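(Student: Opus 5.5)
This is a direct combination of the Sturm count, Proposition~\ref{prop:radialgapnew}, and the realization in Lemma~\ref{lem:radialrealization}. Fix $k$ large enough that Proposition~\ref{prop:radialgapnew} applies. By Lemma~\ref{lem:radialrealization}, $L_{\Phi_k}$ is self-adjoint in $\mathcal H_{\Phi_k}$ with compact resolvent. Its eigenvalues $\lambda_1<\lambda_2<\cdots$ are therefore real and simple, and an eigenfunction for $\lambda_j$ has exactly $j-1$ zeros in $(0,\infty)$.

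The scaling identity $(L_{\Phi_k}+1)\Lambda\Phi_k=0$ is \eqref{eq:scalemode}. The function $\Lambda\Phi_k$ lies in the operator domain: it is regular at the origin, and at infinity it is $O(r^{-4})$ by the outer expansion, so it is square integrable against $m_{\Phi_k}$. Every positive zero of $\Lambda\Phi_k$ is simple, by the Cauchy-uniqueness argument following \eqref{eq:scalemode}. Hence $\Lambda\Phi_k$ is the eigenfunction of the $(N_k+1)$st eigenvalue, so $-1=\lambda_{N_k+1}$ is simple and exactly $N_k$ eigenvalues lie strictly below $-1$.

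It remains to show that no other eigenvalue is negative, i.e.\ $\lambda_{N_k+2}>0$. Proposition~\ref{prop:radialgapnew} gives $\sigma(L_{\Phi_k})\cap(-1,0]=\varnothing$. Since $\lambda_{N_k+2}>\lambda_{N_k+1}=-1$, the gap forces $\lambda_{N_k+2}>0$. The negative spectrum is thus exactly $\{\lambda_1,\dots,\lambda_{N_k},-1\}$, which has $N_k+1$ elements.

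The only step needing care is that $\Lambda\Phi_k$ genuinely belongs to the closed domain, so that the oscillation count applies to it. The regular branch at zero is selected by the domain in Lemma~\ref{lem:radialrealization}. At infinity the complementary branch grows like $e^{r^2/4}$, whereas $\Lambda\Phi_k$ decays algebraically, so it is the Weyl solution. All other steps are direct citations.
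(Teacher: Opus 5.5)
Your proof is correct and is essentially the paper's argument: the Sturm oscillation count for the limit-point/limit-point self-adjoint realization makes the scaling mode $\Lambda\Phi_k$ (with its $N_k$ zeros) the $(N_k+1)$st eigenfunction, and Proposition~\ref{prop:radialgapnew} removes $(-1,0]$, which forces $\lambda_{N_k+2}>0$. Your explicit check that $\Lambda\Phi_k=O(r^{-4})$ is the Weyl branch at infinity and lies in the operator domain is left implicit in the paper (it is only asserted inside the proof of the gap proposition). It is a useful supplement, not a different route.
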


\begin{proof}
After writing the radial operator in Sturm--Liouville form, both $0$ and
$\infty$ are limit-point endpoints.  Thus the self-adjoint realization
requires no additional endpoint boundary condition, its discrete
eigenvalues are simple, and the standard oscillation count applies
\cite[Chapters~6, 7 and 10]{Zettl2005}.  The scaling mode
$\Lambda\Phi_k$ is an eigenfunction for $-1$ and has $N_k$ zeros, so exactly
$N_k$ eigenvalues precede it.  Proposition \ref{prop:radialgapnew} excludes
spectrum in $(-1,0]$, proving the result.
\end{proof}

\subsection{Exact increment of the scaling-mode zero count}

This subsection proves Proposition~\ref{prop:nodes-main}.  It computes the
dimension of the unstable radial space but is not used to prove
$\sigma(L_{\Phi_k})\cap(-1,0]=\varnothing$.  The gap compares inner and outer
Cauchy phases for $\lambda\in[-1,0]$.  Here we prove a different statement:
when the matching root moves from the $k$th phase interval to the next one,
$\Lambda\Phi_k$ acquires exactly one zero.  Together with the Sturm
oscillation theorem, this gives the exact growth of the number of radial
eigenvalues strictly below $-1$.

Choose one matching root $\mu_k$ in every phase interval supplied by
Proposition~\ref{prop:matchingroots}, and denote the corresponding global
solution by $\Phi_k$.  The total number of zeros is split into an inner and
an outer contribution.  Each new logarithmic phase interval adds one inner
zero, while the number of zeros in the fixed outer region remains constant.
This argument does not require uniqueness of the matching root in a phase
interval.

The inner comparison begins with
$\Lambda\Phi_k(\mu_ks)=\mu_k^{-2}\Lambda
(\bar Q+\mu_k^2R_{\mu_k})(s)$ and uses the Cauchy-vector estimate below.  In
the outer region, we apply $\Lambda$ to \eqref{eq:outerform}.

\begin{proof}[Proof of Proposition~\ref{prop:nodes-main}]
Choose one matching root $\mu_k$ from each phase interval and write
$q_k=\bar Q+\mu_k^2R_{\mu_k}$.  Corollary~\ref{cor:derivatives} gives
\begin{align}
 |\Lambda R_{\mu_k}(s)|&\le C(1+s)^{-1/2},\notag\\
 |s(\Lambda R_{\mu_k})'(s)|
 &=|3sR_{\mu_k}'(s)+s^2R_{\mu_k}''(s)|
 \le C(1+s)^{-1/2}.                                       \label{eq:LRbound}
\end{align}
For $u=\Lambda\bar Q$, the differentiated asymptotic expansion gives
\begin{align}
 u(s)&=c_us^{-5/2}\sin(\omega\log s+\delta_u)+O(s^{-3}),\notag\\
 su'(s)&=c_us^{-5/2}\left[-\frac52\sin(\omega\log s+\delta_u)
 +\omega\cos(\omega\log s+\delta_u)\right]+O(s^{-3}).    \label{eq:uCauchy}
\end{align}
The sine and cosine in the leading Cauchy vector cannot vanish together.
Thus, for $s\ge s_1$,
\begin{equation}
 |u(s)|+|su'(s)|\ge cs^{-5/2},\qquad s\ge s_1.             \label{eq:ulower}
\end{equation}

Fix $0<\rho<r_0$, to be chosen below, and put $T_k=\rho/\mu_k$.  On
$s_1\le s\le T_k$,
\begin{equation}
 \frac{\mu_k^2(|\Lambda R_{\mu_k}|+|s(\Lambda R_{\mu_k})'|)}
 {|u|+|su'|}\le C\mu_k^2s^2\le C\rho^2.                   \label{eq:relativeCauchy}
\end{equation}
On $[0,s_1]$, ordinary $C^1$ convergence gives the same conclusion with an
error tending to zero.

Set $t=\log s$ and
\[
 U(t)=e^{5t/2}u(e^t),\qquad
 U_k(t)=e^{5t/2}\Lambda q_k(e^t).
\]
Then $U(t)=c_u\sin(\omega t+\delta_u)+O(e^{-t/2})$, with the
same precision after one $t$ derivative.  Near every sufficiently late zero,
$|U'|\ge c$; outside fixed small zero neighborhoods, $|U|\ge c$.
Estimate \eqref{eq:relativeCauchy} gives
$\|U_k-U\|_{C^1}\le C\rho^2$ on every logarithmic period.  Choose $\rho$ so
small that this preserves both the derivative sign near each zero and the
function sign away from the zero neighborhoods.  Hence every old zero
produces exactly one zero of $U_k$, and no new zero appears elsewhere.

By \eqref{eq:phaseinterval}, for some $e_k\in(-\delta,\delta)$,
\begin{equation}
 -\omega\log\mu_k+\gamma_0=k\pi+e_k.                       \label{eq:rootphase}
\end{equation}
At $T_k=\rho/\mu_k$,
\begin{equation}
 \omega\log T_k+\delta_u
 =k\pi+(\omega\log\rho+\delta_u-\gamma_0)+e_k.            \label{eq:endphasecorrected}
\end{equation}
As $\rho$ runs through one logarithmic period, the parenthesized phase runs
once around $\R/\pi\mathbb Z$.  Since the zeros of the nonzero outer solution
$\Lambda u_1$ are isolated, we may choose $\rho$ in one sufficiently small
period so that, simultaneously,
\begin{equation}
 0<\rho<r_0,\qquad C\rho^2<\frac\delta2,\qquad
 \Lambda u_1(\rho)\ne0,                                   \label{eq:rhononresonant}
\end{equation}
and the phase stays at distance greater than $2\delta$ from
$\pi\mathbb Z$.  Equation \eqref{eq:endphasecorrected} then shows that the
endpoint never crosses a zero and that increasing $k$ by one adds exactly one
inner zero.  Thus
\[
 \#Z_{(0,\rho)}(\Lambda\Phi_k)=k+C_{\rm in}
\]
for all large $k$.

It remains to prove that the outer region contributes a fixed number.  Since
$\Lambda(r^{-2})=0$, \eqref{eq:outerform} gives
\begin{equation}
 \varepsilon_k^{-1}\Lambda\Phi_k
 =\Lambda u_1+\Lambda w_{\varepsilon_k},\qquad r\ge\rho.   \label{eq:outermode}
\end{equation}
Here $\varepsilon_k\ne0$: otherwise both Cauchy data at $r_0$ would equal
those of the singular solution, and ODE uniqueness would contradict
regularity at zero.  The outer norm and $\varepsilon_k\to0$ imply local
$C^1$ convergence of the right side to $\Lambda u_1$.  Every zero of this
nonzero second-order solution is simple.

At infinity,
\[
 u_1(r)=r^{-2}-2r^{-4}+O(r^{-6}),\qquad
 \Lambda u_1(r)=4r^{-4}+O(r^{-6})>0,                        \label{eq:tailpositive}
\]
while $|\Lambda w_{\varepsilon_k}(r)|\le
C|\varepsilon_k|r^{-4}$ for $r\ge1$.  Hence there is a fixed $R>\rho$ with
no zeros on $[R,\infty)$.  On $[\rho,R]$, local $C^1$ convergence preserves
each of the finitely many simple zeros and creates none elsewhere; the
endpoints are nonzero by construction.  Therefore
$\#Z_{[\rho,\infty)}(\Lambda\Phi_k)=C_{\rm out}$ for all large $k$.
Taking $N_0=C_{\rm in}+C_{\rm out}$ proves
\eqref{eq:nodecount-main}.
\end{proof}

\section{Properties of the universal inner solution used by the corrected construction}
\label{sec:JDEcoreproperties}

This appendix first states the properties of the universal inner solution
needed in the spectral analysis and then proves them one by one.  We begin by
clarifying which results are cited.  The universal inner equation
\eqref{eq:core-main} and the oscillatory expansion at infinity of its regular
solution are derived in \cite[(2.36)--(2.46)]{NWZ2026}.  That expansion is proved before
\cite[Proposition~2.7]{NWZ2026} and does not
depend on the subsequent inner-scale correction.  In constructing the
actual matched solution we use the corrected ansatz throughout: Appendix
\ref{sec:correctedmatchingappendix} derives
$\bar Q+\mu^2R_\mu$ afresh from the rescaled equation.  Every identity and
inequality below for $\bar Q,\bar P,\bar U$ is recalculated directly from
\eqref{eq:core-main}.

\begin{proposition}[Universal inner profile used in the corrected construction]
\label{prop:coreprofileproperties}
The solution $\bar Q$ above is positive.  The functions $\bar P,\bar U$ obey
\begin{equation}
 \bar U'=-\bar P\bar U,\qquad
 \bar P'+\frac2s\bar P=\bar U,\qquad \bar P>0,\quad\bar U>0. \label{eq:coreUP}
\end{equation}
At the two endpoints,
\begin{align}
 \bar Q(s)&=\frac16-\frac{s^2}{60}+\frac{s^4}{630}+O(s^6), \label{eq:coreoriginQ}\\
 \bar Q(s)&=s^{-2}+c_5s^{-5/2}
 \sin\!\left(\frac{\sqrt7}{2}\log s\right)+O(s^{-3}),     \label{eq:coretailQ}\\
 \bar Q'(s)&=-2s^{-3}+O(s^{-7/2}),\qquad
 \bar Q''(s)=6s^{-4}+O(s^{-9/2}).                           \label{eq:coretailderivatives}
\end{align}
Set
\begin{equation}
 y=s^2\bar Q,\quad a=-\frac{s\bar Q'}{\bar Q},\quad
 z=\frac ay,\quad\beta=3-a,\quad t=\log s.                 \label{eq:yaz}
\end{equation}
A dot denotes $\partial_t=s\partial_s$.  Then
\begin{align}
 \dot y&=(2-a)y=y(\beta-1),\qquad
 \dot a=(3-a)(2y-a),\notag\\
 \dot\beta&=\beta(3-\beta-2y),\qquad
 \dot z=6-5z+2zy(z-1),                                     \label{eq:yazsystem}
\end{align}
and
\begin{equation}
 z=\frac65+\frac{12}{175}y+O(y^2)\quad(s\to0),\qquad
 z>\frac65+\frac y{20}\quad(s>0).                          \label{eq:coreorigin}
\end{equation}
Consequently, for some $c_0>0$,
\begin{equation}
 \bar Q'(s)<0,\qquad
 -\bar Q'(s)\ge c_0\frac{s}{(1+s)^4}\quad(s>0),            \label{eq:Qprimequant}
\end{equation}
and the orbit stays in
\begin{equation}
 0<y<\frac53,\qquad 0<\beta<3-\frac65y.                     \label{eq:ybregion}
\end{equation}
If $\varphi(x)=x-1-\log x$, then
\begin{equation}
 \mathcal H(y,\beta)=2\varphi(y)+\varphi(\beta),\qquad
 \dot{\mathcal H}=-(\beta-1)^2\le0.                        \label{eq:Lyapunov}
\end{equation}
Finally, if
\[
 \mathsf h_{\rm core}=s^2\bar U,\quad
 \mathsf a_{\rm core}=s^4\left(\bar U''-\frac2s\bar U'\right),\quad
 \mathsf b_{\rm core}=s^4\bar U'',
\]
then
\begin{align}
 \mathsf h_{\rm core}&=2y\beta\le\frac{15}{4},\notag\\
 \mathsf a_{\rm core}&=4y^2\beta(4+2y-\beta)\ge0,\notag\\
 \mathsf b_{\rm core}&=4y^2\beta(2+2y-\beta)\le\frac{1300}{27}. \label{eq:corecoefficientbounds}
\end{align}
\end{proposition}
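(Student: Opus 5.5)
I will prove the proposition by working in the phase variables $(y,a)$, equivalently $(y,\beta)$, and obtaining every listed property from the autonomous system \eqref{eq:yazsystem}. This system is derived directly from \eqref{eq:core-main} and the definitions \eqref{eq:yaz}.

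\textbf{Step 1 (identities and expansions).} First I derive \eqref{eq:coreUP} by writing $\bar P=2s\bar Q$ and $\bar U=6\bar Q+2s\bar Q'$. The relation $\bar P'+2\bar P/s=\bar U$ is immediate from these definitions. The relation $\bar U'=-\bar P\bar U$ is exactly \eqref{eq:core-main} rewritten, as in \eqref{eq:barPUsystem}. The origin expansion \eqref{eq:coreoriginQ} follows by inserting a power series into \eqref{eq:core-main}; matching coefficients gives $-1/60$ and then $1/630$. From this I compute $a=s^2/5+O(s^4)$ and $y=s^2/6+O(s^4)$, hence $z\to6/5$. The next coefficient $12/175$ comes from the $s^4$ terms.

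\textbf{Step 2 (phase system).} Next I derive \eqref{eq:yazsystem}. The equation $\dot y=(2-a)y$ is the definition of $y$. The equation for $\dot a$ follows from \eqref{eq:core-main} multiplied by $s^2/\bar Q$, which gives $-\dot a+a^2-3a+(6-2a)y=0$. Then $\dot\beta=-\dot a$ and $\dot z=\dot a/y-a\dot y/y^2$ are algebra. Differentiating \eqref{eq:Lyapunov} along the flow gives
\[
2(1-1/y)\dot y+(1-1/\beta)\dot\beta=2(y-1)(\beta-1)+(\beta-1)(3-\beta-2y)=-(\beta-1)^2 .
\]

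\textbf{Step 3 (positivity, monotonicity and region).} Positivity and monotonicity are handled by a barrier on $z$. The curve $z=6/5+y/20$ is repelling from below: on it,
\[
\dot z-\tfrac1{20}\dot y=6-5z+2zy(z-1)-\tfrac{1}{20}y(2-yz)>0,
\]
which I will check as a polynomial in $y$ on $0<y\le 5/3$. The origin expansion puts the orbit strictly above this curve initially, and the inequality on the curve keeps it there. Since $z>0$ gives $a>0$, it follows that $\bar Q'<0$. Positivity of $\bar Q$ holds because $y$ solves a linear ODE in $t$ and cannot vanish, and $\bar P,\bar U>0$ follow once $\beta>0$.

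For $\beta>0$, the set $\beta=0$ is invariant, so $\beta$ cannot change sign; since $\beta(0)=3$, it stays positive. The bound $\beta<3-\tfrac65 y$ is the statement $a>\tfrac65 y$, which is $z>6/5$. The bound $y<5/3$ uses $\mathcal H$: it is nonincreasing and its limit value as $t\to-\infty$ is infinite, so it gives nothing directly. Instead I will use the first maximum of $y$, where $\beta=1$, together with the phase energy \eqref{eq:phaseenergy} in the crude form $V(y)=\tfrac23y^3-y^2\le\mathcal E<0$ at the first peak. Here $\mathcal E$ starts at $0$ and is decreasing while $y<1/2$ and when $\dot y$ is small. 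This gives $y_M<3/2<5/3$. After the peak, the Lyapunov function confines the orbit to a sublevel set whose $y$-range lies below $5/3$.

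\textbf{Step 4 (quantitative bound and coefficients).} The bound \eqref{eq:Qprimequant} follows from $a$ being bounded below by positive constants. Near $0$ I use $a\sim s^2/5$. At infinity the orbit converges to $(1,2)$, so $-s\bar Q'=a\bar Q\sim 2s^{-2}$. Positivity and continuity cover the middle range. Finally, from $\bar U=2y\beta/s^2$ and $\bar U'=-\bar P\bar U$ I get
\[
s^4\bar U''=4y^2\beta(2+2y-\beta)
\]
by the chain rule. Then $\mathsf a_{\rm core}$ adds the term $-2s^3\bar U'=4y^2\beta\cdot 2$. The bounds $2y\beta\le 2y(3-\tfrac65y)\le\tfrac{15}{4}$ and $\mathsf b_{\rm core}\le\max 4y^2\beta(2+2y-\beta)$ over \eqref{eq:ybregion} are elementary calculus, with the maximum of $\beta(2+2y-\beta)$ attained at $\beta=1+y$. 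Nonnegativity of $\mathsf a_{\rm core}$ holds since $\beta<3\le 4+2y$.

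\textbf{Main obstacle.} The delicate point is the global barrier $z>6/5+y/20$. It requires the Lie derivative sign to hold uniformly along the entire relevant range of $y$ before and after the peak. If the polynomial check fails near $y=5/3$, I would first establish $y<5/3$ via the Lyapunov/energy argument using only the weaker bound $z>6/5$ (invariance of $z=6/5$: $\dot z=12y/25>0$ there), and then bootstrap the sharper barrier.
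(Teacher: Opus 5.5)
You follow the paper's overall plan: phase variables, the linear barrier $z>\frac65+\frac{y}{20}$, the Lyapunov function $\mathcal H$, and the first peak of $y$. Your barrier check is also unconditional: on the curve, $\dot z-\frac1{20}\dot y=\frac{y(26+y)(2+3y)}{400}>0$ for every $y>0$, so your fallback is unnecessary. However, two steps fail as written.

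\textbf{Positivity of $\bar Q$.} The claim that $y$ solves the linear ODE $\dot y=(2-a)y$ and therefore cannot vanish is circular, because $a=-s\bar Q'/\bar Q$ exists only where $\bar Q\neq0$. At a first zero $s_1$ of $\bar Q$ one has $\bar Q'(s_1)<0$, since otherwise ODE uniqueness gives $\bar Q\equiv0$. Hence $a\to+\infty$, and $\int(2-a)\,dt=-\infty$ is precisely what lets $y$ reach $0$. The same objection applies to deducing $\beta>0$ from invariance of $\{\beta=0\}$, since the whole phase system lives on $\{\bar Q>0\}$. The repair uses an identity you already derived. $\bar U'=-\bar P\bar U$ is a genuine linear ODE whose coefficient $\bar P=2s\bar Q$ is smooth whatever its sign. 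So $\bar U=\exp(-\int_0^s\bar P)>0$, using $\bar U(0)=1$. Then $\bar P=s^{-2}\int_0^s\bar U\sigma^2\,d\sigma>0$, $\bar Q=\bar P/(2s)>0$, and $\beta=\bar U/(2\bar Q)>0$. This is how the paper argues.

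\textbf{The bound $y<\frac53$.} By \eqref{eq:phaseenergy}, $\dot{\mathcal E}=(1-2y)\dot y^2\ge0$ while $y<\frac12$. So $\mathcal E$ \emph{increases} from $0$ before it starts to decrease, and nothing qualitative gives $\mathcal E<0$ at the peak. The paper obtains such a bound (the sharper $y_M<1311/1000$ in Proposition~\ref{prop:kernelK}) only through piecewise barriers and exact integration over segments.

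You do not need it. At the first peak $t=t_M$ one has $\beta=1$, i.e.\ $a=2$. Your own bound $z>\frac65$ then gives $2/y_M=z(t_M)>\frac65$, so $y_M<\frac53$. Afterwards, $2\varphi(y(t))\le\mathcal H(y(t),\beta(t))\le\mathcal H(y_M,1)=2\varphi(y_M)$ forces $y\le y_M$. If instead $\beta>1$ for all $t$, then $y$ increases to $1$. This is exactly the paper's argument.

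\textbf{Two smaller omissions.}
First, you never address the tail expansions \eqref{eq:coretailQ}--\eqref{eq:coretailderivatives}, although Step~4 and the no-peak case use $y\to1$ and $a\to2$. The paper imports them from \cite{NWZ2026} and recovers $\bar Q''$ from the equation.
Second, for $\mathsf b_{\rm core}$ the vertex $\beta=1+y$ is admissible only for $y\le\frac{10}{11}$. Beyond that, the maximum sits on the boundary $\beta=3-\frac65y$ and equals $1300/27$ at $y=\frac53$. Using the vertex throughout would only give $4y^2(1+y)^2\le 6400/81$.
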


\begin{proof}
The inner equation and its oscillatory expansion at infinity appear in
\cite[(2.36)--(2.46)]{NWZ2026}.  The corrected matching construction uses the
order-$\mu^2$ remainder proved in Appendix~\ref{sec:correctedmatchingappendix},
not the erroneous order-$\mu^4$ term.  The subsections below derive all
remaining identities and inequalities directly from \eqref{eq:core-main}.
\end{proof}

\subsection{The first-order system and positivity}

By \eqref{eq:barPU-main},
\begin{align*}
 \bar P'+\frac2s\bar P
 &=\bigl(2\bar Q+2s\bar Q'\bigr)+4\bar Q\\
 &=6\bar Q+2s\bar Q'=\bar U.
\end{align*}
Write \eqref{eq:core-main} as
\begin{equation}
 \bar Q''+\frac4s\bar Q'+6\bar Q^2+2s\bar Q\bar Q'=0.   \label{eq:coreexpandedappendix}
\end{equation}
Differentiating term by term gives
\begin{align*}
 \bar U'
 &=8\bar Q'+2s\bar Q''\\
 &=8\bar Q'+2s\left(-\frac4s\bar Q'-6\bar Q^2
                     -2s\bar Q\bar Q'\right)\\
 &=-12s\bar Q^2-4s^2\bar Q\bar Q'\\
 &=-(2s\bar Q)(6\bar Q+2s\bar Q')
 =-\bar P\bar U.
\end{align*}
This proves \eqref{eq:coreUP}.

Since $\bar U(0)=6\bar Q(0)=1$, the first-order equation gives
\begin{equation}
 \bar U(s)=\exp\!\left(-\int_0^s\bar P(\sigma)\dd\sigma\right)>0.
                                                               \label{eq:Upositiveappendix}
\end{equation}
Multiplying $\bar P'+2\bar P/s=\bar U$ by $s^2$ and integrating from $0$ to
$s$ gives
\begin{equation}
 \bar P(s)=\frac1{s^2}\int_0^s\bar U(\sigma)\sigma^2\dd\sigma>0.
                                                               \label{eq:Ppositiveappendix}
\end{equation}
Since $\bar Q=\bar P/(2s)$, also $\bar Q(s)>0$.  Thus positivity follows from
\eqref{eq:core-main} itself and is not an additional assumption imported
from the JDE matching argument.

\subsection{Expansion at the origin}

Radial smoothness permits the even expansion
\begin{equation}
 \bar Q(s)=q_0+q_2s^2+q_4s^4+O(s^6),
 \qquad q_0=\frac16.                                      \label{eq:Qseriesappendix}
\end{equation}
Substitution into \eqref{eq:coreexpandedappendix} gives, at orders $s^0$ and
$s^2$,
\begin{align*}
 10q_2+6q_0^2&=0,\\
 28q_4+16q_0q_2&=0.
\end{align*}
Hence
\begin{equation}
 q_2=-\frac1{60},\qquad q_4=\frac1{630},                 \label{eq:q2q4appendix}
\end{equation}
which is \eqref{eq:coreoriginQ}.

To compute the initial direction of $z$, factor the expansion as
\[
 \bar Q(s)=\frac16\left(1-\frac{s^2}{10}+\frac{s^4}{105}+O(s^6)\right).
\]
Then
\begin{align*}
 a=-s\frac{\bar Q'}{\bar Q}
 &=\frac{s^2}{5}-\frac{19}{1050}s^4+O(s^6),\\
 y=s^2\bar Q
 &=\frac{s^2}{6}-\frac{s^4}{60}+O(s^6).
\end{align*}
Division gives
\begin{equation}
 z=\frac ay=\frac65+\frac{2}{175}s^2+O(s^4)
 =\frac65+\frac{12}{175}y+O(y^2).                        \label{eq:zoriginappendix}
\end{equation}

\subsection{The autonomous system and a straight-line barrier}

A dot denotes $\partial_t=s\partial_s$.  By \eqref{eq:yaz},
\[
 \dot y=s\partial_s(s^2\bar Q)=2s^2\bar Q+s^3\bar Q'=(2-a)y.
\]
Differentiating $a=-s\bar Q'/\bar Q$ gives
\begin{equation}
 \dot a=a-\frac{s^2\bar Q''}{\bar Q}+a^2.               \label{eq:adotfirstappendix}
\end{equation}
From \eqref{eq:coreexpandedappendix},
\begin{align*}
 \frac{s^2\bar Q''}{\bar Q}
 &=-4\frac{s\bar Q'}{\bar Q}-6s^2\bar Q-2s^3\bar Q'\\
 &=4a-6y+2ay.
\end{align*}
Thus
\begin{align*}
 \dot a&=a-(4a-6y+2ay)+a^2\\
 &=a^2-3a+6y-2ay=(3-a)(2y-a).
\end{align*}
Since $\beta=3-a$,
\[
 \dot\beta=-\dot a=-\beta\bigl(2y-(3-\beta)\bigr)
 =\beta(3-\beta-2y).
\]
Finally, $z=a/y$ gives
\begin{align*}
 \dot z
 &=\frac{\dot a\,y-a\dot y}{y^2}\\
 &=\frac{(3-a)(2y-a)}y-z(2-a)\\
 &=(3-zy)(2-z)-z(2-zy)\\
 &=6-5z+2zy(z-1).
\end{align*}
This proves \eqref{eq:yazsystem} term by term.

Set
\[
 \ell(y)=\frac65+\frac y{20},\qquad
 F(y,z)=y(2-yz),\qquad G(y,z)=6-5z+2zy(z-1).
\]
By \eqref{eq:zoriginappendix}, for all sufficiently small $s>0$,
\[
 z-\ell(y)=\left(\frac{12}{175}-\frac1{20}\right)y+O(y^2)
 =\frac{13}{700}y+O(y^2)>0.
\]
On the boundary $z=\ell(y)$, direct substitution gives
\begin{align}
 G-\ell'F
 &=6-5\ell+2\ell y(\ell-1)-\frac y{20}(2-y\ell)\notag\\
 &=-\frac y4+\frac{y(24+y)(4+y)}{200}
   -\frac y{10}+\frac{y^2(24+y)}{400}\notag\\
 &=\frac{y(52+80y+3y^2)}{400}\notag\\
 &=\frac{y(26+y)(2+3y)}{400}>0.                          \label{eq:lowerbarrier}
\end{align}
If the orbit reached the boundary for the first time from the side
$z>\ell(y)$, then $\partial_t(z-\ell(y))\le0$ there, contradicting
\eqref{eq:lowerbarrier}.  Hence
\begin{equation}
 z>\frac65+\frac y{20}>\frac65.                           \label{eq:zbarrierappendix}
\end{equation}
Thus $a=yz>0$, and $a=-s\bar Q'/\bar Q$ together with $\bar Q>0$ gives
$\bar Q'<0$.

Also,
\[
 \bar U=6\bar Q+2s\bar Q'=2\bar Q(3-a)=2\bar Q\beta.
\]
Equations \eqref{eq:Upositiveappendix} and $\bar Q>0$ imply $\beta>0$.
Since $\beta=3-yz$, \eqref{eq:zbarrierappendix} gives
\begin{equation}
 0<\beta<3-\frac65y.                                      \label{eq:betaboundappendix}
\end{equation}

\subsection{The global bound \texorpdfstring{$y<5/3$}{y<5/3}}

Lemma~2.5 of the JDE paper gives \eqref{eq:coretailQ}.  The fixed-point
remainder also controls one derivative; alternatively, one differentiates
the expansion and recovers the second derivative from
\eqref{eq:coreexpandedappendix}.  Hence \eqref{eq:coretailderivatives} holds,
and
\begin{equation}
 y(s)\longrightarrow1,\qquad a(s)\longrightarrow2,\qquad
 \beta(s)\longrightarrow1\qquad(s\to\infty).             \label{eq:phaseendappendix}
\end{equation}

Let $\varphi(x)=x-1-\log x$.  Since $y,\beta>0$,
$\mathcal H=2\varphi(y)+\varphi(\beta)$ is well defined.  Using
\eqref{eq:yazsystem},
\begin{align*}
 \dot{\mathcal H}
 &=2\left(1-\frac1y\right)y(\beta-1)
  +\left(1-\frac1\beta\right)\beta(3-\beta-2y)\\
 &=2(y-1)(\beta-1)+(\beta-1)(3-\beta-2y)\\
 &=-(\beta-1)^2\le0.
\end{align*}
This proves \eqref{eq:Lyapunov}.

If $\beta(t)>1$ for all $t$, then $\dot y>0$, and
\eqref{eq:phaseendappendix} gives $0<y<1$.  Otherwise let $t_M$ be the first
time at which $\beta(t_M)=1$, and set $y_M=y(t_M)$.  On $t<t_M$, $y$ is
strictly increasing.  At the first hitting time, $\dot\beta(t_M)\le0$.
Since $\dot\beta(t_M)=2(1-y_M)$, one has $y_M\ge1$.  Equality would make
$(y,\beta)=(1,1)$ an equilibrium, and uniqueness would force the entire orbit
to be constant, contradicting $y\to0$ and $\beta\to3$ as $t\to-\infty$.
Thus $y_M>1$.

At $t=t_M$, $a=3-\beta=2$.  By \eqref{eq:zbarrierappendix},
\begin{equation}
 \frac2{y_M}=z(t_M)>\frac65,
 \qquad y_M<\frac53.                                     \label{eq:firstpeakappendix}
\end{equation}
For $t\ge t_M$, monotonicity of the Lyapunov function gives
\[
 2\varphi(y(t))\le\mathcal H(y(t),\beta(t))
 \le\mathcal H(y_M,1)=2\varphi(y_M).
\]
If $y(t)\ge1$, strict monotonicity of $\varphi$ on $[1,\infty)$ implies
$y(t)\le y_M$; if $y(t)<1$, the conclusion is immediate.  Together with
monotonicity before the first maximum, this proves $0<y<5/3$, and hence
\eqref{eq:ybregion}.

\subsection{Quantitative monotonicity}

At the origin, \eqref{eq:coreoriginQ} gives
\[
 -\bar Q'(s)=\frac{s}{30}+O(s^3).
\]
At infinity, \eqref{eq:coretailderivatives} gives
\[
 -\bar Q'(s)=2s^{-3}+O(s^{-7/2}).
\]
Thus $-\bar Q'$ is comparable to $s$ and $s^{-3}$ at the two endpoints.
We have already proved $-\bar Q'>0$.  On every fixed compact subinterval, the
continuous function
\[
 \frac{(1+s)^4}{s}\bigl(-\bar Q'(s)\bigr)
\]
has a strictly positive minimum.  Taking the minimum of the three constants
from the origin, compact region, and infinity proves \eqref{eq:Qprimequant}.

\subsection{Recalculation of the three dimensionless coefficients}

Put
\[
 h(t)=s^2\bar U(s)=2y\beta.
\]
By \eqref{eq:yazsystem},
\begin{align}
 \dot h
 &=2\dot y\,\beta+2y\dot\beta=2h(1-y),\notag\\
 \ddot h
 &=2\dot h(1-y)-2h\dot y.                                \label{eq:hderivativesappendix}
\end{align}
Since $\bar U=s^{-2}h(t)$, the chain rule gives
\begin{align*}
 s^3\bar U'&=\dot h-2h,\\
 s^4\bar U''&=\ddot h-5\dot h+6h,\\
 s^4\left(\bar U''-\frac2s\bar U'\right)
 &=\ddot h-7\dot h+10h.
\end{align*}
Substituting \eqref{eq:hderivativesappendix} and
$\dot y=y(\beta-1)$ and collecting terms gives
\begin{align}
 s^4\bar U''
 &=4y^2\beta(2+2y-\beta),\notag\\
 s^4\left(\bar U''-\frac2s\bar U'\right)
 &=4y^2\beta(4+2y-\beta).                                \label{eq:corecoeffidentitiesappendix}
\end{align}

It remains to optimize over the closed region
\[
 0\le y\le\frac53,
 \qquad 0\le\beta\le3-\frac65y.
\]
First,
\[
 h=2y\beta\le2y\left(3-\frac65y\right).
\]
The derivative of the right-hand side is $6-24y/5$, so its only critical
point is $y=5/4$, and $h\le15/4$.  Next,
$4+2y-\beta\ge1+2y>0$, so $\mathsf a_{\rm core}\ge0$.

Finally, for fixed $y$ let
\[
 F_y(\beta)=4y^2\beta(2+2y-\beta).
\]
This is a concave quadratic in $\beta$, with vertex $\beta=1+y$.  If
$0\le y\le10/11$, the vertex lies in the admissible interval and
\[
 F_y(\beta)\le4y^2(1+y)^2
 \le4\left(\frac{10}{11}\right)^2\left(\frac{21}{11}\right)^2.
\]
If $10/11\le y\le5/3$, the admissible interval lies to the left of the
vertex, so the maximum is attained at $\beta=3-6y/5$:
\begin{equation}
 F_y(\beta)\le
 G(y):=4y^2\left(3-\frac65y\right)
             \left(\frac{16}{5}y-1\right).               \label{eq:Gboundaryappendix}
\end{equation}
Direct differentiation gives
\[
 G'(y)=\frac{24y}{25}\bigl(-64y^2+135y-25\bigr).
\]
The quadratic in parentheses is concave, and its values at the two endpoints
are $5425/121$ and $200/9$, both positive.  Thus $G$ is increasing on the
interval, and
\[
 G(y)\le G(5/3)
 =4\left(\frac53\right)^2\cdot1\cdot\frac{13}{3}
 =\frac{1300}{27}.
\]
The bound on the first interval is smaller than $1300/27$, so
$\mathsf b_{\rm core}\le1300/27$.  This proves all statements of Proposition
\ref{prop:coreprofileproperties}.

\subsection{Transfer of these properties to the corrected matched profile}

We finally check that no erroneous correction order from the JDE paper is
reintroduced when passing to the actual profile.  In the inner region,
Appendix \ref{sec:correctedmatchingappendix} gives the exact representation
\[
 \Phi_n(r)=\mu_n^{-2}
 \bigl(\bar Q+\mu_n^2R_n\bigr)(r/\mu_n),
\]
not $\bar Q+\mu_n^4Q_1$.  Hence, for every fixed $R$,
\eqref{eq:matchedinnerconv-main} transfers the bounds on
$\bar U,\bar U',\bar U''$ to $0<r\le R\mu_n$.  On
$r\ge R\mu_n$, use \eqref{eq:matchedouterconv-main} from the corrected
matching proposition.  First choose $R$ large and then $n$ large; the two
regions cover $(0,\infty)$.  This is exactly the argument by which the main
text passes from \eqref{eq:corecoefficientbounds} to
\eqref{eq:habglobalbounds}.

\section{Exact verification of several polynomial inequalities}\label{sec:polynomial-checks}

No new differential equation is introduced in this section.  We only verify
the signs of the finitely many polynomials occurring in the barriers and
endpoint estimates in the main text.  Each polynomial is written in the
Bernstein basis on the stated interval, and the positivity of every
coefficient is checked.  In the order in which they occur below, the groups
of coefficients correspond to \eqref{eq:lowerbarrier}, the piecewise
barriers in Proposition~\ref{prop:kernelK}, and
\eqref{eq:l2endpointpolynomial}.

The Bernstein principle used here is elementary.  If a polynomial of degree
$m$ on $[a,b]$ is written as
\[
p(x)=\sum_{j=0}^m c_j\binom{m}{j}\theta^j(1-\theta)^{m-j},
\qquad \theta=\frac{x-a}{b-a},
\]
and every $c_j>0$, then $p(x)>0$ throughout the interval.  All numbers below
are rational; none comes from floating-point sampling.

\begin{enumerate}
\item Bernstein coefficients of the quartic for the first curved upper
barrier:
\[
\left(100,\ 1650,\ \frac{13853}{6},\ 1954,\ 63\right).
\]
\item Cubic coefficients for the four affine upper barriers (with the common
positive denominator in each row omitted):
\[
\begin{aligned}
&(3024000,2159800,1190030,112089),\\
&(5172778,3660859,1944190,16696),\\
&(4162190,2996031,1654939,132380),\\
&(61992105,44384456,23832320,181248).
\end{aligned}
\]
\item The Bernstein coefficients of the endpoint polynomial $P_1$ on the
four subintervals before the first maximum are
\[
\begin{aligned}
&(178884/125,692,4412/15,196),\\
&(196,2572/15,497/3,22244/125),\\
&(22244/125,9819/50,511/2,2825/8),\\
&(2825/8,22475/36,32200/27,18100/9).
\end{aligned}
\]
The Bernstein coefficients of $P_2$ are
\[
(15084/125,57,5211/80,2025/16).
\]
\item The Bernstein coefficients of $\mathfrak c-1/20$ on the three
subintervals after the first maximum are
\[
\begin{aligned}
&(117/8,3/10,243/50,684/25),\\
&(684/25,2493/50,903/10,29547/200),\\
&(29547/200,24689/125,975203/3750,1046082/3125).
\end{aligned}
\]
\item Exact verification of the coefficient inequalities needed for
$l\ge3$.  Appendix~\ref{sec:JDEcoreproperties} gives, on
$0\le y\le5/3$ and $0\le\beta\le3-6y/5$,
\[
 \max 2y\beta=\frac{15}{4},\qquad
 4y^2\beta(4+2y-\beta)\ge0,\qquad
 \max 4y^2\beta(2+2y-\beta)=\frac{1300}{27}.
\]
The last maximum is attained at $(y,\beta)=(5/3,1)$.  Substitution at $l=3$
leaves the positive constant $167051/78408$.  Thus the refined piecewise
barriers used for the $l=1$ potential estimate are not needed here.
The positive-coefficient identity \eqref{eq:l2H1outer} follows by direct
reduction to a common denominator.
\item Consider the quartic polynomial in the endpoint $l=2$
Mellin--Newton quadratic form,
\[
 q(b)=960+1141b-2178b^2+1030b^3-140b^4.
\]
It is strictly positive for $b\in[1/2,3/2]$.  Indeed, set
$s=b-1/2\in[0,1]$.  Its Bernstein coefficients with respect to
$\binom4k s^k(1-s)^{4-k}$ are
\[
 1106,\quad\frac{8179}{8},\quad\frac{3193}{4},\quad
 \frac{4969}{8},\quad\frac{1077}{2},
\]
all of which are positive.  This independently verifies
\eqref{eq:l2endpointpolynomial}.
\end{enumerate}

\bigskip
\noindent\textsc{(T. Li) State Key Laboratory of Mathematical Sciences,
Academy of Mathematics \& Systems Science, Chinese Academy of Sciences,
Beijing 100190, China.}

\noindent\emph{Email address:} \texttt{teli@amss.ac.cn}

\medskip
\noindent\textsc{(Y. Sun) Academy of Mathematics \& Systems Science,
Chinese Academy of Sciences, Beijing 100190, China.}

\noindent\emph{Email address:}
\texttt{sunyuwei@amss.ac.cn}

\medskip
\noindent\textsc{(K. Zhang) School of Computer Science and Technology,
Dongguan University of Technology, No.~1 Daxue Road, Songshan Lake District,
Dongguan, Guangdong 523808, China.}

\noindent\emph{Email address:} \texttt{math.zhangkq@dgut.edu.cn}


\begin{thebibliography}{99}

\bibitem{KellerSegel1970}
E.~F.~Keller and L.~A.~Segel,
\newblock Initiation of slime mold aggregation viewed as an instability,
\newblock \emph{Journal of Theoretical Biology} \textbf{26} (1970), no.~3, 399--415.
\newblock \url{https://doi.org/10.1016/0022-5193(70)90092-5}.

\bibitem{Simon2015}
B.~Simon,
\newblock \emph{Harmonic Analysis: A Comprehensive Course in Analysis, Part 3},
\newblock American Mathematical Society, Providence, RI, 2015.

\bibitem{BrennerEtAl1999}
M.~P.~Brenner, P.~Constantin, L.~P.~Kadanoff, A.~Schenkel and S.~C.~Venkataramani,
\newblock Diffusion, attraction and collapse,
\newblock \emph{Nonlinearity} \textbf{12} (1999), no.~4, 1071--1098.
\newblock \url{https://doi.org/10.1088/0951-7715/12/4/320}.

\bibitem{GlogicSchoerkhuber2024}
I.~Glogi\'c and B.~Schörkhuber,
\newblock Stable singularity formation for the Keller--Segel system in three dimensions,
\newblock \emph{Archive for Rational Mechanics and Analysis} \textbf{248} (2024), article~4.
\newblock \url{https://doi.org/10.1007/s00205-023-01947-9}.

\bibitem{CollotZhang2024}
C.~Collot and K.~Zhang,
\newblock On the stability of Type I self-similar blowups for the Keller--Segel system in three dimensions and higher,
\newblock arXiv:2406.11358v2 (2024).
\newblock \url{https://arxiv.org/abs/2406.11358}.

\bibitem{NWZ2026}
V.~T.~Nguyen, Z.-A.~Wang and K.~Zhang,
\newblock Infinitely many self-similar blow-up profiles for the Keller--Segel system in dimensions 3 to 9,
\newblock \emph{Journal of Differential Equations} \textbf{458} (2026), 114033.
\newblock \url{https://doi.org/10.1016/j.jde.2025.114033}.

\bibitem{LiZhou2025}
Z.~Li and T.~Zhou,
\newblock Nonradial stability of self-similar blowup to Keller--Segel equation in three dimensions,
\newblock arXiv:2501.07073v2 (2025).
\newblock \url{https://arxiv.org/abs/2501.07073}.

\bibitem{LiWeiZhang2020}
T.~Li, D.~Wei and Z.~Zhang,
\newblock Pseudospectral and spectral bounds for the Oseen vortices operator,
\newblock \emph{Annales Scientifiques de l'\'Ecole Normale Sup\'erieure}
\textbf{53} (2020), no.~4, 993--1035.
\newblock \url{https://doi.org/10.24033/asens.2438}.

\bibitem{LiZhouKSNS2025}
Z.~Li and T.~Zhou,
\newblock Finite-time blowup for Keller--Segel--Navier--Stokes system in three dimensions,
\newblock \emph{Communications in Mathematical Physics} \textbf{406} (2025), no.~8, article~186.
\newblock \url{https://doi.org/10.1007/s00220-025-05371-w}.

\bibitem{Metafune2001}
G.~Metafune,
\newblock $L^p$-spectrum of Ornstein--Uhlenbeck operators,
\newblock \emph{Annali della Scuola Normale Superiore di Pisa, Classe di Scienze}
\textbf{30} (2001), no.~1, 97--124.
\newblock \url{https://www.numdam.org/item/ASNSP_2001_4_30_1_97_0/}.

\bibitem{NiessenZettl1992}
H.-D.~Niessen and A.~Zettl,
\newblock Singular Sturm--Liouville problems: The Friedrichs extension and
comparison of eigenvalues,
\newblock \emph{Proceedings of the London Mathematical Society} (3)
\textbf{64} (1992), no.~3, 545--578.
\newblock \url{https://doi.org/10.1112/plms/s3-64.3.545}.

\bibitem{Zettl2005}
A.~Zettl,
\newblock \emph{Sturm--Liouville Theory},
\newblock Mathematical Surveys and Monographs, vol.~121,
American Mathematical Society, Providence, RI, 2005.
\newblock \url{https://bookstore.ams.org/SURV/121}.

\bibitem{Teschl2009}
G.~Teschl,
\newblock \emph{Mathematical Methods in Quantum Mechanics: With Applications
to Schr\"odinger Operators},
\newblock Graduate Studies in Mathematics, vol.~99,
American Mathematical Society, Providence, RI, 2009.
\newblock \url{https://www.mat.univie.ac.at/~gerald/ftp/book-schroe/schroe.pdf}.
\bibitem{DLMF}
NIST Digital Library of Mathematical Functions,
\newblock Chapters 5 and 13.
\newblock \url{https://dlmf.nist.gov/}.

\end{thebibliography}
\end{document}